\documentclass[10pt,reqno]{amsart}
\usepackage[T1]{fontenc}
\usepackage[utf8]{inputenc}
\usepackage{lmodern}
\usepackage[a4paper,top=0.82in,bottom=0.84in,left=0.92in,right=0.92in]{geometry}
\usepackage{amsmath,amssymb,amsfonts,amsthm,mathtools,mathrsfs,amscd}
\usepackage{enumitem,microtype,booktabs,array,xcolor,hyperref}
\usepackage[nameinlink,capitalize,noabbrev]{cleveref}
\allowdisplaybreaks[2]
\hypersetup{
 bookmarksdepth=2,
 colorlinks=true,
 linkcolor=blue!45!black,
 citecolor=blue!45!black,
 urlcolor=blue!45!black,
 pdftitle={Infinite-Dimensional Levy Area and Spectral Ancestry},
 pdfauthor={Guangqian Zhao},
 pdfsubject={Primitive Rees geometry, critical weak-Schatten response, and infinite-dimensional Levy area},
 pdfkeywords={infinite-dimensional Levy area, causal Rees response kernel, critical spectral ancestry, weak Schatten ideals, Hall commutator, bidirectional recovery}
}

\theoremstyle{plain}
\newtheorem{theorem}{Theorem}[section]
\newtheorem{proposition}[theorem]{Proposition}
\newtheorem{lemma}[theorem]{Lemma}
\newtheorem{corollary}[theorem]{Corollary}
\theoremstyle{definition}
\newtheorem{definition}[theorem]{Definition}

\newtheorem{example}[theorem]{Example}
\theoremstyle{remark}
\newtheorem{remark}[theorem]{Remark}
\begin{document}

\title[L\'evy area and spectral ancestry]{Infinite-Dimensional L\'evy Area and Spectral Ancestry}
\author{Guangqian Zhao}
\address{School of Mathematical Sciences, University of Science and Technology of China, Hefei, Anhui 230026, China}
\email{zhaoguangqian@mail.ustc.edu.cn}
\date{}
\subjclass[2020]{16W50, 17B01, 46L52, 47B10, 47B35, 60G15, 60G44, 60G51, 60L20}
\keywords{causal characters; primitive Rees geometry; response kernel;
infinite-dimensional L\'evy area; weak Schatten ideals; Hall commutator; spectral ancestry; stochastic convolution}

\begin{abstract}
We relate primitive ancestry in free-Lie expansions of infinite-dimensional
L\'evy area to critical weak-Schatten spectra.  For
$L=\mathfrak L(V)$, $D=[L,L]$, and $J=\ker(T(V)\to S(V))$, we prove
$L\cap J^r=\gamma_r(D)$ and identify the exact normal layers with
$\mathfrak L_r(D/[D,D])$.  Continuous Hilbert-martingale area has the sharp
split endpoint
$H\times\mathcal S^0_{1,\infty}(H)_{\rm skew}\times\mathcal S_1(H)_{\rm sa}$.
The critical grade $r$ has singular profile
$(\log(e+N))^{r-1}/N$.  A Rees response kernel characterizes strictness,
equality of ancestry and spectral depth, and recovery of the finite labelled
ancestry flag.  Pure-area Hall experiments give full-support kernels and exact
critical laws.  Revealed brackets and evolution families provide conditional
covariance and second-order L\'evy/SPDE interfaces.
\end{abstract}

\maketitle

\section{Introduction}
\label{sec:introduction}

The paper studies the chain
\begin{equation}
 \text{causal source}
 \longrightarrow
 \text{critical second-order geometry}
 \longrightarrow
 \text{primitive Rees flag}
 \longrightarrow
 \text{native response kernel}.
 \label{eq:main-architecture}
\end{equation}
The three arrows are, respectively, probabilistic, algebraic, and analytic.
Primitive depth by itself does not determine a compact spectrum; the relevant
invariant is the associated-graded response carried by the declared readers.

The main results are as follows.

\paragraph{Sharp stochastic entrance.}
For a continuous square-integrable martingale with bounded terminal trace
energy, the antisymmetric area has a dimension-free weak-trace bound and every
fixed sample belongs to the little weak ideal, whereas the collision
coordinate is trace class.  The exponent one is optimal in three different
ways: no $\mathcal S_{p,\infty}$ with $p<1$ is universal, no finite critical
Lorentz ideal is universal, and no prescribed dimension-free little modulus
is available.  The resulting Polish split state is
\begin{equation}
 \mathscr G_{\rm crit}(H)
 =H\times\mathcal S^0_{1,\infty}(H)_{\rm skew}
   \times\mathcal S_1(H)_{\rm sa}.
 \label{eq:intro-critical-state}
\end{equation}
It is the exact degree-two quotient of the typed It\^o quasi-shuffle state,
and its anchor provides canonical increments on bounded stopping intervals.

\paragraph{Primitive Rees geometry.}
Let
\[
 L=\mathfrak L(V),\qquad D=[L,L],\qquad
 J=\ker(T(V)\to S(V)).
\]
The primitive normal-cone theorem identifies
\begin{equation}
 F^rL:=L\cap J^r=\gamma_r(D),\qquad
 F^rL/F^{r+1}L\cong\mathfrak L_r(D/[D,D]).
 \label{eq:intro-normal-cone}
\end{equation}
Normal order and derived ancestry are therefore the same canonical valuation.
At fixed tensor degree the filtration is finite and lives on the finite
multiplicity spaces in the Schur decomposition; no splitting of an exact
layer is intrinsic.

\paragraph{One theorem for forward and inverse response.}
For a finite decreasing ancestry filtration $F^\bullet M$, the correct
increasing filtered object is the dual flag
\[
 \mathscr D_rM^*:=(F^{r+1}M)^\perp.
\]
A filtered response $\mathcal R:M^*\to\mathcal Y$ has the canonical kernel
\begin{equation}
 \mathfrak K_r(\mathcal R)
 \in (F^rM/F^{r+1}M)\otimes\partial_r\mathcal Y.
 \label{eq:intro-kernel}
\end{equation}
Full left support is equivalent to strictness and valuation exchange.  Hence
for every nonzero labelled probe $q$,
\begin{equation}
 d_{\rm sp}(q)=a_F(q),
 \qquad
 F^rM=\bigcap_{d_{\rm sp}(q)\le r-1}\ker q.
 \label{eq:intro-bidirectional}
\end{equation}
The first equality is the forward statement; the second reconstructs the
finite labelled ancestry flag.  Both are contractions of the same response kernel.

\paragraph{Native Hall support and the finite-order classification.}
The analytic filtration is generated by the grade-one harmonic atom
$D_\infty=\operatorname{diag}(1,1/2,\ldots)$, whose tensor powers satisfy
\begin{equation}
 s_N(D_\infty^{\otimes r})
 \sim \frac1{\Gamma(r)}\frac{(\log N)^{r-1}}{N}.
 \label{eq:intro-harmonic-profile}
\end{equation}
Every compact Hilbert--Schmidt operator is an off-diagonal corner of a skew
pure-area direction.  Recursive Hall commutators place a prescribed primitive
word in its exact ordinary degree, and signed tensor corners expose the
corresponding tensor product without a lower-cost remainder.  These readers
have full support on every finite Rees layer.  In particular,
\begin{equation}
 \mathbf S_{(3,1)}:\ \frac1N,
 \qquad
 \mathbf S_{(2,1,1)}:\ \frac{\log(e+N)}N,
 \label{eq:intro-degree-four}
\end{equation}
the long hook has depth one in every degree, and exact finite-order depths are
unbounded.  The abstract lower theorem is therefore deterministic; Brownian
blocks certify that the grade-one boundary also occurs naturally in a
stochastic source.

Two additional interfaces are retained because they clarify the meaning and
reach of the main theorem.  If the predictable bracket is revealed at the
initial sigma-field, the exterior lift has a conditional operator-valued
Dol\'eans propagator.  Its particle-two block measures the loss caused by
atomic bracket delivery, and its determinant profile transfers bracket
ideals and Mellin growth to area envelopes.  A representation-theoretic
obstruction shows that higher exterior degree is not itself higher free-Lie
ancestry.  For a deterministic evolution family,
terminal-frame stochastic convolutions satisfy a transported Chen law, and
operational-energy convergence yields Galerkin stability of the critical
lift.  For a specific SPDE, this entrance is combined with the equation's regularity and fixed-point estimates.

Brownian motion supplies the decisive type check.  The raw Chen-local area,
the deterministic second-chaos coefficient root, and a nonlinear
covariance-shell Gramian root live on different spaces and retain different
multiplicities.  Radonification relates the first two only in $L^2$, while the
third is a global auxiliary sample observable.  In the polynomial covariance
regime, the raw area has zero asymptotic spectral density relative to the
annealed root.  Together with the point-evaluation obstruction, a registration
trilemma rules out a bounded spectrum-preserving identification of all three
branches.

\paragraph{Context.}
Infinite-dimensional L\'evy area and rough-path liftability depend on the
chosen tensor topology
\cite{LedouxLyonsQian02,Dereich10,GarridoLuSchmalfuss15,GrongNilssenSchmeding22,Kalinichenko25}.
Semimartingale and jump lifts are developed in
\cite{Chevyrev18,FrizShekhar17,ChevyrevFriz19,ChevyrevFerrucci26}, and
stochastic-convolution regularity in
\cite{DaPratoKwapienZabczyk87,Brzezniak97,DaPratoZabczyk14,GubinelliTindel10}.
The present problem is complementary: the area coordinate selects a critical
operator ideal, primitive normal order propagates its logarithmic cost, and
labelled native responses determine which exact layers remain visible.  The
algebraic input uses free-Lie and commutator-filtration theory, the typed It\^o
interface uses quasi-shuffle/Hoffman calculus, and the analytic side uses
regular variation and tensor-product spectral asymptotics
\cite{Reutenauer93,Kapranov98,Hoffman00,CurryEtAl14,BGT87,KarolNazarovNikitin08,Rastegaev18}.
The finite-dimensional Brownian-area spectrum of
\cite{DelaCruzOberhauser26} supplies a natural-source certificate, while the
conditional covariance results are related to semimartingale signature and
cumulant methods \cite{FrizHagerTapia24}.

\paragraph{Organization.}
Section~\ref{cc:sec-causal-completions} develops causal completion;
Section~\ref{cc:sec-algebraic-ancestry} proves the primitive Rees theorem;
Sections~\ref{cr:sec-analytic}--\ref{rk:sec-kernel} establish the analytic
filtration and response-kernel theorem; Section~\ref{cc:sec-critical-geometry}
proves the sharp stochastic entrance; Section~\ref{hall:sec-native} gives
native Hall support and the finite-order classification.  The final sections
contain conditional covariance, L\'evy/SPDE interfaces, Brownian type
separation, and the infinite-tower consequences.
\section{Causal interval geometry and the two completions}
\label{cc:sec-causal-completions}

The primitive object is a transported multiplicative section over directed
time intervals.  Its endpoint anchor determines the two-parameter field.
Optional completion enlarges the domain to stopping intervals, whereas
singular extension enlarges the target group; their interchange is governed
by convergence of lifted anchors.

\subsection{Causal characters and endpoint anchors}

Fix a time horizon \(T<\infty\).  A \emph{transported Polish coefficient
system} is a Polish group \((G,\star,{\mathbf 1})\) together with continuous group
endomorphisms
\begin{equation}
 \alpha_{t,s}:G\longrightarrow G,
 \qquad
 \alpha_{t,t}={\mathrm{Id}},
 \qquad
 \alpha_{t,u}\alpha_{u,s}=\alpha_{t,s},
 \qquad 0\le s\le u\le t\le T,
 \label{cc:eq-transport-system}
\end{equation}
whose joint evaluation map \((t,s,g)\mapsto\alpha_{t,s}(g)\) is continuous.
We write \(G_t:=G\) and regard \(\alpha_{t,s}:G_s\to G_t\).  This is a fixed
trivialization; no measurable-field assertion for varying fibres is implicit.

\begin{definition}[Causal evolution character]
\label{cc:def-causal-character}
A causal evolution character is a normalized family

\[
 \Phi_{s,t}\in G_t\;(=G),
 \qquad \Phi_{t,t}={\mathbf 1},
\]

satisfying the transported Chen identity
\begin{equation}
 \Phi_{s,t}
 =\alpha_{t,u}(\Phi_{s,u})\star\Phi_{u,t},
 \qquad s\le u\le t.
 \label{cc:eq-transported-Chen}
\end{equation}
On a filtered probability space we additionally require
\(\Phi_{s,t}\) to be \({\mathcal F}_t\)-measurable for deterministic \(s\le t\),
and the normalization and Chen identities are understood almost surely for
each fixed deterministic pair and triple.  No simultaneous exceptional-set
claim is made until the anchor is perfected below.
The triple \(\mathfrak C=(G,\alpha,\Phi)\), or equivalently the anchored
triple introduced below, is the underlying causal object.
\end{definition}

Geometric ancestry requires additional registration data.  Choose
topological alphabet spaces \(V_t\) and continuous linear transports
\begin{equation}
 b_{t,s}:V_s\longrightarrow V_t,
 \qquad b_{t,t}={\mathrm{Id}},
 \qquad b_{t,u}b_{u,s}=b_{t,s},
 \label{cc:eq-alphabet-transport}
\end{equation}
and let
\[
 \beta_{t,s}:=(b_{t,s})_\#:
 \mathsf G_{\mathrm{geo}}^{\le N}(V_s)
 \longrightarrow
 \mathsf G_{\mathrm{geo}}^{\le N}(V_t)
\]
be the induced homomorphism of truncated geometric character groups and of
their free Lie algebras.  We assume that finite products, logarithms, and
the induced maps are continuous.  A registered chart is a family of
continuous homomorphisms
\begin{equation}
 \chi_t:G_t\longrightarrow\mathsf G_{\mathrm{geo}}^{\le N}(V_t),
 \qquad
 \chi_t\alpha_{t,s}=\beta_{t,s}\chi_s.
 \label{cc:eq-geometric-chart}
\end{equation}
whose joint evaluation is Borel.  Whenever charted anchors must be càdlàg,
joint continuity, or direct preservation of the selected paths, is assumed.
The charted logarithm
\begin{equation}
 X_{s,t}:=\log_\star\chi_t(\Phi_{s,t})
 \label{cc:eq-charted-logarithm}
\end{equation}
lies in the truncated free Lie algebra and satisfies
\[
 X_{s,t}
 ={\operatorname{BCH}}\!\left(\beta_{t,u}X_{s,u},X_{u,t}\right).
\]
Naturality of commutativization gives
\[
 (b_{t,s})_\#(J_s^r)\subseteq J_t^r,
 \qquad
 J_t:=\ker\!\left(T(V_t)\longrightarrow S(V_t)\right),
\]
and total tensor degree is preserved.  Exact valuation is preserved when
the induced map on the relevant associated-graded layer is injective; in
general transport may annihilate an initial symbol and raise valuation.

A chart may instead be defined on an admissible transported subgroup of a
lifted target:
\[
 \widetilde\chi_t:\widetilde G_t^{\mathrm{adm}}
 \longrightarrow\mathsf G_{\mathrm{geo}}^{\le N}(V_t),
 \qquad
 \widetilde\chi_t\widetilde\alpha_{t,s}
 =\beta_{t,s}\widetilde\chi_s.
\]
For a selected lift \(L\), put
\(X^L_{s,t}:=\log_\star\widetilde\chi_t(L_{s,t})\).  It descends to the
base when \(\widetilde\chi_t\) is trivial on the admissible kernel and is
otherwise gauge dependent.  Random charts are interpreted samplewise.
An \(L^0\)-valued chart supplies simultaneous representatives for a
countable registered family; an uncountable family requires a jointly
measurable representative as an additional hypothesis.  Hence the chart,
normal filtration, and spectral readers are realization data, not
components of \(\mathfrak C\).

The interval category has the initial object \(0\).  Consequently its
multiplicative sections have no independent two-parameter freedom.

\begin{theorem}[Anchor representation]
\label{cc:thm-anchor-representation}
Causal evolution characters are in natural bijection with normalized
endpoint sections \(A_t\in G_t\), \(A_0={\mathbf 1}\), equivalently paths
\(A:[0,T]\to G\) in the fixed trivialization.  The correspondence is
\begin{equation}
 A_t=\Phi_{0,t},
 \qquad
 \Phi_{s,t}
 =\bigl(\alpha_{t,s}(A_s)\bigr)^{-1}\star A_t.
 \label{cc:eq-anchor-reconstruction}
\end{equation}
In the stochastic category the correspondence is between deterministic-time
characters modulo almost-sure equality and adapted anchors modulo
modification; the identities are initially asserted for each fixed time
tuple.  It becomes a simultaneous pathwise correspondence after the
optional perfection below.
It commutes with every continuous group homomorphism intertwining the
transports.  If \(G\) is a finite-step exponential group and the transports
intertwine the exponential maps, write
\[
 \alpha^{\mathrm{Lie}}_{t,s}
 :=\log_\star\circ\alpha_{t,s}\circ\exp_\star
\]
for the induced Lie-algebra homomorphism.  Then, with
\(a_t=\log_\star A_t\) and \(\Psi_{s,t}=\log_\star\Phi_{s,t}\),
\begin{equation}
 \Psi_{s,t}
 ={\operatorname{BCH}}\!\left(-\alpha^{\mathrm{Lie}}_{t,s}(a_s),a_t\right)
 ={\operatorname{BCH}}\!\left(
   \alpha^{\mathrm{Lie}}_{t,u}(\Psi_{s,u}),\Psi_{u,t}\right).
 \label{cc:eq-transported-BCH}
\end{equation}
Thus causal composition and the logarithmic coordinate are two descriptions
of the same object.  The chart \(\chi\), when present, turns the latter into
the geometric Lie coordinate \(X\); no \(J\)-adic structure is asserted
without that extra datum.
\end{theorem}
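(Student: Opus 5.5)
The plan is to verify the two maps in \eqref{cc:eq-anchor-reconstruction} directly and show they are mutually inverse. Given a causal character \(\Phi\), put \(A_t:=\Phi_{0,t}\); normalization gives \(A_0=\mathbf 1\). Apply the transported Chen identity \eqref{cc:eq-transported-Chen} with the triple \(0\le s\le t\) (so \(u=s\)): \(\Phi_{0,t}=\alpha_{t,s}(\Phi_{0,s})\star\Phi_{s,t}\). Left-multiplying by \(\alpha_{t,s}(A_s)^{-1}\) recovers \(\Phi\) from \(A\), so the map \(\Phi\mapsto A\) is injective. Conversely, given any normalized path \(A\), define \(\Phi_{s,t}\) by the displayed formula. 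Then \(\Phi_{t,t}=\alpha_{t,t}(A_t)^{-1}A_t=\mathbf 1\), and \(\Phi_{0,t}=A_t\) because \(A_0=\mathbf 1\). For Chen we compute
\[
\alpha_{t,u}(\Phi_{s,u})\star\Phi_{u,t}=\alpha_{t,u}\alpha_{u,s}(A_s)^{-1}\star\alpha_{t,u}(A_u)\star\alpha_{t,u}(A_u)^{-1}\star A_t,
\]
which reduces to \(\Phi_{s,t}\). This step uses only that \(\alpha_{t,u}\) is a group homomorphism (so it commutes with inversion) and the cocycle law \eqref{cc:eq-transport-system}. This gives the bijection. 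Naturality is immediate: if \(h:G\to G'\) is a continuous homomorphism with \(h\alpha_{t,s}=\alpha'_{t,s}h\), then \(h\) applied to \eqref{cc:eq-anchor-reconstruction} yields the same formula for \(h\circ\Phi\) and \(h\circ A\).

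For the stochastic version I will run the same argument for each fixed deterministic pair or triple, discarding one null set per tuple. Adaptedness follows because \(A_t=\Phi_{0,t}\) is \(\mathcal F_t\)-measurable. Conversely, \(\alpha_{t,s}(A_s)^{-1}\star A_t\) is \(\mathcal F_t\)-measurable, since \(\alpha_{t,s}\), inversion and \(\star\) are continuous, hence Borel, and \(A_s\) is \(\mathcal F_s\subseteq\mathcal F_t\)-measurable. To see that equivalence classes match, note that modifying \(A\) changes \(\Phi_{s,t}\) only on the union of the two null sets at \(s\) and \(t\), and conversely a.s.\ equality of characters at each \((0,t)\) gives a modification of the anchor. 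I will not claim simultaneous identities here; that is deferred to the perfection step, as the statement says.

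For the logarithmic form I apply \(\log_\star\) to the reconstruction formula. In a finite-step exponential group, \(\exp_\star\) is a bijection and \(\log_\star(g\star h)=\mathrm{BCH}(\log_\star g,\log_\star h)\), where BCH is a finite sum by nilpotency. In addition, \(\log_\star(g^{-1})=-\log_\star g\). The intertwining hypothesis gives \(\log_\star\alpha_{t,s}(A_s)=\alpha^{\mathrm{Lie}}_{t,s}(a_s)\). Together these give the first equality in \eqref{cc:eq-transported-BCH}, and the second is the logarithm of Chen in the same way. The one point needing care is checking that \(\alpha^{\mathrm{Lie}}_{t,s}\) really is a Lie homomorphism compatible with BCH, since this is what makes the translation legitimate. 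I expect it to follow because it is conjugate to a group homomorphism by \(\exp_\star\).

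The final remark about the chart \(\chi\) is definitional: composing with \(\chi_t\) and using \eqref{cc:eq-geometric-chart} together with naturality gives \(X\). No \(J\)-adic content enters the argument.
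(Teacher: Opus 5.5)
Your proposal is correct and follows the paper's proof essentially step for step. Chen at the triple $(0,s,t)$ forces the reconstruction, the converse is the same telescoping cancellation using the homomorphism and cocycle laws, naturality and stochastic measurability are handled the same way, and the logarithmic identities come from exponential equivariance. Your added bookkeeping on modification classes is a reasonable supplement. The Lie-homomorphism check you flag is not actually needed for \eqref{cc:eq-transported-BCH}, which uses only $\alpha_{t,s}\circ\exp_\star=\exp_\star\circ\alpha^{\mathrm{Lie}}_{t,s}$.
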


\begin{proof}
Applying \eqref{cc:eq-transported-Chen} to \(0\le s\le t\) gives

\[
 A_t=\alpha_{t,s}(A_s)\star\Phi_{s,t},
\]

and therefore forces \eqref{cc:eq-anchor-reconstruction}.  Conversely,
for \(s\le u\le t\),
\begin{align*}
 &\alpha_{t,u}\!\left(
   \bigl(\alpha_{u,s}(A_s)\bigr)^{-1}\star A_u\right)
 \star\bigl(\alpha_{t,u}(A_u)\bigr)^{-1}\star A_t\\
 &\qquad=\bigl(\alpha_{t,s}(A_s)\bigr)^{-1}\star A_t,
\end{align*}
which is \eqref{cc:eq-transported-Chen}.  Naturality follows by applying
the intertwiner to \eqref{cc:eq-anchor-reconstruction}.  In the
stochastic case, adaptation of \(A\) implies
\({\mathcal F}_t\)-measurability of the reconstructed increment because
\({\mathcal F}_s\subseteq{\mathcal F}_t\); the converse follows from
\(A_t=\Phi_{0,t}\).  Finally use
\(\exp_\star({\operatorname{BCH}}(x,y))=\exp_\star(x)\star\exp_\star(y)\) and exponential
equivariance
\(\alpha_{t,s}\exp_\star
 =\exp_\star\alpha^{\mathrm{Lie}}_{t,s}\).
\end{proof}

Thus exceptional sets and both extensions can be treated at the anchor
level and then propagated by \eqref{cc:eq-anchor-reconstruction}.

\subsection{Optional completion: extension of the interval domain}

Let \((\Omega,{\mathcal F},({\mathcal F}_t)_{t\ge0},{\mathbb P})\) satisfy the usual conditions.
Suppose that a deterministic-time causal character is given only up to an
exceptional null set which may depend on \((s,t)\), and put
\(A_t=\Phi_{0,t}\).  We use the standard stopped sigma-fields and right
approximation convention; see \cite[Ch.~I]{JacodShiryaev03}.  The following
theorem removes all parameter-dependent exceptional sets by perfecting one
process.

\begin{theorem}[Anchored optional completion]
\label{cc:thm-optional-completion}
Assume that the adapted anchor \(A\) has a \(G\)-valued càdlàg modification
\(\bar A\).  For bounded stopping times \(0\le\sigma\le\tau\le T\), set
\begin{equation}
 \bar\Phi_{\sigma,\tau}
 :=\bigl(\alpha_{\tau,\sigma}(\bar A_\sigma)\bigr)^{-1}
    \star\bar A_\tau.
 \label{cc:eq-optional-anchor}
\end{equation}
Then:

\begin{enumerate}[label=\textup{(\roman*)},leftmargin=2.8em]
\item \(\bar\Phi_{\sigma,\tau}\) is \({\mathcal F}_\tau\)-measurable.  For each
fixed deterministic pair \((s,t)\), it agrees almost surely with
\(\Phi_{s,t}\).

\item On one event, simultaneously for all bounded stopping triples
\(\sigma\le\rho\le\tau\),
\begin{equation}
 \bar\Phi_{\sigma,\tau}
 =\alpha_{\tau,\rho}(\bar\Phi_{\sigma,\rho})
  \star\bar\Phi_{\rho,\tau}.
 \label{cc:eq-optional-Chen}
\end{equation}

\item If bounded stopping times
\(\sigma_n\downarrow\sigma\) and \(\tau_n\downarrow\tau\), with
\(\sigma_n\le\tau_n\), then
\begin{equation}
 \bar\Phi_{\sigma_n,\tau_n}\longrightarrow
 \bar\Phi_{\sigma,\tau}
 \qquad\text{almost surely}.
 \label{cc:eq-right-simple-limit}
\end{equation}

\item This extension is unique among assignments \(Y_{\sigma,\tau}\) with
the following two properties: for every finite-valued stopping pair,
\(Y_{\sigma,\tau}\) is the pointwise pasting of the perfected deterministic
increments \(\bar\Phi_{s,t}\); and for the right endpoints
\(\sigma^{(n)},\tau^{(n)}\) of the nested dyadic partitions of \([0,T]\),
\(Y_{\sigma^{(n)},\tau^{(n)}}\to Y_{\sigma,\tau}\) in probability.  It is
natural under continuous transport-intertwining homomorphisms.
\end{enumerate}
\end{theorem}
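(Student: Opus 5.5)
The plan is to work entirely at the level of the perfected anchor \(\bar A\) and transport every claim back through the algebraic identity of Theorem~\ref{cc:thm-anchor-representation}.

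\textbf{Step 1: measurability, part (i).} Continuity of \((t,s,g)\mapsto\alpha_{t,s}(g)\) and of the group operations makes \(\bar\Phi_{\sigma,\tau}\) a continuous function of the triple \((\tau,\sigma,\bar A_\sigma,\bar A_\tau)\). A càdlàg adapted process is progressive. Hence \(\bar A_\sigma\) is \(\mathcal F_\sigma\subseteq\mathcal F_\tau\)-measurable and \(\bar A_\tau\) is \(\mathcal F_\tau\)-measurable. The times \(\sigma,\tau\) are themselves \(\mathcal F_\tau\)-measurable. So \(\bar\Phi_{\sigma,\tau}\) is \(\mathcal F_\tau\)-measurable.

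For deterministic \(s\le t\) we have \(\bar A_s=A_s\) and \(\bar A_t=A_t\) almost surely. The reconstruction formula \eqref{cc:eq-anchor-reconstruction}, which holds almost surely for the fixed pair, then gives \(\bar\Phi_{s,t}=\Phi_{s,t}\) almost surely.

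\textbf{Step 2: the Chen identity, part (ii).} This is purely algebraic and holds on the full event where \(\bar A\) is a càdlàg path. Fix \(\omega\) and put \(a=\sigma(\omega)\), \(b=\rho(\omega)\), \(c=\tau(\omega)\). The right-hand side of \eqref{cc:eq-optional-Chen} equals
\[
\alpha_{c,b}\bigl(\alpha_{b,a}(\bar A_a)^{-1}\star\bar A_b\bigr)\star\alpha_{c,b}(\bar A_b)^{-1}\star\bar A_c .
\]
Since \(\alpha_{c,b}\) is a homomorphism and \(\alpha_{c,b}\alpha_{b,a}=\alpha_{c,a}\), this collapses to \(\alpha_{c,a}(\bar A_a)^{-1}\star\bar A_c\). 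That is the computation already carried out in the proof of Theorem~\ref{cc:thm-anchor-representation}, done pointwise. No exceptional set depends on the triple, which is exactly the point of perfecting the single process \(\bar A\).

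\textbf{Step 3: right continuity, part (iii).} Pointwise, \(\sigma_n\downarrow\sigma\) and \(\tau_n\downarrow\tau\). Right continuity of \(\bar A\) gives \(\bar A_{\sigma_n}\to\bar A_\sigma\) and \(\bar A_{\tau_n}\to\bar A_\tau\). Joint continuity of \((t,s,g)\mapsto\alpha_{t,s}(g)\), together with continuity of inversion and multiplication in the Polish group, then yields the almost-sure convergence \eqref{cc:eq-right-simple-limit}. Naturality follows because an intertwining continuous homomorphism commutes with \eqref{cc:eq-optional-anchor}.

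\textbf{Step 4: uniqueness, part (iv).} This is the step needing care. Let \(Y\) satisfy both properties and take the dyadic right approximations \(\sigma^{(n)}\downarrow\sigma\), \(\tau^{(n)}\downarrow\tau\). These are finite-valued stopping times with \(\sigma^{(n)}\le\tau^{(n)}\), capped at \(T\).

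On each of the finitely many events \(\{\sigma^{(n)}=s,\ \tau^{(n)}=t\}\), the pasting property forces \(Y_{\sigma^{(n)},\tau^{(n)}}=\bar\Phi_{s,t}\). This agrees with \eqref{cc:eq-optional-anchor} evaluated at \((\sigma^{(n)},\tau^{(n)})\), so \(Y_{\sigma^{(n)},\tau^{(n)}}=\bar\Phi_{\sigma^{(n)},\tau^{(n)}}\) almost surely.

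By Step 3 these converge almost surely, hence in probability, to \(\bar\Phi_{\sigma,\tau}\). By hypothesis they also converge in probability to \(Y_{\sigma,\tau}\). Limits in probability in a Polish (metrizable) space are almost surely unique, so \(Y_{\sigma,\tau}=\bar\Phi_{\sigma,\tau}\) almost surely.

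The only subtlety is checking that the dyadic approximants satisfy \(\sigma^{(n)}\le\tau^{(n)}\) and remain within \([0,T]\). Both hold because rounding up to the dyadic grid is monotone and the top grid point is \(T\).
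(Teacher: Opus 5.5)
Your proposal is correct and follows the paper's proof essentially step by step: pathwise algebra on the perfected anchor for the Chen identity, measurability of the càdlàg anchor at stopping times plus joint continuity for (i), right continuity for (iii), and dyadic right approximation with uniqueness of limits in probability for (iv). The only point the paper makes explicit and you leave implicit is why $\bar A$ may be treated as adapted with every path càdlàg. A modification of an adapted process is adapted because each $\mathcal F_t$ is complete under the usual conditions, and $\bar A$ can be redefined on the null set of non-càdlàg paths; both facts are needed before you invoke progressive measurability.
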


\begin{proof}
The deterministic anchor identity gives
\(A_t=\alpha_{t,s}(A_s)\star\Phi_{s,t}\) almost surely for every fixed
pair.  Replacing \(A\) by its modification proves deterministic agreement.
By completeness, we may redefine \(\bar A\) on the single null event on
which its path is not càdlàg; hence all subsequent pathwise identities may
be read literally on one common event.  Moreover
\(\bar A_t=A_t\) almost surely and completeness of each \({\mathcal F}_t\) make
\(\bar A\) adapted.
The calculation
\begin{align*}
 &\alpha_{\tau,\rho}\!\left(
  \bigl(\alpha_{\rho,\sigma}(\bar A_\sigma)\bigr)^{-1}
  \star\bar A_\rho\right)
  \star
  \bigl(\alpha_{\tau,\rho}(\bar A_\rho)\bigr)^{-1}
  \star\bar A_\tau\\
 &\qquad=
 \bigl(\alpha_{\tau,\sigma}(\bar A_\sigma)\bigr)^{-1}
 \star\bar A_\tau
\end{align*}
is pathwise.  It holds for every numerical triple and hence after
substitution of arbitrary stopping times; no uncountable intersection of
null events is involved.

An adapted càdlàg process is optional, so \(\bar A_\sigma\) and
\(\bar A_\tau\) are respectively \({\mathcal F}_\sigma\)- and
\({\mathcal F}_\tau\)-measurable.  Since \(\sigma\le\tau\), one has
\({\mathcal F}_\sigma\subseteq{\mathcal F}_\tau\), and the random variables \(\sigma\) and
\(\tau\) are also \({\mathcal F}_\tau\)-measurable.  Joint continuity of the
transport evaluation, followed by continuity of inversion and
multiplication, therefore proves \({\mathcal F}_\tau\)-measurability of
\eqref{cc:eq-optional-anchor}.  Pathwise right continuity gives
\(\bar A_{\sigma_n}\to\bar A_\sigma\) and
\(\bar A_{\tau_n}\to\bar A_\tau\); joint continuity of the same three maps
proves \eqref{cc:eq-right-simple-limit}.

For uniqueness, the case \(T=0\) is immediate.  If \(T>0\), let
\[
 d_n(r):=T2^{-n}\Bigl\lceil 2^n r/T\Bigr\rceil,
 \qquad d_n(0):=0,
\]
and set \(\sigma^{(n)}=d_n(\sigma)\),
\(\tau^{(n)}=d_n(\tau)\).  These are finite-valued stopping times,
decrease to the original endpoints, and preserve
\(\sigma^{(n)}\le\tau^{(n)}\).  Finite-valued pasting identifies every
competing assignment with \(\bar\Phi\) on this countable approximating
family.  Its assumed convergence and part~\textup{(iii)} give two limits
in probability of the same sequence.  Uniqueness of limits in probability
therefore gives equality almost surely at \((\sigma,\tau)\).  Naturality
follows by applying the intertwiner directly to
\eqref{cc:eq-optional-anchor}.
\end{proof}

\begin{corollary}[Countable projective optional completion]
\label{cc:cor-projective-optional-completion}
Suppose
\[
 G=\varprojlim_{j\in\mathbb N}G_j
\]
is realized as a closed subgroup of a countable product of Polish groups,
with continuous bonding homomorphisms, and suppose that the transports and
anchors commute with those maps.  If every coordinate anchor has an adapted
càdlàg modification, then these modifications can be chosen
indistinguishably compatible.  They assemble to a càdlàg \(G\)-valued
anchor, and \eqref{cc:eq-optional-anchor} gives the unique projective
optional completion.
\end{corollary}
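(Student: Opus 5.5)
The plan is to reduce everything to the coordinatewise statement already proved in Theorem~\ref{cc:thm-optional-completion}, with one countable intersection of null events. Write $\pi_j:G\to G_j$ for the coordinate projections and $p_{j,k}:G_k\to G_j$ ($j\le k$) for the bonding homomorphisms. By hypothesis each coordinate anchor $A^j=\pi_j A$ is adapted and has a càdlàg modification $\bar A^j$. As in the proof of Theorem~\ref{cc:thm-optional-completion}, completeness lets me redefine each $\bar A^j$ on a null set so that it is càdlàg everywhere and still adapted.

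The key step is compatibility. For fixed $j\le k$ and fixed deterministic $t$, we have $p_{j,k}\bar A^k_t=p_{j,k}A^k_t=A^j_t=\bar A^j_t$ almost surely. The equality $p_{j,k}A^k_t=A^j_t$ holds because the anchors commute with the bonding maps. Continuity of $p_{j,k}$ makes $p_{j,k}\bar A^k$ càdlàg. Two càdlàg processes that are modifications of each other are indistinguishable: take the rationals in $[0,T]$ together with $T$, and use right continuity. So there is a null event $N_{j,k}$ off which $p_{j,k}\bar A^k_\cdot=\bar A^j_\cdot$ for all times. Let $N=\bigcup_{j\le k}N_{j,k}$, which is a countable union. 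Off $N$ the tuple $(\bar A^j_t)_j$ lies in the closed subgroup $G$ for every $t$. On $N$ I redefine all coordinates to be the constant path ${\mathbf 1}$, which preserves adaptedness by completeness. This is the step I expect to need the most care, because closedness of $G$ inside the product must be used with the projective compatibility relations actually cutting out $G$. If $G$ is an arbitrary closed subgroup of the product rather than the full inverse limit, I would first check that membership in $G$ is a condition on deterministic times that passes to all times by right continuity and closedness.

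Next I assemble $\bar A_t:=(\bar A^j_t)_j\in G$. The product topology is the topology of coordinatewise convergence, so $\bar A$ is càdlàg exactly because each coordinate is, with left limits also taken coordinatewise and remaining in $G$ by closedness. The same coordinatewise argument gives adaptedness, because the Borel $\sigma$-field of a countable product of Polish spaces is generated by the coordinates. Each coordinate satisfies $\bar A^j_t=\pi_jA_t$ almost surely, and there are only countably many coordinates, so $\bar A_t=A_t$ almost surely and $\bar A$ is a modification of $A$.

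Finally, I apply Theorem~\ref{cc:thm-optional-completion} to $\bar A$ in $G$; the hypotheses on $G$ (Polish group, jointly continuous transports) are inherited. This gives the optional completion \eqref{cc:eq-optional-anchor} together with its uniqueness from part~(iv). Naturality under the continuous homomorphisms $\pi_j$ shows that $\pi_j\bar\Phi$ is the coordinate completion built from $\bar A^j$. So the $G$-valued completion is the projective one, and uniqueness transfers, since an assignment is determined by its countably many coordinates almost surely.
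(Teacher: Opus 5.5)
Your proof is correct and follows the paper's argument. It obtains almost-sure compatibility at deterministic times, takes a countable intersection over bonding maps and rational times, and upgrades this by right continuity. It then uses closedness of the inverse-limit subgroup and coordinatewise c\`adl\`agness in the product topology, and finishes with \Cref{cc:thm-optional-completion}. The only cosmetic difference is that you apply that theorem once to the assembled $G$-valued anchor, which gives uniqueness directly in $G$, whereas the paper applies it in every coordinate.
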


\begin{proof}
For every bonding map, compatibility holds almost surely at every
deterministic time.  Intersect over the countable family of bonding maps
and rational times, then use right continuity to obtain compatibility at
all times on one event.  The coordinate path therefore takes values in the
closed inverse-limit subgroup.  For a standard bounded product metric,
coordinatewise right continuity and coordinatewise left limits imply
product càdlàgness: control finitely many leading coordinates and then the
metric tail.  Apply \Cref{cc:thm-optional-completion} in every coordinate.
\end{proof}

For an uncountable projective system, coordinatewise perfection alone does
not supply one common full-probability event.  A countable cofinal
realization, a countable separating family with a reconstruction theorem,
or an explicit path-separability assumption is then required.

The right approximation fixes the half-open convention \((s,t]\).  At a
jump, left approximations generally recover \(\bar A_{s-}\) or
\(\bar A_{t-}\) and define a different interval convention.  The càdlàg
hypothesis is also substantive: optional measurability and deterministic
agreement alone do not determine values on the graph of an atomless
stopping time.  Indeed, let \(\theta\) be an atomless bounded stopping time.
The optional process \(Y_t={\mathbf 1}_{\{t=\theta\}}\) satisfies
\(Y_t=0\) almost surely for every deterministic \(t\), whereas
\(Y_\theta=1\) surely.  Thus deterministic sections cannot distinguish
\(Y\) from the zero process; càdlàg perfection rules out precisely this
graph modification.

For a deterministic terminal cap \(q\), the transported anchor
\(A^{[q]}_r:=\alpha_{q,r}(\bar A_r)\) satisfies
\begin{equation}
 (A^{[q]}_\sigma)^{-1}\star A^{[q]}_\tau
 =\alpha_{q,\tau}(\bar\Phi_{\sigma,\tau}),
 \qquad \sigma\le\tau\le q.
 \label{cc:eq-terminal-fibre}
\end{equation}
If a first coordinate is realized as
\(A^{(1)}_t=\int_{(0,t]}U(t,a)\,{\mathop{}\!\mathrm d} X_a\), with
\(\alpha^{(1)}_{t,s}=U(t,s)\), this algebraic identity does not by itself
define \(\int_{(\sigma,\tau]}U(\tau,a)\,{\mathop{}\!\mathrm d} X_a\) as an Itô integral: the
random-terminal integrand need not be predictable.  For example, take
\(\sigma=0\), Brownian motion, \(U(t,s)=e^{t-s}\), and
\[
 \tau=t_0{\mathbf 1}_{\{B_{t_0}>0\}}+T{\mathbf 1}_{\{B_{t_0}\le0\}}.
\]
For every \(a\in(0,t_0)\), the formal integrand distinguishes
\(\{B_{t_0}>0\}\), which is not \({\mathcal F}_a\)-measurable.  The identity does
give that identification when \(\tau\) is \({\mathcal F}_\sigma\)-measurable, the
integrand has a predictable stochastically integrable version, and
finite-valued \({\mathcal F}_\sigma\)-measurable approximations converge in both the
integrand and endpoint topologies, by locality and stability of stochastic
integration; see \cite[Ch.~4]{Applebaum09}.

\subsection{Singular extension: lifting the target}

We now keep the interval domain fixed and enlarge the value group, in the
spirit of character-valued renormalization and enlarged model spaces
\cite{ConnesKreimer00,Hairer14}.

Let
\begin{equation}
 1\longrightarrow K\longrightarrow\widetilde G
 \xrightarrow{\,p\,}G\longrightarrow1
 \label{cc:eq-target-extension}
\end{equation}
be an exact sequence of groups.  Assume that \(\widetilde G\) has transports
\(\widetilde\alpha_{t,s}\) satisfying the analogue of
\eqref{cc:eq-transport-system}, preserving \(K\), and
\begin{equation}
 p\widetilde\alpha_{t,s}=\alpha_{t,s}p.
 \label{cc:eq-lifted-transport}
\end{equation}
For the algebraic statements no topology is needed.  Whenever Borel,
càdlàg, or optional assertions are made, we additionally assume that
\(\widetilde G\) is a Polish group, that \(K\) is closed, and that \(p\) and
the joint lifted-transport evaluation are continuous.
In the fixed trivialization we write
\(\widetilde G_t:=\widetilde G\) and \(K_t:=K\).
Choose a normalized set-theoretic section \(q:G\to\widetilde G\), and for
a base character \(\Phi\) define the pointwise lift and its interval defect
\begin{equation}
 Q_{s,t}:=q(\Phi_{s,t}),
 \qquad
 \Omega_{s,u,t}
 :=\widetilde\alpha_{t,u}(Q_{s,u})\star Q_{u,t}
   \star Q_{s,t}^{-1}\in K.
 \label{cc:eq-lifting-defect}
\end{equation}
The section \(q\) and the anchor selections are purely algebraic unless
measurability, continuity, or admissibility is explicitly imposed.

\begin{theorem}[Singular lifting, counterterms, and gauge]
\label{cc:thm-singular-lifting}
The defects in \eqref{cc:eq-lifting-defect} obey the nonabelian identity
\begin{equation}
 \widetilde\alpha_{t,v}(\Omega_{s,u,v})\star\Omega_{s,v,t}
 =\operatorname{Ad}_{\widetilde\alpha_{t,u}(Q_{s,u})}
      (\Omega_{u,v,t})\star\Omega_{s,u,t}
 \label{cc:eq-nonabelian-two-cocycle}
\end{equation}
for \(s\le u\le v\le t\).  If
\(Q'_{s,t}=r_{s,t}\star Q_{s,t}\), \(r_{s,t}\in K\), then
\begin{equation}
 \Omega'_{s,u,t}
 =\widetilde\alpha_{t,u}(r_{s,u})\star
  \operatorname{Ad}_{\widetilde\alpha_{t,u}(Q_{s,u})}(r_{u,t})
  \star\Omega_{s,u,t}\star r_{s,t}^{-1}.
 \label{cc:eq-defect-gauge-change}
\end{equation}
Consequently, corrected lifts
\(\widetilde\Phi_{s,t}=k_{s,t}\star Q_{s,t}\) are causal exactly when
\begin{equation}
 k_{s,t}=\widetilde\alpha_{t,u}(k_{s,u})\star
 \operatorname{Ad}_{\widetilde\alpha_{t,u}(Q_{s,u})}(k_{u,t})
 \star\Omega_{s,u,t}.
 \label{cc:eq-counterterm-equation}
\end{equation}

Nevertheless, the unrestricted interval problem has no algebraic existence
obstruction.  If \(H_t\in p^{-1}(A_t)\), \(H_0={\mathbf 1}\), then
\begin{equation}
 L^H_{s,t}:=
 \bigl(\widetilde\alpha_{t,s}(H_s)\bigr)^{-1}\star H_t
 \label{cc:eq-rooted-lift}
\end{equation}
is a causal lift, and every causal lift has this form with
\(H_t=L_{0,t}\).  The based vertex gauges \(b_t\in K\), \(b_0={\mathbf 1}\), act by
\begin{equation}
 (L^b)_{s,t}
 :=\widetilde\alpha_{t,s}(b_s)^{-1}\star L_{s,t}\star b_t,
 \label{cc:eq-vertex-gauge}
\end{equation}
freely and transitively on all unrestricted lifts.  Thus causality
shows that \(\Omega\) is the defect of the chosen pointwise section rather
than an obstruction to an unrestricted rooted lift; it selects no
renormalization gauge.
\end{theorem}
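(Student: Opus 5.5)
The whole statement reduces to direct algebra in \(\widetilde G\). The only inputs are the homomorphism property and the cocycle law of the lifted transports \(\widetilde\alpha\), preservation of \(K\), and the Chen identity for \(\Phi\). From the Chen identity and \eqref{cc:eq-lifted-transport}, \(p(\widetilde\alpha_{t,u}(Q_{s,u})\star Q_{u,t})=\Phi_{s,t}\), so \(\Omega_{s,u,t}\in K\). The defining identity \eqref{cc:eq-lifting-defect} can be rewritten as
\[
 \widetilde\alpha_{t,u}(Q_{s,u})\star Q_{u,t}=\Omega_{s,u,t}\star Q_{s,t}.
\]
I will treat this as the basic rewriting rule throughout.

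\textbf{Step 1: the cocycle identity \eqref{cc:eq-nonabelian-two-cocycle}.} Fix \(s\le u\le v\le t\) and reduce \(P:=\widetilde\alpha_{t,u}(Q_{s,u})\star\widetilde\alpha_{t,v}(Q_{u,v})\star Q_{v,t}\) to the form \(k\star Q_{s,t}\) in two ways.
\begin{itemize}
\item Bracketing the first two factors, write \(\widetilde\alpha_{t,u}=\widetilde\alpha_{t,v}\widetilde\alpha_{v,u}\) and apply the rule at \((s,u,v)\) inside \(\widetilde\alpha_{t,v}\). Then apply it at \((s,v,t)\). This gives \(P=\widetilde\alpha_{t,v}(\Omega_{s,u,v})\star\Omega_{s,v,t}\star Q_{s,t}\).
\item Bracketing the last two factors, apply the rule at \((u,v,t)\) to get \(\widetilde\alpha_{t,u}(Q_{s,u})\star\Omega_{u,v,t}\star Q_{u,t}\). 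Conjugate \(\Omega_{u,v,t}\) past \(\widetilde\alpha_{t,u}(Q_{s,u})\), then apply the rule at \((s,u,t)\). This gives \(P=\operatorname{Ad}_{\widetilde\alpha_{t,u}(Q_{s,u})}(\Omega_{u,v,t})\star\Omega_{s,u,t}\star Q_{s,t}\).
\end{itemize}
Cancelling \(Q_{s,t}\) on the right yields \eqref{cc:eq-nonabelian-two-cocycle}.

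\textbf{Step 2: gauge change and counterterms.} For \eqref{cc:eq-defect-gauge-change}, substitute \(Q'=r\star Q\) into the definition of \(\Omega'\). Expand \(\widetilde\alpha_{t,u}(r_{s,u}Q_{s,u})\), move \(r_{u,t}\) left past \(\widetilde\alpha_{t,u}(Q_{s,u})\) as an \(\operatorname{Ad}\), and recognize \(\Omega_{s,u,t}\). Normality of \(K\) keeps every factor in \(K\).

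The counterterm equation \eqref{cc:eq-counterterm-equation} is the special case \(r=k\). The corrected lift \(\widetilde\Phi\) is causal precisely when its defect \(\Omega'\) is trivial. Setting \(\Omega'={\mathbf 1}\) in \eqref{cc:eq-defect-gauge-change} and solving for \(k_{s,t}\) gives the equation directly.

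\textbf{Step 3: rooted lifts and vertex gauges.} That \(L^H\) is causal is exactly the computation in the proof of \Cref{cc:thm-anchor-representation}, carried out in \(\widetilde G\) with \(\widetilde\alpha\). It is a lift because \(p\) intertwines the transports and \(pH=A\), so \(pL^H=\Phi\) by \eqref{cc:eq-anchor-reconstruction}. Conversely, \Cref{cc:thm-anchor-representation} applied to \((\widetilde G,\widetilde\alpha)\) shows every causal lift is rooted at \(H_t=L_{0,t}\in p^{-1}(A_t)\).

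For the gauge action, note that \(L^H\) with \(H\) replaced by \(H\star b\) equals \((L^H)^b\). Hence \eqref{cc:eq-vertex-gauge} is causal and lies over \(\Phi\), because \(b\in K\) and \(\widetilde\alpha\) preserves \(K\). The action property follows likewise.

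\begin{itemize}
\item \textbf{Freeness.} \(L^b=L\) at \(s=0\) gives \(L_{0,t}\star b_t=L_{0,t}\), so \(b_t={\mathbf 1}\).
\item \textbf{Transitivity.} For two lifts \(L,L'\), set \(b_t:=L_{0,t}^{-1}\star L'_{0,t}\in K\). Then \(L^b\) and \(L'\) share anchors, so they agree by the bijection.
\end{itemize}

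I expect the only real obstacle to be bookkeeping in Step 1. One must use \(\widetilde\alpha_{t,u}=\widetilde\alpha_{t,v}\widetilde\alpha_{v,u}\) at the right moment and keep the \(\operatorname{Ad}\) on the correct side. Everything else is formal.
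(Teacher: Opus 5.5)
Your proposal is correct and follows the paper's proof essentially step for step. The cocycle identity comes from reducing the triple composite over $(s,u],(u,v],(v,t]$ to the form $k\star Q_{s,t}$ under the two associations; the gauge-change and counterterm formulas come by direct substitution with $\Omega'={\mathbf 1}$; and the rooted-lift and free/transitive gauge statements come from the anchor argument in $\widetilde G$ with $b_t=L_{0,t}^{-1}\star L'_{0,t}$. Your identity $L^{H\star b}=(L^H)^b$ additionally makes explicit that the vertex gauge sends causal lifts over $\Phi$ to causal lifts over $\Phi$, which the paper leaves implicit.
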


\begin{proof}
Compose the pointwise lifts over \((s,u]\), \((u,v]\), and \((v,t]\).
Left association equals
\[
 \widetilde\alpha_{t,v}(\Omega_{s,u,v})\star
 \Omega_{s,v,t}\star Q_{s,t},
\]
whereas right association equals
\[
 \operatorname{Ad}_{\widetilde\alpha_{t,u}(Q_{s,u})}
 (\Omega_{u,v,t})\star\Omega_{s,u,t}\star Q_{s,t}.
\]
Associativity and cancellation prove
\eqref{cc:eq-nonabelian-two-cocycle}.  Substitution of
\(Q'=r\star Q\) into \eqref{cc:eq-lifting-defect} gives
\eqref{cc:eq-defect-gauge-change}; setting \(r=k\) and requiring the new
defect to be the identity gives \eqref{cc:eq-counterterm-equation}.

Since \(p(H_t)=A_t\), the anchor identity in \(G\) gives

\[
 p(L^H_{s,t})
 =\alpha_{t,s}(A_s)^{-1}\star A_t=\Phi_{s,t}.
\]

Cancellation of the middle \(H_u\)-factors proves causal composition of
\(L^H\).  Conversely, applying causality of a lift \(L\) to \(0\le s\le t\)
gives \(L_{0,t}=\widetilde\alpha_{t,s}(L_{0,s})\star L_{s,t}\), and hence
the rooted formula.  If \(L,L'\) are two lifts, then
\(b_t=L_{0,t}^{-1}\star L'_{0,t}\in K\) is the unique based vertex gauge
with \(L^b=L'\).  This proves freeness and transitivity.
\end{proof}

Relative to a causal lift \(L\), every lift has the unique form
\(L'_{s,t}=a_{s,t}\star L_{s,t}\), \(a_{s,t}\in K\), and is causal
exactly when
\begin{equation}
 a_{s,t}=\widetilde\alpha_{t,u}(a_{s,u})\star
 \operatorname{Ad}_{\widetilde\alpha_{t,u}(L_{s,u})}(a_{u,t}).
 \label{cc:eq-relative-one-cocycle}
\end{equation}
A vertex gauge transforms this relative cocycle by
\begin{equation}
 a^b_{s,t}=\widetilde\alpha_{t,s}(b_s)^{-1}\star a_{s,t}\star
 \operatorname{Ad}_{L_{s,t}}(b_t).
 \label{cc:eq-relative-gauge}
\end{equation}
These formulas follow by direct substitution.  For nonabelian \(K\) they
define a cocycle--gauge groupoid rather than a group under pointwise
multiplication.  For abelian \(K\), the causal lifts form a torsor under
the twisted cocycle group \(Z^1\); quotienting by the allowed vertex
gauges gives the corresponding relative \(H^1\).

An admissible singular problem restricts
the pointwise lifts and gauges by continuity, adaptedness, locality,
optional compatibility, spatial naturality, or a completed topology.  The
rooted construction solves that restricted problem only when the \(H_t\)
can be selected in the admissible class.

The set section \(q\) records the extension by its Schreier data
\begin{align}
 \rho(g)(k)&:=q(g)\star k\star q(g)^{-1},\notag\\
 f(g,h)&:=q(g)\star q(h)\star q(g\star h)^{-1},\notag\\
 d_{t,s}(g)&:=\widetilde\alpha_{t,s}(q(g))
       \star q(\alpha_{t,s}g)^{-1}.
 \label{cc:eq-Schreier-data}
\end{align}
Here \(f(g,h),d_{t,s}(g)\in K\), and \(\rho(g)\) is an automorphism of
\(K\).  These quantities depend on \(q\), while the extension and its
outer action do not.

\begin{theorem}[Canonical splittings and optional naturality]
\label{cc:prop-canonicality-criterion}
A source-independent pointwise lifting rule \(j:G\to\widetilde G\) is
multiplicative and transport-natural if and only if
\begin{equation}
 pj={\mathrm{Id}},\qquad
 j(g\star h)=j(g)\star j(h),\qquad
 \widetilde\alpha_{t,s}j=j\alpha_{t,s}.
 \label{cc:eq-strict-splitting}
\end{equation}
Writing \(j(g)=r(g)\star q(g)\), these conditions are equivalent to
\begin{align}
 r(g\star h)
 &=r(g)\star\rho(g)(r(h))\star f(g,h),
 \label{cc:eq-splitting-factor-equation}\\
 r(\alpha_{t,s}g)
 &=\widetilde\alpha_{t,s}(r(g))\star d_{t,s}(g),
 \label{cc:eq-splitting-transport-equation}
\end{align}
where \(r:G\to K\) and \(r({\mathbf 1})={\mathbf 1}\).  Every such splitting sends
causal characters to causal lifts.  If \(j\) is continuous and \(\bar A\)
is a càdlàg base anchor, then \(j(\bar A)\) is a càdlàg lifted anchor and
\begin{equation}
 \widetilde\Phi_{\sigma,\tau}=j(\bar\Phi_{\sigma,\tau})
 \label{cc:eq-optional-splitting-naturality}
\end{equation}
for all bounded stopping pairs.

More generally, if \(H\) is an adapted càdlàg lifted anchor and \(b\) is
an adapted càdlàg \(K\)-valued gauge with \(b_0={\mathbf 1}\), then
\(H^b_t=H_t\star b_t\) has stopped increments
\begin{equation}
 (L^b)_{\sigma,\tau}
 =\widetilde\alpha_{\tau,\sigma}(b_\sigma)^{-1}
  \star L_{\sigma,\tau}\star b_\tau.
 \label{cc:eq-stopped-vertex-gauge}
\end{equation}
Thus optional completion preserves gauge arrows but selects no gauge.
Existence of a splitting does not imply uniqueness; within this
source-independent class, a canonical rule requires the admissibility
conditions to select a unique splitting.  For random characters the
unstopped rule requires \(j\) to be Borel, and the stopped assertion
requires continuity.
\end{theorem}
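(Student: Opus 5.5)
We argue directly from the definitions and the Schreier data \eqref{cc:eq-Schreier-data}, and reduce the optional statements to \Cref{cc:thm-optional-completion} and \Cref{cc:thm-singular-lifting}. First, \eqref{cc:eq-strict-splitting} is simply the definition of a multiplicative, transport-natural section. For the equivalence with \eqref{cc:eq-splitting-factor-equation}--\eqref{cc:eq-splitting-transport-equation}, note that $pj=\mathrm{Id}$ forces $j(g)q(g)^{-1}\in K$. So every section can be written uniquely as $j=r\star q$ with $r:G\to K$, and $r(\mathbf 1)=\mathbf 1$ follows from normalization of $q$ and $j(\mathbf 1)=\mathbf 1$. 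We then substitute. First,
\[
j(g)\star j(h)=r(g)\star q(g)\star r(h)\star q(h)=r(g)\star\rho(g)(r(h))\star f(g,h)\star q(g\star h).
\]
Comparing this with $j(g\star h)=r(g\star h)\star q(g\star h)$ and cancelling $q(g\star h)$ gives \eqref{cc:eq-splitting-factor-equation}. Similarly,
\[
\widetilde\alpha_{t,s}(j(g))=\widetilde\alpha_{t,s}(r(g))\star d_{t,s}(g)\star q(\alpha_{t,s}g).
\]
Comparing with $j(\alpha_{t,s}g)=r(\alpha_{t,s}g)\star q(\alpha_{t,s}g)$ gives \eqref{cc:eq-splitting-transport-equation}. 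Each step is reversible, which proves the equivalence.

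Next, if $\Phi$ is causal, then applying $j$ to \eqref{cc:eq-transported-Chen} and using multiplicativity and naturality gives
\[
j(\Phi_{s,t})=\widetilde\alpha_{t,u}(j(\Phi_{s,u}))\star j(\Phi_{u,t}),
\]
so $j\Phi$ is a causal lift. For the optional part, assume $j$ is continuous. Then $j(\bar A)$ is càdlàg, because composing a càdlàg path with a continuous map preserves right continuity and left limits. It is also adapted, since $j$ is Borel. Its anchor projects to $\bar A$. Applying $j$ to \eqref{cc:eq-optional-anchor} and using that $j$ is a homomorphism intertwining the transports (it commutes with inversion as well) identifies the stopped lifted increment $\bigl(\widetilde\alpha_{\tau,\sigma}(j\bar A_\sigma)\bigr)^{-1}\star j\bar A_\tau$ with $j(\bar\Phi_{\sigma,\tau})$. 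This is exactly the naturality clause of \Cref{cc:thm-optional-completion} with $j$ as the intertwiner. By the uniqueness in part (iv) of that theorem, this stopped increment is $\widetilde\Phi_{\sigma,\tau}$, which gives \eqref{cc:eq-optional-splitting-naturality}.

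For the gauge statement, $H^b=H\star b$ is adapted and càdlàg, because multiplication is continuous. Applying \eqref{cc:eq-optional-anchor} in $\widetilde G$ gives
\[
(L^b)_{\sigma,\tau}=\widetilde\alpha_{\tau,\sigma}(b_\sigma)^{-1}\star\widetilde\alpha_{\tau,\sigma}(H_\sigma)^{-1}\star H_\tau\star b_\tau,
\]
which is \eqref{cc:eq-stopped-vertex-gauge}. This mirrors \eqref{cc:eq-vertex-gauge} pathwise on the common perfected event.

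The remaining claims are remarks rather than deductions. For non-uniqueness, we exhibit the freedom: if $j$ is a splitting and $c:G\to K$ is a transport-equivariant crossed homomorphism for the twisted action $\mathrm{Ad}_{j}$, then $c\star j$ is again a splitting. A concrete instance is a split direct product with a nontrivial equivariant homomorphism into an abelian $K$. The measurability caveats are exactly where the hypotheses enter: a Borel $j$ makes $j(\Phi_{s,t})$ measurable for each fixed pair, while continuity is what preserves càdlàg paths.

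The main obstacle is the optional identification \eqref{cc:eq-optional-splitting-naturality}. We must make sure that $\widetilde\Phi$ denotes the optional completion of the lifted anchor in the sense of \Cref{cc:thm-optional-completion}. We must also check that continuity of $j$ is what allows pasting and right limits to commute with $j$.
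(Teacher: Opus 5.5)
Your proposal is correct and follows the paper's proof essentially step for step: you substitute the Schreier data and cancel $q(g\star h)$ and $q(\alpha_{t,s}g)$, you apply $j$ to the transported Chen identity and to the anchor formula \eqref{cc:eq-optional-anchor}, and you insert $H\star b$ into that same formula. Your appeal to the uniqueness clause (iv) is not needed, since the anchor computation already gives \eqref{cc:eq-optional-splitting-naturality} pathwise; your crossed-homomorphism construction is a correct addition to the non-uniqueness remark, which the paper states without proof.
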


\begin{proof}
Using the Schreier data, direct multiplication gives
\[
 j(g)\star j(h)
 =r(g)\star\rho(g)(r(h))\star f(g,h)\star q(g\star h),
\]
and transport gives
\[
 \widetilde\alpha_{t,s}(j(g))
 =\widetilde\alpha_{t,s}(r(g))\star d_{t,s}(g)
  \star q(\alpha_{t,s}g).
\]
Cancellation proves the equivalence with the two equations for \(r\).
Applying a strict splitting to the transported Chen identity proves the
lifting assertion.  Continuity gives càdlàgness of \(j(\bar A)\), and
the anchor formula, multiplicativity, and transport equivariance give
\[
 \bigl(\widetilde\alpha_{\tau,\sigma}(j(\bar A_\sigma))\bigr)^{-1}
 \star j(\bar A_\tau)
 =j\!\left(
  \bigl(\alpha_{\tau,\sigma}(\bar A_\sigma)\bigr)^{-1}
  \star\bar A_\tau\right).
\]
Finally, insertion of \(H^b=H\star b\) into the anchor formula yields
\eqref{cc:eq-stopped-vertex-gauge}.
\end{proof}

\subsection{Interchange of optional completion and singular extension}

The preceding axes commute only after specifying a topology strong enough
to evaluate at all stopping times.  We state the criterion in the finite
nilpotent targets used by truncated character theories.  Let
\(\mathbb K\in\{\mathbb R,\mathbb C\}\), and let
\(\widetilde{\mathcal A}=\mathbb K{\mathbf 1}\oplus\mathfrak n\) be a separable unital Banach
algebra with \(\mathfrak n^{\star(N+1)}=0\), and put
\(\widetilde G={\mathbf 1}+\mathfrak n\).  Inversion is the finite polynomial
\begin{equation}
 ({\mathbf 1}+x)^{-1}=\sum_{j=0}^{N}(-x)^{\star j},
 \label{cc:eq-polynomial-inverse}
\end{equation}
so multiplication and inversion are locally Lipschitz.  Assume also that
the extension projection \(p\) is continuous and that the lifted
transports are uniformly locally Lipschitz on compact time
triangles: for each \(R\) there is \(L_{T,R}\) such that
\begin{equation}
 \|\widetilde\alpha_{t,s}(x)-\widetilde\alpha_{t,s}(y)\|
 \le L_{T,R}\|x-y\|
 \label{cc:eq-uniform-transport-Lipschitz}
\end{equation}
whenever \(s\le t\le T\) and \(\|x\|,\|y\|\le R\).

\begin{lemma}[The stopping-time meaning of ucp]
\label{cc:lem-ucp-characterization}
For adapted càdlàg processes \(X^{(m)},X\) in a separable metric space,
the following are equivalent on every compact time interval:
\begin{enumerate}[label=\textup{(\roman*)},leftmargin=2.8em]
\item \(\sup_{t\le T}d(X^{(m)}_t,X_t)\to0\) in probability;
\item for every sequence of possibly \(m\)-dependent stopping times
\(\tau_m\le T\), one has
\(d(X^{(m)}_{\tau_m},X_{\tau_m})\to0\) in probability.
\end{enumerate}
\end{lemma}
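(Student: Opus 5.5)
The direction (i)$\Rightarrow$(ii) is immediate. Fix stopping times $\tau_m\le T$. Then
\[
 d(X^{(m)}_{\tau_m},X_{\tau_m})\le\sup_{t\le T}d(X^{(m)}_t,X_t)
\]
holds pointwise, so convergence in probability of the right-hand side transfers to the left. The only point to check is that $d(X^{(m)}_{\tau_m},X_{\tau_m})$ is a random variable. This holds because adapted càdlàg processes in a separable metric space are optional, hence progressively measurable, so their values at stopping times are measurable.

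For (ii)$\Rightarrow$(i) I will argue by contraposition using a first-passage stopping time. Suppose (i) fails. Then there exist $\varepsilon>0$, $\delta>0$ and a subsequence (still indexed by $m$) with
\[
 \mathbb P\Bigl(\sup_{t\le T}d(X^{(m)}_t,X_t)>\varepsilon\Bigr)\ge\delta .
\]
Put $Z^{(m)}_t:=d(X^{(m)}_t,X_t)$. This is a real adapted càdlàg process: the metric is continuous, and the pair $(X^{(m)},X)$ is càdlàg in the product space. Define
\[
 \tau_m:=\inf\{t\le T: Z^{(m)}_t>\varepsilon\}\wedge T .
\]
Since $\{Z^{(m)}>\varepsilon\}$ is an open-set hitting for a right-continuous adapted process, the usual conditions (in force in this subsection) together with the début theorem, or the elementary argument for open sets via rational times, make $\tau_m$ a stopping time.

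The key step is the following. On the event $\{\sup_{t\le T}Z^{(m)}_t>\varepsilon\}$ there is a time $t\le T$ with $Z^{(m)}_t>\varepsilon$. By right continuity the set $\{Z^{(m)}>\varepsilon\}$ contains a right neighbourhood of each of its points, so its infimum is approached from the right by points of the set. Right continuity then gives $Z^{(m)}_{\tau_m}\ge\varepsilon$. Hence
\[
 \mathbb P\bigl(Z^{(m)}_{\tau_m}\ge\varepsilon\bigr)\ge\delta
\]
along the subsequence. Extending $\tau_m$ by $\tau_m:=T$ for indices off the subsequence produces a full sequence of $m$-dependent stopping times that violates (ii).

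The main obstacle is the careful handling of the infimum: one must ensure that $Z^{(m)}_{\tau_m}\ge\varepsilon$ rather than a mere left-limit statement. The supremum over $[0,T]$ includes the endpoint $T$, and $\tau_m=T$ exactly when the only exceedance is at $T$, which is consistent with the bound. If one wants to avoid the strict versus non-strict subtlety entirely, I would instead use the threshold $\varepsilon/2$ in the definition of $\tau_m$, which gives $Z^{(m)}_{\tau_m}\ge\varepsilon/2$ on the same event. A second minor point is that the supremum is measurable, which follows from right continuity by reduction to rational times together with $T$.
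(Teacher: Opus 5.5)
Your proof is correct and is essentially the paper's argument. The direction (i)$\Rightarrow$(ii) is trivial domination. For the converse you take the first-entrance time of $(\varepsilon,\infty)$ by the adapted c\`adl\`ag distance process, which is a stopping time under the usual conditions, and right continuity forces the distance to be at least the threshold at that time. The paper uses the margin $2\varepsilon$ in the failure event and then reads off $\mathbb P\{d>\varepsilon/2\}>\delta$; this matches your $\varepsilon/2$ variant. Your extension of $\tau_m$ off the subsequence simply makes explicit a step the paper leaves implicit.
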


\begin{proof}
The first condition immediately dominates every stopped evaluation.  If it
fails, pass to and relabel a subsequence for which

\[
 {\mathbb P}\!\left\{\sup_{t\le T}d(X^{(m)}_t,X_t)>2\varepsilon\right\}
 >\delta
\]

for some \(\varepsilon,\delta>0\).  With
\(Y^{(m)}_t=d(X^{(m)}_t,X_t)\), the first entrance time

\[
 \tau_m=\inf\{t\le T:d(X^{(m)}_t,X_t)>\varepsilon\}\wedge T
\]

is a stopping time because \(Y^{(m)}\) is adapted and càdlàg and the
filtration is right-continuous.  On
\(\{\sup_{t\le T}Y^{(m)}_t>2\varepsilon\}\), the open set
\((\varepsilon,\infty)\) is entered before or at \(T\); right continuity at
the entrance gives \(Y^{(m)}_{\tau_m}\ge\varepsilon\).  Consequently
\({\mathbb P}\{d(X^{(m)}_{\tau_m},X_{\tau_m})>\varepsilon/2\}>\delta\),
contradicting part~\textup{(ii)}.
\end{proof}

\medskip
\noindent\emph{Fixed times do not control stopping times.}
Assume \(\mathfrak n\ne0\), choose \(0\ne z\in\mathfrak n\) with
\(z\star z=0\), and take trivial transport on \([0,1]\).  For
\[
 t_m=1-m^{-1},\qquad \varepsilon_m=(4m^2)^{-1},\qquad
 h_m(t)=\left(1-\frac{|t-t_m|}{\varepsilon_m}\right)_+,
\]
the continuous anchors \(A^{(m)}_t={\mathbf 1}+h_m(t)z\) induce exact characters
\[
 \Phi^{(m)}_{s,t}={\mathbf 1}+(h_m(t)-h_m(s))z.
\]
The supports escape every fixed \(t<1\), while \(h_m(1)=0\); hence
\(\Phi^{(m)}_{s,t}\to{\mathbf 1}\) for each deterministic pair.  Nevertheless,
at \(\tau_m=t_m\),
\(\Phi^{(m)}_{0,\tau_m}={\mathbf 1}+z\).  Such a \(z\) always exists in a
nonzero finite nilpotent algebra: take a nonzero element of its last
nonzero ideal power.  Thus fixed-time convergence cannot support optional
interchange.
\medskip

\begin{theorem}[Optional--singular interchange]
\label{cc:thm-optional-singular-interchange}
Let \(\widetilde A^{(m)},\widetilde A\) be normalized adapted càdlàg
\(\widetilde G\)-valued anchors, write
\(A^{(m)}:=p(\widetilde A^{(m)})\) and
\(A:=p(\widetilde A)\) for their base anchors, and assume
\begin{equation}
 \sup_{t\le T}\|\widetilde A^{(m)}_t-\widetilde A_t\|
 \longrightarrow0
 \qquad\text{in probability}.
 \label{cc:eq-lifted-anchor-ucp}
\end{equation}
Define their perfected lifted characters by
\begin{equation}
 \widetilde\Phi^{(m)}_{s,t}
 :=\bigl(\widetilde\alpha_{t,s}(\widetilde A^{(m)}_s)\bigr)^{-1}
   \star\widetilde A^{(m)}_t,
 \qquad
 \widetilde\Phi_{s,t}
 :=\bigl(\widetilde\alpha_{t,s}(\widetilde A_s)\bigr)^{-1}
   \star\widetilde A_t.
 \label{cc:eq-lifted-perfect-characters}
\end{equation}
Then
\begin{equation}
 \sup_{0\le s\le t\le T}
 \|\widetilde\Phi^{(m)}_{s,t}-\widetilde\Phi_{s,t}\|
 \longrightarrow0
 \qquad\text{in probability}.
 \label{cc:eq-uniform-triangle-convergence}
\end{equation}
In particular, for every possibly \(m\)-dependent stopping pair
\(\sigma_m\le\tau_m\le T\),
\begin{equation}
 \widetilde\Phi^{(m)}_{\sigma_m,\tau_m}
 \longrightarrow\widetilde\Phi_{\sigma_m,\tau_m}
 \qquad\text{in probability}.
 \label{cc:eq-stopped-lift-convergence}
\end{equation}
For a fixed stopping pair, the two routes agree in the following
commuting-limit sense:
\begin{equation}
 \bigl(\widetilde\alpha_{\tau,\sigma}
       (\widetilde A_\sigma)\bigr)^{-1}
       \star\widetilde A_\tau
 =
 \lim_{m\to\infty}
 \bigl(\widetilde\alpha_{\tau,\sigma}
       (\widetilde A^{(m)}_\sigma)\bigr)^{-1}
       \star\widetilde A^{(m)}_\tau
 \quad\text{in probability}.
 \label{cc:eq-optional-limit-interchange}
\end{equation}
It is compatible with the projection \(p:\widetilde G\to G\).

More generally, suppose raw counterterm-corrected intervals
\(\widehat\Phi^{(m)}_{s,t}\) have measurable stopped evaluations and
measurable displayed suprema (as holds, for example, for jointly measurable
fields with separable sample paths), have anchors
\(\widetilde A^{(m)}_t=\widehat\Phi^{(m)}_{0,t}\), satisfy
\eqref{cc:eq-lifted-anchor-ucp}, and have uniform anchored defect
\begin{equation}
 \sup_{s\le t\le T}
 \left\|
 \widetilde A^{(m)}_t-
 \widetilde\alpha_{t,s}(\widetilde A^{(m)}_s)
   \star\widehat\Phi^{(m)}_{s,t}
 \right\|
 \longrightarrow0
 \quad\text{in probability}.
 \label{cc:eq-uniform-anchored-defect}
\end{equation}
Then \(\widehat\Phi^{(m)}\) has the same uniform triangle limit
\(\widetilde\Phi\), in the explicit sense that
\[
 \sup_{s\le t\le T}
 \|\widehat\Phi^{(m)}_{s,t}-\widetilde\Phi_{s,t}\|
 \longrightarrow0
 \qquad\text{in probability},
\]
and hence the same stopped limits.  Thus
\eqref{cc:eq-lifted-anchor-ucp} together with
\eqref{cc:eq-uniform-anchored-defect} is the robust upgrade of a
fixed-interval renormalization certificate.
\end{theorem}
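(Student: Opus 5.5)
The plan is to reduce everything to a deterministic local Lipschitz estimate for the map $(x,y,s,t)\mapsto(\widetilde\alpha_{t,s}(x))^{-1}\star y$ on bounded sets, and then use tightness of the limiting anchor.

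\textbf{Step 1: tightness.} Since $\widetilde A$ is càdlàg on $[0,T]$, its path is bounded. Hence for every $\delta>0$ there is $R$ with ${\mathbb P}\{\sup_{t\le T}\|\widetilde A_t\|>R\}<\delta$. By \eqref{cc:eq-lifted-anchor-ucp}, also ${\mathbb P}\{\sup_t\|\widetilde A^{(m)}_t\|>R+1\}<2\delta$ for all large $m$.

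\textbf{Step 2: the deterministic estimate.} On the event where both anchors are bounded by $R+1$, I use \eqref{cc:eq-uniform-transport-Lipschitz}. It also gives a uniform bound on transported values. Since $\widetilde\alpha_{t,s}({\mathbf 1})={\mathbf 1}$, we have $\|\widetilde\alpha_{t,s}(x)\|\le 1+L_{T,R+1}(R+1+1)$, so both transported anchors lie in a fixed ball of radius $R'$. The inversion polynomial \eqref{cc:eq-polynomial-inverse} and multiplication are Lipschitz on such balls. Writing $a=\widetilde\alpha_{t,s}(\widetilde A^{(m)}_s)$ and $b=\widetilde\alpha_{t,s}(\widetilde A_s)$, I get
\[
 \|\widetilde\Phi^{(m)}_{s,t}-\widetilde\Phi_{s,t}\|
 \le C\bigl(\|a^{-1}-b^{-1}\|+\|\widetilde A^{(m)}_t-\widetilde A_t\|\bigr)
 \le C'\sup_{r\le T}\|\widetilde A^{(m)}_r-\widetilde A_r\|,
\]
with $C'$ depending only on $R,T,N$ and uniform over the triangle $s\le t\le T$. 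This yields \eqref{cc:eq-uniform-triangle-convergence}; the main subtlety is exactly this uniformity in $(s,t)$, which is why \eqref{cc:eq-uniform-transport-Lipschitz} is required on compact triangles.

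\textbf{Step 3: measurability and stopped consequences.} Measurability of the supremum follows by restricting to rational pairs together with the endpoints, using right-continuity and joint continuity of the transports. Since the triangle supremum dominates any stopped evaluation, \eqref{cc:eq-stopped-lift-convergence} follows. The fixed-pair identity \eqref{cc:eq-optional-limit-interchange} is the case $\sigma_m=\sigma$, $\tau_m=\tau$ combined with \eqref{cc:eq-optional-anchor}. Compatibility with $p$ holds because $p$ is a continuous homomorphism intertwining the transports, so $p(\widetilde\Phi^{(m)})$ is the base character built from $A^{(m)}$ and limits pass through $p$.

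\textbf{Step 4: the counterterm-corrected case.} Set
\[
 E^{(m)}_{s,t}=\widetilde A^{(m)}_t-\widetilde\alpha_{t,s}(\widetilde A^{(m)}_s)\star\widehat\Phi^{(m)}_{s,t},
 \qquad u=\widetilde\alpha_{t,s}(\widetilde A^{(m)}_s).
\]
Left-multiplying by $u^{-1}$ gives
\[
 \widehat\Phi^{(m)}_{s,t}=\widetilde\Phi^{(m)}_{s,t}-u^{-1}\star E^{(m)}_{s,t}.
\]
On the bounded event, $u^{-1}$ is bounded, and multiplication is bilinear and bounded in the Banach algebra. Hence $\sup\|\widehat\Phi^{(m)}-\widetilde\Phi^{(m)}\|\le C\sup\|E^{(m)}\|\to0$ in probability by \eqref{cc:eq-uniform-anchored-defect}. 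Combining this with Step 2 by the triangle inequality finishes the argument; the assumed measurability of the displayed suprema is used here.
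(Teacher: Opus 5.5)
Your proposal is correct and follows the paper's proof essentially step by step. The shared ingredients are: localization via boundedness of the càdlàg limit anchor and the ucp hypothesis; a deterministic Lipschitz bound for $(x,y)\mapsto(\widetilde\alpha_{t,s}(x))^{-1}\star y$ that is uniform over the time triangle, using that endomorphisms fix $\mathbf 1$ to bound the transported anchors; measurability of the suprema through right-continuity and a countable set of time pairs; and, for the counterterm-corrected family, the factorization $\widehat\Phi^{(m)}_{s,t}-\widetilde\Phi^{(m)}_{s,t}=-u^{-1}\star E^{(m)}_{s,t}$ together with tightness of the inverse suprema.
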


\begin{proof}
Right approximation of both endpoints shows that each perfected increment
field is right-continuous on the time triangle in the product order.
Consequently every supremum below equals the supremum over the countable
union of dyadic time triangles and is measurable.
On each norm ball, \eqref{cc:eq-polynomial-inverse}, the
submultiplicativity of the Banach-algebra norm, and the telescoping identity
for noncommutative powers give a constant \(K_R\) such that
\[
 \|x^{-1}\star x'-y^{-1}\star y'\|
 \le K_R(\|x-y\|+\|x'-y'\|).
\]
A càdlàg path is bounded on \([0,T]\).  Given \(\eta>0\), first choose
\(R_0\) so that
\({\mathbb P}\{\sup_{t\le T}\|\widetilde A_t\|>R_0\}<\eta\); then
\eqref{cc:eq-lifted-anchor-ucp} shows that, for all sufficiently large
\(m\), both anchors lie in the ball of radius \(R_0+1\) outside an event of
probability at most \(2\eta\).  Since every group endomorphism fixes
\({\mathbf 1}\), \eqref{cc:eq-uniform-transport-Lipschitz} also bounds all
transported anchors there, uniformly over the time triangle.  The preceding
polynomial estimate and \eqref{cc:eq-uniform-transport-Lipschitz} now
yield
\[
 \sup_{s\le t}
 \|\widetilde\Phi^{(m)}_{s,t}-\widetilde\Phi_{s,t}\|
 \le K_{T,R}\sup_{r\le T}
 \|\widetilde A^{(m)}_r-\widetilde A_r\|.
\]
For fixed error threshold, \eqref{cc:eq-lifted-anchor-ucp} makes the
right-hand side negligible in probability; then letting \(\eta\downarrow0\)
removes the localization event.  This proves
\eqref{cc:eq-uniform-triangle-convergence}; evaluation gives
\eqref{cc:eq-stopped-lift-convergence}.  Projection compatibility follows
from \eqref{cc:eq-lifted-transport}: explicitly,
\[
 p(\widetilde\Phi^{(m)}_{s,t})
 =\bigl(\alpha_{t,s}(A^{(m)}_s)\bigr)^{-1}\star A^{(m)}_t,
 \qquad
 p(\widetilde\Phi_{s,t})
 =\bigl(\alpha_{t,s}(A_s)\bigr)^{-1}\star A_t.
\]
Continuity of \(p\) passes these identities to every asserted limit.

For the raw family, write
\(a^{(m)}_{s,t}=\widetilde\alpha_{t,s}(\widetilde A^{(m)}_s)\).  Then
\begin{align*}
 \widehat\Phi^{(m)}_{s,t}
 -(a^{(m)}_{s,t})^{-1}\star\widetilde A^{(m)}_t
 &=(a^{(m)}_{s,t})^{-1}\star\bigl(
   a^{(m)}_{s,t}\star\widehat\Phi^{(m)}_{s,t}
   -\widetilde A^{(m)}_t\bigr).
\end{align*}
The inverse suprema are tight, so
\eqref{cc:eq-uniform-anchored-defect} makes the raw family uniformly
close to the perfected one.  The first part completes the proof.
\end{proof}

By \Cref{cc:lem-ucp-characterization}, ucp is exactly the convergence of
anchors detected by all approximation-dependent stopping times; the bump
construction shows that fixed-time convergence is insufficient.  Optional
completion is canonical after perfection of the anchor, whereas singular
extension remains relative to an admissible lift.  Once that lift is chosen,
ucp convergence gives the asserted interchange.

\begin{lemma}[Borel closure of probabilistic limits]
\label{cc:lem-borel-probability-limit}
Let \(S,Y\) be Polish spaces, let \(\mu\) be a probability measure on
\(S\), and let \(\Gamma_m:S\to Y\) be Borel maps converging in
\(\mu\)-probability to a unique \(Y\)-valued limit.  Then a deterministic
subsequence converges pointwise on a Borel set of full \(\mu\)-measure;
extending that limit arbitrarily off the set gives a Borel version
\(\Gamma:S\to Y\).
\end{lemma}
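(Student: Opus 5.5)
The plan is the standard Riesz-type subsequence argument, run against the Polish metric of \(Y\).

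Fix a complete separable metric \(d\) on \(Y\) (it is enough that \(d\) be separable and compatible), and write \(\Gamma_\infty\) for the \(\mu\)-a.e. defined limit. Convergence in \(\mu\)-probability means that for every \(\varepsilon>0\)
\[
 \mu\{x: d(\Gamma_m(x),\Gamma_\infty(x))>\varepsilon\}\longrightarrow0 .
\]
We read ``unique'' as uniqueness up to \(\mu\)-null sets, so the limit \(\Gamma_\infty\) is only a \(\mu\)-measurable map. To keep measurability sets Borel, we avoid \(\Gamma_\infty\) and work with the Cauchy property of \((\Gamma_m)\). Convergence in probability gives Cauchy in probability. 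Hence we can choose deterministic indices \(m_1<m_2<\cdots\) with
\[
 \mu\bigl\{d(\Gamma_{m_{k+1}},\Gamma_{m_k})>2^{-k}\bigr\}<2^{-k}.
\]
Since \(Y\) is separable, \(d\circ(\Gamma_{m_{k+1}},\Gamma_{m_k})\) is Borel on \(S\): the pair map into \(Y\times Y\) is Borel, and the Borel \(\sigma\)-field of \(Y\times Y\) is the product \(\sigma\)-field. So these sets are Borel.

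Borel–Cantelli then shows that the Borel set
\[
 B:=\bigcup_{n}\bigcap_{k\ge n}\bigl\{d(\Gamma_{m_{k+1}},\Gamma_{m_k})\le2^{-k}\bigr\}
\]
has full \(\mu\)-measure. On \(B\) the sequence \(\Gamma_{m_k}(x)\) is Cauchy, and completeness of \(Y\) gives a pointwise limit \(\Gamma(x)\). Off \(B\), set \(\Gamma\equiv y_0\) for a fixed \(y_0\in Y\).

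Borel measurability of \(\Gamma\) follows from the fact that a pointwise limit of Borel maps into a metric space is Borel. For open \(U\subseteq Y\), write \(U_j=\{y:\operatorname{dist}(y,U^c)>1/j\}\). Then
\[
 \Gamma^{-1}(U)\cap B=B\cap\bigcup_j\bigcup_n\bigcap_{k\ge n}\Gamma_{m_k}^{-1}(U_j),
\]
which is Borel, and \(\Gamma^{-1}(U)\cap B^c\) is either \(B^c\) or empty.

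Finally, \(\Gamma\) is a version of the probabilistic limit. A further subsequence of \(\Gamma_{m_k}\) converges \(\mu\)-a.e. to \(\Gamma_\infty\), so \(\Gamma=\Gamma_\infty\) \(\mu\)-a.e. Uniqueness of limits in probability therefore identifies \(\Gamma\) as the limit.

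The main point needing care is the measurability bookkeeping rather than any analysis. The two places to watch are that separability of \(Y\) makes the distance sets Borel, and that the convergence set is produced as an explicit Borel set rather than only a \(\mu\)-completion-measurable one. Completeness of the metric is used only to produce the pointwise limit on \(B\).
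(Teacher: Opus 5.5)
Your proposal is correct and is essentially the paper's own argument: extract a Borel--Cantelli subsequence, observe that its Cauchy set is a Borel set of full measure, and use that a pointwise limit of Borel maps into a Polish space is Borel. The only difference is that you sum the successive differences \(d(\Gamma_{m_{k+1}},\Gamma_{m_k})\) instead of the distances to the limit. This keeps every exceptional set manifestly Borel, at the cost of one further subsequence to identify \(\Gamma\) with the probabilistic limit. The paper instead obtains a.e.\ convergence to the limit directly and then passes to the Borel Cauchy set.
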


\begin{proof}
Choose a subsequence whose distances to the limit are summable in
probability and apply Borel--Cantelli.  Its Cauchy set is Borel, and the
pointwise limit of Borel maps into a Polish space is Borel there.
\end{proof}

\subsection{The causal normal object}
\label{cc:subsec-causal-normal-object}

\begin{definition}[Causal normal geometry]
\label{cc:def-causal-normal-geometry}
A causal normal geometry of step \(N\) consists of a causal evolution
character \(\mathfrak C=(G,\alpha,\Phi)\), a chosen admissible causal lift
\(\widetilde\Phi\) through a transported target extension (the identity
extension is allowed), and a transport-compatible geometric chart
\[
 \widetilde\chi_t:\widetilde G_t^{\rm adm}
 \longrightarrow\mathsf G_{\rm geo}^{\le N}(V_t).
\]
We write
\[
 \mathfrak X=(\mathfrak C;\widetilde\Phi,\widetilde\chi),
 \qquad
 X^{\mathfrak X}_{s,t}
 :=\log_\star\widetilde\chi_t(\widetilde\Phi_{s,t}).
\]
The lift and chart are realization data; the normal filtration itself is the
canonical commutativization filtration of the charted free Lie algebra.  The
geometry is \emph{base-canonical} when the chart is trivial on the
admissible kernel, in which case it descends to \(\Phi\).
\end{definition}

\begin{proposition}[Causal-normal completion principle]
\label{cc:prop-causal-normal-completion}
Assume the hypotheses of
\Cref{cc:thm-optional-completion,cc:thm-optional-singular-interchange} for a
chosen admissible lifted anchor, and assume the chart preserves the selected
c\`adl\`ag paths.  Then the causal normal geometry extends uniquely to
bounded stopping intervals.  Optional completion, the selected singular
limit, and application of the chart commute in probability; adapted
c\`adl\`ag gauges act by the stopped vertex formula
\eqref{cc:eq-stopped-vertex-gauge}.  Thus stopping changes the domain,
singular lifting changes the target, and normal geometry is read only after
both operations have been fixed.
\end{proposition}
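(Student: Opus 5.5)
The plan is to assemble the statement from the results already proved, working at the level of the chosen admissible lifted anchor \(\widetilde A_t=\widetilde\Phi_{0,t}\).

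\textbf{Step 1: existence and uniqueness of the stopped extension.} By \Cref{cc:thm-anchor-representation}, the lifted causal character \(\widetilde\Phi\) is encoded by this anchor. The hypotheses of \Cref{cc:thm-optional-completion} give an adapted c\`adl\`ag modification \(\bar{\widetilde A}\). Applying that theorem in the lifted group \(\widetilde G\), with transports \(\widetilde\alpha\), produces the perfected increments \(\widetilde\Phi_{\sigma,\tau}\) on all bounded stopping intervals. Part~(ii) of that theorem gives the stopped Chen identity on one event, and part~(iv) gives uniqueness in the stated class.

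\textbf{Step 2: applying the chart.} The chart \(\widetilde\chi_\tau\) is then applied pathwise. Since the chart preserves the selected c\`adl\`ag paths and intertwines \(\widetilde\alpha\) with \(\beta\), the process \(\widetilde\chi(\bar{\widetilde A})\) is a c\`adl\`ag anchor in \(\mathsf G_{\rm geo}^{\le N}\). Its optional increments coincide with \(\widetilde\chi_\tau(\widetilde\Phi_{\sigma,\tau})\). This is the argument of \eqref{cc:eq-optional-splitting-naturality}: a transport-equivariant homomorphism applied to the anchor formula \eqref{cc:eq-optional-anchor}. Taking \(\log_\star\), which is a polynomial map on the truncated group, defines \(X^{\mathfrak X}_{\sigma,\tau}\) uniquely. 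Uniqueness of the chart-level object is inherited from Step~1, because \(\log_\star\) is injective on the truncated group.

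\textbf{Step 3: commutation of the three operations.} Take the approximating anchors \(\widetilde A^{(m)}\) of the selected singular limit. \Cref{cc:thm-optional-singular-interchange} yields the uniform triangle convergence \eqref{cc:eq-uniform-triangle-convergence}, hence convergence at every stopping pair, and the commuting-limit identity \eqref{cc:eq-optional-limit-interchange}. This settles the commutation of completion and singular limit. To add the chart, I need \(\widetilde\chi_\tau\circ\)(increment) to be continuous jointly in the time and group variables on bounded sets. Given this, convergence in probability of \(\widetilde\Phi^{(m)}_{\sigma,\tau}\) transfers through \(\widetilde\chi\) and \(\log_\star\) by the continuous-mapping theorem for convergence in probability.

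This is the step I expect to be the main obstacle. The paper assumes joint continuity of the chart only when c\`adl\`ag anchors are required, and I would invoke exactly that hypothesis here. I would also localize on norm balls as in the proof of \Cref{cc:thm-optional-singular-interchange}, so that only continuity, not uniform continuity, is needed. Where a Borel version of the limit is required, \Cref{cc:lem-borel-probability-limit} supplies it.

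\textbf{Step 4: gauges.} For adapted c\`adl\`ag gauges \(b\), \Cref{cc:prop-canonicality-criterion} gives the stopped vertex formula \eqref{cc:eq-stopped-vertex-gauge} directly. I would add only that the chart transforms it covariantly when \(\widetilde\chi\) is a homomorphism; in the base-canonical case this transformation is trivial.
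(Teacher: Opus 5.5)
Your proposal is correct and follows the same route as the paper, which simply cites \Cref{cc:thm-optional-completion} for the stopped lifted character and its uniqueness, \Cref{cc:thm-optional-singular-interchange} for the commuting limit (with continuity of the chart allowing it to be applied after the limit), and \eqref{cc:eq-stopped-vertex-gauge} for the gauge action; your Steps 1--4 spell out exactly these citations in more detail. One small tightening: uniqueness already follows from part~(iv) of \Cref{cc:thm-optional-completion} applied to the lift, or to the c\`adl\`ag charted anchor if you want chart-level competitors, so injectivity of $\log_\star$ is not what secures it; and for a fixed stopping pair the continuous-mapping/subsequence argument needs no norm-ball localization, which in infinite dimensions would not give uniform continuity anyway.
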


\begin{proof}
The stopped lifted character and its uniqueness are given by
\Cref{cc:thm-optional-completion}.  The commuting limit is
\Cref{cc:thm-optional-singular-interchange}; continuity of the chart permits
its application after the limit.  Gauge naturality is
\eqref{cc:eq-stopped-vertex-gauge}.
\end{proof}
\section{Primitive Rees geometry}
\label{cc:sec-algebraic-ancestry}

Let \(\mathbf k\) be a field of characteristic zero and let \(V\) be an
arbitrary \(\mathbf k\)-vector space.  Put
\[
 L:={\mathfrak L}(V),\qquad D:=[L,L],\qquad U(L)=T(V),
\]
where the last identification is the canonical enveloping realization of a
free Lie algebra \cite[Thm.~0.5]{Reutenauer93}.  Let
\[
 \mathfrak a:T(V)\longrightarrow S(V),\qquad
 J:=\ker\mathfrak a
\]
be associative abelianization and the two-sided commutator ideal defining
commutativization.  This is the standard commutator thickening of \(S(V)\)
\cite{Kapranov98,FeiginShoikhet07,JordanOrem13}.  Its primitive normal
cone will identify the exact bracket ancestry of a causal logarithm.

Indeed, after a transport-compatible geometric chart has been fixed, the
charted logarithm of a causal character satisfies
\[
 X_{s,t}={\operatorname{BCH}}\!\left(\beta_{t,u}X_{s,u},X_{u,t}\right).
\]
For \(0\ne x\in L\), put
\[
 \operatorname{ord}_{\mathrm{anc}}(x)
 :=\max\{r\geq0:x\in L\cap J^r\}.
\]
The theorem below identifies this normal order with lower-central bracket
length inside the derived ideal.

Define
\[
 \gamma_1(D):=D,\qquad \gamma_{r+1}(D):=[D,\gamma_r(D)],
 \qquad Q:=D/[D,D].
\]
For \(v\in V\) and \(d\in D\), set
\begin{equation}
 \rho_v(\bar d):=\overline{[v,d]}.
 \label{cc:eq-rho}
\end{equation}
This is well defined.  Jacobi gives
\([\rho_v,\rho_w](\bar d)=\overline{[[v,w],d]}=0\), so \(Q\) is
an \(S(V)\)-module.  Each \(\rho_v\) extends uniquely to a derivation of
the free Lie algebra \({\mathfrak L}(Q)\), and the extensions still commute because
they commute on its free generators.  We write
\(T(Q)\#_\rho S(V)\) for the smash product generated by \(V\) and \(Q\)
with relations
\begin{equation}
 vw=wv,\qquad vq-qv=\rho_vq.
 \label{cc:eq-smash-relations}
\end{equation}
The normal degrees of \(V\) and \(Q\) are zero and one, respectively.  Its
Hopf structure is the enveloping Hopf structure for which every element of
\(V\oplus Q\) is primitive.

\begin{theorem}[Primitive normal-cone theorem]
\label{cc:thm-normal-cone}
There is a natural isomorphism of normal-graded Hopf algebras
\begin{equation}
 {\operatorname{gr}}_JT(V)\cong T(Q)\#_\rho S(V)
 \cong U\!\left(V_{\mathrm{ab}}\ltimes_\rho{\mathfrak L}(Q)\right).
 \label{cc:eq-normal-cone}
\end{equation}
The isomorphism is induced by the identity on the degree-zero primitives
\(L/D\cong V\) and the degree-one primitives \(D/[D,D]=Q\), and it
respects ordinary tensor degree.
Consequently,
\begin{align}
 J^r/J^{r+1}&\cong Q^{\otimes r}\otimes S(V),
 \label{cc:eq-normal-layer}\\
 {\operatorname{Prim}}({\operatorname{gr}}_JT(V))_0&\cong V,
 \qquad
 {\operatorname{Prim}}({\operatorname{gr}}_JT(V))_r
 \cong{\mathfrak L}_r(Q)\cong\gamma_r(D)/\gamma_{r+1}(D),
 \quad r\geq1,
 \label{cc:eq-primitive-layers}
\end{align}
and, most importantly,
\begin{equation}
 L\cap J^r=\gamma_r(D),\qquad r\geq1.
 \label{cc:eq-intersection}
\end{equation}
Thus normal order and derived-ideal ancestry agree exactly.
\end{theorem}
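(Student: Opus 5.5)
The approach is to realize everything inside the enveloping algebra of a semidirect product and then read off the associated graded.

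\emph{Step 1: a semidirect decomposition.} Since \(V\) is free, choose a basis \(\{x_i\}\). By Lazard elimination, \(L=V\ltimes D\) as vector spaces, and \(D\) is itself free, on the brackets \([x_{i_1},[x_{i_2},\dots,[x_{i_k},x_j]]]\) with \(i_1\le\dots\le i_k>j\). Poincaré--Birkhoff--Witt then gives a vector-space factorization
\[
 T(V)=U(L)\cong U(D)\otimes S(V)\cong T(Q')\otimes S(V),
\]
where \(Q'\) is a free generating space of \(D\) lifting \(Q=D/[D,D]\). Concretely, \(T(V)\) is generated by \(D\) and ordered monomials in \(V\).

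\emph{Step 2: identify \(J\) and its powers.} The kernel of \(\mathfrak a\) is the two-sided ideal generated by \([V,V]\). Using \([v,d]\in D\), this equals \(U(L)D=D\,U(L)\). The claim is
\[
 J^r=\sum_{k\ge r}U(D)_{\ge k}\,S(V),
\]
where \(U(D)_{\ge k}\) is the span of products of at least \(k\) elements of \(D\), i.e. the \(D\)-adic filtration. One inclusion is clear. For the other, note that moving a \(V\)-letter past an element of \(D\) produces a commutator that stays in \(D\), so it never lowers the \(D\)-count. Via PBW in \(U(D)\), the filtration by number of \(D\)-factors has associated graded \(S(D)\). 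Combining this with freeness of \(D\), the product filtration of \(U(D)\) by powers of its augmentation ideal has associated graded \(T(Q)\): the augmentation filtration of a free associative algebra on \(Q'\) is the tensor-degree filtration, once \([D,D]\) is placed in degree \(\ge2\). This yields \(J^r/J^{r+1}\cong Q^{\otimes r}\otimes S(V)\), proving \eqref{cc:eq-normal-layer}.

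\emph{Step 3: the graded algebra and its primitives.} In \(\operatorname{gr}_J\), the relations \(vw-wv\in J\) become commutativity in degree zero. The relation \(vq-qv=\overline{[v,q]}\) in degree one is exactly \(\rho_v\) from \eqref{cc:eq-rho}. This gives a surjection
\[
 T(Q)\#_\rho S(V)\to\operatorname{gr}_JT(V),
\]
which is an isomorphism because both sides have the same graded dimensions \(Q^{\otimes r}\otimes S(V)\) in each tensor degree. Tensor degree is finite-dimensional only if \(V\) is, so in general I would compare bases rather than dimensions. The Hopf structure is compatible because \(J\) is a Hopf ideal filtration: \(\Delta J\subset J\otimes T+T\otimes J\). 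The identification with \(U(V_{ab}\ltimes_\rho\mathfrak L(Q))\) is again PBW. In characteristic zero the primitives of an enveloping algebra form the Lie algebra, which gives \eqref{cc:eq-primitive-layers}. Finally, \(\mathfrak L_r(Q)\cong\gamma_r(D)/\gamma_{r+1}(D)\) holds because \(D\) is free with indecomposables \(Q\), and for free Lie algebras the lower central quotients are the graded pieces of the free Lie algebra (Magnus--Witt).

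\emph{Step 4: the intersection \(L\cap J^r=\gamma_r(D)\).} The inclusion \(\supseteq\) is immediate from \(D\subset J\) and \([J^a,J^b]\subset J^{a+b}\). For \(\subseteq\), I would argue by induction on \(r\). Take \(x\in L\cap J^r\). Inductively \(x\in\gamma_{r-1}(D)\), and if \(x\notin\gamma_r(D)\) its class in \(\gamma_{r-1}/\gamma_r\cong\mathfrak L_{r-1}(Q)\) is nonzero. This class maps to the symbol of \(x\) in \(J^{r-1}/J^r\). The map \(\mathfrak L_{r-1}(Q)\to Q^{\otimes(r-1)}\otimes S(V)\) is the Lie-to-enveloping inclusion, which is injective. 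So the symbol is nonzero, contradicting \(x\in J^r\).

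\emph{Main obstacle.} The expected obstacle is the precise equality of \(J^r\) with the \(D\)-adic filtration in Step 2, together with its graded freeness. The dimension or basis count there must be done carefully, with no hidden extra relations. I would handle this by constructing an explicit filtered PBW basis: ordered products of Hall basis elements of \(D\) in increasing \(\gamma\)-weight, followed by ordered \(V\)-monomials. I would then show that the elements of total \(D\)-weight \(\ge r\) span \(J^r\) and are independent modulo \(J^{r+1}\).
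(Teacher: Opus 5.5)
Your proof is correct, but it is organized differently from the paper's.

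\emph{The paper's route.} It filters all of \(L\) by \(H_0L=L\), \(H_rL=\gamma_r(D)\) and proves that the induced enveloping filtration is exactly \(J^r\). It then runs one weighted filtered PBW argument, using adapted bases of weight \(a\) on complements of \(H_{a+1}L_n\) in \(H_aL_n\). That single argument gives both \(\operatorname{gr}_JT(V)\cong U(\operatorname{gr}_HL)\) and \(L\cap J^r=H_rL\). Shirshov--Witt and Magnus--Witt for the free algebra \(D\) are used beforehand to identify \(\operatorname{gr}_HL\cong V_{\mathrm{ab}}\ltimes_\rho\mathfrak L(Q)\). The Hopf and primitive statements are then read off an enveloping algebra directly.

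\emph{Your route.} You use the relative PBW factorization \(T(V)\cong U(D)\otimes\widetilde S(V)\) along the ideal \(D\). You show \(J^r=I_D^r\,\widetilde S(V)\), with \(I_D\) the augmentation ideal of \(U(D)\), and read the layers off the degree filtration of the free associative algebra \(U(D)=T(Q')\). The smash product enters by generators and relations. The intersection is deduced afterwards by induction from the injectivity of \(\mathfrak L_{r-1}(Q)\hookrightarrow Q^{\otimes(r-1)}\).

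\emph{Trade-offs.} Your approach makes \(J^r/J^{r+1}\cong Q^{\otimes r}\otimes S(V)\) nearly immediate and confines Magnus--Witt to the intersection step. The price is the auxiliary choices \(Q'\) and \(\widetilde S(V)\). This is harmless, because the map out of \(T(Q)\#_\rho S(V)\) is defined canonically. The paper needs the general weighted-PBW spanning argument. In exchange it obtains the graded isomorphism and \(L\cap J^r=\gamma_r(D)\) in one stroke from a single basis description.

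Two points need tidying.

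\emph{The \(S(D)\) sentence.} The claim that the filtration of \(U(D)\) by number of \(D\)-factors has associated graded \(S(D)\) is wrong as written. That describes the increasing PBW filtration. The decreasing filtration \(I_D^k\) has associated graded \(T(Q)\) in the free case, as you say next. Delete the sentence.

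\emph{Your anticipated ``main obstacle'' is already handled.} Your two inclusions give \(J^r=I_D^r\widetilde S(V)\) exactly. Since \(Q'\subseteq D\subseteq T(Q')_{\ge1}\), one has \(I_D^k=T(Q')_{\ge k}\). Injectivity of multiplication \(U(D)\otimes\widetilde S(V)\to T(V)\) then gives \(J^r/J^{r+1}\cong (Q')^{\otimes r}\otimes\widetilde S(V)\) with no Hall-basis count.

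\emph{Replacing the dimension count in Step 3.} Comparing dimensions does not suffice when \(V\) is infinite dimensional, but the identification above does the job. Each \(d\in D\) is congruent to its \(Q'\)-component modulo \(T(Q')_{\ge2}\). Hence the composite
\[
 Q^{\otimes r}\otimes S(V)\to(T(Q)\#_\rho S(V))_r\to J^r/J^{r+1}\cong Q^{\otimes r}\otimes S(V)
\]
is the identity. The first arrow is onto by normal ordering. Therefore both arrows are isomorphisms.
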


\begin{proof}
Give \(L\) the filtration
\[
 H_0L:=L,\qquad H_rL:=\gamma_r(D)\quad(r\geq1).
\]
Since \(D\) is an ideal, Jacobi and the standard lower-central induction
give
\begin{equation}
 [H_aL,H_bL]\subseteq H_{a+b}L
 \quad(a,b\geq0).
 \label{cc:eq-strong-filtration}
\end{equation}
The action of \(H_0L/H_1L=L/D\cong V_{\mathrm{ab}}\) on
\(H_1L/H_2L=Q\) is \(\rho\).

The derived algebra \(D\) is free by the Shirshov--Witt theorem
\cite[\S2.2]{Reutenauer93}.  For any free Lie algebra \(\mathfrak f\),
bracketing representatives gives a canonical surjection
\[
 {\mathfrak L}\!\left(\mathfrak f/[\mathfrak f,\mathfrak f]\right)
 \longrightarrow {\operatorname{gr}}_\gamma\mathfrak f
\]
of graded Lie algebras.  After choosing a free generating space \(W\),
both sides identify with the free Lie algebra on \(W\), graded by bracket
length.  The map is therefore an isomorphism; because it was defined
without \(W\), the isomorphism is canonical.  Applied to \(D\), it
preserves ordinary tensor degree because the lower central series of
\(D\) is homogeneous.
Moreover,
\({\operatorname{ad}}_x\) preserves
\(\gamma_r(D)\), and its action on \({\operatorname{gr}}_\gamma D\) depends only on
\(x+D\), because
\([D,\gamma_r(D)]\subseteq\gamma_{r+1}(D)\).  Hence the preceding
isomorphism is \(L/D\)-equivariant when \(\mathfrak f=D\).  The
commuting operators \(\rho_v\) extend by derivations from \(Q\) to
\({\mathfrak L}(Q)\), and therefore
\begin{equation}
 {\operatorname{gr}}_HL\cong V_{\mathrm{ab}}\ltimes_\rho{\mathfrak L}(Q).
 \label{cc:eq-graded-lie}
\end{equation}

We next identify the induced enveloping filtration.
Let \(\mathcal U_0:=U(L)\), and for \(r\geq1\) let \(\mathcal U_r\) be
the span of products
\(x_1\cdots x_m\), where \(x_i\in H_{a_i}L\) and
\(\sum_i a_i\geq r\).  Strong filteredness makes
\(\mathcal U_\bullet\) multiplicative and every \(\mathcal U_r\) a
two-sided ideal.  Expanding a bracket in
\(H_aL=\gamma_a(D)\) produces words with at least \(a\) factors from
\(D\); hence \(H_aL\subseteq J^a\) and
\(\mathcal U_r\subseteq J^r\).  Conversely, \(\mathcal U_1\) is the
two-sided ideal generated by \(H_1L=D\), so \(\mathcal U_1=J\), and
multiplicativity gives \(J^r=\mathcal U_1^r\subseteq\mathcal U_r\).
Therefore
\begin{equation}
 \mathcal U_r=J^r.
 \label{cc:eq-induced-is-j}
\end{equation}

We record the filtered PBW argument because it also proves the crucial
intersection statement; compare \cite{Quillen68}.  In ordinary
tensor degree \(n\), one has
\(H_rL_n=0\) for \(2r>n\).  Choose degreewise complements of
\(H_{a+1}L_n\) in \(H_aL_n\), assign weight \(a\) to their basis
vectors, and order their union.  The PBW theorem
\cite[\S0.1]{Reutenauer93} says that the ordered monomials in this basis
form a basis of \(U(L)\).  Interchanging vectors of weights \(a,b\)
creates a correction bracket of weight at least \(a+b\), so reordering
never lowers total weight.  It follows that \(\mathcal U_r\) is exactly
the span of ordered PBW monomials of total weight at least \(r\).
Consequently the exact-weight PBW
monomials form bases on both sides of
\[
 U({\operatorname{gr}}_HL)\longrightarrow{\operatorname{gr}}_{\mathcal U}U(L),
\]
so this canonical map is an isomorphism.  Since \(L\subset U(L)\) is
exactly the span of the length-one PBW monomials, the same basis
description also gives
\begin{equation}
 L\cap\mathcal U_r=H_rL.
 \label{cc:eq-pbw-intersection}
\end{equation}
Equations \eqref{cc:eq-induced-is-j} and
\eqref{cc:eq-pbw-intersection} prove
\eqref{cc:eq-intersection}, while
\eqref{cc:eq-graded-lie} gives
\[
 {\operatorname{gr}}_JT(V)\cong U({\operatorname{gr}}_HL)
 \cong U\!\left(V_{\mathrm{ab}}\ltimes_\rho{\mathfrak L}(Q)\right).
\]
Since \(U({\mathfrak L}(Q))=T(Q)\), PBW for the semidirect decomposition identifies
the last enveloping algebra with the smash product in
\eqref{cc:eq-normal-cone}; multiplication gives the vector-space
isomorphism \(T(Q)\otimes S(V)\to T(Q)\#_\rho S(V)\).  Its normal
degree-\(r\) vector space is therefore
\(Q^{\otimes r}\otimes S(V)\), proving
\eqref{cc:eq-normal-layer}.

The identification is also Hopf.  Indeed \(J\) is generated by the
primitive Lie ideal \(D\), so
\[
 \Delta(J)\subseteq J\otimes U(L)+U(L)\otimes J,
 \qquad S(J)\subseteq J,
\]
and multiplicativity of \(\Delta\) gives
\begin{equation}
 \Delta(J^r)\subseteq\sum_{a+b=r}J^a\otimes J^b,
 \qquad S(J^r)\subseteq J^r.
 \label{cc:eq-hopf-filtration}
\end{equation}
The filtered PBW map carries every primitive generator to its own normal
class.  Since these classes generate, it intertwines coproduct, counit,
and antipode.

In characteristic zero, \({\operatorname{Prim}} U(\mathfrak g)=\mathfrak g\): the top
PBW symbol of a primitive element is primitive in \(S(\mathfrak g)\),
whose only primitives have symmetric degree one, and the counit removes
the scalar term.  This is the Friedrichs characterization in the free
case \cite[Thm.~1.4]{Reutenauer93}.  Applying it to
\(\mathfrak g={\operatorname{gr}}_HL\) and using \eqref{cc:eq-graded-lie} gives the
degree-zero primitive space \(V\) and the degree-\(r\) primitive space
\({\mathfrak L}_r(Q)\).  The latter is
\(\gamma_r(D)/\gamma_{r+1}(D)\) by construction, proving
\eqref{cc:eq-primitive-layers} and completing the proof.
\end{proof}

Put \(F_0L(V):=L(V)\) and
\(F_rL(V):=L(V)\cap J(V)^r=\gamma_r(D(V))\) for \(r\geq1\), and write
\(F_rL_n(V):=F_rL(V)\cap L_n(V)\).  The 
\emph{full} primal layer and its algebraic dual are
\begin{equation}
 G^{\mathrm{full}}_{n,r}(V):=
 \frac{F_rL_n(V)}{F_{r+1}L_n(V)}
 \cong\bigl({\mathfrak L}_r(Q(V))\bigr)_n,
 \qquad
 E^{\mathrm{full}}_{n,r}(V):=
 \bigl(G^{\mathrm{full}}_{n,r}(V)\bigr)^{\vee}.
 \label{cc:eq-exact-layer}
\end{equation}
We retain the superscript ``\(\mathrm{full}\)'' throughout, so these
algebraic layers cannot be confused with the finite registered layers used
by the spectral reader.

For a time-dependent alphabet \(V_t\), abbreviate
\begin{equation}
 \begin{gathered}
 L_t:=L(V_t),\qquad D_t:=[L_t,L_t],\qquad
 J_t:=\ker\bigl(T(V_t)\to S(V_t)\bigr),\\
 F_0L_t:=L_t,\quad
 F_rL_t:=L_t\cap J_t^r=\gamma_r(D_t)\ (r\geq1),\qquad
 G^{\mathrm{full}}_{n,r}(t):=
 \dfrac{F_rL_t\cap(L_t)_n}{F_{r+1}L_t\cap(L_t)_n}.
 \end{gathered}
 \label{cc:eq-fibrewise-full-layer}
\end{equation}
The canonical map at order \(r\) is only the quotient
\begin{equation}
 \varpi^{(t)}_{n,r}:
 F_rL_t\cap(L_t)_n\longrightarrow G^{\mathrm{full}}_{n,r}(t).
 \label{cc:eq-normal-symbol-quotient}
\end{equation}
This quotient map is defined on the filtered subspace, not on all of
\((L_t)_n\).  For \(0\ne x\in(L_t)_n\), set
\(\nu_{F,t}(x):=\max\{r:x\in F_rL_t\}\).  Its only canonical homogeneous
component is its \emph{full initial symbol}
\begin{equation}
 \operatorname{in}^{\mathrm{full}}_{F,t}(x)
 :=\varpi^{(t)}_{n,\nu_{F,t}(x)}(x)
 \in G^{\mathrm{full}}_{n,\nu_{F,t}(x)}(t).
 \label{cc:eq-canonical-initial-symbol}
\end{equation}
All further homogeneous components require a splitting or a Rees lift.

Suppose that a chart
\(\chi_t:G_t\to\mathsf G_{\mathrm{geo}}^{\le N}(V_t)\) intertwines
\(\alpha_{t,s}\) with the geometric transport \(\beta_{t,s}\), which
preserves tensor degree and the \(J\)-adic filtration.  For
\(1\leq n\leq N\), put
\begin{equation}
 x_{s,t}^{(n)}:=[\log_\star\chi_t(\Phi_{s,t})]_n,
 \label{cc:eq-charted-degree-n-logarithm}
\end{equation}
If \(x_{s,t}^{(n)}\ne0\) and
\(r_{s,t}^{(n)}:=\nu_{F,t}(x_{s,t}^{(n)})\geq1\), its full causal normal
symbol is
\begin{equation}
 \sigma_n^{\chi,\mathrm{full}}(\Phi_{s,t})
 :=\operatorname{in}^{\mathrm{full}}_{F,t}(x_{s,t}^{(n)})
 \in G^{\mathrm{full}}_{n,r_{s,t}^{(n)}}(t).
 \label{cc:eq-registered-normal-symbol}
\end{equation}

Relative to the chart, this initial symbol is canonical.  Valuation zero
records the abelianized component rather than positive ancestry.  For a
causal normal geometry \(\mathfrak X\) we abbreviate the same object by
\begin{equation}
 \sigma_{n,r}^{\mathfrak X}(s,t)
 :=\operatorname{in}^{\mathrm{full}}_{F,t}
   \bigl([X^{\mathfrak X}_{s,t}]_n\bigr),
 \qquad r=\nu_{F,t}([X^{\mathfrak X}_{s,t}]_n).
 \label{cc:eq-causal-normal-symbol}
\end{equation}

\begin{corollary}[Finite jets and naturality]
\label{cc:cor-faithful-jets}
\label{cc:prop-naturality}
For every \(M\geq0\), the associative realization induces an injection
\[
 L/\gamma_{M+1}(D)\lhook\joinrel\longrightarrow T(V)/J^{M+1}.
\]
Its successive normal quotients are \(L/D\) in order zero and
\(\gamma_r(D)/\gamma_{r+1}(D)\) in orders \(1\leq r\leq M\).
Every linear map \(A:V\to W\) induces compatible maps
\begin{equation}
 Q(A):Q(V)\longrightarrow Q(W),
 \qquad
 Q(A)(\rho_vq)=\rho_{Av}Q(A)(q),
 \label{cc:eq-Q-naturality}
\end{equation}
and hence a graded Hopf homomorphism
\begin{equation}
 \mathfrak N(A):
 T(Q(V))\#_\rho S(V)\longrightarrow T(Q(W))\#_\rho S(W).
 \label{cc:eq-normal-functor}
\end{equation}
Under \eqref{cc:eq-normal-cone}, this is the normal-cone map induced
by \(T(A)\).  Hence \(G^{\mathrm{full}}_{n,r}\) is a homogeneous
degree-\(n\) polynomial functor, and \(E^{\mathrm{full}}_{n,r}\) is its
contravariant algebraic dual.  These assertions remain algebraically
valid for infinite-dimensional \(V\).
\end{corollary}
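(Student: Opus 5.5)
The plan is to read everything off \Cref{cc:thm-normal-cone}, treating the two halves (finite jets, then naturality) separately.

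\emph{Finite jets.} The realization \(L\subset U(L)=T(V)\) composed with the quotient by \(J^{M+1}\) has kernel \(L\cap J^{M+1}\). By \eqref{cc:eq-intersection} this kernel is \(\gamma_{M+1}(D)\), so the induced map on \(L/\gamma_{M+1}(D)\) is injective. For the successive quotients, I will filter \(L/\gamma_{M+1}(D)\) by the images of \(F_rL=L\cap J^r\). Again by \eqref{cc:eq-intersection} these images are \(\gamma_r(D)/\gamma_{M+1}(D)\) for \(1\le r\le M+1\), together with \(F_0L=L\) and \(F_1L=D\). The layers are then \(L/D\) in order zero and \(\gamma_r(D)/\gamma_{r+1}(D)\) for \(1\le r\le M\).

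\emph{Naturality.} A linear map \(A:V\to W\) induces a Lie homomorphism \(\mathfrak L(A)\) and the algebra map \(T(A)\). The first preserves derived ideals, since \(\mathfrak L(A)(D(V))\subseteq D(W)\), and hence maps \([D,D]\) into \([D,D]\). This gives \(Q(A)\). The identity \eqref{cc:eq-Q-naturality} follows from \(\mathfrak L(A)[v,d]=[Av,\mathfrak L(A)d]\). The map \(T(A)\) intertwines the two abelianizations, so \(T(A)(J(V))\subseteq J(W)\) and therefore \(T(A)(J(V)^r)\subseteq J(W)^r\). Consequently \(T(A)\) induces a graded Hopf map on \(\operatorname{gr}_J\).

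To identify this map with \(\mathfrak N(A)\), I will use the description of the isomorphism \eqref{cc:eq-normal-cone}: it is induced by the identity on the primitive generators \(V\) in degree zero and \(Q\) in degree one. Both \(\operatorname{gr}_J T(A)\) and \(\mathfrak N(A)\) act on these generators as \(A\) and \(Q(A)\). The map \(\mathfrak N(A)\) is well defined because \(A\oplus Q(A)\) respects the relations \eqref{cc:eq-smash-relations}, which is exactly what \eqref{cc:eq-Q-naturality} provides. Two algebra maps that agree on generators coincide. I expect the only point needing care here to be checking that the isomorphism of \Cref{cc:thm-normal-cone} really is determined by these generators. This holds because \(\operatorname{gr}_JT(V)\) is generated by the classes of \(V\) and \(D/[D,D]\): by the filtered PBW description, \(J^r/J^{r+1}\) is spanned by products of elements of weight summing to \(r\), and by \eqref{cc:eq-graded-lie} these reduce to brackets of classes in \(Q\).

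\emph{Polynomial functoriality.} Here the layer \(G^{\mathrm{full}}_{n,r}(V)\) is identified with the multilinear-degree-\(n\) part of \(\gamma_r(D)/\gamma_{r+1}(D)\), which is a subquotient of \(V^{\otimes n}\). Under scaling by \(\lambda\) the map \(T(A)\) acts on degree-\(n\) tensors homogeneously of degree \(n\) in \(A\), so \(G^{\mathrm{full}}_{n,r}\) is a homogeneous degree-\(n\) polynomial functor. For concreteness it is the image of the Schur-functor decomposition of \(V^{\otimes n}\) under a \(\mathbf k[S_n]\)-idempotent-free subquotient of a \(GL\)-stable filtration. The algebraic dual \(E^{\mathrm{full}}_{n,r}\) is then contravariant by transposition. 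No finiteness of \(V\) enters anywhere: all the steps above are statements about spans and quotients, so the argument is valid for infinite-dimensional \(V\).
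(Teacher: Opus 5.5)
Your proposal is correct and follows the paper's proof essentially step for step: the kernel \(L\cap J^{M+1}=\gamma_{M+1}(D)\) gives the injection, \(Q(A)\) and the smash relations come from \(\mathfrak L(A)[v,d]=[Av,\mathfrak L(A)d]\), the two Hopf maps are identified by agreement on the primitive generators \(V\) and \(Q(V)\), and homogeneity comes from dilation acting by \(c^n\) on natural subquotients of \(V^{\otimes n}\). The only blemishes are cosmetic: ``multilinear-degree-\(n\) part'' should read ``tensor-degree-\(n\) part'', and the sentence about an ``idempotent-free subquotient'' is not meaningful and can simply be deleted.
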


\begin{proof}
The kernel of the displayed associative realization is
\(L\cap J^{M+1}=\gamma_{M+1}(D)\), proving the first assertion.
Functoriality of the free Lie algebra sends \(D(V)\) into \(D(W)\) and
\([D(V),D(V)]\) into \([D(W),D(W)]\), so it descends to \(Q(A)\).
The identity
\[
 L(A)[v,d]=[Av,L(A)d]
\]
gives \eqref{cc:eq-Q-naturality}.  Therefore the images of \(V\) and
\(Q(V)\) satisfy the two relations \eqref{cc:eq-smash-relations} in the
target, producing \eqref{cc:eq-normal-functor}.  Both this map and the
map induced on \({\operatorname{gr}}_JT(V)\) agree on the primitive generators \(V\) and
\(Q(V)\), hence everywhere.  Taking normal degree \(r\) and ordinary
degree \(n\) gives the induced natural transformation.  Scalar dilation
by \(c\) acts in tensor degree \(n\) by \(c^n\), and the construction
takes subquotients of \(V^{\otimes n}\).
\end{proof}

\begin{remark}[Algebraic scope]
\label{cc:rem-algebraic-scope}
The theorem is purely algebraic.  It allows arbitrary \(V\) because
\[
 (L\cap J^r)_n=0\qquad(2r>n).
\]
Thus the filtration has finite length in every tensor degree.  Completed
tensor algebras and continuity of \(\rho\) require separate topological
hypotheses.
\end{remark}

For the rest of the section we use the extended valuation
\begin{equation}
 \nu_F(0):=+\infty,
 \qquad
 \nu_F(x):=\max\{r:x\in F_rL\}\quad(x\ne0).
 \label{cc:eq-extended-F-valuation}
\end{equation}

\begin{proposition}[Filtered BCH and the initial ancestry symbol]
\label{cc:prop-filtered-bch}
\label{cc:cor-causal-special-fibre}
\label{cc:cor-bch-cobracket}
Let \(A:V\to V\) be linear and write \(L(A)\) for its functorial action
on \(L\).  Then
\[
 L(A)\gamma_r(D)\subseteq\gamma_r(D),
 \qquad
 [\gamma_r(D),\gamma_s(D)]\subseteq\gamma_{r+s}(D).
\]
Consequently every truncation in ordinary tensor degree is nilpotent and
filtered for
\(F_0L=L\), \(F_rL=L\cap J^r=\gamma_r(D)\).  It therefore defines BCH
laws on the ordinary-degree completions of \({\operatorname{gr}}_FL\) and of the extended
Rees Lie algebra
\begin{equation}
 \mathcal R_F(L):=
 \sum_{r\geq0}F_rL\,z^{-r}\mathbf k[z]
 \subseteq L[z,z^{-1}],
 \qquad
 \mathcal R_F(L)/z\mathcal R_F(L)\cong{\operatorname{gr}}_FL,
 \label{cc:eq-rees-lie}
\end{equation}
Reduction modulo \(z\) records the associated-graded deformation.  If
\(x\in F_rL\setminus F_{r+1}L\),
its full initial symbol is the nonzero class
\begin{equation}
 \operatorname{in}^{\mathrm{full}}_F(x)
 :=x+F_{r+1}L\in{\mathfrak L}_r(Q).
 \label{cc:eq-initial-symbol}
\end{equation}
Thus filtration order cannot decrease under linear transport (with the
value \(+\infty\) if the transported element vanishes), and
\begin{equation}
 \nu_F([x,y])\geq\nu_F(x)+\nu_F(y)
 \label{cc:eq-valuation-superadditive}
\end{equation}
for nonzero \(x,y\), with the convention \(\nu_F(0)=+\infty\).  Exact
valuation is preserved
by transport precisely when the induced associated-graded map does not kill
the initial symbol; injectivity of that map on the containing graded layer
is sufficient.  Likewise,
equality holds in \eqref{cc:eq-valuation-superadditive} precisely when
the associated-graded bracket of the two initial symbols is nonzero.
Otherwise transport,
bracketing, cancellation in a sum, or a BCH component may raise the order
or annihilate the element.  A full Rees lift of a causal logarithm requires
a Rees jet or a splitting and is not supplied canonically by the filtration.

In the ordinary-degree completion, exponentiation identifies the special
fibre with
\begin{equation}
 \exp\!\left(V_{\mathrm{ab}}\ltimes_\rho\widehat{{\mathfrak L}}(Q)\right).
 \label{cc:eq-special-fibre-group}
\end{equation}
Here
\(\widehat{{\mathfrak L}}(Q)=\prod_{n\geq1}{\mathfrak L}(Q)_n\) denotes completion in
ordinary tensor degree.
Moreover, the associated-graded bracket induces
\begin{equation}
 \mathfrak b_{a,b}:G^{\mathrm{full}}_{n,a}(V)\otimes
 G^{\mathrm{full}}_{m,b}(V)
 \longrightarrow G^{\mathrm{full}}_{n+m,a+b}(V),
 \qquad \mathfrak b_{a,b}([x],[y])=[x,y],
 \label{cc:eq-ancestry-bracket}
\end{equation}
and this is the mixed quadratic symbol of BCH:
\begin{equation}
 \left.\partial_s\partial_t\right|_{s=t=0}
 {\operatorname{BCH}}(sx,ty)=\tfrac12[x,y].
 \label{cc:eq-bch-mixed-hessian}
\end{equation}
If the three layers in \eqref{cc:eq-ancestry-bracket} are finite
dimensional, duality gives
\begin{equation}
 \mathfrak b_{a,b}^{\vee}:E^{\mathrm{full}}_{n+m,a+b}(V)
 \longrightarrow E^{\mathrm{full}}_{n,a}(V)\otimes
 E^{\mathrm{full}}_{m,b}(V).
 \label{cc:eq-ancestry-cobracket}
\end{equation}
\end{proposition}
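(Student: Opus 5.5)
The plan is to treat every assertion as a consequence of \Cref{cc:thm-normal-cone} together with the strong filtration property \eqref{cc:eq-strong-filtration} established in its proof.

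\textbf{Invariance and bracket compatibility.} Functoriality of $\mathfrak L$ gives $L(A)[x,y]=[L(A)x,L(A)y]$ and $L(A)V\subseteq V$. Hence $L(A)D\subseteq D$, and an induction on $r$ gives $L(A)\gamma_r(D)\subseteq\gamma_r(D)$. The inclusion $[\gamma_r(D),\gamma_s(D)]\subseteq\gamma_{r+s}(D)$ is the standard lower-central estimate. I would prove it by induction on $s$ via Jacobi: $[\gamma_r,[D,\gamma_s]]\subseteq[[\gamma_r,D],\gamma_s]+[D,[\gamma_r,\gamma_s]]$. This is exactly \eqref{cc:eq-strong-filtration}, so it can simply be quoted.

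\textbf{Nilpotence, Rees algebra and initial symbols.} Nilpotence of each ordinary-degree truncation follows from grading, since brackets raise tensor degree. By \Cref{cc:rem-algebraic-scope} the filtration also has finite length in each degree. BCH therefore converges degreewise on the ordinary-degree completions, and its filtration compatibility comes from the bracket inclusion. Next I would check that $\mathcal R_F(L)$ is a Lie subalgebra. Indeed, $[F_aL\,z^{-a},F_bL\,z^{-b}]\subseteq F_{a+b}L\,z^{-a-b}$, and multiplying by $z$ stays inside because $F_{r}\supseteq F_{r+1}$. Reduction modulo $z$ sends $F_rL\,z^{-r}$ onto $F_rL/F_{r+1}L$, giving ${\operatorname{gr}}_FL$. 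The identification of the layers with $\mathfrak L_r(Q)$ is \eqref{cc:eq-primitive-layers}.

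\textbf{Valuation statements.} Monotonicity under $L(A)$ and the superadditivity \eqref{cc:eq-valuation-superadditive} follow immediately from the two inclusions. The equality criteria are tautological: $[x,y]$ modulo $F_{a+b+1}$ is the graded bracket of the initial symbols. So $\nu_F([x,y])=a+b$ exactly when that graded bracket is nonzero, and the same argument applies to $L(A)$ acting on layers. I would give one example showing that transport can raise the order, say $A=0$, and note that a Rees lift requires a choice because ${\operatorname{gr}}$ only sees the quotient.

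\textbf{Special fibre, BCH symbol and duality.} The special fibre is the completed ${\operatorname{gr}}_FL\cong V_{\mathrm{ab}}\ltimes_\rho\widehat{\mathfrak L}(Q)$ by \eqref{cc:eq-graded-lie}, and BCH exponentiates it degreewise to \eqref{cc:eq-special-fibre-group}. The bracket map \eqref{cc:eq-ancestry-bracket} is well defined by the strong filtration. The mixed derivative \eqref{cc:eq-bch-mixed-hessian} is read off from ${\operatorname{BCH}}(X,Y)=X+Y+\tfrac12[X,Y]+\dots$, because the higher terms have bidegree at least $(1,2)$ or $(2,1)$. The dual map \eqref{cc:eq-ancestry-cobracket} is the transpose in finite dimensions.

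The only delicate point is bookkeeping for the completions and the Rees algebra. I expect the rest to be routine.
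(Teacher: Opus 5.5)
Your proposal is correct and follows essentially the same route as the paper. Both quote functoriality and the strong filtration \eqref{cc:eq-strong-filtration} for the two inclusions, read the special fibre off coefficientwise ($F_rL$ at $z^{-r}$ in $\mathcal R_F(L)$ versus $F_{r+1}L$ in $z\mathcal R_F(L)$), derive the valuation criteria from transport and bracketing of initial symbols, and obtain the special-fibre group, the BCH quadratic symbol and the cobracket from \eqref{cc:eq-graded-lie}, the BCH expansion, and finite-dimensional duality. Your only departures are cosmetic: you re-derive the lower-central inclusion by a Jacobi induction instead of citing it, and you add the $A=0$ example.
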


\begin{proof}
Functoriality sends \(D\) into \(D\), and induction sends each term of its
lower central series into itself.  The bracket inclusion is
\eqref{cc:eq-strong-filtration}.  Every homogeneous BCH term is an
iterated Lie bracket, so its filtration degree is at least the sum of the
positive degrees of its entries.  This proves closure of
\eqref{cc:eq-rees-lie} and filteredness of BCH\@.  The coefficient of
\(z^{-r}\) in \(\mathcal R_F(L)\) is \(F_rL\), whereas its coefficient
in \(z\mathcal R_F(L)\) is \(F_{r+1}L\); hence the stated special-fibre
isomorphism follows degree by degree.  The description of the
initial symbol follows from \eqref{cc:eq-primitive-layers}.  Naturality
of the normal-cone isomorphism says that its order-\(r\) class is sent to
the induced associated-graded image.  If that image is nonzero, the
valuation remains \(r\); if it vanishes, the image lies in
\(F_{r+1}L\) (or is zero), so the valuation increases rather than
decreases.  Finally, if \(a=\nu_F(x)\) and \(b=\nu_F(y)\), then
\([x,y]\in F_{a+b}L\), which proves
\eqref{cc:eq-valuation-superadditive}.  Its class in
\(F_{a+b}L/F_{a+b+1}L\) is exactly
\([\operatorname{in}_F^{\mathrm{full}}(x),
  \operatorname{in}_F^{\mathrm{full}}(y)]\).  This class is nonzero if
and only if the valuation of \([x,y]\) is exactly \(a+b\); if the bracket
itself vanishes, both statements are read using \(\nu_F(0)=+\infty\).
Applying the degreewise exponential to
\(\mathcal R_F(L)/z\mathcal R_F(L)\cong{\operatorname{gr}}_FL\) and using
\eqref{cc:eq-graded-lie} gives \eqref{cc:eq-special-fibre-group}.
Well-definedness of \(\mathfrak b_{a,b}\) is exactly
\([F_aL,F_bL]\subseteq F_{a+b}L\), together with
\([F_{a+1}L,F_bL]+[F_aL,F_{b+1}L]\subseteq F_{a+b+1}L\).
The quadratic term of BCH is \(\frac12[x,y]\), which proves
\eqref{cc:eq-bch-mixed-hessian}; the canonical identification
\((G\otimes H)^\vee\cong G^\vee\otimes H^\vee\) in finite dimension gives
\eqref{cc:eq-ancestry-cobracket}.
\end{proof}

Thus no completion enters the normal-cone theorem; ordinary-degree
completion is used only to sum BCH.

The first normal layer \(Q\) admits an intrinsic presentation that contains
both the curvature and its Bianchi relation.  It is the Koszul model for
the infinitesimal Alexander invariant of a free Lie algebra; compare
\cite[Thm.~6.2]{PapadimaSuciu04}.  Set \(\mathsf S:=S(V)\), give
both polynomial and exterior generators degree one, and define
\begin{align}
 d_3(f\otimes u\wedge v\wedge w)
 &:=fu\otimes v\wedge w-fv\otimes u\wedge w
      +fw\otimes u\wedge v,                                    
 \label{cc:eq-d3}\\
 d_2(f\otimes u\wedge v)&:=fu\otimes v-fv\otimes u.
 \label{cc:eq-d2}
\end{align}

\begin{theorem}[Universal Bianchi--Koszul module]
\label{cc:thm-bianchi-module}
The map
\[
 \kappa:\mathsf S\otimes\Lambda^2V\longrightarrow Q,
 \qquad
 \kappa(f\otimes u\wedge v)=f\cdot\overline{[u,v]},
\]
induces natural graded \(\mathsf S\)-module isomorphisms
\begin{equation}
 Q\cong\operatorname{coker}d_3
 \cong\ker\bigl(\mathsf S\otimes V\xrightarrow{\mu}\mathsf S\bigr),
 \qquad \mu(f\otimes v)=fv.
 \label{cc:eq-koszul-model}
\end{equation}
Equivalently, there are exact sequences
\begin{equation}
 \mathsf S\otimes\Lambda^3V\xrightarrow{d_3}
 \mathsf S\otimes\Lambda^2V\xrightarrow{\kappa}Q\longrightarrow0
 \label{cc:eq-bianchi-presentation}
\end{equation}
Writing \(\bar d_2\) for the map induced by \(d_2\) on
\(\operatorname{coker}d_3\cong Q\), one also has
\begin{equation}
 0\longrightarrow Q\xrightarrow{\bar d_2}\mathsf S\otimes V
 \xrightarrow{\mu}\mathsf S\xrightarrow{\varepsilon}\mathbf k
 \longrightarrow0.
 \label{cc:eq-first-syzygy}
\end{equation}
For every \(\mathsf S\)-module \(E\), viewed as a
\(V_{\mathrm{ab}}\)-module through its commuting multiplication
operators, there is a natural isomorphism
\begin{equation}
 {\operatorname{Hom}}_{\mathsf S}(Q,E)\cong Z^2_{\mathrm{CE}}(V_{\mathrm{ab}};E).
 \label{cc:eq-represents-cocycles}
\end{equation}
If \(V\) is finite dimensional, then, as a polynomial functor over
\(\mathbf k\), for \(n\geq2\),
\begin{equation}
 Q_n\cong\mathbf S_{(n-1,1)}V.
 \label{cc:eq-hook}
\end{equation}
\end{theorem}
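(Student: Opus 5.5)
The plan is to first identify \(Q\) with the kernel of \(\mu\), and then read off the presentation and the cocycle description from the Koszul complex of \(V\) over \(\mathsf S\). The key tool is the faithful first jet of \Cref{cc:cor-faithful-jets}. By \Cref{cc:thm-normal-cone}, \(Q=D/[D,D]\cong J/J^2\cap L\), and more usefully \(J/J^2\cong Q\otimes\mathsf S\) as a module. I would instead use the standard model of \(L/[D,D]\), the free metabelian Lie algebra.

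Define a map \(\theta:Q\to\mathsf S\otimes V\) by sending \(\overline{[u,d]}\) to the "Fox derivative" of \(d\). Concretely, a word \(v_1\cdots v_n\in J\) is sent to \(\sum_i \mathfrak a(v_1\cdots\widehat{v_i}\cdots v_n)\otimes v_i\), with signs arranged so that commutators are sent to \(d_2\)-images. The cleanest route is Magnus/Fox calculus. The map \(T(V)\to\mathsf S\otimes V\), \(w\mapsto\sum_i \mathfrak a(\text{prefix})\,\mathfrak a(\text{suffix})\otimes v_i\), is the universal derivation into a module, and it is only available after commutativizing.

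I would check three things in order:
\begin{enumerate}
\item \(\theta\) kills \([D,D]\), because a derivation into a module on which \(D\) acts by zero kills brackets of two elements of \(D\).
\item \(\theta\) is \(\mathsf S\)-linear for \(\rho\), by the Leibniz rule together with \(\mathfrak a(d)=0\).
\item \(\theta\) lands in \(\ker\mu\), because \(\mu\theta(x)=\mathfrak a(x)\cdot(\text{degree})\) vanishes on \(D\).
\end{enumerate}
By construction \(\theta\kappa=d_2\). Moreover \(\kappa d_3=0\) by Jacobi: \(u\cdot\overline{[v,w]}-v\cdot\overline{[u,w]}+w\cdot\overline{[u,v]}=\overline{[u,[v,w]]+[v,[w,u]]+[w,[u,v]]}=0\).

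Next I would show that \(\kappa\) is surjective. \(Q\) is generated as an \(\mathsf S\)-module by the classes \(\overline{[u,v]}\), since every element of \(D\) is a sum of iterated brackets \([v_1,[v_2,\ldots,[v_{k-1},v_k]]]\) modulo \([D,D]\). The Koszul complex \(\mathsf S\otimes\Lambda^\bullet V\to\mathbf k\) is exact in characteristic zero for arbitrary \(V\), taking a filtered colimit over finite-dimensional subspaces. Hence \(\operatorname{coker}d_3\xrightarrow{\bar d_2}\ker\mu\) is an isomorphism and \(\mu\) has cokernel \(\mathbf k\). Since \(\theta\kappa=d_2\), the composite \(\operatorname{coker}d_3\twoheadrightarrow Q\xrightarrow{\theta}\ker\mu\) equals this isomorphism. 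Therefore \(\kappa\) is injective on \(\operatorname{coker}d_3\), and both maps are isomorphisms. This gives \eqref{cc:eq-koszul-model}, \eqref{cc:eq-bianchi-presentation} and \eqref{cc:eq-first-syzygy} together. Naturality in \(V\) follows because \(\kappa\), \(d_2\) and \(d_3\) are functorial.

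For \eqref{cc:eq-represents-cocycles}, apply \({\operatorname{Hom}}_{\mathsf S}(-,E)\) to the right-exact presentation. An \(\mathsf S\)-map \(Q\to E\) is the same as a linear map \(c:\Lambda^2V\to E\) with \(c\circ d_3=0\), that is \(u\,c(v,w)-v\,c(u,w)+w\,c(u,v)=0\). This is exactly the Chevalley--Eilenberg 2-cocycle condition for the abelian Lie algebra acting on \(E\), because the bracket terms vanish.

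For \eqref{cc:eq-hook}, take the degree-\(n\) part of \(0\to Q_n\to S^{n-1}V\otimes V\to S^nV\to0\). By Pieri, \(S^{n-1}\otimes V\cong\mathbf S_{(n)}\oplus\mathbf S_{(n-1,1)}\), and multiplication is the projection onto \(\mathbf S_{(n)}\). Semisimplicity in characteristic zero then gives \(Q_n\cong\mathbf S_{(n-1,1)}V\). Naturality of this identification follows from the naturality of the sequence.

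The main obstacle is making \(\theta\) well defined on \(Q\) with the correct signs and \(\rho\)-equivariance. I would define it on all of \(T(V)\) as the Fox derivative into \(\mathsf S\otimes V\) with bimodule structure through \(\mathfrak a\), and then verify the single identity \(\theta([v,d])=\mathfrak a(v)\theta(d)\) for \(d\in D\).
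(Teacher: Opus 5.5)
\textbf{There is a genuine gap in the definition of $\theta$, the map on which your entire injectivity argument rests.}

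The map you actually commit to is the derivation $T(V)\to\mathsf S\otimes V$ for the bimodule in which $T(V)$ acts on \emph{both} sides through $\mathfrak a$, i.e.\ $v_1\cdots v_n\mapsto\sum_i\mathfrak a(v_1\cdots\widehat{v_i}\cdots v_n)\otimes v_i$. This is also what your Euler-type identity $\mu\theta(x)=\mathfrak a(x)\cdot(\text{degree})$ reflects. That map is just $\mathfrak a$ followed by the K\"ahler differential $\mathsf S\to\mathsf S\otimes V$. It therefore vanishes on $J=\ker\mathfrak a\supseteq D$; already
$\theta(uv-vu)=(u\otimes v+v\otimes u)-(v\otimes u+u\otimes v)=0$.

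Three consequences follow:
\begin{itemize}
\item ``By construction $\theta\kappa=d_2$'' is false.
\item The identity $\theta([v,d])=\mathfrak a(v)\theta(d)$ holds only as $0=0$.
\item The conclusion that $\bar\kappa:\operatorname{coker}d_3\to Q$ is injective does not follow.
\end{itemize}
The ``signs'' you leave unspecified are not cosmetic: they are the whole content of the construction.

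\textbf{The fix is an asymmetric bimodule.} Let $T(V)$ act on $\mathsf S\otimes V$ on the left through $\mathfrak a$ and on the right through the counit $\varepsilon$. The corresponding derivation is the one-sided Fox derivative $\theta(1)=0$, $\theta(xv)=\mathfrak a(x)\otimes v$. It satisfies $\theta(xy)=\mathfrak a(x)\theta(y)+\theta(x)\varepsilon(y)$. Every $d\in D$ has $\mathfrak a(d)=0=\varepsilon(d)$, so:
\begin{itemize}
\item $\theta(d_1d_2)=0$, hence $[D,D]$ is killed;
\item $\theta([v,d])=\mathfrak a(v)\theta(d)$;
\item $\mu\theta=\mathfrak a$ on $T(V)_+$, with no degree factor, so $\theta(D)\subseteq\ker\mu$;
\item $\theta([u,v])=u\otimes v-v\otimes u=d_2(1\otimes u\wedge v)$.
\end{itemize}
With this $\theta$ the rest of your plan is correct: surjectivity of $\kappa$ via right-normed brackets, $\kappa d_3=0$ by Jacobi, and $\theta\bar\kappa=\bar d_2$ being an isomorphism onto $\ker\mu$ by Koszul exactness. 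Your cocycle and Pieri steps coincide with the paper's.

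\textbf{Comparison with the paper.} The repaired argument is a genuinely different route.
\begin{itemize}
\item The paper produces the map out of $Q$ by building the free metabelian Lie algebra $V\oplus\operatorname{coker}d_3$, whose Jacobi identity is exactly the Bianchi relation, and invoking its universal property. This also identifies $L/[D,D]$ as a Lie algebra.
\item Your route is the Lie-algebra relation-module (Fox--Magnus) sequence $D/[D,D]\to\mathsf S\otimes_{T(V)}T(V)_+\cong\mathsf S\otimes V$ inside $U(L)=T(V)$. It avoids checking Jacobi and the universal property, but depends on the enveloping realization.
\end{itemize}
Your opening remarks about $J/J^2$ and the metabelian model are never used and can be dropped.
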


\begin{proof}
Let \(B:=\operatorname{coker}d_3\), and denote by \(c(u,v)\) the class
of \(1\otimes u\wedge v\).  Its defining relation is
\begin{equation}
 u\cdot c(v,w)-v\cdot c(u,w)+w\cdot c(u,v)=0.
 \label{cc:eq-bianchi-relation}
\end{equation}
Define on \(V\oplus B\)
\[
 [(u,a),(v,b)]:=(0,c(u,v)+u\cdot b-v\cdot a).
\]
Jacobi for three vectors in \(V\) is
\eqref{cc:eq-bianchi-relation}; Jacobi for two vectors and one element
of \(B\) is commutativity of the \(\mathsf S\)-action, and all remaining
cases are immediate.  If \(f:V\to\mathfrak g\) and
\(\mathfrak g''=0\), then
\([{\operatorname{ad}}_{f(u)},{\operatorname{ad}}_{f(v)}]={\operatorname{ad}}_{[f(u),f(v)]}\) vanishes on
\(\mathfrak g'\).  Hence \(\mathfrak g'\) is an \(\mathsf S\)-module,
and \(c_f(u,v)=[f(u),f(v)]\) satisfies
\eqref{cc:eq-bianchi-relation}.  It induces a unique
\(\mathsf S\)-linear map \(\bar f:B\to\mathfrak g'\), and
\((u,b)\mapsto f(u)+\bar f(b)\) is the unique Lie map extending \(f\).
Thus \(V\oplus B\) is the free metabelian Lie algebra, and
\(L/[D,D]\cong V\oplus B\).  The derived ideal of \(V\oplus B\) is all
of \(B\), because \(B\) is generated as an \(\mathsf S\)-module by the
elements \(c(u,v)\); comparison of derived ideals yields \(Q\cong B\).

The Koszul complex
\[
 \mathsf S\otimes\Lambda^3V\xrightarrow{d_3}
 \mathsf S\otimes\Lambda^2V\xrightarrow{d_2}
 \mathsf S\otimes V\xrightarrow{\mu}\mathsf S
 \xrightarrow{\varepsilon}\mathbf k\longrightarrow0
\]
is exact.  In infinite dimension this follows from the finite-dimensional
case because every cycle belongs to the complex of a finite-dimensional
subspace.  This proves \eqref{cc:eq-koszul-model} and
\eqref{cc:eq-first-syzygy}.
An \(\mathsf S\)-linear
map from \(\mathsf S\otimes\Lambda^2V\) to \(E\) is an alternating map
\(b:\Lambda^2V\to E\), and it kills \(\operatorname{im}d_3\) exactly
when \(b\) satisfies the abelian Chevalley--Eilenberg cocycle equation.
This proves \eqref{cc:eq-represents-cocycles}.  Finally, in degree
\(n\), \eqref{cc:eq-first-syzygy} reads
\[
 Q_n=\ker\bigl({\operatorname{Sym}}^{n-1}V\otimes V\to{\operatorname{Sym}}^nV\bigr).
\]
Pieri's rule \cite[\S6.1 and Ch.~8]{Fulton97} decomposes the source as
\(\mathbf S_{(n)}V\oplus\mathbf S_{(n-1,1)}V\), with Schur functors of
excess row length understood to vanish.  Multiplication is the projection
onto \(\mathbf S_{(n)}V\), so its kernel is
\(\mathbf S_{(n-1,1)}V\).  The construction and the corresponding Young
symmetrizer are defined over every characteristic-zero field, proving
\eqref{cc:eq-hook} over \(\mathbf k\).
\end{proof}

Let \(s:V\hookrightarrow L\) be the generator section and define
\begin{equation}
 \nabla_ud:=[s(u),d],\qquad
 \Omega(u,v):=[s(u),s(v)]\in D.
 \label{cc:eq-connection-curvature}
\end{equation}

\begin{corollary}[Curvature raises ancestry]
\label{cc:prop-curvature}
The exact identities
\begin{align}
 [\nabla_u,\nabla_v]&={\operatorname{ad}}_{\Omega(u,v)}|_D,
 \label{cc:eq-curvature-commutator}\\
 \nabla_u\Omega(v,w)-\nabla_v\Omega(u,w)+\nabla_w\Omega(u,v)&=0
 \label{cc:eq-curvature-bianchi}
\end{align}
hold.  Moreover,
\(D\) is the smallest Lie subalgebra of \(L\) which contains
\(\Omega(\Lambda^2V)\) and is stable under every \(\nabla_u\), and
\begin{equation}
 \nabla_u\gamma_r(D)\subseteq\gamma_r(D),\qquad
 [\nabla_u,\nabla_v]\gamma_r(D)\subseteq\gamma_{r+1}(D),
 \label{cc:eq-curvature-raises}
\end{equation}
and curvature has the well-defined transverse symbol
\begin{equation}
 \mathcal R^{(r)}_{u,v}:
 \gamma_r(D)/\gamma_{r+1}(D)\longrightarrow
 \gamma_{r+1}(D)/\gamma_{r+2}(D),
 \qquad[d]\longmapsto[\Omega(u,v),d].
 \label{cc:eq-curvature-symbol}
\end{equation}
The induced \(V\)-actions on each fixed ancestry layer commute, and the
degree-one curvature class is
\(\kappa(1\otimes u\wedge v)\in Q\).
\end{corollary}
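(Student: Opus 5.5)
The plan is to derive all assertions of the corollary from the Jacobi identity, the normal-cone theorem (\Cref{cc:thm-normal-cone}), and the strong filtration \eqref{cc:eq-strong-filtration}.

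\emph{Curvature identities.} For \(d\in D\), Jacobi gives
\[
 [\nabla_u,\nabla_v]d=[s(u),[s(v),d]]-[s(v),[s(u),d]]=[[s(u),s(v)],d]={\operatorname{ad}}_{\Omega(u,v)}d,
\]
which is \eqref{cc:eq-curvature-commutator}. The Bianchi identity \eqref{cc:eq-curvature-bianchi} is the Jacobi identity for the three generators \(s(u),s(v),s(w)\), after rewriting \(-[s(v),[s(u),s(w)]]=[s(v),[s(w),s(u)]]\).

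\emph{Minimality of \(D\).} Let \(E\) be the smallest Lie subalgebra containing \(\Omega(\Lambda^2V)\) and stable under every \(\nabla_u\). Clearly \(E\subseteq D\), since \(D\) is an ideal containing all \(\Omega(u,v)\). Conversely, \(D\) is spanned by brackets of Lie monomials of degree at least two in the generators. I will argue by induction on tensor degree: every such monomial can be written as a left-normed bracket \([s(u_1),[s(u_2),\ldots,[s(u_{n-1}),s(u_n)]]]\), because the Lie-monomial span in degree \(n\) is spanned by right-nested brackets of generators. This equals \(\nabla_{u_1}\cdots\nabla_{u_{n-2}}\Omega(u_{n-1},u_n)\), so it lies in \(E\). (Subalgebra closure is then not even needed.)

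\emph{Filtration statements.} The inclusion \(\nabla_u\gamma_r(D)\subseteq\gamma_r(D)\) follows from \eqref{cc:eq-strong-filtration} with \(a=0\), or directly by induction, since \({\operatorname{ad}}_{s(u)}\) is a derivation preserving the ideal \(D\). The second inclusion in \eqref{cc:eq-curvature-raises} follows from \eqref{cc:eq-curvature-commutator} together with \([D,\gamma_r(D)]=\gamma_{r+1}(D)\), which holds by definition.

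\emph{The map \(\mathcal R^{(r)}_{u,v}\).} Well-definedness uses \([\Omega(u,v),\gamma_{r+1}(D)]\subseteq\gamma_{r+2}(D)\), again by definition of the lower central series of \(D\).

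\emph{Commutation of the \(V\)-actions.} Because the induced operators satisfy \([\nabla_u,\nabla_v]\gamma_r(D)\subseteq\gamma_{r+1}(D)\), the induced operators on \(\gamma_r(D)/\gamma_{r+1}(D)\) commute. This matches the \(S(V)\)-module structure \(\rho\) of \Cref{cc:thm-normal-cone}.

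\emph{Degree-one class.} The class of \(\Omega(u,v)\) in \(Q\) is \(\overline{[u,v]}=\kappa(1\otimes u\wedge v)\) by the definition of \(\kappa\) in \Cref{cc:thm-bianchi-module}.

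The only step needing care is the spanning claim in the minimality argument. I would cite the standard fact that right-normed brackets of generators span \(\mathfrak L(V)\) \cite{Reutenauer93}, applied in degrees \(n\ge2\).
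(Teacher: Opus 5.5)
Your proposal is correct and, except for the minimality of $D$, follows the paper's proof. Both use Jacobi for the two identities. Both use $\operatorname{ad}_L$-stability of $\gamma_r(D)$ for the first inclusion: you take it from the strong filtration with $a=0$, while the paper writes out the same induction. Both use $\Omega(u,v)\in D$ and $\gamma_{r+1}(D)=[D,\gamma_r(D)]$ for the second inclusion and for the curvature symbol, and both pass to the associated graded for the remaining assertions.

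For minimality the routes differ. The paper observes that $\{x\in L:\operatorname{ad}_xK\subseteq K\}$ is a Lie subalgebra, since $[\operatorname{ad}_x,\operatorname{ad}_y]=\operatorname{ad}_{[x,y]}$, and that it contains $s(V)$. Hence the $\nabla$-generated subalgebra $K$ is an ideal of $L$ containing the curvatures, so it contains $D$, the ideal they generate. You instead use $D=\bigoplus_{n\ge2}L_n$ and the spanning of $L_n$ by right-normed brackets of generators. This writes each spanning vector of $D$ explicitly as $\nabla_{u_1}\cdots\nabla_{u_{n-2}}\Omega(u_{n-1},u_n)$.

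Your version gives a sharper fact: $D$ is already the smallest $\nabla$-stable \emph{linear subspace} containing $\Omega(\Lambda^2V)$, with no subalgebra closure needed. The cost is citing the right-normed spanning lemma, whose standard proof is the same stabilizer-subalgebra argument the paper applies directly.

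Two wording fixes. A Lie monomial is a linear combination of right-normed brackets, not equal to a single one. And no induction on degree is needed once that lemma is cited.
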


\begin{proof}
Equations \eqref{cc:eq-curvature-commutator} and
\eqref{cc:eq-curvature-bianchi} are the two corresponding forms of
Jacobi.  Let \(K\) be the Lie subalgebra generated from
\(\Omega(\Lambda^2V)\) by the operators \(\nabla_u\).  Stability under
all \({\operatorname{ad}}_{s(u)}\), together with
\([{\operatorname{ad}}_x,{\operatorname{ad}}_y]={\operatorname{ad}}_{[x,y]}\), implies stability under \({\operatorname{ad}}_x\) for
every \(x\in L\), because \(s(V)\) generates \(L\).  Thus \(K\) is a
Lie ideal of \(L\).  It contains every \([s(u),s(v)]\), so it contains
the derived ideal \(D\); conversely its generators and all closure
operations remain inside the ideal \(D\).  Hence \(K=D\), proving the
minimality assertion.

Since every \(\gamma_r(D)\) is stable under \({\operatorname{ad}}_L\), the first
inclusion in \eqref{cc:eq-curvature-raises} holds.  Explicitly, the
case \(r=1\) uses that \(D\) is an ideal, and if it holds at level \(r\),
then Jacobi gives
\[
 [L,[D,\gamma_r(D)]]
 \subseteq[[L,D],\gamma_r(D)]+[D,[L,\gamma_r(D)]]
 \subseteq[D,\gamma_r(D)].
\]
This proves the induction.  The second inclusion follows
from \eqref{cc:eq-curvature-commutator}, because
\(\Omega(u,v)\in D\).  Finally,
\([D,\gamma_{r+1}(D)]\subseteq\gamma_{r+2}(D)\), so
\eqref{cc:eq-curvature-symbol} is independent of the representative.
The remaining assertions follow on passing to the associated graded.
\end{proof}

Thus each fixed ancestry layer is flat, while curvature survives as a
transverse map to the next layer.  The whole positive tower is the free
Lie algebra generated by the Bianchi module \(Q\).  Notice that
\eqref{cc:eq-represents-cocycles} represents actual cocycles
\(Z^2_{\mathrm{CE}}\), not cohomology classes: the canonical generator
section fixes the curvature tensor before any change-of-section quotient.

\medskip
\noindent\textit{Finite-dimensional characters.}

For the remainder of this section, assume that \(\mathbf k=\mathbb C\) and
\(1\leq d:=\dim V<\infty\).  All character identities are taken in the
\((t,u)\)-adic completion of the \(\lambda\)-ring of polynomial
\(\mathrm{GL}(V)\)-representations.  Let \(\psi_k\) denote its \(k\)-th
Adams operation, extended by \(\psi_k(t)=t^k\), and put
\begin{equation}
 \mathcal Q(t):=\sum_{n\geq2}\operatorname{ch}(Q_n)t^n
 =\sum_{n\geq2}s_{(n-1,1)}t^n.
 \label{cc:eq-Q-character}
\end{equation}

\begin{corollary}[Witt ancestry character]
\label{cc:cor-witt-character}
For the primal full layers \(G^{\mathrm{full}}_{n,r}(V)\) of
\eqref{cc:eq-exact-layer}, one has
\begin{align}
 \mathcal A(t,u)
 &:=\sum_{r\geq1}\sum_{n\geq2}
 \operatorname{ch}\!\left(G^{\mathrm{full}}_{n,r}(V)\right)t^nu^r
 \notag\\
 &=\sum_{k\geq1}\frac{\mu(k)}{k}
 \log\!\left(\frac{1}{1-u^k\psi_k(\mathcal Q(t))}\right),
 \label{cc:eq-plethystic-witt}\\
 \sum_n\operatorname{ch}\!\left(G^{\mathrm{full}}_{n,r}(V)\right)t^n
 &=\frac1r\sum_{k\mid r}\mu(k)
 \psi_k(\mathcal Q(t))^{r/k}.
 \label{cc:eq-layer-witt}
\end{align}
Moreover,
\begin{equation}
 q_d(t):=\sum_{n\geq2}(\dim Q_n)t^n
 =1-\frac{1-dt}{(1-t)^d},
 \label{cc:eq-Q-hilbert}
\end{equation}
and
\begin{equation}
 \sum_{r\geq1,n\geq2}b_{n,r}(d)t^nu^r
 =\sum_{k\geq1}\frac{\mu(k)}{k}
 \log\!\left(\frac{1}{1-u^kq_d(t^k)}\right),
 \label{cc:eq-dimension-witt}
\end{equation}
where \(b_{n,r}(d)=\dim G^{\mathrm{full}}_{n,r}(V)\).
Equivalently, for each fixed ancestry depth,
\begin{equation}
 \sum_n b_{n,r}(d)t^n
 =\frac1r\sum_{k\mid r}\mu(k)
 q_d(t^k)^{r/k}.
 \label{cc:eq-fixed-depth-dimension}
\end{equation}
Moreover,
\begin{equation}
 \dim Q_n
 =\frac{(d-1)(n-1)}{n}\binom{d+n-2}{n-1},
 \qquad n\geq2,
 \label{cc:eq-Q-dimension}
\end{equation}
and \(G^{\mathrm{full}}_{n,r}(V)=0\) whenever \(2r>n\).
\end{corollary}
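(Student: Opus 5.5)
The plan is to combine the layer identification $G^{\mathrm{full}}_{n,r}(V)\cong(\mathfrak L_r(Q))_n$ from \eqref{cc:eq-exact-layer} with the classical plethystic Witt formula for free Lie algebras on a graded module. By \Cref{cc:thm-normal-cone}, $\bigoplus_{r,n}G^{\mathrm{full}}_{n,r}=\mathfrak L(Q)$, bigraded by bracket length $r$ and tensor degree $n$. Here $Q=\bigoplus_{n\ge2}Q_n$ is a graded polynomial $\mathrm{GL}(V)$-module. For $n\ge 2$, \eqref{cc:eq-hook} gives $Q_n\cong\mathbf S_{(n-1,1)}V$; for $n\le1$ we have $Q_n=0$, since $D$ has no degree-one part. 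Hence the bigraded character of the generators is $u\,\mathcal Q(t)$.

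The first step is the graded PBW identity $U(\mathfrak L(Q))=T(Q)$ with character $1/(1-u\mathcal Q(t))$. Taking $\lambda$-ring logarithms gives
\[
\log\frac1{1-u\mathcal Q}=\sum_{k\ge1}\frac1k\psi_k(\mathcal A),
\]
because $\operatorname{ch}\operatorname{Sym}(W)=\exp\bigl(\sum_k\psi_k(\operatorname{ch}W)/k\bigr)$ in the completed $\lambda$-ring. Here $\psi_k(u)=u^k$ and $\psi_k(t)=t^k$ is built in, and all terms converge $(t,u)$-adically since $\mathcal Q$ has $t$-order at least $2$. Möbius inversion of $\sum_k\psi_k(\mathcal A)/k=\log(1/(1-u\mathcal Q))$ yields \eqref{cc:eq-plethystic-witt}, using $\psi_k\psi_m=\psi_{km}$ and that $\psi_k$ is a ring homomorphism commuting with $\log$. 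Extracting the coefficient of $u^r$ from $\sum_k\frac{\mu(k)}k\sum_m\frac{u^{km}\psi_k(\mathcal Q)^m}{m}$ with $km=r$ gives \eqref{cc:eq-layer-witt}. This step, verifying the $\lambda$-ring bookkeeping (Adams operations acting on the variables $t,u$, and validity of the inversion in the completion), is the one requiring care. I would handle it by reducing to formal power series in finitely many torus variables, where $\psi_k$ is $x_i\mapsto x_i^k$ and everything is a ring identity.

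Dimension formulas follow by specializing the character at the identity, i.e. all torus variables equal to $1$, which sends $\psi_k(f)(t)$ to $f(t^k)$. Using \eqref{cc:eq-first-syzygy} degreewise, $\dim Q_n=d\binom{d+n-2}{n-1}-\binom{d+n-1}{n}$ for $n\ge1$, with $\dim Q_0=0$ after accounting for $\varepsilon$. Summing gives
\[
q_d(t)=\frac{dt}{(1-t)^d}-\Bigl(\frac1{(1-t)^d}-1\Bigr)=1-\frac{1-dt}{(1-t)^d},
\]
which is \eqref{cc:eq-Q-hilbert}. The closed form \eqref{cc:eq-Q-dimension} is the simplification $d\binom{d+n-2}{n-1}-\frac{d+n-1}{n}\binom{d+n-2}{n-1}$. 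Then \eqref{cc:eq-dimension-witt} and \eqref{cc:eq-fixed-depth-dimension} are the specializations of \eqref{cc:eq-plethystic-witt} and \eqref{cc:eq-layer-witt}. Finally, vanishing for $2r>n$ holds because $q_d$ has $t$-order $2$, so $q_d(t^k)^{r/k}$ has order at least $2r$; alternatively, this is \Cref{cc:rem-algebraic-scope}.
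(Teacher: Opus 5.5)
Your proposal is correct and follows the paper's proof. You identify $G^{\mathrm{full}}_{n,r}(V)$ with $(\mathfrak L_r(Q))_n$, apply Witt's formula in the $(t,u)$-adic $\lambda$-ring, specialize characters at the identity, read $q_d$ and $\dim Q_n$ off \eqref{cc:eq-first-syzygy}, and get vanishing for $2r>n$ from $Q_0=Q_1=0$. The only difference is that you derive the Witt--Brandt formula yourself, via $U(\mathfrak L(Q))=T(Q)$, the equivariant PBW isomorphism $U(\mathfrak L(Q))\cong\operatorname{Sym}(\mathfrak L(Q))$ that your ``taking logarithms'' step implicitly uses, and M\"obius inversion; so you obtain \eqref{cc:eq-plethystic-witt} first and extract \eqref{cc:eq-layer-witt} from it, whereas the paper cites the layerwise formula from Reutenauer and sums over $r$.
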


\begin{proof}
By \eqref{cc:eq-primitive-layers}, exact ancestry \(r\) is the
length-\(r\) component of the free Lie algebra on the graded module \(Q\).
The multigraded Witt--Brandt formula
\cite[\S0.4.2 and \S8.2]{Reutenauer93} states in a characteristic-zero
\(\lambda\)-ring that
\[
 \operatorname{ch}({\mathfrak L}_r(Q))
 =\frac1r\sum_{k\mid r}\mu(k)
   \psi_k(\operatorname{ch}Q)^{r/k}.
\]
Applying it to the internally graded object \(Q=\bigoplus_{n\ge2}Q_n\)
gives \eqref{cc:eq-layer-witt}.  With \(r=km\), summing over \(r\)
gives
\[
 \sum_{k\ge1}\frac{\mu(k)}{k}
 \sum_{m\ge1}\frac{(u^k\psi_k(\mathcal Q(t)))^m}{m},
\]
which is \eqref{cc:eq-plethystic-witt}.  Taking dimensions yields
\eqref{cc:eq-dimension-witt} and
\eqref{cc:eq-fixed-depth-dimension}.  Finally,
\eqref{cc:eq-first-syzygy} gives, with generators placed in internal
degree one,
\[
 q_d(t)=\frac{dt}{(1-t)^d}-\frac1{(1-t)^d}+1,
\]
which is \eqref{cc:eq-Q-hilbert}.  Its degree-\(n\) coefficient is
\[
 d\binom{d+n-2}{n-1}-\binom{d+n-1}{n}
 =\frac{(d-1)(n-1)}{n}\binom{d+n-2}{n-1},
\]
proving \eqref{cc:eq-Q-dimension}.  Since every
generator of \(Q\) has internal degree at least two, a length-\(r\) Lie
word has internal degree at least \(2r\), proving the last assertion.
\end{proof}

For a partition \(\lambda\vdash n\) with \(\ell(\lambda)\leq d\), define
the sector ancestry polynomial
\begin{equation}
 A_{\lambda,n}(u):=
 \sum_{r\geq1}\dim{\operatorname{Hom}}_{\mathrm{GL}(V)}
 \bigl(\mathbf S_\lambda V,G^{\mathrm{full}}_{n,r}(V)\bigr)u^r.
 \label{cc:eq-ancestry-polynomial}
\end{equation}
Its degree records maximal visible ancestry, while its coefficients retain
the multiplicities at every depth.  They stabilize once
\(d\geq\ell(\lambda)\); the uniform condition \(d\geq n\) makes every
partition of \(n\) visible.

\begin{example}[Multiplicity and mixed ancestry]
\label{cc:ex-mixed-ancestry}
In total degree six, the sector \(\mathbf S_{(3,2,1)}V\) occurs twice in
ancestry two and once in ancestry three.  Equivalently,
\begin{equation}
 A_{(3,2,1),6}(u)=2u^2+u^3.
 \label{cc:eq-mixed-polynomial}
\end{equation}
Indeed, \(Q_2=\mathbf S_{(1,1)}V\),
\(Q_3=\mathbf S_{(2,1)}V\), and
\(Q_4=\mathbf S_{(3,1)}V\), so
\[
 ({\mathfrak L}_2(Q))_6=(Q_2\otimes Q_4)\oplus\Lambda^2Q_3,
 \qquad
 ({\mathfrak L}_3(Q))_6={\mathfrak L}_3(Q_2),
\]
and hence, at the character level,
\begin{align*}
 \operatorname{ch}G^{\mathrm{full}}_{6,2}
 &=s_{(1,1)}s_{(3,1)}
   +\tfrac12\bigl(s_{(2,1)}^2-\psi_2(s_{(2,1)})\bigr),\\
 \operatorname{ch}G^{\mathrm{full}}_{6,3}
 &=\tfrac13\bigl(s_{(1,1)}^3-\psi_3(s_{(1,1)})\bigr).
\end{align*}
The Littlewood--Richardson rule \cite[Chs.~5--6 and 8]{Fulton97} and the two
displayed Witt operations give
\begin{align*}
 G^{\mathrm{full}}_{6,2}(V)
 &\cong \mathbf S_{(4,2)}V
 \oplus2\mathbf S_{(4,1,1)}V
 \oplus\mathbf S_{(3,3)}V
 \oplus2\mathbf S_{(3,2,1)}V\\
 &\quad\oplus\mathbf S_{(3,1,1,1)}V
 \oplus\mathbf S_{(2,2,1,1)}V,\\
 G^{\mathrm{full}}_{6,3}(V)
 &\cong\mathbf S_{(3,2,1)}V
 \oplus\mathbf S_{(2,2,1,1)}V
 \oplus\mathbf S_{(2,1,1,1,1)}V.
\end{align*}
These are identities of polynomial functors (partitions with too many rows
evaluate to zero).  Thus the relevant multiplicities are two and one
whenever \(d\geq3\); all displayed summands are nonzero when \(d\geq5\).
Since every generator
of \(Q\) has internal degree at least two, no ancestry greater than three
occurs in degree six.
Thus even inside one irreducible symmetry sector, maximal depth is not a
substitute for the full filtered multiplicity space.
\end{example}

For each tensor degree \(n\), define the finite ancestry package
\begin{equation}
 \mathcal G_n^{\mathrm{full}}(V):=
 \bigoplus_{1\leq r\leq\lfloor n/2\rfloor}
 G^{\mathrm{full}}_{n,r}(V),
 \qquad
 \mathcal N_n|_{G^{\mathrm{full}}_{n,r}(V)}:=r\,\mathrm{Id},
 \label{cc:eq-finite-ancestry-package}
\end{equation}
and its algebraic dual
\begin{equation}
 \mathcal E_n^{\mathrm{full}}(V):=
 \bigl(\mathcal G_n^{\mathrm{full}}(V)\bigr)^{\vee}
 =\bigoplus_{r=1}^{\lfloor n/2\rfloor}
 E^{\mathrm{full}}_{n,r}(V).
 \label{cc:eq-finite-readout-package}
\end{equation}
The vanishing for \(2r>n\) makes these sums finite.  In the
\(\mathbf S_\lambda\)-sector, put
\begin{equation}
 \mathcal M_{\lambda,n}:=
 \bigoplus_{r=1}^{\lfloor n/2\rfloor}
 {\operatorname{Hom}}_{\mathrm{GL}(V)}
 \bigl(\mathbf S_\lambda V,G^{\mathrm{full}}_{n,r}(V)\bigr),
 \label{cc:eq-sector-multiplicity-space}
\end{equation}
and the ancestry polynomial is its marked trace:
\begin{equation}
 A_{\lambda,n}(u)
 =\operatorname{Tr}_{\mathcal M_{\lambda,n}}(u^{\mathcal N_n}),
 \label{cc:eq-ancestry-marked-trace}
\end{equation}
where \(\mathcal N_n\) acts by \(r\) on the \(r\)-th summand.  Thus
\eqref{cc:eq-ancestry-marked-trace} follows directly from
\eqref{cc:eq-ancestry-polynomial}.  The normal grading makes
\((\mathcal G_n^{\mathrm{full}},\mathcal N_n)\) intrinsic, while any
finite analytic registration is an additional quotient of these full
layers.
\begin{theorem}[Real Schur exact-layer projectors]
\label{cc:thm-schur-exact-projectors}
Fix \(n\geq2\).  There is a system of stable natural transformations
\[
 P_{\lambda,r}:L_n(H)\longrightarrow L_n(H),
 \qquad
 \lambda\vdash n,\quad
 1\leq r\leq\lfloor n/2\rfloor,
\]
defined for real vector spaces \(H\), with the following properties.
\begin{align}
 P_{\lambda,r}P_{\mu,s}
 &=\mathbf1_{\{(\lambda,r)=(\mu,s)\}}P_{\lambda,r},
 &
 \sum_{\lambda,r}P_{\lambda,r}
 &=I_{L_n},
 \label{cc:eq-exact-projector-orthogonality}\\
 \operatorname{Ran}P_{\lambda,r}
 &\subseteq F_rL_n,
 &
 P_{\lambda,r}(F_{r+1}L_n)&=0.
 \label{cc:eq-exact-projector-filtration}
\end{align}
The quotient \(F_rL_n\to G^{\mathrm{full}}_{n,r}\) restricts to an
isomorphism from \(\operatorname{Ran}P_{\lambda,r}\) onto the
\(\lambda\)-isotypic part of the exact layer.  If \(e_n\) is a normalized
real Dynkin--Specht--Wever idempotent, each projector has the form
\begin{equation}
 a_{\lambda,r}
 :=\sum_{\sigma\in\mathfrak S_n}c_{\lambda,r}(\sigma)\sigma
 \in e_n\mathbb R[\mathfrak S_n]e_n,
 \qquad
 P_{\lambda,r}
 =\left.\rho(a_{\lambda,r})\right|_{L_n(H)},
 \qquad c_{\lambda,r}(\sigma)\in\mathbb R,
 \label{cc:eq-real-permutation-projector}
\end{equation}
where \(\rho(\sigma)\) permutes tensor legs.  Thus it belongs to a finite
permutation algebra and uses at most \(n!\) permutation terms.
For every bounded linear map \(A:H\to K\),
\begin{equation}
 L_n(A)P_{\lambda,r}^{H}
 =P_{\lambda,r}^{K}L_n(A).
 \label{cc:eq-projector-bounded-naturality}
\end{equation}
Moreover, on every fixed-degree all-cut multi-flattening grade with
leg-permutation-invariant norm,
\begin{equation}
 \|P_{\lambda,r}X\|_{\mathrm{mf},g}
 \leq C_{\lambda,r}^{(n)}\|X\|_{\mathrm{mf},g},
 \qquad
 C_{\lambda,r}^{(n)}
 :=\sum_{\sigma\in\mathfrak S_n}|c_{\lambda,r}(\sigma)|.
 \label{cc:eq-projector-multiflat-bound}
\end{equation}
The constant is finite and independent of \(\dim H\).
For a leg-permutation-invariant quasi-norm, the same conclusion holds
with \(C_{\lambda,r}^{(n)}\) multiplied by the finite quasi-triangle
constant for at most \(n!\) summands.

After complexification,
\begin{equation}
 P_{\lambda,r}^{\mathbb C}(\overline z)
 =\overline{P_{\lambda,r}^{\mathbb C}(z)},
 \label{cc:eq-projector-real-form}
\end{equation}
and, for \(d=\dim_{\mathbb R}H\geq n\), one has the stable decomposition
\begin{equation}
 L_n(H_{\mathbb C})
 \cong
 \bigoplus_{\lambda\vdash n}
 \mathbf S_\lambda H_{\mathbb C}\otimes M_{\lambda,n},
 \qquad
 F_rL_n(H_{\mathbb C})
 \cong
 \bigoplus_{\lambda\vdash n}
 \mathbf S_\lambda H_{\mathbb C}\otimes M^{\geq r}_{\lambda,n},
 \label{cc:eq-schur-ancestry-filtration}
\end{equation}
with real-form-compatible splittings
\begin{equation}
 M^{\geq r}_{\lambda,n}
 =M_{\lambda,n,r}\oplus M^{\geq r+1}_{\lambda,n},
 \qquad
 G^{\mathrm{full}}_{n,r}(H_{\mathbb C})
 \cong\bigoplus_{\lambda\vdash n}
 \mathbf S_\lambda H_{\mathbb C}\otimes M_{\lambda,n,r}.
 \label{cc:eq-schur-exact-layers}
\end{equation}
Here \(P_{\lambda,r}^{\mathbb C}\) is the identity on the displayed
\(\mathbf S_\lambda H_{\mathbb C}\otimes M_{\lambda,n,r}\) summand and
zero on the others.  For a fixed partition \(\lambda\), the same
description is already valid whenever \(d\geq\ell(\lambda)\), with
Schur functors having more than \(d\) rows interpreted as zero.
\end{theorem}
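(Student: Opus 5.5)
The plan is to build the projectors inside the finite algebra \(\mathbb R[\mathfrak S_n]\) by Schur--Weyl duality, applied at a single large dimension, and then to transport them to every \(H\) by naturality.

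\textbf{Step 1: reduction to \(\mathfrak S_n\).} Fix a real space \(H_0\) with \(\dim H_0=n\). The degree-\(n\) part of the filtration,
\[
 L_n(H_0)\supseteq F_1L_n\supseteq\cdots\supseteq F_{\lfloor n/2\rfloor}L_n\supseteq0,
\]
consists of \(\mathrm{GL}(H_0)\)-submodules of \(V^{\otimes n}\), because it is functorial by \Cref{cc:cor-faithful-jets}. In degree \(n\) with \(\dim H_0\ge n\), Schur--Weyl duality makes \(V^{\otimes n}\mapsto\) (its multi-linear weight space) an equivalence between polynomial degree-\(n\) functors and \(\mathfrak S_n\)-modules. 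Under it:
\begin{enumerate}[label=\textup{(\roman*)},leftmargin=2.8em]
\item \(L_n\) corresponds to the Lie module \(e_n\mathbb R[\mathfrak S_n]\), where \(e_n\) is the normalized Dynkin idempotent;
\item each \(F_rL_n\) corresponds to a left ideal \(I_r\subseteq e_n\mathbb R[\mathfrak S_n]\), with \(\mathfrak S_n\) acting by place permutations.
\end{enumerate}
Equivalently, \(\operatorname{End}_{\mathrm{GL}}(L_n)\cong e_n\mathbb R[\mathfrak S_n]e_n\) acting through \(\rho\). This already explains the shape \eqref{cc:eq-real-permutation-projector} and the bound of at most \(n!\) terms.

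\textbf{Step 2: the splitting.} Over \(\mathbb R\) the group algebra is semisimple. Therefore the chain of right \(\mathfrak S_n\)-submodules \(I_{\lfloor n/2\rfloor}\subseteq\cdots\subseteq I_1\subseteq e_n\mathbb R[\mathfrak S_n]\) splits \(\mathbb R\)-rationally. I would make the splitting canonical in each isotypic component by taking orthogonal complements for the standard invariant inner product, \(\langle\sigma,\tau\rangle=\delta_{\sigma\tau}\). Concretely:
\begin{enumerate}[label=\textup{(\roman*)},leftmargin=2.8em]
\item let \(W_r\) be the orthogonal complement of \(I_{r+1}\) inside \(I_r\);
\item cut \(W_r\) by the central primitive idempotents \(z_\lambda\), which have rational coefficients;
\item let \(a_{\lambda,r}\) be the element of \(e_n\mathbb R[\mathfrak S_n]e_n\) acting as the orthogonal projection onto \(z_\lambda W_r\) (an isotypic component of \(W_r\), viewed through Schur--Weyl as a \(\mathrm{GL}\)-equivariant endomorphism of \(L_n\)).
\end{enumerate}
These properties then follow from the orthogonal decomposition \(e_n\mathbb R[\mathfrak S_n]=\bigoplus_{\lambda,r}z_\lambda W_r\):
\begin{enumerate}[label=\textup{(\roman*)},leftmargin=2.8em]
\item orthogonality and completeness \eqref{cc:eq-exact-projector-orthogonality};
\item the range condition and the vanishing on \(F_{r+1}\) in \eqref{cc:eq-exact-projector-filtration};
\item the isomorphism of \(\operatorname{Ran}P_{\lambda,r}\) onto the \(\lambda\)-isotypic part of \(G^{\mathrm{full}}_{n,r}\), since \(z_\lambda W_r\cong z_\lambda(I_r/I_{r+1})\).
\end{enumerate}

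\textbf{Step 3: the main obstacle, stability.} One has to check that the element \(a_{\lambda,r}\) defined at one large dimension gives the correct projector on \(L_n(H)\) for every \(H\), including infinite-dimensional \(H\) and \(\dim H<n\). I would argue this as follows.
\begin{enumerate}[label=\textup{(\roman*)},leftmargin=2.8em]
\item Both \(\rho(a)\) and the filtration are defined by tensor-leg formulas, so every element of \(L_n(H)\) lies in \(L_n(H')\) for some finite-dimensional subspace \(H'\subseteq H\).
\item \(F_rL_n(H)\cap L_n(H')=F_rL_n(H')\). This uses the injective naturality of \Cref{cc:thm-normal-cone}, which applies via a linear retraction \(H\to H'\).
\item Every finite-dimensional \(H'\) embeds in, or is a retract of, a space of dimension at least \(n\), where the identities \eqref{cc:eq-exact-projector-orthogonality} and \eqref{cc:eq-exact-projector-filtration} hold by Step 2.
\item Identities between elements of the permutation algebra acting on \(V^{\otimes n}\) transfer along such inclusions and retractions, because \(\rho(a)\) commutes with \(A^{\otimes n}\).
\end{enumerate}
Point (iv) is also exactly the bounded naturality \eqref{cc:eq-projector-bounded-naturality}.

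\textbf{Step 4: the remaining statements.}
\begin{enumerate}[label=\textup{(\roman*)},leftmargin=2.8em]
\item The norm bound \eqref{cc:eq-projector-multiflat-bound} follows from the triangle inequality and leg-permutation invariance. The quasi-norm version replaces the triangle inequality by the iterated quasi-triangle constant for at most \(n!\) summands.
\item Reality of the coefficients gives \eqref{cc:eq-projector-real-form}.
\item Complexifying, the standard Schur--Weyl decomposition \(L_n(H_{\mathbb C})\cong\bigoplus_\lambda\mathbf S_\lambda H_{\mathbb C}\otimes M_{\lambda,n}\) turns the filtration into subspaces \(M^{\ge r}_{\lambda,n}\) of the multiplicity spaces.
\item The images of the \(a_{\lambda,r}\) give the real-form-compatible splittings \eqref{cc:eq-schur-exact-layers}.
\item When \(d\ge\ell(\lambda)\), the same argument applies sector by sector, since \(\mathbf S_\lambda H_{\mathbb C}\ne0\) exactly in that range.
\end{enumerate}
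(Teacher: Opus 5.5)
Your proposal is correct and follows essentially the paper's route. Both arguments multilinearize via Schur--Weyl and split the coefficient filtration by Maschke complements (you take orthogonal ones) together with the rational central idempotents. Both realize the resulting equivariant idempotents in the corner $e_n\mathbb R[\mathfrak S_n]e_n$, and both read off naturality, reality and the norm bound from the fact that a fixed real permutation polynomial commutes with every $A^{\otimes n}$; your retraction argument just spells out the paper's one-line transfer to small and infinite dimension.

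The only thing to tidy is the left/right bookkeeping. Step~1 calls $I_r$ a left ideal, while Step~2 treats it as a right submodule of $e_n\mathbb R[\mathfrak S_n]$. Fix one convention; the paper writes $L_n(H)=H^{\otimes n}e_n$ and uses the opposite action of the corner $e_n\mathbb R[\mathfrak S_n]e_n$.
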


\begin{proof}
Multilinearization identifies the degree-\(n\) free-Lie functor with its
finite coefficient module \(\operatorname{Lie}(n)\), a real
\(\mathfrak S_n\)-module.  Because the filtration \(F_rL_n\) is natural
and homogeneous, its multilinearization is a decreasing filtration
\[
 \operatorname{Lie}(n)=\operatorname{Lie}(n)_{\geq1}
 \supseteq\operatorname{Lie}(n)_{\geq2}
 \supseteq\cdots
\]
by real \(\mathfrak S_n\)-submodules.  Maschke's theorem provides
\(\mathfrak S_n\)-invariant complements
\[
 \operatorname{Lie}(n)_{\geq r}
 =C_{n,r}\oplus\operatorname{Lie}(n)_{\geq r+1}.
\]
The real central idempotents of \(\mathbb R[\mathfrak S_n]\) split each
\(C_{n,r}\) into its Specht isotypic pieces \(C_{\lambda,n,r}\).
Projection onto \(C_{\lambda,n,r}\), along all the other chosen
summands, gives pairwise orthogonal equivariant idempotents whose sum is
the identity of \(\operatorname{Lie}(n)\).
Characteristic-zero polarization and depolarization carry these
coefficient-module idempotents back to homogeneous natural
transformations of \(L_n\).  They may be checked in the faithful range
\(\dim H\geq n\), and naturality then evaluates the same transformations
in every smaller dimension.

For completeness, these coefficient-module idempotents are finite
permutation polynomials.  A normalized real Dynkin--Specht--Wever
idempotent \(e_n\) realizes
\[
 L_n(H)=H^{\otimes n}e_n
\]
and \(\operatorname{Lie}(n)\) as a summand of the regular
\(\mathbb R[\mathfrak S_n]\)-module
\cite[Chs.~0--1]{Reutenauer93}.  The stable double-centralizer
identification represents every equivariant endomorphism of this summand
by the opposite action of the finite corner
\(e_n\mathbb R[\mathfrak S_n]e_n\).  The chosen idempotents therefore
give \eqref{cc:eq-real-permutation-projector}.  The range and kernel
relations in \eqref{cc:eq-exact-projector-filtration} are exactly the
chosen splitting of the coefficient filtration.

All complements and idempotents were chosen over \(\mathbb R\), so
complexification gives \eqref{cc:eq-projector-real-form}.  Schur--Weyl
duality identifies the Specht isotypic decomposition of the coefficient
module with
\eqref{cc:eq-schur-ancestry-filtration}--\eqref{cc:eq-schur-exact-layers};
see \cite[Chs.~4--6]{Fulton97}.  The group-algebra element in
\eqref{cc:eq-real-permutation-projector} is independent of \(H\), hence
commutes with every map \(A^{\otimes n}\).  It therefore defines the same
natural projectors in every dimension and proves
\eqref{cc:eq-projector-bounded-naturality}, including for bounded maps
between Hilbert completions.  Each leg permutation is an isometry of an
all-cut, permutation-invariant multi-flattening grade.  The triangle
inequality applied to \eqref{cc:eq-real-permutation-projector} proves
\eqref{cc:eq-projector-multiflat-bound}.  The range statements descend
to smaller dimensions; only the Schur summands with too many rows
vanish.  The faithful uniform stable range \(d\geq n\) displays every
partition of \(n\), while \(d\geq\ell(\lambda)\) displays the individual
\(\lambda\)-sector.
\end{proof}

\begin{remark}[Canonical data versus chosen lifts]
\label{cc:rem-projectors-noncanonical}
The spaces
\[
 M^{\geq r}_{\lambda,n}
 \quad\text{and}\quad
 M^{\geq r}_{\lambda,n}/M^{\geq r+1}_{\lambda,n}
\]
and hence their dimensions are intrinsic.  The complements
\(M_{\lambda,n,r}\) and the operators \(P_{\lambda,r}\) are auxiliary.
Changing them changes an exact-layer lift inside \(L_n(H)\), but not its
class in \(G^{\mathrm{full}}_{n,r}(H)\).  This distinction prevents a
chosen projector from being mistaken for a canonical homogeneous
component of a filtered logarithm.
\end{remark}

For a partition \(\lambda\vdash n\), let
\begin{equation}
 m_{\lambda,n,r}:=
 \dim\operatorname{Hom}_{\mathrm{GL}(H_{\mathbb C})}
 \!\left(\mathbf S_\lambda H_{\mathbb C},
 G^{\mathrm{full}}_{n,r}(H_{\mathbb C})\right),
 \qquad
 a_n(\lambda):=\max\{r:m_{\lambda,n,r}\neq0\},
 \label{cc:eq-schur-ancestry-multiplicity}
\end{equation}
where \(a_n(\lambda)=-\infty\) if the sector is absent.  The dimensions
are stable once \(d\geq\ell(\lambda)\).
Equivalently, for every nonzero sector,
\(a_n(\lambda)=\deg A_{\lambda,n}\), where
\(A_{\lambda,n}\) is the ancestry polynomial defined above.

\begin{corollary}[First-row ancestry bound]
\label{cc:cor-first-row-ancestry}
For every \(\lambda\vdash n\),
\begin{equation}
 m_{\lambda,n,r}=0
 \quad\text{if}\quad
 r>n-\lambda_1
 \quad\text{or}\quad
 2r>n.
 \label{cc:eq-first-row-layer-bound}
\end{equation}
Consequently,
\begin{equation}
 a_n(\lambda)\leq
 \min\!\left\{n-\lambda_1,\left\lfloor\frac n2\right\rfloor\right\}.
 \label{cc:eq-first-row-ancestry-bound}
\end{equation}
\end{corollary}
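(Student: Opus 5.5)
The condition \(2r>n\) is already settled by \Cref{cc:cor-witt-character}: every generator of \(Q\) has internal degree at least two, so \(G^{\mathrm{full}}_{n,r}=0\) for \(2r>n\).  The substantive claim is \(m_{\lambda,n,r}=0\) for \(r>n-\lambda_1\).  I would prove it by a Pieri-type bound on the Schur constituents of the exact layer.  By \eqref{cc:eq-primitive-layers}, \(G^{\mathrm{full}}_{n,r}\cong({\mathfrak L}_r(Q))_n\), which is a natural \(\mathrm{GL}\)-summand of
\[
\bigoplus_{n_1+\cdots+n_r=n,\ n_i\ge2} Q_{n_1}\otimes\cdots\otimes Q_{n_r}.
\]
Indeed, \({\mathfrak L}_r(Q)\) is a direct summand of \(Q^{\otimes r}\) in characteristic zero, for instance as the image of a Dynkin idempotent.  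So it suffices to show that every Schur constituent \(\mathbf S_\lambda\) of such a tensor product satisfies \(\lambda_1\le n-r\).

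For the key step, \eqref{cc:eq-hook} gives \(Q_{n_i}\cong\mathbf S_{(n_i-1,1)}\), a partition whose first row is \(n_i-1\).  I would then use the standard Littlewood--Richardson fact that first rows are subadditive: if \(\mathbf S_\nu\subseteq\mathbf S_\alpha\otimes\mathbf S_\beta\), then \(\nu_1\le\alpha_1+\beta_1\).  This holds because an LR tableau places at most one box per column from each row of \(\beta\) on top of \(\alpha\), so the first row of \(\nu\) gains at most \(\beta_1\) boxes.  Equivalently, \(\mathbf S_\beta\subseteq \mathrm{Sym}^{\beta_1}\otimes(\text{the rest})\), and Pieri adds a horizontal strip.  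By induction on the number of tensor factors, every constituent \(\lambda\) of \(\bigotimes_i\mathbf S_{(n_i-1,1)}\) satisfies
\[
\lambda_1\le\sum_i(n_i-1)=n-r.
\]
Hence \(m_{\lambda,n,r}=0\) whenever \(\lambda_1>n-r\), that is, whenever \(r>n-\lambda_1\).

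The statement \eqref{cc:eq-first-row-ancestry-bound} then follows from the definition \eqref{cc:eq-schur-ancestry-multiplicity} of \(a_n(\lambda)\) as the largest \(r\) with \(m_{\lambda,n,r}\ne0\).  Stability in \(d\) is harmless because the argument is an identity of polynomial functors, and Schur functors with too many rows simply vanish.

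I expect the only real point of care to be the LR first-row subadditivity and the passage from \({\mathfrak L}_r\) to the tensor power.  Both are standard.  The first-row bound can alternatively be checked by restricting to a torus weight argument: the highest weight's first coordinate is at most the maximal \(e_1\)-exponent occurring in \(\bigotimes_i Q_{n_i}\subseteq\bigotimes_i(\mathrm{Sym}^{n_i-1}V\otimes V)\).  In \(\mathrm{Sym}^{n_i-1}V\otimes V\), the vector \(e_1^{n_i}\) lies in \(\mathrm{Sym}^{n_i}\), and \(Q_{n_i}\) is the kernel of multiplication, so the \(e_1\)-weight in \(Q_{n_i}\) is at most \(n_i-1\).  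Summing over the factors gives the same bound \(n-r\), and this is the route I would actually write down, since it avoids LR combinatorics entirely.
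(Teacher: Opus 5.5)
Your proposal is correct and follows the paper's proof. You realize $(\mathfrak L_r(Q))_n$ inside $\bigoplus_{n_1+\cdots+n_r=n,\,n_i\ge2}Q_{n_1}\otimes\cdots\otimes Q_{n_r}$ with $Q_m\cong\mathbf S_{(m-1,1)}$, bound first rows by $\sum_i(n_i-1)=n-r$, and read off $2r\le n$ from $n_i\ge2$, exactly as the paper does.

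Your torus-weight check is a sound, self-contained substitute for the cited Littlewood--Richardson first-row inequality: the weight space of $\operatorname{Sym}^{n_i-1}V\otimes V$ with $e_1$-coordinate $n_i$ is the line spanned by $e_1^{n_i-1}\otimes e_1$, and multiplication sends it to $e_1^{n_i}\neq0$. It is the right thing to write down. Your other justifications of the inequality (a horizontal-strip count, or Young's rule plus Pieri) only give $\nu_1\le\alpha_1+|\beta|$. The sharp bound $\nu_1\le\alpha_1+\beta_1$ needs the lattice-word condition, which forces the first row of $\nu/\alpha$ to contain only $1$'s.
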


\begin{proof}
By \Cref{cc:thm-normal-cone,cc:thm-bianchi-module},
\[
 G^{\mathrm{full}}_{n,r}(H_{\mathbb C})
 \cong\bigl({\mathfrak L}_r(Q(H_{\mathbb C}))\bigr)_n,
 \qquad
 Q_m(H_{\mathbb C})\cong
 \mathbf S_{(m-1,1)}H_{\mathbb C}.
\]
The length-\(r\) free-Lie piece is a direct summand of
\(Q^{\otimes r}\) in characteristic zero.  Its internal degree-\(n\)
part is therefore a subrepresentation of the sum, over
\(m_1+\cdots+m_r=n\) and \(m_i\geq2\), of
\[
 \mathbf S_{(m_1-1,1)}H_{\mathbb C}\otimes\cdots\otimes
 \mathbf S_{(m_r-1,1)}H_{\mathbb C}.
\]
The Littlewood--Richardson first-row inequality gives
\[
 \lambda_1\leq\sum_{i=1}^r(m_i-1)=n-r
\]
for every Schur functor occurring in such a tensor product
\cite[Chs.~5--6]{Fulton97}.  Thus \(r\leq n-\lambda_1\).
The inequalities \(m_i\geq2\) also imply \(2r\leq n\).
\end{proof}

\begin{corollary}[Low degrees, long hooks, and unbounded depth]
\label{cc:cor-low-degree-long-hook}
Over \(\mathbb C\), with excess-row Schur functors understood to vanish,
the first exact layers are
\begin{align}
 G^{\mathrm{full}}_{3,1}(H_{\mathbb C})
 &\cong\mathbf S_{(2,1)}H_{\mathbb C},
 &
 G^{\mathrm{full}}_{3,r}(H_{\mathbb C})&=0\quad(r\geq2),
 \label{cc:eq-degree-three-ancestry}\\
 G^{\mathrm{full}}_{4,1}(H_{\mathbb C})
 &\cong\mathbf S_{(3,1)}H_{\mathbb C},
 &
 G^{\mathrm{full}}_{4,2}(H_{\mathbb C})
 &\cong\mathbf S_{(2,1,1)}H_{\mathbb C}.
 \label{cc:eq-degree-four-ancestry}
\end{align}
In the stable range \(\dim H\geq4\), the chosen real-form-compatible
projectors give the direct splitting
\begin{equation}
 L_4(H_{\mathbb C})
 =
 \operatorname{Ran}P_{(3,1),1}^{\mathbb C}
 \oplus
 \operatorname{Ran}P_{(2,1,1),2}^{\mathbb C},
 \label{cc:eq-degree-four-sector-split}
\end{equation}
so the \((3,1)\)-sector has exact ancestry one and the
\((2,1,1)\)-sector has exact ancestry two.

More generally, for every \(n\geq2\),
\begin{equation}
 G^{\mathrm{full}}_{n,1}(H_{\mathbb C})
 \cong\mathbf S_{(n-1,1)}H_{\mathbb C},
 \qquad
 m_{(n-1,1),n,r}=\mathbf1_{\{r=1\}}.
 \label{cc:eq-long-hook-depth-one}
\end{equation}
At the opposite, maximal-depth edge,
\begin{equation}
 G^{\mathrm{full}}_{2r,r}(H)
 \cong{\mathfrak L}_r(\Lambda^2H).
 \label{cc:eq-maximal-depth-layer}
\end{equation}
For every \(r\geq2\) this space is nonzero when \(\dim H\geq3\).
In the uniform stable range \(\dim H\geq2r\), there is consequently at
least one partition \(\lambda_r\vdash2r\) such that
\begin{equation}
 m_{\lambda_r,2r,r}>0,
 \qquad
 a_{2r}(\lambda_r)=r.
 \label{cc:eq-arbitrarily-deep-schur-sector}
\end{equation}
Thus the purely algebraic area genealogy has arbitrarily large finite
depth.
\end{corollary}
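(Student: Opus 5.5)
Everything will be read off from the identification $G^{\mathrm{full}}_{n,r}\cong(\mathfrak L_r(Q))_n$ of \Cref{cc:thm-normal-cone}, with $Q_m\cong\mathbf S_{(m-1,1)}$ and $Q_1=0$ from \Cref{cc:thm-bianchi-module}.

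\emph{Depth one.} Since $\mathfrak L_1(Q)=Q$, the degree-$n$ part of the first layer is $Q_n\cong\mathbf S_{(n-1,1)}H_{\mathbb C}$. This gives the first half of \eqref{cc:eq-long-hook-depth-one}, including $G^{\mathrm{full}}_{3,1}$ and $G^{\mathrm{full}}_{4,1}$. For $r\ge2$, the first-row bound \Cref{cc:cor-first-row-ancestry} forces $m_{(n-1,1),n,r}=0$, because $r>1=n-\lambda_1$. Hence $m_{(n-1,1),n,r}=\mathbf 1_{\{r=1\}}$.

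\emph{Degrees three and four.} In degree three we have $2r>3$ for every $r\ge2$, so all deeper layers vanish. In degree four, $r=2$ forces internal degrees $m_1=m_2=2$, so $G^{\mathrm{full}}_{4,2}\cong\mathfrak L_2(Q_2)=\Lambda^2(\Lambda^2H_{\mathbb C})$. The plethysm $\Lambda^2(\Lambda^2)=\mathbf S_{(2,1,1)}$ can be checked by a character computation, $\tfrac12(e_2^2-\psi_2 e_2)=s_{(2,1,1)}$, or by a dimension count. For the splitting \eqref{cc:eq-degree-four-sector-split}, the isotypic components $(3,1)$ and $(2,1,1)$ each occur in exactly one exact layer. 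Consequently, in \Cref{cc:thm-schur-exact-projectors} the only nonzero projectors are $P_{(3,1),1}$ and $P_{(2,1,1),2}$, and orthogonality together with $\sum P=I$ yields the decomposition. The exact depths then follow from the filtration properties \eqref{cc:eq-exact-projector-filtration}.

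\emph{Maximal depth.} When $n=2r$, every $m_i$ must equal $2$, so $(\mathfrak L_r(Q))_{2r}=\mathfrak L_r(Q_2)=\mathfrak L_r(\Lambda^2H)$, since $Q_2=\Lambda^2H$ via $\kappa$. This argument is valid over any field of characteristic zero. For nonvanishing, I note that the free Lie algebra on a space of dimension at least $2$ has nonzero length-$r$ component for every $r\ge1$; for example, $[x,[x,\dots,[x,y]]]\neq0$. The space $\Lambda^2H$ has dimension at least $3\ge2$ once $\dim H\ge3$, so the layer is nonzero. In the stable range $\dim H\ge 2r$, the nonzero polynomial functor decomposes into Schur sectors $\lambda\vdash 2r$ that are all visible, so some $\lambda_r$ has $m_{\lambda_r,2r,r}>0$. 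Since $m_{\lambda,n,s}=0$ for $2s>n$, we get $a_{2r}(\lambda_r)=r$ exactly.

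The main obstacle is the plethysm identification in degree four and the bookkeeping showing that the isotypic supports of the two layers are disjoint. Both are small finite checks, which I would handle by character formulas rather than by building explicit projectors.
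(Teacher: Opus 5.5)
Your proposal is correct and follows essentially the same route as the paper. You read every layer off $G^{\mathrm{full}}_{n,r}\cong(\mathfrak L_r(Q))_n$ with $Q_m\cong\mathbf S_{(m-1,1)}$. You exclude the long hook from depth $r\ge2$ via the first-row bound, identify $G^{\mathrm{full}}_{4,2}=\Lambda^2(\Lambda^2H_{\mathbb C})\cong\mathbf S_{(2,1,1)}H_{\mathbb C}$, and obtain the maximal-depth layer by forcing every generator into $Q_2=\Lambda^2H$. Your additional details, namely the character check $\tfrac12(e_2^2-\psi_2e_2)=s_{(2,1,1)}$ and the explanation of why only $P_{(3,1),1}$ and $P_{(2,1,1),2}$ survive in the degree-four splitting, simply fill in steps that the paper cites as standard.
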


\begin{proof}
Exact ancestry one is \(Q_n\), so \Cref{cc:eq-hook} gives
\[
 G^{\mathrm{full}}_{3,1}\cong\mathbf S_{(2,1)},\qquad
 G^{\mathrm{full}}_{4,1}\cong\mathbf S_{(3,1)}.
\]
The bound \(2r\leq n\) eliminates higher depth in degree three.  In
degree four, the only remaining depth is two, and
\[
 G^{\mathrm{full}}_{4,2}
 =\bigl({\mathfrak L}_2(Q)\bigr)_4
 =\Lambda^2Q_2
 =\Lambda^2(\Lambda^2H_{\mathbb C})
 \cong\mathbf S_{(2,1,1)}H_{\mathbb C}.
\]
The last is the standard Littlewood--Richardson decomposition; it is an
identity of polynomial functors, so it also covers dimensions below the
stable range by the excess-row convention.  This proves
\eqref{cc:eq-degree-three-ancestry}--\eqref{cc:eq-degree-four-ancestry},
and \Cref{cc:thm-schur-exact-projectors} gives
\eqref{cc:eq-degree-four-sector-split}.

For general \(n\), exact ancestry one is again
\(Q_n\cong\mathbf S_{(n-1,1)}\).  The first-row bound
\eqref{cc:eq-first-row-layer-bound} excludes this same long-hook sector
from every depth \(r\geq2\).  It is visibly nonzero: for independent
\(x,y\in H\), the element
\[
 (\operatorname{ad}_x)^{n-2}[x,y]
\]
is a nonzero weight-\((n-1,1)\) vector and has exactly one area leaf.
This proves
\eqref{cc:eq-long-hook-depth-one}.

Finally, a length-\(r\) word in \(Q\) has internal degree at least
\(2r\).  Equality forces every free generator to lie in
\(Q_2=\Lambda^2H\), whence
\eqref{cc:eq-maximal-depth-layer}.  If \(\dim H\geq3\), then
\(\dim\Lambda^2H\geq3\), and the degree-\(r\) piece of the free Lie
algebra on \(\Lambda^2H\) is nonzero for every \(r\); for example, if
\(a,b\in\Lambda^2H\) are linearly independent, the Hall word
\((\operatorname{ad}_a)^{r-1}b\) is nonzero.  When
\(\dim H\geq2r\), Schur--Weyl decomposition displays every partition of
\(2r\).  Some \(\lambda_r\) must therefore occur in the nonzero module
\eqref{cc:eq-maximal-depth-layer}.  Since no degree-\(2r\) layer can
have ancestry greater than \(r\), its maximal ancestry is exactly \(r\).
\end{proof}

In summary, the normal-cone identity
\(L\cap J^r=\gamma_r(D)\) identifies ideal order, lower-central
ancestry, and the tagged area-leaf filtration.  The Bianchi quotient
\(Q=\operatorname{coker}d_3\) supplies the first intrinsic atom, while
\((\mathfrak L_r(Q))_n\) records its exact depth-\(r\),
degree-\(n\) descendants.  The real Schur projectors split these exact
layers functorially, the first-row estimate controls every symmetry
sector, and the low-degree and long-hook formulas exhibit both sharp
finite-depth sectors and descendants of arbitrarily large depth.  Thus
each fixed tensor degree has finite ancestry depth; completion is needed
for infinite causal logarithms, not for the algebraic normal cone.
\section{Critical analytic filtration and packet arithmetic}
\label{cr:sec-analytic}

\subsection{Spectral measures, asymptotics, and critical weak scales}
\label{cc:subsec:atom-rig}

For a compact operator \(T\) and an integer \(q\geq1\), put
\begin{equation}
 \|T\|_{\mathfrak W^{[q]}}
 :=\sup_{N\geq1}
 \frac{N s_N(T)}{(\log(e+N))^{q-1}},
 \qquad
 \mathfrak W^{[q]}_0
 :=\left\{T:\frac{Ns_N(T)}{(\log(e+N))^{q-1}}\to0\right\}.
 \label{cc:eq:critical-ideals}
\end{equation}
In particular,
\(\mathfrak W^{[1]}=\mathcal S_{1,\infty}\) and
\(\mathfrak W^{[1]}_0=\mathcal S^0_{1,\infty}\).
These are the weak Schatten scale and its separable part, with a logarithmic
weight when \(q>1\); see
\cite{Pietsch80,LordSukochevZanin12} for the underlying operator-ideal
framework.

\begin{definition}[Positive log-spectral data]
\label{cc:def:log-spectral-rig}
For compact \(T\), define
\begin{equation}
 \Lambda_T:=\sum_{j:s_j(T)>0}\delta_{-\log s_j(T)},
 \qquad
 \mathcal N_T(R):=\Lambda_T(({-}\infty,R]),
 \label{cc:eq:log-measure-counting}
\end{equation}
and, on their domains of finiteness,
\begin{align}
 \zeta_T(s)&:=\operatorname{Tr}|T|^s
 =\int e^{-sx}\,d\Lambda_T(x),                                
 \label{cc:eq:zeta-reader}\\
 \mathfrak F_T(a)&:=\frac12\sum_j\log(1+a^2s_j(T)^2)
 =\frac12\log\det(I+a^2T^*T).
 \label{cc:eq:fredholm-reader}
\end{align}
The determinant identity in \eqref{cc:eq:fredholm-reader} applies when
\(T\in\mathcal S_2\), because then \(T^*T\in\mathcal S_1\) and the
Fredholm determinant is defined; see \cite{Simon05}.  The series is otherwise understood in
\([0,\infty]\).
\end{definition}

\begin{proposition}[Exact positive spectralization]
\label{cc:prop:positive-spectralization}
For compact \(A,B\),
\begin{equation}
 \Lambda_{A\oplus B}=\Lambda_A+\Lambda_B,
 \qquad
 \Lambda_{A\otimes B}=\Lambda_A*\Lambda_B.
 \label{cc:eq:rig-law}
\end{equation}
Consequently
\begin{equation}
 \zeta_{A\oplus B}=\zeta_A+\zeta_B,
 \qquad
 \zeta_{A\otimes B}=\zeta_A\zeta_B,
 \qquad
 \mathfrak F_{A\oplus B}=\mathfrak F_A+\mathfrak F_B,
 \label{cc:eq:reader-rig-laws}
\end{equation}
whenever the indicated quantities are finite.  Moreover, with
\(K(u)=\frac12\log(1+e^{2u})\),
\begin{equation}
 \mathfrak F_T(e^t)=\int K(t-x)\,d\Lambda_T(x).
 \label{cc:eq:soft-threshold-reader}
\end{equation}
\end{proposition}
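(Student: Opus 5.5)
The proof reduces everything to the multiset of nonzero singular values. By definition, \(\Lambda_T\) is the counting measure of the multiset \(\{-\log s_j(T): s_j(T)>0\}\), repeated according to multiplicity. So the two laws in \eqref{cc:eq:rig-law} amount to identifying the nonzero singular values of \(A\oplus B\) and of \(A\otimes B\), with multiplicity.

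For the direct sum, \(|A\oplus B|=|A|\oplus|B|\). The nonzero spectrum of a compact positive operator that is a direct sum is the multiset union of the nonzero spectra of the summands. Multiplicities add because the eigenspace of \(|A|\oplus|B|\) at \(\lambda>0\) is the direct sum of the two eigenspaces. This gives \(\Lambda_{A\oplus B}=\Lambda_A+\Lambda_B\).

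For the tensor product, I would use the Hilbert tensor product and polar decomposition. Since \((A\otimes B)^*(A\otimes B)=A^*A\otimes B^*B\), we get \(|A\otimes B|=|A|\otimes|B|\). Take orthonormal eigenbases \((e_i)\) of \(|A|\) on \((\ker A)^\perp\) and \((f_k)\) of \(|B|\) on \((\ker B)^\perp\). Then \(\{e_i\otimes f_k\}\) is an orthonormal basis of \((\ker A)^\perp\otimes(\ker B)^\perp\), and \(|A|\otimes|B|\) is diagonal there with eigenvalues \(s_i(A)s_k(B)\). The orthogonal complement of this subspace is \(\ker A\otimes H_2+H_1\otimes\ker B\), which lies in the kernel of \(|A|\otimes|B|\).

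Compactness of \(A\otimes B\) follows because each eigenvalue \(\lambda>0\) is attained by only finitely many pairs with \(s_i(A)s_k(B)\ge\lambda\), since both sequences tend to zero. Hence the nonzero singular values of \(A\otimes B\) are the products \(s_i(A)s_k(B)\), counted over pairs. Since \(-\log(s_is_k)=(-\log s_i)+(-\log s_k)\), the counting measure of these is exactly \(\Lambda_A*\Lambda_B\). This is the convolution of locally finite measures supported on half-lines bounded below, so the convolution is well defined and locally finite.

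For the reader laws, \(\zeta_T(s)=\int e^{-sx}\,d\Lambda_T\). Additivity of the measure under direct sums gives \(\zeta_{A\oplus B}=\zeta_A+\zeta_B\). For the tensor product, Fubini/Tonelli for nonnegative integrands and convolution gives
\[
 \int e^{-sx}\,d(\Lambda_A*\Lambda_B)=\iint e^{-s(x+y)}\,d\Lambda_A(x)\,d\Lambda_B(y)=\zeta_A(s)\zeta_B(s).
\]
Additivity of \(\mathfrak F\) under direct sums again follows from additivity of \(\Lambda\), since \(\mathfrak F_T(a)=\int\tfrac12\log(1+a^2e^{-2x})\,d\Lambda_T(x)\).

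For \eqref{cc:eq:soft-threshold-reader}, substitute \(a=e^t\) and \(s=e^{-x}\):
\[
 \tfrac12\log(1+e^{2t}e^{-2x})=\tfrac12\log(1+e^{2(t-x)})=K(t-x).
\]
Summing over the atoms of \(\Lambda_T\) gives the identity. All integrands are nonnegative, so every identity holds in \([0,\infty]\), and finiteness is needed only to state the equalities in \(\mathbb R\).

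The only genuinely delicate step is the tensor-product spectral identification, specifically:
\begin{itemize}
 \item making sure multiplicities are counted correctly when different pairs \((i,k)\) give the same product;
 \item checking that the kernel part contributes no spurious nonzero values.
\end{itemize}
The explicit orthonormal basis of product eigenvectors handles both issues, because multiplicity is counted by basis vectors rather than by distinct values.
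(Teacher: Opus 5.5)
Your proof is correct and follows the paper's route: the paper also identifies the nonzero singular multisets of \(A\oplus B\) and \(A\otimes B\), via Schmidt expansions where you use \(|A\otimes B|=|A|\otimes|B|\) with a product eigenbasis. It then applies \(s\mapsto-\log s\) and integrates against \(e^{-sx}\) and the soft-threshold kernel, and your version supplies the multiplicity, kernel, compactness, and Tonelli details that the paper leaves implicit.
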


\begin{proof}
The singular multiset of \(A\oplus B\) is the disjoint union of the two
singular multisets, while that of \(A\otimes B\) is
\(\{s_j(A)s_k(B)\}_{j,k}\), by the Schmidt expansions of the two compact
arrows; see \cite{Simon05}.  The map \(s\mapsto-\log s\) turns the latter
products into sums.  Taking the Laplace transform and the soft-threshold
convolution gives \eqref{cc:eq:reader-rig-laws} and
\eqref{cc:eq:soft-threshold-reader}.
\end{proof}

We use the Laplace and integral forms of Karamata's theorem, Potter bounds,
and monotone asymptotic inversion in the standard form of
\cite[Chs.~1 and 4]{BGT87}.

\begin{theorem}[Counting, Mellin, and Fredholm dictionary]
\label{cc:thm:regular-dictionary}
Let \(p,r,C>0\), let \(\ell\) be eventually positive and slowly varying,
and suppose
\begin{equation}
 \mathcal N_T(R)
 \sim C e^{pR}R^{r-1}\ell(R)
 \qquad(R\to\infty).
 \label{cc:eq:regular-counting}
\end{equation}
Then, as \(\varepsilon\downarrow0\),
\begin{equation}
 \zeta_T(p+\varepsilon)
 \sim pC\Gamma(r)\varepsilon^{-r}\ell(1/\varepsilon).
 \label{cc:eq:regular-zeta}
\end{equation}
If \(0<p<2\), then \(T\in\mathcal S_2\) and
\begin{equation}
 \mathfrak F_T(a)
 \sim
 \frac{\pi C}{2\sin(\pi p/2)}
 a^p(\log a)^{r-1}\ell(\log a)
 \qquad(a\to\infty).
 \label{cc:eq:regular-fredholm}
\end{equation}
If \(p\geq2\), then \(\mathfrak F_T(a)=+\infty\) for every \(a>0\);
thus the restriction \(0<p<2\) is sharp.
The critical threshold mass satisfies
\begin{equation}
 \operatorname{Tr}\!\left(
 |T|^p\mathbf 1_{\{|T|\geq e^{-R}\}}
 \right)
 \sim\frac{pC}{r}R^r\ell(R).
 \label{cc:eq:threshold-mass}
\end{equation}
Finally,
\begin{equation}
 s_N(T)
 \sim
 \bigl(Cp^{1-r}\bigr)^{1/p}
 N^{-1/p}(\log N)^{(r-1)/p}\ell(\log N)^{1/p}.
 \label{cc:eq:regular-quantile}
\end{equation}
Thus \(T\in\mathcal S_q\) for every \(q>p\), but
\(T\notin\mathcal S_p\).  If, in addition, \(\zeta_T\) admits a
meromorphic continuation to a complex neighborhood of \(p\), then
\(r\in\mathbb N\), \(\ell(R)\) converges to a positive constant, and the
pole at \(p\) has order exactly \(r\).
\end{theorem}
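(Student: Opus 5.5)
The plan is to reduce every reader to a Laplace--Stieltjes or convolution integral against the counting function $\mathcal N_T$, and then apply Karamata's theorems and Potter bounds \cite[Chs.~1 and 4]{BGT87}. Since $T$ is compact, $\Lambda_T$ is a locally finite positive measure on $[-\log\|T\|,\infty)$, so $\mathcal N_T(R)=0$ for $R<-\log\|T\|$ and no boundary terms arise at $-\infty$.

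\emph{Threshold mass and zeta.} The first step is to introduce
\[
U(R):=\operatorname{Tr}\!\left(|T|^p\mathbf 1_{\{|T|\ge e^{-R}\}}\right)=\int_{(-\infty,R]}e^{-px}\,d\Lambda_T(x).
\]
Integrating by parts gives $U(R)=e^{-pR}\mathcal N_T(R)+p\int_{-\infty}^R e^{-px}\mathcal N_T(x)\,dx$. The integrand is $\sim C x^{r-1}\ell(x)$, so Karamata's integral theorem yields $\int^R\sim C R^r\ell(R)/r$. The boundary term $CR^{r-1}\ell(R)$ is of lower order, which proves \eqref{cc:eq:threshold-mass}. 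Next, $\zeta_T(p+\varepsilon)=\int e^{-\varepsilon x}\,dU(x)$, and $U$ is nondecreasing and regularly varying of index $r$. The Abelian half of Karamata's Laplace theorem then gives
\[
\zeta_T(p+\varepsilon)\sim\Gamma(r+1)\,\tfrac{pC}{r}\,\varepsilon^{-r}\ell(1/\varepsilon),
\]
which is \eqref{cc:eq:regular-zeta}. The same formulas show $U(R)\to\infty$, hence $T\notin\mathcal S_p$. Finiteness of $\zeta_T(q)$ for $q>p$ gives $T\in\mathcal S_q$; in particular $T\in\mathcal S_2$ when $p<2$.

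\emph{Fredholm reader.} Use \eqref{cc:eq:soft-threshold-reader} and integrate by parts, with $K'(u)=e^{2u}/(1+e^{2u})$. Substituting $x=t-u$ gives
\[
\mathfrak F_T(e^t)=\int_{\mathbb R}K'(u)\,\mathcal N_T(t-u)\,du .
\]
Divide by $Ce^{pt}t^{r-1}\ell(t)$. Pointwise the ratio tends to $K'(u)e^{-pu}$, and its integral is
\[
\int_{\mathbb R}K'(u)e^{-pu}\,du=\tfrac12\int_0^\infty\frac{y^{-p/2}}{1+y}\,dy=\frac{\pi}{2\sin(\pi p/2)}.
\]
This is the main obstacle: dominating the ratio uniformly in $t$. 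On $u\le0$ I would use Potter bounds, $\mathcal N_T(t-u)/\mathcal N_T(t)\le A e^{-(p+\delta)u}$ with $p+\delta<2$, against $K'(u)\le e^{2u}$. On $0\le u\le t-R_0$ the same Potter bounds give $e^{-(p-\delta)u}$ with $K'\le1$, which is integrable. The residual region where $t-u$ is bounded contributes $O(1)$, which is negligible. Dominated convergence then gives \eqref{cc:eq:regular-fredholm}. For $p\ge2$, note that $\log(1+y)\ge y/2$ for $y\le1$. Hence $\mathfrak F_T(a)\ge c\,a^2\sum_{s_j\le 1/a}s_j^2$. This tail diverges, because $\int e^{-2x}\,d\Lambda_T\ge\int e^{-px}\,d\Lambda_T$ on $x\ge0$ when $p\le2$, which is $+\infty$ by the threshold-mass growth; for $p>2$ divergence is even easier since $\zeta_T(2)=\infty$.

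\emph{Quantiles and poles.} As a function of $y=e^R$, $\mathcal N_T$ is nondecreasing and regularly varying of index $p$. Asymptotic inversion of monotone regularly varying functions \cite[\S1.5]{BGT87} gives $R_N:=-\log s_N(T)$ with $\mathcal N_T(R_N)\sim N$. Then $R_N\sim p^{-1}\log N$, so slow variation gives $R_N^{r-1}\ell(R_N)\sim p^{1-r}(\log N)^{r-1}\ell(\log N)$. Solving $Ce^{pR_N}R_N^{r-1}\ell(R_N)\sim N$ for $e^{-R_N}$ yields \eqref{cc:eq:regular-quantile}. Finally, suppose $\zeta_T$ is meromorphic near $p$. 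It blows up at $p$, so it has a pole of some integer order $k\ge1$ with positive leading coefficient $a$. Comparing with \eqref{cc:eq:regular-zeta} gives $\ell(1/\varepsilon)\sim\bigl(a/(pC\Gamma(r))\bigr)\varepsilon^{r-k}$. A slowly varying function cannot be regularly varying of nonzero index, so $k=r$, hence $r\in\mathbb N$, and $\ell$ converges to the positive constant $a/(pC\Gamma(r))$.
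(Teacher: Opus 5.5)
Your proposal is correct and follows essentially the paper's route. Both arguments use Stieltjes integration by parts with Karamata's integral and Laplace theorems, the layer-cake (equivalently soft-threshold) integral with a Potter majorant and dominated convergence for $\mathfrak F_T$, divergence of $\sum_j s_j(T)^2$ when $p\ge2$, monotone asymptotic inversion for $s_N(T)$, and the slowly-varying comparison $\ell(x)\sim c\,x^{k-r}$ that forces the pole order to equal $r$. The only cosmetic difference is that you get the Mellin asymptotic by applying the Laplace--Stieltjes Abelian theorem to the threshold mass $U$ (index $r$) instead of to $e^{-pR}\mathcal N_T(R)$; this gives the same constant because $\Gamma(r+1)/r=\Gamma(r)$.
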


\begin{proof}
The part of \(\Lambda_T\) in \(({-}\infty,0)\) is finite and contributes
only bounded terms to the limits below.  On \([0,\infty)\), Stieltjes
integration by parts gives
\begin{equation}
 \zeta_T(p+\varepsilon)
 =(p+\varepsilon)\int_0^\infty
 e^{-(p+\varepsilon)R}\mathcal N_T(R)\,dR+O(1).
 \label{cc:eq:zeta-layer-cake}
\end{equation}
Karamata's Laplace theorem applied to
\eqref{cc:eq:regular-counting} proves
\eqref{cc:eq:regular-zeta}.  If \(p<2\), the same counting bound makes
\(\int_0^\infty e^{-2R}\mathcal N_T(R)\,dR\) finite, so
\(T\in\mathcal S_2\).  Tonelli's theorem and scalar integration then give
the exact layer-cake identity
\begin{equation}
 \mathfrak F_T(e^t)
 =\int_{\mathbb R}
 \frac{\mathcal N_T(R)}{1+e^{2(R-t)}}\,dR
 \label{cc:eq:fredholm-layer-cake}
\end{equation}
and the change \(R=t+y\) reduces the leading constant to
\[
 \int_{-\infty}^{\infty}\frac{e^{py}}{1+e^{2y}}\,dy
 =\frac{\pi}{2\sin(\pi p/2)}.
\]
Potter bounds give an integrable majorant of the form
\(e^{(p+\eta)y}\mathbf1_{\{y<0\}}+
e^{-(2-p-\eta)y}\mathbf1_{\{y\geq0\}}\), for
\(0<\eta<\min\{p,2-p\}\).  Dominated convergence therefore applies
precisely for \(0<p<2\), proving
\eqref{cc:eq:regular-fredholm}.
If \(p\geq2\), the counting law gives
\(\sum_js_j(T)^2=\infty\).  Since \(s_j(T)\to0\) and
\(\log(1+a^2u^2)\geq \tfrac12a^2u^2\) for all sufficiently small \(u\),
the defining series for \(\mathfrak F_T(a)\) diverges for every \(a>0\).

For the threshold mass, put
\(U_p(R)=\int_{(-\infty,R]}e^{-px}\,d\Lambda_T(x)\).  Another Stieltjes
integration by parts gives
\[
 U_p(R)=e^{-pR}\mathcal N_T(R)
 +p\int_0^R e^{-px}\mathcal N_T(x)\,dx+O(1).
\]
The boundary term is of order \(R^{r-1}\ell(R)\), whereas Karamata's
integral theorem makes the integral asymptotic to
\(C R^r\ell(R)/r\).  This proves
\eqref{cc:eq:threshold-mass}.

Finally set
\[
 b_N:=\bigl(Cp^{1-r}\bigr)^{1/p}N^{-1/p}
 (\log N)^{(r-1)/p}\ell(\log N)^{1/p}.
\]
Slow variation and \(-\log b_N\sim p^{-1}\log N\) imply, for each fixed
\(\delta\in(0,1)\),
\[
 \frac{\mathcal N_T(-\log((1\pm\delta)b_N))}{N}
 \longrightarrow (1\pm\delta)^{-p}.
\]
Monotonicity, with the usual one-index adjustment at a multiplicity jump,
brackets \(s_N(T)/b_N\) between \(1-\delta\) and \(1+\delta\).
Letting \(\delta\downarrow0\) proves
\eqref{cc:eq:regular-quantile} with the displayed constant.  The same
counting law shows \(\sum_Ns_N(T)^q<\infty\) for \(q>p\), while
\eqref{cc:eq:threshold-mass} diverges for \(q=p\); hence the final ideal
claims follow as well.

For the last assertion, \eqref{cc:eq:regular-zeta} tends to infinity, so
the meromorphic singularity is a pole, say of order \(k\).  Comparison with
its leading Laurent term gives
\(\ell(x)\sim c x^{k-r}\) for some \(c>0\).  Since \(\ell\) is slowly
varying, necessarily \(k=r\).  Thus \(r\in\mathbb N\),
\(\ell(x)\to c\), and the pole order is \(r\).
\end{proof}

For \(p,r>0\), when the following limit belongs to \((0,\infty)\), set
\begin{equation}
 \mathscr R_p^{(r)}(T)
 :=\lim_{\varepsilon\downarrow0}
 \varepsilon^r\zeta_T(p+\varepsilon).
 \label{cc:eq:Mellin-coefficient}
\end{equation}
This is a real-axis boundary coefficient.  It is a Laurent coefficient
only when a meromorphic continuation is known and \(r\in\mathbb N\).

The corresponding tensor-product asymptotics are known in greater
generality; see \cite{KarolNazarovNikitin08,Rastegaev18}.  We record the
regularly varying case with its constants.

\begin{theorem}[Critical arithmetic under tensor product]
\label{cc:thm:tensor-critical-arithmetic}
Suppose \(p_A,p_B,r_A,r_B,C_A,C_B>0\), the functions
\(\ell_A,\ell_B\) are eventually positive and slowly varying, and
\(A,B\) satisfy
\[
 \mathcal N_A(R)\sim C_Ae^{p_AR}R^{r_A-1}\ell_A(R),
 \qquad
 \mathcal N_B(R)\sim C_Be^{p_BR}R^{r_B-1}\ell_B(R).
\]
If \(p_A>p_B\), then
\begin{equation}
 \mathcal N_{A\otimes B}(R)
 \sim C_A\zeta_B(p_A)e^{p_AR}R^{r_A-1}\ell_A(R).
 \label{cc:eq:nonresonant-product}
\end{equation}
The symmetric formula holds when \(p_B>p_A\).  If
\(p_A=p_B=p\), then
\begin{equation}
 \mathcal N_{A\otimes B}(R)
 \sim p\,\mathrm B(r_A,r_B)C_AC_B
 e^{pR}R^{r_A+r_B-1}\ell_A(R)\ell_B(R),
 \label{cc:eq:resonant-product}
\end{equation}
where
\(\mathrm B(a,b)=\Gamma(a)\Gamma(b)/\Gamma(a+b)\).
Consequently the power exponent with larger value dominates, while equal
power exponents resonate and their Mellin orders add.

In the pure logarithmic case, if \(p,r,\rho,a,b>0\) and
\[
 s_N(A)\sim aN^{-1/p}(\log N)^{(r-1)/p},\qquad
 s_N(B)\sim bN^{-1/p}(\log N)^{(\rho-1)/p},
\]
then
\begin{equation}
 s_N(A\otimes B)
 \sim ab\left(
 \frac{\Gamma(r)\Gamma(\rho)}{\Gamma(r+\rho)}
 \right)^{1/p}
 N^{-1/p}(\log N)^{(r+\rho-1)/p}.
 \label{cc:eq:amplitude-convolution}
\end{equation}
In particular, if \(s_N(A_j)\sim a_jN^{-1/p}\), then
\begin{equation}
 s_N(A_1\otimes\cdots\otimes A_k)
 \sim\frac{a_1\cdots a_k}{\Gamma(k)^{1/p}}
 N^{-1/p}(\log N)^{(k-1)/p}.
 \label{cc:eq:kfold-simple-atoms}
\end{equation}
Finally, for any \(p,r,\rho>0\), existence of the coefficients on the
right-hand sides below implies existence of those on the left, and
\begin{align}
 \mathscr R_p^{(r+\rho)}(A\otimes B)
 &=\mathscr R_p^{(r)}(A)\mathscr R_p^{(\rho)}(B),
 \label{cc:eq:coefficient-product}\\
 \mathscr R_p^{(r)}(A\otimes C)
 &=\mathscr R_p^{(r)}(A)\operatorname{Tr}|C|^p,
 \qquad 0\ne C\in\mathcal S_p,
 \label{cc:eq:coefficient-decoration}\\
 \mathscr R_p^{(r)}(cA)
 &=|c|^p\mathscr R_p^{(r)}(A),\qquad c\ne0.
 \label{cc:eq:coefficient-scalar}
\end{align}
\end{theorem}

\begin{proof}
The positive rig law gives the exact Stieltjes convolution
\[
 \mathcal N_{A\otimes B}(R)
 =\int_{\mathbb R}\mathcal N_A(R-y)\,d\Lambda_B(y).
\]
If \(p_A>p_B\), divide by the proposed right-hand side.
The integrand converges pointwise to \(e^{-p_Ay}\); Potter bounds and
\(\zeta_B(p_A-\eta)<\infty\) for a small
\(\eta\in(0,p_A-p_B)\) give an integrable majorant.  Dominated convergence
proves \eqref{cc:eq:nonresonant-product}.

At resonance, set
\[
 f_A(x):=e^{-px}\mathcal N_A(x),\qquad
 d\mu_B(y):=e^{-py}\,d\Lambda_B(y),\qquad
 U_B(R):=\mu_B(({-}\infty,R]).
\]
The threshold-mass formula \eqref{cc:eq:threshold-mass} gives
\[
 f_A(x)\sim C_Ax^{r_A-1}\ell_A(x),\qquad
 U_B(R)\sim\frac{pC_B}{r_B}R^{r_B}\ell_B(R).
\]
The portion of the convolution with \(y<0\) is a finite weighted sum and
has order at most
\(e^{pR}R^{r_A-1}\ell_A(R)\).  Since the log-spectrum of \(A\) is bounded
below, the portion with \(y>R\) is confined to an interval of fixed length;
the local estimate below makes it
\(O(e^{pR}R^{r_B-1}\ell_B(R))\).  Both are lower order than the claimed
main term because \(r_A,r_B>0\).  It therefore remains to analyze
\(0\leq y\leq R\).
The regular counting bound also gives, uniformly for
\(0\leq h\leq R/2\),
\[
 U_B(R)-U_B(R-h)
 \leq K(1+h)R^{r_B-1}\ell_B(R).
\]
Indeed, integration by parts on \((R-h,R]\), followed by the upper half of
the counting asymptotic and Potter's bound on \([R/2,R]\), gives
\begin{align*}
 U_B(R)-U_B(R-h)
 &\leq e^{-pR}\mathcal N_B(R)
   +p\int_{R-h}^R e^{-py}\mathcal N_B(y)\,dy\\
 &\leq K(1+h)R^{r_B-1}\ell_B(R).
\end{align*}
Fix \(0<\delta<1/2\).  On
\(\delta R\leq y\leq(1-\delta)R\), uniform convergence for regularly
varying functions and the vague convergence
\[
 \frac{dU_B(Rt)}{U_B(R)}
 \Longrightarrow r_Bt^{r_B-1}\,dt
\]
give the limiting integral over \([\delta,1-\delta]\).  Potter bounds
control the endpoint \(0\leq y\leq\delta R\) by
\(K\delta^{r_B-\eta}\) times the normalization.  Applying the displayed
increment estimate on unit intervals and summing the regular bound for
\(f_A\) controls \((1-\delta)R\leq y\leq R\) by
\(K\delta^{r_A-\eta}\), for any
\(0<\eta<\min\{r_A,r_B\}\).  Letting \(\delta\downarrow0\) therefore gives
\[
 \int_0^R f_A(R-y)\,dU_B(y)
 \sim pC_AC_B\,\mathrm B(r_A,r_B)
 R^{r_A+r_B-1}\ell_A(R)\ell_B(R).
\]
Multiplication by \(e^{pR}\) proves
\eqref{cc:eq:resonant-product}.  Its constant is consistent with zeta
multiplication because
\[
 p\,[p\,\mathrm B(r_A,r_B)C_AC_B]\Gamma(r_A+r_B)
 =(pC_A\Gamma(r_A))(pC_B\Gamma(r_B)).
\]

The counting constants corresponding to the two hypotheses are
\(a^pp^{r-1}\) and \(b^pp^{\rho-1}\).  Insert them into
\eqref{cc:eq:resonant-product} and use
\eqref{cc:eq:regular-quantile}.  Iteration with
\(\Gamma(1)=1\) proves \eqref{cc:eq:kfold-simple-atoms}.

The coefficient identities follow from
\(\zeta_{A\otimes B}=\zeta_A\zeta_B\),
\(\zeta_{cA}=|c|^s\zeta_A\), and
\(\zeta_C(p+\varepsilon)\to\operatorname{Tr}|C|^p\).
\end{proof}

At the ancestry face \(p=1\), if
\(s_N(T)\sim c(\log N)^{r-1}/N\), the dictionary becomes
\begin{equation}
 \zeta_T(1+\varepsilon)
 \sim c\Gamma(r)\varepsilon^{-r},
 \qquad
 \mathfrak F_T(a)
 \sim\frac{\pi c}{2}a(\log a)^{r-1}.
 \label{cc:eq:p-one-dictionary}
\end{equation}
These are real-axis asymptotics.  A literal complex pole follows only when
\(\zeta_T\) is meromorphic at \(p\) and \(r\in\mathbb N\); the harmonic atom
below has such a continuation.  If \(r\in\mathbb N\), the singular-value
asymptotic also gives
\(T\in\mathfrak W^{[r]}\setminus\mathfrak W^{[r]}_0\).

\begin{proposition}[Pointwise detection of the weak logarithmic scale]
\label{cc:prop:weak-detection}
Fix \(0<p<2\), \(r\geq1\), and set
\[
 \varphi_{p,r}(N)
 :=N^{-1/p}\bigl(\log(e+N)\bigr)^{(r-1)/p}.
\]
For a compact \(T\), the following are equivalent:
\begin{align}
 s_N(T)&=O(\varphi_{p,r}(N)),\label{cc:eq:weak-singular}\\
 \mathcal N_T(R)&=O(e^{pR}(1+R)^{r-1}),\label{cc:eq:weak-counting}\\
 \mathfrak F_T(a)&=O(a^p(\log(e+a))^{r-1}).
 \label{cc:eq:weak-fredholm}
\end{align}
The same equivalence holds with every \(O\) replaced by \(o\).
Thus the positive determinant detects both the weak and little
logarithmic ideals, even when no asymptotic equivalent exists.
\end{proposition}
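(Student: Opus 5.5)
I will prove the cycle (singular) \(\Rightarrow\) (counting) \(\Rightarrow\) (singular) and (counting) \(\Leftrightarrow\) (Fredholm), with \(o\)-versions run in parallel. Only monotone rearrangement and the layer-cake identity \eqref{cc:eq:fredholm-layer-cake} are needed, not the regular-variation machinery of \Cref{cc:thm:regular-dictionary}. Throughout I use that \(\varphi_{p,r}\) is regularly varying of index \(-1/p\). Hence \(\varphi_{p,r}(2N)\asymp\varphi_{p,r}(N)\), and its inverse in the sense of counting satisfies \(\#\{N:\varphi_{p,r}(N)\ge e^{-R}\}\asymp e^{pR}(1+R)^{r-1}\).

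\emph{Singular values and counting.} By definition \(\mathcal N_T(R)=\#\{j:s_j(T)\ge e^{-R}\}\). If \(s_N\le K\varphi_{p,r}(N)\), then \(\mathcal N_T(R)\le\#\{N:K\varphi_{p,r}(N)\ge e^{-R}\}\). The latter is \(O(e^{pR}(1+R)^{r-1})\), since \(K\) only shifts \(R\) by \(\log K\) and \((1+R+c)^{r-1}\asymp(1+R)^{r-1}\). Conversely, put \(R_N:=-\log s_N(T)\), assuming \(s_N>0\). Then \(N\le\mathcal N_T(R_N)\le Ke^{pR_N}(1+R_N)^{r-1}\). Inverting this monotone relation gives \(e^{-R_N}\le K'N^{-1/p}(\log(e+N))^{(r-1)/p}\), because \(R_N\asymp p^{-1}\log N\) for large \(N\). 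For the little-\(o\) versions I repeat the argument with \(K\) replaced by an arbitrary \(\varepsilon\) for large arguments. The finitely many small indices, or the bounded range of \(R\), are absorbed because both comparison functions tend to infinity, or stay bounded below, in the relevant way. This uses that \(N\mapsto e^{pR}(1+R)^{r-1}\) composed with the inverse is \(N(1+o(1))\) up to constants.

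\emph{Counting and Fredholm.} I use \eqref{cc:eq:fredholm-layer-cake}, \(\mathfrak F_T(e^t)=\int\mathcal N_T(R)(1+e^{2(R-t)})^{-1}dR\). This identity needs only Tonelli, not \(T\in\mathcal S_2\): both sides are allowed to be \(+\infty\), and finiteness follows from the bound. If \(\mathcal N_T(R)\le Ke^{pR}(1+R)^{r-1}\), substitute \(R=t+y\) and use \((1+t+y)^{r-1}\le C(1+t)^{r-1}(1+|y|)^{r-1}\). This leaves \(\int e^{py}(1+|y|)^{r-1}(1+e^{2y})^{-1}dy<\infty\), which is finite precisely because \(0<p<2\). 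The result is \(\mathfrak F_T(a)\le C'a^p(\log(e+a))^{r-1}\). For the converse, which is a Tauberian step, I use positivity and monotonicity of \(\mathcal N_T\) to bound the integral below by its portion \(R\in[t-1,t]\). There the kernel is at least \(1/(1+1)\), so \(\mathcal N_T(t-1)\le2\mathfrak F_T(e^t)\). Hence \(\mathcal N_T(R)\le2\mathfrak F_T(e^{R+1})=O(e^{pR}(1+R)^{r-1})\).

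\emph{Little-\(o\) versions, and the main obstacle.} The lower Tauberian bound transfers \(o\) directly. In the Abelian direction, \(o\) requires splitting the \(y\)-integral. On \(|y|\le M\) I use \(\mathcal N_T(t+y)\le\varepsilon e^{p(t+y)}(1+t+y)^{r-1}\) for large \(t\). On \(|y|>M\) I use the \(O\)-bound, which follows from \(o\), together with the integrable tail, which is small for large \(M\). Finally, the region \(R\) bounded, meaning \(y\approx-t\), contributes \(O(e^{-2t}e^{2\cdot O(1)})\cdot\) const at most. I expect this uniformity bookkeeping in the \(o\)-Abelian step, and the small-index and \(R<0\) edge effects, to be the only delicate points. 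The Tauberian direction is immediate thanks to positivity of the kernel.
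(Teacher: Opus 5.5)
Your proposal is correct and follows essentially the paper's route. Both use monotone inversion for singular values versus counting, and the layer-cake identity \eqref{cc:eq:fredholm-layer-cake} with a kernel integrable against \(e^{py}(1+|y|)^{r-1}\) exactly when \(0<p<2\). Both prove the converse by a positivity bound: the paper reads \(\mathcal N_T(R)\le\frac{2}{\log 2}\mathfrak F_T(e^R)\) directly off the singular-value series, whereas you restrict the layer-cake integral to \([t-1,t]\).

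There are two harmless slips. First, the bounded-\(R\) region contributes \(O(1)\), not \(O(e^{-2t})\), because the kernel is close to \(1\) there; this is still negligible since \(p>0\). Second, \(R_N\asymp p^{-1}\log N\) only holds as a lower bound. When \(R_N\ge 2p^{-1}\log N\) the estimate is trivial, because then \(s_N\le N^{-2/p}\).
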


\begin{proof}
Because \(N\mapsto\varphi_{p,r}(N)\) is eventually decreasing and regularly
varying with index \(-1/p\), monotone inversion gives the equivalence of
\eqref{cc:eq:weak-singular} and \eqref{cc:eq:weak-counting}, including
the little versions.  Put \(t=\log a\).  Inserting the counting bound into
\eqref{cc:eq:fredholm-layer-cake}, split at \(R=t\).  The part \(R\leq t\)
is controlled at its upper endpoint by \(p>0\); on \(R\geq t\), use
\((1+e^{2(R-t)})^{-1}\leq e^{-2(R-t)}\), which is integrable exactly when
\(p<2\).  This proves \eqref{cc:eq:weak-fredholm}.  Conversely, every singular value at least
\(e^{-R}\) contributes at least \(\frac12\log2\) to
\(\mathfrak F_T(e^R)\), so
\[
 \mathcal N_T(R)\leq\frac{2}{\log2}\mathfrak F_T(e^R).
\]
For the little version, first make the normalized counting bound smaller
than an arbitrary \(\delta>0\) beyond a fixed radius.  The finite initial
spectral part contributes only \(O(\log(e+a))\), which is
\(o(a^p(\log a)^{r-1})\); the same split makes the normalized Fredholm
limsup at most a constant times \(\delta\).  Letting \(\delta\downarrow0\)
and using the displayed threshold inequality proves both implications.
\end{proof}

\begin{proposition}[Uniform logarithmic envelopes]
\label{cc:prop-uniform-logarithmic-envelope}
Let \(r\ge1\), let \(T\) be compact, and suppose
\begin{equation}
 \|T\|_{\mathfrak W^{[r]}}\le b.
 \label{cc:eq-uniform-logarithmic-input}
\end{equation}
Then \(T\in\mathcal S_2\), and there is a constant \(C_r\), depending
only on \(r\), such that for \(a\ge0\) and \(0<\varepsilon\le1\),
\begin{align}
 \mathfrak F_T(a)
 &\le C_r(ab)\bigl(\log(e+ab)\bigr)^{r-1},
 \label{cc:eq-uniform-fredholm-envelope}\\
 \zeta_T(1+\varepsilon)
 &\le C_r b^{1+\varepsilon}\varepsilon^{-r}.
 \label{cc:eq-uniform-mellin-envelope}
\end{align}
The same bounds hold for every flattening of a tensor whose
multi-flattening \(\mathfrak W^{[r]}\)-norm is at most \(b\).  Thus, once a
Rees coefficient has been placed in grade \(r\), all upper Mellin and
Fredholm consequences are automatic.
\end{proposition}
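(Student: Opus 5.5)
The plan is to reduce everything to the singular-value bound $s_N(T)\le b\,(\log(e+N))^{r-1}/N$, which is the definition of \eqref{cc:eq-uniform-logarithmic-input}, and then to sum explicitly rather than invoke the asymptotic dictionary. The dictionary only gives asymptotics; here we need constants depending only on $r$.

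First, membership in $\mathcal S_2$ is immediate:
\[
\sum_N s_N^2\le b^2\sum_N(\log(e+N))^{2r-2}N^{-2}<\infty .
\]

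For the Mellin envelope \eqref{cc:eq-uniform-mellin-envelope}, bound
\[
\zeta_T(1+\varepsilon)\le b^{1+\varepsilon}\sum_{N\ge1}\frac{(\log(e+N))^{(r-1)(1+\varepsilon)}}{N^{1+\varepsilon}}.
\]
Since $\varepsilon\le1$, the exponent $(r-1)(1+\varepsilon)$ is at most $2(r-1)$. I would rather control it as $(r-1)+(r-1)\varepsilon$ and use $(\log(e+N))^{(r-1)\varepsilon}\le C_r N^{\varepsilon/2}$, but this shifts the power. A cleaner route is to compare the sum with the integral $\int_1^\infty (\log(e+x))^{q}x^{-1-\varepsilon}\,dx$. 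The substitution $x=e^{u}$ turns it into $\int_0^\infty (1+u)^{q}e^{-\varepsilon u}\,du\lesssim_q \varepsilon^{-q-1}$. Applying this with $q=(r-1)(1+\varepsilon)$ gives $\varepsilon^{-r-(r-1)\varepsilon}$. The factor $\varepsilon^{-(r-1)\varepsilon}$ is bounded by $e^{(r-1)/e}$, uniformly in $\varepsilon\in(0,1]$. This yields $C_r b^{1+\varepsilon}\varepsilon^{-r}$, with the monotone sum–integral comparison contributing only an $r$-dependent constant.

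For the Fredholm envelope \eqref{cc:eq-uniform-fredholm-envelope}, use monotonicity of $\mathfrak F$ in each singular value to replace $s_N$ by $b\,\varphi(N)$, where $\varphi(N)=(\log(e+N))^{r-1}/N$. Put $c=ab$ and split the sum $\frac12\sum_N\log(1+c^2\varphi(N)^2)$ at $N_0\approx c(\log(e+c))^{r-1}$.
\begin{itemize}
\item For $N\le N_0$, use $\log(1+c^2\varphi^2)\le 2\log(1+c)$, which contributes $\lesssim N_0\log(e+c)$. This is too big by one log, so instead I would use the layer-cake identity \eqref{cc:eq:fredholm-layer-cake} with the counting bound $\mathcal N_T(R)\le C_r e^{R}(1+R)^{r-1}b$. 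That bound follows from monotone inversion as in \Cref{cc:prop:weak-detection}, now tracking constants.
\item Splitting the layer-cake integral at $R=t=\log a$, the part $R\le t$ gives $\int_{-\infty}^t \min(\mathcal N_T(R),\cdot)\,dR\lesssim_r b\,e^{t}(1+t)^{r-1}$. The part $R\ge t$ gives $\int_t^\infty b e^R(1+R)^{r-1}e^{-2(R-t)}\,dR\lesssim_r b\,e^t(1+t)^{r-1}$.
\end{itemize}

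The scale needs care. Since $\mathcal N_T(R)=0$ whenever $e^{-R}>\|T\|\le b$, one has $\mathcal N_T(R)=0$ for $R<-\log b$. Rescaling $T\mapsto T/b$, so that $\mathfrak F_T(a)=\mathfrak F_{T/b}(ab)$, reduces everything to $b=1$. The bound then reads $C_r\,ab(\log(e+ab))^{r-1}$, with small $ab$ handled by $\log(1+x)\le x$ and the trace bound for $\ldots$. In fact for $ab\le1$ one uses $\mathfrak F\le\frac12 a^2\|T\|_2^2\le C_r(ab)^2\le C_r ab$.

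For the last clause, a flattening of a tensor with multi-flattening $\mathfrak W^{[r]}$-norm at most $b$ is by definition an operator with $\|\cdot\|_{\mathfrak W^{[r]}}\le b$, so the same proof applies verbatim.

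The main obstacle is bookkeeping the uniformity: constants must not depend on $b$ or $\varepsilon$. This is handled by the scaling reduction to $b=1$ and by the bound $\sup_{0<\varepsilon\le1}\varepsilon^{-(r-1)\varepsilon}<\infty$.
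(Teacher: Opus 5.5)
Your argument is correct and follows the paper's proof essentially step for step. You rescale to $b=1$ via $\mathfrak F_T(a)=\mathfrak F_{T/b}(ab)$. You get the Fredholm envelope from the layer-cake identity \eqref{cc:eq:fredholm-layer-cake}, using the inverted counting bound split at $R=\log a$; this is the quantitative content of \Cref{cc:prop:weak-detection}. You get the Mellin envelope by sum--integral comparison, which gives $C_r\varepsilon^{-r-(r-1)\varepsilon}$, together with $\sup_{0<\varepsilon\le1}\varepsilon^{-(r-1)\varepsilon}<\infty$. The only slip is harmless: for $r\ge2$ one has $\|T\|=s_1(T)\le b(\log(e+1))^{r-1}$ rather than $\|T\|\le b$, so the counting function vanishes below $R=-\log b-(r-1)\log\log(e+1)$ instead of $-\log b$, which changes nothing after rescaling.
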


\begin{proof}
The case \(b=0\) is immediate.  After replacing \(T\) by \(T/b\),
\eqref{cc:eq-uniform-logarithmic-input} gives
\[
 s_N(T/b)\le \frac{(\log(e+N))^{r-1}}{N}.
\]
The quantitative implication from the singular-value bound to the Fredholm
bound in \Cref{cc:prop:weak-detection}, followed by
\(\mathfrak F_T(a)=\mathfrak F_{T/b}(ab)\), proves
\eqref{cc:eq-uniform-fredholm-envelope}.  The same singular-value estimate
gives
\[
 \zeta_T(1+\varepsilon)
 \le b^{1+\varepsilon}
 \sum_{N\ge1}N^{-1-\varepsilon}
       (\log(e+N))^{(r-1)(1+\varepsilon)}.
\]
Integral comparison and the substitution \(u=\varepsilon\log x\) bound the
sum by
\[
 C_r\varepsilon^{-r-(r-1)\varepsilon}
 \le C'_r\varepsilon^{-r},
\]
because \(\sup_{0<\varepsilon\le1}
\varepsilon^{-(r-1)\varepsilon}<\infty\).  Membership in
\(\mathcal S_2\) follows from the same singular-value bound.  The
multi-flattening assertion is obtained by applying the scalar statement to
each cut.
\end{proof}
\subsection{Critical packets and ancestry amplification}
\label{cc:subsec-critical-packets}

The preceding dictionary converts a realized singular profile into scalar
spectral laws.  We now isolate the single analytic mechanism shared by all
finite-depth realizations.  Put
\begin{equation}
 \phi_r(N):=\frac{(\log(e+N))^{r-1}}{N},\qquad r\ge1.
 \label{cc:eq-critical-profile}
\end{equation}
A compact operator is a \emph{critical atom of amplitude \(a>0\)} if
\(s_N(B)\sim a/N\).  A family \((B_L)\) is a
\emph{window-critical atom family} if
\(\sup_L\|B_L\|_{\mathfrak W^{[1]}}<\infty\) and, for some \(a>0\),
\[
 s_k(B_L)\ge \frac ak,\qquad 1\le k\le L.
\]

For \(r\ge1\), call
\begin{equation}
 \mathcal P=
 \bigoplus_{\nu=1}^m c_\nu U_\nu
 \bigl(B_{\nu,1}\otimes\cdots\otimes B_{\nu,r}\otimes C_\nu\bigr)
 V_\nu^*+R
 \label{cc:eq-exact-critical-packet}
\end{equation}
an \emph{exact critical \(r\)-packet} when the outer input and output
packets are mutually orthogonal, every \(B_{\nu,j}\) is a critical atom,
\(0\ne C_\nu\in\mathcal S_1\), and
\begin{equation}
 s_N(R)=
 \begin{cases}
  o(N^{-1}),&r=1,\\
  O(\phi_{r-1}(N)),&r\ge2.
 \end{cases}
 \label{cc:eq-packet-lower-remainder}
\end{equation}
Zero coefficients are omitted.  A compact operator \(Y\) has a
\emph{principal \(r\)-exposure} if there are contractions \(Q_\pm\) for
which \(Q_+YQ_-\) is one nonzero \(r\)-fold packet plus a remainder of the
form \eqref{cc:eq-packet-lower-remainder}.  Exact packets produce full-scale
asymptotics; principal exposures are the weaker datum needed to certify a
sharp grade.

For \(\kappa>0\), write
\begin{equation}
 T\models\operatorname{Crit}_r(\kappa)
 \label{cc:eq-exact-critical-law-notation}
\end{equation}
when the following three equivalent critical readouts hold:
\begin{align}
 s_N(T)
 &\sim\frac{\kappa}{\Gamma(r)}\phi_r(N),
 \label{cc:eq-packet-singular}\\
 \operatorname{Tr}|T|^{1+\varepsilon}
 &\sim\kappa\varepsilon^{-r},
 \label{cc:eq-packet-mellin}\\
 \frac12\log\det(I+x^2T^*T)
 &\sim\frac{\pi\kappa}{2\Gamma(r)}
 x(\log x)^{r-1}.
 \label{cc:eq-packet-fredholm}
\end{align}
The limits are \(N\to\infty\), \(\varepsilon\downarrow0\), and
\(x\to\infty\).  The notation records real-axis data; meromorphic
continuation is an additional assertion.

\begin{lemma}[Critical product lower divisor law]
\label{cc:gen-lemma-divisor-lower}
Fix \(r\geq1\).  Suppose that, for every integer \(L\), compact
operators \(A_{j,L}\), \(1\leq j\leq r\), satisfy
\[
 s_k(A_{j,L})\geq\frac{c}{k},
 \qquad 1\leq k\leq L.
\]
For \(T_L=A_{1,L}\otimes\cdots\otimes A_{r,L}\), there are constants
\(b_r,B_r>0\), independent of \(L\), such that
\begin{equation}
 s_N(T_L)\geq b_r\frac{(\log(e+N))^{r-1}}{N}
 \label{cc:gen-divisor-lower}
\end{equation}
whenever
\[
 B_r\leq N\leq b_rL(\log(e+L))^{r-1}.
\]
\end{lemma}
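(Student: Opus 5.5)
The plan is a lattice-point count in the divisor region, followed by quantile inversion. Singular values of a tensor product are the multiset of products, so \(s_N(T_L)\geq\theta\) as soon as at least \(N\) index tuples \((k_1,\dots,k_r)\) satisfy \(\prod_j s_{k_j}(A_{j,L})\geq\theta\). This uses the positive rig law of \Cref{cc:prop:positive-spectralization}, \(\Lambda_{A\otimes B}=\Lambda_A*\Lambda_B\), iterated. For tuples with every \(k_j\leq L\), the hypothesis gives \(\prod_j s_{k_j}\geq c^r/(k_1\cdots k_r)\). Hence
\[
 \#\{k:\ s_k(T_L)\geq c^r/M\}\ \geq\ D_r(M;L):=\#\{(k_1,\dots,k_r)\in[1,L]^r:\ k_1\cdots k_r\leq M\}.
\]

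The first step is to bound \(D_r(M;L)\) from below by \(a_r M(\log M)^{r-1}\) on a suitable range of \(M\). I would prove this by induction on \(r\) through the sum \(D_r(M;L)=\sum_{k_r\leq\min(L,M)}D_{r-1}(M/k_r;L)\).

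The main obstacle is that the truncation \(k_j\leq L\) genuinely cuts the count when \(M\) is large. To avoid this, I would first restrict to \(M\leq L\). Then \(k_1\cdots k_r\leq M\) already forces every \(k_j\leq L\), so \(D_r(M;L)\) equals the untruncated divisor count, which is \(\geq a_rM(\log M)^{r-1}\) for \(M\geq M_r\) by the classical estimate.

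This restriction only reaches \(N\asymp L(\log L)^{r-1}\) at \(M=L\), and that is exactly the stated window \(N\leq b_rL(\log(e+L))^{r-1}\). So no counting beyond \(M=L\) is needed. It does require checking that \(N\mapsto\) the corresponding \(M\) stays at most \(L\), which is arranged by taking \(b_r\) small.

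The second step is inversion. Given \(N\) with \(B_r\leq N\leq b_rL(\log(e+L))^{r-1}\), I would choose \(M\) so that \(a_rM(\log M)^{r-1}\geq N\). Concretely, I would take \(M=\lceil K_rN/(\log(e+N))^{r-1}\rceil\) with \(K_r\) large. Then \(\log M\geq\tfrac12\log N\) once \(N\geq B_r\), which gives \(a_rM(\log M)^{r-1}\geq a_rK_r2^{1-r}N\geq N\).

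I also need \(M\leq L\). Since \(x\mapsto x/(\log(e+x))^{r-1}\) is increasing for large \(x\), the bound \(N\leq b_rL(\log(e+L))^{r-1}\) gives \(N/(\log(e+N))^{r-1}\lesssim b_rL\) up to a constant depending on \(r\). Hence \(M\leq L\) for \(b_r\) small relative to \(K_r\). If \(L\) is so small that the window is empty, there is nothing to prove.

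The third step concludes. The count gives \(s_N(T_L)\geq c^r/M\geq c^r(\log(e+N))^{r-1}/(2K_rN)\). Shrinking \(b_r\) absorbs \(c^r/(2K_r)\), and \(B_r\) is chosen to cover the thresholds \(M_r\) and the monotonicity ranges. All constants depend only on \(r\) and \(c\), never on \(L\).

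The case \(r=1\) is immediate, since \(s_N(T_L)\geq c/N\) for \(N\leq L\).
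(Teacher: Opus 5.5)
Your proposal is correct and follows the paper's proof essentially step for step. You restrict to divisor radii $R\le L$, so that the product constraint forces every index into the admissible range; you use the untruncated lower bound $D_r(R)\ge c_rR(\log R)^{r-1}$, proved by induction; and you invert $N\asymp_r R(\log R)^{r-1}$, where your explicit choice $M=\lceil K_rN/(\log(e+N))^{r-1}\rceil$ is more detailed than the paper's one-line ``monotonicity and inversion''. The only bookkeeping left implicit is that $B_r$ must be fixed after $b_r$, so that a nonempty window forces $L$ large enough for $\log(e+b_rL(\log(e+L))^{r-1})\gtrsim\log(e+L)$ in your check that $M\le L$.
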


\begin{proof}
The singular values of \(T_L\) are the decreasing rearrangement of
\(\prod_{j=1}^rs_{n_j}(A_{j,L})\).  For \(2\leq R\leq L\), every tuple
with \(n_1\cdots n_r\leq R\) contributes a singular value at least
\(c^r/R\).  The elementary divisor count
\[
 D_r(R):=
 \#\{(n_1,\ldots,n_r)\in\mathbb N^r:n_1\cdots n_r\leq R\}
 \geq c_rR(\log R)^{r-1}
\]
follows by induction from
\(D_r(R)=\sum_{n\leq R}D_{r-1}(R/n)\) and comparison of the resulting
harmonic sums with integrals.  Hence
\[
 s_{\lfloor c_rR(\log R)^{r-1}\rfloor}(T_L)\geq c^r/R.
\]
Monotonicity and inversion of
\(N\asymp_rR(\log R)^{r-1}\) prove
\eqref{cc:gen-divisor-lower}.
\end{proof}

\begin{lemma}[One-grade noncancellation]
\label{cc:gen-lemma-one-grade-noncancellation}
Suppose that
\(s_N(T_L)\geq c\phi_r(N)\) on arbitrarily long windows.
If \(r\geq2\) and
\[
 \sup_L\|R_L\|_{\mathfrak W^{[r-1]}}\leq C,
\]
then, above a fixed lower cutoff and on a fixed fraction of each common
window,
\[
 s_N(T_L+R_L)\geq c'\phi_r(N).
\]
For \(r=1\), the same holds after a fixed index shift if
\(\sup_L\operatorname{rank}R_L<\infty\).
\end{lemma}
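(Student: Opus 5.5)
The argument is a Weyl--Ky Fan perturbation comparison for singular values. Throughout I use the additive inequality
\[
 s_{j+k-1}(X+Y)\le s_j(X)+s_k(Y),
\]
applied with \(X=T_L+R_L\) and \(Y=-R_L\). This gives the lower bound
\[
 s_N(T_L+R_L)\ \ge\ s_{N+k-1}(T_L)-s_k(R_L)
\]
for every \(k\ge1\). The proof then reduces to choosing \(k\) as a function of \(N\) so that the perturbation term is a small fraction of \(c\phi_r(N)\), while the shift \(N\mapsto N+k-1\) costs only a constant factor in \(\phi_r\).

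\emph{Case \(r\ge2\).} I take \(k=N\). The index shift stays within bounded ratio: \(s_{2N-1}(T_L)\ge c\phi_r(2N-1)\ge c\,2^{-1}\phi_r(N)\) up to an absolute constant, provided \(2N-1\) still lies in the window. The window hypothesis for \(T_L\) therefore yields the lower term on the half-window \(N\le W_L/2\). This is the ``fixed fraction'' in the statement; for windows \([B,W_L]\), keep \(N\in[B,W_L/2]\), which is a fixed fraction in logarithmic or multiplicative terms. For the remainder, the uniform \(\mathfrak W^{[r-1]}\) bound gives
\[
 s_N(R_L)\le C\phi_{r-1}(N)=\frac{C\,\phi_r(N)}{\log(e+N)}.
\]
Since \(\phi_r/\phi_{r-1}=\log(e+N)\to\infty\), there is a cutoff \(B'\) with \(C/\log(e+N)\le c/4\) for \(N\ge B'\). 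Hence
\[
 s_N(T_L+R_L)\ge \bigl(c'' - c/4\bigr)\phi_r(N),
\]
where \(c''\) comes from \(\phi_r(2N)\ge \tfrac12\phi_r(N)\) up to a constant. More carefully, \(\phi_r(2N)\ge \phi_r(N)/2\) holds because the logarithmic factor is increasing, so \(c''=c/2\), and \(c'=c/4\) works. The cutoff \(B'\) depends only on \(C,c,r\), not on \(L\), which gives uniformity.

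\emph{Case \(r=1\).} Now \(\phi_1(N)=1/N\), and a \(\mathfrak W^{[0]}\)-type remainder is not available, so I use finite rank instead. Let \(\rho=\sup_L\operatorname{rank}R_L\). Taking \(k=\rho+1\) gives \(s_k(R_L)=0\), and therefore
\[
 s_N(T_L+R_L)\ge s_{N+\rho}(T_L)\ge \frac{c}{N+\rho}\ge \frac{c}{2N}
\]
for \(N\ge\rho\), as long as \(N+\rho\) lies in the window. This is the fixed index shift claimed in the statement.

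The only delicate point is the bookkeeping of windows. The hypothesis gives \(s_N(T_L)\ge c\phi_r(N)\) only for \(N\) in some window \([B,W_L]\) with \(W_L\to\infty\), so the conclusion must be stated on \([\max(B,B'),W_L/2]\) (respectively \([\max(B,\rho),W_L-\rho]\)). I would make this explicit, noting that this range is nonempty and of comparable length for large \(L\). That is all that ``fixed fraction of each common window'' requires.
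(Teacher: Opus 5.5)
Your proposal is correct and follows essentially the paper's proof. You use Weyl's inequality with $k=N$ to get $s_N(T_L+R_L)\ge s_{2N-1}(T_L)-s_N(R_L)$ on the half-window, bound $s_N(R_L)\le C\phi_r(N)/\log(e+N)$ with an $L$-independent cutoff, and for $r=1$ use the rank-perturbation shift $s_N(T_L+R_L)\ge s_{N+\rho}(T_L)$. Your explicit constants ($\phi_r(2N-1)\ge\phi_r(N)/2$, giving $c'=c/4$) and your window bookkeeping simply make the paper's sketch precise.
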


\begin{proof}
For \(r\geq2\), Weyl's inequality applied to
\(T_L=(T_L+R_L)-R_L\) gives
\[
 s_N(T_L+R_L)\geq s_{2N-1}(T_L)-s_N(R_L).
\]
On the half-window on which \(2N-1\) remains admissible, the first term
is at least \(c_1\phi_r(N)\), whereas
\[
 s_N(R_L)\leq C\phi_{r-1}(N)
 \leq\frac{C_rC}{\log(e+N)}\phi_r(N).
\]
A cutoff independent of \(L\) makes the last coefficient at most
\(c_1/2\).  If \(r=1\) and the rank bound is \(R_0\), the
rank-perturbation form of Weyl's inequality gives
\[
 s_N(T_L+R_L)\geq s_{N+R_0}(T_L).
\]
\end{proof}

\begin{lemma}[Trace-class decoration of a logarithmic critical profile]
\label{cc:lem-trace-decoration-tauberian}
Let $r\ge1$, let $A$ be compact, and suppose
\begin{equation}
 s_N(A)\sim c_A\phi_r(N),\qquad c_A>0.
 \label{cc:eq-decoration-base-asymptotic}
\end{equation}
If $C\in\mathcal S_1$ is nonzero, then
\begin{equation}
 s_N(A\otimes C)\sim c_A\operatorname{Tr}|C|\,\phi_r(N).
 \label{cc:eq-trace-decoration-asymptotic}
\end{equation}
The same statement holds for a finite orthogonal direct sum, with the
constants added.  Moreover, trace-class decoration preserves the little grade
$\mathfrak W^{[r]}_0$.  For a finite-window lower bound it is enough to retain
one nonzero singular direction of $C$; if $s_k(A_L)\ge c\phi_r(k)$ on a
window, then
\[
 s_k(A_L\otimes C)\ge c\,s_1(C)\phi_r(k)
\]
on the same window.
\end{lemma}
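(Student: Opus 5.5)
The plan is to work throughout with the counting function and the exact convolution law of \Cref{cc:prop:positive-spectralization}:
\[
 \mathcal N_{A\otimes C}(R)=\sum_k\mathcal N_A\bigl(R+\log s_k(C)\bigr),
\]
with the sum taken over the nonzero singular values $s_k(C)$.

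First I would translate the hypothesis \eqref{cc:eq-decoration-base-asymptotic} into counting form. By monotone inversion, as in the proof of \Cref{cc:thm:regular-dictionary} with $p=1$, one gets
\[
 \mathcal N_A(R)\sim c_A\,e^{R}R^{r-1}.
\]
The logarithmic weight is harmless, since $\log(e+N)\sim\log N\sim R$ along the inversion.

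Next, divide the sum by $c_Ae^RR^{r-1}$. The $k$-th term becomes
\[
 \frac{\mathcal N_A(R+\log s_k)}{c_Ae^RR^{r-1}},
\]
which tends to $s_k(C)$ for each fixed $k$. To pass the limit inside the sum I need a summable majorant. From the asymptotic, $\mathcal N_A(x)\le K e^x(1+|x|)^{r-1}$ for all $x$, and $\mathcal N_A(x)=0$ for $x<-\log\|A\|$. Hence for $R\ge1$ the $k$-th term is at most
\[
 K s_k\Bigl(\frac{1+|R+\log s_k|}{R}\Bigr)^{r-1}.
\]
This factor is the main obstacle, because $|\log s_k|$ is unbounded in $k$, so the ratio in parentheses is not uniformly bounded. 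However, a term is nonzero only when $R+\log s_k\ge -\log\|A\|$. On those terms $|R+\log s_k|\le R+O(1)$, since $\log s_k\le\log\|C\|$. The parenthesized factor is therefore bounded uniformly in $k$ and in $R\ge1$, and the majorant $K's_k$ is summable because $C\in\mathcal S_1$.

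Dominated convergence for series then gives
\[
 \mathcal N_{A\otimes C}(R)\sim c_A\operatorname{Tr}|C|\,e^RR^{r-1}.
\]
Inverting once more, which is \eqref{cc:eq:regular-quantile} with $p=1$ and $\ell$ constant, yields \eqref{cc:eq-trace-decoration-asymptotic}.

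For a finite orthogonal direct sum the counting functions add, so the counting asymptotics add, and inversion gives the summed constant.

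For the little grade, the same majorant argument applies with $o(\cdot)$ in place of $\sim$. Replace $c_A$ by an arbitrary $\delta>0$ beyond a cutoff; \Cref{cc:prop:weak-detection} identifies the little grade with the little counting bound. Dominated convergence then gives $\limsup\le\delta\operatorname{Tr}|C|$, and $\delta$ is arbitrary.

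Finally, for the window bound, $A_L\otimes s_1(C)e_1e_1^*$ is a compression of $A_L\otimes C$ onto $H\otimes\operatorname{span}(e_1)$, using the Schmidt vectors of $C$. Its singular values are $s_1(C)s_k(A_L)$, and compression does not increase singular values, so
\[
 s_k(A_L\otimes C)\ge s_1(C)s_k(A_L)\ge c\,s_1(C)\phi_r(k)
\]
on the same window.
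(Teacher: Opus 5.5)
Your proposal is correct and follows the paper's proof: the same tensor counting identity, pointwise limits $s_k(C)$, a summable trace-class majorant, dominated convergence, two monotone inversions, additivity of counting functions for direct sums, and a tensor slice for the window bound. You bound the logarithmic ratio using the support of $\mathcal N_A$, where the paper uses monotonicity of $\log(e+c_j/u)$ for $c_j\le1$; for non-normal $C$, the slice should be taken between the right and left Schmidt vectors $e_1$ and $f_1$.

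The one genuine variation is the little-grade clause. You rerun the dominated-convergence argument with the little-$o$ counting bound from \Cref{cc:prop:weak-detection}. The paper instead approximates $C$ by finite-rank operators and uses the trace-decoration estimate together with closedness of $\mathfrak W^{[r]}_0$. Your version is self-contained and avoids that forward reference.
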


\begin{proof}
Write $c_j=s_j(C)$.  The tensor-product counting identity is
\begin{equation}
 N_{A\otimes C}(u)=\sum_{j:c_j>0}N_A(u/c_j).
 \label{cc:eq-decoration-counting-identity}
\end{equation}
Monotone inversion of \eqref{cc:eq-decoration-base-asymptotic} gives
\[
 N_A(u)\sim c_Au^{-1}\bigl(\log(1/u)\bigr)^{r-1}
 \qquad (u\downarrow0),
\]
with the logarithmic factor interpreted as $1$ when $r=1$.  The same
inversion also gives a global envelope
\[
 N_A(v)\leq C_Av^{-1}
 \bigl(\log(e+1/v)\bigr)^{r-1},
 \qquad v>0.
\]
After a scalar rescaling of $C$, all but finitely many $c_j$ lie in $(0,1]$.
For those indices,
\[
 \frac{N_A(u/c_j)}
 {u^{-1}(\log(e+1/u))^{r-1}}
 \leq C_Ac_j
 \left(\frac{\log(e+c_j/u)}{\log(e+1/u)}\right)^{r-1}
 \leq C_Ac_j.
\]
For each fixed $j$ the same ratio converges to $c_Ac_j$.  The finitely many
remaining $c_j>1$ are handled separately.  Dominated convergence in
\eqref{cc:eq-decoration-counting-identity}, using $\sum_jc_j<\infty$, yields
\[
 N_{A\otimes C}(u)\sim
 c_A\operatorname{Tr}|C|\,u^{-1}(\log(1/u))^{r-1}.
\]
A second monotone inversion proves
\eqref{cc:eq-trace-decoration-asymptotic}.  Orthogonal sums add counting
functions.  The little assertion follows by approximating $C$ in trace norm
by finite-rank operators and using the continuous trace-decoration estimate in
$\mathfrak W^{[r]}$.  The window bound follows from the tensor slice
corresponding to a singular vector of $C$ for $s_1(C)$.
\end{proof}

\begin{theorem}[Critical ancestry amplification principle]
\label{cc:thm-critical-packet-principle}
Let \(r\ge1\).

\begin{enumerate}[label=\textup{(\roman*)},leftmargin=2.8em]
\item Suppose \(\mathcal P\) is an exact critical \(r\)-packet and
\[
 s_N(B_{\nu,j})\sim \frac{a_{\nu,j}}N,
 \qquad a_{\nu,j}>0.
\]
Set
\begin{equation}
 \kappa(\mathcal P)
 :=\sum_{\nu=1}^m |c_\nu|\,\operatorname{Tr}|C_\nu|
       \prod_{j=1}^r a_{\nu,j}.
 \label{cc:eq-packet-amplitude}
\end{equation}
Then
\[
 \mathcal P\models\operatorname{Crit}_r(\kappa(\mathcal P)).
\]

\item Let \(Y_L\) be uniformly bounded in \(\mathfrak W^{[r]}\).  Suppose
that, after fixed two-sided contractions, it has a principal exposure
\[
 Q_+Y_LQ_-=c_LU
 (B_{1,L}\otimes\cdots\otimes B_{r,L})V+R_L,
\]
where \(\inf_L|c_L|>0\), the \(B_{j,L}\) are window-critical with uniform
constants, and the remainders are uniformly bounded in
\(\mathfrak W^{[r-1]}\) when \(r\ge2\), or have uniformly bounded rank and
operator norm when \(r=1\).  Then there are constants
\(N_r,c_r,C_r>0\), independent of \(L\), such that
\begin{equation}
 s_N(Y_L)\ge c_r\phi_r(N)
 \label{cc:eq-window-exposure-lower}
\end{equation}
whenever
\[
 N_r\le N\le C_rL(\log(e+L))^{r-1}.
\]
Consequently no grade \(q<r\) can bound such a family uniformly on
arbitrarily long windows.
\end{enumerate}
\end{theorem}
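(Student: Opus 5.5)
For part (i) the plan is to reduce to a single block and then sum. Take one summand \(c_\nu U_\nu(B_{\nu,1}\otimes\cdots\otimes B_{\nu,r}\otimes C_\nu)V_\nu^*\). Because the outer input and output packets are mutually orthogonal and \(U_\nu,V_\nu\) are isometric on the relevant packets, the singular values of the main term are the union of the singular values of the blocks, scaled by \(|c_\nu|\). In each block, \eqref{cc:eq:kfold-simple-atoms} with \(p=1\) applies to the pure-atom tensor \(B_{\nu,1}\otimes\cdots\otimes B_{\nu,r}\) and gives \(s_N\sim\frac{\prod_ja_{\nu,j}}{\Gamma(r)}\phi_r(N)\); here \(\log N\) and \(\log(e+N)\) agree to first order. \Cref{cc:lem-trace-decoration-tauberian} then adds the decoration \(C_\nu\) and multiplies the constant by \(\operatorname{Tr}|C_\nu|\). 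By the orthogonal direct-sum clause of the same lemma, the block counting functions add. The main term therefore satisfies \(s_N\sim\frac{\kappa(\mathcal P)}{\Gamma(r)}\phi_r(N)\), where \(\kappa(\mathcal P)\) is given by \eqref{cc:eq-packet-amplitude}.

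Next comes the remainder \(R\), handled through the Ky Fan/Weyl inequalities \(s_{N+M-1}(X+R)\le s_N(X)+s_M(R)\), used in both directions. For \(r\ge2\) I choose \(M=\lfloor\delta N\rfloor\). Then \(s_M(R)=O(\phi_{r-1}(\delta N))=o(\phi_r(N))\), while \(\phi_r\) is regularly varying, so \(\phi_r((1\pm\delta)N)/\phi_r(N)\to(1\pm\delta)^{-1}\). Letting \(\delta\downarrow0\) preserves the asymptotic. For \(r=1\) the same argument works with \(s_M(R)=o(1/N)\). This gives \eqref{cc:eq-packet-singular}. The two remaining readouts follow from \Cref{cc:thm:regular-dictionary}. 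First I invert \(s_N\sim c\phi_r(N)\) into \(\mathcal N(R)\sim c\,p^{\,r-1}e^{R}R^{r-1}\) at \(p=1\), i.e. \(C=c\). Then \eqref{cc:eq:regular-zeta} gives \(c\Gamma(r)\varepsilon^{-r}=\kappa\varepsilon^{-r}\), and \eqref{cc:eq:regular-fredholm} gives \(\frac{\pi c}{2}x(\log x)^{r-1}\), as in \eqref{cc:eq:p-one-dictionary}.

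For part (ii) I first apply \Cref{cc:gen-lemma-divisor-lower} to the window-critical \(B_{j,L}\). This gives \(s_N(B_{1,L}\otimes\cdots\otimes B_{r,L})\ge b_r\phi_r(N)\) for \(B_r\le N\le b_rL(\log(e+L))^{r-1}\). The factor \(c_L\) costs at most \(\inf|c_L|>0\). The isometries \(U,V\) do not change singular values; if they are only contractions with isometric restriction on the range, this still holds. Next, \Cref{cc:gen-lemma-one-grade-noncancellation} absorbs \(R_L\). For \(r\ge2\) this uses the uniform \(\mathfrak W^{[r-1]}\) bound; for \(r=1\) it uses the bounded rank with an index shift. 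The result is \(s_N(Q_+Y_LQ_-)\ge c'\phi_r(N)\) on a fixed fraction of the window, above a fixed cutoff. Since \(s_N(Q_+Y_LQ_-)\le\|Q_+\|\|Q_-\|s_N(Y_L)\le s_N(Y_L)\), the bound transfers to \(Y_L\). I then shrink the window constants to \(N_r,C_r\) and use monotonicity of \(\phi_r\) to fill in the fraction, which gives \eqref{cc:eq-window-exposure-lower}. For the final assertion, a uniform \(\mathfrak W^{[q]}\) bound with \(q<r\) would force \(s_N(Y_L)\le K\phi_q(N)\). But \(\phi_q(N)/\phi_r(N)\to0\), so this contradicts the lower bound once \(N\) is large, and such \(N\) lie inside the window when \(L\) is large.

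The main obstacle I expect is the remainder step in part (i) at \(r=1\), where the hypothesis is only \(o(1/N)\). The index shift \(M=\delta N\) gives \(s_{\delta N}(R)=o(1/(\delta N))\), which is still \(o(1/N)\) for fixed \(\delta\), so this should close the step. More delicate is checking the constant bookkeeping between \(\log N\) and \(\log(e+N)\), together with the one-index adjustments at multiplicity jumps in the monotone inversions.
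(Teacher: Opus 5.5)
Your proposal is correct and follows essentially the same route as the paper's proof. For part~(i) both use blockwise asymptotics from the tensor-arithmetic theorem plus trace decoration, additivity of counting constants over orthogonal blocks, a Ky Fan split at index $\lfloor\delta N\rfloor$ for the lower-grade remainder, and the $p=1$ dictionary. For part~(ii) both use the divisor lower law, one-grade noncancellation, and contraction monotonicity of singular values. Your explicit inversion giving $C=\kappa/\Gamma(r)$ and your two-sided Weyl estimates only spell out steps the paper compresses.
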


\begin{proof}
For part~\textup{(i)}, repeated use of
\Cref{cc:thm:tensor-critical-arithmetic}, followed by
\Cref{cc:lem-trace-decoration-tauberian}, gives
\[
 s_N(B_{\nu,1}\otimes\cdots\otimes B_{\nu,r}\otimes C_\nu)
 \sim\frac{\operatorname{Tr}|C_\nu|\prod_ja_{\nu,j}}{\Gamma(r)}
 \phi_r(N).
\]
The orthogonal direct sum adds the corresponding log-counting constants,
and scalar multiplication contributes \(|c_\nu|\).  Thus the packet without
\(R\) has the equivalent \eqref{cc:eq-packet-singular}.  The remainder is
strictly lower grade.  Ky Fan inequalities, with an index split
\(\lfloor\delta N\rfloor\), show that it does not change a regularly varying
singular sequence of index \(-1\); first let \(N\to\infty\), then
\(\delta\downarrow0\).  The Mellin and Fredholm laws follow from
\Cref{cc:thm:regular-dictionary} at \(p=1\).

For part~\textup{(ii)},
\Cref{cc:gen-lemma-divisor-lower} gives the \(\phi_r\)-lower profile for the
principal tensor product on the stated divisor window.  The uniform
lower-grade bound cannot cancel it by
\Cref{cc:gen-lemma-one-grade-noncancellation}.  Singular values decrease
under two-sided contractions, so the same lower bound holds for \(Y_L\).
The ratio \(\phi_r(N)/\phi_q(N)=(\log(e+N))^{r-q}\) diverges for \(q<r\).
\end{proof}

The packet theorem is source-independent.  Its source-level
formulation will be given only after the canonical critical entrance and the
filtered realization and response-kernel structures have been constructed;
at that point exact, window, and bounded response certificates become three
modes of one theorem.
\section{Rees response kernels and valuation exchange}
\label{rk:sec-kernel}

The previous sections provide two filtrations with opposite orientations.  A
finite ancestry flag is decreasing, while analytic critical cost is
increasing.  The correct filtered object is therefore the dual ancestry
flag.  This observation makes the primal and dual directions contractions of
one associated-graded kernel.

We use superscripts $F^r$ for an abstract decreasing filtration and retain
subscripts $F_r$ for the concrete free-Lie and Schur filtrations introduced
above.  The concrete algebraic and analytic Rees objects are displayed with
$z^{-r}$ so that equal ancestry and critical costs occupy the same formal
weight.  The abstract saturation theorem uses the equivalent positive-degree
parameter $u^r$; formally $u=z^{-1}$.  These are bookkeeping conventions for
one filtration comparison, not two independent deformations.

\subsection{Strict filtered responses}
\label{rk:subsec-strict}

Let $M$ be a finite-dimensional vector space over a field $\mathbf k$ of
characteristic zero, with a finite decreasing filtration
\begin{equation}
 M=F^0M\supseteq F^1M\supseteq\cdots\supseteq F^{d+1}M=0,
 \qquad G_r:=F^rM/F^{r+1}M.
 \label{rk:eq-source-filtration}
\end{equation}
Its dual filtration is increasing:
\begin{equation}
 \mathscr D_rM^*:=(F^{r+1}M)^\perp,
 \qquad \mathscr D_{-1}M^*:=0.
 \label{rk:eq-dual-filtration}
\end{equation}
Restriction identifies
$\mathscr D_rM^*/\mathscr D_{r-1}M^*\cong G_r^*$.  For $q\ne0$ put
\begin{equation}
 a_F(q):=\min\{r:q\in\mathscr D_rM^*\}
       =\max\{r:q(F^rM)\ne0\},
 \qquad a_F(0):=-\infty.
 \label{rk:eq-algebraic-depth}
\end{equation}

Let $\mathcal Y$ be a vector space with an increasing filtration
\[
 0=K_{-1}\mathcal Y\subseteq K_0\mathcal Y\subseteq\cdots
 \subseteq K_d\mathcal Y=\mathcal Y,
 \qquad
 \partial_r\mathcal Y:=K_r\mathcal Y/K_{r-1}\mathcal Y.
\]
For $0\ne y\in\mathcal Y$, define
$\nu_K(y):=\min\{r:y\in K_r\mathcal Y\}$ and put
$\nu_K(0):=-\infty$.

\begin{definition}[Filtered and strict response]
\label{rk:def-filtered-strict}
A linear map $\mathcal R:M^*\to\mathcal Y$ is \emph{filtered} if
\[
 \mathcal R(\mathscr D_rM^*)\subseteq K_r\mathcal Y
 \quad\text{for every }r.
\]
It is \emph{strict} if
\begin{equation}
 \mathcal R^{-1}(K_r\mathcal Y)=\mathscr D_rM^*,
 \qquad -1\le r\le d.
 \label{rk:eq-strictness}
\end{equation}
A filtered map induces
\begin{equation}
 \operatorname{gr}_r\mathcal R:
 G_r^*\longrightarrow\partial_r\mathcal Y.
 \label{rk:eq-graded-response}
\end{equation}
\end{definition}

Extend both filtrations constantly above degree \(d\).  The induced Rees map is
\begin{equation}
 \operatorname{Rees}(\mathcal R):
 \bigoplus_{r\ge0}\mathscr D_rM^*u^r
 \longrightarrow
 \bigoplus_{r\ge0}K_r\mathcal Yu^r,
 \qquad qu^r\longmapsto\mathcal R(q)u^r.
 \label{rk:eq-Rees-map}
\end{equation}
A graded submodule \(I\) of the target is \emph{\(u\)-saturated} when
\(uy\in I\) implies \(y\in I\).  The clause \(r=-1\) in
\eqref{rk:eq-strictness} says exactly that \(\mathcal R\) is injective.

\begin{theorem}[Valuation exchange and Rees saturation]
\label{rk:thm-valuation-exchange}
For a filtered response $\mathcal R:M^*\to\mathcal Y$, the following are
equivalent.
\begin{enumerate}[label=\textup{(\roman*)},leftmargin=2.8em]
\item $\mathcal R$ is strict;
\item every map $\operatorname{gr}_r\mathcal R$ is injective;
\item $\nu_K(\mathcal Rq)=a_F(q)$ for every $q\in M^*$;
\item the induced Rees map is injective and has $u$-saturated image;
\item the Rees map is injective and its cokernel has no $u$-torsion.
\end{enumerate}
Under these equivalent conditions,
\begin{equation}
 \boxed{
 F^rM=
 \bigcap_{\substack{q\in M^*\\
              \nu_K(\mathcal Rq)\le r-1}}
 \ker q.}
 \label{rk:eq-flag-reconstruction}
\end{equation}
Thus forward depth and reconstruction of the primal flag are the same
strictness statement.
\end{theorem}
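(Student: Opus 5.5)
The plan is to prove the cycle (i)$\Rightarrow$(ii)$\Rightarrow$(iii)$\Rightarrow$(i) by elementary filtered linear algebra, then identify (iv) and (v) with (i), and finally deduce \eqref{rk:eq-flag-reconstruction} from (iii) by a duality computation in the finite-dimensional space $M$.

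For (i)$\Rightarrow$(ii), take $\bar q\in G_r^*$ with $\operatorname{gr}_r\mathcal R(\bar q)=0$ and lift it to $q\in\mathscr D_rM^*$. Then $\mathcal Rq\in K_{r-1}\mathcal Y$, so strictness puts $q$ in $\mathscr D_{r-1}M^*$, and hence $\bar q=0$.

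For (ii)$\Rightarrow$(iii), let $q\neq0$ and $r=a_F(q)$. Filteredness gives $\nu_K(\mathcal Rq)\le r$. The class of $q$ in $G_r^*$ is nonzero, so by (ii) $\mathcal Rq\notin K_{r-1}\mathcal Y$, which gives equality. The case $q=0$ holds by convention, with both sides $-\infty$. The same argument in the case $r=0$ gives $\mathcal Rq\neq0$, so injectivity also follows.

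For (iii)$\Rightarrow$(i), the inclusion $\mathscr D_r\subseteq\mathcal R^{-1}(K_r)$ is filteredness. Conversely, if $\mathcal Rq\in K_r$ then $a_F(q)=\nu_K(\mathcal Rq)\le r$, so $q\in\mathscr D_r$. The clause $r=-1$ is exactly injectivity, which (iii) gives because $\nu_K(\mathcal Rq)=-\infty$ forces $a_F(q)=-\infty$, that is, $q=0$.

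For (iv) and (v), note first that the image $I$ of the Rees map is $\bigoplus_r\mathcal R(\mathscr D_r)u^r$. The main point to check is that $u$-saturation of $I$ says precisely: if $y\in\mathcal R(\mathscr D_{r+1})\cap K_r\mathcal Y$, then $y\in\mathcal R(\mathscr D_r)$. Given injectivity of $\mathcal R$, this is equivalent to $\mathcal R^{-1}(K_r)\cap\mathscr D_{r+1}\subseteq\mathscr D_r$. Inducting downward from $r=d$, where $\mathscr D_d=M^*$, this gives $\mathcal R^{-1}(K_r)=\mathscr D_r$, which is (i). The converse direction is immediate from (i).

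Injectivity of the Rees map is equivalent to injectivity of $\mathcal R$, since the map acts degreewise as $\mathcal R$ restricted to $\mathscr D_r$. A $u$-torsion class $[y]$ in the cokernel means $u^ky\in I$ with $y\notin I$. Saturation applied $k$ times excludes this, and conversely the absence of torsion with $k=1$ is exactly saturation, so (iv) and (v) agree. The one place needing care is the constant extension above degree $d$, which ensures the degree shift by $u$ stays inside the filtered targets.

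For the reconstruction formula, (iii) turns the index set into $\{q:a_F(q)\le r-1\}=\mathscr D_{r-1}M^*=(F^rM)^\perp$. In finite dimension, $\bigcap_{q\in W^\perp}\ker q=W$ for every subspace $W\subseteq M$ (the double-annihilator identity), applied here with $W=F^rM$. I expect the Rees-saturation equivalence to be the only step requiring bookkeeping; everything else is routine.
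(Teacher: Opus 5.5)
Your proposal is correct and follows essentially the same route as the paper. You use the elementary equivalence of strictness, graded injectivity, and valuation equality, then read $u$-saturation degreewise as $\mathcal R^{-1}(K_r\mathcal Y)\cap\mathscr D_{r+1}M^*\subseteq\mathscr D_rM^*$ given injectivity, and finish with double-annihilator duality for the flag reconstruction. The only differences are cosmetic: you propagate the saturation condition by downward induction from $r=d$ where the paper applies saturation repeatedly from the first index $s$ down to $r$, and your cycle (i)$\Rightarrow$(ii)$\Rightarrow$(iii)$\Rightarrow$(i), with the $r=-1$ clause handled explicitly as injectivity, is slightly more explicit than the paper's.
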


\begin{proof}
Filteredness gives $\nu_K(\mathcal Rq)\le a_F(q)$.  If all graded maps are
injective and $a_F(q)=r$, the nonzero class of $q$ in
$\mathscr D_r/\mathscr D_{r-1}$ cannot map into $K_{r-1}$, so equality
holds.  Conversely, a nonzero class in the kernel of
$\operatorname{gr}_r\mathcal R$ contradicts valuation equality.  This is
equivalent to \eqref{rk:eq-strictness}.

For the Rees clauses, use the map \eqref{rk:eq-Rees-map}.  If
$\mathcal R$ is strict and
$u(yu^r)=\mathcal R(q)u^{r+1}$ lies in its image, then
$y=\mathcal R(q)\in K_r$, hence $q\in\mathscr D_r$ and $yu^r$ already lies
in the image.  Thus the image is $u$-saturated.  Conversely, if
$\mathcal R(q)\in K_r$ while $q$ first belongs to $\mathscr D_s$ with
$s>r$, then $\mathcal R(q)u^s$ lies in the image and repeated saturation
would force $\mathcal R(q)u^r$ to lie in the image, hence produce
$q'\in\mathscr D_r$ with $\mathcal R(q')=\mathcal R(q)$.  Injectivity of the
Rees map gives $q'=q$, a contradiction.  Therefore saturation gives
strictness.  A submodule is $u$-saturated exactly when its quotient has no
$u$-torsion.

Finally, the probes of analytic depth at most $r-1$ are exactly
$\mathscr D_{r-1}=(F^rM)^\perp$.  Finite-dimensional annihilator duality
gives \eqref{rk:eq-flag-reconstruction}.
\end{proof}

\begin{proposition}[All filtered lifts of one special fibre]
\label{rk:prop-filtered-lifts}
Let $\operatorname{Hom}_{\rm fil}(M^*,\mathcal Y)$ be the filtered maps and
let $\operatorname{Hom}_{-1}(M^*,\mathcal Y)$ consist of maps satisfying
$T(\mathscr D_r)\subseteq K_{r-1}\mathcal Y$.  There is a natural exact
sequence
\begin{equation}
 0\longrightarrow\operatorname{Hom}_{-1}(M^*,\mathcal Y)
 \longrightarrow\operatorname{Hom}_{\rm fil}(M^*,\mathcal Y)
 \longrightarrow
 \bigoplus_r\operatorname{Hom}(G_r^*,\partial_r\mathcal Y)
 \longrightarrow0.
 \label{rk:eq-filtered-lift-exact}
\end{equation}
Consequently, two filtered responses with the same associated-graded map
differ only by a one-step-lowering response and have the same valuation
theorem.  A filtered splitting selects a lift but creates no new invariant.
\end{proposition}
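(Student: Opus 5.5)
The plan is to build the three maps explicitly and check exactness at each spot, all degreewise in $r$.

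\emph{Left map.} This is the inclusion. A map $T$ with $T(\mathscr D_rM^*)\subseteq K_{r-1}\mathcal Y\subseteq K_r\mathcal Y$ is filtered, and the inclusion is injective.

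\emph{Middle map.} This is $\mathcal R\mapsto(\operatorname{gr}_r\mathcal R)_r$, where $\operatorname{gr}_r\mathcal R$ is the map on $\mathscr D_r/\mathscr D_{r-1}\cong G_r^*$ induced by $\mathcal R$ modulo $K_{r-1}$, as in \eqref{rk:eq-graded-response}. It is well defined and linear precisely because $\mathcal R$ is filtered. The map $\mathcal R$ lies in the kernel exactly when $\mathcal R(\mathscr D_r)\subseteq K_{r-1}$ for every $r$. That is the definition of $\operatorname{Hom}_{-1}$, which gives exactness in the middle.

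\emph{Surjectivity.} This is the only step needing a choice, so I expect it to be the main point. Since $M$ is finite dimensional, I choose complements $\mathscr D_r=W_r\oplus\mathscr D_{r-1}$, so that $M^*=\bigoplus_rW_r$ and $W_r\cong G_r^*$. I also choose set-theoretic linear sections $s_r:\partial_r\mathcal Y\to K_r\mathcal Y$, which are linear splittings of the quotient $K_r\to\partial_r$. Given a family $(\phi_r)$, I define $\mathcal R$ on $W_r$ by $s_r\circ\phi_r$ under the identification $W_r\cong G_r^*$.

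Then $\mathcal R(\mathscr D_r)=\sum_{s\le r}\mathcal R(W_s)\subseteq K_r$, so $\mathcal R$ is filtered. Modulo $K_{r-1}$, the image of $\mathscr D_{r-1}$ vanishes, and on $W_r$ we recover $\phi_r$; hence $\operatorname{gr}_r\mathcal R=\phi_r$. The construction is finite because $F^{d+1}M=0$ and the filtrations stabilize above $d$. Naturality in morphisms of filtered data holds because the maps $\operatorname{gr}$ are functorial; the splittings enter only in the surjectivity witness, not in the sequence itself.

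\emph{Consequences.} If $\mathcal R,\mathcal R'$ have the same graded map, exactness puts $\mathcal R-\mathcal R'$ in $\operatorname{Hom}_{-1}$. By \Cref{rk:thm-valuation-exchange}, condition (ii) depends only on $\operatorname{gr}\mathcal R$, so strictness, valuation exchange and the reconstruction \eqref{rk:eq-flag-reconstruction} hold for $\mathcal R$ exactly when they hold for $\mathcal R'$; this is the sense in which the two maps ``have the same valuation theorem.'' A filtered splitting is a choice of the $W_r$ and $s_r$ above. It produces one preimage, and any two preimages differ by an element of $\operatorname{Hom}_{-1}$. It therefore adds no invariant beyond the associated-graded data.
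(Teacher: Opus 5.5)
Your proposal is correct and follows the paper's own proof. There, the kernel of passage to the associated graded is identified with $\operatorname{Hom}_{-1}$ directly from the definition, and surjectivity is obtained by choosing complements of successive filtration steps in source and target and lifting each graded map; your treatment of the consequences through condition (ii) of \Cref{rk:thm-valuation-exchange}, which depends only on $\operatorname{gr}\mathcal R$, is the intended reading.
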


\begin{proof}
The kernel of passage to the associated graded is precisely
$\operatorname{Hom}_{-1}$.  Surjectivity follows by choosing complements of
successive filtration steps in source and target and lifting each graded map
between those complements.
\end{proof}

\subsection{The canonical response kernel}
\label{rk:subsec-kernel}

For a finite-dimensional space $G$, let
$\operatorname{coev}_G(1)\in G\otimes G^*$ be the canonical coevaluation
tensor.  In any pair of dual bases it is $\sum_je_j\otimes e_j^*$.

\begin{definition}[Rees response kernel]
\label{rk:def-response-kernel}
For a filtered response $\mathcal R$, its order-$r$ Rees response kernel is
\begin{equation}
 \mathfrak K_r(\mathcal R)
 :=\bigl(\operatorname{Id}_{G_r}\otimes
          \operatorname{gr}_r\mathcal R\bigr)
    \operatorname{coev}_{G_r}(1)
 \in G_r\otimes\partial_r\mathcal Y.
 \label{rk:eq-response-kernel}
\end{equation}
For $K\in G\otimes Z$, define its left support by
\begin{equation}
 \operatorname{supp}_L(K)
 :=\operatorname{span}\bigl\{
 (\operatorname{Id}_G\otimes\lambda)K:\lambda\in Z^*\bigr\}
 \subseteq G.
 \label{rk:eq-left-support}
\end{equation}
Equivalently, this is the smallest subspace $S\subseteq G$ for which
$K\in S\otimes Z$.
\end{definition}

\begin{theorem}[Kernel support theorem]
\label{rk:thm-kernel-support}
For every $r$,
\begin{equation}
 \ker(\operatorname{gr}_r\mathcal R)
 =\operatorname{supp}_L\bigl(\mathfrak K_r(\mathcal R)\bigr)^\perp
 \subseteq G_r^*.
 \label{rk:eq-kernel-annihilator}
\end{equation}
Consequently, $\mathcal R$ is strict if and only if
\begin{equation}
 \operatorname{supp}_L\bigl(\mathfrak K_r(\mathcal R)\bigr)=G_r
 \quad\text{for every }r.
 \label{rk:eq-full-support}
\end{equation}
Primal symbol exposure and dual depth detection are the two tensor legs of
one canonical kernel.
\end{theorem}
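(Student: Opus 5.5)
The identity \eqref{rk:eq-kernel-annihilator} is pure finite-dimensional linear algebra on the single layer $G_r$. The strictness criterion \eqref{rk:eq-full-support} will then follow by combining it with the equivalence (i)$\Leftrightarrow$(ii) of \Cref{rk:thm-valuation-exchange}.

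The plan is to compute the contraction of $\mathfrak K_r(\mathcal R)$ against an arbitrary $\lambda\in(\partial_r\mathcal Y)^*$ in a basis. Fix a basis $e_1,\dots,e_m$ of $G_r$ with dual basis $e_j^*$. By \eqref{rk:eq-response-kernel},
\[
 \mathfrak K_r(\mathcal R)=\sum_j e_j\otimes\operatorname{gr}_r\mathcal R(e_j^*),
\]
and therefore
\[
 (\operatorname{Id}\otimes\lambda)\mathfrak K_r(\mathcal R)
 =\sum_j\lambda\bigl(\operatorname{gr}_r\mathcal R(e_j^*)\bigr)e_j
 =\sum_j\bigl((\operatorname{gr}_r\mathcal R)^\vee\lambda\bigr)(e_j^*)\,e_j .
\]
Here $(\operatorname{gr}_r\mathcal R)^\vee:(\partial_r\mathcal Y)^*\to G_r^{**}=G_r$ is the transpose. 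This last expression is exactly the element $(\operatorname{gr}_r\mathcal R)^\vee\lambda$ of $G_r$. The computation is basis independent because the coevaluation is canonical. Hence
\[
 \operatorname{supp}_L\mathfrak K_r(\mathcal R)=\operatorname{Ran}\bigl((\operatorname{gr}_r\mathcal R)^\vee\bigr).
\]
One should note that $\partial_r\mathcal Y$ may be infinite-dimensional. Its full algebraic dual still separates points, so the transpose argument survives.

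Next I apply the standard annihilator identity for a linear map $g:G_r^*\to Z$, namely $\operatorname{Ran}(g^\vee)^\perp=\ker g$ inside $G_r^*$. For $\ker g\subseteq\operatorname{Ran}(g^\vee)^\perp$: if $g(q)=0$, then $\langle q,g^\vee\lambda\rangle=\lambda(gq)=0$. For the reverse inclusion: if $q$ annihilates every $g^\vee\lambda$, then $\lambda(gq)=0$ for all $\lambda\in Z^*$, so $gq=0$ since $Z^*$ separates points; this uses a Hahn–Banach-free algebraic basis extension. This proves \eqref{rk:eq-kernel-annihilator}.

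Since $G_r$ is finite-dimensional, a subspace $S\subseteq G_r$ equals $G_r$ iff $S^\perp=0$. So full left support at order $r$ is equivalent to injectivity of $\operatorname{gr}_r\mathcal R$. Requiring this for every $r$ is condition (ii) of \Cref{rk:thm-valuation-exchange}, hence equivalent to strictness. The only step needing any care is the point-separation by the algebraic dual of a possibly infinite-dimensional $\partial_r\mathcal Y$; otherwise the argument is routine.
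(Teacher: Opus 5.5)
Your proposal is correct and follows essentially the paper's argument. The paper contracts the first leg to get $(q\otimes\operatorname{Id})\mathfrak K_r(\mathcal R)=\operatorname{gr}_r\mathcal R(q)$ and notes that this vanishes exactly when $q$ annihilates every second-leg contraction; this is your transpose/annihilator identity, with the point-separation step by the algebraic dual left implicit. Both arguments then conclude through the equivalence (i)$\Leftrightarrow$(ii) of \Cref{rk:thm-valuation-exchange}.
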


\begin{proof}
For $q\in G_r^*$, contraction in the first tensor leg gives
\[
 (q\otimes\operatorname{Id})\mathfrak K_r(\mathcal R)
 =\operatorname{gr}_r\mathcal R(q).
\]
This vanishes exactly when $q$ annihilates every contraction in the second
leg, proving \eqref{rk:eq-kernel-annihilator}.  The final assertion follows
from \Cref{rk:thm-valuation-exchange}.
\end{proof}

Every declared native reader merely pushes this kernel forward.

\begin{theorem}[Reader symbols and Yoneda factorization]
\label{rk:thm-reader-symbols}
Let $\mathscr A_r$ be a class of allowed boundary readers
$A:\partial_r\mathcal Y\to Z_A$, and put
\[
 K_{r,A}:=(\operatorname{Id}\otimes A)\mathfrak K_r(\mathcal R)
 \in G_r\otimes Z_A,
 \qquad
 S_r(\mathscr A):=
 \sum_{A\in\mathscr A_r}\operatorname{supp}_L(K_{r,A}).
\]
The total allowed response on $G_r^*$ is injective if and only if
\begin{equation}
 S_r(\mathscr A)=G_r.
 \label{rk:eq-reader-support}
\end{equation}
If $Z_A$ contains a fixed nonzero line $\mathbf k\tau_{r,A}$ and
$K_{r,A}=\sigma_A\otimes\tau_{r,A}$, then
\begin{equation}
 A\,\operatorname{gr}_r\mathcal R(q)
 =q(\sigma_A)\tau_{r,A}.
 \label{rk:eq-rank-one-reader}
\end{equation}
Thus rank-one symbol factorization is not an additional axiom; it is a
coordinate form of the canonical response kernel.  The reader symbols detect
order $r$ exactly when they span $G_r$.
\end{theorem}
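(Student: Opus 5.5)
The proof is a bookkeeping argument on top of \Cref{rk:thm-kernel-support}; the only real input is naturality of the canonical kernel under pushforward. First I verify the contraction identity for each reader. Since $K_{r,A}=(\operatorname{Id}\otimes A)\mathfrak K_r(\mathcal R)$, contracting the first leg against $q\in G_r^*$ commutes with applying $A$ in the second leg. The identity $(q\otimes\operatorname{Id})\mathfrak K_r(\mathcal R)=\operatorname{gr}_r\mathcal R(q)$ established in the proof of \Cref{rk:thm-kernel-support} then gives
\[
 (q\otimes\operatorname{Id})K_{r,A}=A\,\operatorname{gr}_r\mathcal R(q).
\]
This is the Yoneda factorization: each reader's response is the contraction of the pushed-forward kernel.

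Next I prove the injectivity criterion. By the same annihilator argument as in \eqref{rk:eq-kernel-annihilator}, applied to $K_{r,A}\in G_r\otimes Z_A$, the identity above gives
\[
 \ker(A\,\operatorname{gr}_r\mathcal R)=\operatorname{supp}_L(K_{r,A})^\perp.
\]
This uses finite dimensionality of $G_r$ and the characterization of left support as the span of second-leg contractions. The total allowed response $q\mapsto(A\,\operatorname{gr}_r\mathcal R(q))_{A\in\mathscr A_r}$ has kernel
\[
 \bigcap_{A}\operatorname{supp}_L(K_{r,A})^\perp=S_r(\mathscr A)^\perp.
\]
The class $\mathscr A_r$ may be infinite, but the intersection of annihilators is the annihilator of the sum regardless. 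Since $G_r$ is finite dimensional, $S_r(\mathscr A)^\perp=0$ if and only if $S_r(\mathscr A)=G_r$, which proves \eqref{rk:eq-reader-support}.

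For the rank-one clause, I substitute $K_{r,A}=\sigma_A\otimes\tau_{r,A}$ into the contraction identity, which gives $(q\otimes\operatorname{Id})(\sigma_A\otimes\tau_{r,A})=q(\sigma_A)\tau_{r,A}$, that is, \eqref{rk:eq-rank-one-reader}. In this case $\operatorname{supp}_L(K_{r,A})=\mathbf k\sigma_A$ when $\sigma_A\neq0$ and $0$ otherwise, because $\tau_{r,A}\ne0$. Hence $S_r(\mathscr A)=\operatorname{span}\{\sigma_A\}$, and the final sentence of the statement follows from the criterion just proved. Exact detection of order $r$ means injectivity on $G_r^*$; by \Cref{rk:thm-valuation-exchange}(ii),(iii) this is valuation equality at depth $r$ for the composed readers.

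No step is a genuine obstacle. The one place needing care is the interchange of the first-leg contraction with $A$ when $\partial_r\mathcal Y$ is infinite dimensional. This is harmless because $\mathfrak K_r(\mathcal R)$ is a finite sum of elementary tensors, since $G_r$ is finite dimensional.
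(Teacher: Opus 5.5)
Your proposal is correct and follows the paper's proof essentially verbatim: the paper likewise identifies the kernel of the total graded response as $\bigcap_A\operatorname{supp}_L(K_{r,A})^\perp=S_r(\mathscr A)^\perp$ via \Cref{rk:thm-kernel-support}, and obtains \eqref{rk:eq-rank-one-reader} by contraction in the first tensor leg. Your additional remarks are accurate expansions of steps the paper leaves implicit, namely that $\mathfrak K_r(\mathcal R)$ is a finite sum of elementary tensors (so contraction commutes with $A$) and that $\operatorname{supp}_L(\sigma_A\otimes\tau_{r,A})=\mathbf k\sigma_A$ for $\sigma_A\neq0$.
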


\begin{proof}
By \Cref{rk:thm-kernel-support}, the kernel of the product of the allowed
graded maps is
$\bigcap_A\operatorname{supp}_L(K_{r,A})^\perp
=S_r(\mathscr A)^\perp$.  Formula \eqref{rk:eq-rank-one-reader} is contraction
in the first tensor leg.
\end{proof}

\begin{proposition}[Naturality and base change of response kernels]
\label{rk:prop-kernel-naturality}
Let $f:(M,F^\bullet)\to(M',F^\bullet)$ preserve the decreasing
filtrations, let
$B:(\mathcal Y,K_\bullet)\to(\mathcal Y',K_\bullet)$ preserve the
increasing filtrations, and let
$\mathcal R:M^*\to\mathcal Y$ and
$\mathcal R':(M')^*\to\mathcal Y'$ be filtered responses satisfying
\begin{equation}
 \mathcal R'=B\circ\mathcal R\circ f^*.
 \label{rk:eq-response-base-change}
\end{equation}
Write $f_r:G_r(M)\to G_r(M')$ and
$B_r:\partial_r\mathcal Y\to\partial_r\mathcal Y'$ for the induced
maps.  Then
\begin{equation}
 \mathfrak K_r(\mathcal R')
 =(f_r\otimes B_r)\mathfrak K_r(\mathcal R).
 \label{rk:eq-kernel-base-change}
\end{equation}
Consequently
\[
 \operatorname{supp}_L\mathfrak K_r(\mathcal R')
 \subseteq f_r\bigl(\operatorname{supp}_L\mathfrak K_r(\mathcal R)\bigr).
\]
If $\mathcal R$ has full support, $f_r$ is surjective, and $B_r$ is
injective on the image of
$\operatorname{gr}_r\mathcal R\circ f_r^*$, then $\mathcal R'$ has full
support on $G_r(M')$.  Thus a quotient observation recovers exactly its
observable ancestry layer.  In particular, spatial pushforwards,
exact-layer identifications, and native boundary readers act on one canonical
kernel rather than creating new detector data.
\end{proposition}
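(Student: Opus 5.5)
We prove the three assertions of the proposition in order: the kernel formula, the support inclusion, and the full-support criterion.

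\textbf{Step 1: the associated-graded maps compose.}
Since $f$ preserves the decreasing filtrations, $f^*$ maps $\mathscr D_r(M')^*=(F^{r+1}M')^\perp$ into $(F^{r+1}M)^\perp=\mathscr D_rM^*$. Hence $f^*$ is filtered for the dual flags. Under the identification $\mathscr D_r/\mathscr D_{r-1}\cong G_r^*$, its graded piece is the transpose $f_r^*:G_r(M')^*\to G_r(M)^*$; this follows directly by restricting a functional to $F^rM$. Associated-graded passage is functorial for compositions of filtered maps, so
\[
 \operatorname{gr}_r\mathcal R'=B_r\circ\operatorname{gr}_r\mathcal R\circ f_r^*.
\]

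\textbf{Step 2: the kernel formula.}
We substitute this into the definition of $\mathfrak K_r$ and use the naturality of coevaluation:
\[
 (\operatorname{Id}_{G_r(M')}\otimes f_r^*)\operatorname{coev}_{G_r(M')}(1)
 =(f_r\otimes\operatorname{Id}_{G_r(M)^*})\operatorname{coev}_{G_r(M)}(1).
\]
To check this identity, pair both sides with an arbitrary $q'\otimes x$. Both sides give $q'(f_rx)$, and the tensors pairing against $G_r(M')^*\otimes G_r(M)$ are separated by such elements in finite dimension. Applying $\operatorname{Id}\otimes B_r\operatorname{gr}_r\mathcal R$ then yields \eqref{rk:eq-kernel-base-change}.

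\textbf{Step 3: the support inclusion.}
If $\mathfrak K_r(\mathcal R)\in S\otimes\partial_r\mathcal Y$ with $S=\operatorname{supp}_L\mathfrak K_r(\mathcal R)$, then by Step 2 $\mathfrak K_r(\mathcal R')\in f_r(S)\otimes\partial_r\mathcal Y'$. Minimality of the left support then gives the stated inclusion.

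\textbf{Step 4: the full-support criterion.}
By \Cref{rk:thm-kernel-support} it suffices to show that $\operatorname{gr}_r\mathcal R'$ is injective.
\begin{enumerate}[label=\textup{(\alph*)},leftmargin=2.8em]
\item Surjectivity of $f_r$ makes $f_r^*$ injective.
\item Full support of $\mathcal R$ makes $\operatorname{gr}_r\mathcal R$ injective, again by \Cref{rk:thm-kernel-support}.
\item $B_r$ is injective on the image of $\operatorname{gr}_r\mathcal R\circ f_r^*$ by hypothesis.
\end{enumerate}
The composite in Step 1 is therefore injective, so $\mathcal R'$ has full support on $G_r(M')$.

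The "quotient observation" remark is the special case in which $f$ is a filtered surjection, so that each $f_r$ is surjective and the criterion applies directly.

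The only delicate point is Step 1. One must check carefully that $(f^*)$ induces exactly $f_r^*$ under the restriction isomorphism $\mathscr D_r/\mathscr D_{r-1}\cong G_r^*$, with the direction of transposes kept correct. Everything after that is formal.
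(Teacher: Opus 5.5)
Your proof is correct and is essentially the paper's argument. Like the paper, you identify $\operatorname{gr}_r\mathcal R'$ with $B_r\circ\operatorname{gr}_r\mathcal R\circ f_r^*$, read off the kernel via $\operatorname{Hom}(G_r^*,Z)\cong G_r\otimes Z$ (your naturality of coevaluation), and get the support inclusion from the second tensor leg. You then finish by composing injective maps and applying the kernel support theorem; your Step~1 only spells out the transpose bookkeeping that the paper leaves implicit.

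There is one small slip, in your closing gloss on quotients: a merely filtered surjection need not induce surjective $f_r$. For example, take $f=\mathrm{id}:\mathbf k\to\mathbf k$ with $F^1M=0$ but $F^1M'=M'$ and $F^2M'=0$; then $f_1:0\to\mathbf k$. So that remark needs $f$ to be strict, $f(F^rM)=F^rM'$ for all $r$, which is exactly what makes every $f_r$ surjective.
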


\begin{proof}
Under the canonical identification
$\operatorname{Hom}(G_r^*,Z)\cong G_r\otimes Z$, the tensor corresponding
to the composite
$B_r\circ\operatorname{gr}_r\mathcal R\circ f_r^*$ is
$(f_r\otimes B_r)\mathfrak K_r(\mathcal R)$.  By
\eqref{rk:eq-response-base-change}, that composite is
$\operatorname{gr}_r\mathcal R'$, proving
\eqref{rk:eq-kernel-base-change}.  The support inclusion follows by
contracting the second tensor leg.  Under the final hypotheses,
$f_r^*$, $\operatorname{gr}_r\mathcal R$, and the restriction of $B_r$
are all injective.  Their composite is therefore injective, and
\Cref{rk:thm-kernel-support} gives full support for $\mathcal R'$.
\end{proof}

\begin{proposition}[Finite certificates and the Rees observability Gramian]
\label{rk:prop-robust-support}
Let $G$ be a finite-dimensional exact layer and let a labelled native reader
family have full support on $G$.  Then at most $\dim G$ scalar contractions of
allowed reader coordinates suffice to detect $G$.  Thus there are
$\sigma_1,\ldots,\sigma_m\in G$, $m\leq\dim G$, spanning $G$, and a scalar
analysis map
\[
 \mathsf C_G:G^*\longrightarrow\mathbf k^m,
 \qquad q\longmapsto(q(\sigma_1),\ldots,q(\sigma_m)).
\]
After fixing Hilbert norms, write
\[
 \kappa(A):=\inf_{\|x\|=1}\|Ax\|
\]
for the minimum modulus of a linear map $A$, and define the finite Rees
observability Gramian
\begin{equation}
 \Gamma_G:=\mathsf C_G^*\mathsf C_G,
 \qquad
 \kappa_G:=\kappa(\mathsf C_G)
 =\lambda_{\min}(\Gamma_G)^{1/2}.
 \label{rk:eq-rees-observability-gramian}
\end{equation}
Then full support is equivalent to $\Gamma_G\succ0$, equivalently
$\kappa_G>0$.  For another analysis map $\widetilde{\mathsf C}_G$,
\begin{equation}
 |\kappa(\widetilde{\mathsf C}_G)-\kappa(\mathsf C_G)|
 \leq\|\widetilde{\mathsf C}_G-\mathsf C_G\|.
 \label{rk:eq-observability-gap-stability}
\end{equation}
Consequently full support is stable under perturbations smaller than
$\kappa_G$, and operator-norm convergence of finite reader maps transports a
uniform positive Gramian gap between Galerkin models and their limit.
\end{proposition}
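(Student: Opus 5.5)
The argument is finite-dimensional linear algebra on the single exact layer $G$, so I will work directly with the canonical kernel of \Cref{rk:thm-kernel-support}.

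\emph{Step 1 (extracting the vectors $\sigma_i$).} Full support of the labelled reader family means, by \Cref{rk:thm-reader-symbols}, that
\[
 S(\mathscr A)=\sum_A\operatorname{supp}_L(K_A)=G .
\]
Each left support is spanned by contractions $(\operatorname{Id}\otimes\lambda)K_A$, where $\lambda$ is a scalar functional on an allowed reader coordinate. The union of all these contraction vectors therefore spans $G$. Any spanning set of a finite-dimensional space contains a basis, so I select $m=\dim G$ of them, which I call $\sigma_1,\ldots,\sigma_m$. Each one is a scalar contraction of a reader coordinate, and this gives the ``at most $\dim G$'' count.

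\emph{Step 2 (the analysis map is injective).} For $q\in G^*$, contracting in the first tensor leg as in \eqref{rk:eq-rank-one-reader} gives $q(\sigma_i)=\lambda_i\bigl(A_i\operatorname{gr}\mathcal R(q)\bigr)$. Hence the analysis map $\mathsf C_G$ factors through the allowed response. It is injective exactly when $q$ annihilating every $\sigma_i$ forces $q=0$, and this holds exactly when the $\sigma_i$ span $G$. That is full support.

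\emph{Step 3 (the Gramian criterion).} After fixing Hilbert norms, the quadratic form of $\Gamma_G$ is
\[
 \langle\Gamma_Gq,q\rangle=\|\mathsf C_Gq\|^2 .
\]
So $\Gamma_G\succ0$ if and only if $\mathsf C_G$ is injective. The equality $\kappa_G^2=\lambda_{\min}(\Gamma_G)$ is the Rayleigh quotient characterization of the smallest eigenvalue, taken over the compact unit sphere. Since $G^*$ is finite-dimensional, injectivity is equivalent to $\kappa_G>0$.

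\emph{Step 4 (the stability estimate).} For a unit vector $x$,
\[
 \|\widetilde{\mathsf C}x\|\le\|\mathsf Cx\|+\|\widetilde{\mathsf C}-\mathsf C\| .
\]
Taking the infimum over $x$ gives $\kappa(\widetilde{\mathsf C})\le\kappa(\mathsf C)+\|\widetilde{\mathsf C}-\mathsf C\|$. Exchanging the roles of the two maps gives the reverse inequality, which proves \eqref{rk:eq-observability-gap-stability}. If $\|\widetilde{\mathsf C}-\mathsf C\|<\kappa_G$, then $\kappa(\widetilde{\mathsf C})>0$, and by Step 3 the perturbed map again has full support.

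\emph{Step 5 (Galerkin transport).} Suppose the finite reader maps converge in operator norm to the limit map, which has gap $\kappa_G>0$. By Step 4, the Galerkin gaps eventually satisfy $\kappa\ge\kappa_G/2$.

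The only delicate point is bookkeeping in Step 1. Each $\sigma_i$ must come from a single scalar functional on a single allowed reader coordinate, so the count is genuinely $m\le\dim G$ scalar contractions. The rest is routine.
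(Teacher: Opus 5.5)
Your proposal is correct and matches the paper's proof essentially step for step: you extract a basis from the kernel contractions, identify $\ker\mathsf C_G$ with the annihilator of $\operatorname{span}\{\sigma_j\}$, and get the gap estimate from the unit-vector triangle inequality with the two maps interchanged. The same two-sided estimate also gives the converse Galerkin direction used later in the paper: a uniform gap $\inf_m\kappa(\mathsf C^{(m)}_G)>0$ together with operator-norm convergence forces $\kappa(\mathsf C_G)>0$, and you could state this next to your Step 5.
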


\begin{proof}
Full support means that contractions of the labelled kernels span $G$.
Choose a basis from these contractions; this uses at most $\dim G$ scalar
readers and gives the displayed analysis map.  Its kernel is the annihilator
of $\operatorname{span}\{\sigma_j\}$, proving the Gramian criterion.  For a
unit vector $q$,
\[
 \|\widetilde{\mathsf C}_Gq\|
 \geq\|\mathsf C_Gq\|-\|\widetilde{\mathsf C}_G-\mathsf C_G\|.
\]
Taking infima and then interchanging the two maps proves
\eqref{rk:eq-observability-gap-stability}.
\end{proof}

\begin{remark}[The genuinely native question]
If every algebraic boundary functional is admitted as a reader, full support
is formal.  A source theorem is nonformal because its allowed readers are
restricted to operations intrinsic to the observable: tensor flattenings,
spatial compressions, conditional readouts, or another declared class.  The
only native issue is whether these restricted pushforwards retain full left
support.
\end{remark}

\subsection{Native reachability and finite Rees observability}
\label{rk:subsec-reachability}
Fix an exact layer $G$.  A native experiment is described by finite-dimensional
parameter data $E_e$, a source-symbol map $i_e:E_e\to G$, and a boundary
transfer $T_e:E_e^*\to B_e$.  The corresponding labelled graded readout is
\begin{equation}
 \mathcal O_e=T_e i_e^*:G^*\longrightarrow B_e.
 \label{rk:eq-native-experiment-factorization}
\end{equation}
For a subspace $W\subseteq E_e^*$, write
$W^\circ:=\{x\in E_e:\ell(x)=0\text{ for every }\ell\in W\}$.
Set
\[
 \mathsf R:=\sum_e i_e(E_e),\qquad
 \mathsf V:=\sum_e i_e\bigl((\ker T_e)^\circ\bigr)\subseteq G.
\]
The spaces $\mathsf R$ and $\mathsf V$ are respectively the reachable and
visible symbol spaces.

\begin{proposition}[Reachability--visibility decomposition]
\label{rk:prop-reachability-visibility}
For the product readout $\mathcal O=(\mathcal O_e)_e$ one has
\begin{equation}
 \ker\mathcal O=\mathsf V^\circ,
 \qquad
 \mathsf V\subseteq\mathsf R\subseteq G.
 \label{rk:eq-reachability-visibility}
\end{equation}
If every $T_e$ is injective, then $\mathsf V=\mathsf R$; hence full Rees
support is equivalent to $\mathsf R=G$.
\end{proposition}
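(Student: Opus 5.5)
The proof is finite-dimensional linear algebra with annihilators. The plan is to compute $\ker\mathcal O_e$ for one experiment, intersect over $e$, and then read off the inclusions and the injective case.

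\emph{Step 1: one experiment.} Since $\mathcal O_e=T_e i_e^*$, a functional $q\in G^*$ lies in $\ker\mathcal O_e$ exactly when $i_e^*q\in\ker T_e$. Put $W_e:=\ker T_e\subseteq E_e^*$. Because $E_e$ is finite dimensional, the bipolar identity gives $W_e=(W_e^\circ)^\perp$. Hence $i_e^*q\in W_e$ if and only if $q(i_e x)=(i_e^*q)(x)=0$ for every $x\in W_e^\circ$, that is, if and only if $q$ annihilates $i_e(W_e^\circ)$. This yields
\[
 \ker\mathcal O_e=i_e\bigl((\ker T_e)^\circ\bigr)^\circ .
\]

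\emph{Step 2: the product readout.} The kernel of $\mathcal O=(\mathcal O_e)_e$ is the intersection of the individual kernels. The annihilator of a sum of subspaces of $G$ is the intersection of their annihilators. Therefore $\ker\mathcal O=\bigl(\sum_e i_e((\ker T_e)^\circ)\bigr)^\circ=\mathsf V^\circ$. This holds even for infinitely many experiments, since $G$ is finite dimensional and the annihilator of a sum is always the intersection of the annihilators.

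\emph{Step 3: inclusions and the injective case.} The chain $(\ker T_e)^\circ\subseteq E_e$ gives $\mathsf V\subseteq\mathsf R\subseteq G$ at once. If every $T_e$ is injective, then $\ker T_e=0$, so $(\ker T_e)^\circ=E_e$ and $\mathsf V=\mathsf R$. Full Rees support on $G$ means injectivity of the total readout, as in \Cref{rk:thm-reader-symbols}, i.e.\ $\ker\mathcal O=0$. This holds exactly when $\mathsf V^\circ=0$, which in finite dimension is equivalent to $\mathsf V=G$, and here that is $\mathsf R=G$.

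The main obstacle is the bipolar step $W_e=(W_e^\circ)^\perp$. It uses finite-dimensionality of $E_e$, and one must keep straight the two annihilator directions, $E_e^*\to E_e$ and $G\to G^*$. Everything else is formal.
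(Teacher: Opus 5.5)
Your argument is correct and is essentially the paper's: both identify $\ker\mathcal O_e$ with the annihilator of $i_e\bigl((\ker T_e)^\circ\bigr)$, intersect over $e$, and use $(\ker T_e)^\circ=E_e$ in the injective case. The only difference is packaging: the paper writes $i_e\bigl((\ker T_e)^\circ\bigr)=i_e(\operatorname{Ran}T_e^*)$ as the left support of the kernel tensor $(i_e\otimes T_e)\operatorname{coev}_{E_e}$ and applies the kernel--annihilator identity of \Cref{rk:thm-kernel-support}, whereas you prove that instance directly from the bipolar identity in the finite-dimensional $E_e$.
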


\begin{proof}
The kernel tensor of $\mathcal O_e=T_ei_e^*$ is
$(i_e\otimes T_e)\operatorname{coev}_{E_e}$.  Its left support is
$i_e(\operatorname{Ran}T_e^*)=i_e((\ker T_e)^\circ)$.  Summing over the
labelled experiments and applying the kernel--annihilator identity proves
\eqref{rk:eq-reachability-visibility}.  Injectivity of $T_e$ gives
$(\ker T_e)^\circ=E_e$.
\end{proof}

Here finite Rees observability means separation of a finite-dimensional
labelled exact layer; it is distinct from state or generator observability for
an evolution equation.

\subsection{Critical boundary consequences}
\label{rk:subsec-critical-consequences}

For the logarithmic weak filtration introduced in
\Cref{cr:sec-analytic}, write
\[
 \tau_r:=\bigl[D_\infty^{\otimes r}\bigr],
 \qquad
 D_\infty=\operatorname{diag}(1,1/2,1/3,\ldots).
\]
By \Cref{cc:thm:tensor-critical-arithmetic}, $\tau_r$ is a nonzero class of
critical cost $r$, $\tau_a\otimes\tau_b=\tau_{a+b}$, and a representative
satisfies
\begin{equation}
 s_N(D_\infty^{\otimes r})
 \sim\frac1{\Gamma(r)}\phi_r(N).
 \label{rk:eq-critical-boundary-line}
\end{equation}

\begin{definition}[Full-support and coefficient-pure response]
\label{rk:def-full-pure}
A Rees response kernel is \emph{full-support} in order $r$ if its left
support is $G_r$.  Relative to declared native readers, it is
\emph{coefficient-pure} if the total pushed-forward kernel lies in
\[
 G_r\otimes E_r\otimes\mathbf k\tau_r
\]
for a finite-dimensional label space $E_r$.  A window-pure family is defined
by replacing $D_\infty$ by finite truncations which reproduce the same
singular values on every prescribed finite output window.
\end{definition}

\begin{theorem}[Qualitative and quantitative response consequences]
\label{rk:thm-response-consequences}
Let $\mathcal R$ be a filtered response.
\begin{enumerate}[label=\textup{(\roman*)},leftmargin=2.8em]
\item Full support in every order is equivalent to strictness.  It gives
      valuation exchange and the flag reconstruction formula
      \eqref{rk:eq-flag-reconstruction}.
\item Coefficient-pure full support gives, for every nonzero probe of top
      depth $r$, at least one labelled native coordinate with a nonzero exact
      critical $r$-packet.  Its singular, Mellin, and Fredholm laws are those
      of \Cref{cc:thm-critical-packet-principle}.
\item Window-pure full support gives the corresponding family-valued
      finite-window lower profile.  It preserves the valuation but does not
      assert one full-tail extremizing realization.
\end{enumerate}
Full support alone need not imply a regularly varying equivalent: it records
nonvanishing in the analytic quotient, while coefficient-purity identifies a
specific critical boundary line.
\end{theorem}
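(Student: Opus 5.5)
The plan is to derive each of the three clauses by composing results already proved: the kernel support theorem, the valuation-exchange theorem, and the critical packet principle. Throughout, a ``probe of top depth \(r\)'' means a nonzero \(q\in M^*\) with \(a_F(q)=r\). Its class \([q]\in\mathscr D_r/\mathscr D_{r-1}\cong G_r^*\) is then nonzero.

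For part (i), \Cref{rk:thm-kernel-support} shows that full support of \(\mathfrak K_r(\mathcal R)\) in every order is equivalent to injectivity of every \(\operatorname{gr}_r\mathcal R\). By \Cref{rk:thm-valuation-exchange}(ii)\(\Leftrightarrow\)(i)\(\Leftrightarrow\)(iii), this is equivalent to strictness and to the valuation identity \(\nu_K(\mathcal Rq)=a_F(q)\). The flag formula \eqref{rk:eq-flag-reconstruction} is the last assertion of the same theorem. Nothing new is needed here.

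For part (ii), first contract the first tensor leg. As in the proof of \Cref{rk:thm-kernel-support}, \((q\otimes\mathrm{Id})\) applied to the pushed-forward kernel equals \(A\,\operatorname{gr}_r\mathcal R([q])\). By coefficient purity this lies in \(E_r\otimes\mathbf k\tau_r\), and by full support together with \Cref{rk:thm-reader-symbols} it is nonzero. Write it as \(\sum_\nu c_\nu e_\nu\otimes\tau_r\) in a basis of \(E_r\), and choose a label coordinate with \(c_\nu\ne0\). Its boundary class is then \(c_\nu\tau_r\), with representative \(c_\nu D_\infty^{\otimes r}\).

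The next step is to lift this class to an actual operator in \(K_r\mathcal Y\). Any two lifts differ by an element of \(K_{r-1}\mathcal Y\), which by construction of the logarithmic weak filtration has singular values \(O(\phi_{r-1}(N))\) for \(r\ge2\), or \(o(1/N)\) for \(r=1\). This is exactly the remainder condition \eqref{cc:eq-packet-lower-remainder}. We therefore obtain an exact critical \(r\)-packet with \(m=1\), \(B_{j}=D_\infty\) (so \(a_j=1\)), and \(C\) a rank-one trace-class unit decoration. \Cref{cc:thm-critical-packet-principle}(i) then gives \(\operatorname{Crit}_r(|c_\nu|)\), and the Mellin and Fredholm laws follow from \Cref{cc:thm:regular-dictionary} at \(p=1\).

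The main obstacle is this identification of ``class in \(\partial_r\mathcal Y\) equal to \(c\tau_r\)'' with ``operator equal to \(cD_\infty^{\otimes r}\) plus a strictly lower-grade remainder.'' I would make it precise by taking \(K_{r-1}\) to be the \(\mathfrak W^{[r-1]}\) grade (and \(\mathcal S^0_{1,\infty}\) for \(r=1\)). The \(r=1\) case is the delicate one: the packet principle requires \(o(1/N)\), which is exactly membership in the little ideal \(\mathcal S^0_{1,\infty}\). For \(r\ge2\), a \(\mathfrak W^{[r-1]}\)-bounded remainder is \(O(\phi_{r-1})\), which suffices.

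For part (iii), run the same argument with \(D_\infty\) replaced by its window truncations. These are window-critical atom families with uniform constants, and \(\inf|c_L|>0\) holds because the same nonzero coefficient \(c_\nu\) appears for every window. \Cref{cc:thm-critical-packet-principle}(ii) then yields the lower profile \(\phi_r(N)\) on the divisor window. Valuation is still preserved by part (i), but no single tail asymptotic is asserted.

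The closing remark needs a counterexample, which I would build from an element \(D_\infty^{\otimes r}\oplus S\). Here \(S\in\mathfrak W^{[r]}\setminus\mathfrak W^{[r]}_0\) is chosen with an oscillating ratio \(Ns_N/\log^{r-1}N\), for instance blockwise constant singular values on lacunary blocks. The element is nonzero in the quotient, yet its singular values have no regularly varying equivalent.
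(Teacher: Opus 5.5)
Your proof follows the paper's own argument. Part (i) is read off from \Cref{rk:thm-kernel-support,rk:thm-valuation-exchange}; part (ii) contracts with a nonzero top dual class to isolate a nonzero $\tau_r$-coefficient in one labelled coordinate and then applies the exact clause of \Cref{cc:thm-critical-packet-principle}; part (iii) repeats this with truncations and the window clause. Your explicit lift $c\,D_\infty^{\otimes r}+R$ with $R\in K_{r-1}$, and your lacunary counterexample (compare \Cref{rk:prop-minimality}(iii)), only make explicit what the paper leaves implicit. For $r=1$ you need no special choice of $K_0$: the paper's cost-zero level is trace class, and a trace-class $R$ already has $s_N(R)=o(1/N)$.

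One caveat, which the paper also glosses over, concerns part (iii), where ``the same argument'' cannot be repeated literally. The truncation $D_L^{\otimes r}$ has finite rank and so vanishes in every $\partial_r\mathcal Y$. Hence neither the coefficient nor $\inf_L|c_L|>0$ can be read off from $\operatorname{gr}_r\mathcal R$. Window-purity therefore has to be taken as representative-level data that already includes the uniform remainder hypotheses of the window clause: $\sup_L\|R_L\|_{\mathfrak W^{[r-1]}}<\infty$ for $r\ge2$, and uniformly bounded rank and operator norm for $r=1$.
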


\begin{proof}
Part~\textup{(i)} is
\Cref{rk:thm-valuation-exchange,rk:thm-kernel-support}.  In the
coefficient-pure case, contraction by a nonzero top dual class leaves a
nonzero coefficient of $\tau_r$ in at least one labelled coordinate; apply
the exact part of \Cref{cc:thm-critical-packet-principle}.  The same argument
with truncations and the window clause of that theorem proves part~\textup{(iii)}.
\end{proof}

\begin{proposition}[Independent failure mechanisms]
\label{rk:prop-minimality}
Each part of the architecture has an independent role.
\begin{enumerate}[label=\textup{(\roman*)},leftmargin=2.8em]
\item Without filteredness, an algebraic order-one vector may be sent to
      singular values $N^{-1/2}$, so no finite critical upper grade exists.
\item With filteredness but proper kernel support, every nonzero probe in the
      annihilator of that support is invisible at the correct grade.
\item A full-support kernel may carry an oscillatory boundary representative;
      valuation equality then holds without an asymptotic equivalent.
\item If the grade-one boundary class is nilpotent under tensor product, one
      grade-one certificate cannot propagate to every depth.
\item If labels are discarded, equal singular-value profiles can represent
      distinct coefficient directions, so annihilator reconstruction fails.
\item A universal coefficient-pure kernel says nothing about an unrelated raw
      observable until its native readers are shown to push forward that
      kernel with full support.
\end{enumerate}
\end{proposition}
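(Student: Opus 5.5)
The plan is to prove the six items of \Cref{rk:prop-minimality} separately. Each needs an explicit minimal counterexample in which exactly one hypothesis of \Cref{rk:thm-response-consequences} is removed and all the others are kept. Throughout I use the smallest source: $M$ of dimension one or two with a two-step flag, and target $\mathcal Y$ equal to a quotient of compact operators by the critical filtration $K_r=\mathfrak W^{[r]}$ (modulo the next lower grade). This gives concrete meaning to ``grade'' and ``boundary representative''.

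For (i), I take $M=G_1$ one-dimensional and let $\mathcal R(q)=q(\sigma)\,\operatorname{diag}(N^{-1/2})$. The operator lies in no $\mathfrak W^{[q]}$, since $N\cdot N^{-1/2}/(\log N)^{q-1}\to\infty$. Hence $\mathcal R(\mathscr D_1)\not\subseteq K_r$ for every finite $r$.

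For (ii), I take $G_r$ of dimension two and a filtered $\mathcal R$ whose kernel is $\sigma_1\otimes\tau_r$ only. Then \Cref{rk:thm-kernel-support} shows that the dual probe $q$ with $q(\sigma_1)=0$ lies in $\ker\operatorname{gr}_r\mathcal R$. So $\nu_K(\mathcal Rq)<a_F(q)$ and the probe is invisible at its grade.

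For (iii), I take an order-$r$ representative whose singular values are $\phi_r(N)(2+\sin\log\log N)$. This sequence is bounded above and below by constant multiples of $\phi_r$. It therefore lies in $\mathfrak W^{[r]}$ but not in $\mathfrak W^{[r]}_0$, and not in $\mathfrak W^{[r-1]}$, so valuation equality holds. It has no regularly varying equivalent, so \eqref{cc:eq-packet-singular} fails.

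For (iv), I use a formal algebraic model: an analytic boundary ring $\mathbf k[\tau]/(\tau^2)$ with a grade-one class $\tau$. A response placing a depth-two symbol on $\tau\otimes\tau=0$ then has zero associated-graded image, so no certificate at depth two or higher comes from the grade-one one. I would state this as a remark that the arithmetic $\tau_a\otimes\tau_b=\tau_{a+b}$ from \Cref{cc:thm:tensor-critical-arithmetic} is essential.

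For (v), I take two probes $q_1,q_2$ with responses $D_\infty\oplus0$ and $0\oplus D_\infty$. Their singular profiles coincide, but their labelled coordinates differ. After discarding labels, \eqref{rk:eq-flag-reconstruction} can only use the unlabelled profile, which cannot separate $\ker q_1$ from $\ker q_2$.

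For (vi), I compose a universal coefficient-pure kernel with a boundary reader $B$ that kills the $\tau_r$ line. \Cref{rk:prop-kernel-naturality} then gives the pushed-forward kernel as $(f_r\otimes B_r)\mathfrak K_r=0$, which has empty support.

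I expect the main obstacle to be item (iv), because it has to be phrased inside the filtered-response formalism rather than as informal commentary. The key step there is specifying a filtered target in which the grade-one class is nilpotent. Everything else is a direct instance of \Cref{rk:thm-kernel-support} and \Cref{rk:prop-kernel-naturality}, together with the elementary membership checks for $\mathfrak W^{[r]}$.
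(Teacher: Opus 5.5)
Your strategy is the paper's: its proof also settles each item with one small explicit model. It uses a one-dimensional model for (i), a finite-dimensional one for (ii), an oscillating normalized profile for (iii), a declared-zero higher product for (iv), and a transposition quotient for (v). For (vi) the paper prescribes compact operators on the raw source independently of the response, whereas you push the kernel through a reader with $B_r\tau_r=0$; either witness makes the point. Items (i), (ii), (iv) and (vi) are fine.

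The step that fails is the mechanism in (v). The reconstruction formula \eqref{rk:eq-flag-reconstruction} uses only the valuations $\nu_K(\mathcal Rq)$, and membership in $\mathfrak W^{[r]}$ depends only on singular values. So if ``discarding labels'' means replacing each $\mathcal Rq$ by its unlabelled profile, the formula can still be evaluated, and it is still correct in your example. Every nonzero $q=aq_1+bq_2$ has response $aD_\infty\oplus bD_\infty$ of valuation one, so one recovers $F^1M=M$ and $F^2M=0$. That flag is swap-invariant, so being unable to ``separate $\ker q_1$ from $\ker q_2$'' creates no ambiguity.

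The missing idea is that label-discarding must act \emph{linearly} on the response, as the quotient by the transposition $(y_1,y_2)\mapsto y_1+y_2$. Then $q_1$ and $q_2$ have literally the same image $D_\infty$. The nonzero probe $q_1-q_2$ is sent to $0$ and acquires valuation $-\infty$, so the symmetrized response is not strict. Consequently \eqref{rk:eq-flag-reconstruction} returns $\ker(q_1-q_2)=\mathbf k(\sigma_1+\sigma_2)$ instead of $F^1M=M$. That is the precise sense in which equal profiles represent distinct coefficient directions.

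In (iii) your example works, but not for the reason you give. Since $\log\log(\lambda N)-\log\log N\to0$, the factor $2+\sin\log\log N$ is slowly varying in $N$. Hence $\phi_r(N)(2+\sin\log\log N)$ is itself regularly varying of index $-1$, and your claim that it has no regularly varying equivalent is false.

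What (iii) actually needs is only that $s_N/\phi_r(N)$ has no limit, which rules out every $\kappa$ in \eqref{cc:eq-packet-singular}. In the log-counting variable the correction becomes $2+\sin\log R$, which is not slowly varying in $R$, so \Cref{cc:thm:regular-dictionary} does not apply. Indeed $\varepsilon^r\zeta_T(1+\varepsilon)=2\Gamma(r)+\operatorname{Im}\bigl(\varepsilon^{-i}\Gamma(r+i)\bigr)+o(1)$ does not converge, so \eqref{cc:eq-packet-mellin} fails as well.

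If you want literally ``no regularly varying equivalent'' (the phrasing of the remark after \Cref{rk:thm-response-consequences}), use $\phi_r(N)(2+\sin\log N)$ instead. It is eventually decreasing because $\cos\theta/(2+\sin\theta)\le1/\sqrt3$. Its ratio to $\phi_r$ stays in $[1,3]$ but is not slowly varying, which excludes every regularly varying equivalent.
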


\begin{proof}
The first two assertions follow from one-dimensional and finite-dimensional
examples.  For~\textup{(iii)}, choose an element of the critical quotient
whose normalized singular values oscillate.  Part~\textup{(iv)} is obtained
by declaring a higher product zero.  For~\textup{(v)}, quotient a
two-dimensional labelled response by the transposition of its coordinates.
For~\textup{(vi)}, prescribe arbitrary compact operators on the raw source
space independently of the registered response.
\end{proof}

\begin{theorem}[Sharp depth-only no-go and determinant blindness]
\label{rk:thm-sharp-depth-no-go}
Fix $r\ge1$.  For every nonzero compact operator $K$ on a separable
Hilbert space $H$, there is a block-lower-triangular operator model of a
nonzero depth-$r$ primitive word whose only nonzero endpoint block is
unitarily equivalent to $K$.  In particular,
\begin{equation}
 s_j(W_r)=s_j(K),\qquad j\ge1.
 \label{rk:eq-arbitrary-depth-spectrum}
\end{equation}
Consequently primitive depth alone forces no Schatten membership, decay
law, regular variation, Mellin order, or Fredholm growth.

More explicitly, on
$\mathcal H=H_0\oplus\cdots\oplus H_r$, with $H_j\cong H$, choose
adjacent compact blocks $X_j:H_{j-1}\to H_j$ and set
$W_1=X_1$, $W_j=[X_j,W_{j-1}]$.  Then $W_r$ has only its $(r,0)$ block and
realizes the free normal word
\[
 \operatorname{ad}_{\mathsf q_r}\operatorname{ad}_{\mathsf q_{r-1}}
 \cdots\operatorname{ad}_{\mathsf q_2}(\mathsf q_1)
 \in\mathfrak L_r(Q^\sharp).
\]
Moreover $W_r^2=0$.  If $K\in\mathcal S_1$, then
\begin{equation}
 \det(I+zW_r)=1,
 \qquad
 \frac12\log\det(I+z^2W_r^*W_r)
 =\frac12\log\det(I+z^2K^*K).
 \label{rk:eq-two-determinants}
\end{equation}
More generally, if
$D=\operatorname{diag}(D_0,\ldots,D_r)$ and $R_\triangle$ are trace
class, with $R_\triangle$ strictly block triangular, then
\begin{equation}
 \det(I+z(D+R_\triangle))
 =\prod_{j=0}^r\det(I+zD_j).
 \label{rk:eq-triangular-determinant-blindness}
\end{equation}
Thus an ordinary Fredholm determinant can erase an entire strictly
triangular ancestry layer, while hermitized and labelled responses retain it.
\end{theorem}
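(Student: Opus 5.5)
The plan is to make everything explicit in the block-lower-triangular model and then read off the spectral and determinant claims from nilpotency. Fix the polar decomposition $K=U|K|$. On $\mathcal H=H_0\oplus\cdots\oplus H_r$ with $H_j\cong H$, let $X_j:H_{j-1}\to H_j$ be the block operator, extended by zero elsewhere. Choose $X_1=K$, read as a map $H_0\to H_1$, and $X_j=I$ for $j\ge2$, via the fixed identifications. These are not compact, so if compactness of every $X_j$ is wanted I will split $K=K_1K_2\cdots K_r$ into compact factors. One way: write $K=U|K|^{1/r}\cdots|K|^{1/r}$ and take $X_1=|K|^{1/r}$, $X_j=|K|^{1/r}$ for $2\le j<r$, and $X_r=U|K|^{1/r}$. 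Each factor is compact because $|K|^{1/r}$ is compact. The only facts used are that $X_j$ maps $H_{j-1}\to H_j$ and is zero elsewhere.

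The key computation is an induction showing that $W_j$ is supported only in the $(j,0)$ block, with that block equal to $X_jX_{j-1}\cdots X_1$. Indeed, $X_jW_{j-1}$ maps $H_0\to H_{j-1}\to H_j$. The other term $W_{j-1}X_j$ needs input in $H_{j-1}$ landing where $W_{j-1}$ is nonzero, which is $H_0$; since $j-1\ge1$ this term vanishes. Hence the $(r,0)$ block is $X_r\cdots X_1$, which is unitarily equivalent to $K$. It follows that $s_j(W_r)=s_j(K)$ and $W_r^2=0$ because $r\ge1$.

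For the free-normal-word claim, send $\mathsf q_j\mapsto X_j$ in the free Lie algebra on the letters $\mathsf q_j$. The nested bracket $\operatorname{ad}_{\mathsf q_r}\cdots\operatorname{ad}_{\mathsf q_2}(\mathsf q_1)$ is a nonzero Lie word of length $r$, hence lies in $\mathfrak L_r(Q^\sharp)$ when the $\mathsf q_j$ are taken as independent degree-one generators. Its image is $W_r\ne0$ whenever $K\ne0$, so the realization is nonzero. The spectral freedom follows immediately: $K$ is arbitrary, so no decay law, Schatten class, Mellin order or Fredholm growth is forced.

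For the determinants, treat the strictly triangular case first. If $K\in\mathcal S_1$ then $W_r\in\mathcal S_1$ and is nilpotent, so all its eigenvalues are zero and Lidskii gives $\det(I+zW_r)=1$. Next, $W_r^*W_r$ is block diagonal with only the $(0,0)$ block $K^*K$ nonzero, up to the unitary conjugation. This gives the hermitized identity. For the general $D+R_\triangle$ I will factor $I+z(D+R_\triangle)=(I+zD)\bigl(I+z(I+zD)^{-1}R_\triangle\bigr)$ for $z$ outside the discrete set where $I+zD$ fails to be invertible. The second factor is $I$ plus a strictly block-lower-triangular trace-class operator, since $(I+zD)^{-1}$ is block diagonal. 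That operator is nilpotent, so its determinant is $1$. Multiplicativity of Fredholm determinants then gives $\prod_j\det(I+zD_j)$, and analyticity in $z$ extends the identity to all $z$.

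The main obstacle is interpretive rather than computational: making precise in what sense $W_r$ "realizes" an element of $\mathfrak L_r(Q^\sharp)$. I would cite the normal-cone identification of \Cref{cc:thm-normal-cone} and note that each $X_j$ has normal degree one. The remaining steps are routine.
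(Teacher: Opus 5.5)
Your proposal is correct and follows the paper's proof essentially step for step: the same factorization $K=U|K|^{1/r}\cdots|K|^{1/r}$ into adjacent compact blocks, the same induction giving $W_j=X_j\cdots X_1$ because the reversed product vanishes, the Lie map $\mathsf q_j\mapsto X_j$, nilpotency for $\det(I+zW_r)=1$, and the factorization $(I+zD)\bigl[I+z(I+zD)^{-1}R_\triangle\bigr]$ extended to all $z$ by analyticity. The one thing to tidy is the case $r=1$, where your two prescriptions for $X_1$ conflict; take $X_1=K$ there, as the paper does.
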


\begin{proof}
Write $K=U|K|$ and take
\[
 A_1=\cdots=A_{r-1}=|K|^{1/r},
 \qquad A_r=U|K|^{1/r},
\]
with $A_1=K$ when $r=1$.  Let $X_j$ have $A_j$ as its only nonzero
adjacent block.  The block orientation gives inductively
$W_j=X_jX_{j-1}\cdots X_1$, because the reversed product vanishes.  Hence
the only nonzero block of $W_r$ is $A_r\cdots A_1=K$, proving
\eqref{rk:eq-arbitrary-depth-spectrum}; the universal Lie map
$\mathsf q_j\mapsto X_j$ realizes the displayed nonzero depth-$r$ word.
The same block support gives $W_r^2=0$.  For trace-class $K$, the nilpotent
trace-class operator $W_r$ has Fredholm determinant one, while the nonzero
spectrum of $W_r^*W_r$ is that of $K^*K$, proving
\eqref{rk:eq-two-determinants}.

Wherever $I+zD$ is invertible, factor
\[
 I+z(D+R_\triangle)
 =(I+zD)\bigl[I+z(I+zD)^{-1}R_\triangle\bigr].
\]
The second factor is identity plus a trace-class strictly triangular
nilpotent, hence has determinant one.  Multiplicativity proves
\eqref{rk:eq-triangular-determinant-blindness} away from the discrete
noninvertibility set of $I+zD$; analyticity extends the identity to every
$z$.
\end{proof}

\section{Canonical stochastic entrance and the critical endpoint}
\label{cc:sec-critical-geometry}

\subsection{The exterior realization of critical area}

The logarithmic normal filtration records primitive ancestry.  The exterior
construction below has a different purpose: it is a multiplicative reader of
one degree--two primitive and, after conditioning, a generating function for
all repeated uses of its covariance.  Exterior particle number is therefore
not a second ancestry filtration.  This distinction lets the same reader
connect causal composition, predictable brackets, Fredholm determinants, and
operator ideals without identifying algebraically different gradings.

Throughout this subsection \(H\) is a real separable Hilbert space.  For vectors
\(u,v\) in a Hilbert space, \(u\otimes v\) denotes the rank--one operator
\(w\mapsto\langle w,v\rangle u\).  We use the exterior convention

\[
 \langle u_1\wedge\cdots\wedge u_j,
         v_1\wedge\cdots\wedge v_j\rangle
 =\det(\langle u_a,v_b\rangle)_{a,b=1}^j.
\]

If \(A\in\mathcal S_2(H)\) is skew-adjoint, let
\(\alpha_A\in\Lambda^2H\) be determined by
\begin{equation}
 \langle Av,u\rangle
 =\langle\alpha_A,u\wedge v\rangle.
 \label{cc:eq-two-form-operator}
\end{equation}

\subsubsection{The multiplicative exterior reader}

Consider the sector-resolved degree--two space
\[
 \mathcal G_2(H)
 :=H\times\mathcal S_2(H)_{\mathrm{skew}}
       \times\mathcal S_1(H)_{\mathrm{sa}}
\]
with product
\begin{equation}
 (x,A,Q)\circ(y,B,R)
 :=\left(x+y,
 A+B+\frac12(x\otimes y-y\otimes x),Q+R\right).
 \label{cc:eq-sector-resolved-product}
\end{equation}
It is a topological group, with
\[
 e=(0,0,0),\qquad (x,A,Q)^{-1}=(-x,-A,-Q).
\]
The third coordinate records the collision sector, while the second
coordinate records chronological antisymmetry.

Put
\[
 \mathcal F_{\wedge}(H):=\bigoplus_{j\geq0}\Lambda^jH
\]
and write
\(\exp_{\wedge}(\alpha)=\sum_{j\geq0}\alpha^{\wedge j}/j!\).
For \(\lambda\in\mathbb R\), define
\begin{equation}
 \Sigma_{\wedge}^{\lambda}(x,A,Q)
 :=(1+\lambda x)\wedge
   \exp_{\wedge}(2\lambda^2\alpha_A).
 \label{cc:eq-exterior-reader}
\end{equation}

\begin{theorem}[Multiplicative exterior area reader]
\label{cc:thm-multiplicative-exterior-reader}
For every \(g,h\in\mathcal G_2(H)\),
\begin{equation}
 \Sigma_{\wedge}^{\lambda}(g\circ h)
 =\Sigma_{\wedge}^{\lambda}(g)\wedge
  \Sigma_{\wedge}^{\lambda}(h).
 \label{cc:eq-exterior-multiplicativity}
\end{equation}
If \(a_1(A)\geq a_2(A)\geq\cdots\) are the positive two--plane
singular values of \(A\), then
\begin{align}
 \mathfrak P_A(z)
 &:=\log\|\exp_{\wedge}(z\alpha_A)\|^2
   =\sum_{j\geq1}\log(1+z^2a_j(A)^2) \notag\\
 &=\frac12\log\det(I+z^2A^*A)
   =\mathfrak F_A(z).
 \label{cc:eq-exterior-profile}
\end{align}
Consequently, for every \(0<p<2\), the weak and little
\(\mathcal S_{p,\infty}\) scales are read directly from \(\mathfrak P_A\)
by \Cref{cc:prop:weak-detection}.  The full vector
\(\Sigma_{\wedge}^{\lambda}\) is faithful in \(A\) for
\(\lambda\ne0\), whereas the scalar profile remembers precisely the nonzero
singular spectrum, with multiplicity, and forgets the singular subspaces.
\end{theorem}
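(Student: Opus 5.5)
The proof reduces multiplicativity to three facts: \(\alpha\) is additive, the wedge product is associative, and even-degree elements are central. The scalar profile then follows from a canonical block decomposition of a skew Hilbert--Schmidt operator.

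\emph{Multiplicativity.} First I check that \(A\mapsto\alpha_A\) is linear. Under the convention \(\langle Av,u\rangle=\langle\alpha_A,u\wedge v\rangle\), the rank-two skew operator \(x\otimes y-y\otimes x\) corresponds to \(\pm\,y\wedge x\) or \(\pm\,x\wedge y\). Testing against \(u\wedge v\) and using the determinant pairing gives
\[
\langle (x\otimes y-y\otimes x)v,u\rangle=\langle v,y\rangle\langle x,u\rangle-\langle v,x\rangle\langle y,u\rangle=\langle x\wedge y,u\wedge v\rangle .
\]
Hence \(\alpha_{\frac12(x\otimes y-y\otimes x)}=\tfrac12\,x\wedge y\).

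Even-degree elements of \(\mathcal F_\wedge\) are central, so \(\exp_\wedge(\beta+\gamma)=\exp_\wedge\beta\wedge\exp_\wedge\gamma\) for \(\beta,\gamma\in\Lambda^2H\), and these factors commute with everything. Since \(x\wedge x=0\), we have \((1+\lambda x)\wedge(1+\lambda y)=1+\lambda(x+y)+\lambda^2 x\wedge y\). On the other side, \((1+\lambda(x+y))\wedge\exp_\wedge(\lambda^2 x\wedge y)\) agrees with this, because \((x+y)\wedge x\wedge y=0\) and \((x\wedge y)^{\wedge2}=0\). The coefficient \(2\lambda^2\cdot\tfrac12=\lambda^2\) matches exactly. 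Collecting the central exponentials gives \eqref{cc:eq-exterior-multiplicativity}. The \(Q\)-coordinate plays no role.

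\emph{Profile.} By the spectral theorem for compact skew operators on a real Hilbert space, there are an orthonormal family \(e_j,f_j\) and numbers \(a_j>0\) with \(A=\sum_j a_j(f_j\otimes e_j-e_j\otimes f_j)\). The sign convention is immaterial for the norm. This gives \(\alpha_A=\sum_j a_j\,\omega_j\) up to a global sign, with \(\omega_j=f_j\wedge e_j\) mutually orthogonal unit decomposable 2-vectors in disjoint planes. Each \(\omega_j\) squares to zero, so
\[
\exp_\wedge(z\alpha_A)=\prod_j(1+za_j\omega_j)=\sum_{S\ \text{finite}}z^{|S|}\prod_{j\in S}a_j\,\omega_S .
\]
The \(\omega_S\) are orthonormal, since they are wedges of disjoint orthonormal vectors. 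Therefore \(\|\exp_\wedge(z\alpha_A)\|^2=\sum_S\prod_{j\in S}z^2a_j^2=\prod_j(1+z^2a_j^2)\). This product converges because \(\sum_j a_j^2<\infty\) for Hilbert--Schmidt \(A\). The same convergence justifies infinite \(\exp_\wedge\) in the Fock completion, by limiting over finite-rank truncations with norm continuity.

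The singular values of \(A\) are each \(a_j\) with multiplicity two, so \(\tfrac12\log\det(I+z^2A^*A)=\tfrac12\sum_j 2\log(1+z^2a_j^2)\), which yields \eqref{cc:eq-exterior-profile}. Since \(\mathfrak P_A=\mathfrak F_A\), the weak and little \(\mathcal S_{p,\infty}\) detection for \(0<p<2\) is exactly \Cref{cc:prop:weak-detection}.

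\emph{Faithfulness.} The degree-one component of \(\Sigma^\lambda_\wedge\) is \(\lambda x\). The degree-two component is \(2\lambda^2\alpha_A\), with no \(x\wedge x\) contribution. For \(\lambda\neq0\) these recover \(x\) and \(\alpha_A\), and hence \(A\); the collision coordinate \(Q\) is deliberately not read. The scalar profile \(\prod(1+z^2a_j^2)\) is an entire function of \(z^2\) whose zeros at \(z^2=-a_j^{-2}\) record the \(a_j\) with multiplicity. It is invariant under \(A\mapsto UAU^{*}\) for orthogonal \(U\), so it forgets the singular subspaces.

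The main obstacle is the sign and normalization bookkeeping in the \(\alpha\) pairing, where the factor \(2\lambda^2\) must cancel the \(\tfrac12\) exactly, together with the infinite-dimensional convergence of \(\exp_\wedge\).
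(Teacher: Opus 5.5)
Your proposal is correct and follows the paper's proof essentially step for step: you use the identity \(2\alpha_{\frac12(x\otimes y-y\otimes x)}=x\wedge y\), then centrality of even forms together with \((x\wedge y)^{\wedge2}=0\) and \((x+y)\wedge x\wedge y=0\) for multiplicativity, and finally the orthogonal two-plane decomposition of \(A\) into commuting square-zero summands for the profile identity. Your explicit check of the pairing convention and your zero-set argument for recovering the singular spectrum fill in details that the paper leaves implicit.
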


\begin{proof}
For \(x,y\in H\),
\[
 2\alpha_{\frac12(x\otimes y-y\otimes x)}=x\wedge y.
\]
The decomposable two--form \(x\wedge y\) squares to zero and
\((x+y)\wedge x\wedge y=0\).  Since even exterior forms commute, the
right-hand side of \eqref{cc:eq-exterior-multiplicativity} is
\[
 (1+\lambda(x+y))\wedge
 \exp_{\wedge}\!\left(
  2\lambda^2(\alpha_A+\alpha_B)+\lambda^2x\wedge y
 \right),
\]
which is the left-hand side.

An orthogonal two--plane decomposition of a compact skew operator gives
\(
 \alpha_A=\sum_j a_j(A)e_{2j-1}\wedge e_{2j}
\)
up to signs.  The mutually commuting decomposable summands yield
\[
 \|\exp_{\wedge}(z\alpha_A)\|^2
 =\prod_j(1+z^2a_j(A)^2).
\]
Every \(a_j(A)\) occurs twice in the singular-value list of \(A\), proving
\eqref{cc:eq-exterior-profile}.

Finally, \eqref{cc:eq-exterior-profile} identifies
\(\mathfrak P_A\) with the positive Fredholm reader of
\Cref{cc:def:log-spectral-rig}, so the asserted weak and little detection is
exactly the case \(r=1\) of \Cref{cc:prop:weak-detection}.
\end{proof}

\begin{remark}[Reader degree versus ancestry degree]
\label{cc:rem-reader-versus-ancestry}
The degree--two component of
\(\exp_{\wedge}(2\lambda^2\alpha_A)\) is
\(2\lambda^2\alpha_A\), so the exterior vector determines \(A\).
Its higher even components are exterior powers of that same two--form.  They
are not the higher primitive Lie layers of the normal cone; those are
measured by the lower-central filtration of the convolution logarithm.
\end{remark}

\subsubsection{Fermionic superoperators}

Let \(\Omega_{\wedge}=1\in\Lambda^0H\), let \(P_j\) be the projection onto
\(\Lambda^jH\), and identify
\(P_0=\Omega_{\wedge}\otimes\Omega_{\wedge}\) with the vacuum projection.
Right creation and annihilation are
\[
 c^{\dagger}(h)\xi:=\xi\wedge h,
 \qquad c(h):=c^{\dagger}(h)^*.
\]
They satisfy the canonical anticommutation relations.  For
\(Q=\sum_i u_i\otimes v_i\in\mathcal S_1(H)\), define
\begin{equation}
 \mathcal B_Q(T):=\sum_i c^{\dagger}(u_i)T c(v_i),
 \qquad
 \mathrm d\Gamma(Q):=\sum_i c^{\dagger}(u_i)c(v_i).
 \label{cc:eq-fermionic-superoperators}
\end{equation}
For \(Q\succeq0\), put
\begin{equation}
 \Gamma_{\wedge}(Q):=\bigoplus_{j\geq0}\Lambda^jQ.
 \label{cc:eq-fermionic-second-quantization}
\end{equation}
This is the standard CAR calculus used here only as a reader of classical
martingale integrals; see
\cite{BarnettStreaterWilde82,ApplebaumHudson84} for its stochastic-analysis
background.

\begin{lemma}[Fermionic trace-class calculus]
\label{cc:lem-fermionic-calculus}
The operators in \eqref{cc:eq-fermionic-superoperators} do not depend on
the chosen nuclear representation of \(Q\), and
\begin{equation}
 \|\mathrm d\Gamma(Q)\|\leq\|Q\|_1,
 \qquad
 \|\mathcal B_Q(T)\|_1\leq\|Q\|_1\|T\|_1.
 \label{cc:eq-fermionic-trace-bound}
\end{equation}
For \(P,Q\in\mathcal S_1(H)\),
\begin{equation}
 \mathcal B_P\mathcal B_Q=\mathcal B_Q\mathcal B_P.
 \label{cc:eq-fermionic-commutation}
\end{equation}
If \(Q,T\succeq0\), then \(\mathcal B_Q(T)\succeq0\), and
\begin{equation}
 \operatorname{Tr}\mathcal B_Q(T)
 =(\operatorname{Tr}Q)(\operatorname{Tr}T)
  -\operatorname{Tr}(\mathrm d\Gamma(Q)T)
 \leq(\operatorname{Tr}Q)(\operatorname{Tr}T).
 \label{cc:eq-particle-hole-trace}
\end{equation}
Moreover,
\begin{equation}
 \frac1{j!}\mathcal B_Q^j(P_0)=\Lambda^jQ,
 \qquad
 \|\Lambda^jQ\|_1\leq\frac{\|Q\|_1^j}{j!}.
 \label{cc:eq-fermionic-powers}
\end{equation}
Consequently, for \(Q\succeq0\) and \(x\geq0\),
\begin{equation}
 e^{x\mathcal B_Q}(P_0)=\Gamma_{\wedge}(xQ),
 \qquad
 \operatorname{Tr}_{\mathcal F_{\wedge}(H)}\Gamma_{\wedge}(xQ)
 =\det(I+xQ).
 \label{cc:eq-second-quantization-determinant}
\end{equation}
\end{lemma}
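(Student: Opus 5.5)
We prove the fermionic lemma by working first with finite-rank \(Q\) in a fixed orthonormal basis, and then extending to trace class by nuclear approximation.

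\emph{Well-definedness and bounds.}
The map \((u,v)\mapsto c^{\dagger}(u)Tc(v)\) is bilinear. It is bounded by \(\|u\|\|v\|\|T\|_1\) in trace norm, since \(\|c^{\dagger}(h)\|=\|c(h)\|=\|h\|\) by the CAR. It therefore factors through the projective tensor product \(H\hat\otimes_\pi H\cong\mathcal S_1(H)\).

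Independence of the nuclear representation follows from this factorization. The same factorization gives \(\|\mathcal B_Q(T)\|_1\le\|Q\|_1\|T\|_1\) after taking the infimum over representations. The bound for \(\mathrm d\Gamma(Q)\) is identical, with the operator norm in place of the trace norm.

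\emph{Commutation.}
For \(\mathcal B_P\mathcal B_Q=\mathcal B_Q\mathcal B_P\), expand
\[
 \mathcal B_P\mathcal B_Q(T)=\sum c^{\dagger}(p)c^{\dagger}(u)\,T\,c(v)c(q).
\]
Both the creation pair and the annihilation pair anticommute, so swapping the two pairs produces two signs that cancel.

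\emph{Positivity and the trace formula.}
For \(Q,T\succeq0\), use the spectral representation \(Q=\sum_i q_i e_i\otimes e_i\) with \(q_i\ge0\). Then
\[
 \mathcal B_Q(T)=\sum_i q_i\,c^{\dagger}(e_i)Tc^{\dagger}(e_i)^*\succeq0 .
\]
By cyclicity of the trace,
\[
 \operatorname{Tr}\mathcal B_Q(T)=\sum_i q_i\operatorname{Tr}\bigl(c(e_i)c^{\dagger}(e_i)T\bigr).
\]
The CAR give \(c(e_i)c^{\dagger}(e_i)=I-c^{\dagger}(e_i)c(e_i)\). Summing over \(i\) yields \(\operatorname{Tr}Q\operatorname{Tr}T-\operatorname{Tr}(\mathrm d\Gamma(Q)T)\). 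The subtracted term is \(\ge0\) because \(\mathrm d\Gamma(Q)\succeq0\), which gives the inequality. The finite-rank to trace-class passage uses the continuity bounds already proved.

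\emph{Powers of \(\mathcal B_Q\) on the vacuum.}
This step is the main bookkeeping point. We compute \(\mathcal B_Q^j(P_0)\) on decomposable vectors. Iterating gives
\[
 \mathcal B_Q^j(P_0)=\sum_{i_1,\dots,i_j}(u_{i_1}\wedge\cdots\wedge u_{i_j})\otimes(v_{i_1}\wedge\cdots\wedge v_{i_j}),
\]
as a rank-one operator sum. The right-creation ordering puts the same ordering on both sides, so no relative sign appears.

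For the diagonalized positive \(Q\), repeated indices vanish. Each \(j\)-subset then appears \(j!\) times, giving exactly \(j!\,\Lambda^jQ\) with \(\Lambda^jQ\) diagonal in \(e_{I}\) with eigenvalues \(\prod_{i\in I}q_i\). For general trace-class \(Q\) the identity follows by multilinearity and continuity, since both sides are continuous and polynomial in \(Q\).

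The trace-norm bound \(\|\Lambda^jQ\|_1\le\|Q\|_1^j/j!\) follows from the iterated bound \(\|\mathcal B_Q^j(P_0)\|_1\le\|Q\|_1^j\) together with \(\|P_0\|_1=1\). Alternatively, for positive \(Q\), it is the inequality \(e_j(q)\le(\sum_i q_i)^j/j!\).

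\emph{Exponential and determinant.}
Summing the exponential series converges in \(\mathcal S_1\) by the factorial bound, which gives \(e^{x\mathcal B_Q}(P_0)=\bigoplus_j x^j\Lambda^jQ=\Gamma_\wedge(xQ)\). Its trace is
\[
 \sum_j x^j e_j(q)=\prod_i(1+xq_i)=\det(I+xQ),
\]
by the standard Fredholm product formula.
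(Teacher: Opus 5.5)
Your outline follows the paper's proof step for step: the rank-one bounds extended through the projective tensor product, the two CAR sign flips for commutation, and the spectral decomposition with cyclicity and $c(e_i)c^{\dagger}(e_i)=I-c^{\dagger}(e_i)c(e_i)$ for the particle--hole trace. The vacuum expansion with vanishing repeated indices and $j!$ counting, the iterated trace-norm bound, and the product formula for the determinant also match. One step, however, does not go through as you justify it.

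\textbf{The gap.} It is the extension of $\frac1{j!}\mathcal B_Q^j(P_0)=\Lambda^jQ$ from diagonal positive $Q$ to arbitrary trace-class $Q$ ``by multilinearity and continuity''. In this section $H$ is a real Hilbert space. Two continuous degree-$j$ homogeneous polynomial maps that agree on the positive cone agree, after polarization, only on the real linear span of that cone, i.e.\ on the self-adjoint trace class. Skew or non-normal $Q$ are never reached, and your bound $\|\Lambda^jQ\|_1\le\|Q\|_1^j/j!$ for general $Q$ inherits the gap.

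\textbf{The repair.} It is short, because your expansion $\mathcal B_Q^j(P_0)=\sum_{i_1,\dots,i_j}(u_{i_1}\wedge\cdots\wedge u_{i_j})\otimes(v_{i_1}\wedge\cdots\wedge v_{i_j})$ is valid for every nuclear representation. Run the same counting on a singular value decomposition $Q=\sum_i s_i\,f_i\otimes e_i$ with orthonormal families $(e_i)$ and $(f_i)$. Repeated indices vanish on both sides, and reordering a tuple of distinct indices contributes $\operatorname{sgn}(\sigma)^2=1$. You obtain $j!\sum_I s_I\,f_I\otimes e_I$, where $e_I=e_{i_1}\wedge\cdots\wedge e_{i_j}$ for $i_1<\cdots<i_j$, $f_I$ is defined likewise, and $s_I=\prod_{i\in I}s_i$. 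This equals $j!\Lambda^jQ$, since $\Lambda^jQ\,e_I=s_If_I$ and $\Lambda^jQ$ kills every basis wedge containing a vector of $\{e_i\}^{\perp}=\ker Q$. It also gives $\|\Lambda^jQ\|_1=\sum_I s_I\le\|Q\|_1^j/j!$ directly. Alternatively, identify the general expansion with $j!\Lambda^jQ$ through the determinant formula for $\langle w_1\wedge\cdots\wedge w_j,v_{i_1}\wedge\cdots\wedge v_{i_j}\rangle$, or complexify, where positive operators do span.

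The later uses of the lemma in the paper involve only positive brackets, so the gap is local. Still, the lemma as stated, and the paper's proof, cover arbitrary trace-class $Q$.
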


\begin{proof}
For rank--one input,
\[
 \|c^{\dagger}(u)c(v)\|\leq\|u\|\|v\|,
 \qquad
 \|c^{\dagger}(u)Tc(v)\|_1
 \leq\|u\|\|v\|\|T\|_1.
\]
The projective tensor description of nuclear operators gives the unique
extensions and \eqref{cc:eq-fermionic-trace-bound}.  For
\(P=a\otimes b\) and \(Q=u\otimes v\), the two CAR interchanges give two
minus signs:
\[
 c^{\dagger}(a)c^{\dagger}(u)T c(v)c(b)
 =c^{\dagger}(u)c^{\dagger}(a)T c(b)c(v).
\]
This proves \eqref{cc:eq-fermionic-commutation} by approximation.

If \(Q=\sum_iq_i e_i\otimes e_i\succeq0\), then
\(
 \mathcal B_Q(T)=\sum_iq_i c(e_i)^*Tc(e_i)\succeq0
\).
Cyclicity of the trace and
\(c(e_i)c^{\dagger}(e_i)=I-c^{\dagger}(e_i)c(e_i)\) give
\eqref{cc:eq-particle-hole-trace}.  Expanding
\(\mathcal B_Q^j(P_0)\), repeated indices vanish.  Every unordered
antisymmetrized \(j\)-tuple occurs \(j!\) times because the signs from the
creation and annihilation strings cancel.  Thus
\(\mathcal B_Q^j(P_0)=j!\Lambda^jQ\).  Trace-norm approximation proves
this for arbitrary trace-class \(Q\), and the norm estimate follows from
\eqref{cc:eq-fermionic-trace-bound}.  Summing the powers proves the first
identity in \eqref{cc:eq-second-quantization-determinant}.  If
\((q_i)\) are the eigenvalues of \(Q\succeq0\), then
\[
 \operatorname{Tr}\Gamma_{\wedge}(xQ)
 =\sum_{j\geq0}\operatorname{Tr}\Lambda^j(xQ)
 =\prod_i(1+xq_i)=\det(I+xQ),
\]
with absolute convergence because \(Q\) is trace class.
\end{proof}

\begin{corollary}[Free capacity and the Fredholm derivative]
\label{cc:cor-free-capacity-Fredholm}
For positive trace-class operators \(Q\) on \(H\) and \(T\) on
\(\mathcal F_{\wedge}(H)\), define the free capacity of \(T\) in the
direction \(Q\) by
\begin{equation}
 \operatorname{Cap}^{\mathrm{free}}_Q(T)
 :=(\operatorname{Tr}Q)(\operatorname{Tr}T)
   -\operatorname{Tr}(\mathrm d\Gamma(Q)T)
 =\operatorname{Tr}\mathcal B_Q(T).
 \label{cc:eq-free-capacity-definition}
\end{equation}
Then
\[
 0\leq\operatorname{Cap}^{\mathrm{free}}_Q(T)
 \leq(\operatorname{Tr}Q)(\operatorname{Tr}T).
\]
Moreover, for positive trace-class \(P,Q\) on \(H\),
\begin{equation}
 \operatorname{Cap}^{\mathrm{free}}_Q(\Gamma_{\wedge}(P))
 =\det(I+P)\operatorname{Tr}((I+P)^{-1}Q).
 \label{cc:eq-free-capacity-Fredholm}
\end{equation}
Consequently,
\begin{equation}
 \frac{\operatorname{Cap}^{\mathrm{free}}_Q(\Gamma_{\wedge}(P))}
      {\operatorname{Tr}_{\mathcal F_{\wedge}(H)}\Gamma_{\wedge}(P)}
 =\left.\frac{\mathrm d}{\mathrm d\varepsilon}\right|_{\varepsilon=0}
   \log\det(I+P+\varepsilon Q)
 =\operatorname{Tr}((I+P)^{-1}Q).
 \label{cc:eq-normalized-free-capacity}
\end{equation}
\end{corollary}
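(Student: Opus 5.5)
The plan is to reduce everything to \Cref{cc:lem-fermionic-calculus} plus a diagonalization in which $P$ and $Q$ are handled together. The bounds $0\le\operatorname{Cap}^{\mathrm{free}}_Q(T)\le(\operatorname{Tr}Q)(\operatorname{Tr}T)$ need no further work. The identity in \eqref{cc:eq-free-capacity-definition} is \eqref{cc:eq-particle-hole-trace}. Positivity of $\mathcal B_Q(T)$ for $Q,T\succeq0$ gives the lower bound, and the same display gives the upper bound, since $\operatorname{Tr}(\mathrm d\Gamma(Q)T)\ge0$ (as $\mathrm d\Gamma(Q)\succeq0$ and $T\succeq0$).

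For \eqref{cc:eq-free-capacity-Fredholm}, set $T=\Gamma_\wedge(P)$. By \eqref{cc:eq-second-quantization-determinant}, $\operatorname{Tr}\Gamma_\wedge(P)=\det(I+P)$, so it suffices to show
\[
 \operatorname{Tr}(\mathrm d\Gamma(Q)\Gamma_\wedge(P))
 =\det(I+P)\operatorname{Tr}\bigl(P(I+P)^{-1}Q\bigr),
\]
because $\operatorname{Tr}Q-\operatorname{Tr}(P(I+P)^{-1}Q)=\operatorname{Tr}((I+P)^{-1}Q)$. Both sides are linear in $Q$, and by \eqref{cc:eq-fermionic-trace-bound} both are trace-norm continuous in $Q$, so I would take $Q=u\otimes v$ rank one and then extend. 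Next, diagonalize $P=\sum_ip_ie_i\otimes e_i$. On the orthonormal basis $e_I=e_{i_1}\wedge\cdots$ of $\mathcal F_\wedge(H)$, the operator $\Gamma_\wedge(P)$ acts diagonally with eigenvalue $p_I=\prod_{i\in I}p_i$. For a basis vector $e_I$, the diagonal entry of $c^\dagger(e_a)c(e_b)$ is $\delta_{ab}\mathbf 1_{a\in I}$. Hence
\[
 \operatorname{Tr}(\mathrm d\Gamma(Q)\Gamma_\wedge(P))
 =\sum_a\langle Qe_a,e_a\rangle\sum_{I\ni a}p_I
 =\sum_a\langle Qe_a,e_a\rangle\,p_a\prod_{i\ne a}(1+p_i).
\]
This equals $\det(I+P)\sum_a\frac{p_a}{1+p_a}\langle Qe_a,e_a\rangle$, which is the claimed identity. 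The ordering signs of the CAR strings cancel on diagonal entries, and absolute convergence follows from $P,Q\in\mathcal S_1$. If $P$ has a kernel, the zero eigenvalues simply contribute $p_a=0$.

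Finally, \eqref{cc:eq-normalized-free-capacity} follows by dividing by $\det(I+P)>0$ and applying the Jacobi formula $\frac{d}{d\varepsilon}\log\det(I+P+\varepsilon Q)|_{0}=\operatorname{Tr}((I+P)^{-1}Q)$. This formula holds for trace-class perturbations of an invertible $I+P$, and it can be verified by factoring $\det(I+P+\varepsilon Q)=\det(I+P)\det(I+\varepsilon(I+P)^{-1}Q)$. The main obstacle is the bookkeeping in the Fock-space trace: choosing the right-creation convention, checking that only diagonal terms survive, and justifying the infinite sums and the interchange with the product $\prod(1+p_i)$. I would handle this by first working with finite-rank $P$, on a finite-dimensional exterior algebra, and then passing to the limit using trace-norm continuity of both sides in $P$.
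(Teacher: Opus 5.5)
Your proposal is correct and follows essentially the paper's route. Both arguments diagonalize \(P\), use that \(\Gamma_{\wedge}(P)\) is diagonal in the occupation-number basis so that only the diagonal coefficients \(\langle Qe_a,e_a\rangle\) contribute, evaluate the resulting occupation-set sums as multiples of \(\det(I+P)\), extend by trace-norm continuity, and finish with the Fredholm (Jacobi) derivative. The only difference is cosmetic: you compute the occupied term \(\operatorname{Tr}(\mathrm d\Gamma(Q)\Gamma_{\wedge}(P))\) over sets containing \(a\) and subtract it from \((\operatorname{Tr}Q)\det(I+P)\), whereas the paper evaluates \(\operatorname{Tr}\mathcal B_Q(\Gamma_{\wedge}(P))\) directly by counting sets not containing \(i\), which gives \(\det(I+P)/(1+p_i)\).
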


\begin{proof}
The first identity and the two inequalities are
\eqref{cc:eq-particle-hole-trace}.  To prove
\eqref{cc:eq-free-capacity-Fredholm}, first suppose that \(P\) has finite
rank and diagonalize it as \(P=\sum_i p_i e_i\otimes e_i\).  In the
occupation-number basis, \(\Gamma_{\wedge}(P)\) is diagonal.  The
off-diagonal matrix coefficients of \(Q\) therefore make no contribution
to either trace in \eqref{cc:eq-free-capacity-definition}.  For the
diagonal matrix unit \(e_i\otimes e_i\), the free capacity is the sum over
occupation sets not containing \(i\), hence
\[
 \operatorname{Cap}^{\mathrm{free}}_{e_i\otimes e_i}
   (\Gamma_{\wedge}(P))
 =\prod_{j\ne i}(1+p_j)
 =\frac{\det(I+P)}{1+p_i}.
\]
Summing against the diagonal coefficients
\(\langle Qe_i,e_i\rangle\) gives the asserted formula in finite rank.
The estimates in \eqref{cc:eq-fermionic-trace-bound}, together with
\(
 e^{\mathcal B_P}(P_0)=\Gamma_{\wedge}(P)
\), show continuity in the trace norms of \(P\) and \(Q\); finite-rank
compression therefore proves the general case.  Finally the standard
trace-class Fredholm derivative is
\[
 \left.\frac{\mathrm d}{\mathrm d\varepsilon}\right|_{\varepsilon=0}
 \log\det(I+P+\varepsilon Q)
 =\operatorname{Tr}((I+P)^{-1}Q).
\]
Combining this with
\(
 \operatorname{Tr}\Gamma_{\wedge}(P)=\det(I+P)
\)
proves \eqref{cc:eq-normalized-free-capacity}.
\end{proof}

\subsubsection{The exterior lift of chronological area}

Let \((\Omega,\mathcal F,(\mathcal F_t)_{0\leq t\leq T},\mathbb P)\)
satisfy the usual conditions, and let \(M\) be a square-integrable
c\`adl\`ag \(H\)-valued martingale with \(M_0=0\).  Its predictable
operator bracket is denoted by
\[
 R_t=\langle M\rangle_t\in\mathcal S_1(H)_+,
 \qquad V_t=\operatorname{Tr}R_t.
\]
We use the standard Hilbert-space stochastic integral, predictable-bracket
isometry, and stopping calculus in the form of
\cite{MetivierPellaumail80}.
Define
\begin{equation}
 A_t:=\operatorname{Alt}_2
       \int_{(0,t]}M_{s-}\otimes\mathrm dM_s,
 \qquad
 \operatorname{Alt}_2(u\otimes v)
 :=\frac12(u\otimes v-v\otimes u).
 \label{cc:eq-chronological-area}
\end{equation}

\begin{theorem}[Localized exterior lift and exact area realization]
\label{cc:thm-localized-exterior-lift}
Assume first that \(V_T\leq K\) almost surely for a deterministic
\(K<\infty\).  Set
\begin{equation}
 \Xi_0(t)=1,
 \qquad
 \Xi_j(t)=\int_{(0,t]}\Xi_{j-1}(s-)\wedge\mathrm dM_s,
 \qquad
 Z_t^{\lambda,N}=\bigoplus_{j=0}^N\lambda^j\Xi_j(t).
 \label{cc:eq-exterior-iterates}
\end{equation}
Then \(Z^{\lambda,N}\) converges to a c\`adl\`ag
\(\mathcal F_{\wedge}(H)\)-valued martingale \(Z^\lambda\) and
\begin{equation}
 \mathbb E\sup_{t\leq T}
 \|Z_t^{\lambda,N}-Z_t^\lambda\|^2\longrightarrow0,
 \qquad
 \mathbb E\|Z_T^\lambda\|^2\leq e^{\lambda^2K}.
 \label{cc:eq-exterior-energy}
\end{equation}
Indistinguishably in \(t\),
\begin{equation}
 Z_t^\lambda=(1+\lambda M_t)\wedge
 \exp_{\wedge}(2\lambda^2\alpha_{A_t}).
 \label{cc:eq-exact-exterior-area}
\end{equation}
In particular,
\begin{equation}
 \Xi_{2j}(t)=\frac{2^j}{j!}\alpha_{A_t}^{\wedge j},
 \qquad
 \Xi_{2j+1}(t)=\frac{2^j}{j!}
 M_t\wedge\alpha_{A_t}^{\wedge j},
 \label{cc:eq-exterior-level-formulas}
\end{equation}
and, for \(z\geq0\),
\begin{equation}
 \|P_{\mathrm{even}}Z_t^{\sqrt{z/2}}\|^2
 =\exp(\mathfrak F_{A_t}(z)).
 \label{cc:eq-even-profile-identity}
\end{equation}
For an arbitrary square-integrable \(M\), the same assertions hold locally
after a predictable scalar-clock localization.
\end{theorem}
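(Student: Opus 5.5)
We take the three assertions in order: the isometry/energy estimate, the closed-form identification via Itô's formula, and then the level formulas and the profile identity.

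\emph{Energy estimate.} We work first under the deterministic bound \(V_T\le K\). For exterior stochastic integrals of predictable integrands we need a predictable-bracket isometry,
\[
 \mathbb E\Bigl\|\int_{(0,t]}\xi_{s-}\wedge\mathrm dM_s\Bigr\|^2
 \le\mathbb E\int_{(0,t]}\|\xi_{s-}\|^2\,\mathrm dV_s .
\]
This follows from the Hilbert-space isometry of \cite{MetivierPellaumail80}, because \(h\mapsto\xi\wedge h\) is a bounded operator of norm at most \(\|\xi\|\). Iterating it along the clock \(V\) gives \(\mathbb E\|\Xi_j(t)\|^2\le\mathbb E[V_t^j/j!]\le K^j/j!\). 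Doob's \(L^2\) inequality then turns this into \(\mathbb E\sup_t\|\Xi_j(t)\|^2\le 4K^j/j!\). The series \(\sum_j\lambda^j\Xi_j\) therefore converges in the Banach space of càdlàg square-integrable martingales under the norm \(\mathbb E\sup\|\cdot\|^2\). Orthogonality of the distinct \(\Lambda^jH\) yields \(\mathbb E\|Z^\lambda_T\|^2=\sum_j\lambda^{2j}\mathbb E\|\Xi_j(T)\|^2\le e^{\lambda^2K}\), which proves \eqref{cc:eq-exterior-energy}.

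\emph{Identification of the limit.} By uniqueness, it suffices to show that the right-hand side \(Y_t:=(1+\lambda M_t)\wedge\exp_\wedge(2\lambda^2\alpha_{A_t})\) solves the same linear equation \(Y_t=1+\lambda\int_{(0,t]}Y_{s-}\wedge\mathrm dM_s\). We do this level by level, through the formulas \eqref{cc:eq-exterior-level-formulas}. Write \(\Theta_{2j}:=\frac{2^j}{j!}\alpha_{A}^{\wedge j}\) and \(\Theta_{2j+1}:=M\wedge\Theta_{2j}\), and prove \(\Xi_k=\Theta_k\) by induction on \(k\).

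The key identity is \(2\,\mathrm d\alpha_{A_t}=M_{t-}\wedge\mathrm dM_t\). This holds as an equality of stochastic integrals, since \(2\alpha_{\mathrm{Alt}_2(u\otimes v)}=u\wedge v\) as computed in the proof of \Cref{cc:thm-multiplicative-exterior-reader}. The feature that makes the argument work is that every would-be Itô correction is a symmetric bracket term paired into an alternating slot, and so vanishes:
\begin{itemize}
\item \(\mathrm d[M,M]\) enters only through \(h\wedge h=0\);
\item jump corrections in the product rule for \(M\wedge\Theta_{2j}\) and in the power rule for \(\alpha^{\wedge j}\) involve \(\Delta M\wedge\Delta M=0\) or \((M_{s-}\wedge\Delta M_s)\wedge(\cdots\wedge\Delta M_s)=0\).
\end{itemize}
In particular \(\mathrm d(\alpha^{\wedge j})=j\,\alpha_-^{\wedge(j-1)}\wedge\mathrm d\alpha\) holds exactly, because even forms commute and decomposable increments square to zero.

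Granting these, the two recursions close as follows. Even levels: \(\mathrm d\Theta_{2j}=\frac{2^j}{(j-1)!}\alpha_-^{\wedge(j-1)}\wedge\tfrac12M_-\wedge\mathrm dM=\Theta_{2j-1,-}\wedge\mathrm dM\). Odd levels: the product rule gives \(\mathrm d\Theta_{2j+1}=\mathrm dM\wedge\Theta_{2j}+M_-\wedge\mathrm d\Theta_{2j}\), where the second term contains \(M_-\wedge M_-=0\); reordering the even form past the one-form gives \(\Theta_{2j,-}\wedge\mathrm dM\).

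\emph{Main obstacle.} The hardest step is justifying this exterior Itô calculus for a càdlàg Hilbert martingale with jumps in infinite dimension. I would first prove every identity for the finite-dimensional projections \(\pi_nM\). There \(A\) is a finite matrix integral, and the classical multidimensional Itô formula applies; the jump sums vanish by the alternation argument above. I would then pass to the limit \(n\to\infty\) using the \(L^2\) isometry bounds from the first paragraph, which are uniform in \(n\) because \(\operatorname{Tr}\pi_nR\pi_n\le V\). Indistinguishability follows from càdlàg versions together with ucp convergence.

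\emph{Profile identity and localization.} The even part of \(Z^{\sqrt{z/2}}\) is \(\exp_\wedge(z\alpha_{A_t})\), since \(2\lambda^2=z\). Its squared norm is \(\exp\mathfrak F_{A_t}(z)\) by \eqref{cc:eq-exterior-profile}, which gives \eqref{cc:eq-even-profile-identity}. For general square-integrable \(M\), I would stop at \(\tau_K=\inf\{t:V_t\ge K\}\). This is a predictable time, so it is announced by stopping times along which \(V\) stays bounded. Applying the bounded case to the stopped martingale and invoking the locality of the stochastic integral gives the local statement.
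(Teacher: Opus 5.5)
Your identification of the limit (finite-dimensional It\^o calculus in which every bracket and jump correction is killed by alternation, then passage through finite-rank projections), the even-profile identity, and the predictable localization all follow the paper's route.

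The gap is in the energy estimate. You iterate $\mathbb E\|\int\xi_{s-}\wedge\mathrm dM_s\|^2\le\mathbb E\int\|\xi_{s-}\|^2\,\mathrm dV_s$ ``along the clock'' to get $\mathbb E\|\Xi_j(t)\|^2\le\mathbb E[V_t^j/j!]$. That iteration is valid only for a deterministic clock. The inductive step needs a bound on $\mathbb E\int\|\Xi_{j-1}(s-)\|^2\,\mathrm dV_s$. Fixed-time bounds on $\mathbb E\|\Xi_{j-1}(s)\|^2$ say nothing about an integral against a random $\mathrm dV$ that may be correlated with $\|\Xi_{j-1}\|$.

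The intermediate inequality is in fact false. Already for $j=2$, predictability of $V$ gives
\[
 \mathbb E\int_{(0,t]}\|M_{s-}\|^2\,\mathrm dV_s
 -\mathbb E\int_{(0,t]}V_{s-}\,\mathrm dV_s
 =\mathbb E\bigl[(\|M_t\|^2-V_t)V_t\bigr].
\]
Suppose the clock runs at unit speed on $[0,1]$ and at speed $c$ on $(1,2]$ exactly on $\{\|M_1\|^2>1\}$. Then the right-hand side equals $c\,\mathbb E(\|M_1\|^2-1)^+>0$. For isotropic $M$ in $\mathbb R^d$ the isometry loses only the factor $(d-1)/d$, so $\mathbb E\|\Xi_2(2)\|^2>\mathbb E[V_2^2/2]$ once $d$ is large.

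The termwise bound $\mathbb E\|\Xi_j(T)\|^2\le K^j/j!$ that you actually use is still true. It needs an argument in which the random clock enters through a weight before $V_T\le K$ is invoked.

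The paper does this for the whole Fock cutoff at once. The particle--hole estimate bounds the compensator of $Y^N=\|Z^{\lambda,N}\|^2$ by $\lambda^2Y^N_-\,\mathrm dV$. Hence $\mathcal E(\lambda^2V)^{-1}Y^N$ is a nonnegative local supermartingale, by left-point Dol\'eans--Gronwall including predictable atoms. Only then do the deterministic bound $\mathcal E(\lambda^2V)_T\le e^{\lambda^2K}$ and Fatou give $\mathbb E\|Z^{\lambda,N}_T\|^2\le e^{\lambda^2K}$ uniformly in $N$.

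A termwise repair of your induction also works. Integrate by parts against the predictable decreasing weight $(K-V)^{k+1}/(k+1)!$. For $i\ge1$ this gives $\mathbb E\int\frac{(K-V_s)^k}{k!}\|\Xi_i(s-)\|^2\,\mathrm dV_s\le\mathbb E\int\frac{(K-V_s)^{k+1}}{(k+1)!}\|\Xi_{i-1}(s-)\|^2\,\mathrm dV_s$. The chain ends at $K^{i+k+1}/(i+k+1)!$.

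Separately, in the projection step, uniform-in-$n$ bounds give boundedness, not convergence of $\Xi^{(n)}_j$. The paper gets the Cauchy property from naturality $(\wedge^jP_n)\Xi^{(m)}_j=\Xi^{(n)}_j$ and $\mathbb E\|\Xi^{(m)}_j(T)-\Xi^{(n)}_j(T)\|^2=\mathbb E\|\Xi^{(m)}_j(T)\|^2-\mathbb E\|\Xi^{(n)}_j(T)\|^2$.
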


\begin{proof}
Let \(P_{\leq N}=\sum_{j=0}^NP_j\) and
\(T_{s-}^N=Z_{s-}^{\lambda,N}\otimes Z_{s-}^{\lambda,N}\).  Predictable
bracket isometry gives
\begin{equation}
 \mathrm d\langle Z^{\lambda,N}\rangle_s
 =\lambda^2P_{\leq N}
   \mathcal B_{\mathrm dR_s}(T_{s-}^N)P_{\leq N}.
 \label{cc:eq-exterior-Fock-bracket}
\end{equation}
Write \(\mathrm dR_s=r_s\,\mathrm dV_s\), where
\(r_s\succeq0\) and \(\operatorname{Tr}r_s=1\) for
\(\mathrm dV\)-almost every \(s\).  The particle--hole estimate
\eqref{cc:eq-particle-hole-trace} shows that the predictable increasing
part of \(Y^N=\|Z^{\lambda,N}\|^2\) is bounded by
\(
 \lambda^2Y^N_{-}\,\mathrm dV
\).
With
\[
 \mathcal E(\lambda^2V)_t
 :=e^{\lambda^2V_t^c}
   \prod_{0<s\leq t}(1+\lambda^2\Delta V_s),
\]
the left-point Dol\'eans--Gronwall argument, including predictable atoms,
makes
\[
 \mathcal E(\lambda^2V)^{-1}Y^N
\]
a nonnegative local supermartingale after the usual auxiliary localization.
Since
\(
 \mathcal E(\lambda^2V)_T
 \leq e^{\lambda^2V_T}\leq e^{\lambda^2K}
\), Fatou's lemma yields
\(
 \mathbb E\|Z_T^{\lambda,N}\|^2\leq e^{\lambda^2K}
\), uniformly in \(N\).  The cutoffs are compatible, and the exterior
degrees are orthogonal.  Hence their terminal values are Cauchy in \(L^2\),
and Doob's inequality gives \eqref{cc:eq-exterior-energy}.

In finite dimension, the semimartingale It\^o formula applied to the finite
exterior polynomial gives \eqref{cc:eq-exact-exterior-area}.  Continuous
quadratic corrections are symmetric in bracket indices and antisymmetric in
exterior indices, hence vanish.  At a jump \(y=\Delta M_t\),
\[
 2\alpha_{\Delta A_t}=M_{t-}\wedge y,
 \qquad (M_{t-}\wedge y)^{\wedge2}=0,
\]
and direct multiplication gives the exact left-point update
\[
 Z_t^\lambda=Z_{t-}^\lambda\wedge(1+\lambda y).
\]
There is no finite-activity approximation in this calculation.

We spell out the infinite-dimensional passage.  Let $P_n\uparrow I_H$ be
finite-rank orthogonal projections, put $M^{(n)}=P_nM$, and denote the
finite-dimensional iterates by $\Xi_j^{(n)}$.  Naturality of stochastic
integration gives, for $m\ge n$,
\begin{equation}
 (\wedge^jP_n)\Xi_j^{(m)}=\Xi_j^{(n)}
 \quad\text{indistinguishably}.
 \label{cc:eq-exterior-projection-compatibility}
\end{equation}
The finite-dimensional Fock estimate with $\lambda=1$ implies
$\sup_n\mathbb E\|\Xi_j^{(n)}(T)\|^2\le e^K$ for every fixed $j$.
Since $\wedge^jP_n$ is an orthogonal projection,
\[
 \mathbb E\|\Xi_j^{(m)}(T)-\Xi_j^{(n)}(T)\|^2
 =\mathbb E\|\Xi_j^{(m)}(T)\|^2
  -\mathbb E\|\Xi_j^{(n)}(T)\|^2.
\]
The right-hand side tends to zero as $m,n\to\infty$; Doob's inequality
therefore makes $\Xi_j^{(n)}$ Cauchy in
$L^2(\Omega;D([0,T];\Lambda^jH))$ with the supremum norm.  Write its limit as
$\Xi_j$.  Fatou applied to the finite Fock sums yields
\[
 \sum_{j=0}^N\lambda^{2j}\mathbb E\|\Xi_j(T)\|^2
 \le e^{\lambda^2K},
\]
so $\bigoplus_{j\ge0}\lambda^j\Xi_j$ converges in the asserted Fock space
and agrees with the preceding cutoff limit.

At degree two one has
\[
 \Xi_2^{(n)}=2\alpha_{A^{P_nM}}.
\]
Hence the same argument gives convergence of $A^{P_nM}$ in
\[
 L^2\bigl(\Omega;D([0,T];\mathcal S_2(H))\bigr).
\]
Hilbert-space It\^o isometry identifies the limit with the stochastic
integral in \eqref{cc:eq-chronological-area}.  For each fixed exterior
degree, wedge multiplication is continuous on the relevant Hilbert tensor
powers, so the finite-dimensional identities pass in the supremum topology.
A diagonal subsequence gives one common full-measure set for all degrees,
and the c\`adl\`ag versions remove any dependence on a time grid.  Passing
first in each degree and then in the Fock cutoff proves
\eqref{cc:eq-exact-exterior-area}.  Comparing exterior degrees gives
\eqref{cc:eq-exterior-level-formulas}; its even part and
\eqref{cc:eq-exterior-profile} give
\eqref{cc:eq-even-profile-identity}.  Predictable localization of \(V\)
removes the deterministic bound.
\end{proof}

\begin{theorem}[Fock energy and free-capacity balance]
\label{cc:thm-Fock-free-capacity-balance}
Assume in this theorem that \(M\) is continuous and that \(V_T\leq K\)
almost surely for a deterministic \(K<\infty\).  Let \(Z^\lambda\) and
\(Z^{\lambda,N}=P_{\leq N}Z^\lambda\) be the exterior lifts above, and put
\[
 T_t^N=Z_t^{\lambda,N}\otimes Z_t^{\lambda,N},
 \qquad T_t=Z_t^\lambda\otimes Z_t^\lambda.
\]
The finite-level bracket is \eqref{cc:eq-exterior-Fock-bracket}, and its
trace-class limit is the equality of positive
\(\mathcal S_1(\mathcal F_{\wedge}(H))\)-valued measures
\begin{equation}
 \mathrm d\langle Z^\lambda\rangle_t
 =\lambda^2\mathcal B_{\mathrm dR_t}(T_t).
 \label{cc:eq-Fock-bracket-limit}
\end{equation}
More explicitly, write
\(
 \mathrm dR_t=r_t\,\mathrm dV_t
\), where \(r_t\succeq0\) and
\(\operatorname{Tr}r_t=1\) for \(\mathrm dV\)-almost every \(t\), and set
\[
 Y_t=\|Z_t^\lambda\|^2,
 \qquad
 L_t=2\int_0^t
      \langle Z_s^\lambda,\mathrm dZ_s^\lambda\rangle.
\]
Then \(L\) is a real continuous local martingale and
\begin{align}
 \mathrm dY_t
 &=\mathrm dL_t
   +\lambda^2\operatorname{Cap}^{\mathrm{free}}_{r_t}(T_t)
      \,\mathrm dV_t \notag\\
 &=\mathrm dL_t
   +\lambda^2\left(
      Y_t-\operatorname{Tr}(\mathrm d\Gamma(r_t)T_t)
     \right)\mathrm dV_t,
 \label{cc:eq-Fock-free-energy}\\
 \mathrm d[L]_t
 &\leq4\lambda^2Y_t
   \operatorname{Cap}^{\mathrm{free}}_{r_t}(T_t)
   \,\mathrm dV_t.
 \label{cc:eq-Fock-free-energy-bracket}
\end{align}
Thus the predictable growth of Fock energy is exactly the free capacity;
the occupied part is removed, rather than estimated away.
\end{theorem}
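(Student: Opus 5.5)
The plan is to pass from the finite-level bracket \eqref{cc:eq-exterior-Fock-bracket} to the limit $N\to\infty$, and then apply the Hilbert-space It\^o formula to $Y=\|Z^\lambda\|^2$. Continuity of $M$ makes every iterate $\Xi_j$ continuous, so $Z^\lambda$ is a continuous $\mathcal F_\wedge(H)$-valued martingale, square integrable by \eqref{cc:eq-exterior-energy}.

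\textbf{Step 1: the bracket limit \eqref{cc:eq-Fock-bracket-limit}.} For fixed $N$, \eqref{cc:eq-exterior-Fock-bracket} gives $\langle Z^{\lambda,N}\rangle_t=\lambda^2\int_0^tP_{\le N}\mathcal B_{\mathrm dR_s}(T^N_s)P_{\le N}$. Since $\mathcal B_Q$ raises exterior degree by one, $P_{\le N}\mathcal B_Q(T^N)P_{\le N}$ differs from $\mathcal B_Q(T)$ only through components involving degrees above $N-1$.

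\begin{itemize}
\item The bound \eqref{cc:eq-fermionic-trace-bound} gives $\|\mathcal B_{r_s}(T_s-T^N_s)\|_1\le\|T_s-T^N_s\|_1\le 2\|Z_s\|\,\|Z_s-Z^N_s\|$.
\item The residual truncation error is controlled by $\|(I-P_{\le N})Z_s\|$.
\item Integrating in $\mathrm dV$, using $V_T\le K$, Cauchy--Schwarz and \eqref{cc:eq-exterior-energy}, both terms tend to zero in $L^1(\Omega;\mathcal S_1)$, uniformly in $t$.
\end{itemize}

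Meanwhile $\langle Z^{\lambda,N}\rangle_t=P_{\le N}\langle Z^\lambda\rangle_tP_{\le N}\to\langle Z^\lambda\rangle_t$, because $P_{\le N}$ commutes with the stochastic integral. Both sides of \eqref{cc:eq-Fock-bracket-limit} are increasing positive trace-class processes, so equality of their distribution functions at all $t$ gives equality of the measures. Positivity follows from Lemma~\ref{cc:lem-fermionic-calculus}.

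\textbf{Step 2: the It\^o decomposition \eqref{cc:eq-Fock-free-energy}.} For a continuous Hilbert martingale, $\|Z_t\|^2=2\int\langle Z_s,\mathrm dZ_s\rangle+\operatorname{Tr}\langle Z\rangle_t$. The first term is the continuous local martingale $L$. By Step 1 and Fubini for trace-class measures, the second term is $\lambda^2\int\operatorname{Tr}\mathcal B_{r_s}(T_s)\,\mathrm dV_s$. By Corollary~\ref{cc:cor-free-capacity-Fredholm} this integrand equals $\operatorname{Cap}^{\rm free}_{r_s}(T_s)$. Then \eqref{cc:eq-particle-hole-trace}, with $\operatorname{Tr}r_s=1$ and $\operatorname{Tr}T_s=Y_s$, gives the second line of \eqref{cc:eq-Fock-free-energy}.

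\textbf{Step 3: the bracket bound \eqref{cc:eq-Fock-free-energy-bracket}.} Write $\mathrm d[L]=4\,\mathrm d\langle\langle Z,\cdot\rangle\rangle$, that is, $\mathrm d[L]_t=4\langle \mathrm d\langle Z\rangle_t\,Z_t,Z_t\rangle$ viewed as a scalar measure. Since the operator measure is positive, $\langle \mu Z,Z\rangle\le\|Z\|^2\operatorname{Tr}\mu$. With $\mu=\lambda^2\mathcal B_{r_t}(T_t)\,\mathrm dV_t$ this gives exactly $4\lambda^2Y_t\operatorname{Cap}^{\rm free}_{r_t}(T_t)\,\mathrm dV_t$.

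The main obstacle is Step 1: making the trace-norm convergence rigorous, and justifying the scalar identity $\mathrm d[L]=4\langle \mathrm d\langle Z\rangle Z,Z\rangle$ for an infinite-dimensional martingale. For the latter I would first verify it on the finite cutoffs $Z^{\lambda,N}$, where everything is finite dimensional in exterior degree, and pass to the limit using the $L^2$ sup-convergence of \eqref{cc:eq-exterior-energy}. A further localization by $\inf\{t:Y_t\ge n\}$ handles the local-martingale qualifications.
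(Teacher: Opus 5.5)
Your proposal is correct and follows essentially the paper's proof. Step 1 passes to trace-norm convergence of the truncated bracket densities using the rank-one estimate, the bound $\|\mathcal B_Q(T)\|_1\le\|Q\|_1\|T\|_1$, and $V_T\le K$; Step 2 takes the Fock trace in the Hilbert-space It\^o formula to get \eqref{cc:eq-Fock-free-energy}; Step 3 is the Cauchy--Schwarz bound \eqref{cc:eq-Fock-free-energy-bracket}. The differences are cosmetic: you identify the limit through $\langle P_{\le N}Z^\lambda\rangle=P_{\le N}\langle Z^\lambda\rangle P_{\le N}$ and bound the truncation error by $\|(I-P_{\le N-1})Z^\lambda\|$, whereas the paper passes to the limit in the scalar martingale product identities, invokes uniqueness of the predictable bracket, and uses dominated convergence for the truncation.
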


\begin{proof}
For finite \(N\), Hilbert-space It\^o isometry gives
\eqref{cc:eq-exterior-Fock-bracket}.  We justify passage to the limit as an
identity of trace-class measures.  The rank-one estimate
\[
 \|u\otimes u-v\otimes v\|_1
 \leq(\|u\|+\|v\|)\|u-v\|
\]
and \eqref{cc:eq-exterior-energy}, together with Doob's inequality, imply
\begin{equation}
 \mathbb E\sup_{t\leq T}\|T_t^N-T_t\|_1\longrightarrow0.
 \label{cc:eq-Fock-rank-one-convergence}
\end{equation}
Using \(\mathrm dR_t=r_t\,\mathrm dV_t\),
\eqref{cc:eq-fermionic-trace-bound}, and \(V_T\leq K\), we obtain
\[
 \mathbb E\int_0^T
 \|\mathcal B_{r_t}(T_t^N-T_t)\|_1\,\mathrm dV_t
 \leq K\,\mathbb E\sup_{t\leq T}\|T_t^N-T_t\|_1
 \longrightarrow0.
\]
For every trace-class \(S\),
\(
 P_{\leq N}SP_{\leq N}\to S
\)
in trace norm and
\(
 \|P_{\leq N}SP_{\leq N}-S\|_1\leq2\|S\|_1
\).
Taking \(S=\mathcal B_{r_t}(T_t)\), using
\(
 \|\mathcal B_{r_t}(T_t)\|_1\leq Y_t
\), and applying dominated convergence for the finite measure
\[
 \mu(B):=\mathbb E\int_0^T
          \mathbf1_B(\omega,t)\,\mathrm dV_t(\omega)
\]
show that the right-hand sides of the
finite bracket identities converge in expected trace-class variation to
the right-hand side of \eqref{cc:eq-Fock-bracket-limit}.  On the other
hand, \(Z^{\lambda,N}\to Z^\lambda\) in
\(L^2(\Omega;C([0,T];\mathcal F_{\wedge}(H)))\).  Passing in the scalar
martingale product identities and using uniqueness of the predictable
operator bracket identifies the limit with
\(\mathrm d\langle Z^\lambda\rangle\), proving
\eqref{cc:eq-Fock-bracket-limit}.

Continuous Hilbert-space It\^o's formula gives
\[
 \mathrm dY_t=\mathrm dL_t+
 \operatorname{Tr}(\mathrm d\langle Z^\lambda\rangle_t).
\]
Now take the Fock trace in \eqref{cc:eq-Fock-bracket-limit} and use
\eqref{cc:eq-free-capacity-definition}.  Since
\(\operatorname{Tr}T_t=Y_t\) and \(\operatorname{Tr}r_t=1\), this is
exactly \eqref{cc:eq-Fock-free-energy}.  Finally, Cauchy--Schwarz for the
stochastic-integral bracket of
\(2\int\langle Z^\lambda,\mathrm dZ^\lambda\rangle\) gives
\[
 \mathrm d[L]_t
 \leq4Y_t\operatorname{Tr}
       (\mathrm d\langle Z^\lambda\rangle_t).
\]
Substitution of \eqref{cc:eq-Fock-bracket-limit} and
\eqref{cc:eq-free-capacity-definition} proves
\eqref{cc:eq-Fock-free-energy-bracket}.
\end{proof}

\subsection{The forced weak-trace endpoint}
\label{cc:subsec-critical-split-endpoint}

Let \(H\) be a real separable Hilbert space.  Singular values are listed
nonincreasingly and with multiplicity.  We use
\[
 \|T\|_{1,\infty}:=\sup_{n\geq1}n s_n(T),
 \qquad
 \mathcal S^0_{1,\infty}(H)
 :=\{T\in\mathcal S_{1,\infty}(H):ns_n(T)\longrightarrow0\},
\]
and, for \(0<\rho<\infty\),
\[
 \|T\|_{1,\rho}^{\rho}
 :=\sum_{n\geq1}\frac{(n s_n(T))^{\rho}}{n}.
\]
The little weak ideal is precisely the
\(\|\cdot\|_{1,\infty}\)-closure of the finite-rank operators.  We shall
use only the standard completeness, ideal, and compression properties of
these symmetric ideals; see
\cite{Pietsch80,Simon05,LordSukochevZanin12}.

For \(x,y\in H\), write
\[
 x\wedge_{\mathrm{op}}y:=x\otimes y-y\otimes x.
\]
If \(E\subset H\) is finite dimensional, set
\[
 \mathscr G_{\mathrm{fin}}(E)
 :=E\times\mathcal L(E)_{\mathrm{skew}}
       \times\mathcal L(E)_{\mathrm{sa}},
 \qquad
 \mathscr G_{\mathrm{fin}}(H)
 :=\bigcup_{\dim E<\infty}\mathscr G_{\mathrm{fin}}(E),
\]
where the union is directed by isometric inclusion.  On this core define
\begin{equation}
 (x,A,Q)\circ(y,B,R)
 :=\left(x+y,A+B+\frac12x\wedge_{\mathrm{op}}y,Q+R\right).
 \label{cc:eq-critical-split-product}
\end{equation}

\begin{theorem}[Critical split completion]
\label{cc:thm-critical-split-completion}
The completion of \(\mathscr G_{\mathrm{fin}}(H)\) for the coordinate
quasi-gauge
\begin{equation}
 \|(x,A,Q)\|_{\mathrm{crit}}
 :=\|x\|_H+\|A\|_{1,\infty}+\|Q\|_1
 \label{cc:eq-critical-quasi-gauge}
\end{equation}
is canonically
\begin{equation}
 \mathscr G_{\mathrm{crit}}(H)
 =H\times\mathcal S^0_{1,\infty}(H)_{\mathrm{skew}}
       \times\mathcal S_1(H)_{\mathrm{sa}}.
 \label{cc:eq-critical-split-space}
\end{equation}
The product \eqref{cc:eq-critical-split-product} and inverse
\[
 (x,A,Q)^{-1}=(-x,-A,-Q)
\]
extend continuously and make this space a separable complete topological
group.  More precisely, there are an exponent
\(0<\vartheta\leq1\) and an equivalent quasi-norm
\(\|\cdot\|_{1,\infty,*}\) on
\(\mathcal S^0_{1,\infty}(H)\) such that
\[
 \|S+T\|_{1,\infty,*}^{\vartheta}
 \leq \|S\|_{1,\infty,*}^{\vartheta}
      +\|T\|_{1,\infty,*}^{\vartheta}.
\]
Consequently
\begin{align}
 d_{\mathrm{crit}}(g,h)
 :={}&\|x_g-x_h\|_H^{\vartheta}
 +\|A_g-A_h\|_{1,\infty,*}^{\vartheta}
 +\|Q_g-Q_h\|_1^{\vartheta}
 \label{cc:eq-critical-Aoki-metric}
\end{align}
is a compatible complete metric.  In particular,
\(\mathscr G_{\mathrm{crit}}(H)\) is Polish; no Banach-space assertion is
being made for its weak-ideal coordinate.

The physical sector
\begin{equation}
 \mathscr G^+_{\mathrm{crit}}(H)
 :=\{(x,A,Q)\in\mathscr G_{\mathrm{crit}}(H):Q\succeq0\}
 \label{cc:eq-critical-positive-sector}
\end{equation}
is a closed submonoid.  For every bounded \(L:H\to H'\),
\[
 L_{\#}(x,A,Q):=(Lx,LAL^*,LQL^*)
\]
is a continuous homomorphism, and the homogeneous dilations
\[
 \delta_a(x,A,Q):=(ax,a^2A,a^2Q),\qquad a\in\mathbb R,
\]
are continuous.
\end{theorem}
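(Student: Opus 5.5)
The plan is to handle the three coordinates separately, since the quasi-gauge \eqref{cc:eq-critical-quasi-gauge} is a sum of coordinate quasi-norms. The $H$ coordinate needs no work. For the third coordinate, finite-rank self-adjoint operators on finite-dimensional subspaces are dense in $\mathcal S_1(H)_{\rm sa}$ for the trace norm. That trace-norm completion is $\mathcal S_1(H)_{\rm sa}$, because the real-linear self-adjoint part is closed. For the second coordinate, the stated fact that $\mathcal S^0_{1,\infty}$ is the $\|\cdot\|_{1,\infty}$-closure of the finite-rank operators identifies the completion of the finite skew operators with $\mathcal S^0_{1,\infty}(H)_{\rm skew}$. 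Two points need checking here. First, skew-adjointness is preserved under $\|\cdot\|_{1,\infty}$-limits, since convergence in this quasi-norm implies operator-norm convergence because $s_1\le\|\cdot\|_{1,\infty}$. Second, every skew element is approximated by finite skew ones. For this I would take $A_n=P_nAP_n$ with $P_n$ finite-rank orthogonal projections. Then $A-P_nAP_n=(I-P_n)A+P_nA(I-P_n)$, and each term tends to zero in $\|\cdot\|_{1,\infty}$. To see this, first approximate $A$ by a finite-rank $F$ with $\|A-F\|_{1,\infty}<\varepsilon$, then use the ideal property and $\|(I-P_n)F\|\to0$ at fixed finite rank. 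The Cauchy completion of the core is canonical because each coordinate completes to the displayed closed subspace.

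Next I would construct the Aoki--Rolewicz metric. $\|\cdot\|_{1,\infty}$ is a quasi-norm with constant $2$, by $s_{2n-1}(S+T)\le s_n(S)+s_n(T)$. The Aoki--Rolewicz theorem then gives $\vartheta\in(0,1]$ with $2^{1/\vartheta}=2$ up to the standard choice, together with an equivalent $\vartheta$-subadditive quasi-norm $\|\cdot\|_{1,\infty,*}$. With it, \eqref{cc:eq-critical-Aoki-metric} is a translation-invariant metric on the additive group. It is compatible with the quasi-gauge topology and complete, because each coordinate space is complete ($\mathcal S_{1,\infty}$ is a quasi-Banach ideal and $\mathcal S^0_{1,\infty}$ is closed in it). Separability follows from density of finite-rank operators with rational coordinates in a countable basis. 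Hence the space is Polish.

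The group structure is the step that needs care. Associativity, the identity and the inverse on the core are algebraic checks, since $x\wedge_{\rm op}x=0$ gives the inverse $(-x,-A,-Q)$. For continuous extension of the product, the only nonlinear term is $\tfrac12x\wedge_{\rm op}y$. The map $(x,y)\mapsto x\wedge_{\rm op}y$ is bilinear into rank-$\le2$ operators, with $\|x\wedge_{\rm op}y\|_{1,\infty}\le C\|x\|\|y\|$ since $s_1\le2\|x\|\|y\|$, $s_2\le\|x\|\|y\|$ and all other singular values vanish. It is therefore jointly continuous into $\mathcal S^0_{1,\infty}$. Combining this with continuity of addition in the quasi-norm gives a continuous product. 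Its values stay in the space because rank-two operators lie in the little ideal and the skew part is preserved. Inversion is linear and isometric.

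The remaining assertions are short. The positive sector is closed because the positive cone is closed in trace norm, and it is a submonoid because $Q+R\succeq0$. For bounded $L$, we have $L(x\wedge_{\rm op}y)L^*=Lx\wedge_{\rm op}Ly$, so $L_\#$ is a homomorphism. The ideal inequalities $s_n(LAL^*)\le\|L\|^2s_n(A)$ and $\|LQL^*\|_1\le\|L\|^2\|Q\|_1$ give continuity and preserve the little ideal, skewness and self-adjointness. Dilations are the special case $L=aI$. I expect the main obstacle to be the density and completion of the skew little-ideal coordinate, together with confirming that the topology does not depend on the choice of renorming.
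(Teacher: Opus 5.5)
Your proposal is correct and follows essentially the same route as the paper: coordinatewise completion using that $\mathcal S^0_{1,\infty}$ is the weak-quasi-norm closure of the finite-rank operators, Aoki--Rolewicz renorming of the constant-$2$ quasi-triangle inequality to get the power metric, the rank-two bound on $x\wedge_{\mathrm{op}}y$ for continuity of the only nonlinear term, and the ideal inequalities for $L_\#$, the dilations and the closed positive submonoid (your explicit $P_nAP_n$ density argument for the skew coordinate supplies a detail the paper leaves implicit). One small slip: with quasi-triangle constant $2$ the Aoki--Rolewicz relation is $2^{1/\vartheta-1}=2$, so $\vartheta=1/2$, whereas your relation $2^{1/\vartheta}=2$ gives $\vartheta=1$, i.e.\ an equivalent norm, which cannot exist on $\mathcal S_{1,\infty}$; the argument only uses the existence of some $\vartheta\in(0,1]$, so nothing else is affected.
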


\begin{proof}
The weak Schatten quasi-triangle inequality follows from
\[
 s_{m+n-1}(S+T)\leq s_m(S)+s_n(T).
\]
It gives a universal quasi-triangle constant for
\(\|\cdot\|_{1,\infty}\).  The Aoki--Rolewicz renorming theorem
\cite{KaltonPeckRoberts84} therefore supplies \(\vartheta\) and
\(\|\cdot\|_{1,\infty,*}\) with the displayed
power triangle inequality.  This use of Aoki--Rolewicz is essential:
\(\mathcal S_{1,\infty}\) is not being treated as a Banach ideal.

The space \(\mathcal S^0_{1,\infty}\), being the weak-quasi-norm closure
of the finite-rank operators in the complete weak ideal, is complete.
Finite-rank operators with rational matrix entries relative to a fixed
countable dense subset of \(H\) form a countable dense family.  Thus this
coordinate is separable; the same is standard for \(H\) and
\(\mathcal S_1\).  The power triangle inequalities in the three
coordinates show directly that \eqref{cc:eq-critical-Aoki-metric} is a
complete metric inducing the product topology.  This also identifies the
completion of the directed finite-dimensional core with
\eqref{cc:eq-critical-split-space}.

For rank-one operators,
\begin{equation}
 \|x\wedge_{\mathrm{op}}y\|_{1,\infty}
 \leq\|x\wedge_{\mathrm{op}}y\|_1
 \leq2\|x\|_H\|y\|_H.
 \label{cc:eq-wedge-trace-bound}
\end{equation}
Hence the only nonlinear coordinate of \eqref{cc:eq-critical-split-product}
is continuous.  Indeed,
\[
 x_n\wedge_{\mathrm{op}}y_n-x\wedge_{\mathrm{op}}y
 =(x_n-x)\wedge_{\mathrm{op}}y_n
   +x\wedge_{\mathrm{op}}(y_n-y),
\]
and convergent Hilbert-space sequences are bounded.  Associativity follows
from bilinearity:
\[
 x\wedge y+(x+y)\wedge z
 =y\wedge z+x\wedge(y+z).
\]
The inverse formula is immediate because
\(x\wedge_{\mathrm{op}}x=0\).

The ideal property gives
\[
 \|LAL^*\|_{1,\infty}\leq\|L\|^2\|A\|_{1,\infty},
 \qquad
 \|LQL^*\|_1\leq\|L\|^2\|Q\|_1,
\]
which proves the pushforward statement; the dilation assertion is
coordinatewise.  Finally the positive cone is trace-norm closed, and the
third coordinate is additive under \(\circ\), so
\eqref{cc:eq-critical-positive-sector} is a closed submonoid.
\end{proof}

The critical topology is the exponent-one member of a natural family.  We
record the full family because the exterior profile detects every exponent
below the ambient Hilbert--Schmidt scale.

\begin{corollary}[Sub-Hilbert weak split completions]
\label{cc:cor-sub-Hilbert-split-completions}
For \(0<p<2\), equip \(\mathscr G_{\mathrm{fin}}(H)\) with
\[
 \|(x,A,Q)\|_{\mathrm{split},p}
 :=\|x\|_H+\|A\|_{p,\infty}+\|Q\|_1,
 \qquad
 \|A\|_{p,\infty}:=\sup_{n\geq1}n^{1/p}s_n(A).
\]
Write
\[
 \mathcal S^0_{p,\infty}(H)
 :=\{A\in\mathcal S_{p,\infty}(H):n^{1/p}s_n(A)\to0\}.
\]
Its completion is canonically
\begin{equation}
 \mathscr G^{(p)}_0(H)
 :=H\times\mathcal S^0_{p,\infty}(H)_{\mathrm{skew}}
       \times\mathcal S_1(H)_{\mathrm{sa}}.
 \label{cc:eq-sub-Hilbert-split-completion}
\end{equation}
The product \eqref{cc:eq-critical-split-product}, inverse, dilations, and
bounded pushforwards extend continuously, and
\[
 \mathscr G^{(p)}_+(H)
 :=\{(x,A,Q)\in\mathscr G^{(p)}_0(H):Q\succeq0\}
\]
is a closed submonoid.  An equivalent Aoki--Rolewicz power metric makes
each \(\mathscr G^{(p)}_0(H)\) a Polish group.  At \(p=1\) this is exactly
\(\mathscr G_{\mathrm{crit}}(H)\) with its previously defined topology.
\end{corollary}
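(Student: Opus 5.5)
The plan is to repeat the proof of \Cref{cc:thm-critical-split-completion} with the exponent \(1\) replaced by a general \(0<p<2\). The first and third coordinates do not change. All the new work is in the weak coordinate \(\mathcal S^0_{p,\infty}(H)\).

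\textbf{Step 1: quasi-triangle inequality.} From the Ky Fan inequality
\[
 s_{m+n-1}(S+T)\le s_m(S)+s_n(T),
\]
applied with \(m=n\) and \(m=n+1\), we get
\[
 \|S+T\|_{p,\infty}\le 2^{1/p}\bigl(\|S\|_{p,\infty}+\|T\|_{p,\infty}\bigr),
\]
after a routine index adjustment. So \(\|\cdot\|_{p,\infty}\) is a quasi-norm with a constant depending only on \(p\).

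\textbf{Step 2: completeness and separability.} Aoki--Rolewicz \cite{KaltonPeckRoberts84} gives an exponent \(\vartheta_p\in(0,1]\) and an equivalent \(\vartheta_p\)-subadditive quasi-norm. With these, the coordinate metric analogous to \eqref{cc:eq-critical-Aoki-metric} is a genuine metric. The ideal \(\mathcal S_{p,\infty}\) is complete under this quasi-norm by the standard symmetric-ideal theory. The little ideal \(\mathcal S^0_{p,\infty}\) is exactly the closure of the finite-rank operators in it, since truncating the Schmidt series after \(N\) terms leaves a tail of quasi-norm \(\sup_{n>N} n^{1/p}s_n\). Rational finite-rank operators therefore give a countable dense set.

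\textbf{Step 3: identifying the completion.} The skew-adjoint and self-adjoint parts are closed subspaces. Compressions of \(A\in\mathcal S^0_{p,\infty}\) to finite-dimensional \(E\uparrow H\) converge in quasi-norm: approximate \(A\) by a finite-rank \(F\), then use the ideal bound \(\|P_E(A-F)P_E\|_{p,\infty}\le\|A-F\|_{p,\infty}\) together with \(P_EFP_E\to F\). This shows the directed core is dense and identifies its completion with \eqref{cc:eq-sub-Hilbert-split-completion}.

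\textbf{Step 4: continuity of the group law.} The only nonlinear term is handled by
\[
 \|x\wedge_{\mathrm{op}}y\|_{p,\infty}\le C_p\|x\wedge_{\mathrm{op}}y\|_{\mathrm{op}}\le 2C_p\|x\|\|y\|,
\]
which holds because a rank-two operator has only two nonzero singular values (\(C_p=2^{1/p}\) suffices). Bilinearity and the same splitting used in \eqref{cc:eq-wedge-trace-bound} then give continuity, and associativity and the inverse formula are algebraic.

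\textbf{Step 5: remaining clauses.} Pushforwards are continuous because \(s_n(LAL^*)\le\|L\|^2s_n(A)\). Dilations act coordinatewise with factors \(|a|\) and \(a^2\). The positive sector is closed in trace norm and additive in \(Q\), so it is a closed submonoid. Polishness follows from Steps 2--4. At \(p=1\) the gauge literally coincides with \eqref{cc:eq-critical-quasi-gauge}, so the topology is the one of \Cref{cc:thm-critical-split-completion}.

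The main obstacle is Step 3: checking that the completion really equals the little ideal and not all of \(\mathcal S_{p,\infty}\). The key input is that the finite-rank closure is exactly \(\{n^{1/p}s_n\to0\}\). Containment in one direction uses that \(n^{1/p}s_n(A)\to0\) is preserved under quasi-norm limits, via Ky Fan with the index split \(\lfloor n/2\rfloor\). The other direction is the truncation argument in Step 2.
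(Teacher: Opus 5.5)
Your proposal is correct and follows the paper's proof. The paper also reruns the $p=1$ argument (Aoki--Rolewicz metrization, completeness and separability of the little ideal, closedness of the positive cone), with the single new estimate $\|x\wedge_{\mathrm{op}}y\|_{p,\infty}\le 2^{1+1/p}\|x\|_H\|y\|_H$, which is exactly the bound you derive in Step~4; you simply spell out the routine details (the quasi-triangle constant, density of compressions, and the finite-rank characterization of $\mathcal S^0_{p,\infty}$) that the paper leaves implicit.
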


\begin{proof}
Repeat the proof of \Cref{cc:thm-critical-split-completion} with
\(\mathcal S^0_{p,\infty}\) in place of
\(\mathcal S^0_{1,\infty}\).  The only new estimate is the rank-two bound
\[
 \|x\wedge_{\mathrm{op}}y\|_{p,\infty}
 \leq2^{1+1/p}\|x\|_H\|y\|_H,
\]
which gives continuity of the nonlinear group term.  Completeness,
separability, the ideal inequalities, Aoki--Rolewicz metrization, and
closedness of the positive cone are then identical.
\end{proof}

\begin{proposition}[Universal trace obstruction]
\label{cc:prop-critical-trace-obstruction}
Let \(\mathcal E\) be a symmetric quasi-normed ideal on the finite-rank
operators.  If the universal positive collision readout is continuous in
the sense that
\[
 \operatorname{Tr}S\leq C\|S\|_{\mathcal E}
 \qquad(S\succeq0,\ S\text{ finite rank}),
\]
then
\[
 \|S\|_1\leq C\|S\|_{\mathcal E}
 \qquad(S\succeq0,\ S\text{ finite rank}).
\]
Thus any universal topology on the physical collision cone has at least
trace-class strength.  In particular Hilbert--Schmidt control is
insufficient in infinite dimension.
\end{proposition}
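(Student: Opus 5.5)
For a positive finite-rank \(S\), the identity \(\|S\|_1=\operatorname{Tr}S\) already gives the conclusion from the hypothesis. So the substance lies in the interpretive consequences, and the plan is to make those precise using symmetry of the ideal.

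First, positivity. For \(S\succeq0\) of finite rank, \(s_n(S)\) are the eigenvalues of \(S\), so \(\|S\|_1=\sum_n s_n(S)=\operatorname{Tr}S\). The hypothesis then reads \(\|S\|_1\le C\|S\|_{\mathcal E}\) verbatim. Symmetry of \(\mathcal E\) is not needed for this step. It is only needed so that \(\|S\|_{\mathcal E}\) depends on the singular sequence alone, which makes the inequality a statement about sequences.

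Second, the "trace-class strength'' assertion. Suppose a topology on the collision cone is given by such an \(\mathcal E\) and the readout is continuous. Then any \(\mathcal E\)-Cauchy net of positive finite-rank operators is trace-norm Cauchy. Hence the completion of the positive cone maps continuously into \(\mathcal S_1(H)_+\), so the topology is at least as strong as the trace norm on the cone. If the hypothesis is only continuity at \(0\) rather than a linear bound, I would first upgrade it using homogeneity \(\|tS\|_{\mathcal E}=t\|S\|_{\mathcal E}\) together with \(\operatorname{Tr}(tS)=t\operatorname{Tr}S\); a scaling argument then produces the constant \(C\).

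Third, the Hilbert--Schmidt counterexample. Take \(S_n=P_n/\sqrt n\), where \(P_n\) is the rank-\(n\) orthogonal projection onto the span of the first \(n\) vectors of an orthonormal basis of the infinite-dimensional \(H\). Then \(\|S_n\|_2=1\) while \(\operatorname{Tr}S_n=\sqrt n\to\infty\). So no constant \(C\) works for \(\mathcal E=\mathcal S_2\). The same family, with a suitable scaling, rules out every \(\mathcal S_{p}\) and \(\mathcal S_{p,\infty}\) with \(p>1\); for the weak ideals one uses the normalization \(P_n/n^{1/p}\).

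The argument has no genuine obstacle. The only point needing care is fixing what "continuous'' means for a quasi-normed topology, namely the passage from continuity at \(0\) to a bounded constant via positive homogeneity. I would state that convention explicitly before the one-line trace identity.
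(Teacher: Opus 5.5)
Your argument is the same as the paper's. For positive finite-rank \(S\) one has \(\|S\|_1=\operatorname{Tr}S\), so the hypothesis is already the conclusion. Rank-\(N\) projections then show that Hilbert--Schmidt control gives no dimension-free trace bound: your \(P_n/\sqrt n\) is the normalized form of the paper's \(\|P_N\|_2=N^{1/2}\), \(\operatorname{Tr}P_N=N\).

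One small fix is needed in your optional completion remark. The differences \(S_\alpha-S_\beta\) of an \(\mathcal E\)-Cauchy net are not positive, so the cone inequality does not apply to them directly. Write a self-adjoint finite-rank \(X\) as \(X_+-X_-\) with \(X_\pm=\pm P_\pm XP_\pm\) for its spectral projections \(P_\pm\), and use the ideal bound \(\|P_\pm XP_\pm\|_{\mathcal E}\le\|X\|_{\mathcal E}\) to get \(\|X\|_1\le 2C\|X\|_{\mathcal E}\). This step, not the cone identity, is where symmetry of \(\mathcal E\) is actually used.
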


\begin{proof}
For a positive finite-rank operator,
\(\|S\|_1=\operatorname{Tr}S\), which proves the first assertion.  If
\(P_N\) is an orthogonal projection of rank \(N\), then
\[
 \|P_N\|_2=N^{1/2},\qquad \operatorname{Tr}P_N=N.
\]
No dimension-free trace bound can therefore follow from the
Hilbert--Schmidt norm.
\end{proof}

\paragraph{The universal upper bound.}
Let \(M\) be a continuous square-integrable \(H\)-valued martingale with
\(M_0=0\).  Put
\[
 R_t:=\langle\!\langle M\rangle\!\rangle_t=[M]_t,
 \qquad V_t:=\operatorname{Tr}R_t,
\]
and
\begin{equation}
 A_{s,t}:=\operatorname{Alt}_2
 \int_s^t(M_u-M_s)\otimes\mathop{}\!\mathrm dM_u,
 \qquad A_t:=A_{0,t}.
 \label{cc:eq-critical-martingale-area}
\end{equation}
The integral is first an \(H^{\otimes2}\)-valued It\^o integral; under the
Hilbert tensor--operator identification \(A_{s,t}\) is skew-adjoint and
Hilbert--Schmidt.

We use the exterior spectral profile \(\mathfrak P_A=\mathfrak F_A\)
from \eqref{cc:eq-exterior-profile}; no second degree-two spectral reader is
introduced here.

For a positive trace-class operator \(Q\), also put
\begin{equation}
 \Psi_Q(x):=\log\det(I+xQ),\qquad x\geq0.
 \label{cc:eq-positive-determinant-profile}
\end{equation}

\begin{lemma}[Determinant and conditional profile decoder]
\label{cc:lem-response-profile-decoder}
The following statements hold.
\begin{enumerate}[label=\textup{(\roman*)},leftmargin=2.8em]
\item If \(0<p<1\) and \(Q\succeq0\) is trace class, then
\begin{equation}
 \sup_{x>0}x^{-p}\Psi_Q(x)
 \asymp_p\|Q\|_{\mathcal S_{p,\infty}}^p,
 \qquad
 Q\in\mathcal S^0_{p,\infty}
 \Longleftrightarrow \Psi_Q(x)=o(x^p).
 \label{cc:eq-determinant-weak-dictionary}
\end{equation}
Moreover \(Q\in\mathcal S_1\) implies \(\Psi_Q(x)=o(x)\).

\item Let \(0<p<2\), let \((A_\theta)\) be a measurable family of skew
Hilbert--Schmidt operators, and put
\[
 \Phi(z):=\sup_\theta\mathfrak P_{A_\theta}(z),
 \qquad
 W_p:=\sup_{\theta,j}j^{1/p}a_j(A_\theta).
\]
Let \(\mathcal H\) be a sigma-field.  Suppose that, for an
\(\mathcal H\)-measurable \(B\geq0\), some \(q>0\), and deterministic
\(C_0,C_1<\infty\),
\begin{equation}
 \mathbb E[e^{q\Phi(z)}\mid\mathcal H]
 \leq C_0\exp(C_1(zB)^p),\qquad z\geq0.
 \label{cc:eq-maximal-profile-hypothesis}
\end{equation}
Then, for every \(0<r<2q\),
\begin{equation}
 \left(\mathbb E[W_p^r\mid\mathcal H]\right)^{1/r}
 \leq C_{p,q,r,C_0,C_1}B.
 \label{cc:eq-maximal-profile-moment}
\end{equation}

\item Under the notation of part~\textup{(ii)}, suppose instead that
\begin{equation}
 \mathbb E[e^{\Phi(z)}\mid\mathcal H]
 \leq C_0e^{b(z)},
 \qquad b(z)=o(z^p)\quad\text{almost surely},
 \label{cc:eq-maximal-profile-little-hypothesis}
\end{equation}
where \(b(z)\) is \(\mathcal H\)-measurable.  Then
\begin{equation}
 \sup_\theta n^{1/p}s_n(A_\theta)\longrightarrow0
 \quad\text{almost surely}.
 \label{cc:eq-maximal-profile-little-conclusion}
\end{equation}
\end{enumerate}
\end{lemma}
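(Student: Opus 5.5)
Each of the three parts reduces to the layer-cake/threshold comparison between a determinant profile and the counting function, already used in \Cref{cc:prop:weak-detection}.

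For part~\textup{(i)}, write \(\Psi_Q(x)=\sum_j\log(1+xq_j)\) with \(q_j=s_j(Q)\) and \(\mathcal N(u)=\#\{j:q_j\ge u\}\). I would prove the two bounds separately.

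The lower bound uses \(\Psi_Q(x)\ge\log 2\cdot\mathcal N(1/x)\), since every \(q_j\ge1/x\) contributes at least \(\log2\). Evaluating at \(x=1/q_n\) gives \(n q_n^p\le(\log2)^{-1}\sup_x x^{-p}\Psi_Q(x)\).

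For the upper bound, split the sum at \(q_j\ge1/x\). The large part is at most \(\sum_{j\le \mathcal N(1/x)}\log(1+xq_j)\), and \(q_j\le \|Q\|_{p,\infty}j^{-1/p}\) makes it \(O_p(x^p\|Q\|_{p,\infty}^p)\). The small part is at most \(x\sum_{q_j<1/x}q_j\), which is also \(O_p(x^p\|Q\|_{p,\infty}^p)\) because \(p<1\) makes the tail sum converge at the right rate.

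The little-ideal equivalence follows by running the same split with \(\delta\)-small normalized counting beyond a fixed index, as in the little clause of \Cref{cc:prop:weak-detection}. The statement \(\Psi_Q(x)=o(x)\) for trace-class \(Q\) is dominated convergence applied to \(x^{-1}\log(1+xq_j)\le q_j\) with \(\sum q_j<\infty\).

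For part~\textup{(ii)}, the route is a conditional Chernoff bound on counting. By \eqref{cc:eq-exterior-profile}, \(\mathfrak P_A(z)\ge\log2\cdot\#\{j:a_j(A)\ge1/z\}\). Hence on the event \(\{j^{1/p}a_j(A_\theta)\ge t\}\) for some \(\theta,j\), taking \(z=j^{1/p}/t\) gives \(\Phi(z)\ge j\log 2\). Applying conditional Markov with \(e^{q\Phi}\) and \eqref{cc:eq-maximal-profile-hypothesis}, for that fixed \(j\),
\[
 \mathbb P\bigl(\exists\theta:\ j^{1/p}a_j(A_\theta)\ge t\mid\mathcal H\bigr)
 \le C_0\exp\bigl(-qj\log2+C_1 j(B/t)^p\bigr).
\]
Once \(t\ge \kappa B\) with \(\kappa\) chosen so that \(C_1\kappa^{-p}\le\tfrac q2\log2\), the right side is summable over \(j\). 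The sum is then bounded by \(C\exp(-c\,(t/B)^p)\) times a factor that is harmless after integrating \(rt^{r-1}\) against the tail.

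The main obstacle is that this simple bound only yields \(e^{-cj}\) and no decay in \(t\) itself. I would first try choosing \(z\) larger, namely \(z=sj^{1/p}/t\) with \(s\ge1\), so that \(\Phi\ge j\log(1+s^2)\). This gives a tail of order \((1+s^2)^{-qj}\) against \(e^{C_1 j (sB/t)^p}\). Optimizing \(s\) as a function of \(t/B\) produces polynomial decay \((t/B)^{-2q}\), and this is exactly why the moment restriction is \(r<2q\). Summing over \(j\) and integrating then gives \eqref{cc:eq-maximal-profile-moment}. The measurability of the supremum over \(\theta\) is taken as given in the hypothesis.

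For part~\textup{(iii)}, I would repeat the same tail estimate with \(q=1\), \(s\) fixed, and \(B\) replaced by the \(\mathcal H\)-measurable rate \(b\). Since \(b(z)=o(z^p)\), for every \(\varepsilon>0\) the events \(\{\sup_\theta n^{1/p}s_n(A_\theta)\ge\varepsilon\}\) have conditional probabilities summable in \(n\) beyond a random index. Borel--Cantelli applied conditionally on \(\mathcal H\) then gives \eqref{cc:eq-maximal-profile-little-conclusion}. Here \(s_n(A_\theta)\) and \(a_{\lceil n/2\rceil}(A_\theta)\) are comparable up to a factor \(2^{1/p}\) because of the two-plane multiplicity doubling.
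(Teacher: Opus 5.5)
Your proposal is correct and follows the paper's argument. Part (i) uses the same threshold lower bound and split upper bound that the paper obtains by applying \Cref{cc:prop:weak-detection} to \(Q^{1/2}\) with exponent \(2p\), via \(\Psi_Q(x)=2\mathfrak F_{Q^{1/2}}(\sqrt x)\). Your optimized choice \(s\approx t/B\) in part (ii) is exactly the paper's \(z_j=j^{1/p}/B\), which gives the conditional \(O(\lambda^{-2q})\) tail, and part (iii) is the same conditional Markov and Borel--Cantelli step.

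There are two small clean-ups. First, delete the false intermediate claim of \(\exp(-c(t/B)^p)\) decay. Second, once \(z\) depends on the \(\mathcal H\)-measurable \(B\), you need the routine justification for applying the hypothesis at a random \(z\) (first simple values, then monotone approximation in \(z\)), and you should treat \(\{B=0\}\) separately by letting a deterministic \(z\to\infty\).
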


\begin{proof}
Since
\(
 \Psi_Q(x)=2\mathfrak F_{Q^{1/2}}(\sqrt{x})
\), part~\textup{(i)} follows from
\Cref{cc:prop:weak-detection} applied to \(Q^{1/2}\) with exponent
\(2p\).  If \((q_j)\) are the eigenvalues of \(Q\in\mathcal S_1\), then
\(x^{-1}\log(1+xq_j)\leq q_j\); dominated convergence gives
\(\Psi_Q(x)=o(x)\).

For part~\textup{(ii)}, on
\[
 E_j(\lambda):=
 \{\sup_\theta j^{1/p}a_j(A_\theta)>\lambda B\}
\]
choose \(z_j=j^{1/p}/B\) on \(\{B>0\}\).  The first \(j\) positive
blocks give
\(
 \Phi(z_j)\geq j\log(1+\lambda^2)
\).
Conditional Markov inequality and
\eqref{cc:eq-maximal-profile-hypothesis} yield
\[
 \mathbb P(E_j(\lambda)\mid\mathcal H)
 \leq C_0\exp\{C_1j-qj\log(1+\lambda^2)\}.
\]
The use of the \(\mathcal H\)-measurable value \(z_j\) follows first for
simple values and then by monotone approximation and continuity in \(z\).
Summing in \(j\) gives a conditional
\(O(\lambda^{-2q})\) tail for \(W_p/B\), which integrates to
\eqref{cc:eq-maximal-profile-moment} for \(r<2q\).  The case \(B=0\)
follows by letting deterministic \(z\to\infty\).

For part~\textup{(iii)}, take \(z_j=j^{1/p}\).  For every
\(\varepsilon>0\), conditional Markov inequality bounds
\(
 \mathbb P\{\sup_\theta j^{1/p}a_j(A_\theta)>\varepsilon
 \mid\mathcal H\}
\)
by
\[
 C_0\exp\{b(j^{1/p})-j\log(1+\varepsilon^2)\},
\]
which is summable almost surely.  Conditional Borel--Cantelli, followed by
a countable sequence \(\varepsilon\downarrow0\), proves
\eqref{cc:eq-maximal-profile-little-conclusion}.
\end{proof}

\begin{proposition}[Universal weak-trace area bound]
\label{cc:prop-universal-critical-area-bound}
If \(V_T\leq K\) almost surely, then for \(q>1\) and \(z\geq0\),
\begin{equation}
 \mathbb E\exp\!\left(q\sup_{t\leq T}\mathfrak P_{A_t}(z)\right)
 \leq\left(\frac q{q-1}\right)^q
 \exp\!\left(\frac{q(2q-1)}2zK\right).
 \label{cc:eq-critical-profile-maximal}
\end{equation}
Consequently, for every finite \(r>0\),
\begin{equation}
 \left\|\sup_{t\leq T}\|A_t\|_{1,\infty}\right\|_{L^r}
 \leq C_rK,
 \label{cc:eq-universal-weak-area-bound}
\end{equation}
where \(C_r\) is independent of \(H,M,K\), and \(T\).
\end{proposition}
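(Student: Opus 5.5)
The plan is to realize $\exp(\mathfrak P_{A_t}(z))$ as the even-sector energy of the exterior lift, and to control the moments of the full Fock energy with the free-capacity balance. Fix $z\ge0$ and put $\lambda:=\sqrt{z/2}$. By \Cref{cc:thm-localized-exterior-lift} (valid since $V_T\le K$), the even-profile identity \eqref{cc:eq-even-profile-identity} together with \eqref{cc:eq-exterior-profile} gives
\[
 \exp\bigl(\mathfrak P_{A_t}(z)\bigr)=\|P_{\mathrm{even}}Z^\lambda_t\|^2\le Y_t:=\|Z^\lambda_t\|^2 .
\]
Hence $\exp(q\sup_t\mathfrak P_{A_t}(z))\le\sup_tY_t^q$. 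Because $Z^\lambda$ is a continuous Fock-valued martingale, $Y$ is a nonnegative continuous submartingale. Doob's $L^q$ inequality then gives
\[
 \mathbb E\sup_{t\le T}Y_t^q\le\left(\frac q{q-1}\right)^q\mathbb E Y_T^q,
\]
and this accounts for the prefactor in \eqref{cc:eq-critical-profile-maximal}.

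The remaining task is the bound $\mathbb E Y_T^q\le\exp(\tfrac{q(2q-1)}2zK)$. I will use \Cref{cc:thm-Fock-free-capacity-balance}, which applies because $M$ is continuous: $\mathrm dY=\mathrm dL+\lambda^2\operatorname{Cap}^{\mathrm{free}}_{r_t}(T_t)\,\mathrm dV$ and $\mathrm d[L]\le4\lambda^2Y\operatorname{Cap}^{\mathrm{free}}_{r_t}(T_t)\,\mathrm dV$. By \Cref{cc:cor-free-capacity-Fredholm}, with $\operatorname{Tr}r_t=1$ and $\operatorname{Tr}T_t=Y_t$, we have $0\le\operatorname{Cap}^{\mathrm{free}}_{r_t}(T_t)\le Y_t$. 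Itô's formula for $Y^q$ with $q>1$ then gives
\[
 \mathrm d(Y^q)=qY^{q-1}\mathrm dL+\Bigl(q\lambda^2Y^{q-1}\operatorname{Cap}+\tfrac{q(q-1)}2Y^{q-2}\mathrm d[L]/\mathrm dV\Bigr)\mathrm dV
 \le qY^{q-1}\mathrm dL+\lambda^2q(2q-1)\,Y^q\,\mathrm dV .
\]
So $e^{-\lambda^2q(2q-1)V_t}Y_t^q$ is a nonnegative local supermartingale; $V$ is continuous, so no atoms arise. Localize $L$ by stopping times, then remove the localization with Fatou's lemma. Using $V_T\le K$ and $Y_0=1$ this yields $\mathbb E Y_T^q\le e^{\lambda^2q(2q-1)K}$. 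With $\lambda^2=z/2$ this is exactly the exponent in \eqref{cc:eq-critical-profile-maximal}. I expect this step to be the main obstacle. The point is that the trace bound on the capacity and the bracket bound must combine into the sharp constant $q(2q-1)/2$, and the localization must be justified, since $Y^{q-2}$ is harmless because $Y\ge1$ ($Y\ge\|P_0Z\|^2=1$).

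For \eqref{cc:eq-universal-weak-area-bound}, apply \Cref{cc:lem-response-profile-decoder}(ii) with $p=1$, trivial $\mathcal H$, the family $(A_t)_{t\le T}$, and $B:=K$. Measurability of the supremum follows from path continuity, and we may restrict to rational $t$. Given $r>0$, choose $q>\max\{1,r/2\}$. The maximal estimate just proved supplies \eqref{cc:eq-maximal-profile-hypothesis} with $C_0=(q/(q-1))^q$ and $C_1=q(2q-1)/2$, which are absolute constants. The lemma then gives $\|W_1\|_{L^r}\le C K$, where $W_1=\sup_{t,j}j\,a_j(A_t)$. Each two-plane value $a_j$ occurs twice among the singular values, so $s_{2j-1}=s_{2j}=a_j$, and therefore $n s_n(A_t)\le2\lceil n/2\rceil a_{\lceil n/2\rceil}\le2W_1$. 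This proves $\sup_t\|A_t\|_{1,\infty}\le2W_1$, and the constant $C_r$ is independent of $H$, $M$, $K$ and $T$.
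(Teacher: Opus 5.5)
Your proposal is correct and follows the paper's argument: the even-profile identity, the free-capacity bounds $\operatorname{Cap}\le Y$ and $\mathrm d[L]\le 4\lambda^2Y\operatorname{Cap}\,\mathrm dV$ combining into the drift constant $q(2q-1)\lambda^2$, the exponential supermartingale, Doob's inequality, and the profile decoder with $p=1$, $q>r/2$. The differences are only technical. You invoke \Cref{cc:thm-Fock-free-capacity-balance} directly in infinite dimension, whereas the paper proves the bound for the projections $P_nM$ and passes to general $H$ via $\mathcal S_2$-convergence of the areas, continuity of the Fredholm determinant, and Fatou. You use $Y\ge1$ in place of the paper's $(Y+\epsilon)^q$ regularization, and you spell out the factor $2$ coming from $s_{2j-1}=s_{2j}=a_j$, which the paper leaves implicit.
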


\begin{proof}
We first work in finite dimension, so that no Banach-space theorem is
applied to the weak ideal.  Let \(Z^\lambda\) be the exterior It\^o lift.
The exact lift theorem
\Cref{cc:thm-localized-exterior-lift}, together with
\eqref{cc:eq-even-profile-identity}, gives
\[
 \|P_{\mathrm{even}}Z_t^{\sqrt{z/2}}\|^2
 =e^{\mathfrak P_{A_t}(z)}.
\]
The full closed form and its degreewise It\^o verification belong to that
reader theorem; here we use only its even-profile identity and the
trace-class CAR bounds in \Cref{cc:lem-fermionic-calculus}.

Let \(Y_t=\|Z_t^\lambda\|^2\).  Writing
\(\mathop{}\!\mathrm dR_t=r_t\mathop{}\!\mathrm dV_t\), with
\(r_t\succeq0\) and \(\operatorname{Tr}r_t=1\) on the support of
\(\mathop{}\!\mathrm dV\), note that \(Y_t\geq1\) because the vacuum
component of \(Z^\lambda\) is one.  Define the predictable density
\[
 a_t^{\mathrm{free}}
 :=\frac{\operatorname{Tr}\mathcal B_{r_t}
        (Z_t^\lambda\otimes Z_t^\lambda)}{Y_t},
 \qquad
 \mathop{}\!\mathrm dV_t^{\mathrm{free}}
 :=a_t^{\mathrm{free}}\mathop{}\!\mathrm dV_t.
\]
The particle--hole identity gives
\(0\leq a_t^{\mathrm{free}}\leq1\).  Since \(M\), hence
\(Z^\lambda\), is continuous, its optional and predictable brackets
coincide.  Hilbert-space It\^o's formula, with
\[
 \mathop{}\!\mathrm dL_t
 :=2\operatorname{Re}
   \langle Z_t^\lambda,\mathop{}\!\mathrm dZ_t^\lambda\rangle,
\]
therefore gives
\[
 \mathop{}\!\mathrm dY_t
 =\mathop{}\!\mathrm dL_t
   +\lambda^2Y_t\mathop{}\!\mathrm dV_t^{\mathrm{free}},
 \qquad
 0\leq\mathop{}\!\mathrm dV_t^{\mathrm{free}}
 \leq\mathop{}\!\mathrm dV_t,
\]
where \(L\) is a real local martingale.  Cauchy--Schwarz for the
stochastic-integral bracket gives, pointwise as measures,
\[
 \mathop{}\!\mathrm d[L]_t
 \leq4\lambda^2Y_t^2\mathop{}\!\mathrm dV_t^{\mathrm{free}}.
\]
Thus the drift in It\^o's formula for \(Y^q\) is at most
\[
 q\lambda^2Y_t^q\mathop{}\!\mathrm dV_t^{\mathrm{free}}
 +\frac{q(q-1)}2Y_t^{q-2}\mathop{}\!\mathrm d[L]_t
 \leq q(2q-1)\lambda^2Y_t^q
       \mathop{}\!\mathrm dV_t^{\mathrm{free}}.
\]
For \(1<q<2\), this statement is obtained first for
\((Y+\epsilon)^q\) and then by the standard localized
\(\epsilon\downarrow0\) argument.
Thus
\[
 e^{-q(2q-1)\lambda^2V_t^{\mathrm{free}}}Y_t^q
\]
is a local supermartingale.  Localization and \(V_T\leq K\) give
\[
 \mathbb EY_T^q\leq e^{q(2q-1)\lambda^2K}.
\]
Since \(Y\) is a nonnegative submartingale, Doob's inequality yields
\[
 \mathbb E\sup_{t\leq T}Y_t^q
 \leq\left(\frac q{q-1}\right)^q
 e^{q(2q-1)\lambda^2K}.
\]
Taking \(\lambda^2=z/2\) and retaining the even component proves
\eqref{cc:eq-critical-profile-maximal} in finite dimension.

For general \(H\), apply the estimate to increasing finite-rank
projections \(P_nM\).  The projected areas converge in
\(L^2(\Omega;C([0,T];\mathcal S_2))\) after the usual scalar-clock
localization, by the Hilbert-space It\^o isometry and BDG inequality
\cite{MetivierPellaumail80}.  From every subsequence one may therefore
extract a further subsequence converging almost surely, uniformly in
\(t\), in \(\mathcal S_2\).  The estimate
\[
 \|A^*A-B^*B\|_1
 \leq(\|A\|_2+\|B\|_2)\|A-B\|_2
\]
shows uniform trace-norm convergence of the squared moduli.
Consequently, for each fixed \(z\), continuity of the Fredholm
determinant on \(\mathcal S_1\) gives
\[
 \sup_{t\leq T}
 |\mathfrak P_{A_t^{P_nM}}(z)-\mathfrak P_{A_t^M}(z)|
 \longrightarrow0
\]
along that further subsequence.  Fatou's lemma applied to
\(\exp(q\sup_t\mathfrak P_{A_t}(z))\) proves
\eqref{cc:eq-critical-profile-maximal}; the subsequence principle removes
the extraction.  Any auxiliary local-martingale localization is removed by
Fatou's lemma; the energy is already bounded by \(K\).  Finally apply
\Cref{cc:lem-response-profile-decoder} with \(p=1\) and choose \(q>r/2\).
This proves \eqref{cc:eq-universal-weak-area-bound} without using linear
expectation in a quasi-Banach space.
\end{proof}

\begin{corollary}[Bounded bracket-density control]
\label{cc:cor-critical-bracket-density-control}
Assume
\begin{equation}
 \mathop{}\!\mathrm dV_t\leq\Lambda\,\mathop{}\!\mathrm dt
 \qquad(0\leq t\leq T).
 \label{cc:eq-critical-bracket-density}
\end{equation}
For every finite \(q\geq2\) and \(0\leq s<t\leq T\),
\begin{align}
 \left\|\sup_{s\leq u\leq t}\|M_u-M_s\|_H\right\|_{L^q}
 &\leq C_q\Lambda^{1/2}|t-s|^{1/2},
 \label{cc:eq-critical-local-first}\\
 \left\|\sup_{s\leq u\leq t}\|A_{s,u}\|_{1,\infty}\right\|_{L^q}
 &\leq C_q\Lambda|t-s|.
 \label{cc:eq-critical-local-area}
\end{align}
For every \(\beta\in(1/3,1/2)\) there is a modification and an almost
surely finite random \(C_\beta\) such that, simultaneously for all
\(s<t\),
\begin{equation}
 \|M_t-M_s\|_H\leq C_\beta|t-s|^\beta,
 \qquad
 \|A_{s,t}\|_{1,\infty}\leq C_\beta|t-s|^{2\beta}.
 \label{cc:eq-critical-local-holder}
\end{equation}
Moreover, for every finite \(p\), the constant may be chosen so that
\begin{equation}
 \|C_\beta\|_{L^p}
 \leq C_{p,\beta,T}\bigl(\Lambda^{1/2}+\Lambda\bigr),
 \label{cc:eq-critical-holder-moments}
\end{equation}
where the constant is independent of the Hilbert-space dimension.  Thus
bounded bracket density is already a dimension-free grade-one entrance
certificate; no model-specific dyadic argument is required later.
\end{corollary}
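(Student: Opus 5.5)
The plan is to localize \Cref{cc:prop-universal-critical-area-bound} to subintervals, using the bounded bracket density to convert the energy bound $K$ into $\Lambda|t-s|$. We then feed the resulting moment bounds into a Kolmogorov/Garsia-type argument for two-parameter multiplicative functionals. The Chen relation of the split group \eqref{cc:eq-critical-split-product} supplies the algebraic input, and the quasi-norm is handled through Aoki--Rolewicz.

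\emph{Step 1: local moments.} Fix $s<t$ and consider the shifted martingale $N_u:=M_{s+u}-M_s$ for $0\le u\le t-s$, relative to the filtration $(\mathcal F_{s+u})$. Its bracket trace is $V_{s+u}-V_s\le\Lambda(t-s)$ almost surely, which is a deterministic bound. Its area process is exactly $A_{s,s+u}$. Hence \Cref{cc:prop-universal-critical-area-bound} with $K=\Lambda|t-s|$ gives \eqref{cc:eq-critical-local-area} directly, with $C_q$ independent of $H$. For \eqref{cc:eq-critical-local-first}, apply the Hilbert-space BDG inequality to $N$; its quadratic variation trace is at most $\Lambda|t-s|$. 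If one prefers to stay within the paper, the odd component of the exterior lift with the same $Y^q$ supermartingale estimate also gives an exponential bound on $\|N\|$.

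\emph{Step 2: Hölder modification.} Take $\mathbb X_{s,t}=(M_t-M_s,A_{s,t})$. Continuity of $M$ and of the area in $\mathcal S_2$ gives continuous sample paths, so no modification issue arises beyond the one for $A$. The Chen identity $A_{s,t}=A_{s,u}+A_{u,t}+\tfrac12(M_u-M_s)\wedge_{\mathrm{op}}(M_t-M_u)$ holds pathwise. We then run the standard dyadic Kolmogorov argument for rough paths.
\begin{itemize}
\item Level one is the usual Kolmogorov criterion in $H$.
\item For level two, we bound dyadic increments $\|A_{s,t}\|_{1,\infty,*}^\vartheta$ via Chen chaining. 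Each step adds a rank-two cross term, controlled through \eqref{cc:eq-wedge-trace-bound} by products of first-level increments.
\item The quasi-norm is replaced by the $\vartheta$-subadditive equivalent $\|\cdot\|_{1,\infty,*}$ of \Cref{cc:thm-critical-split-completion}, so that dyadic telescoping sums remain geometric.
\end{itemize}
Taking $q$ large relative to $(1/2-\beta)^{-1}$ makes both levels $\beta$- and $2\beta$-Hölder on dyadics, and continuity extends this to all $s<t$. The restriction $\beta>1/3$ is only needed for the rough-path interpretation; the estimates themselves hold for every $\beta<1/2$.

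\emph{Step 3: moments of $C_\beta$.} Collecting the chaining bounds gives $C_\beta$ as a sum of weighted $\ell^q$ sums of dyadic increments. Their $L^p$ norms are bounded by Step 1, giving the factor $\Lambda^{1/2}$ from level one and $\Lambda$ from level two. The cross terms contribute $(\Lambda^{1/2})^2=\Lambda$. Constants depend only on $p,\beta,T$ and the universal Aoki constant, never on $\dim H$.

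The main obstacle is Step 2 in the quasi-Banach setting. A Garsia--Rodemich--Rumsey inequality is unavailable there, and chaining must use the $\vartheta$-power triangle inequality. The point to check is that the geometric sums $\sum_n 2^{-n\vartheta(\cdots)}$ still converge. They do, since $\vartheta$ only rescales exponents, provided we apply moments to $\|\cdot\|_*^\vartheta$ rather than use linearity of expectation.
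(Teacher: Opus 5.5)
Your proposal is correct and follows essentially the paper's route. The local moments come from BDG and from \Cref{cc:prop-universal-critical-area-bound} applied to the shifted martingale with $K=\Lambda|t-s|$, followed by dyadic chaining through Chen's relation. That chaining uses the Aoki--Rolewicz $\vartheta$-power inequality for the area sum, \eqref{cc:eq-wedge-trace-bound} for the wedge cross terms, and $q\ge 2p$ to control the squared level-one constant in $L^p$. The only technical difference is how arbitrary $s<t$ are reached. The paper uses maximal increments $\sup_{u\in I}$ over dyadic intervals, so no extension step is needed. Your passage from dyadic endpoints to all $s<t$ implicitly uses that $\|\cdot\|_{1,\infty}$ is lower semicontinuous under $\mathcal S_2$-convergence, since each $s_N$ is operator-norm continuous; this holds and suffices.
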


\begin{proof}
The first estimate is Hilbert-space BDG\@.  The shifted martingale
\(u\mapsto M_{s+u}-M_s\) has scalar bracket at most
\(\Lambda(t-s)\), so
\eqref{cc:eq-critical-local-area} follows from
\eqref{cc:eq-universal-weak-area-bound}.

Fix \(p<\infty\) and \(\beta<1/2\).  Choose
\(q\geq2p\) so large that
\[
 q(1/2-\beta)>1,
 \qquad q(1-2\beta)>1.
\]
For adjacent dyadic intervals \(I\subset[0,T]\), put
\[
 D_X:=\sup_I |I|^{-\beta}\sup_{u\in I}\|M_{I^-,u}\|,
 \qquad
 D_A:=\sup_I |I|^{-2\beta}\sup_{u\in I}
       \|A_{I^-,u}\|_{1,\infty}.
\]
Using \((\sup_jY_j)^q\leq\sum_jY_j^q\), followed by
\eqref{cc:eq-critical-local-first}--\eqref{cc:eq-critical-local-area},
and summing over the \(2^k\) intervals at level \(k\), gives
\[
 \|D_X\|_{L^q}\leq C_{q,\beta,T}\Lambda^{1/2},
 \qquad
 \|D_A\|_{L^q}\leq C_{q,\beta,T}\Lambda.
\]
The canonical dyadic decomposition of an arbitrary interval uses at most
two intervals at each scale.  Additivity of the first level and Chen's
formula
\[
 A_{s,t}=\sum_\ell A_{I_\ell}
 +\frac12\sum_{\ell<j}M_{I_\ell}\wedge_{\mathrm{op}}M_{I_j}
\]
therefore imply
\[
 \sup_{s<t}\frac{\|M_{s,t}\|}{|t-s|^\beta}
 \lesssim_\beta D_X,
 \qquad
 \sup_{s<t}\frac{\|A_{s,t}\|_{1,\infty}}{|t-s|^{2\beta}}
 \lesssim_\beta D_A+D_X^2.
\]
The first sum uses the Aoki--Rolewicz power inequality from
\Cref{cc:thm-critical-split-completion}; the wedge sum is trace class by
\eqref{cc:eq-wedge-trace-bound}.  Taking \(L^p\)-norms and using
\(q\ge2p\) proves \eqref{cc:eq-critical-holder-moments}, hence also
\eqref{cc:eq-critical-local-holder}.
\end{proof}

\begin{theorem}[Critical stability and projection convergence]
\label{cc:thm-critical-projection-stability}
Let \(X,Z\) be continuous square-integrable \(H\)-valued martingales with
\(X_0=Z_0=0\), and assume, almost surely,
\[
 \operatorname{Tr}\langle\!\langle X\rangle\!\rangle_T\leq E,
 \qquad
 \operatorname{Tr}\langle\!\langle Z\rangle\!\rangle_T\leq\delta^2.
\]
For every finite \(r>0\),
\begin{align}
 \left\|\sup_{t\leq T}\|Z_t\|_H\right\|_{L^r}
 &\leq C_r\delta,
 \label{cc:eq-critical-stability-first}\\
 \left\|\sup_{t\leq T}
 \|A_t^{X+Z}-A_t^X\|_{1,\infty}\right\|_{L^r}
 &\leq C_r(\sqrt E\,\delta+\delta^2),
 \label{cc:eq-critical-stability-area}\\
 \sup_{t\leq T}
 \|\langle\!\langle X+Z\rangle\!\rangle_t
     -\langle\!\langle X\rangle\!\rangle_t\|_1
 &\leq2\sqrt E\,\delta+\delta^2.
 \label{cc:eq-critical-stability-bracket}
\end{align}
The constants are independent of the Hilbert-space dimension.

If \(P_n\to I\) strongly are increasing finite-rank orthogonal
projections and \(M\) is any continuous square-integrable Hilbert
martingale with \(M_0=0\), then its split anchors
\[
 \mathbf M_t^{(n)}
 :=\bigl(P_nM_t,A_t^{P_nM},
        \langle\!\langle P_nM\rangle\!\rangle_t\bigr)
\]
converge to a continuous
\(\mathscr G^+_{\mathrm{crit}}(H)\)-valued anchor \(\mathbf M\), uniformly
on compact time intervals in probability for \(d_{\mathrm{crit}}\).
Moreover, on one event of probability one,
\begin{equation}
 A_t^M\in\mathcal S^0_{1,\infty}(H)_{\mathrm{skew}}
 \qquad\text{for every }t\leq T.
 \label{cc:eq-individual-little-weak}
\end{equation}
Thus every individual continuous martingale realization belongs to the
little weak core; this is a samplewise assertion and does not give a
dimension-uniform decay modulus.
\end{theorem}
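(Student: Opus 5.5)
The first estimate \eqref{cc:eq-critical-stability-first} is Hilbert-space BDG applied to \(Z\), since its scalar bracket is at most \(\delta^2\).

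For the bracket estimate \eqref{cc:eq-critical-stability-bracket}, I expand the operator bracket bilinearly:
\[
 \langle\!\langle X+Z\rangle\!\rangle-\langle\!\langle X\rangle\!\rangle
 =\langle\!\langle X,Z\rangle\!\rangle+\langle\!\langle Z,X\rangle\!\rangle+\langle\!\langle Z\rangle\!\rangle .
\]
The cross term is controlled by the Kunita--Watanabe inequality in trace norm. Writing \(\mathrm d\langle\!\langle X\rangle\!\rangle=r^X\,\mathrm dV^X\) and similarly for \(Z\), the cross density is a nuclear operator whose trace norm is at most the geometric mean of the two scalar densities. Cauchy--Schwarz in time then gives \(\sqrt{E}\,\delta\) for each of the two cross terms, and the last term is at most \(\delta^2\). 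This bound holds almost surely and deterministically, with no expectation needed.

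For the area estimate \eqref{cc:eq-critical-stability-area}, I use bilinearity of the chronological integral:
\[
 A^{X+Z}-A^X=A^Z+\operatorname{Alt}_2\!\Bigl(\int X\otimes\mathrm dZ+\int Z\otimes\mathrm dX\Bigr).
\]
The term \(A^Z\) is handled by \Cref{cc:prop-universal-critical-area-bound} with \(K=\delta^2\). The mixed terms are the real obstacle, because they are not areas of a single martingale. I plan to get them by polarization. Apply \Cref{cc:prop-universal-critical-area-bound} to the martingale \(sX+s^{-1}Z\), whose bracket trace is at most \(2(s^2E+s^{-2}\delta^2)\). This martingale has area
\[
 s^2A^X+s^{-2}A^Z+\mathrm{Cross},
\]
so subtracting \(s^2A^X\) and \(s^{-2}A^Z\) isolates the cross term. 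The quasi-triangle inequality of \(\|\cdot\|_{1,\infty}\) then gives a bound of order \(s^2E+s^{-2}\delta^2+s^2E+s^{-2}\delta^2\). Choosing \(s^2=\delta/\sqrt E\) yields \(C_r\sqrt E\,\delta\). The \(L^r\) norms of suprema combine through the quasi-norm constant, with no Banach-space property used. The degenerate cases \(E=0\) or \(\delta=0\) are trivial.

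For projection convergence, set \(X=P_nM\) and \(Z=P_mM-P_nM\) after localizing \(V_T\le K\). Then \(\delta^2_{n,m}=\operatorname{Tr}\langle\!\langle (P_m-P_n)M\rangle\!\rangle_T\), which tends to \(0\) almost surely and boundedly, by trace-class monotone convergence of \(P_nR_TP_n\to R_T\). The three estimates, applied in \(L^r\) with dominated convergence for \(\delta\), then show that the anchors are Cauchy in ucp for \(d_{\mathrm{crit}}\). Each \(A^{P_nM}\) is finite rank, hence lies in the little weak ideal. That ideal is closed and complete by \Cref{cc:thm-critical-split-completion}, so along an almost surely uniformly convergent subsequence the limit lies in \(\mathcal S^0_{1,\infty}\) for all \(t\) simultaneously. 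This limit coincides with \(A^M\) because \(\mathcal S_2\)-convergence holds by Itô isometry and both topologies dominate weak operator convergence. Positivity of the bracket coordinate passes to the limit because the positive cone is closed. Continuity in time is inherited from the uniform limit of continuous anchors. Finally, \(\mathbf M\) takes values in \(\mathscr G^+_{\mathrm{crit}}\), and localization is removed by a standard stopping argument.
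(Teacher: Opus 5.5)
Your three estimates are sound and essentially follow the paper. The first level is BDG. For the area, your three-term polarization $A^{sX+s^{-1}Z}-s^2A^X-s^{-2}A^Z$ with $s^2=\delta/\sqrt E$ is equivalent to the paper's symmetric quotient $(A^{X+\lambda Z}-A^{X-\lambda Z})/(2\lambda)$ with $\lambda=\sqrt E/\delta$. For the bracket, Kunita--Watanabe at the density level replaces the paper's Douglas factorization of the terminal $2\times2$ bracket matrix. Running the projection step as a Cauchy argument in $(P_m-P_n)M$, rather than comparing $P_nM$ directly with $M$, is also fine.

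The gap is in the projection step, where you apply the estimates to the random tail energy $\delta_{n,m}^2=\operatorname{Tr}\langle\!\langle(P_m-P_n)M\rangle\!\rangle_T$ ``with dominated convergence for $\delta$''. The first-level BDG bound and the pathwise bracket bound tolerate a random $\delta$. The area bound \eqref{cc:eq-critical-stability-area} does not, for two reasons:
\begin{enumerate}[label=\textup{(\alph*)},leftmargin=2.5em]
\item \Cref{cc:prop-universal-critical-area-bound} needs $V_T\le K$ with $K$ deterministic, because it comes from an exponential supermartingale with a deterministic clock bound.
\item Your polarization needs $s$ to be deterministic, or at least $\mathcal F_0$-measurable, for $sX+s^{-1}Z$ to remain a martingale. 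But $\delta_{n,m}$ is only $\mathcal F_T$-measurable, so you can neither choose $s$ from it nor condition on it.
\end{enumerate}
Localizing $V_T\le K$ only gives $\delta_{n,m}\le\sqrt K$, which is useless. No random-$\delta$ version of \eqref{cc:eq-critical-stability-area} has been established.

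The missing idea is the paper's second stopping time. With $R_t=\langle\!\langle M\rangle\!\rangle_t$, stop also at $\sigma_{n,\delta}=\inf\{t:\operatorname{Tr}((I-P_n)R_t(I-P_n))\ge\delta^2\}\wedge T$.
\begin{itemize}
\item The scalar bracket is continuous, so there is no overshoot.
\item Since $\operatorname{Tr}((P_m-P_n)R_t(P_m-P_n))\le\operatorname{Tr}((I-P_n)R_t(I-P_n))$, every stopped tail martingale with $m>n$ satisfies the deterministic bound $\delta^2$, and the estimates apply.
\item Moreover $\mathbb P(\sigma_{n,\delta}<T)\to0$, because $\operatorname{Tr}((I-P_n)R_T(I-P_n))\to0$ almost surely.
\end{itemize}
Letting $n\to\infty$, then $\delta\downarrow0$, then $K\uparrow\infty$ gives the ucp Cauchy property. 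After that, your closure argument in $\mathcal S^0_{1,\infty}$ and the $\mathcal S_2$ identification with $A^M$ go through unchanged.
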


\begin{proof}
Equation \eqref{cc:eq-critical-stability-first} is BDG\@.  Write
\[
 A^{X+Z}-A^X=\mathcal A(X,Z)+A^Z,
 \qquad
 \mathcal A(X,Z)
 =\frac{A^{X+\lambda Z}-A^{X-\lambda Z}}{2\lambda}.
\]
Positivity of the joint operator bracket and Schatten Cauchy--Schwarz give
\[
 \operatorname{Tr}\langle\!\langle X\pm\lambda Z
 \rangle\!\rangle_T
 \leq(\sqrt E+\lambda\delta)^2.
\]
Apply \eqref{cc:eq-universal-weak-area-bound} to \(X\pm\lambda Z\), use
the weak-ideal quasi-triangle inequality, and divide by \(2\lambda\).
The resulting bound is
\[
 C_r(E/\lambda+\sqrt E\,\delta+\lambda\delta^2).
\]
Taking \(\lambda=\sqrt E/\delta\), with the cases \(E=0\) or
\(\delta=0\) obtained by a limit, bounds the mixed term by
\(C_r\sqrt E\,\delta\).  A further application of
\eqref{cc:eq-universal-weak-area-bound} to \(Z\) bounds \(A^Z\) by
\(C_r\delta^2\), proving \eqref{cc:eq-critical-stability-area}.

Put \(R^X=\langle\!\langle X\rangle\!\rangle\),
\(R^Z=\langle\!\langle Z\rangle\!\rangle\), and let \(C^{X,Z}\) be the
cross bracket.  Positivity of the \(2\times2\)
joint bracket matrix
\[
 \begin{pmatrix}
  R_t^X&C_t^{X,Z}\\
  (C_t^{X,Z})^*&R_t^Z
 \end{pmatrix}\succeq0
\]
and the Douglas factorization yields
\[
 C_t^{X,Z}=(R_t^X)^{1/2}D_t(R_t^Z)^{1/2},
 \qquad\|D_t\|\leq1.
\]
Schatten H\"older therefore gives
\[
 \|C_t^{X,Z}\|_1
 \leq
 \bigl(\operatorname{Tr}R_t^X\bigr)^{1/2}
 \bigl(\operatorname{Tr}R_t^Z\bigr)^{1/2}
 \leq\sqrt E\,\delta.
\]
Expanding the bracket of \(X+Z\) proves
\eqref{cc:eq-critical-stability-bracket}.

For the projection statement take \(X=P_nM\) and
\(Z=(I-P_n)M\).  The tail energy
\[
 \delta_n^2
 =\operatorname{Tr}\bigl((I-P_n)
 \langle\!\langle M\rangle\!\rangle_T(I-P_n)\bigr)
\]
tends to zero almost surely by trace-class monotone approximation.  Stop
at the common stopping time
\[
 \begin{aligned}
 \theta_{n,E,\delta}&:=\tau_E\wedge\sigma_{n,\delta},\\
 \tau_E&:=\inf\{t:\operatorname{Tr}R_t\geq E\}\wedge T,\\
 \sigma_{n,\delta}&:=
 \inf\{t:\operatorname{Tr}R_t^{(n)}\geq\delta^2\}\wedge T.
 \end{aligned}
\]
where
\(R_t=\langle\!\langle M\rangle\!\rangle_t\) and
\(R_t^{(n)}=(I-P_n)R_t(I-P_n)\).
These scalar bracket paths are continuous, so there is no overshoot and
the three preceding estimates apply to the stopped martingales.  For
fixed \(\delta>0\),
\[
 \mathbb P(\sigma_{n,\delta}<T)\longrightarrow0,
 \qquad
 \mathbb P(\tau_E<T)\longrightarrow0\quad(E\to\infty),
\]
because \(\operatorname{Tr}R_T^{(n)}\to0\) almost surely and
\(\operatorname{Tr}R_T<\infty\) almost surely.  Hence, in the order
\(n\to\infty\), \(\delta\downarrow0\), and \(E\uparrow\infty\), the
stopped estimates give uniform convergence in probability in all three
coordinates.

Choose a subsequence converging almost surely and uniformly for the
complete metric \eqref{cc:eq-critical-Aoki-metric}.  Its area coordinates
are finite rank.  Since
\(\mathcal S^0_{1,\infty}\) is their weak-quasi-norm closure, the limit
satisfies \eqref{cc:eq-individual-little-weak}; uniform convergence and
the completeness of the metric give a continuous path.  Standard
Hilbert-space It\^o stability identifies this limit with the original
\(H^{\otimes2}\)-valued stochastic area: indeed
\(\mathcal S_{1,\infty}\hookrightarrow\mathcal S_2\) continuously, since
\[
 \sum_{k\geq1}s_k(A)^2
 \leq\left(\sum_{k\geq1}k^{-2}\right)\|A\|_{1,\infty}^2.
\]
Uniqueness in the Hilbert tensor space, and hence uniqueness in
probability, removes dependence on the chosen subsequence.
\end{proof}

\begin{lemma}[Finite Brownian blocks]
\label{cc:lem-critical-brownian-block}
Let
\[
 \mathcal L^{(d)}
 :=\operatorname{Alt}_2\int_0^1B_t\otimes\mathop{}\!\mathrm dB_t
\]
for standard Brownian motion in \(\mathbb R^d\), with the same
half-commutator normalization as in
\eqref{cc:eq-critical-martingale-area}.  There are numerical constants
\(a,\eta,c_0>0\) such that
\begin{align}
 \mathbb P\left\{s_{\lfloor\eta d\rfloor}
       (\mathcal L^{(d)})\geq a\right\}&\longrightarrow1,
 \label{cc:eq-critical-Brownian-bulk}\\
 \mathbb P\left\{s_k(\mathcal L^{(d)})
       \geq c_0\frac d k,\ 1\leq k\leq m\right\}
 &\geq1-\varepsilon
 \label{cc:eq-critical-Brownian-edge}
\end{align}
for every fixed \(m\) and \(\varepsilon>0\), once the even dimension \(d\)
is sufficiently large.
\end{lemma}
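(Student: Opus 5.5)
Throughout, I would work with the matrix representation. Up to a sign convention, $\mathcal L^{(d)}$ is the $d\times d$ skew matrix of L\'evy areas
\[
 \mathcal L_{ij}=\tfrac12\int_0^1\bigl(B^i\,dB^j-B^j\,dB^i\bigr).
\]
Its singular values occur in pairs. I would not prove either spectral statement by hand. The plan is to use the known finite-dimensional Brownian-area spectrum of \cite{DelaCruzOberhauser26}, combined with an explicit Gaussian-chaos representation.

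\textbf{Karhunen--Lo\`eve reduction.} Expand
\[
 B_t=\sum_k \xi_k e_k(t)
\]
with independent $N(0,I_d)$ vectors $\xi_k$ and a trigonometric basis. This gives
\[
 \mathcal L=\sum_{k,l} c_{kl}\,\xi_k\wedge_{\rm op}\xi_l ,
\]
where $(c_{kl})$ is a fixed skew kernel whose operator has singular values of order $1/k$. The dominant contribution is the sum
\[
 \sum_k \frac{1}{2\pi k}\bigl(\xi_k\wedge_{\rm op}\eta_k\bigr),
\]
with $\xi_k,\eta_k$ the sine and cosine coefficients. It is a sum of independent rank-two skew blocks, the $k$-th of size of order $d/k$ in operator norm, since $\|\xi_k\|\,\|\eta_k\|\approx d$ and the two vectors are nearly orthogonal.

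\textbf{Edge bound \eqref{cc:eq-critical-Brownian-edge}.} For fixed $m$, the vectors $\xi_1,\eta_1,\dots,\xi_m,\eta_m$ are almost orthonormal after dividing by $\sqrt d$, by the law of large numbers in dimension $d$. I would compress $\mathcal L$ onto their $2m$-dimensional span $E$. The main part restricted to $E$ is then, up to $o(d)$ error in probability, block diagonal with $k$-th block $\frac{d}{2\pi k}J$. The cross terms with $k>m$, projected to $E$, have norm $O(\sqrt d\log$-type$)=o(d)$ with high probability, because projecting an independent Gaussian onto a fixed finite-dimensional subspace gives an $O(1)$ vector. Singular values do not increase under compression, so $s_{2k-1},s_{2k}\ge d/(4\pi k)$ for $k\le m$ with probability at least $1-\varepsilon$. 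This yields $c_0$ independent of $m$.

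\textbf{Bulk bound \eqref{cc:eq-critical-Brownian-bulk}.} I would use $\mathbb E\|\mathcal L\|_2^2\asymp d^2$ and $\|\mathcal L\|\le C d$ with high probability. The second bound follows from the operator-norm bound on sums of independent blocks, or from a Gaussian chaos tail bound via Hanson--Wright/Latala, which gives $\|\mathcal L\|\lesssim d$. If that is the right order, the second-moment argument alone only gives $\gtrsim d^2/(Cd)^2$ many singular values of size $\gtrsim 1$, which is a constant count and is too weak. Instead I would use the first moment of the trace norm: take the block $k=1$ truncated to rank one, $\xi\wedge\eta$, and note that
\[
 \frac1d\sum_{k\le K}\text{blocks}
\]
alone does not give rank $\eta d$ either. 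The genuine route is therefore the limiting empirical spectral distribution of $\mathcal L/\sqrt d$ established in \cite{DelaCruzOberhauser26}. This is a Wishart-like free-probability law with positive mass away from $0$. Weak convergence of the empirical measure then gives a fraction $\eta$ of singular values above a threshold $a$, with probability tending to one, by concentration of linear statistics.

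\textbf{Expected obstacle.} The hard step is the bulk statement, specifically the normalization. If the limit law is for $\mathcal L/\sqrt d$, then $s_{\eta d}\asymp\sqrt d\ge a$, which is even stronger than needed. I would cite that limit law and its concentration rather than derive it. The edge statement is elementary once the compression is organized.
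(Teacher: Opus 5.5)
The gap is in the bulk bound \eqref{cc:eq-critical-Brownian-bulk}. There your route is the paper's route: cite the empirical singular-value law of \cite{DelaCruzOberhauser26}, then use weak convergence at a continuity point. But you have the wrong normalization and the wrong limit law, and the conclusion you draw from them is false.

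The input the paper uses is the following. The empirical measure of the \(d/2\) distinct positive singular values of \(\mathcal L^{(d)}\) itself, with no rescaling, converges almost surely to the heavy-tailed law \(\lvert\operatorname{Cauchy}(0,1/2)\rvert\). There is no compactly supported ``Wishart-like'' limit for \(\mathcal L^{(d)}/\sqrt d\), and \(s_{\lfloor\eta d\rfloor}(\mathcal L^{(d)})\asymp\sqrt d\) cannot hold. A quick consistency check shows why:
\begin{itemize}
\item One has \(\mathbb E\|\mathcal L^{(d)}\|_2^2=d(d-1)/4\).
\item The edge values \(\sigma^{(d)}_{(j)}\approx d/((2j-1)\pi)\), each of multiplicity two, already carry \(2\sum_j d^2/((2j-1)\pi)^2=d^2/4\).
\item So only \(o(d^2)\) Hilbert--Schmidt mass is left for the bulk, whereas \(\eta d\) singular values of size \(\gtrsim\sqrt d\) would need \(\gtrsim\eta d^2\).
\end{itemize}
The spectrum is in fact harmonic on all scales. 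The Cauchy tail \(\approx 1/(\pi x)\) matches \(s_k\approx d/(\pi k)\), so \(s_{\lfloor\eta d\rfloor}\) is of order one (about \(1/(\pi\eta)\) for small \(\eta\)). This is exactly why the lemma's threshold \(a\) is a \(d\)-independent constant.

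With the correct statement your argument closes as in the paper. Let \(\nu\) be the half-Cauchy law. Every \(a>0\) is a continuity point with \(\nu(a,\infty)>0\). By twofold multiplicity, \(\#\{k:s_k(\mathcal L^{(d)})>a\}=d\,\nu(a,\infty)(1+o(1))\) almost surely, so any \(\eta<\nu(a,\infty)\) works.

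For the edge bound \eqref{cc:eq-critical-Brownian-edge} you take a genuinely different and more elementary route. The paper simply cites the fixed-edge limits \(\sigma^{(d)}_{(j)}/n\to 2/((2j-1)\pi)\), \(d=2n\), from \cite{DelaCruzOberhauser26}. You instead compress onto the span \(E\) of the first \(m\) Fourier coefficient pairs. This gives the non-sharp but sufficient bound \(s_{2k-1},s_{2k}\ge(1-o(1))\,d/(2\pi k)\), hence the same \(c_0=1/(4\pi)\). It works, but two points must be made explicit.

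First, with sines and cosines you must split off the endpoint. Writing \(B_t=tB_1+\beta_t\), one has, up to sign conventions, \(\mathcal L^{(d)}=\sum_{k\ge1}\frac{1}{2\pi k}\xi_k\wedge_{\rm op}\eta_k+\bar\beta\wedge_{\rm op}B_1\), where \(\bar\beta=\int_0^1\beta_t\,dt=-\sum_k\xi_k/(\sqrt2\pi k)\). This rank-two term has norm about \(d/\sqrt{12}\) and couples to every \(\xi_k\). So your block sum does not describe the true edge: the top value is \(d/\pi\), not \(d/(2\pi)\). The term is harmless for a lower bound only because \(B_1\) is independent of \(E\). Hence \(\|P_EB_1\|=O_m(1)\), and the compressed term \((P_E\bar\beta)\wedge_{\rm op}(P_EB_1)\) is \(O(\sqrt d)\).

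Second, the compressed tail \(\sum_{k>m}\frac{1}{2\pi k}P_E\xi_k\wedge_{\rm op}P_E\eta_k\) is \(O_m(1)\) only by orthogonality of independent centred summands in \(L^2\). Summing norms instead gives a divergent harmonic series, so your ``\(O(\sqrt d\log)\)'' estimate needs this replacement.
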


\begin{proof}
The global singular-value law and fixed-edge asymptotics for the Brownian
L\'evy-area matrix are proved in
\cite[Sections~5.1--5.2, especially (5.1) and
(5.14)]{DelaCruzOberhauser26}.  Their convention is
\[
 \operatorname{Anti}\!\left(\int_0^1B_t\otimes\mathop{}\!\mathrm dB_t\right)_{ij}
 =\frac12\left(\int_0^1B_t^i\,\mathop{}\!\mathrm dB_t^j
              -\int_0^1B_t^j\,\mathop{}\!\mathrm dB_t^i\right),
\]
which is exactly the normalization used here.  If
\(\sigma_{(1)}^{(d)}\geq\sigma_{(2)}^{(d)}\geq\cdots\) are the distinct
positive singular values and \(d=2n\), skew-adjointness gives
\[
 s_{2j-1}(\mathcal L^{(d)})
 =s_{2j}(\mathcal L^{(d)})=\sigma_{(j)}^{(d)}.
\]
The empirical measure of the distinct positive singular values converges
almost surely to \(\lvert\operatorname{Cauchy}(0,1/2)\rvert\).  Choose a
continuity point \(a>0\) with positive upper-tail mass and then choose
\(\eta\) smaller than half that mass.  The twofold multiplicity gives
\eqref{cc:eq-critical-Brownian-bulk}.  For each fixed \(j\), (5.14) gives
\[
 \frac{\sigma_{(j)}^{(d)}}{n}
 \longrightarrow\frac{2}{(2j-1)\pi}
 \qquad\text{in probability}.
\]
Simultaneous convergence for \(1\leq j\leq\lceil m/2\rceil\), followed by
the twofold multiplicity, proves
\eqref{cc:eq-critical-Brownian-edge}; for example one may take
\(c_0=1/(4\pi)\) after increasing the dimension threshold.
\end{proof}

\begin{proposition}[Orthogonal block saturation]
\label{cc:prop-critical-block-saturation}
Let \(H=\bigoplus_{j\geq1}E_j\), where
\(d_j:=\dim E_j\uparrow\infty\).  Let \(W^Q\) be a \(Q\)-Wiener process
with
\[
 Q=\bigoplus_{j\geq1}q_jI_{E_j},
 \qquad q_j>0,\qquad\sum_jq_jd_j<\infty.
\]
If \(A\) is its area on \([0,1]\), then the compressions are independent
and
\begin{equation}
 P_{E_j}AP_{E_j}\stackrel{\mathrm d}=q_j\mathcal L^{(d_j)}.
 \label{cc:eq-critical-block-law}
\end{equation}
This distributional statement holds for every choice of the blocks and
weights above.

Conversely, given any prescribed \(m_j\uparrow\infty\), the even
dimensions \(d_j\) can be chosen recursively so that the failure
probabilities in \eqref{cc:eq-critical-Brownian-bulk} and
\eqref{cc:eq-critical-Brownian-edge} are summable.  For that choice of
dimensions, and for any positive weights satisfying
\(\sum_jq_jd_j<\infty\), the resulting almost-sure event has, eventually,
\begin{align}
 s_n(A)&\geq aq_j,\qquad
 1\leq n\leq\lfloor\eta d_j\rfloor,
 \label{cc:eq-critical-block-bulk}\\
 s_k(P_{E_j}AP_{E_j})&\geq c_0q_jd_j/k,\qquad1\leq k\leq m_j.
 \label{cc:eq-critical-block-edge}
\end{align}
\end{proposition}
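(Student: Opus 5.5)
The plan has three steps: establish the block law \eqref{cc:eq-critical-block-law} by an independence-and-scaling argument, choose the dimensions recursively so that Borel--Cantelli applies, and transfer compressed lower bounds to $A$ via contractions.

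\emph{Block law.} Since $Q$ is diagonal in the decomposition $H=\bigoplus_jE_j$, the projections $W^{(j)}:=P_{E_j}W^Q$ are independent. Each has covariance $q_jI_{E_j}$, so $W^{(j)}\stackrel{\mathrm d}=q_j^{1/2}B^{(d_j)}$ for a standard Brownian motion in $\mathbb R^{d_j}$ after choosing an orthonormal basis of $E_j$. The Hilbert tensor-valued It\^o integral is natural under bounded maps (as in \Cref{cc:thm-localized-exterior-lift}), so
\[
 P_{E_j}AP_{E_j}=\operatorname{Alt}_2\int_0^1W^{(j)}_t\otimes\mathrm dW^{(j)}_t.
\]
Quadratic homogeneity of the area then gives $q_j\mathcal L^{(d_j)}$ in law, and independence of the compressions follows because each is a measurable functional of the corresponding independent $W^{(j)}$. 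The hypothesis $\sum_jq_jd_j=\operatorname{Tr}Q<\infty$ is needed only so that $A$ is defined. None of this uses the particular choice of $d_j$.

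\emph{Recursive choice of dimensions.} Let $\varepsilon_j:=2^{-j}$. Using \Cref{cc:lem-critical-brownian-block} with $m=m_j$ and $\varepsilon=\varepsilon_j/2$, choose an even $d_j>d_{j-1}$ so large that both failure probabilities in \eqref{cc:eq-critical-Brownian-bulk} and \eqref{cc:eq-critical-Brownian-edge} are at most $\varepsilon_j/2$. The bulk statement is a limit in probability, so a threshold dimension exists; the edge statement holds for the fixed $m_j$ once $d$ is large. These events depend only on the law of $\mathcal L^{(d_j)}$ and are scale invariant once multiplied by $q_j$. Hence, by the block law, the corresponding events for $P_{E_j}AP_{E_j}/q_j$ have the same failure probabilities, whatever the weights are. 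Since $\sum_j\varepsilon_j<\infty$, the first Borel--Cantelli lemma (independence is not even needed) shows that almost surely all but finitely many blocks satisfy both bounds.

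\emph{Transfer to $A$.} Singular values do not increase under the two-sided contraction $A\mapsto P_{E_j}AP_{E_j}$, so $s_n(A)\ge s_n(P_{E_j}AP_{E_j})\ge aq_j$ for $n\le\lfloor\eta d_j\rfloor$; this is \eqref{cc:eq-critical-block-bulk}. The estimate \eqref{cc:eq-critical-block-edge} concerns the compression itself and needs no transfer.

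The only delicate point is making sure the scalings match. The Brownian blocks must be defined with the same half-commutator normalization used in \eqref{cc:eq-critical-martingale-area}, and $q_j$ must enter linearly, since the area is quadratic in $q_j^{1/2}$. I expect this to be routine.
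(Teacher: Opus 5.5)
Your proposal is correct and follows essentially the same route as the paper. The paper also obtains the block law from independence of the orthogonal block Brownian motions and the $q_j^{1/2}$ spatial scaling. It likewise chooses $d_j$ so that each failure probability is at most $2^{-j-1}$, applies Borel--Cantelli, and deduces the bulk bound on $A$ from $s_n(P_{E_j}AP_{E_j})\le s_n(A)$ together with monotonicity of $s_n$ in $n$.
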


\begin{proof}
The Brownian motions in the mutually orthogonal blocks are independent,
and spatial scaling by \(q_j^{1/2}\) scales area by \(q_j\), proving
\eqref{cc:eq-critical-block-law}.  Increase \(d_j\) until each failure
probability is at most \(2^{-j-1}\), and apply Borel--Cantelli.  Compression
monotonicity \(s_n(P_{E_j}AP_{E_j})\leq s_n(A)\) gives
\eqref{cc:eq-critical-block-bulk}; the edge assertion is direct.
\end{proof}

For \(K,T>0\), let \(\mathfrak C_{\mathrm{cont}}(K,T)\) be the class of
continuous square-integrable \(H\)-martingales \(M\) with \(M_0=0\)
satisfying
\[
 \operatorname{Tr}\langle\!\langle M\rangle\!\rangle_T\leq K
 \qquad\text{almost surely},
\]
and define
\[
 \begin{aligned}
 p_{\mathrm{univ}}(\mathfrak C_{\mathrm{cont}}(K,T);H)
 :=\inf\{p>0:\;&A^M_{0,T}\in\mathcal S_{p,\infty}(H)\ \text{a.s.}\\
              &\text{for every }M\in\mathfrak C_{\mathrm{cont}}(K,T)\}.
 \end{aligned}
\]

\begin{theorem}[Universal critical endpoint and its sharp boundary]
\label{cc:thm-universal-critical-endpoint}
Suppose \(H\) is infinite dimensional.  Then, for every \(K,T>0\),
\begin{equation}
 p_{\mathrm{univ}}(\mathfrak C_{\mathrm{cont}}(K,T);H)=1.
 \label{cc:eq-critical-universal-exponent}
\end{equation}
The exponent-one upper bound is dimension free and is quantified by
\eqref{cc:eq-universal-weak-area-bound}.  It is sharp in all of the
following distinct senses.
\begin{enumerate}[label=\textup{(\roman*)},leftmargin=2.8em]
\item No \(\mathcal S_{p,\infty}\) with \(p<1\) contains all terminal
areas in \(\mathfrak C_{\mathrm{cont}}(K,T)\).
\item No finite critical Lorentz ideal \(\mathcal S_{1,\rho}\),
\(0<\rho<\infty\), is universal.
\item Given any deterministic positive sequence
\(\varepsilon_n\downarrow0\), there is one
martingale in \(\mathfrak C_{\mathrm{cont}}(K,T)\) such that, almost surely,
\begin{equation}
 A_{0,T}\in\mathcal S^0_{1,\infty}(H),\qquad
 \sup_n\frac{n s_n(A_{0,T})}{\varepsilon_n}=\infty,
 \qquad
 A_{0,T}\notin\bigcup_{0<\rho<\infty}\mathcal S_{1,\rho}(H).
 \label{cc:eq-critical-no-uniform-little-modulus}
\end{equation}
\end{enumerate}
Thus there is no conflict between
\eqref{cc:eq-individual-little-weak} and universal sharpness: each fixed
martingale area has \(ns_n(A)\to0\), while dimension-varying Brownian
blocks make that convergence arbitrarily slow and destroy every fixed
finite Lorentz refinement.
\end{theorem}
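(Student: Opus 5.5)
The upper bound \(p_{\mathrm{univ}}\le1\) is already contained in \Cref{cc:prop-universal-critical-area-bound}. For \(M\in\mathfrak C_{\mathrm{cont}}(K,T)\), \eqref{cc:eq-universal-weak-area-bound} makes \(\|A_{0,T}\|_{1,\infty}\) finite almost surely. By \Cref{cc:thm-critical-projection-stability}, \(A_{0,T}\) even lies in \(\mathcal S^0_{1,\infty}\). All three sharpness assertions I plan to derive from one construction: a \(Q\)-Wiener process on \(H=\bigoplus_jE_j\) as in \Cref{cc:prop-critical-block-saturation}, time-rescaled to \([0,T]\). Brownian scaling multiplies the area by \(T\), so I will work on \([0,1]\) and absorb \(T\) into the weights. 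The energy is \(\operatorname{Tr}\langle\!\langle M\rangle\!\rangle_T=T\sum_jq_jd_j\). This is deterministic, so membership in \(\mathfrak C_{\mathrm{cont}}(K,T)\) only requires choosing \(\sum_jq_jd_j\le K/T\). Since \(p_{\mathrm{univ}}\) is an infimum, \eqref{cc:eq-critical-universal-exponent} then follows from (i).

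For (i) and (ii) I will use the bulk estimate \eqref{cc:eq-critical-block-bulk}: eventually, almost surely, \(s_n(A)\ge aq_j\) for \(n\le\lfloor\eta d_j\rfloor\). At \(n_j=\lfloor\eta d_j\rfloor\) this gives
\[
 n_j^{1/p}s_{n_j}(A)\gtrsim (\eta d_j)^{1/p}q_j .
\]
I will choose \(q_j=c\,2^{-j}/d_j\), with \(c\) normalising the energy. Then \(d_jq_j\asymp2^{-j}\), and the quantity above is of order \(d_j^{1/p-1}2^{-j}\). Taking \(d_j\) growing superexponentially, for instance \(d_j\ge 2^{j^2}\) (the recursive choice in \Cref{cc:prop-critical-block-saturation} only ever requires \(d_j\) larger), makes this unbounded for every \(p<1\). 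So \(A\notin\mathcal S_{p,\infty}\) for all \(p<1\) at once, which proves (i).

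For (ii), the bulk block contributes to \(\|A\|_{1,\rho}^\rho\) at least
\[
 \sum_{n\le\eta d_j}\frac{(naq_j)^\rho}{n}\asymp(\eta d_jq_j)^\rho .
\]
This is only of order \(2^{-j\rho}\), hence summable, so the bulk alone is not enough. Instead I will use the edge estimate \eqref{cc:eq-critical-block-edge} together with compression monotonicity: \(s_k(A)\ge c_0q_jd_j/k\) for \(k\le m_j\), so \(ks_k(A)\ge c_0q_jd_j\) on \(1\le k\le m_j\). Summing \(1/k\) over this window gives a contribution of order \((q_jd_j)^\rho\log m_j\) to \(\|A\|_{1,\rho}^\rho\). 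I will set \(q_jd_j\asymp 1/j^2\) (which keeps the energy finite) and choose \(m_j\) so that \(\log m_j\ge j^{2\rho+1}\) for every rational \(\rho\), e.g.\ \(\log m_j\ge j^{j}\). This is allowed because the \(m_j\) are arbitrary in \Cref{cc:prop-critical-block-saturation}. The contributions then diverge for every \(\rho\). Since the Lorentz ideals are nested in \(\rho\), one countable family of exponents suffices, and \(A\notin\bigcup_\rho\mathcal S_{1,\rho}\).

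For (iii), given \(\varepsilon_n\downarrow0\), I will make the same choice \(q_jd_j\asymp j^{-2}\) but take the window \(m_j\) large enough that \(j^{-2}/\varepsilon_{m_j}\ge j\); this is possible because \(\varepsilon_n\to0\). Evaluating at \(n=m_j\), the edge bound gives \(m_js_{m_j}(A)/\varepsilon_{m_j}\ge c_0j^{-2}/\varepsilon_{m_j}\to\infty\). Membership in \(\mathcal S^0_{1,\infty}\) comes from \eqref{cc:eq-individual-little-weak}, and exclusion from the Lorentz ideals from (ii) with the same construction. Both requirements on \(m_j\) are just lower bounds, so they can be imposed simultaneously.

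The main obstacle is keeping a single almost-sure event valid for all the simultaneous demands, since \(d_j\) must be chosen after \(m_j\) to make the failure probabilities summable. I handle this by fixing \(m_j\) first, which fixes the \(d_j\), and choosing the weights \(q_j=\theta_j/d_j\) last, as \Cref{cc:prop-critical-block-saturation} permits for any positive weights. Compatibility of the edge bounds for different blocks inside the full \(A\) also needs care; compression monotonicity \(s_k(A)\ge s_k(P_{E_j}AP_{E_j})\) gives this.
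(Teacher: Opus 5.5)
Your proposal is correct and follows the paper's proof. The shared ingredients are a $Q$-Wiener process on orthogonal Brownian blocks with weights $q_j=\theta_j/d_j$ (windows $m_j$ chosen first, then $d_j$, then the weights), the edge window with compression monotonicity and a harmonic sum over $k\le m_j$ to exclude every finite Lorentz ideal, \eqref{cc:eq-individual-little-weak} for little-weak membership, and scaling to general $K,T$. The only deviations are minor: you beat $\varepsilon_n$ with the edge bound at $k=m_j$ rather than the bulk bound at $n=\lfloor\eta d_j\rfloor$, and you prove (i) directly from the bulk with superexponential $d_j$, which gives one witness for all $p<1$ at once, whereas the paper specializes (iii) to $\varepsilon_n=n^{1-1/p}$ separately for each $p$.
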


\begin{proof}
It is enough first to construct a witness on \([0,1]\) with terminal trace
energy one.  Fix \(\varepsilon_n\downarrow0\), choose positive \(c_j\)
with \(\sum_jc_j=1\), and put \(q_j=c_j/d_j\).  Recursively choose
\(m_j\uparrow\infty\) and then even \(d_j\geq2m_j\), increasing the latter
so that the bulk and edge failure probabilities are at most \(2^{-j}\),
the index intervals
\([\eta d_j/2,\eta d_j]\) are disjoint, and
\begin{equation}
 c_j(\log(1+m_j))^{1/j}\geq j^2,
 \qquad
 \varepsilon_{\lfloor\eta d_j\rfloor}\leq c_j/j.
 \label{cc:eq-critical-block-selection}
\end{equation}
This is possible because \(m_j\), and then \(d_j\), may be arbitrarily
large.  The covariance
\(Q=\bigoplus_jq_jI_{E_j}\) is positive trace class with trace one.

On the almost-sure saturation event of
\Cref{cc:prop-critical-block-saturation}, put
\(n_j=\lfloor\eta d_j\rfloor\).  The bulk bound gives
\[
 n_js_{n_j}(A)\geq\frac{a\eta}{2}q_jd_j
 =\frac{a\eta}{2}c_j
\]
for all sufficiently large \(j\), because
\(\lfloor\eta d_j\rfloor\geq\eta d_j/2\),
and the second condition in \eqref{cc:eq-critical-block-selection} gives
\[
 \frac{n_js_{n_j}(A)}{\varepsilon_{n_j}}
 \geq\frac{a\eta}{2}j\longrightarrow\infty.
\]
For every orthogonal projection \(P\), approximation numbers satisfy
\(s_k(PAP)\leq s_k(A)\).  Hence, for \(0<\rho<\infty\), the edge bound
gives term by term
\begin{align*}
 \|A\|_{1,\rho}^{\rho}
 &\geq\|P_{E_j}AP_{E_j}\|_{1,\rho}^{\rho}\\
 &\geq(c_0c_j)^{\rho}\sum_{k\leq m_j}\frac1k.
\end{align*}
Because \(0<c_j\leq1\), for \(j\geq\rho\) the first condition in
\eqref{cc:eq-critical-block-selection} implies
\[
 c_j^\rho\log(1+m_j)
 \geq c_j^j\log(1+m_j)\geq j^{2j}.
\]
The harmonic sum is comparable to \(\log(1+m_j)\), so the preceding
Lorentz lower bound diverges for every fixed \(\rho\).  All these
deterministic inequalities hold on the same Borel--Cantelli event; no
intersection of \(\rho\)-dependent probability-one events is being
taken.  Thus \(A\) lies simultaneously in no finite critical Lorentz
class.  On the other hand,
\Cref{cc:thm-critical-projection-stability} gives
\(A\in\mathcal S^0_{1,\infty}\) almost surely.  This proves
\eqref{cc:eq-critical-no-uniform-little-modulus}.

To rule out \(p<1\), use the same construction with
\(\varepsilon_n=n^{1-1/p}\downarrow0\).  Membership in
\(\mathcal S_{p,\infty}\) would make
\[
 \frac{ns_n(A)}{\varepsilon_n}=n^{1/p}s_n(A)
\]
bounded, a contradiction.  Hence the universal exponent is at least one;
\Cref{cc:prop-universal-critical-area-bound} gives the opposite
inequality.

Finally set \(\widetilde M_t:=\sqrt K\,M_{t/T}\) on \([0,T]\).  Its
terminal trace energy is \(K\), its area is multiplied by \(K\), and none
of the membership or nonmembership statements changes.  This proves every
assertion for arbitrary \(K,T>0\).
\end{proof}

\begin{corollary}[Intrinsic degree-two endpoint]
\label{cc:cor-intrinsic-degree-two-endpoint}
Consider symmetric-ideal completions of the finite-rank split state with
the following four properties:
\begin{enumerate}[label=\textup{(\roman*)},leftmargin=2.8em]
\item finite-rank states are dense and every bounded pushforward
      \(L_\#\) is continuous;
\item the universal positive collision functional
      \(Q\mapsto\operatorname{Tr}Q\) is continuous;
\item the full exterior response is a faithful continuous area reader,
      and the scalar profile \(\mathfrak P_A\) is a continuous spectral
      reader for the singular-value topology;
\item the completion contains every continuous Hilbert-martingale state
      with uniformly bounded terminal trace energy.
\end{enumerate}
Then the collision coordinate must have trace-class strength.  Within the
weak and Lorentz scales, the universal area coordinate reaches exactly
the weak-trace frontier.  It cannot be replaced by any
\(\mathcal S_{p,\infty}\) with \(p<1\), by any finite
\(\mathcal S_{1,\rho}\), or by a little weak space with a prescribed
dimension-free decay modulus.  Requiring a separable finite-rank
completion therefore selects
\[
 H\times\mathcal S^0_{1,\infty}(H)_{\mathrm{skew}}
   \times\mathcal S_1(H)_{\mathrm{sa}}
\]
as the intrinsic degree-two split endpoint in this scale.
\end{corollary}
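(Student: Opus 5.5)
The corollary is an assembly of results already proved, and I would organize the proof around its four hypotheses. Each hypothesis either forces a lower bound on the strength of a coordinate or rules out a candidate topology.

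\emph{Collision coordinate.} Hypothesis (ii) says that $Q\mapsto\operatorname{Tr}Q$ is continuous for the symmetric quasi-norm $\|\cdot\|_{\mathcal E}$ on the collision coordinate. On the finite-rank positive core this gives $\operatorname{Tr}S\le C\|S\|_{\mathcal E}$. \Cref{cc:prop-critical-trace-obstruction} then gives $\|S\|_1\le C\|S\|_{\mathcal E}$ on positive finite-rank operators. To pass to self-adjoint ones, I would split $S=S_+-S_-$ into its positive and negative parts. Both parts are finite rank, and symmetry of the ideal gives $\|S_\pm\|_{\mathcal E}\lesssim\|S\|_{\mathcal E}$. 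By density (i), the completed collision coordinate then embeds continuously into $\mathcal S_1(H)_{\rm sa}$. Conversely, $\mathcal S_1$ itself satisfies (i), (ii) and (iv): pushforwards obey $\|LQL^*\|_1\le\|L\|^2\|Q\|_1$, and the bracket is trace class with trace at most $K$. So trace-class strength is forced and is also sufficient.

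\emph{Area coordinate, upper side.} Hypothesis (iv) asks for containment of all states in $\mathfrak C_{\rm cont}(K,T)$. \Cref{cc:prop-universal-critical-area-bound} gives dimension-free $\mathcal S_{1,\infty}$ control. \Cref{cc:thm-critical-projection-stability} gives samplewise membership in $\mathcal S^0_{1,\infty}$, together with convergence of the finite-rank split anchors for $d_{\rm crit}$. Hence $\mathscr G_{\rm crit}(H)$ contains every such state. By \Cref{cc:thm-critical-split-completion} it is the separable completion of the finite-rank core and carries continuous pushforwards, so (i) holds. For (iii), the identity $\mathfrak P_A=\mathfrak F_A$ from \Cref{cc:thm-multiplicative-exterior-reader} and \Cref{cc:prop:weak-detection} at $r=1$ give continuity of the scalar profile on $\mathcal S_{1,\infty}\hookrightarrow\mathcal S_2$. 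Faithfulness comes from the degree-two component $2\lambda^2\alpha_A$.

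\emph{Area coordinate, lower side (sharpness).} Suppose the area coordinate were $\mathcal S_{p,\infty}$ with $p<1$, or a finite $\mathcal S_{1,\rho}$, or a little weak space with a prescribed modulus $\varepsilon_n\downarrow0$. In each case, (iv) would require every terminal area of $\mathfrak C_{\rm cont}(K,T)$ to lie in that space almost surely. \Cref{cc:thm-universal-critical-endpoint}, parts (i)--(iii), supplies a single orthogonal Brownian-block martingale whose area violates each of these, so all three are excluded.

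\emph{Conclusion and main obstacle.} Within the weak/Lorentz scale, the remaining admissible choice is $\mathcal S_{1,\infty}$. Requiring the completion to be separable and finite-rank-dense, as in (i), excludes the nonseparable $\mathcal S_{1,\infty}$, since finite-rank operators are not dense there. This leaves its finite-rank closure $\mathcal S^0_{1,\infty}$, which \Cref{cc:thm-critical-projection-stability} shows still contains all samples. The main obstacle is making ``within the weak and Lorentz scales'' precise. I would interpret it as the area coordinate being one of $\mathcal S_{p,\infty}$, $\mathcal S^0_{p,\infty}$, $\mathcal S_{1,\rho}$, or a modulus-restricted little weak space. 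Then the selection is a finite case check against the witnesses above, with no general classification of symmetric ideals needed.
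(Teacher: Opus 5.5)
Your proposal is correct and follows the paper's proof essentially step for step: \Cref{cc:prop-critical-trace-obstruction} for the collision coordinate, \Cref{cc:prop-universal-critical-area-bound} and \Cref{cc:thm-critical-projection-stability} for the universal little-weak upper side, the Brownian-block obstructions of \Cref{cc:thm-universal-critical-endpoint} for sharpness, and \Cref{cc:thm-critical-split-completion} for the separable finite-rank completion. Your added details are sound: the splitting $S=S_+-S_-$ to pass from positive to self-adjoint collision states, and the check that $\mathscr G_{\rm crit}(H)$ itself satisfies (i)--(iv). One caveat is that ``the remaining admissible choice'' should be read as the minimal admissible choice, since $\mathcal S^0_{p,\infty}$ with $1<p<2$ also satisfies (i)--(iv).
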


\begin{proof}
The positive collision assertion is
\Cref{cc:prop-critical-trace-obstruction}.  The profile decoder
\Cref{cc:lem-response-profile-decoder} shows that the scalar exterior
profile detects precisely the weak Schatten singular-value scale at
degree two.  The universal upper estimate is
\Cref{cc:prop-universal-critical-area-bound}, while
\Cref{cc:thm-universal-critical-endpoint} gives all three sharp
obstructions to a smaller weak/Lorentz target.  Finally,
\Cref{cc:thm-critical-split-completion} identifies the separable
finite-rank completion and proves continuity of its structural
operations.  These arguments concern scalar Hilbert and singular-value
estimates; none assumes local convexity of the weak ideal.
\end{proof}

\subsection{The canonical degree-two quotient}
\label{cc:subsec-critical-causal-quotient}

The degree-two It\^o state is not an additional stochastic decoration of a
causal character.  It is the canonical degree-two quotient of the typed
quasi-shuffle character.  We first make this statement on the algebraic core
and then pass to the critical completion.

For \(x,y\in H\), identify \(x\otimes y\) with the rank-one operator
\(z\mapsto\langle y,z\rangle x\), and put
\[
 x\wedge_{\mathrm{op}}y:=x\otimes y-y\otimes x.
\]
Let \(\mathscr I^{(2)}_{\mathrm{fin}}(H)\) be the directed union, over all
finite-dimensional subspaces \(E\subset H\), of triples
\((x,\mathbb x,Q)\) satisfying
\[
 x\in E,\qquad
 \mathbb x\in E^{\otimes 2},\qquad
 Q\in\mathcal L(E)_{\mathrm{sa}},\qquad
 \mathbb x+\mathbb x^*=x\otimes x-Q.
\]
Here the adjoint flips the two tensor legs.  The last identity is the
degree-two quasi-shuffle relation.  Define
\begin{equation}
 (x,\mathbb x,Q)\diamond(y,\mathbb y,R)
 :=(x+y,\mathbb x+\mathbb y+x\otimes y,Q+R).
 \label{cc:eq-ito-two-product}
\end{equation}

\begin{theorem}[Critical typed target and causal quotient]
\label{cc:thm-critical-typed-target}
The product \eqref{cc:eq-ito-two-product} is associative and preserves
\(\mathscr I^{(2)}_{\mathrm{fin}}(H)\).  The map
\begin{equation}
 \Pi_2(x,\mathbb x,Q)
 :=\bigl(x,\operatorname{Alt}_2\mathbb x,Q\bigr),
 \qquad
 \operatorname{Alt}_2\mathbb x:=\tfrac12(\mathbb x-\mathbb x^*),
 \label{cc:eq-split-quotient}
\end{equation}
is a group isomorphism from \(\mathscr I^{(2)}_{\mathrm{fin}}(H)\) onto
\(\mathscr G_{\mathrm{fin}}(H)\), where
\[
 (x,A,Q)\circ(y,B,R)
 =\left(x+y,A+B+\tfrac12x\wedge_{\mathrm{op}}y,Q+R\right).
\]
Its inverse is the exact reconstruction
\begin{equation}
 \mathbb x=\tfrac12x\otimes x+A-\tfrac12Q.
 \label{cc:eq-split-reconstruction}
\end{equation}
Thus, after the typed quasi-shuffle relation has been imposed, passage to
the split state loses no degree-two information.

Let \(E\subset H\) be finite dimensional and let \(X\) be an
\(E\)-valued c\`adl\`ag semimartingale.  Put
\begin{align*}
 X_{s,t}&:=X_t-X_s,\\
 \mathbb X^I_{s,t}
 &:=\int_{(s,t]}(X_{r-}-X_s)\otimes\mathop{}\!\mathrm dX_r,\\
 A_{s,t}&:=\operatorname{Alt}_2\mathbb X^I_{s,t},
 \qquad
 Q_{s,t}:=[X]_t-[X]_s.
\end{align*}
There are versions for which, on one event and for every
\(0\leq s\leq u\leq t\),
\begin{equation}
 (X_{s,t},A_{s,t},Q_{s,t})
 =(X_{s,u},A_{s,u},Q_{s,u})
  \circ(X_{u,t},A_{u,t},Q_{u,t}).
 \label{cc:eq-critical-split-Chen}
\end{equation}
Moreover,
\begin{equation}
 \mathbb X^I_{s,t}
 =\tfrac12X_{s,t}^{\otimes 2}+A_{s,t}-\tfrac12[X]_{s,t}.
 \label{cc:eq-critical-Ito-reconstruction}
\end{equation}
At every jump time \(t\), with \(z=\Delta X_t\), the relative state is
\begin{equation}
 (X_{t-,t},A_{t-,t},Q_{t-,t})=(z,0,z\otimes z).
 \label{cc:eq-critical-single-jump}
\end{equation}

For every bounded linear map \(L:H\to K\),
\[
 L_\#(x,A,Q):=(Lx,LAL^*,LQL^*)
\]
is a homomorphism on the finite-rank groups and extends continuously to the
critical completions of \Cref{cc:thm-critical-split-completion}.  On the
typed core set
\[
 L_\#(x,\mathbb x,Q):=(Lx,(L\otimes L)\mathbb x,LQL^*).
\]
Then
\(\Pi_2(L_\#(x,\mathbb x,Q))=L_\#\Pi_2(x,\mathbb x,Q)\).
\end{theorem}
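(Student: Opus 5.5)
The proof is a sequence of direct verifications on the algebraic core, followed by the semimartingale identities and the pushforward naturality.

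\emph{Core identities.} First, associativity of \eqref{cc:eq-ito-two-product}: both bracketings of three elements give second coordinate $\mathbb x+\mathbb y+\mathbb z+x\otimes y+x\otimes z+y\otimes z$. Next, the product preserves the quasi-shuffle relation. Using $(x\otimes y)^*=y\otimes x$, we get
\[
 \mathbb x+\mathbb y+x\otimes y+(\mathbb x+\mathbb y+x\otimes y)^*
 =x\otimes x-Q+y\otimes y-R+x\otimes y+y\otimes x=(x+y)^{\otimes2}-(Q+R).
\]
The identity of this monoid is $(0,0,0)$. An inverse of $(x,\mathbb x,Q)$ is $(-x,-\mathbb x+x\otimes x,-Q)$, so the core is a group; one checks directly that this inverse again satisfies the relation.

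\emph{The isomorphism $\Pi_2$.} It is a homomorphism because $\operatorname{Alt}_2(x\otimes y)=\tfrac12 x\wedge_{\mathrm{op}}y$. For bijectivity, the relation fixes the symmetric part of $\mathbb x$ as $\tfrac12(x\otimes x-Q)$, and $\operatorname{Alt}_2$ records the skew part. This gives \eqref{cc:eq-split-reconstruction}. Conversely, for any skew $A$ that formula produces a $\mathbb x$ satisfying the relation. All data stay inside $E$, so the directed-union structure is respected.

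\emph{Semimartingale statements.} In finite dimension we fix càdlàg versions of $X$, of the anchored integral $\int_{(0,t]}X_{r-}\otimes dX_r$, and of $[X]$. We then define the increments from these anchors, in the spirit of \Cref{cc:thm-anchor-representation}. Concretely,
\[
 \mathbb X^I_{s,t}=I_t-I_s-X_s\otimes(X_t-X_s),\qquad I_t:=\int_{(0,t]}X_{r-}\otimes dX_r .
\]
This formula is pathwise. Chen's identity $\mathbb X^I_{s,t}=\mathbb X^I_{s,u}+\mathbb X^I_{u,t}+X_{s,u}\otimes X_{u,t}$ then follows algebraically on a single event for all triples. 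Applying $\Pi_2$ and its homomorphism property gives \eqref{cc:eq-critical-split-Chen}.

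The reconstruction \eqref{cc:eq-critical-Ito-reconstruction} is the integration-by-parts formula
\[
 X_{s,t}\otimes X_{s,t}=\mathbb X^I_{s,t}+(\mathbb X^I_{s,t})^*+[X]_{s,t},
\]
with the tensor-valued bracket. This says exactly that the triple lies in the typed core, so the reconstruction follows from the algebraic inverse formula \eqref{cc:eq-split-reconstruction}. The one point needing care, and the step I expect to be the only real obstacle, is to do this simultaneously for all $s\le t$. It is handled by applying the product rule once to the anchors and taking differences pathwise, rather than working separately with each fixed pair.

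For the jump identity \eqref{cc:eq-critical-single-jump}, take left limits of these pathwise formulas: as $s\uparrow t$, $\mathbb X^I_{s,t}\to(X_{t-}-X_{t-})\otimes z=0$ and $[X]_{t-,t}=z\otimes z$. This gives $A=0$ and $Q=z\otimes z$, consistent with $z\otimes z=0+0+z\otimes z$.

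\emph{Naturality.} On finite-rank data, $L_\#$ is a homomorphism because $L(x\wedge_{\mathrm{op}}y)L^*=Lx\wedge_{\mathrm{op}}Ly$; continuous extension to the critical completions is \Cref{cc:thm-critical-split-completion}. On the typed core, $(L\otimes L)$ intertwines leg-flip adjoints, so $L_\#$ preserves the quasi-shuffle relation. Finally, $\operatorname{Alt}_2((L\otimes L)\mathbb x)=L(\operatorname{Alt}_2\mathbb x)L^*$ under the operator identification, which gives $\Pi_2L_\#=L_\#\Pi_2$.
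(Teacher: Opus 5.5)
Your proposal is correct and follows essentially the paper's argument. On the core you use the same associativity, closure, inverse and symmetric-part computations. For the stochastic part you use the anchor-based reconstruction that the paper records as its equivalent route to a single exceptional event, together with the integration-by-parts relation and the jump of the left-point integral. The only cosmetic difference is that you verify $\Pi_2L_\#=L_\#\Pi_2$ directly through the algebraic intertwining $\operatorname{Alt}_2\bigl((L\otimes L)\mathbb x\bigr)=L(\operatorname{Alt}_2\mathbb x)L^*$, whereas the paper appeals to commutation of bounded maps with stochastic integration and quadratic variation.
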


\begin{proof}
The second coordinate in either association of a triple product is
\[
 \mathbb x+\mathbb y+\mathbb z+x\otimes y+x\otimes z+y\otimes z,
\]
which proves associativity.  If the two inputs satisfy the quasi-shuffle
relation, then
\begin{align*}
 &(\mathbb x+\mathbb y+x\otimes y)
  +(\mathbb x+\mathbb y+x\otimes y)^*\\
 &\qquad=(x+y)\otimes(x+y)-(Q+R),
\end{align*}
so the product is closed.  Taking antisymmetric parts gives
\[
 \operatorname{Alt}_2(\mathbb x+\mathbb y+x\otimes y)
 =\operatorname{Alt}_2\mathbb x+\operatorname{Alt}_2\mathbb y
  +\tfrac12x\wedge_{\mathrm{op}}y.
\]
Hence \(\Pi_2\) is multiplicative.  The quasi-shuffle relation fixes the
symmetric part of \(\mathbb x\) and gives
\eqref{cc:eq-split-reconstruction}, proving bijectivity.  The identity and
inverse in typed coordinates are
\[
 (0,0,0),\qquad
 (x,\mathbb x,Q)^{-1}=(-x,-\mathbb x+x\otimes x,-Q);
\]
under \(\Pi_2\) these become \((0,0,0)\) and \((-x,-A,-Q)\).

For the stochastic assertion, splitting the left-point integral at \(u\)
gives
\[
 \mathbb X^I_{s,t}
 =\mathbb X^I_{s,u}+\mathbb X^I_{u,t}
  +X_{s,u}\otimes X_{u,t}.
\]
Operator quadratic variation is additive.  The Hilbert-space It\^o product
formula gives
\[
 \mathbb X^I_{s,t}+(\mathbb X^I_{s,t})^*
 =X_{s,t}^{\otimes 2}-[X]_{s,t},
\]
which proves both the typed relation and
\eqref{cc:eq-critical-Ito-reconstruction}.  Taking antisymmetric parts
proves \eqref{cc:eq-critical-split-Chen}.  Equivalently, one first chooses
c\`adl\`ag versions of the one-parameter anchor coordinates and reconstructs
all intervals from that anchor; the identities then hold on a single event
for every time triple.  On \((t-,t]\) the left-point integrand relative to
\(X_{t-}\) is zero, whereas
\(\Delta[X]_t=z\otimes z\), proving
\eqref{cc:eq-critical-single-jump}.  Finally, deterministic bounded maps
commute with stochastic integration and operator quadratic variation, which
proves naturality.
\end{proof}

\begin{corollary}[Completed typed quotient]
\label{cc:cor-critical-typed-completion}
Set
\[
 \mathscr I^{(2)}_{\mathrm{crit}}(H)
 :=
 \left\{
 (x,\mathbb x,Q)\in
 H\times\mathcal S_2(H)\times\mathcal S_1(H)_{\mathrm{sa}}:
 \begin{array}{l}
 \mathbb x+\mathbb x^*=x\otimes x-Q,\\
 \operatorname{Alt}_2\mathbb x\in
 \mathcal S^0_{1,\infty}(H)_{\mathrm{skew}}
 \end{array}
 \right\}.
\]
Equip this space with the pullback of \(d_{\mathrm{crit}}\) under
\(\Pi_2\).  Then \(\diamond\) extends continuously,
\(\mathscr I^{(2)}_{\mathrm{crit}}(H)\) is the completion of the typed
finite-rank core, and
\[
 \Pi_2:\mathscr I^{(2)}_{\mathrm{crit}}(H)
 \longrightarrow\mathscr G_{\mathrm{crit}}(H)
\]
is a topological group isomorphism.  Its inverse is still
\eqref{cc:eq-split-reconstruction}.
In infinite dimension this pullback topology is not the ambient subspace
topology inherited from
\(H\times\mathcal S_2(H)\times\mathcal S_1(H)\): it retains the stronger
\(\mathcal S^0_{1,\infty}\)-topology of the antisymmetric component.
\end{corollary}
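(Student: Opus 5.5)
The plan is to transport everything through the coordinate map \(\Pi_2\), using \Cref{cc:thm-critical-typed-target} on the finite-rank core and \Cref{cc:thm-critical-split-completion} on the completion. First I check that \(\Pi_2\) maps \(\mathscr I^{(2)}_{\mathrm{crit}}(H)\) bijectively onto \(\mathscr G_{\mathrm{crit}}(H)\). Given \((x,\mathbb x,Q)\) in the typed space, the quasi-shuffle relation shows that the symmetric part \(\tfrac12(\mathbb x+\mathbb x^*)\) equals \(\tfrac12(x\otimes x-Q)\). Hence \(\mathbb x\) is recovered by \eqref{cc:eq-split-reconstruction}, and \(\Pi_2\) is injective with image in \(H\times\mathcal S^0_{1,\infty}(H)_{\mathrm{skew}}\times\mathcal S_1(H)_{\mathrm{sa}}\). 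Conversely, for \((x,A,Q)\in\mathscr G_{\mathrm{crit}}(H)\), define \(\mathbb x\) by \eqref{cc:eq-split-reconstruction}. Then \(\mathbb x\in\mathcal S_2\), because \(x\otimes x\) is rank one, \(Q\in\mathcal S_1\subset\mathcal S_2\), and \(A\in\mathcal S_{1,\infty}\hookrightarrow\mathcal S_2\) by the embedding \(\sum s_k^2\le(\sum k^{-2})\|A\|_{1,\infty}^2\) used in the proof of \Cref{cc:thm-critical-projection-stability}. A direct computation gives \(\mathbb x+\mathbb x^*=x\otimes x-Q\) (using \(A^*=-A\) and \(Q^*=Q\)) and \(\operatorname{Alt}_2\mathbb x=A\). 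So \(\Pi_2\) is a bijection.

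Next comes the algebra. Under \(\Pi_2\), the product \(\diamond\) corresponds to \(\circ\). The identity \(\operatorname{Alt}_2(\mathbb x+\mathbb y+x\otimes y)=\operatorname{Alt}_2\mathbb x+\operatorname{Alt}_2\mathbb y+\tfrac12x\wedge_{\mathrm{op}}y\) is purely algebraic and holds verbatim in \(\mathcal S_2\). Closure of the quasi-shuffle relation under \(\diamond\) is the same calculation as on the core. The antisymmetric part of the product lies in \(\mathcal S^0_{1,\infty}\), since \(x\wedge_{\mathrm{op}}y\) has rank at most two. Therefore \(\diamond\) preserves \(\mathscr I^{(2)}_{\mathrm{crit}}(H)\) and \(\Pi_2(g\diamond h)=\Pi_2(g)\circ\Pi_2(h)\). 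Inverses match as on the core.

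The topology is then handled by transport. Because the metric on the typed space is by definition \(d_{\mathrm{crit}}(\Pi_2\cdot,\Pi_2\cdot)\), the bijection \(\Pi_2\) is an isometry. Continuity of \(\diamond\) and of inversion, completeness, and the Polish group structure all transfer from \Cref{cc:thm-critical-split-completion}. The same theorem says the finite-rank core \(\mathscr G_{\mathrm{fin}}(H)\) is dense in \(\mathscr G_{\mathrm{crit}}(H)\). By \Cref{cc:thm-critical-typed-target}, \(\Pi_2\) maps \(\mathscr I^{(2)}_{\mathrm{fin}}(H)\) onto \(\mathscr G_{\mathrm{fin}}(H)\), so the typed core is dense in the complete typed space, which is therefore its completion. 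The extension of \(\diamond\) is unique, being a continuous extension from a dense subset.

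The last claim, and the only genuine obstacle, is that this pullback topology differs from the ambient topology of \(H\times\mathcal S_2\times\mathcal S_1\). Ambient convergence of \(\mathbb x\) controls \(\operatorname{Alt}_2\mathbb x\) only in \(\mathcal S_2\), which is strictly weaker than the quasi-norm \(\|\cdot\|_{1,\infty}\) in infinite dimension. To show the topologies really differ, I take an orthonormal family \((e_k)\) and the skew finite-rank operators \(A_n=\sum_{k=n}^{2n}k^{-1}(e_{2k}\wedge_{\mathrm{op}}e_{2k+1})\cdot c\) with \(x=0\), \(Q=0\), and \(c\) constant. Then \(\|A_n\|_2\asymp n^{-1/2}\to0\). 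On the other hand, the \(2n\)th singular value is at least \(c/(2n)\), so \(\|A_n\|_{1,\infty}\) stays bounded below, and the typed points \((0,A_n,0)\) converge ambiently but not in the pullback metric. I would tune the exponent or blocks so this gap is clean, for instance by using \(n\) equal singular values \(1/n\), which gives \(\mathcal S_2\)-norm \(n^{-1/2}\) and weak norm \(\asymp1\).
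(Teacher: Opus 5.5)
Your proposal is correct and follows essentially the paper's route: the reconstruction \eqref{cc:eq-split-reconstruction} inverts \(\Pi_2\) because \(\mathcal S^0_{1,\infty}\hookrightarrow\mathcal S_2\) continuously, the group law and metric are transported through \(\Pi_2\), and density of the finite-rank core from \Cref{cc:thm-critical-split-completion} identifies the completion. Your explicit sequence \((0,A_n,0)\), with \(\|A_n\|_2\to0\) but \(\|A_n\|_{1,\infty}\) bounded below, is a valid witness for the final non-equivalence claim; the paper states that claim but does not prove it.
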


\begin{proof}
The inclusion
\(\mathcal S^0_{1,\infty}(H)\hookrightarrow\mathcal S_2(H)\) is continuous.
Hence \eqref{cc:eq-split-reconstruction} maps every critical split state to
the displayed typed space and is inverse to \(\Pi_2\).  Finite-rank density
in the three split coordinates and
\Cref{cc:thm-critical-split-completion} identify the completion; continuity
of the product is transported through \(\Pi_2\).
\end{proof}

Thus the typed and split presentations define the same canonical Polish
second-order entrance; the split coordinates will be used below.

\subsection{Optional and projection stability}
\label{cc:subsec-critical-optional-levy}

\begin{theorem}[Optional completion of the critical quotient]
\label{cc:thm-critical-optional-quotient}
Let \((\Omega,\mathcal F,(\mathcal F_t)_{t\geq0},\mathbb P)\) satisfy the
usual conditions, and let
\[
 \mathbf A_t=(X_t,A_t,Q_t),\qquad \mathbf A_0=(0,0,0),
\]
be an adapted c\`adl\`ag process with values in
\(\mathscr G_{\mathrm{crit}}(H)\).  For bounded stopping times
\(0\leq\sigma\leq\tau\leq T\), define
\begin{equation}
 \mathbf\Phi_{\sigma,\tau}
 :=\mathbf A_\sigma^{-1}\circ\mathbf A_\tau
 =\left(
 X_\tau-X_\sigma,
 A_\tau-A_\sigma-\tfrac12X_\sigma\wedge_{\mathrm{op}}X_\tau,
 Q_\tau-Q_\sigma
 \right).
 \label{cc:eq-critical-stopped-anchor}
\end{equation}
Then \(\mathbf\Phi_{\sigma,\tau}\) is
\(\mathcal F_\tau\)-measurable and, on one event, simultaneously for all
bounded stopping triples \(\sigma\leq\rho\leq\tau\),
\begin{equation}
 \mathbf\Phi_{\sigma,\tau}
 =\mathbf\Phi_{\sigma,\rho}\circ\mathbf\Phi_{\rho,\tau}.
 \label{cc:eq-critical-stopped-Chen}
\end{equation}
It is the unique extension of the perfected deterministic-time character
which is obtained by pointwise pasting on finite-valued stopping intervals
and is continuous in probability under nested right-dyadic approximation.

Suppose in addition that \(X\) is an \(H\)-valued c\`adl\`ag
semimartingale whose split anchor satisfies
\[
 A_t=\operatorname{Alt}_2\int_{(0,t]}X_{r-}\otimes
       \mathop{}\!\mathrm dX_r,
 \qquad
 Q_t=[X]_t.
\]
Then
\begin{align}
 X_{\sigma,\tau}&=X_\tau-X_\sigma,\notag\\
 Q_{\sigma,\tau}&=[X]_\tau-[X]_\sigma,\notag\\
 A_{\sigma,\tau}
 &=\operatorname{Alt}_2\int_{(0,T]}
  {\mathbf 1}_{\{\sigma<r\leq\tau\}}
  (X_{r-}-X_\sigma)\otimes\mathop{}\!\mathrm dX_r.
 \label{cc:eq-critical-stopped-integral}
\end{align}
Thus algebraic optional evaluation is exactly the usual stopped left-point
It\^o integral.
\end{theorem}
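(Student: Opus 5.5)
The plan is to reduce the theorem to \Cref{cc:thm-optional-completion} applied to the Polish group \(\mathscr G_{\mathrm{crit}}(H)\) with trivial transport (\(\alpha_{t,s}=\mathrm{Id}\)). This is a transported Polish coefficient system in the sense of \eqref{cc:eq-transport-system}, because \Cref{cc:thm-critical-split-completion} makes \(\mathscr G_{\mathrm{crit}}(H)\) a Polish topological group with continuous product and inverse. The process \(\mathbf A\) is already an adapted c\`adl\`ag modification of its own anchor, so \eqref{cc:eq-optional-anchor} gives exactly \(\mathbf\Phi_{\sigma,\tau}=\mathbf A_\sigma^{-1}\circ\mathbf A_\tau\). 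We then compute the displayed coordinates from the inverse \((-x,-A,-Q)\) and the product \eqref{cc:eq-critical-split-product}. The area coordinate is
\[
 -A_\sigma+A_\tau+\tfrac12(-X_\sigma)\wedge_{\mathrm{op}}X_\tau ,
\]
which is \eqref{cc:eq-critical-stopped-anchor}. Parts (i), (ii) and (iv) of \Cref{cc:thm-optional-completion} then give, respectively, \(\mathcal F_\tau\)-measurability, the simultaneous Chen identity \eqref{cc:eq-critical-stopped-Chen} on one event, and uniqueness under finite-valued pasting together with right-dyadic continuity in probability. Part (iii) supplies the almost sure right limits, which imply convergence in probability for the existence half of the uniqueness statement. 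The only point to check is that measurability of the map into the quasi-Banach coordinate is Borel for the metric \(d_{\mathrm{crit}}\). This follows because the metric is compatible and complete, so continuity of the group operations is all that is used.

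For the semimartingale identification we work coordinatewise. The first and third coordinates are immediate, since \(X\) and \([X]\) are additive. For the area we start from
\[
 A_\tau-A_\sigma=\operatorname{Alt}_2\int_{(0,T]}\mathbf 1_{\{\sigma<r\le\tau\}}X_{r-}\otimes\mathrm dX_r ,
\]
which holds by stopping calculus (\(\int_{(0,\tau]}-\int_{(0,\sigma]}\) with the predictable indicator of the stochastic interval). Next we write \(X_{r-}=(X_{r-}-X_\sigma)+X_\sigma\) on \(\{\sigma<r\le\tau\}\). The integrand \(\mathbf 1_{(\sigma,\tau]}(r)\,X_\sigma\) is predictable, because \(X_\sigma\) is \(\mathcal F_\sigma\)-measurable and equals a left-continuous adapted process on the stochastic interval. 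Hence
\[
 \int\mathbf 1_{(\sigma,\tau]}X_\sigma\otimes\mathrm dX=X_\sigma\otimes(X_\tau-X_\sigma)
\]
by locality of the stochastic integral. Applying \(\operatorname{Alt}_2\) gives
\[
 \tfrac12X_\sigma\wedge_{\mathrm{op}}(X_\tau-X_\sigma)=\tfrac12X_\sigma\wedge_{\mathrm{op}}X_\tau ,
\]
since \(X_\sigma\wedge_{\mathrm{op}}X_\sigma=0\). Subtracting this term reproduces \eqref{cc:eq-critical-stopped-integral} exactly.

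The main obstacle is the infinite-dimensional setting. The split anchor lives in \(\mathcal S^0_{1,\infty}\), while the stochastic integral is defined in \(H^{\otimes2}\cong\mathcal S_2\). We handle this through the continuous embedding \(\mathcal S^0_{1,\infty}\hookrightarrow\mathcal S_2\) noted in the proof of \Cref{cc:thm-critical-projection-stability}. That embedding lets us perform the stopping and locality computation in the Hilbert space \(\mathcal S_2\), where the Hilbert-space stochastic calculus of \cite{MetivierPellaumail80} applies. We then observe that both sides coincide as \(\mathcal S_2\)-valued random variables. The left side already lies in \(\mathcal S^0_{1,\infty}\), so the identity holds in the critical coordinate as well. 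If a direct stopping argument in \(H^{\otimes2}\) is problematic, the fallback is to verify the formula for finite-valued stopping times by pasting deterministic intervals. We would then pass to general \(\sigma,\tau\) via the dyadic approximations \(d_n(\sigma),d_n(\tau)\), using part (iii) on the algebraic side and right-continuity plus dominated convergence for stochastic integrals on the It\^o side.
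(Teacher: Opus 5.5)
Your proposal is correct and takes essentially the same route as the paper. Both arguments apply the anchored optional completion theorem with trivial transport on the Polish group \(\mathscr G_{\mathrm{crit}}(H)\), read the coordinates off the group law, and identify the area coordinate by splitting off the predictable term \(\mathbf 1_{(\sigma,\tau]}X_\sigma\) and using \(\operatorname{Alt}_2(X_\sigma\otimes(X_\tau-X_\sigma))=\tfrac12X_\sigma\wedge_{\mathrm{op}}X_\tau\). Your explicit passage through \(\mathcal S^0_{1,\infty}\hookrightarrow\mathcal S_2\), and your fallback via finite-valued stopping times and right-dyadic approximation, are details the paper leaves implicit or mentions only briefly; they do not change the argument.
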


\begin{proof}
Apply \Cref{cc:thm-optional-completion} to the Polish group
\(\mathscr G_{\mathrm{crit}}(H)\) with trivial transport.  It gives
measurability, the simultaneous stopped Chen identity, right-dyadic
continuity, and uniqueness, while the group law gives the explicit formula
\eqref{cc:eq-critical-stopped-anchor}.  It remains only to identify this
abstract stopped increment with the classical left-point It\^o integral.
For that identification, both
\[
 r\longmapsto{\mathbf 1}_{\{\sigma<r\leq\tau\}}X_{r-}
 \quad\text{and}\quad
 r\longmapsto{\mathbf 1}_{\{\sigma<r\leq\tau\}}X_\sigma
\]
are predictable.  Indeed, \(X_-\) is predictable,
\({\mathbf 1}_{(\sigma,\tau]}\) is predictable, and the second displayed
process is adapted and left-continuous.  After localization the difference
is stochastically integrable.  The stopping identity for semimartingale
integrals, first for finite-valued stopping times and then by decreasing
right approximation, gives
\[
 \int_{(0,T]}{\mathbf 1}_{\{\sigma<r\leq\tau\}}
 (X_{r-}-X_\sigma)\otimes\mathop{}\!\mathrm dX_r
 =
 \int_{(\sigma,\tau]}(X_{r-}-X_\sigma)\otimes
 \mathop{}\!\mathrm dX_r.
\]
Taking antisymmetric parts and using
\[
 \operatorname{Alt}_2\bigl(
 X_\sigma\otimes(X_\tau-X_\sigma)\bigr)
 =\tfrac12X_\sigma\wedge_{\mathrm{op}}X_\tau
\]
identifies the middle coordinate with
\eqref{cc:eq-critical-stopped-anchor}.  The last coordinate is the standard
stopping identity for optional quadratic variation.
\end{proof}

\begin{corollary}[Projection stability on moving stopping intervals]
\label{cc:cor-critical-stopped-projection}
Let \(M\) be a continuous square-integrable \(H\)-valued martingale with
\(M_0=0\), and let
\((P_n)\) be increasing finite-rank orthogonal projections converging
strongly to the identity.  Let \(\mathbf A^{(n)}\) and \(\mathbf A\) be the
critical split anchors of \(P_nM\) and \(M\) supplied by
\Cref{cc:thm-critical-projection-stability}.  Then
\begin{equation}
 \sup_{0\leq s\leq t\leq T}
 \bigl\|\mathbf\Phi^{(n)}_{s,t}-\mathbf\Phi_{s,t}\bigr\|_{\mathrm{crit}}
 \longrightarrow0
 \qquad\text{in probability}.
 \label{cc:eq-critical-uniform-interval-projection}
\end{equation}
Here subtraction is coordinatewise and
\(\|\cdot\|_{\mathrm{crit}}\) is the coordinate quasi-gauge
\eqref{cc:eq-critical-quasi-gauge}; convergence for that gauge is
equivalent to convergence for \(d_{\mathrm{crit}}\).
Consequently, for every sequence of possibly \(n\)-dependent bounded
stopping pairs \(\sigma_n\leq\tau_n\leq T\),
\[
 d_{\mathrm{crit}}\!\left(
 \mathbf\Phi^{(n)}_{\sigma_n,\tau_n},
 \mathbf\Phi_{\sigma_n,\tau_n}
 \right)\longrightarrow0
 \qquad\text{in probability}.
\]
Thus finite-dimensional approximation commutes with optional completion,
even for intervals selected from the approximation.
\end{corollary}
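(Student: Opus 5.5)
The plan is to reduce everything to the uniform anchor convergence already proved in \Cref{cc:thm-critical-projection-stability} and then transport it to increments through the explicit group law \eqref{cc:eq-critical-stopped-anchor}, in the spirit of \Cref{cc:thm-optional-singular-interchange}. That theorem gives
\[
 \sup_{t\le T}d_{\mathrm{crit}}(\mathbf A^{(n)}_t,\mathbf A_t)\to0
\]
in probability. Since the three coordinates are separate and the Aoki--Rolewicz quasi-norm is equivalent to $\|\cdot\|_{1,\infty}$, this is the same as
\[
 \sup_{t}\Bigl(\|X^{(n)}_t-X_t\|_H+\|A^{(n)}_t-A_t\|_{1,\infty}+\|Q^{(n)}_t-Q_t\|_1\Bigr)\to0
\]
in probability, where $X^{(n)}=P_nM$ and $X=M$. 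This also settles the stated equivalence between convergence for the gauge and for $d_{\mathrm{crit}}$.

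Next I write out the increment difference coordinatewise from \eqref{cc:eq-critical-stopped-anchor}.

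\emph{First and third coordinates.} These are differences of anchor differences. Each is bounded by twice the anchor supremum, using the triangle inequality in $H$ and in $\mathcal S_1$.

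\emph{Middle coordinate.} This is
\[
 (A^{(n)}_t-A_t)-(A^{(n)}_s-A_s)-\tfrac12\bigl(X^{(n)}_s\wedge_{\mathrm{op}}X^{(n)}_t-X_s\wedge_{\mathrm{op}}X_t\bigr).
\]
The two area terms are controlled by the weak quasi-triangle constant times the anchor supremum. For the wedge term, I split bilinearly as in the proof of \Cref{cc:thm-critical-split-completion}:
\[
 (X^{(n)}_s-X_s)\wedge_{\mathrm{op}}X^{(n)}_t+X_s\wedge_{\mathrm{op}}(X^{(n)}_t-X_t).
\]
By \eqref{cc:eq-wedge-trace-bound}, its $\|\cdot\|_{1,\infty}$ norm is at most
\[
 2\bigl(\sup_t\|X^{(n)}_t\|+\sup_t\|X_t\|\bigr)\sup_t\|X^{(n)}_t-X_t\|.
\]
Putting the coordinates together gives, uniformly on the time triangle,
\[
 \|\mathbf\Phi^{(n)}_{s,t}-\mathbf\Phi_{s,t}\|_{\mathrm{crit}}\le C\bigl(1+\sup_t\|X^{(n)}_t\|+\sup_t\|X_t\|\bigr)\,\Delta_n,
\]
where $\Delta_n$ is the coordinate anchor supremum above and $C$ is a universal quasi-triangle constant.

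The only step needing care is that the prefactor is random, so one more argument is needed. The path $X$ is continuous, so $\sup_t\|X_t\|<\infty$ almost surely. Also $\sup_t\|X^{(n)}_t\|\le\sup_t\|X_t\|$ because each $P_n$ is a contraction. Hence the prefactor is tight, uniformly in $n$. I then localize on the event $\{\sup_t\|X_t\|\le R\}$, let $n\to\infty$, and finally let $R\to\infty$; this gives \eqref{cc:eq-critical-uniform-interval-projection}.

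Measurability of the supremum over the triangle follows from path continuity: the supremum can be taken over rational pairs. For the stopped statement, evaluating at $(\sigma_n,\tau_n)$ is dominated pathwise by the triangle supremum, whatever the dependence on $n$. It is therefore immediate, exactly as in \Cref{cc:thm-optional-singular-interchange}. The stopped increments coincide with the perfected ones by \Cref{cc:thm-critical-optional-quotient}.
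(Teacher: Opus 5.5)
Your proposal is correct and is essentially the paper's proof: uniform anchor convergence from the projection-stability theorem, then the explicit increment formula with the bilinear split and the wedge bound, giving $C\bigl(1+\sup_t\|P_nM_t\|+\sup_t\|M_t\|\bigr)\Delta_n$, then tightness of the prefactor, and finally pathwise domination at the stopping pairs. Your extra details fill in steps the paper leaves implicit. These are the contraction bound $\sup_t\|P_nM_t\|\le\sup_t\|M_t\|$ for tightness and the reduction of the triangle supremum to rational pairs. For the latter, note that a quasi-norm need not be continuous; the reduction works because $\|\cdot\|_{1,\infty}=\sup_n n s_n(\cdot)$ is lower semicontinuous, so path continuity alone is not quite the reason.
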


\begin{proof}
Write
\(\mathbf A^{(n)}=(M^{(n)},A^{(n)},Q^{(n)})\) and
\(\mathbf A=(M,A,Q)\).  The cited stability theorem gives, after
localization,
\[
 \Delta_n:=\sup_{t\leq T}\left(
 \|M_t^{(n)}-M_t\|+
 \|A_t^{(n)}-A_t\|_{1,\infty}+
 \|Q_t^{(n)}-Q_t\|_1\right)\longrightarrow0
\]
in probability.  Formula \eqref{cc:eq-critical-stopped-anchor} and
\[
 \|x\wedge_{\mathrm{op}}y\|_{1,\infty}
 \leq2\|x\|\,\|y\|
\]
give
\[
 \sup_{0\leq s\leq t\leq T}
 \|\mathbf\Phi^{(n)}_{s,t}-\mathbf\Phi_{s,t}\|_{\mathrm{crit}}
 \leq
 C\left(1+\sup_{t\leq T}\|M_t^{(n)}\|
          +\sup_{t\leq T}\|M_t\|\right)\Delta_n.
\]
The factor in parentheses is tight, proving
\eqref{cc:eq-critical-uniform-interval-projection}.  The same pathwise
supremum bounds evaluation at every random pair
\((\sigma_n,\tau_n)\); compatibility of \(d_{\mathrm{crit}}\) with the
coordinate topology proves the last assertion.
\end{proof}

The path state below records optional quadratic variation, whereas its
conditional estimate is driven by the predictable bracket.  The two must
not be identified in the presence of jumps.

\begin{definition}[Critical second-order entrance]
\label{cc:def-critical-second-order-entrance}
A stochastic source has a \emph{critical second-order entrance} in a real
separable Hilbert space \(H\) if its perfected degree-two anchor is an
adapted c\`adl\`ag path in \(\mathscr G^+_{\mathrm{crit}}(H)\), its
finite-rank spatial projections converge ucp in the critical metric, and its
relative increments are obtained from the anchor by the stopped causal law.
A \(\beta\)-controlled entrance additionally satisfies, for a control
\(\omega\),
\[
 \|X_{s,t}\|\lesssim\omega(s,t)^\beta,
 \qquad
 \|A_{s,t}\|_{1,\infty}\lesssim\omega(s,t)^{2\beta},
 \qquad \beta\in(1/3,1/2).
\]
The source-specific analysis ends at this entrance.  Higher ancestry and
spectral conclusions are supplied by the normal and response theories.
\end{definition}
\section{Controlled critical responses and native Hall support}
\label{hall:sec-native}

The abstract response-kernel theorem reduces all lower estimates and inverse
statements to one obligation: prove full left support for a declared class of
native readers.  We first retain the source-independent upper machinery, and
then verify native support by pure-area Hall geometry.

\subsection{Critical multi-flattening spaces and sewing}
\label{cc:gen-subsection-calculus}

For \(n\geq1\), \(S\subset[n]\), and
\(K\in H^{\otimes n}\), let

\[
 \operatorname{Mat}_S(K):H^{\otimes S}\longrightarrow
 H^{\otimes S^c}
\]

be the Hilbert-space flattening obtained by placing the tensor legs in
increasing order on each side of the cut.  A different ordering changes the
operator only by unitaries.  The cases \(S=\varnothing,[n]\) are rank-one
realizations with Schatten norm \(\|K\|_{H^{\otimes n}}\).  They are retained
in the finite-\(p\) envelope below and omitted only from the critical
logarithmic maximum.

For \(1\leq p<\infty\), first endow the algebraic finite tensors with the
augmented multi-flattening norm

\begin{equation}
 \|K\|_{\mathfrak A_p^{(n)}}
 :=\max_{S\subset[n]}
 \|\operatorname{Mat}_S(K)\|_{\mathcal S_p},
 \qquad K\in H^{\otimes_{\mathrm{alg}}n},
 \label{cc:gen-augmented-schatten-multiflat}
\end{equation}

where \(H^{\otimes\varnothing}:=\mathbb R\).  Denote the completion by
\(\mathfrak A_p^{(n)}(H)\), and put
\(\mathfrak A_p^{(0)}(H):=\mathbb R\), with its absolute-value norm.  The
adjective ``augmented'' records the two trivial cuts.  If \(n\geq2\) and
\(1\leq p\leq2\), they are redundant:

\begin{equation}
 \|K\|_{\mathfrak A_p^{(n)}}
 =\max_{\varnothing\ne S\ne[n]}
 \|\operatorname{Mat}_S(K)\|_{\mathcal S_p}.
 \label{cc:gen-augmented-nontrivial-cuts}
\end{equation}

\begin{proposition}[Permutation-stable tensor algebra]
\label{cc:gen-prop-permutation-stable-tensor-algebra}
Let \(1\leq p<\infty\).  For \(m,n\geq1\) and algebraic finite tensors
\(K\in H^{\otimes_{\mathrm{alg}}m}\) and
\(L\in H^{\otimes_{\mathrm{alg}}n}\),

\begin{equation}
 \|K\otimes L\|_{\mathfrak A_p^{(m+n)}}
 =\|K\|_{\mathfrak A_p^{(m)}}
  \|L\|_{\mathfrak A_p^{(n)}}.
 \label{cc:gen-exact-schatten-tensor-crossnorm}
\end{equation}

The tensor product therefore extends to the completions and the same equality
holds there; with the scalar convention it also holds if \(m=0\) or \(n=0\).
Every tensor-leg permutation is an isometry.  More generally,
if \(Q=\sum_{\pi\in\mathfrak S_n}c_\pi U_\pi\) is a fixed element of the
permutation algebra, then

\begin{equation}
 \|QK\|_{\mathfrak A_p^{(n)}}
 \leq\left(\sum_{\pi\in\mathfrak S_n}|c_\pi|\right)
 \|K\|_{\mathfrak A_p^{(n)}}.
 \label{cc:gen-permutation-projector-bound}
\end{equation}

Consequently every fixed Young projector and every
Dynkin--Specht--Wever/free-Lie projector extends to
\(\mathfrak A_p^{(n)}(H)\), with norm independent of the dimension of \(H\).
For \(1\leq p\leq2\) and \(N\geq0\), set

\begin{equation}
 \mathfrak A_{p,\leq N}(H)
 :=\bigoplus_{n=0}^N\mathfrak A_p^{(n)}(H),
 \qquad
 \|(K_0,\ldots,K_N)\|_{\mathfrak A_{p,\leq N}}
 :=\sum_{n=0}^N\|K_n\|_{\mathfrak A_p^{(n)}}.
 \label{cc:gen-truncated-augmented-algebra}
\end{equation}

With truncated concatenation

\begin{equation}
 (K\diamond_N L)_k:=\sum_{i+j=k}K_i\otimes L_j,
 \qquad 0\leq k\leq N,
 \label{cc:gen-truncated-concatenation}
\end{equation}

this is a unital graded Banach algebra, and

\begin{equation}
 \|K\diamond_NL\|_{\mathfrak A_{p,\leq N}}
 \leq
 \|K\|_{\mathfrak A_{p,\leq N}}
 \|L\|_{\mathfrak A_{p,\leq N}}.
 \label{cc:gen-truncated-concatenation-bound}
\end{equation}
\end{proposition}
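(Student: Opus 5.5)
The plan is to reduce everything to the multiplicativity of Schatten norms under tensor products of operators. First fix cuts: for $K\in H^{\otimes m}$, $L\in H^{\otimes n}$ and a cut $S\subset[m+n]$, write $S=S_1\sqcup S_2$ with $S_1\subset[m]$ and $S_2$ the shifted part in the last $n$ legs. Up to the unitary reorderings of legs on each side (which change no Schatten norm), one has
\[
 \operatorname{Mat}_S(K\otimes L)\cong\operatorname{Mat}_{S_1}(K)\otimes\operatorname{Mat}_{S_2}(L).
\]
Singular values of a tensor product of operators are the pairwise products, as recorded in the proof of \Cref{cc:prop:positive-spectralization}. Hence $\|A\otimes B\|_{\mathcal S_p}=\|A\|_{\mathcal S_p}\|B\|_{\mathcal S_p}$, and so
\[
 \|\operatorname{Mat}_S(K\otimes L)\|_p=\|\operatorname{Mat}_{S_1}K\|_p\,\|\operatorname{Mat}_{S_2}L\|_p .
\]
Every pair $(S_1,S_2)$ arises from some $S$, including the trivial cuts. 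This is where the augmented convention is essential, since $S_1=\varnothing$ must be allowed. Taking the maximum over $S$ therefore factors as the product of the two maxima, which is \eqref{cc:gen-exact-schatten-tensor-crossnorm}. By this equality the bilinear map is bounded on algebraic tensors, so it extends to the completions with the same equality. The scalar case $m=0$ or $n=0$ is trivial.

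For permutations: $U_\pi$ maps the cut $S$ to the cut $\pi(S)$, and $\operatorname{Mat}_{\pi(S)}(U_\pi K)$ differs from $\operatorname{Mat}_S(K)$ only by leg-reordering unitaries. The maximum over all cuts is therefore invariant, and $U_\pi$ is an isometry. The bound \eqref{cc:gen-permutation-projector-bound} then follows from the triangle inequality in $\mathcal S_p$ cutwise, since $p\ge1$ makes this a genuine norm, and then taking the maximum. Young and Dynkin projectors are fixed elements of $\mathbb R[\mathfrak S_n]$, as in \eqref{cc:eq-real-permutation-projector}, so they extend with dimension-free norm.

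For the truncated algebra, apply the triangle inequality and the crossnorm:
\[
 \|(K\diamond_N L)_k\|\le\sum_{i+j=k}\|K_i\|\,\|L_j\|.
\]
Summing over $k\le N$ is dominated by the full product of the two $\ell^1$-sums, which gives \eqref{cc:gen-truncated-concatenation-bound}. Associativity and the unit $(1,0,\dots,0)$ are inherited from the algebraic tensor algebra by continuity, and completeness of the finite direct sum of Banach spaces gives a unital graded Banach algebra.

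For \eqref{cc:gen-augmented-nontrivial-cuts}, the trivial cut has norm $\|K\|_{H^{\otimes n}}=\|\operatorname{Mat}_{\{1\}}K\|_{\mathcal S_2}$. Since $p\le2$ gives $\|\cdot\|_{\mathcal S_2}\le\|\cdot\|_{\mathcal S_p}$, the trivial cuts are dominated by a nontrivial one when $n\ge2$.

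The only delicate step I expect is the bookkeeping of leg orderings in the cut factorisation. One has to check that the reorderings are genuine unitaries between Hilbert tensor products, identifying $H^{\otimes S_1}\otimes H^{\otimes S_2}$ with $H^{\otimes S}$. One also has to check that extension to completions is legitimate: the flattening maps are continuous on the completion because each cut norm is dominated by the maximum. I would verify this first on elementary tensors and extend by linearity and density.
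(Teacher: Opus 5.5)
Your proposal is correct and follows essentially the same route as the paper. The shared steps are the cut factorization $\operatorname{Mat}_S(K\otimes L)\cong\operatorname{Mat}_{S_1}(K)\otimes\operatorname{Mat}_{S_2}(L)$ up to leg unitaries, multiplicativity of Schatten norms, and the bijection $S\leftrightarrow(S_1,S_2)$, which is exactly why the augmented trivial cuts give equality rather than an inequality; then relabelling of cuts for the permutation isometry, and the $\ell^1$ crossnorm estimate for the truncated algebra (the paper's Cauchy-sequence extension to the completions is what your bounded-bilinear-extension remark amounts to).
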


\begin{proof}
For every cut \(S\subset[m+n]\), write
\(S_1=S\cap[m]\), and let \(S_2\subset[n]\) be the translate by \(-m\)
of \(S\cap\{m+1,\ldots,m+n\}\).  Permuting tensor factors separately in
the domain and range gives unitary operators \(V_S,W_S\) such that

\begin{equation}
 \operatorname{Mat}_S(K\otimes L)
 =V_S\bigl(
   \operatorname{Mat}_{S_1}(K)\otimes
   \operatorname{Mat}_{S_2}(L)
  \bigr)W_S.
 \label{cc:gen-flattening-tensor-factorization}
\end{equation}

If \((a_j)\) and \((b_k)\) are the singular values of two finite-rank
operators, the singular values of their Hilbert tensor product are
\((a_jb_k)_{j,k}\).  Hence

\[
 \|A\otimes B\|_{\mathcal S_p}^p
 =\sum_{j,k}(a_jb_k)^p
 =\|A\|_{\mathcal S_p}^p\|B\|_{\mathcal S_p}^p.
\]

As \(S\mapsto(S_1,S_2)\) is a bijection from the cuts of \([m+n]\) to
the pairs of cuts of \([m]\) and \([n]\), taking the maximum in
\eqref{cc:gen-flattening-tensor-factorization} gives the equality
\eqref{cc:gen-exact-schatten-tensor-crossnorm}, not merely an inequality.
The trivial cut shows that \eqref{cc:gen-augmented-schatten-multiflat}
dominates the Hilbert tensor norm and is therefore a norm.  For any cut,

\[
 \|\operatorname{Mat}_S(K)\|_{\mathcal S_2}
 =\|K\|_{H^{\otimes n}}.
\]

When \(p\leq2\), monotonicity of Schatten norms gives
\(\|K\|_{H^{\otimes n}}\leq
\|\operatorname{Mat}_S(K)\|_{\mathcal S_p}\).  If \(n\geq2\), choose a
nontrivial cut; it dominates both trivial cuts and proves
\eqref{cc:gen-augmented-nontrivial-cuts}.

To pass from finite tensors to the completions, choose
\(K_a\to K\) and \(L_a\to L\) in their respective augmented norms.
The sequences are bounded, and the exact crossnorm together with the
triangle inequality gives

\begin{align*}
 \|K_a\otimes L_a-K_b\otimes L_b\|_{\mathfrak A_p^{(m+n)}}
 &\leq
 \|K_a-K_b\|_{\mathfrak A_p^{(m)}}
 \|L_a\|_{\mathfrak A_p^{(n)}}\\
 &\quad+
 \|K_b\|_{\mathfrak A_p^{(m)}}
 \|L_a-L_b\|_{\mathfrak A_p^{(n)}}.
\end{align*}

Thus \(K_a\otimes L_a\) has a limit independent of the approximating
sequences.  Passing to the limit in
\eqref{cc:gen-exact-schatten-tensor-crossnorm} proves the exact equality on
the completions.

For \(\pi\in\mathfrak S_n\), the flattening of \(U_\pi K\) across \(S\)
is, up to domain and range unitaries, the flattening of \(K\) across
\(\pi^{-1}S\).  Since \(S\mapsto\pi^{-1}S\) permutes all cuts, \(U_\pi\)
is an isometry.  The triangle inequality now proves
\eqref{cc:gen-permutation-projector-bound}.  Normalized Young projectors and
normalized Dynkin--Specht--Wever/free-Lie projectors are fixed finite linear
combinations of permutations; see \cite{Reutenauer93}.  Their algebraic
projection identities pass by continuity to the completions, and the
displayed bound depends on the fixed projector but not on \(\dim H\).

Finally, \(\mathfrak A_{p,\leq N}(H)\) is complete because it is a finite
\(\ell^1\)-sum of Banach spaces.  By the exact crossnorm and the triangle
inequality,

\begin{align*}
 \|K\diamond_NL\|_{\mathfrak A_{p,\leq N}}
 &\leq\sum_{i+j\leq N}
 \|K_i\|_{\mathfrak A_p^{(i)}}
 \|L_j\|_{\mathfrak A_p^{(j)}}\\
 &\leq
 \left(\sum_{i=0}^N\|K_i\|_{\mathfrak A_p^{(i)}}\right)
 \left(\sum_{j=0}^N\|L_j\|_{\mathfrak A_p^{(j)}}\right).
\end{align*}

Here the factors of degree zero are scalar multiplications, for which the same
crossnorm equality is immediate.  This is
\eqref{cc:gen-truncated-concatenation-bound}.  On algebraic tensors,
associativity and the unit \((1,0,\ldots,0)\) descend from the tensor algebra
after discarding all degrees above \(N\), and the component formula respects
degrees.  Continuity and density carry both identities to the completions.
This proves the graded Banach-algebra assertion.
\end{proof}

The finite-\(p\) envelope is the permutation-stable algebraic completion used
to justify tensor products and fixed projectors.  The endpoint scale needed
for critical stochastic generation is different: it records the logarithmic
loss under tensor products and is defined next.

Use the critical ideals \(\mathfrak W^{[r]}\) and
\(\mathfrak W^{[r]}_0\) from \eqref{cc:eq:critical-ideals}.  Define the big
and little multi-flattening spaces by

\begin{align}
 \mathfrak W_{\mathrm{mf}}^{[r]}(H^{\otimes n})
 &:={\left\{K\in H^{\otimes n}:
 \operatorname{Mat}_S(K)\in\mathfrak W^{[r]}
 \text{ for every }\varnothing\ne S\ne[n]\right\}},
 \label{cc:gen-multiflat-space}\\
 \|K\|_{\mathfrak W_{\mathrm{mf}}^{[r]}(n)}
 &:=\|K\|_{H^{\otimes n}}
 +\max_{\varnothing\ne S\ne[n]}
 \|\operatorname{Mat}_S(K)\|_{\mathfrak W^{[r]}},
 \label{cc:gen-multiflat-norm}\\
 \mathfrak W_{0,\mathrm{mf}}^{[r]}(H^{\otimes n})
 &:={\left\{K\in\mathfrak W_{\mathrm{mf}}^{[r]}:
 \operatorname{Mat}_S(K)\in\mathfrak W^{[r]}_0
 \text{ for every }\varnothing\ne S\ne[n]\right\}}.
 \label{cc:gen-little-multiflat}
\end{align}

The Hilbert term in \eqref{cc:gen-multiflat-norm} is included so that the
tensor represented by the compatible family of flattenings is part of the
topology.  At degree two it is controlled by the critical term, since

\[
 \|T\|_{\mathcal S_2}^2
 \leq \|T\|_{\mathfrak W^{[r]}}^2
 \sum_{N\geq1}\frac{(\log(e+N))^{2r-2}}{N^2}.
\]

\begin{lemma}[Distribution, completeness, and the little core]
\label{cc:gen-lemma-critical-spaces}
With \(\phi_r\) as in \eqref{cc:eq-critical-profile}, for a compact operator \(T\), the quantity
\(\sup_Ns_N(T)/\phi_r(N)\) is equivalent, with constants depending only
on \(r\), to the least \(A>0\) for which

\begin{equation}
 d_T(u):=\#\{N:s_N(T)>u\}
 \leq C_r\frac Au
 \left(\log\left(e+\frac Au\right)\right)^{r-1},
 \qquad u>0.
 \label{cc:gen-distribution-bound}
\end{equation}

The space \(\mathfrak W^{[r]}\) is a complete symmetric quasi-Banach
ideal.  Its little subspace is closed and is exactly the
\(\mathfrak W^{[r]}\)-closure of the finite-rank operators.  Moreover,

\[
 \mathfrak W^{[r]}_0\subset\mathfrak W^{[r+1]}_0.
\]

For every fixed \(n\),
\(\mathfrak W_{\mathrm{mf}}^{[r]}(H^{\otimes n})\) is complete, and its
little subspace is precisely the closure of the finite-dimensional tensors
in the big multi-flattening quasi-norm.  Tensor-leg permutations are
isometries of both spaces.
\end{lemma}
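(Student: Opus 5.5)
The plan is to reduce everything to the decreasing rearrangement $s_N$ and then transport the scalar statements to each flattening.

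\emph{Distribution bound.} Put $A:=\sup_N s_N(T)/\phi_r(N)$. If $s_N(T)\le A\phi_r(N)$, then $d_T(u)\le\#\{N:A\phi_r(N)>u\}$. Since $\phi_r$ is eventually decreasing and regularly varying of index $-1$, inverting $A\phi_r(N)=u$ gives $N\asymp (A/u)(\log(e+A/u))^{r-1}$. This yields \eqref{cc:gen-distribution-bound}, with a constant depending only on $r$; a finite initial range is absorbed because $\phi_r$ is bounded there. Conversely, assume \eqref{cc:gen-distribution-bound} and evaluate it at $u=s_N(T)$ (more precisely, at $u$ slightly below $s_N(T)$). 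Then $N\le d_T(u)$, and solving for $u$ gives $s_N(T)\lesssim_r A\phi_r(N)$. This is the same monotone inversion used in \Cref{cc:prop:weak-detection}.

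\emph{Quasi-Banach ideal, little core, and inclusion.}
\begin{itemize}
\item \emph{Quasi-triangle and ideal property.} Combine $s_{2N-1}(S+T)\le s_N(S)+s_N(T)$ with the doubling bound $\phi_r(2N-1)\asymp\phi_r(N)$. The ideal and symmetry properties follow from $s_N(BTC)\le\|B\|s_N(T)\|C\|$.
\item \emph{Completeness.} Take a Cauchy sequence. It is Cauchy in operator norm, so it has a compact limit. Lower semicontinuity of each $s_N$ under norm limits then puts the limit in $\mathfrak W^{[r]}$ with convergent quasi-norm, and Fatou applied along the tail gives convergence in $\mathfrak W^{[r]}$.
\item \emph{Little space equals closure of finite rank.} For $T\in\mathfrak W^{[r]}_0$, let $T_k$ be its rank-$k$ Schmidt truncation. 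Then $s_N(T-T_k)=s_{N+k}(T)$, so
\[
\sup_N\frac{s_{N+k}(T)}{\phi_r(N)}\lesssim\sup_{N}\frac{s_{N+k}(T)}{\phi_r(N+k)}\cdot\frac{\phi_r(N+k)}{\phi_r(N)}.
\]
The ratio $\phi_r(N+k)/\phi_r(N)$ is at most a constant depending only on $r$. The first factor is small for large $k$ only on the range $N\ge k$. For $N<k$, use $s_{N+k}\le s_k$ together with $k\,s_k(T)/(\log k)^{r-1}\to0$ and $\phi_r(N)\ge\phi_r(k)$.
\item \emph{Closedness of the little space.} For the reverse direction, use the quasi-triangle inequality with the same index-doubling to show that a limit of finite-rank operators, or of little elements, keeps $N s_N/(\log)^{r-1}\to0$. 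This also proves that the little space is closed.
\item \emph{Inclusion $\mathfrak W^{[r]}_0\subset\mathfrak W^{[r+1]}_0$.} This is immediate, since $\phi_r\le\phi_{r+1}$.
\end{itemize}

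\emph{Multi-flattening spaces.} Fix $n$. A Cauchy sequence in $\|\cdot\|_{\mathfrak W^{[r]}_{\rm mf}(n)}$ is Cauchy in $H^{\otimes n}$ and, for each of the finitely many cuts, in $\mathfrak W^{[r]}$. The Hilbert limit $K$ has flattening $\operatorname{Mat}_S(K)$ equal to the Hilbert--Schmidt limit of the flattenings. Because $\mathfrak W^{[r]}\hookrightarrow\mathcal S_2$ continuously, this Hilbert--Schmidt limit coincides with the $\mathfrak W^{[r]}$ limit, which gives completeness. Tensor-leg permutations carry the flattening across $S$ to the flattening across $\pi^{-1}S$, up to domain and range unitaries, as in \Cref{cc:gen-prop-permutation-stable-tensor-algebra}. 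They therefore preserve both the big and the little conditions, and are isometries of both spaces.

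\emph{Density of finite-dimensional tensors in the little multi-flattening space (main step).} Closedness of the little subspace, and the fact that finite tensors are little, follow cutwise from the scalar case. The obstacle is density: a single finite-dimensional tensor must approximate all cuts simultaneously. The plan is to compress by $P_E^{\otimes n}$, where $P_E$ is a finite-rank projection with $P_E\to I$ strongly. Write
\[
K-P_E^{\otimes n}K=\sum_j P_E^{\otimes(j-1)}\otimes(I-P_E)\otimes I^{\otimes(n-j)}K .
\]
Each cut flattening of a summand is a two-sided compression of $\operatorname{Mat}_S(K)$ by contractions, one of which contains the factor $(I-P_E)$ acting on a single leg. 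For little $T=\operatorname{Mat}_S(K)$, split $T=T_k+(T-T_k)$ with $T_k$ finite rank. The tail $T-T_k$ is small uniformly in the compression, by the ideal inequality. On the finite-rank part, $(I-P_E)$ acting on one leg tends to zero in operator norm; for a fixed finite rank, norm convergence of the operators implies convergence of $\mathfrak W^{[r]}$-norms. This gives density cutwise, uniformly over the finitely many cuts, and the Hilbert term converges trivially.
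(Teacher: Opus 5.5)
Your proposal is correct and follows essentially the same route as the paper: monotone inversion of $\phi_r$ for the distribution dictionary, Weyl's index-doubling inequality plus Fatou for the quasi-Banach completeness, and Schmidt truncation with a finite-rank perturbation estimate for the little core. For density in the multi-flattening space, both you and the paper compress by $P^{\otimes n}$ and approximate each flattening by a finite-rank operator. There are two cosmetic differences. The paper writes $\operatorname{Mat}_S(P_m^{\otimes n}K)=P_m^{\otimes S^c}\operatorname{Mat}_S(K)P_m^{\otimes S}$ directly, where you use a telescoping sum. Also, your inequality $\phi_r(N)\ge\phi_r(k)$ for $N<k$ holds only up to a constant $c_r$, because $\phi_r$ is only eventually decreasing; this is harmless.
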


\begin{proof}
If \(s_N(T)\leq A\phi_r(N)\), then \(d_T(u)\geq N\) implies

\[
 \frac{N}{(\log(e+N))^{r-1}}<\frac Au.
\]

The increasing function on the left has an inverse comparable to
\(x\mapsto x(\log(e+x))^{r-1}\); this proves
\eqref{cc:gen-distribution-bound}.  Conversely, apply that bound with
\(u=s_N(T)/2\), for which \(d_T(u)\geq N\), and invert once more.

The Weyl inequality

\[
 s_{2N-1}(S+T)\leq s_N(S)+s_N(T)
\]

and the doubling relation \(\phi_r(2N)\asymp_r\phi_r(N)\) give a fixed
quasi-triangle inequality.  Symmetry and the ideal property follow from the
corresponding singular-value inequalities.  A Cauchy sequence in this
quasi-norm is Cauchy in operator norm and converges to a compact operator
\(T\).  Continuity of each \(s_N\) in operator norm and Fatou's lemma give

\[
 \|T_j-T\|_{\mathfrak W^{[r]}}
 \leq\liminf_{k\to\infty}
 \|T_j-T_k\|_{\mathfrak W^{[r]}},
\]

which proves completeness.

Let \(T^{(m)}\) be the truncation to the first \(m\) singular directions.
If

\[
 \varepsilon_N(T):=\frac{Ns_N(T)}{(\log(e+N))^{r-1}}\longrightarrow0,
\]

then

\[
 \|T-T^{(m)}\|_{\mathfrak W^{[r]}}
 \leq C_r\sup_{k\geq m+1}\varepsilon_k(T)\longrightarrow0.
\]

Conversely, if \(F\) has finite rank and
\(\|T-F\|_{\mathfrak W^{[r]}}\leq\varepsilon\), the rank-perturbation
inequality gives

\[
 \limsup_{N\to\infty}
 \frac{Ns_N(T)}{(\log(e+N))^{r-1}}
 \leq C_r\varepsilon.
\]

Approximation proves the little condition, and also its closedness.  The
inclusion into the next little grade follows from the extra logarithm in the
denominator.

For multi-flattenings, let \(K_j\) be Cauchy in
\eqref{cc:gen-multiflat-norm}.  Its Hilbert limit is a tensor \(K\).  For
each cut, completeness of \(\mathfrak W^{[r]}\) gives an operator limit,
while Hilbert convergence gives the same limit in \(\mathcal S_2\), hence
in operator norm.  Uniqueness identifies it with
\(\operatorname{Mat}_S(K)\).  There are finitely many cuts, so the whole
sequence converges in the stated quasi-norm.

If \(K\) belongs to the little space and \(P_m\uparrow I\) are finite-rank
orthogonal projections, put \(K_m=P_m^{\otimes n}K\).  For every cut,

\[
 \operatorname{Mat}_S(K_m)
 =P_m^{\otimes S^c}\operatorname{Mat}_S(K)P_m^{\otimes S}.
\]

Approximate the middle operator in \(\mathfrak W^{[r]}\) by a finite-rank
operator and use uniform convergence of \(P_m\) on its finite-dimensional
source and range.  This proves \(K_m\to K\) in every cut and in Hilbert
norm.  The converse follows because each flattening of a finite-dimensional
tensor has finite rank and the little ideal is closed.  Finally, a
tensor-leg permutation merely relabels the cuts and conjugates the
flattenings by unitaries.
\end{proof}

\begin{lemma}[Finite logarithmic addition]
\label{cc:gen-lemma-finite-addition}
For \(r\geq1\), \(m\geq2\), and compact operators \(T_1,\ldots,T_m\),

\begin{equation}
 \left\|\sum_{j=1}^mT_j\right\|_{\mathfrak W^{[r]}}
 \leq C_r(1+\log m)^r
 \sum_{j=1}^m\|T_j\|_{\mathfrak W^{[r]}}.
 \label{cc:gen-finite-addition}
\end{equation}

The same estimate, with a constant also depending on the fixed tensor
degree, holds in every multi-flattening space.
\end{lemma}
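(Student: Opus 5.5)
The plan is to work with the distribution function rather than the singular values directly, since addition of operators behaves well under counting. By \Cref{cc:gen-lemma-critical-spaces}, the quasi-norm \(\|T\|_{\mathfrak W^{[r]}}\) is equivalent, with constants depending only on \(r\), to the least \(A\) with \(d_T(u)\le C_r(A/u)(\log(e+A/u))^{r-1}\). After homogeneity, put \(a_j:=\|T_j\|_{\mathfrak W^{[r]}}\) and normalize so that \(\sum_ja_j=1\). Each \(T_j\) then satisfies \(s_N(T_j)\le a_j\phi_r(N)\). The goal is a bound of the form \(s_N(\sum_jT_j)\lesssim_r(1+\log m)^r\phi_r(N)\).

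First, split by size of the \(a_j\). I would not use the naive iterated quasi-triangle inequality, since it gives a constant \(C^{\log_2 m}\), which is polynomial in \(m\) rather than logarithmic. Instead I use the rank-splitting Weyl inequality in its multi-term form,
\[
 s_{1+\sum_j(N_j-1)}\Bigl(\sum_jT_j\Bigr)\le\sum_js_{N_j}(T_j),
\]
with an allocation of indices \(N_j\) chosen for each target \(N\). For a target index \(N\ge m\), take \(N_j:=\lfloor a_jN\rfloor+1\); then \(\sum_j(N_j-1)\le N\). The right side is at most
\[
 \sum_ja_j\phi_r(a_jN+1)\lesssim\sum_j\frac{(\log(e+N))^{r-1}}{N}\cdot\frac{a_j}{a_j+1/N}.
\]
The factor \(a_j/(a_j+1/N)\) is at most \(1\), so the terms sum to at most \(m\). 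That crude estimate is too weak, and the key step is to do better. Bound \(a_j/(a_j+1/N)\le\min(1,a_jN)\) and group the \(j\) dyadically by \(a_j\in(2^{-k-1},2^{-k}]\). Only scales with \(2^{-k}\gtrsim1/N\) contribute with factor \(1\), and there are \(O(\log(e+N))\) such scales. Within one scale, the \(\sum a_j\le1\) constraint caps the count at \(2^{k+1}\), and the contribution is then \(\min(\#,\ldots)\).

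This is the step I expect to be the main obstacle. The allocation \(N_j\propto a_j\) yields \(\phi_r(N)\) times the number of effectively active terms, and to keep this logarithmic in \(m\) the allocation must be refined. The refinement I would try first is \(N_j\propto a_j\) within each dyadic scale, with the scales weighted equally, each receiving roughly \(N/L\) indices where \(L\approx1+\log_2 m\). Terms with \(a_j<1/m^2\) are discarded into a remainder, using \(\|T_j\|\le a_j\) in operator norm together with the finite-count bound.

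Inside a scale the \(a_j\) agree up to a factor \(2\), so with \(n_k\) terms each receiving \(N/(Ln_k)\) indices, the sum is
\[
 n_k\cdot2^{-k}\,\phi_r\bigl(N/(Ln_k)\bigr)\lesssim L\,(\log(e+N))^{r-1}/N.
\]
Summing over the \(L\) scales gives \(L^2(\log)^{r-1}/N\), and this needs sharpening to \(L^r\). Here I would use \(\phi_r(N/L)\le L\,\phi_r(N)\) only once and absorb the remaining factors, since \((\log(e+N/L))^{r-1}\le(\log(e+N))^{r-1}\). The factor \(L\) from the scale count and the factor \(L\) from index dilution combine to \(L^2\le L^r\) when \(r\ge2\). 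For \(r=1\), the classical Lorentz–\(\mathcal S_{1,\infty}\) addition lemma gives the \(1+\log m\) loss directly: \(\mathcal S_{1,\infty}\) is normable on finite sums up to \(\log m\), since \(\sum_{k\le N}s_k\) is subadditive and \(\|T\|_{1,\infty}\le\sup_N N^{-1}\sum_{k\le N}s_k\le(1+\log N)\|T\|_{1,\infty}\), and one restricts to \(N\lesssim m\)-relevant ranges. For small \(N<m\) I would handle the bound separately using the \(\ell^1\)-type Ky Fan subadditivity of partial sums \(\sum_{k\le N}s_k\), which costs only \(\log\) factors.

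Finally, for multi-flattening spaces, apply the operator estimate cut by cut. The Hilbert term is genuinely normed, and the maximum over the finitely many cuts gives a constant depending on \(n\) only through the norm equivalence.
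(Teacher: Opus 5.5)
\textbf{There is a genuine gap, at the step you flag as the main obstacle.} No allocation of the $N_j$ in the multi-term Weyl inequality can give a logarithmic loss. That inequality only uses the single-index information $s_{N_j}(T_j)\le a_j\phi_r(N_j)$, and this is too weak when many summands are comparable.

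Take $a_j=1/m$ for all $j$ and a target index $N\ge m^2$. Any admissible allocation has $\sum_jN_j\le N+m\le 2N$, so at least $m/2$ of the $N_j$ satisfy $N_j\le 4N/m$. Since $\phi_r(x)\ge c_r\phi_r(y)$ whenever $1\le x\le y$, the right-hand side is at least $\tfrac{c_r}{2}\phi_r(4N/m)$. Using $4N/m\ge4\sqrt N$ and $(e+4\sqrt N)^2\ge e+N$, this is at least $c_r2^{-r-2}\,m\,\phi_r(N)$. For $r=1$ the same conclusion is just Cauchy--Schwarz: $\sum_ja_j/N_j\ge(\sum_j\sqrt{a_j})^2/\sum_jN_j$. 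So the Weyl route is capped at a loss of order $m$, which is exactly your ``crude'' bound.

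Your dyadic computation hides this. With $n_k$ terms of size $\approx2^{-k}$, each receiving $N/(Ln_k)$ indices, the contribution is $\approx L\,n_k^2\,2^{-k}(\log(e+N))^{r-1}/N$, not $L(\log(e+N))^{r-1}/N$. The constraint $\sum_ja_j\le1$ only gives $n_k2^{-k}\lesssim1$, so $n_k^22^{-k}$ can be as large as $n_k\approx m$. Your $r=1$ remark does not close the gap either. The bound $\sigma_N(T)=\sum_{k\le N}s_k(T)\le(1+\log N)\|T\|_{1,\infty}$ loses $\log N$, not $\log m$, and the range $N\gg m$ is precisely the one left open.

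\textbf{The missing idea} is to remove a bounded-rank head from each summand and then control a middle band of each spectrum by a genuinely subadditive trace-type functional. The paper fixes a threshold $u$ and splits each $T_j$ into its parts with singular values in $(u/4,\infty)$, $(u/(4m),u/4]$ and $[0,u/(4m)]$.
\begin{itemize}
\item The heads have total rank bounded through the distribution bound with $A=\sum_ja_j$.
\item The tails sum to operator norm at most $u/4$.
\item The middle parts are estimated in $\mathcal S_1$ by a layer-cake integral over a window of logarithmic length $\log(4m)$; this is where $(1+\log m)^r$ comes from.
\end{itemize}
Chebyshev in $\mathcal S_1$ then bounds $d_{\sum_jT_j}(u)$, and the converse half of the distribution dictionary concludes.

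The same mechanism can be phrased in your index language. For $N\ge m$, delete the top $K=\lceil N/m\rceil$ singular directions of each $T_j$ (total rank at most $2N$) to get remainders $G_j$. Weyl's rank inequality and Ky Fan subadditivity of $\sigma_N$ give $s_{3N}(\sum_jT_j)\le N^{-1}\sum_j\sigma_N(G_j)$. Moreover $\sigma_N(G_j)\le a_j\sum_{K<k\le K+N}\phi_r(k)\le a_j\log(1+m)(\log(e+2N))^{r-1}$. For $N<m$, apply Ky Fan directly to the full sum. This version even yields the factor $1+\log m$ in place of $(1+\log m)^r$.
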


\begin{proof}
Write \(a_j=\|T_j\|_{\mathfrak W^{[r]}}\) and
\(A=\sum_ja_j\).  Fix \(u>0\).  Split the singular expansion of \(T_j\)
as \(H_j+M_j+R_j\), according as its singular values lie in

\[
 (u/4,\infty),\qquad (u/(4m),u/4],\qquad[0,u/(4m)].
\]

Then \(\|\sum_jR_j\|\leq u/4\).  By
\eqref{cc:gen-distribution-bound},

\begin{equation}
 \sum_j\operatorname{rank}H_j
 \leq C_r\frac Au
 \left(\log\left(e+\frac Au\right)\right)^{r-1}.
 \label{cc:gen-high-rank}
\end{equation}

For \(a=u/(4m)\), \(b=u/4\), scalar layer-cake and
\eqref{cc:gen-distribution-bound} give

\begin{align*}
 \sum_j\|M_j\|_1
 &\leq a\sum_jd_{T_j}(a)
 +\sum_j\int_a^bd_{T_j}(t)\,\mathrm dt\\
 &\leq C_rA(1+\log m)^r
 \left(\log\left(e+\frac Au\right)\right)^{r-1}.
\end{align*}

Here one uses \(a_j\leq A\) and

\[
 (x+y)^r-x^r\leq C_r(1+y)^r(1+x)^{r-1},
 \qquad x,y\geq0,
\]

after changing variables to logarithmic scale.  If
\(H=\sum H_j\), \(M=\sum M_j\), and \(R=\sum R_j\), then

\[
 d_{H+M+R}(u)
 \leq\operatorname{rank}H+d_M(3u/4)
 \leq\operatorname{rank}H+\frac{4}{3u}\|M\|_1.
\]

Together with \eqref{cc:gen-high-rank}, this is
\eqref{cc:gen-distribution-bound} for the sum, with parameter
\(C_r(1+\log m)^rA\).  The converse half of
\Cref{cc:gen-lemma-critical-spaces} proves
\eqref{cc:gen-finite-addition}.  Apply it to the finitely many flattenings
and use the ordinary Hilbert triangle inequality for the tensor term.
\end{proof}

\begin{theorem}[Logarithmic tensor calculus and quasi-Banach sewing]
\label{cc:gen-theorem-product-sewing}
For fixed integers \(r,s\geq1\),

\begin{align}
 \|A\otimes B\|_{\mathfrak W^{[r+s]}}
 &\leq C_{r,s}
 \|A\|_{\mathfrak W^{[r]}}
 \|B\|_{\mathfrak W^{[s]}},
 \label{cc:gen-log-product}\\
 \|A\otimes C\|_{\mathfrak W^{[r]}}
 &\leq C_r
 \|A\|_{\mathfrak W^{[r]}}\|C\|_{\mathcal S_1}.
 \label{cc:gen-trace-product}
\end{align}

If one positive-grade factor is little, the corresponding output is little.
The same assertions hold flattening by flattening for concrete tensor
products in every fixed tensor degree.  First-order Hilbert vectors and
finite-rank factors have zero logarithmic cost.

Let \(B_{s,u,t}\) be a closed three-increment with values in
\(\mathfrak W_{\mathrm{mf}}^{[r]}(H^{\otimes n})\), and suppose

\[
 \|B_{s,u,t}\|_{\mathfrak W_{\mathrm{mf}}^{[r]}(n)}
 \leq C\omega(s,t)^\gamma,
 \qquad \gamma>1,
\]

for a continuous control \(\omega\).  There is a unique two-increment
\(\Lambda B\) with \(\delta\Lambda B=B\) and

\begin{equation}
 \|\Lambda B_{s,t}\|_{\mathfrak W_{\mathrm{mf}}^{[r]}(n)}
 \leq C_{n,r,\gamma}C\omega(s,t)^\gamma.
 \label{cc:gen-sewing-bound}
\end{equation}

If all defects are little, every sewn increment is little.
\end{theorem}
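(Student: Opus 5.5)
The proof splits into three parts: the product estimates \eqref{cc:gen-log-product}--\eqref{cc:gen-trace-product}, the little-grade and flattening refinements, and the sewing lemma.

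\textbf{Product estimate \eqref{cc:gen-log-product}.} I would work with log-counting functions. After normalizing \(\|A\|_{\mathfrak W^{[r]}}=\|B\|_{\mathfrak W^{[s]}}=1\), \Cref{cc:gen-lemma-critical-spaces} gives the envelope
\[
 \mathcal N_A(R)\le C e^{R}(1+R)^{r-1},\qquad
 \mathcal N_B(R)\le Ce^R(1+R)^{s-1}.
\]
The exact convolution identity from the proof of \Cref{cc:thm:tensor-critical-arithmetic} reads
\[
 \mathcal N_{A\otimes B}(R)=\int\mathcal N_A(R-y)\,d\Lambda_B(y).
\]
I would insert the envelope for \(\mathcal N_A\) and integrate by parts against \(e^{-y}d\Lambda_B\), using the threshold-mass bound \(U_B(y)\lesssim(1+y)^{s}\). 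This is the upper-bound version of \eqref{cc:eq:threshold-mass}, obtained by the same Stieltjes computation with \(O\) in place of \(\sim\). The result is \(\mathcal N_{A\otimes B}(R)\lesssim e^R(1+R)^{r+s-1}\). The two tails \(y<0\) and \(y>R\) are handled as in the resonant case of that theorem; since all norms are \(\ge\) operator norm, the log-spectra are bounded below. Monotone inversion (\Cref{cc:prop:weak-detection} or \Cref{cc:gen-lemma-critical-spaces}) then returns the \(\mathfrak W^{[r+s]}\) bound.

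\textbf{Trace estimate \eqref{cc:gen-trace-product}.} This is the non-asymptotic form of \Cref{cc:lem-trace-decoration-tauberian}. From \(N_{A\otimes C}(u)=\sum_jN_A(u/c_j)\) and the global envelope \(N_A(v)\le C_Av^{-1}(\log(e+1/v))^{r-1}\), I would bound each term by \(c_j\) times the envelope at \(u\). For \(c_j\le\|C\|\) the ratio of logarithms is harmless after scaling; summing gives \(\|C\|_1\) times the envelope.

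\textbf{Little outputs, zero-cost factors, and flattenings.} For the little assertion, I would approximate the little factor in its quasi-norm by finite-rank operators (\Cref{cc:gen-lemma-critical-spaces}). A finite-rank factor times a grade-\(s\) operator is a finite sum of grade-\(s\) operators, hence little in grade \(r+s\) by the inclusion \(\mathfrak W_0^{[s]}\subset\mathfrak W_0^{[r+s]}\) once its extra logarithm is used, or little in grade \(r\) for the trace product. Bilinear continuity together with closedness of the little ideal finishes the argument. Zero cost of Hilbert vectors and finite-rank factors is the case where the decoration is rank-one or finite-rank, combined with \Cref{cc:gen-lemma-finite-addition}. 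Flattening by flattening, \eqref{cc:gen-flattening-tensor-factorization} writes each cut of \(K\otimes L\) as a tensor product of cuts of \(K\) and \(L\) up to unitaries. A trivial cut is a rank-one Hilbert factor of zero cost, and the Hilbert term of the norm is multiplicative.

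\textbf{Sewing.} The sewing statement is the main obstacle, because the space is only quasi-Banach. I would follow the standard dyadic construction. Put \(\Lambda B_{s,t}:=\lim_k(\text{sum of defects over the level-}k\text{ refinement})\), and control the level-\(k\) correction as a sum of \(2^k\) defects. A naive quasi-triangle estimate would lose a factor \(2^{k}\), and \Cref{cc:gen-lemma-finite-addition} alone loses \(k^r\). To avoid both, I would use the Aoki--Rolewicz \(\vartheta\)-power renorming of \Cref{cc:thm-critical-split-completion} for the ideal \(\mathfrak W^{[r]}\) and for each flattening.

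The difficulty is that \(\vartheta<1\) would require \(\vartheta\gamma>1\). Instead I would use the Young/Gubinelli point-removal scheme: remove one partition point at a time, choosing a point with \(\omega(t_{i-1},t_{i+1})\le 2\omega(s,t)/(m-1)\). Each removal costs \(\lesssim(\omega(s,t)/m)^\gamma\), and the number of points halves per block. Grouping the removals dyadically gives level-\(k\) blocks of \(2^k\) terms each \(\lesssim2^{-k\gamma}\omega^\gamma\). \Cref{cc:gen-lemma-finite-addition} bounds each block by \(C_r(1+k)^r2^{k(1-\gamma)}\omega^\gamma\). Summing the geometrically decaying blocks in the \(\vartheta\)-power metric converges because \((1+k)^{r\vartheta}2^{k\vartheta(1-\gamma)}\) is summable. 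This yields existence, the bound \eqref{cc:gen-sewing-bound}, and independence of the partition by the usual mesh argument.

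Uniqueness holds because the difference of two solutions is additive and \(O(\omega^\gamma)\) with \(\gamma>1\). Subdividing and using the same block-summation bound forces it to zero. Little defects give little sums, and closedness of the little space passes this to the limit.
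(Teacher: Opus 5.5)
Your proof is correct. Its skeleton matches the paper's, and the differences lie in how the two main computations are carried out. The trace-decoration, little-grade, zero-cost and flattening parts coincide with the paper's arguments.

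\emph{Product estimate.} For \eqref{cc:gen-log-product} the paper counts index pairs $(i,j)$ directly. It splits the index plane into dyadic rectangles $2^k\le i<2^{k+1}$, $2^\ell\le j<2^{\ell+1}$ and sums the discrete beta convolution $\sum_{k\le L}(1+k)^{r-1}(1+L-k)^{s-1}$. You instead convolve the log-counting envelope of $A$ against $e^{-y}\,d\Lambda_B(y)$ and integrate by parts using $U_B(y)\lesssim(1+y)^s$. This is the continuous form of the same beta convolution. Your version reuses the Stieltjes machinery of \Cref{cc:thm:tensor-critical-arithmetic}; the paper's version is elementary and self-contained.

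\emph{Sewing.} The paper bisects by $\omega$-mass, while you use Young's point removal grouped into dyadic blocks. The decisive step is the same in both. \Cref{cc:gen-lemma-finite-addition} is applied to each block of $2^k$ defects, giving $(1+k)^r2^{k(1-\gamma)}\omega^\gamma$. The Aoki--Rolewicz $\vartheta$-power inequality is then used only across the countably many blocks. So you do not ``avoid'' the $k^r$ loss; you absorb it, which is exactly why no condition $\vartheta\gamma>1$ ever arises. Your scheme has a real advantage: it bounds the accumulated defect sum uniformly over \emph{arbitrary} partitions. That uniform bound is what one needs to show the limit is partition independent and hence that $\delta\Lambda B=B$, a step the paper's bisection sketch leaves implicit.

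Two points need care. First, the ``usual mesh argument'' does not transfer verbatim to a quasi-Banach space. Comparing a partition $P$ with a refinement produces one error term per interval of $P$. Applying \Cref{cc:gen-lemma-finite-addition} to all $m$ of them costs a factor $(1+\log m)^r$, and the $\omega$-mesh does not control $m$. Instead, group the intervals of $P$ by the dyadic size of $\omega(t_i,t_{i+1})$. By superadditivity there are fewer than $2^{j+1}$ intervals with $\omega(t_i,t_{i+1})>2^{-j-1}\omega(s,t)$. Then sum the groups in the $\vartheta$-power metric exactly as in your existence step; the tail over small scales tends to zero with the mesh. Second, in the little-grade argument the inclusion you need is $\mathfrak W^{[s]}\subset\mathfrak W^{[r+s]}_0$, which holds because $r\ge1$ supplies extra logarithms. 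The inclusion $\mathfrak W^{[s]}_0\subset\mathfrak W^{[r+s]}_0$ is not enough, since the non-approximated factor $B$ need not be little. Your remark about ``its extra logarithm'' suggests you intend the former.
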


\begin{proof}
After normalization, write

\[
 s_i(A)\leq\frac{(\log(e+i))^{r-1}}i,
 \qquad
 s_j(B)\leq\frac{(\log(e+j))^{s-1}}j.
\]

The singular values of \(A\otimes B\) are the decreasing rearrangement of
their pairwise products.  Dividing the \((i,j)\)-plane into the rectangles
\(2^k\leq i<2^{k+1}\), \(2^\ell\leq j<2^{\ell+1}\), and summing first over
the rectangles for which the displayed product exceeds \(u\), gives

\[
 \#\{(i,j):s_i(A)s_j(B)>u\}
 \leq C_{r,s}u^{-1}
 \bigl(\log(e+u^{-1})\bigr)^{r+s-1}.
\]

Indeed, on the \((k,\ell)\) rectangle the product is bounded by
\(C_{r,s}(1+k)^{r-1}(1+\ell)^{s-1}2^{-k-\ell}\), while the rectangle has
at most \(2^{k+\ell+2}\) points; summation over the terminal diagonal and
then over its predecessors is the discrete beta convolution

\[
 \sum_{k=0}^{L}(1+k)^{r-1}(1+L-k)^{s-1}
 \leq C_{r,s}(1+L)^{r+s-1}.
\]

The distribution dictionary proves \eqref{cc:gen-log-product}.  For a
trace-class factor, use

\[
 d_{A\otimes C}(u)
 =\sum_jd_A\!\left(\frac{u}{s_j(C)}\right)
\]

and \(\sum_js_j(C)=\|C\|_1\) in
\eqref{cc:gen-distribution-bound}; monotonicity of the logarithm gives
\eqref{cc:gen-trace-product}.  If, for example,
\(A\in\mathfrak W^{[r]}_0\), choose finite-rank \(A_m\to A\) in
\(\mathfrak W^{[r]}\).  The big product estimate gives
\(A_m\otimes B\to A\otimes B\) in
\(\mathfrak W^{[r+s]}\).  Each \(A_m\otimes B\) is a finite union of
scaled copies of the singular sequence of \(B\); the additional \(r\)
logarithmic powers in the denominator therefore put it in
\(\mathfrak W^{[r+s]}_0\).  Closedness proves the first little
assertion.  The trace-class case is identical, since a finite-rank tensor
product with a trace-class operator is trace class.  A tensor
product of flattenings represents every flattening of a concrete tensor
product, up to leg unitaries, so the multi-flattening assertions follow.

For sewing, bisect each interval successively by \(\omega\)-mass.  At
level \(k\), the difference between two consecutive approximants is a sum
of at most \(2^k\) defects, each bounded by
\(C2^{-k\gamma}\omega(s,t)^\gamma\).  By
\Cref{cc:gen-lemma-finite-addition},

\[
 \|D_k\|_{\mathfrak W_{\mathrm{mf}}^{[r]}(n)}
 \leq C_{n,r}(1+k)^r2^{-k(\gamma-1)}
 C\omega(s,t)^\gamma.
\]

By the Aoki--Rolewicz theorem \cite{KaltonPeckRoberts84}, the complete
quasi-Banach space in
\Cref{cc:gen-lemma-critical-spaces} has an equivalent \(p\)-quasi-norm,
\(0<p\leq1\), satisfying
\(\|x+y\|_*^p\leq\|x\|_*^p+\|y\|_*^p\).  The displayed geometric
decay is \(p\)-summable for every \(\gamma>1\), proving convergence and
\eqref{cc:gen-sewing-bound}.  Closedness of the little space gives the
little assertion.  The difference of two candidates is additive and has
order \(\omega^\gamma\); subdivision into \(2^k\) equal-control pieces and
\Cref{cc:gen-lemma-finite-addition} makes its norm tend to zero, proving
uniqueness.
\end{proof}

\subsection{Exact ancestry layers and controlled entrances}
\label{cc:gen-subsection-critical-class}

We now specialize the full normal filtration of
\eqref{cc:eq-exact-layer} to a fixed tensor degree over a Hilbert
alphabet.  The stable real decomposition, compatible ancestry flag, and
adapted complements are those of
\Cref{cc:thm-schur-exact-projectors}; throughout this subsection we use
the same fixed real-form-compatible splitting chosen there.  In its
notation set, explicitly,
\[
 F_rM_{\lambda,n}:=M^{\geq r}_{\lambda,n},
 \qquad
 G_{\lambda,r}:=M_{\lambda,n,r},
 \qquad
 F_rM_{\lambda,n}
 =G_{\lambda,r}\oplus F_{r+1}M_{\lambda,n}.
\]
Thus
\[
 \operatorname{gr}^{\mathrm{anc}}_rM_{\lambda,n}
 :=F_rM_{\lambda,n}/F_{r+1}M_{\lambda,n}
 \simeq G_{\lambda,r}.
\]
The theorem's \(P_{\lambda,r}\) projects onto
\({\mathbf S}_{\lambda}H_{\mathbb C}\otimes G_{\lambda,r}\), kills
\(F_{r+1}{\mathfrak L}_n\), and is a real finite permutation polynomial
natural in the following precise sense: for every bounded map
\(A:H\to K\),
\[
 A^{\otimes n}P_{\lambda,r}^{H}
 =P_{\lambda,r}^{K}A^{\otimes n}
 \quad\hbox{on }L_n(H).
\]
For \(n\geq2\), \(L_n(H)\subseteq D(H)=F_1L(H)\), so
\(F_0L_n(H)=F_1L_n(H)\) and the grade-zero multiplicity complement is
\(G_{\lambda,0}=0\).

Since every leg permutation is an isometry of the spaces in
\Cref{cc:gen-lemma-critical-spaces}, write the fixed projector as
\(P_{\lambda,r}=\sum_{j=1}^{m_P}c_jU_j\), where the \(U_j\) are leg
permutations.  Applying the fixed-\(q\), finite-sum estimate of
\Cref{cc:gen-lemma-finite-addition} (and the ordinary triangle
inequality for the Hilbert component) gives, for every
\(q\in\mathbb N_{\geq1}\),
\begin{equation}
 \|P_{\lambda,r}K\|_{\mathfrak W_{\mathrm{mf}}^{[q]}(n)}
 \leq C_{n,\lambda,r,P,q}
 \|K\|_{\mathfrak W_{\mathrm{mf}}^{[q]}(n)}.
 \label{cc:gen-projector-multiflat-bound}
\end{equation}
Here \(C_{n,\lambda,r,P,q}<\infty\) depends on the fixed finite
permutation polynomial and on \(q\), but not on the tensor being
projected or on the ambient Hilbert-space dimension; for example it may
be chosen as a fixed-degree multiple of
\[
 \left(1+C_q(1+\log m_P)^q\right)
 \sum_{j=1}^{m_P}|c_j|.
\]

\begin{definition}[Controlled critical normal entrances]
\label{cc:gen-definition-critical-class}
Fix \(\beta\in(1/3,1/2)\) and \(T>0\).  An element of
\(\mathfrak C_{\beta}^{\mathrm{crit}}(T)\) is a real Hilbert space
\(H\), a continuous control \(\omega\) on \([0,T]\), and a
multiplicative step-two geometric increment
\[
 {\mathbf X}^{(2)}_{s,t}
 =
 \left(
  1,\,
  X_{s,t},\,
  \frac12X_{s,t}^{\otimes2}+A_{s,t}
 \right)
\]
such that
\begin{equation}
 \omega(0,T)\leq1,\qquad
 \|X_{s,t}\|_H\leq\omega(s,t)^\beta,\qquad
 \|A_{s,t}\|_{\mathfrak W_{\mathrm{mf}}^{[1]}(2)}
 \leq\omega(s,t)^{2\beta}.
 \label{cc:gen-unit-critical-control}
\end{equation}
The class ranges over finite-dimensional and separable
infinite-dimensional \(H\).  A random realization is admissible on an
event \(G\) when \eqref{cc:gen-unit-critical-control} holds pathwise on
\(G\) with one deterministic control.  Spatial dilation by \(\rho\)
sends \(X,A,L_n\) to \(\rho X,\rho^2A,\rho^nL_n\); it therefore changes
constants but not logarithmic grade.
\end{definition}

\begin{proposition}[Entrance-to-Rees bridge]
\label{cc:prop-entrance-to-Rees-bridge}
Let \((X,A,Q)\) be a \(\beta\)-controlled critical second-order entrance.
Then its geometricized second level
\begin{equation}
 {\mathbf X}^{(2)}_{s,t}
 :=\left(1,X_{s,t},\tfrac12X_{s,t}^{\otimes2}+A_{s,t}\right)
 \label{cc:eq-entrance-geometricization}
\end{equation}
is multiplicative.  After one deterministic spatial rescaling and a fixed
renormalization of the control, it belongs to
\(\mathfrak C_\beta^{\mathrm{crit}}(T)\).  The collision coordinate \(Q\)
is a trace-class decoration and does not alter the positive critical cost.
The construction commutes with bounded spatial maps, optional evaluation,
and ucp projection limits.
\end{proposition}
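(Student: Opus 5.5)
Multiplicativity comes first, purely algebraically, from the split Chen law \eqref{cc:eq-critical-split-Chen} (equivalently \eqref{cc:eq-critical-stopped-Chen}). For \(s\le u\le t\) the first levels are additive, \(X_{s,t}=X_{s,u}+X_{u,t}\), and \(A_{s,t}=A_{s,u}+A_{u,t}+\tfrac12X_{s,u}\wedge_{\mathrm{op}}X_{u,t}\). Expanding \(\tfrac12(X_{s,u}+X_{u,t})^{\otimes2}\) gives \(\tfrac12X_{s,u}^{\otimes2}+\tfrac12X_{u,t}^{\otimes2}+\operatorname{Sym}(X_{s,u}\otimes X_{u,t})\). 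Adding the antisymmetric correction \(\tfrac12(X_{s,u}\otimes X_{u,t}-X_{u,t}\otimes X_{s,u})\) then produces exactly the cross term \(X_{s,u}\otimes X_{u,t}\) of truncated tensor concatenation. Hence \(\mathbf X^{(2)}_{s,t}=\mathbf X^{(2)}_{s,u}\otimes\mathbf X^{(2)}_{u,t}\) in \(T^{(2)}(H)\). The geometric character is automatic because the symmetric part of the second level is \(\tfrac12X^{\otimes2}\). The coordinate \(Q\) never enters this computation; this is the algebraic content of the statement that it is a decoration.

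Next I will place the object in \(\mathfrak C^{\mathrm{crit}}_\beta(T)\). At degree two, \(\operatorname{Mat}_S\) for the single nontrivial cut is the operator \(A_{s,t}\) itself, up to the transpose for the other cut, which has the same singular values. Together with the \(\mathcal S_2\) bound displayed after \eqref{cc:gen-multiflat-norm}, this gives \(\|A\|_{\mathfrak W^{[1]}_{\mathrm{mf}}(2)}\le C\|A\|_{1,\infty}\) with a numerical \(C\). The \(\beta\)-controlled hypothesis yields \(\|X_{s,t}\|\le C_1\omega^\beta\) and \(\|A_{s,t}\|\le C_2\omega^{2\beta}\). I replace \(\omega\) by \(\tilde\omega=\omega/\omega(0,T)\), which remains a continuous control with \(\tilde\omega(0,T)\le1\) after the dichotomy \(\omega(0,T)=0\) is handled trivially, and I absorb the resulting powers into the constants. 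I then apply the dilation \(\rho\) of the definition, with \(\rho\le\min(C_1^{-1},C_2^{-1/2})\) after the control renormalization, which sends \((X,A)\mapsto(\rho X,\rho^2A)\) and yields \eqref{cc:gen-unit-critical-control}. For a random entrance the constants \(C_\beta\) are random, as in \Cref{cc:cor-critical-bracket-density-control}. The rescaling is then deterministic only on the events \(\{C_\beta\le c\}\), so admissibility will be stated on such events, whose probability tends to one. I expect this to be the main subtle point: the phrase ``one deterministic spatial rescaling'' must be read either for a deterministic implied constant in the definition of a controlled entrance or on the admissibility event \(G\), and I will commit to the latter.

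Trace-class decoration is handled by \Cref{cc:lem-trace-decoration-tauberian} and \eqref{cc:gen-trace-product}. Any tensoring or collision contribution by \(Q\in\mathcal S_1\) preserves the grade \(\mathfrak W^{[r]}\), and the little grade \(\mathfrak W^{[r]}_0\), so \(Q\) adds no positive critical cost. Moreover \(Q\) is not used in the geometricization, by the reconstruction \eqref{cc:eq-critical-Ito-reconstruction}, which differs from \(\mathbf X^{(2)}\) only in the symmetric \(-\tfrac12Q\) term.

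Naturality is checked as follows. For bounded \(L\), \(L_\#\) sends \((X,A)\) to \((LX,LAL^*)\), and \((L\otimes L)(\tfrac12X^{\otimes2}+A)=\tfrac12(LX)^{\otimes2}+LAL^*\), so geometricization commutes with \(L_\#\). Optional evaluation commutes because \(\mathbf X^{(2)}_{\sigma,\tau}\) is computed from the stopped anchor \eqref{cc:eq-critical-stopped-anchor} by the same polynomial map, and \Cref{cc:thm-critical-optional-quotient} identifies that anchor. Ucp projection limits pass through the geometricization because the map \((x,A)\mapsto\tfrac12x^{\otimes2}+A\) is continuous from \(H\times\mathcal S^0_{1,\infty}\) into \(\mathfrak W^{[1]}_{\mathrm{mf}}(2)\), by the rank-one bound \eqref{cc:eq-wedge-trace-bound}. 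Combined with \Cref{cc:cor-critical-stopped-projection}, this gives uniform convergence on the time triangle, including at moving stopping pairs.
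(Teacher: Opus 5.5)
Your proposal is correct and follows essentially the same route as the paper: the split Chen law gives the step-two geometric Chen identity, dilation and control rescaling normalize the constants, the reconstruction $\mathbb X^I=\tfrac12X^{\otimes2}+A-\tfrac12Q$ shows that $Q$ enters only as a trace-class (cost-zero) correction, and the remaining claims are inherited from naturality and optional/projection stability of the critical split target. The paper reads the implied constants of a $\beta$-controlled entrance as fixed, so your localization to events $\{C_\beta\le c\}$ is harmless but unnecessary; likewise, the decoration claim needs only the additive inclusion $\mathcal S_1\subset\mathfrak W^{[r]}_0$ applied to $\tfrac12Q$, not the tensor-decoration lemmas.
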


\begin{proof}
The split Chen law gives
\[
 A_{s,t}=A_{s,u}+A_{u,t}+\tfrac12X_{s,u}\wedge_{\mathrm{op}}X_{u,t}.
\]
Substitution into \eqref{cc:eq-entrance-geometricization} is exactly the
step-two geometric Chen identity.  The entrance bounds give
\eqref{cc:gen-unit-critical-control} up to fixed constants; spatial dilation
and multiplication of the control normalize those constants and its total
mass.  The reconstruction
\(\mathbb X^I=\frac12X^{\otimes2}+A-\frac12Q\) shows that geometricization
adds only the trace-class collision correction.  The remaining assertions
follow from the naturality and optional/projection stability of the critical
split target.
\end{proof}

\begin{definition}[Universal and minimax spectral depth]
\label{cc:gen-definition-minimax-depth}
Fix the stable natural exact-layer family
\[
 G^{\mathrm{Schur}}_{\lambda,n,r}(-):
 H\longmapsto
 {\mathbf S}_{\lambda}H_{\mathbb C}\otimes
 \operatorname{gr}^{\mathrm{anc}}_rM_{\lambda,n},
 \qquad r\geq1,
\]
assume that its stable multiplicity space is nonzero, and fix the
adapted projector \(P_{\lambda,r}\).  This is a polynomial-functor
type: evaluating it on a particular Hilbert space is not part of the
spectral-depth datum.  Its ancestry depth is
\(d_{\mathrm{anc}}(G^{\mathrm{Schur}}_{\lambda,n,r})=r\).  Its
universal critical spectral depth
\(d_{\mathrm{sp}}^{(\beta)}(G^{\mathrm{Schur}}_{\lambda,n,r})\) is the
least \(q\in\mathbb N_{\geq1}\) for which a constant
\(C_{G,P,\beta}\) satisfies
\begin{equation}
 \|P_{\lambda,r}L_n(s,t)\|_{\mathfrak W_{\mathrm{mf}}^{[q]}(n)}
 \leq C_{G,P,\beta}\omega(s,t)^{n\beta}
 \label{cc:gen-universal-grade}
\end{equation}
for every \(T\), every member of
\(\mathfrak C_{\beta}^{\mathrm{crit}}(T)\), and every \(s<t\).
The constant is uniform over the natural family, in particular
independent of the ambient Hilbert-space dimension.

An integer grade \(q\in\mathbb N_{\geq1}\) is called minimax sharp if
there are \(T_G,N_G,c_G,p_G>0\) and one fixed nontrivial cut
\(S_G\subset[n]\), chosen for the natural family before the spectral
window, with the following property.  For every \(L\geq N_G\) there is
a finite-dimensional random realization and an event \(G_L\) of
probability at least \(p_G\) on
which the realization belongs to
\(\mathfrak C_{\beta}^{\mathrm{crit}}(T_G)\) and
\begin{equation}
 s_N\!\left(
 \operatorname{Mat}_{S_G}
 (P_{\lambda,r}L_n(0,T_G))
 \right)
 \geq c_G\phi_q(N),
 \qquad N_G\leq N\leq L.
 \label{cc:gen-minimax-lower}
\end{equation}
All constants and algebraic data are independent of the ambient
dimension; only the finite block dimensions may depend on \(L\).  In particular,
minimax sharpness is not the assertion that every realization, or every
flattening of one realization, is extremal.
\end{definition}

\begin{theorem}[Chronological logarithm: universal upper bound]
\label{cc:gen-theorem-chronological-upper}
Every \((H,\omega,{\mathbf X}^{(2)})\in
\mathfrak C_{\beta}^{\mathrm{crit}}(T)\) has, for every fixed
\(N_*\), a unique multiplicative geometric extension through degree
\(N_*\).  If
\[
 L_n(s,t):=
 \pi_n\log_\otimes{\mathbf X}^{(\leq N_*)}_{s,t},
\]
then every nonzero positive exact layer satisfies
\begin{equation}
 \|P_{\lambda,r}L_n(s,t)\|_
 {\mathfrak W_{\mathrm{mf}}^{[r]}(n)}
 \leq C_{n,\lambda,r,\beta,P}\,
 \omega(s,t)^{n\beta}.
 \label{cc:gen-chronological-upper}
\end{equation}
If every \(A_{s,t}\) belongs to
\(\mathfrak W_{0,\mathrm{mf}}^{[1]}\), then every displayed output
belongs to \(\mathfrak W_{0,\mathrm{mf}}^{[r]}\).
\end{theorem}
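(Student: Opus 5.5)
Our plan is to build the extension by sewing one tensor degree at a time, and then to read off the grade of each exact layer from the Lie-word structure of the logarithm. For the extension, suppose the step-$(n-1)$ multiplicative extension has already been constructed, with $\mathbf X^{(k)}_{s,t}$ controlled in $\mathfrak W^{[\lfloor k/2\rfloor]}_{\mathrm{mf}}$-type norms by $\omega^{k\beta}$. Take the germ $\Xi_{s,t}=\sum_{0<j<n}\mathbf X^{(j)}_{s,u}\otimes\mathbf X^{(n-j)}_{u,t}$. Its three-increment defect is a finite sum of tensor products of lower levels, and its order is $\omega^{n\beta}$ with $n\beta>1$ once $n\ge3$, since $\beta>1/3$. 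The degree-$n$ level then comes from \Cref{cc:gen-theorem-product-sewing}, and sewing uniqueness gives uniqueness of the extension. The product estimates \eqref{cc:gen-log-product}--\eqref{cc:gen-trace-product} bound every cut of the defect: a level-one factor is a Hilbert vector and costs zero logarithms, while each area factor costs one. The defect of level $n$ therefore lies in $\mathfrak W^{[\lfloor n/2\rfloor]}_{\mathrm{mf}}$, and the sewn level lies in the same space. This gives a crude grade $\lfloor n/2\rfloor$ for the whole of $L_n$.

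The crude grade is too weak for layers with $r<\lfloor n/2\rfloor$, so the projection must be used. We would use the Lie structure of $\log_\otimes$. By the BCH/Chen expansion over a partition of $[s,t]$ (equivalently, writing $L$ as the sewn limit of BCH of the germs $X_{t_i,t_{i+1}}+A_{t_i,t_{i+1}}$), $L_n(s,t)$ is the limit of sums of iterated brackets in letters $X$ (degree one) and $A$ (degree two). Take a bracket monomial containing $k$ letters $A$. We claim its component lies in $F_kL_n$ modulo higher layers, in the following sense. Each $A$-letter is an element of $D=F_1L$, and a bracket of $X$'s of total length $m$ lies in $F_{\lceil\cdot\rceil}$ as well. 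More carefully, by \eqref{cc:eq-valuation-superadditive} a monomial lies in $F_{a}L$, where $a$ counts the $A$-letters plus the number of innermost $[X,X]$ pairs. The key point goes the other way. $P_{\lambda,r}$ kills $F_{r+1}L_n$ by \eqref{cc:eq-exact-projector-filtration}, so any monomial with more than $r$ area factors, or more than $r$ effective $D$-leaves, is annihilated by $P_{\lambda,r}$. Every surviving monomial therefore has at most $r$ critical-cost factors: the $A$'s cost one logarithm each, and $X\otimes X$ pieces are rank-one and cost zero. By \eqref{cc:gen-log-product}, each surviving monomial lies in $\mathfrak W^{[r]}_{\mathrm{mf}}$ with bound $\omega^{n\beta}$.

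The main obstacle is making this truncation rigorous at the level of the sewn limit, since the sewing for the bracket expansion produces infinite sums and the projector must be interchanged with the limit. We would proceed in four steps. First, apply $P_{\lambda,r}$ to the germ: it is a fixed permutation polynomial, so it is bounded on every $\mathfrak W^{[q]}_{\mathrm{mf}}$ by \eqref{cc:gen-projector-multiflat-bound}. Second, show that the projected germ $P_{\lambda,r}\pi_n\log(\text{germ})$ has its defect in $\mathfrak W^{[r]}_{\mathrm{mf}}$ with order $\omega^{n\beta}$. Here $\omega^{n\beta}$ is superlinear for $n\ge3$, and the case $n=2$ is the hypothesis $\|A\|\le\omega^{2\beta}$. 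Third, sew in $\mathfrak W^{[r]}_{\mathrm{mf}}$ rather than in the crude grade. Fourth, identify the result with $P_{\lambda,r}L_n$ using uniqueness of the sewing in the weaker crude-grade space.

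The little-ideal assertion follows the same route, using the little clauses of \Cref{cc:gen-theorem-product-sewing} and the closedness of $\mathfrak W^{[r]}_{0,\mathrm{mf}}$. The constant depends only on $n,\lambda,r,\beta,P$ through the fixed finite Lie expansion and the sewing constants, and it is independent of $\dim H$.
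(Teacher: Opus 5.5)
Your architecture matches the paper's: you sew the \(P_{\lambda,r}\)-projected defect directly in \(\mathfrak W^{[r]}_{\mathrm{mf}}\), then identify the result with \(P_{\lambda,r}L_n\), since the two share a coboundary and an additive increment of order \(\omega^{n\beta}\) with \(n\beta>1\) must vanish. The gap is in your second step, which carries all the content and is asserted rather than proved. The three-increment defect of \(L_n\) is the BCH defect
\[
 \delta L_n(s,u,t)=\mathcal B_n\bigl(L_{<n}(s,u),L_{<n}(u,t)\bigr).
\]
This is a fixed Lie polynomial in the \emph{sewn} lower logarithms \(L_j\), \(j<n\), not in the step-two letters \(X,A\).

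Your letter count is correct for a finite BCH expansion over a partition. However, the individual monomials are not summable as the mesh shrinks, so the truncation cannot be carried termwise to the limit. Their sizes involve sums such as \(\sum_i\|X_{t_i,t_{i+1}}\|\), which need not converge for \(\beta<1\), and \Cref{cc:gen-lemma-finite-addition} loses powers of \(\log m\) in any case. At the level of \(\mathcal B_n\), the only information you have established about \(L_j\) is the crude grade \(\lfloor j/2\rfloor\), and that is too weak. For \(n=5\) the defect contains \(\tfrac12[L_4(s,u),X_{u,t}]\), whose \(P_{(4,1),1}\)-projection sees \(P_{(3,1),1}L_4\). The crude bound puts this only in \(\mathfrak W^{[2]}_{\mathrm{mf}}\), giving cost two where the theorem asserts cost one.

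The missing idea is to strengthen the induction on \(n\) to the statement being proved. The hypothesis is: for every \(j<n\) and every exact layer, \(P_{\lambda',r'}L_j\) is bounded by \(C\omega^{j\beta}\) in \(\mathfrak W^{[r']}_{\mathrm{mf}}(j)\). It also lies in \(F_{r'}L_j\) by \eqref{cc:eq-exact-projector-filtration}.

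Insert \(L_j=\sum_{\lambda',r'}P_{\lambda',r'}L_j\) for \(j\ge2\), with \(L_1=X\) of depth zero, into every monomial of \(\mathcal B_n\). A monomial whose factors have depths \(r_1,\ldots,r_\ell\) lies in \(F_{r_1+\cdots+r_\ell}\) by strong filteredness. It has cost at most \(r_1+\cdots+r_\ell\) by \eqref{cc:gen-log-product}, since first-level vectors are rank one in every induced cut. \(P_{\lambda,r}\) kills it if the sum exceeds \(r\); otherwise it lies in \(\mathfrak W^{[r]}_{\mathrm{mf}}\) by nesting, with size \(\omega^{n\beta}\).

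With that hypothesis, your sewing and identification steps go through as you describe, and the little case follows the same way. Your crude-grade sewing of the signature is then needed only for existence and uniqueness. These can equally be taken from Lyons' extension theorem in the Hilbert tensor norms, with Hilbert-norm uniqueness used in the identification step.
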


\begin{proof}
By \Cref{cc:gen-lemma-critical-spaces}, the critical degree-two norm
controls the Hilbert tensor norm.  The Hilbert tensor norms are an
admissible crossnorm system:
\[
 \|u\otimes v\|_{H^{\otimes(a+b)}}
 =\|u\|_{H^{\otimes a}}\|v\|_{H^{\otimes b}},
\]
and tensor-leg permutations are unitary.  Hence the hypotheses of the
Lyons extension theorem for a geometric \(p\)-rough path are satisfied
with \(p=1/\beta<3\); see \cite{Lyons98}.  This supplies
the unique finite-degree extension and its Hilbert estimate
\(\|L_j(s,t)\|_{H^{\otimes j}}\lesssim_j\omega(s,t)^{j\beta}\).

Multiplicativity gives
\[
 L_{s,t}=\operatorname{BCH}(L_{s,u},L_{u,t}).
\]
For \(n\geq3\), remove the two terms linear in \(L_n\).  The result is
the closed three-increment identity
\begin{equation}
 \delta L_n(s,u,t)
 =
 {\mathcal B}_n\bigl(L_{<n}(s,u),L_{<n}(u,t)\bigr),
 \label{cc:gen-bch-defect}
\end{equation}
where \({\mathcal B}_n\) is a fixed finite Lie polynomial in lower
homogeneous degrees.

Induct on \(n\).  A positive exact component of degree two is the
hypothesis.  Expand each monomial on the right of
\eqref{cc:gen-bch-defect} into exact ancestry components.  If its
positive depths are \(r_1,\ldots,r_\ell\), let
\(s=r_1+\cdots+r_\ell\).  Strong filteredness
\([F_a{\mathfrak L},F_b{\mathfrak L}]\subset F_{a+b}{\mathfrak L}\)
places the monomial in \(F_s{\mathfrak L}\).  Iterated use of
\Cref{cc:gen-theorem-product-sewing} places every flattening in
\(\mathfrak W^{[s]}\); first-level Hilbert factors are rank one in
each induced cut and add no logarithmic cost.  A monomial made only of
first-level factors has uniformly bounded finite rank.

Apply \(P_{\lambda,r}\).  Terms with \(s>r\) vanish by
\eqref{cc:eq-exact-projector-filtration}; those with \(s\leq r\) lie in
\(\mathfrak W_{\mathrm{mf}}^{[r]}\) by nesting.  If the ordinary
degrees of all factors are \(j_1,\ldots,j_k\), then
\[
 j_1+\cdots+j_k=n,
 \qquad
 \prod_i\omega(s,t)^{j_i\beta}=\omega(s,t)^{n\beta}.
\]
Since \(n\beta>1\), \Cref{cc:gen-theorem-product-sewing} sews the
projected defect to an increment
\(\widetilde L_{n,\lambda,r}\) satisfying
\eqref{cc:gen-chronological-upper}.

The actual \(P_{\lambda,r}L_n\) has the same coboundary.  Their
difference is additive and, first in Hilbert norm, is
\(O(\omega^{n\beta})\).  A subdivision into pieces of vanishing control
mass forces this additive difference to be zero because \(n\beta>1\).
This identifies the endpoint-valued sewn increment with the canonical
Lyons extension without presupposing the endpoint bound.

For the little assertion, a defect monomial containing a positive-depth
lower component is little by the inductive hypothesis and the little
product estimate.  A monomial containing only first-level vectors has
finite-rank flattenings and is therefore little as well.  The little
sewing statement of \Cref{cc:gen-theorem-product-sewing}, together with
closedness from \Cref{cc:gen-lemma-critical-spaces}, repeats the induction
in the little spaces.
\end{proof}

\subsection{The split Rees logarithm}
\label{cc:subsec-split-rees-logarithm}

For each \(n\ge2\), put
\[
 P_{n,r}:=\sum_{\lambda\vdash n}P_{\lambda,r},
 \qquad 1\le r\le\lfloor n/2\rfloor,
\]
where absent Schur sectors contribute zero.  The projectors are the fixed
real-form-compatible permutation polynomials of
\Cref{cc:thm-schur-exact-projectors}.  They split the intrinsic normal flag,
but the splitting itself is auxiliary.

\begin{definition}[Algebraic ancestry and analytic critical cost]
\label{cc:def-ancestry-response-rees-modules}
For a cutoff \(N\ge2\), the split ancestry Rees module is
\begin{equation}
 \mathscr R^{\rm anc}_{\le N}(H)
 :=H\oplus
 \bigoplus_{n=2}^N\ \bigoplus_{r=1}^{\lfloor n/2\rfloor}
 z^{-r}\operatorname{Ran}P_{n,r}.
 \label{cc:eq-ancestry-rees-module}
\end{equation}
The symbol \(z^{-r}\) records the algebraic normal order
\(F_rL_n/F_{r+1}L_n\).

For tensors set
\begin{equation}
 \mathscr K_0^{[0]}:=\mathbb R,
 \qquad \mathscr K_1^{[0]}(H):=H,
 \qquad \mathscr K_n^{[0]}(H):=\mathfrak A_1^{(n)}(H)\quad(n\ge2),
 \label{cc:eq-cost-zero-spaces}
\end{equation}
and, for \(n\ge2\), \(r\ge1\),
\begin{equation}
 \mathscr K_n^{[r]}(H):=
 \mathfrak W_{\rm mf}^{[r]}(H^{\otimes n}).
 \label{cc:eq-positive-cost-spaces}
\end{equation}
Thus cost zero means trace class in every cut, while cost \(r\) permits the
critical profile \(\phi_r(N)\).  The inclusions
\(\mathscr K_n^{[r-1]}\subseteq\mathscr K_n^{[r]}\) define the analytic
critical boundary
\begin{equation}
 \partial_{\rm cr}^0\mathscr K_n(H):=\mathscr K_n^{[0]}(H),
 \qquad
 \partial_{\rm cr}^r\mathscr K_n(H)
 :=\mathscr K_n^{[r]}(H)/\mathscr K_n^{[r-1]}(H),
 \quad r\ge1.
 \label{cc:eq-critical-associated-graded}
\end{equation}
The truncated analytic Rees space is
\begin{equation}
 \mathscr R^{\rm cr}_{\le N}(H)
 :=\bigoplus_{n=0}^N
   \bigoplus_{0\le r\le\lfloor n/2\rfloor}
   z^{-r}\mathscr K_n^{[r]}(H),
 \label{cc:eq-response-rees-module}
\end{equation}
with the degree-one summand understood as \(H\).  The direct Rees sum carries
its product quasi-topology.  By contrast, the quotients
\(\partial_{\rm cr}^r\mathscr K_n\) are used as algebraic associated-graded
spaces; no Hausdorff quotient topology is implicit.
\end{definition}

\begin{proposition}[Algebraic nature of the critical boundary]
\label{cc:prop-algebraic-critical-boundary}
For every \(r\ge1\), the inclusion
\(\mathfrak W^{[r-1]}\subset\mathfrak W^{[r]}\), with
\(\mathfrak W^{[0]}:=\mathcal S_1\), has nonclosed range in the
\(\mathfrak W^{[r]}\)-quasi-norm.  The same is true in degree two for the
multi-flattening spaces.  Consequently
\[
 \partial_{\rm cr}^r\mathscr K_n
 =\mathscr K_n^{[r]}/\mathscr K_n^{[r-1]}
\]
is deliberately an algebraic quotient.  All strictness and support statements
below depend only on membership in the filtration.  Exact or finite-window
asymptotics are asserted only after a representative-level packet certificate
has been supplied.
\end{proposition}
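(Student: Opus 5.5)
To show that the range of \(\mathfrak W^{[r-1]}\hookrightarrow\mathfrak W^{[r]}\) is not closed, I would build a sequence of finite-rank operators \(T_m\). Each \(T_m\) lies in \(\mathfrak W^{[r-1]}\), hence in the range. The sequence will be bounded, and Cauchy, in the \(\mathfrak W^{[r]}\)-quasi-norm. Its \(\mathfrak W^{[r]}\)-limit \(T\) will satisfy \(\|T_m\|_{\mathfrak W^{[r-1]}}\to\infty\) and \(T\notin\mathfrak W^{[r-1]}\).

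\emph{Choice of the limit.} Take \(T=\operatorname{diag}(\psi(N))\), where \(\psi(N)\) is a decreasing profile lying strictly between \(\phi_{r-1}\) and \(\phi_r\). For \(r\geq2\) I take
\[
 \psi(N)=\frac{(\log(e+N))^{r-1}}{N\,\log\log(e^e+N)},
\]
and for \(r=1\) (where \(\mathfrak W^{[0]}=\mathcal S_1\)) I take \(\psi(N)=1/\bigl(N\log(e+N)\bigr)\). Then \(T\in\mathfrak W^{[r]}_0\), because \(\psi/\phi_r\to0\). On the other hand \(T\notin\mathfrak W^{[r-1]}\): for \(r\geq2\) the ratio \(\psi/\phi_{r-1}\) is unbounded, and for \(r=1\) the series \(\sum\psi(N)\) diverges. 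Let \(T_m\) be the truncation of \(T\) to its first \(m\) diagonal entries. By the little-core part of \Cref{cc:gen-lemma-critical-spaces}, \(T_m\to T\) in \(\mathfrak W^{[r]}\).

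\emph{Conclusion.} If the range were closed, then \(T\) would lie in the range, that is, \(T\in\mathfrak W^{[r-1]}\), which is a contradiction. This argument does not need the open mapping theorem. If one prefers the quantitative form instead, note that \(\|T_m\|_{\mathfrak W^{[r-1]}}\to\infty\) while \(\|T_m\|_{\mathfrak W^{[r]}}\) stays bounded. Both spaces are complete quasi-Banach (\Cref{cc:gen-lemma-critical-spaces}), so the open mapping theorem for F-spaces shows that a closed range would force norm equivalence on the range, which these bounds violate.

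\emph{Degree two, multi-flattening.} For \(n=2\) the only nontrivial cuts are \(S=\{1\}\) and \(S=\{2\}\). These flattenings are \(K\) and its transpose, which have the same singular values. The Hilbert term in the norm is dominated by the critical term, as shown by the displayed \(\mathcal S_2\) estimate. So \(\mathfrak W^{[r]}_{\rm mf}(H^{\otimes2})\cong\mathfrak W^{[r]}(H)\) with equivalent quasi-norms, and the same tensor \(\sum_N\psi(N)e_N\otimes e_N\) works. For \(r=1\) one checks that \(\mathscr K_2^{[0]}=\mathfrak A_1^{(2)}\) is trace class in every cut, so it matches \(\mathcal S_1\).

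The remaining assertions, that the quotient is algebraic and that strictness depends only on membership, are interpretive. They follow because quotienting by a dense non-closed subspace gives a non-Hausdorff topology.

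The step I expect to need the most care is choosing the interpolating profile \(\psi\) so that it is eventually decreasing. Truncation also needs a check: the singular values of \(T-T_m\) are a shifted tail of \(\psi\), and I must confirm that its \(\phi_r\)-normalized supremum still tends to zero, using monotonicity of \(N\phi_r(N)^{-1}\psi(N+m)\).
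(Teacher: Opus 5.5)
Your argument is correct and is essentially the paper's own proof. Both take a diagonal operator whose profile lies strictly between \(\phi_{r-1}\) and \(\phi_r\); the paper uses \((\log(e+N))^{r-3/2}/N\) where you damp by \(\log\log\), and both use \(1/(N\log(e+N))\) at \(r=1\). That operator lies in \(\mathfrak W^{[r]}_0\setminus\mathfrak W^{[r-1]}\) and is the \(\mathfrak W^{[r]}\)-limit of its finite-rank truncations, and degree two reduces to the operator flattening itself.

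One small correction to your last paragraph: \(\mathfrak W^{[r-1]}\) is not dense in \(\mathfrak W^{[r]}\). Its closure is \(\mathfrak W^{[r]}_0\), which misses, for example, \(D_\infty^{\otimes r}\). Non-Hausdorffness of the quotient needs only non-closedness, however, so your conclusion stands.
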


\begin{proof}
For \(r=1\), the diagonal operator with singular values
\[
 d_N=\frac1{N\log(e+N)}
\]
belongs to \(\mathfrak W^{[1]}_0\setminus\mathcal S_1\).  Its finite-rank
truncations lie in \(\mathcal S_1\) and converge to it in
\(\mathfrak W^{[1]}\).  For \(r\ge2\), take instead
\[
 d_N=\frac{(\log(e+N))^{r-3/2}}N.
\]
Then \(d_N/\phi_r(N)\to0\), whereas
\(d_N/\phi_{r-1}(N)\to\infty\).  Thus the corresponding diagonal operator
lies in \(\mathfrak W^{[r]}_0\setminus\mathfrak W^{[r-1]}\), while its
finite-rank truncations belong to every lower grade and converge in the
higher quasi-norm.  Degree two identifies the only nontrivial flattening
with the operator itself, proving the multi-flattening assertion.
\end{proof}

\begin{theorem}[Analytic critical Rees algebra]
\label{cc:thm-analytic-critical-rees-algebra}
Truncated tensor concatenation makes
\(\mathscr R^{\rm cr}_{\le N}(H)\) a graded topological algebra.  More
precisely, whenever the total tensor degree is at most \(N\),
\begin{equation}
 \mathscr K_m^{[a]}(H)\widehat\otimes
 \mathscr K_n^{[b]}(H)
 \longrightarrow
 \mathscr K_{m+n}^{[a+b]}(H)
 \label{cc:eq-critical-cost-addition}
\end{equation}
is continuous, with the conventions in
\eqref{cc:eq-cost-zero-spaces}.  Lowering either positive input cost by one
lowers the product cost by one.  Hence tensor product descends to the algebraic associated graded:
\begin{equation}
 \partial_{\rm cr}^a\mathscr K_m(H)\otimes
 \partial_{\rm cr}^b\mathscr K_n(H)
 \longrightarrow
 \partial_{\rm cr}^{a+b}\mathscr K_{m+n}(H),
 \qquad [A]\otimes[B]\longmapsto[A\otimes B].
 \label{cc:eq-critical-graded-product}
\end{equation}
If \(B\) is a critical atom with \(s_N(B)\sim c/N\), then
\begin{equation}
 s_N(B^{\otimes r})
 \sim\frac{c^r}{\Gamma(r)}\phi_r(N),
 \qquad
 [B]^{\otimes r}\ne0
 \quad\text{in }\partial_{\rm cr}^r.
 \label{cc:eq-exact-atom-Rees-power}
\end{equation}
Thus tensor resonance is multiplication on the analytic critical boundary,
not a separate calculation attached to each stochastic model.
\end{theorem}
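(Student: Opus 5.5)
The plan is to reduce everything to the scalar estimates already proved for singular values. We then read off the algebraic statements as consequences of continuity plus filtration membership.

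\textbf{Step 1: cost addition \eqref{cc:eq-critical-cost-addition}.} Continuity is checked cut by cut. For algebraic finite tensors $K\in\mathscr K_m^{[a]}$ and $L\in\mathscr K_n^{[b]}$, and a cut $S\subset[m+n]$, the factorization \eqref{cc:gen-flattening-tensor-factorization} writes $\operatorname{Mat}_S(K\otimes L)$, up to leg unitaries, as $\operatorname{Mat}_{S_1}(K)\otimes\operatorname{Mat}_{S_2}(L)$. We then split into cases.
\begin{itemize}
\item If $a,b\ge1$ and both induced cuts are nontrivial, \eqref{cc:gen-log-product} gives grade $a+b$.
\item If one induced cut is trivial, that factor is a rank-one realization with norm $\|K\|_{H^{\otimes m}}$. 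It therefore has zero logarithmic cost, and \eqref{cc:gen-trace-product} applied with a rank-one $C$ gives grade $b\le a+b$. Nesting $\mathfrak W^{[b]}\subset\mathfrak W^{[a+b]}$ then finishes this case.
\item If one factor has cost zero (Hilbert vector, scalar, or $\mathfrak A_1^{(n)}$), every flattening of it is trace class. Then \eqref{cc:gen-trace-product} gives grade equal to the other cost.
\item If both costs are zero, $\mathcal S_1\otimes\mathcal S_1\subset\mathcal S_1$ by the exact crossnorm of \Cref{cc:gen-prop-permutation-stable-tensor-algebra}.
\end{itemize}
The Hilbert term of \eqref{cc:gen-multiflat-norm} is multiplicative, and there are finitely many cuts, so the bilinear map is bounded. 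Bilinear boundedness in quasi-norms implies continuity and extension to the completions, exactly as in the completion argument of \Cref{cc:gen-prop-permutation-stable-tensor-algebra}. Summing over $i+j=k$ with \Cref{cc:gen-lemma-finite-addition} for the finitely many terms gives the graded topological algebra $\mathscr R^{\rm cr}_{\le N}$. The $z^{-r}$ weights add because the costs add.

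\textbf{Step 2: descent to the boundary \eqref{cc:eq-critical-graded-product}.} The statement that lowering one input cost lowers the output cost is Step 1 applied with $(a-1,b)$ or $(a,b-1)$. Hence $\mathscr K^{[a-1]}\otimes\mathscr K^{[b]}+\mathscr K^{[a]}\otimes\mathscr K^{[b-1]}$ maps into $\mathscr K^{[a+b-1]}$. This is exactly well-definedness on the algebraic quotients. No topology is needed here, consistent with \Cref{cc:prop-algebraic-critical-boundary}.

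\textbf{Step 3: atom powers \eqref{cc:eq-exact-atom-Rees-power}.} The asymptotic follows from \eqref{cc:eq:kfold-simple-atoms} with $p=1$, $a_j=c$, using $(\log N)^{r-1}/N\sim\phi_r(N)$. For nonvanishing of the class $[B]^{\otimes r}$, note that any $R\in\mathscr K^{[r-1]}$ has $s_N(R)=O(\phi_{r-1}(N))=o(\phi_r(N))$. If $B^{\otimes r}=R$ held, the asymptotic would force $c^r/\Gamma(r)=0$, a contradiction. In degree two with $r=1$, the same comparison uses $\mathcal S_1$, where $Ns_N\to0$.

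\textbf{Main obstacle.} I expect the main obstacle to be the bookkeeping in Step 1 for mixed cuts, where trivial induced cuts and cost-zero factors occur. The key is making sure that rank-one factors and Hilbert-vector factors never add a logarithm. I would handle this uniformly by treating every trivial-cut or cost-zero flattening as a trace-class decoration via \eqref{cc:gen-trace-product}.
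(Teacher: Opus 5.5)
Your plan is the paper's proof. Cost $0+0$ goes through the exact crossnorm at $p=1$. Cost-zero factors and rank-one trivial-cut factors are handled by the trace decoration \eqref{cc:gen-trace-product}, and positive costs add by \eqref{cc:gen-log-product}. Descent comes from rerunning the bound with one cost lowered. The atom law comes from \eqref{cc:eq:kfold-simple-atoms}, with nonvanishing from $\phi_r/\phi_{r-1}\to\infty$, or from $Ns_N\to0$ on $\mathcal S_1$ when $r=1$.

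\textbf{The step that fails.} You prove the bound for algebraic finite tensors and then extend ``to the completions'' by density. For positive cost, $\mathscr K_n^{[r]}=\mathfrak W^{[r]}_{\mathrm{mf}}(H^{\otimes n})$ is the big space, not a completion of finite tensors. By \Cref{cc:gen-lemma-critical-spaces}, the closure of the finite tensors in the big quasi-norm is only the little subspace $\mathfrak W^{[r]}_{0,\mathrm{mf}}$. So the density route constructs the product, and the map \eqref{cc:eq-critical-cost-addition}, only on the little spaces. In particular it misses the critical atoms of your Step~3. If $s_N(B)\sim c/N$, then $Ns_N(B)\to c\neq0$, so $B$ is not little. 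For example, $\|D_\infty-D_L\|_{\mathfrak W^{[1]}}=\sup_N N/(L+N)=1$ for every $L$, so truncations never approach $D_\infty$.

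\textbf{The repair.} Drop the density step altogether.
\begin{itemize}
\item The factorization \eqref{cc:gen-flattening-tensor-factorization} holds for arbitrary $K\in H^{\otimes m}$ and $L\in H^{\otimes n}$, since both sides are continuous bilinear maps into $\mathcal S_2$.
\item The estimates \eqref{cc:gen-log-product} and \eqref{cc:gen-trace-product} are proved for arbitrary compact operators, through the distribution function of the rearranged singular-value products.
\item Applying them directly gives the bound on the full big spaces.
\item Continuity then follows from $K\otimes L-K'\otimes L'=(K-K')\otimes L+K'\otimes(L-L')$ and the quasi-triangle inequality.
\end{itemize}
Density is needed only for the cost-zero spaces $\mathfrak A_1^{(n)}$, which are defined as completions. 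This is what the paper does implicitly by citing the two estimates directly rather than passing through finite tensors. With this change, your Steps~2 and~3 go through as written.
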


\begin{proof}
For cost \(0+0\), use the exact projective tensor law in
\Cref{cc:gen-prop-permutation-stable-tensor-algebra} at \(p=1\).  A
cost-zero factor is trace class in every cut and therefore preserves every
positive cost by the trace-class decoration estimate
\eqref{cc:gen-trace-product}.  Positive costs add by
\eqref{cc:gen-log-product}.  Rank-one factors in tensor degree one obey the
same estimates.  These bounds also give
\[
 \mathscr K_m^{[a-1]}\widehat\otimes\mathscr K_n^{[b]}
 +\mathscr K_m^{[a]}\widehat\otimes\mathscr K_n^{[b-1]}
 \subseteq\mathscr K_{m+n}^{[a+b-1]},
\]
which proves that \eqref{cc:eq-critical-graded-product} is independent of
representatives.  In view of \Cref{cc:prop-algebraic-critical-boundary}, no
quotient topology is used here.  Finite quasi-Banach sums, equipped with equivalent
Aoki--Rolewicz metrics, give the asserted topological algebra.  Finally,
\Cref{cc:thm:tensor-critical-arithmetic} gives the equivalent in
\eqref{cc:eq-exact-atom-Rees-power}.  For \(r=1\), the harmonic divergence
of \(\sum_Ns_N(B)\) shows that \(B\notin\mathcal S_1\).  For \(r\ge2\),
\(\phi_r/\phi_{r-1}\to\infty\), so the tensor power does not belong to the
lower cost class.
\end{proof}

\begin{definition}[Split Rees realization]
\label{cc:def-critical-rees-compiler}
Let \((H,\omega,{\mathbf X}^{(2)})\) be a controlled critical normal
entrance, and let \(L_n(s,t)\) be the homogeneous logarithms of its geometric
extension.  Its split Rees logarithm through degree \(N\) is
\begin{equation}
 \mathbf C_{\le N}({\mathbf X})_{s,t}
 :=X_{s,t}+\sum_{n=2}^N\sum_{r=1}^{\lfloor n/2\rfloor}
 z^{-r}P_{n,r}L_n(s,t).
 \label{cc:eq-critical-rees-compiler}
\end{equation}
Changing the splitting changes the displayed lift, but not the first nonzero
normal coefficient modulo the next filtration level.
\end{definition}

\begin{theorem}[Filtered critical realization theorem]
\label{cc:thm-critical-rees-compilation}
Fix \(N\ge2\) and \(\beta\in(1/3,1/2)\).  The assignment
\eqref{cc:eq-critical-rees-compiler} has the following properties.
\begin{enumerate}[label=\textup{(\roman*)},leftmargin=2.7em]
\item It is a well-defined map from controlled critical entrances to
\(\mathscr R^{\rm cr}_{\le N}(H)\), and
\begin{equation}
 \|P_{n,r}L_n(s,t)\|_{\mathfrak W^{[r]}_{\rm mf}(n)}
 \le C_{N,\beta,P}\,\omega(s,t)^{n\beta}
 \label{cc:eq-rees-coefficient-bound}
\end{equation}
for every \(2\le n\le N\) and every exact depth \(r\).
\item If the input area is little weak trace on every interval, each Rees
coefficient belongs to the corresponding little logarithmic grade.
\item The construction is natural under bounded linear maps: the induced
map on every coefficient is the tensor pushforward, and its norm is bounded
by the appropriate power of the operator norm.
\item If \(x=L_n(s,t)\ne0\) and \(r_0=\nu_F(x)\), then
\(P_{n,r}x=0\) for \(r<r_0\), while the class of
\(P_{n,r_0}x\) in \(F_{r_0}L_n/F_{r_0+1}L_n\) is the intrinsic initial
normal symbol of \(x\).  Thus the least nonzero Rees cost is canonical even
though the full split lift is not.
\item Optional evaluation, a selected singular lift, and the split Rees realization
commute under the hypotheses of
\Cref{cc:prop-causal-normal-completion}.  In particular, no source application needs a second stopped or
gauge-covariance proof for its compiled logarithm.
\end{enumerate}
\end{theorem}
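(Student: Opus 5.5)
The plan is to treat the five clauses as bookkeeping consequences of the chronological upper bound (\Cref{cc:gen-theorem-chronological-upper}), the exact-layer projector theorem (\Cref{cc:thm-schur-exact-projectors}), and the completion principle (\Cref{cc:prop-causal-normal-completion}). No new analytic estimate is needed.

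For (i), I sum \eqref{cc:gen-chronological-upper} over the finitely many partitions $\lambda\vdash n$ with $m_{\lambda,n,r}\ne0$; absent sectors contribute zero. The finite-addition estimate \Cref{cc:gen-lemma-finite-addition} controls the sum $P_{n,r}=\sum_\lambda P_{\lambda,r}$ with a constant depending only on $N$ and the fixed projectors. This gives \eqref{cc:eq-rees-coefficient-bound}. It also shows that each coefficient lies in $\mathscr K_n^{[r]}(H)$, so the map lands in $\mathscr R^{\rm cr}_{\le N}(H)$. The degree-one term $X_{s,t}\in H$ is immediate, and uniqueness of the Lyons extension makes the assignment well defined.

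For (ii), I apply the little clause of \Cref{cc:gen-theorem-chronological-upper} sector by sector, and then use closedness of $\mathfrak W^{[r]}_0$ under finite sums (\Cref{cc:gen-lemma-critical-spaces}).

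For (iii), I combine two facts. First, $A^{\otimes n}$ intertwines the geometric extensions and their logarithms, by uniqueness of the extension together with the pushforward of the step-two entrance. Second, $P_{n,r}$ commutes with $A^{\otimes n}$ by \eqref{cc:eq-projector-bounded-naturality}. For the norm bound, each flattening of $A^{\otimes n}K$ is $A^{\otimes S^c}\operatorname{Mat}_S(K)(A^{*})^{\otimes S}$. The ideal property then gives the factor $\|A\|^n$, and the Hilbert term obeys the same bound.

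For (iv), I use the algebraic properties in \eqref{cc:eq-exact-projector-filtration}. If $x\in F_{r_0}L_n$ and $r<r_0$, then $x\in F_{r+1}L_n$, so $P_{\lambda,r}x=0$. Next I write $x=\sum_{r\ge r_0}P_{n,r}x$, using that the projectors sum to the identity. Every term with $r>r_0$ lies in $F_{r_0+1}L_n$ because $\operatorname{Ran}P_{n,r}\subseteq F_rL_n$. Hence $P_{n,r_0}x\equiv x$ modulo $F_{r_0+1}L_n$, which is the full initial symbol \eqref{cc:eq-canonical-initial-symbol}. Since $\nu_F(x)=r_0$, this class is nonzero, and it does not depend on the complements chosen (\Cref{cc:rem-projectors-noncanonical}).

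For (v), which I expect to be the only delicate point, I argue as follows. The compiled logarithm is a fixed continuous function of the step-two anchor increment: the Lyons extension is continuous in the controlled topology, and $P_{n,r}$ and the truncated logarithm are polynomial. So, once \Cref{cc:prop-causal-normal-completion} has fixed optional evaluation and the selected singular lift, applying the realization commutes with both by continuity. The work is in checking that the ucp convergence of the critical anchors (\Cref{cc:cor-critical-stopped-projection}) passes through the extension map. I would first try to obtain this from the local Lipschitz dependence of the sewing in \Cref{cc:gen-theorem-product-sewing} on the defect, using a uniform control on a localization event. Tightness of the controls then lets me upgrade the convergence to convergence in probability at stopped intervals, exactly as in \Cref{cc:thm-optional-singular-interchange}.
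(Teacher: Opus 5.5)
Your arguments for (i)--(iv) coincide with the paper's. Parts (i) and (ii) are \Cref{cc:gen-theorem-chronological-upper} summed over the finitely many $\lambda\vdash n$, with \Cref{cc:gen-lemma-finite-addition} absorbing the quasi-triangle constant. Part (iii) is naturality of the extension plus \eqref{cc:eq-projector-bounded-naturality} and the ideal inequality cut by cut. Part (iv) is exactly the splitting $F_rL_n=\bigoplus_{q\ge r}\operatorname{Ran}P_{n,q}$ given by \eqref{cc:eq-exact-projector-filtration}.

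The gap is in (v), in the step you single out as ``the work''; it does not go through as stated.
\begin{itemize}
\item For $n\ge3$, $L_n(s,t)$ is not a function of the step-two increment at $(s,t)$. It depends on the whole step-two path on $[s,t]$.
\item The extension map is continuous for H\"older or $p$-variation rough-path convergence, not for the uniform-in-intervals convergence of \Cref{cc:cor-critical-stopped-projection}. For instance, concatenate $\delta^{-3}$ loops whose step-three logarithm is $\delta^3[e_1,[e_2,e_3]]$. The resulting step-two paths stay within $O(\delta)$ and $O(\delta^2)$ of the identity, yet their third level stays equal to $[e_1,[e_2,e_3]]$.
\item An interpolation argument would need uniformly tight H\"older controls for the whole approximating family. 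The hypotheses of \Cref{cc:prop-causal-normal-completion} do not provide them. They do hold for finite-rank projections of a controlled entrance, since $A^{P_kM}_{s,t}=P_kA^M_{s,t}P_k$ inherits the original control, but not for general approximants.
\end{itemize}

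The step is, however, unnecessary. Under those hypotheses, the convergence assumption \eqref{cc:eq-lifted-anchor-ucp} is imposed on the chosen lifted anchor itself. The continuous chart already takes values in $\mathsf G^{\le N}_{\rm geo}(V_t)$, so every level $n\le N$ is part of the selected lift, and no extension map ever has to pass through a limit. The split Rees realization is then a fixed post-composition of the charted value: first $\pi_n\log_\otimes$, which is a polynomial on the nilpotent truncation, then the bounded projections $P_{n,r}$. It is applied pointwise to each stopped increment, so it commutes with optional evaluation and with the selected singular limit for that reason alone.

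This is the paper's proof, and it is what the first sentence of your (v) already says. Keep that sentence and drop the rest.
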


\begin{proof}
Parts~\textup{(i)} and \textup{(ii)} are the chronological upper theorem
\Cref{cc:gen-theorem-chronological-upper}, summed over the finite set of
indices.  Naturality follows from the natural permutation projectors and
functoriality of the geometric extension.  The chosen splitting satisfies
\[
 F_rL_n=\bigoplus_{q\ge r}\operatorname{Ran}P_{n,q}.
\]
Hence the first nonzero component of a vector of valuation \(r_0\) is its
class modulo \(F_{r_0+1}L_n\), proving part~\textup{(iv)}.  The final part is the functoriality statement of
\Cref{cc:prop-causal-normal-completion}, followed by naturality of the
logarithm and normal projection.
\end{proof}

\subsection{Pure-area cells and degree-exact Hall experiments}
\label{hall:subsec-pure-area}

For a Hilbert--Schmidt operator $B:E\to F$, define its skew dilation by
\begin{equation}
 \operatorname{Sk}(B):=
 \begin{pmatrix}0&-B^*\\ B&0\end{pmatrix}
 \quad\text{on }E\oplus F.
 \label{hall:eq-skew-dilation}
\end{equation}
It is skew-adjoint, its compression from $E$ to $F$ is $B$, and each nonzero
singular value of $B$ occurs twice in $\operatorname{Sk}(B)$.

\begin{lemma}[Pure-area operator cells]
\label{hall:lem-pure-area}
Let $A$ be a skew Hilbert--Schmidt operator on a real Hilbert space $H$.  For
every $p\in(2,3)$,
\begin{equation}
 \mathbf P^A_{s,t}:=\exp_\otimes((t-s)A)
 \label{hall:eq-pure-area-path}
\end{equation}
defines a weakly geometric step-two $p$-rough path with zero first level and
logarithm $(t-s)A$.  It is the $p$-variation limit of signatures of
finite-dimensional bounded-variation loops whenever $A$ is approximated in
Hilbert--Schmidt norm by finite-rank skew operators.  Its unique extension to
any fixed tensor degree is natural under bounded linear maps.
\end{lemma}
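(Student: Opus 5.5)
The plan has four steps: check the rough-path axioms directly, approximate by finite-rank skew operators, realize those as signatures of explicit loops, and pass to the limit. I expect the loop construction to be the main obstacle.

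\emph{Step 1: the axioms.} I work in the truncated step-two tensor algebra \(\mathbb R\oplus H\oplus H^{\otimes2}\), identifying the skew Hilbert--Schmidt operator \(A\) with an antisymmetric element of \(H^{\otimes2}\). Since \(A\) lies in degree two, \(A\otimes A\) vanishes after truncation, so
\[
\mathbf P^A_{s,t}=(1,0,(t-s)A).
\]
Chen's identity then reduces to additivity \((u-s)A+(t-u)A=(t-s)A\), because the cross term \(X_{s,u}\otimes X_{u,t}\) vanishes when the first level is zero. Weak geometricity at step two means the symmetric part of the second level equals \(\tfrac12X^{\otimes2}=0\); this holds because \(A\) is antisymmetric. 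For the \(p\)-variation bound, \(\|(t-s)A\|_{\mathcal S_2}=|t-s|\,\|A\|_2\le C|t-s|^{2/p}\) on bounded intervals whenever \(p>2\). The logarithm is \((t-s)A\) by the same truncation.

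\emph{Step 2: reduction to finite rank.} Let \(A_k\to A\) in \(\mathcal S_2\) with each \(A_k\) finite-rank skew. Then \(\mathbf P^{A_k}\to\mathbf P^A\) in homogeneous \(p\)-variation metric, with second-level distance \(|t-s|\,\|A_k-A\|_2\). So it suffices to realize each \(\mathbf P^{A_k}\) as a \(p\)-variation limit of loop signatures in a finite-dimensional \(E\).

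\emph{Step 3: the loop construction (main obstacle).} Write \(A_k=\sum_j a_j\, e_{2j-1}\wedge e_{2j}\) in its two-plane normal form. For each \(m\), let \(\gamma^m\) be the concatenation over the dyadic cells \([t_i,t_{i+1}]\) of mesh \(1/m\). On each cell, run in sequence small circles in the planes \((e_{2j-1},e_{2j})\), of radius \(\sqrt{a_j/(m\pi)}\) scaled for the \(\tfrac12\)-convention, so that each circle encloses signed area \(a_j/m\).

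Each circle is a closed loop, so its first level vanishes. Concatenating loops adds their second levels exactly, since the cross terms \(X\otimes Y\) vanish for loops. Hence the signature of \(\gamma^m\) agrees with \(\mathbf P^{A_k}\) at grid points. Between grid points, the increments are bounded by \(\|X_{s,t}\|\lesssim m^{-1/2}\) and \(|\mathbb X_{s,t}|\lesssim 1/m\) for \(|t-s|\le 1/m\).

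This gives a uniform \(1/2\)-Hölder rough-path bound and pointwise convergence. Interpolation then yields convergence in \(p\)-variation for every \(p>2\). The constants depend on the finite dimension of \(E\), which is harmless at fixed \(k\).

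\emph{Step 4: extension and naturality.} Since \(p<3\), the Lyons extension theorem, already used in \Cref{cc:gen-theorem-chronological-upper}, gives a unique extension to any fixed tensor degree. For a bounded map \(L\), the pushforward \(L_\#\mathbf P^A=\mathbf P^{LAL^*}\) holds at step two. Uniqueness of the Lyons extension then identifies the extension of the pushforward with the pushforward of the extension.
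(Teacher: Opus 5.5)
Your proposal is correct and follows essentially the paper's route: the planar normal form of a finite-rank skew operator, rapidly traversed small loops (your circles in place of the paper's rectangles) whose step-two Trotter product is exact at grid points because pure-area elements are central, Hilbert--Schmidt approximation of the pure-area paths, and the Lyons extension theorem, with naturality obtained from uniqueness of that extension. One step needs tightening: at scales below the mesh $1/m$, the uniform $1/2$-H\"older bound does not follow from the increment bounds $m^{-1/2}$ and $1/m$ you state. It needs the constant-speed estimates $\|X_{s,t}\|\le Cm^{1/2}|t-s|$ and $\|\mathbb X_{s,t}\|\le Cm|t-s|^2$, which hold because circles of length of order $m^{-1/2}$ are traversed in time of order $1/m$.
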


\begin{proof}
For finite-rank $A$, decompose it into finitely many planar skew blocks and
use rapidly traversed rectangular loops with the prescribed signed areas.
Trotter products converge in the step-two nilpotent group to
$\exp_\otimes(tA)$.  The homogeneous step-two distance between two pure-area
paths is bounded by a constant times $\|A-A'\|_2^{1/2}$, so finite-rank
approximation gives the general case.  The Lyons extension theorem supplies
all fixed higher levels; see \cite{FrizVictoir10,Lyons98}.
\end{proof}

A polarized area--marker Hall word is a multilinear Hall Lie word in symbols
$A_1,\ldots,A_r,x_1,\ldots,x_m$, where $\deg A_j=2$,
$\deg x_\ell=1$, and every symbol occurs once.  Its ordinary tensor degree is
$n=2r+m$.

\begin{lemma}[Area--marker spanning]
\label{hall:lem-area-marker-spanning}
For every $n,r$, the full exact layer
$G^{\mathrm{full}}_{n,r}(H)$ is spanned, after polarization, by classes of
area--marker Hall words with $r$ area symbols and $n-2r$ vector markers.
The same assertion holds after passage to every stable Schur multiplicity
quotient $\operatorname{gr}^{\mathrm{anc}}_rM_{\lambda,n}$.
\end{lemma}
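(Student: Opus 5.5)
The exact layer is identified with a free-Lie layer by \Cref{cc:thm-normal-cone}: $G^{\mathrm{full}}_{n,r}(H)\cong(\mathfrak L_r(Q))_n$, where $Q=D/[D,D]$. The plan is to produce spanning sets layer by layer from this description and then polarize.

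First, the generators. By \Cref{cc:thm-bianchi-module}, the map $\kappa$ is surjective, so $Q$ is generated as an $\mathsf S$-module by the curvature classes $\overline{[u,v]}$. The $\mathsf S$-action is $\rho_v=\overline{\operatorname{ad}_v}$. Hence $Q_m$ is spanned by the classes of $\operatorname{ad}_{x_1}\cdots\operatorname{ad}_{x_{m-2}}[u,v]$. Polarizing $[u,v]$ gives an area symbol $A$ of degree two, and $Q_m$ is then spanned by words with one area symbol and $m-2$ markers.

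Second, the length-$r$ layer. $\mathfrak L_r(Q)$ is spanned by Lie monomials of bracket length $r$ in generators of $Q$. By \Cref{cc:thm-normal-cone} the Lie structure on the associated graded is induced from $L$: the class of $[d_1,d_2]$ is the bracket of the classes. So every element of $(\mathfrak L_r(Q))_n$ is the class of a nested bracket of $r$ elements of $D$. Each element has the form $\operatorname{ad}_{x}\cdots\operatorname{ad}_x[u,v]$, and together they carry $r$ area symbols and $n-2r$ markers. These are Lie words in area symbols and markers with each symbol used once, hence multilinear after polarization.

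Third, pass to Hall words. The free Lie algebra on the symbols $A_1,\dots,A_r,x_1,\dots,x_m$ has a Hall basis, and its multilinear part is spanned by multilinear Hall words. Map each symbol to its realization: $A_j\mapsto[u_j,v_j]$ and $x_\ell\mapsto x_\ell$. This is a Lie homomorphism into $L$. The words from the second step lie in the image, so they are linear combinations of images of multilinear Hall words in the same symbols.

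Fourth, check the filtration of each Hall word's image. A word with $r$ area symbols lies in $F_r$ by strong filteredness \eqref{cc:eq-strong-filtration}. The combination therefore equals the original element, which lies in $F_r$, and its class modulo $F_{r+1}$ is the sum of the classes of the Hall terms. This gives spanning of $G^{\mathrm{full}}_{n,r}$.

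\emph{Main obstacle.} Some Hall words could have more than one area symbol inside a single $D$-factor, or could be built so that the naive count gives depth above $r$. In either case the word lies in $F_{r+1}$ and simply contributes zero. Spanning survives, but one must avoid claiming that each Hall word has exact depth $r$.

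Finally, the Schur quotients. The quotient $\operatorname{gr}^{\mathrm{anc}}_rM_{\lambda,n}$ is the image of $G^{\mathrm{full}}_{n,r}$ under the natural $\lambda$-isotypic projection, in the stable range of \Cref{cc:thm-schur-exact-projectors}. A surjective image of a spanning set spans, and polarization commutes with these natural projections by \eqref{cc:eq-projector-bounded-naturality}. Real versus complex coefficients are handled by the real-form compatibility \eqref{cc:eq-projector-real-form}.
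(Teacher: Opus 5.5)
Your proposal is correct and follows the paper's route. You identify the exact layer with $(\mathfrak L_r(Q))_n$ via the normal-cone theorem, use the Bianchi--Koszul theorem to generate $Q$ from $\Lambda^2H$ under the $S(H)$-action, substitute into a Hall basis and polarize, and pass to Schur quotients by functoriality; you add the welcome extra care of re-expanding in a Hall basis on the area and marker symbols and checking by strong filteredness that every Hall term lies in $F_r$. One minor correction to your ``main obstacle'' remark: a word with two area symbols nested together, such as $[x_1,[A_1,A_2]]$, generally has exact depth two and does not fall into $F_{r+1}$ (the words that can drop are rather those in which markers are bracketed with one another, producing an extra factor in $D$), but your spanning argument only needs membership in $F_r$, so nothing is affected.
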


\begin{proof}
By \Cref{cc:thm-normal-cone}, the exact layer is the degree-$n$ part of
$\mathfrak L_r(Q)$, where $Q=D/[D,D]$.  The Bianchi--Koszul description in
\Cref{cc:thm-bianchi-module} shows that $Q$ is generated as an $S(H)$-module
by the image of $\Lambda^2H$.  Substituting these generators into a Hall basis
of $\mathfrak L_r(Q)$ and polarizing the remaining ordinary variables proves
the claim.  Passage to a stable Schur quotient is functorial.
\end{proof}

If a geometric cell has logarithm
$g=x+A+R_{\ge3}$, append the straight closing segment $-x$ and write the
resulting group element as $g^\circ$.  The BCH formula gives
\begin{equation}
 \pi_1\log_\otimes g^\circ=0,
 \qquad
 \pi_2\log_\otimes g^\circ=A.
 \label{hall:eq-purified-cell}
\end{equation}
For group-like elements $g,h$, put
$[g,h]_{\rm grp}=ghg^{-1}h^{-1}$ and replace every bracket of a Hall word by
this group commutator.

\begin{lemma}[Degree-exact Hall realization]
\label{hall:lem-Hall-exact}
Let $w$ be a polarized area--marker Hall word.  Realize each area letter by a
pure-area cell, or by a purified geometric cell satisfying
\eqref{hall:eq-purified-cell}, and each marker by a straight segment.  The
finite path obtained by recursive group commutators satisfies
\begin{equation}
 \pi_n\log_\otimes\mathbf Z_w
 =w(A_1,\ldots,A_r;x_1,\ldots,x_m),
 \qquad n=2r+m,
 \label{hall:eq-Hall-leading}
\end{equation}
and every other logarithmic term has ordinary tensor degree strictly larger
than $n$.
\end{lemma}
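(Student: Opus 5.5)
The plan is to argue by induction on the structure of the Hall word $w$, carrying a stronger inductive statement: for every subword $u$ of ordinary degree $n_u$, the realized group element $\mathbf Z_u$ is group-like with
\[
 \log_\otimes\mathbf Z_u=u(\text{letters})+(\text{terms of degree}>n_u).
\]
Equivalently, $\log_\otimes\mathbf Z_u$ has no component in degrees $<n_u$, and its degree-$n_u$ component is the Lie polynomial $u$. This is the natural filtered statement in the tensor algebra truncated at any fixed degree $N_*\ge n$. By the Lyons extension in \Cref{hall:lem-pure-area}, all these objects are well defined and natural there, so the computation is purely algebraic in the nilpotent group $\exp(\mathfrak L^{\le N_*})$.

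The base cases are the letters. A straight segment with increment $x_\ell$ has logarithm exactly $x_\ell$, which is homogeneous of degree one. A pure-area cell has logarithm exactly $(t-s)A_j$, and we normalize the time so that this equals $A_j$; it is homogeneous of degree two. A purified geometric cell has, by \eqref{hall:eq-purified-cell}, vanishing degree-one part and degree-two part $A_j$, with the remainder in degrees $\ge3$. Each letter thus satisfies the inductive claim with $n_u=\deg$ of the letter.

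For the inductive step, take $u=[u_1,u_2]$ realized as $[\mathbf Z_{u_1},\mathbf Z_{u_2}]_{\rm grp}$. Write $a=\log\mathbf Z_{u_1}$ and $b=\log\mathbf Z_{u_2}$, so that $a\in\mathfrak L_{\ge n_1}$ with leading part $u_1$, and $b\in\mathfrak L_{\ge n_2}$ with leading part $u_2$. The group commutator formula
\[
 \log(e^ae^be^{-a}e^{-b})=[a,b]+\sum(\text{iterated brackets containing at least one }a\text{ and one }b\text{, of length}\ge3)
\]
is the key input; it follows from BCH, because the expression vanishes whenever $a=0$ or $b=0$, and so every Lie monomial in it involves both letters. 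Since the grading by ordinary degree is a Lie grading, $[a,b]$ has lowest component $[u_1,u_2]$ in degree $n_1+n_2$, and every remaining term lies in degree $\ge n_1+n_2+\min(n_1,n_2)>n_u$. This closes the induction and gives \eqref{hall:eq-Hall-leading} with all other terms of strictly larger degree.

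The main obstacle I expect is ensuring that the leading component survives, i.e.\ that $w(A_1,\dots;x_1,\dots)$ is genuinely the degree-$n$ component and no lower-degree debris enters. This is exactly why purification is required: without closing the first level, a geometric cell would carry a degree-one part that could bracket to produce lower-degree terms. With purification, the lowest degree is controlled, and the filtered argument above handles the rest. The only analytic point is that concatenation of finitely many paths and group commutators commute with the degree truncation, which holds in the truncated tensor algebra by \Cref{cc:gen-prop-permutation-stable-tensor-algebra}.
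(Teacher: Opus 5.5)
Your proof is correct and is essentially the paper's argument. The paper also inducts on the Hall bracketing: if the logarithms $X,Y$ begin in degrees $a,b$, then the logarithm of $e^Xe^Ye^{-X}e^{-Y}$ begins with $[X_a,Y_b]$ in degree $a+b$, every other Lie monomial repeats an input or uses a higher-degree component, and \eqref{hall:eq-purified-cell} handles the purified cells; your bidegree argument (every commutator monomial contains both $a$ and $b$, so length $\ge3$ forces degree $\ge n_1+n_2+\min(n_1,n_2)$) simply makes that step explicit.
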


\begin{proof}
If logarithms $X,Y$ begin in ordinary degrees $a,b$, the logarithm of
$e^Xe^Ye^{-X}e^{-Y}$ begins with $[X_a,Y_b]$ in degree $a+b$; every remaining
Lie monomial either repeats a positive-degree input or uses a higher-degree
term.  Induction on the Hall bracketing proves the assertion.  Formula
\eqref{hall:eq-purified-cell} handles nonzero first-level cells.
\end{proof}

\begin{lemma}[Polarized disjoint-support separation]
\label{hall:lem-polarized-label-separation}
Let
\[
 \Theta_H:(\Lambda^2H)^r\times H^m\longrightarrow H^{\otimes n},
 \qquad n=2r+m,
\]
be a nonzero multilinear natural tensor polynomial.  Polarize all slots and
let $H_r\simeq(\mathfrak S_2)^r$ act by swapping the two elementary labels in
each exterior input.  Then there is a word $b$ in pairwise distinct labels
and a representative $\tau_b\in\mathfrak S_n$ such that the signed fibre
coefficient
\begin{equation}
 \kappa_b:=\sum_{h\in H_r}c_{\tau_bh}\,\operatorname{sgn}_r(h)
 \label{hall:eq-signed-fibre-coefficient}
\end{equation}
is nonzero, where
\[
 \Theta_H=\sum_{\pi\in\mathfrak S_n}c_\pi U_\pi
\]
on the polarized elementary tensor slots.  The same coefficient is obtained
after replacing the one-dimensional labels by arbitrary mutually orthogonal
Hilbert blocks.
\end{lemma}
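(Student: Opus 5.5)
Write $\Theta_H$ on polarized slots. Each exterior input $\alpha_j\in\Lambda^2H$ is polarized as $e_{a_j}\wedge e_{b_j}=e_{a_j}\otimes e_{b_j}-e_{b_j}\otimes e_{a_j}$, and each marker as $e_{c_\ell}$. The elementary labels $a_1,b_1,\ldots,a_r,b_r,c_1,\ldots,c_m$ are taken pairwise distinct; this is possible in the faithful range $\dim H\ge n$, and naturality then covers every other dimension. With this polarization, $\Theta_H$ is a fixed element $\sum_\pi c_\pi U_\pi$ acting on the ordered input tensor
\[
 e_{a_1}\otimes e_{b_1}\otimes\cdots\otimes e_{a_r}\otimes e_{b_r}\otimes e_{c_1}\otimes\cdots\otimes e_{c_m}.
\]
The plan is to expand the value of $\Theta_H$ on the wedge inputs in the orthonormal basis of $H^{\otimes n}$ formed by distinct-label words, and to read off one coordinate.

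First, expand each wedge. The input becomes $\sum_{h\in H_r}\operatorname{sgn}_r(h)\,h\cdot(e_{a_1}\otimes e_{b_1}\otimes\cdots)$, where $h$ swaps positions $2j-1,2j$. Applying $\sum_\pi c_\pi U_\pi$ gives
\[
 \sum_{\pi,h}c_\pi\operatorname{sgn}_r(h)\,U_{\pi h}\,e_{\mathrm{ref}}.
\]
Because the labels are distinct, the vectors $U_\sigma e_{\mathrm{ref}}$ for $\sigma\in\mathfrak S_n$ are orthonormal and pairwise distinct words. Hence the coefficient of the word $b=U_{\tau}e_{\mathrm{ref}}$ is $\sum_{\pi h=\tau}c_\pi\operatorname{sgn}_r(h)=\sum_{h}c_{\tau h^{-1}}\operatorname{sgn}_r(h)$. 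Since $h=h^{-1}$ and $\operatorname{sgn}_r(h^{-1})=\operatorname{sgn}_r(h)$, this is exactly $\kappa_b$ in \eqref{hall:eq-signed-fibre-coefficient}.

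Second, use nonvanishing. The polynomial $\Theta_H$ is nonzero as a multilinear map on $(\Lambda^2H)^r\times H^m$. Multilinearity and the fact that decomposable wedges $e_a\wedge e_b$ and vectors $e_c$ span the input spaces mean that some polarized input with basis labels gives a nonzero value. The key step is to reduce to pairwise distinct labels. Multilinear natural polynomials are determined by their multilinearization (polarization in characteristic zero), so nonvanishing on the faithful range is witnessed by distinct labels. Concretely, I would argue that by $\mathrm{GL}(H)$-naturality a nonzero value on repeated labels forces a nonzero value on distinct labels, because replacing a repeated basis vector by an independent one and projecting back recovers the original. I expect this reduction to be the only delicate point. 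It should follow from naturality applied to the map $e_{a'}\mapsto e_a$, which pushes the distinct-label value onto the repeated-label value, so the distinct-label value is nonzero whenever the repeated one is. A nonzero value has some nonzero coordinate $b$, and choosing $\tau_b$ with $b=U_{\tau_b}e_{\mathrm{ref}}$ gives $\kappa_b\ne0$. Note that $\tau_b$ is determined only up to the $H_r$-fibre, which is why $\kappa_b$ is the signed fibre sum.

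Finally, for orthogonal Hilbert blocks, replace each label $e_k$ by a vector $u_k$ in a block $E_k$, with the blocks mutually orthogonal. Then $H^{\otimes n}$ contains the orthogonal summands $E_{k_{\sigma(1)}}\otimes\cdots\otimes E_{k_{\sigma(n)}}$ for distinct label words. The same computation shows that the component of the output in the summand indexed by $b$ equals $\kappa_b$ times $U_{\tau_b}(u_{\mathrm{ref}})$, where $u_{\mathrm{ref}}$ is the correspondingly ordered tensor of the $u_k$. This summand is nonzero as soon as each $u_k\ne0$, so the coefficient is the same. Alternatively, this follows from the naturality \eqref{cc:eq-projector-bounded-naturality}, applied to the isometry sending $e_k$ to a unit vector of $E_k$.
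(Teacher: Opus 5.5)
Your proposal is correct and follows essentially the same route as the paper. Both arguments evaluate on a distinct-label space of dimension $n$ and obtain a nonzero value there from naturality under linear maps together with spanning by decomposable wedges; the paper maps its universal input directly onto an arbitrary decomposable input, which is your repeated-to-distinct reduction done in one step. Both then read off the coefficient of one label word as the signed sum over the right coset $\tau_b H_r$ (using $h=h^{-1}$), and handle the Hilbert-block statement by coordinate projection onto the orthogonal summand indexed by that same label word.
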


\begin{proof}
Naturality and multilinearity identify $\Theta$ with a finite element of the
permutation algebra on the $n$ polarized slots.  Take the universal label
space
\[
 H_{\rm lab}=\bigoplus_{j=1}^r
 (\mathbf k e_j^0\oplus\mathbf k e_j^1)
 \oplus\bigoplus_{\ell=1}^m\mathbf k f_\ell
\]
and evaluate the exterior inputs at $e_j^0\wedge e_j^1$ and the marker inputs
at $f_\ell$.  The value is nonzero: if it vanished, functoriality under
linear maps and the fact that decomposable wedges span every exterior square
would force the natural polynomial to vanish identically.

Expand the universal value in the basis of elementary label words.  The
permutations producing a fixed output word $b$ form a right coset
$\tau_bH_r$; the subgroup $H_r$ records only the internal swap in each area
pair.  Substitution of $e_j^0\wedge e_j^1$ multiplies the term indexed by
$h\in H_r$ by $\operatorname{sgn}_r(h)$.  Hence the coefficient of the basis
word $b$ is exactly \eqref{hall:eq-signed-fibre-coefficient}.  Since the
universal value is nonzero, at least one such coefficient is nonzero.
Replacing each label line by an orthogonal Hilbert block changes neither the
permutation coefficients nor the coset decomposition; coordinate projection
to the same label word recovers the identical $\kappa_b$.
\end{proof}

\begin{lemma}[Signed native corner]
\label{hall:lem-signed-corner}
Let $q$ be a nonzero labelled stable tensor probe on a polarized Hall word
$w$ of exact depth $r$.  Put $A_j=\operatorname{Sk}(B_j)$ on mutually
orthogonal copies $E_j\oplus F_j$, and place every marker on a separate
one-dimensional block.  There exist a nontrivial tensor cut $S$, norm-one
coordinate compressions $Q_-,Q_+$, tensor-leg unitaries $U,V$, and
$c_{q,w}\ne0$ such that
\begin{equation}
 Q_+\operatorname{Mat}_S\!\left(
 q_H(\pi_n\log_\otimes\mathbf Z_w)\right)Q_-
 =c_{q,w}\,U(B_1\otimes\cdots\otimes B_r)V.
 \label{hall:eq-native-corner}
\end{equation}
The reader data depend only on the finite labelled polynomial $q(w)$, not on
spectral truncation or on the dimensions of the blocks.
\end{lemma}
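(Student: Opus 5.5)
The plan is to reduce the statement to the signed fibre coefficient of \Cref{hall:lem-polarized-label-separation}. First, by \Cref{hall:lem-Hall-exact}, the degree-$n$ logarithm of the recursive commutator path is exactly $w(A_1,\ldots,A_r;x_1,\ldots,x_m)$, with no lower-degree contamination. Hence $q_H(\pi_n\log_\otimes\mathbf Z_w)=\Theta_H(\alpha_{A_1},\ldots,\alpha_{A_r};x_1,\ldots,x_m)$, where $\Theta:=q\circ w$ is a nonzero multilinear natural tensor polynomial. Nonvanishing of $\Theta$ is exactly the statement that $q$ is a nonzero labelled probe on $w$. Naturality and multilinearity write $\Theta_H=\sum_\pi c_\pi U_\pi$ on polarized slots. \Cref{hall:lem-polarized-label-separation} then supplies a label word $b$ and a representative $\tau_b$ with $\kappa_b\ne0$. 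I take $c_{q,w}:=\kappa_b$, up to a fixed normalization factor coming from $2\alpha_{\operatorname{Sk}(B)}$.

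Next I identify the two-form of a skew dilation. Under the convention \eqref{cc:eq-two-form-operator}, $\alpha_{\operatorname{Sk}(B_j)}$ is, up to a universal factor $\pm\tfrac12$ or $\pm1$, the antisymmetrization of the tensor of $B_j$ in $F_j\otimes E_j$. So it plays the role of ``$e^1_j\wedge e^0_j$'', with the two elementary labels replaced by the orthogonal blocks $F_j$ and $E_j$. This is exactly the block replacement allowed in the last sentence of \Cref{hall:lem-polarized-label-separation}. Apply the coordinate projection onto the block word $b$: each slot goes to the block prescribed by $b$, namely $E_j$ or $F_j$ for the two legs of the $j$-th area and the marker line for each marker. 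In $\Theta_H$, only permutations in the coset $\tau_bH_r$ survive this projection, because the blocks are mutually orthogonal and all labels are distinct. The $H_r$-swaps contribute the sign $\operatorname{sgn}_r(h)$ through the antisymmetry of $\operatorname{Sk}$. The projected tensor is therefore $\kappa_b$ times a leg permutation of $\bigotimes_j\widehat B_j\otimes\bigotimes_\ell f_\ell$, where $\widehat B_j\in F_j\otimes E_j$ is $B_j$ viewed as a tensor.

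Now choose the cut $S$ to consist of the $E$-legs of every area pair, and put all marker legs on one side, say the domain. Taking $r\ge1$ makes this cut nontrivial. The flattening across $S$ of the projected tensor is then $B_1\otimes\cdots\otimes B_r$, tensored with rank-one unit-vector factors for the markers, conjugated by leg-reordering unitaries $U,V$. I let $Q_-$ and $Q_+$ be the tensor products of the orthogonal block projections dictated by $b$ on the domain and range legs. These are norm-one coordinate compressions, and compressing the full flattening by them equals the flattening of the block-projected tensor. The marker factors are one-dimensional and can be absorbed into $U,V$ and the compressions, which gives \eqref{hall:eq-native-corner}. The data $b$, $S$, $\kappa_b$ and the leg orders all come from the finite permutation expansion of $q(w)$, so they do not depend on the block dimensions or on any spectral truncation.

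I expect the main obstacle to be the sign and multiplicity bookkeeping in the block replacement. Each area slot, evaluated on $\operatorname{Sk}(B_j)$, contributes two summands: $\widehat B_j$ in $F_j\otimes E_j$ and $-\widehat{B_j^*}$ in $E_j\otimes F_j$. I must check that after projecting to the word $b$ exactly one of these survives for each coset element, so the coefficient is precisely $\kappa_b$ and not a different signed combination. My approach is to carry out the one-dimensional computation of \Cref{hall:lem-polarized-label-separation} with $e^0_j\mapsto E_j$ and $e^1_j\mapsto F_j$ and transport it verbatim by naturality under the partial isometries that embed the label lines into the blocks.
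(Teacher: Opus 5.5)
Your proposal is correct and follows essentially the same route as the paper. Both arguments use Hall degree-exactness to identify $\pi_n\log_\otimes\mathbf Z_w$ with $w(A_1,\ldots,A_r;x_1,\ldots,x_m)$, take the signed fibre coefficient $\kappa_b\neq0$ from the polarized label-separation lemma, project onto the block word $b$ with the $H_r$-swap signs matched by the antisymmetry of $\operatorname{Sk}(B_j)$, and choose a cut with the $E_j$ legs in the domain, the $F_j$ legs in the range, and marker lines contributing scalars. The bookkeeping you flag at the end resolves as you expect: for $\pi=\tau_bh$, only the input ordering indexed by $h$ lands on the word $b$, contributing $c_{\tau_bh}\operatorname{sgn}_r(h)$, so the projected tensor is $\kappa_b$ times a single leg permutation of $\widehat B_1\otimes\cdots\otimes\widehat B_r\otimes f_1\otimes\cdots\otimes f_m$.
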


\begin{proof}
By the degree-exact Hall lemma,
\[
 \pi_n\log_\otimes\mathbf Z_w=w(A_1,\ldots,A_r;\text{markers});
\]
all other BCH terms have ordinary tensor degree strictly larger than $n$.
After applying $q$, polarize the $r$ area slots into their $2r$ elementary
legs.  The resulting nonzero natural multilinear polynomial satisfies
\Cref{hall:lem-polarized-label-separation}.  Choose a label word $b$ with
$\kappa_b\ne0$.

Project every elementary output leg to the block prescribed by $b$.  Terms
with another label word vanish by orthogonality.  The terms which remain form
exactly the fibre $\tau_bH_r$.  Internal swaps in an area pair change the
exterior input and the skew corner by the same sign, so the fibre collapses
to the single coefficient $\kappa_b$ rather than cancelling.  Replacing the
universal label lines by larger orthogonal blocks leaves this coefficient
unchanged.

Choose the representative of the fibre so that, for each $j$, the $E_j$ leg
lies on the input side of the cut and the $F_j$ leg lies on the output side.
Place all marker lines on either side as one-dimensional factors.  The
compression of
\[
 \operatorname{Sk}(B_j)=
 \begin{pmatrix}0&-B_j^*\\ B_j&0\end{pmatrix}
\]
from $E_j$ to $F_j$ is $B_j$; marker legs contribute fixed scalars, and the
remaining tensor reorderings are the unitaries $U,V$.  Thus the surviving
fibre is
$\kappa_bU(B_1\otimes\cdots\otimes B_r)V$.  Set
$c_{q,w}=\kappa_b$ after absorbing the fixed exterior-normalization and marker
scalars.  Hall degree exactness excludes terms from other ordinary degrees,
and the label projection has already removed every other term in degree $n$.
This proves \eqref{hall:eq-native-corner} with no same-degree or lower-cost
remainder.
\end{proof}

\subsection{The native response kernel}
\label{hall:subsec-native-kernel}

Fix $n$, a partition $\lambda\vdash n$, and the filtered stable multiplicity
space $M_{\lambda,n}$ from
\Cref{cc:gen-subsection-critical-class}.  Interpret
$q\in M_{\lambda,n}^*$ as the corresponding labelled natural tensor map.
Fix $\beta\in(1/3,1/2)$ and the universal separable Hilbert space
$\mathcal H_\infty=\ell^2$.  Let $\mathscr I_{\lambda,n,\beta}$ be the set of
tuples consisting of a unit-normalized entrance in
$\mathfrak C_\beta^{\mathrm{crit}}(1)$ on a closed subspace of
$\mathcal H_\infty$, an interval $0\le s<t\le1$, a nontrivial tensor cut,
and norm-one spatial compressions.  Fix unitary identifications of the finite
tensor powers of $\mathcal H_\infty$ with standard separable domain and range
spaces, and extend each compressed corner by zero.  For $r\ge0$ set
\begin{equation}
 K_r\mathcal Y_{\lambda,n,\beta}^{\rm nat}
 :=\ell^\infty\!\left(
 \mathscr I_{\lambda,n,\beta};\mathfrak W^{[r]}\right),
 \label{hall:eq-native-target-filtration}
\end{equation}
with the trace-class convention at cost zero, and set
\[
 \mathcal Y_{\lambda,n,\beta}^{\rm nat}
 :=K_{\lfloor n/2\rfloor}\mathcal Y_{\lambda,n,\beta}^{\rm nat}.
\]
The resulting labelled response is
\begin{equation}
 \mathcal R_{\lambda,n,\beta}^{\rm nat}:M_{\lambda,n}^*
 \longrightarrow\mathcal Y_{\lambda,n,\beta}^{\rm nat}.
 \label{hall:eq-native-response}
\end{equation}
Every separable Hilbert-space experiment is represented after an isometric
embedding into $\mathcal H_\infty$, so this reduction discards no native reader.
The normalization
\eqref{cc:gen-unit-critical-control} and
\Cref{cc:gen-theorem-chronological-upper} give the uniform bound required in
\eqref{hall:eq-native-target-filtration}; hence the response is filtered for
the dual ancestry filtration.  By
\Cref{rk:prop-robust-support}, strictness on each finite exact layer is already
certified by finitely many coordinates of this product.  We set
\begin{equation}
 d_{\rm sp}^{\rm nat}(q)
 :=\nu_K(\mathcal R_{\lambda,n,\beta}^{\rm nat}q),
 \label{hall:eq-native-spectral-depth-definition}
\end{equation}
with value $-\infty$ at $q=0$.

\begin{theorem}[Full-support native Hall kernel]
\label{hall:thm-native-kernel}
For every nonzero exact layer
$\operatorname{gr}^{\mathrm{anc}}_rM_{\lambda,n}$, finitely many pure-area
Hall experiments and signed native corners have rank-one pushed-forward Rees
kernels
\begin{equation}
 K_{r,e}=\sigma_e\otimes\tau_r,
 \qquad
 \tau_r=[D_\infty^{\otimes r}],
 \label{hall:eq-rank-one-kernels}
\end{equation}
up to nonzero scalars and fixed leg unitaries, and the symbols
$\{\sigma_e\}$ span the whole exact layer.  Hence
$\mathcal R_{\lambda,n,\beta}^{\rm nat}$ is strict and coefficient-pure.
\end{theorem}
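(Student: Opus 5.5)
The plan is to reduce the statement to the reader-support criterion of \Cref{rk:thm-reader-symbols} and then supply, for each exact layer, finitely many Hall experiments whose labelled corners are pure tensor powers of the harmonic atom.

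\emph{Step 1: a spanning set of symbols.} Fix a nonzero layer $G=\operatorname{gr}^{\mathrm{anc}}_rM_{\lambda,n}$. By \Cref{hall:lem-area-marker-spanning}, $G$ is spanned by classes of polarized area--marker Hall words with $r$ area symbols and $m=n-2r$ markers. Since $G$ is finite dimensional, we choose finitely many such words $w_1,\ldots,w_k$ whose classes $\sigma_{w_e}$ form a basis of $G$. These classes will serve as the left symbols $\sigma_e$.

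\emph{Step 2: the experiment attached to each word.} For each $w_e$ we realize the area letters by pure-area cells $\mathbf P^{A_j}$ with $A_j=\operatorname{Sk}(B_j)$ on mutually orthogonal blocks $E_j\oplus F_j$. Each $B_j$ is chosen as a scaled copy of $D_\infty$, or of a finite truncation of $D_\infty$ for the window-pure version. The markers are placed as straight segments on separate lines, and the recursive group commutator path $\mathbf Z_{w_e}$ is formed as in \Cref{hall:lem-pure-area,hall:lem-Hall-exact}.

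After a deterministic rescaling of space and time, this finite path is a unit-normalized member of $\mathfrak C^{\mathrm{crit}}_\beta(1)$. The reason is that $\operatorname{Sk}(\epsilon D_\infty)$ has finite $\mathfrak W^{[1]}$ norm, and the path is a finite concatenation of controlled pieces. Hence the experiment is an admissible index in $\mathscr I_{\lambda,n,\beta}$.

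By \Cref{hall:lem-Hall-exact}, $\pi_n\log_\otimes\mathbf Z_{w_e}$ equals $w_e(A;x)$ exactly, with no other degree-$n$ contribution.

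\emph{Step 3: the corner readout and the kernel.} For a probe $q\in M^*_{\lambda,n}$ of top depth $r$, we apply $q$ (through $P_{\lambda,r}$) and use \Cref{hall:lem-signed-corner}. This gives a cut $S_e$, compressions, and unitaries such that the corner is $c_{q,w_e}U(B_1\otimes\cdots\otimes B_r)V$, which is a multiple of a representative of $\tau_r=[D_\infty^{\otimes r}]$.

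The delicate point is the dependence on $q$. The corner scalar is linear in $q$ and, for a fixed choice of label word and cut, equals $q(\sigma_e)$ up to a fixed nonzero normalization. So the labelled reader is exactly $q\mapsto q(\sigma_e)\tau_r$, and its pushed-forward kernel is rank one, $K_{r,e}=\sigma_e\otimes\tau_r$, as required by \eqref{rk:eq-rank-one-reader}.

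The main obstacle is fixing the label word $b$ and the cut \emph{uniformly} in $q$. \Cref{hall:lem-signed-corner} as stated chooses them after $q$. I would handle this in one of two ways.
\begin{enumerate}[label=\textup{(\alph*)},leftmargin=2.8em]
\item Use all finitely many label words and cuts as separate labelled coordinates. Then the family of scalar functionals $q\mapsto\kappa_b(q)$ jointly separates $G^*$: if all of them vanish for some $q$, the polarized universal value of $q(w_e)$ vanishes for every $e$, so $q$ kills every $\sigma_{w_e}$, hence kills $G$ by spanning.
\item Note that the symbols $\sigma_{e,b}$ defined by these functionals then span $G$ by the annihilator duality of \Cref{rk:thm-kernel-support}.
\end{enumerate}

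\emph{Step 4: strictness, purity, and lower layers.} We must check that these readers factor through the associated graded at order $r$. Deeper components have depth above $r$ and are annihilated by the dual flag, since $q\in\mathscr D_r$. The corner contains no lower-cost remainder because degree exactness and label projection remove all other terms. Hence the total kernel lies in $G\otimes E_r\otimes\mathbf k\tau_r$, and its left support is all of $G$.

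\Cref{rk:thm-kernel-support} then gives injectivity of $\operatorname{gr}_r$ on every layer, which is strictness, and the containment just described is coefficient purity. The remaining requirement, that $\tau_r\neq0$ in $\partial^r_{\rm cr}$, is \eqref{cc:eq-exact-atom-Rees-power}.
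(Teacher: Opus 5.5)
Your proposal is correct and follows the paper's own argument: spanning area--marker Hall words from \Cref{hall:lem-area-marker-spanning}, $B_j=D_\infty$ in every area slot, \Cref{hall:lem-Hall-exact,hall:lem-signed-corner} to make each corner a nonzero multiple of $UD_\infty^{\otimes r}V$ with a rank-one kernel, and then \Cref{rk:thm-reader-symbols,rk:thm-response-consequences}. You index the readers by (word, label word) pairs, so that the cut, the compressions and the functional $q\mapsto\kappa_b(q)$ are all fixed before $q$, and you get spanning of the symbols $\sigma_{e,b}$ by annihilator duality; this makes explicit a point the paper's short proof leaves implicit, and it should replace your Step~1/Step~3 claim that each corner equals $q(\sigma_{w_e})$ for a single class vector per word, because one polarized word can contribute a subspace of dimension greater than one to the multiplicity quotient.
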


\begin{proof}
Choose finitely many area--marker Hall words whose classes span the exact
layer, using \Cref{hall:lem-area-marker-spanning}.  In every area slot take
$B_j=D_\infty$ and apply
\Cref{hall:lem-Hall-exact,hall:lem-signed-corner}.  Each native corner is a
nonzero scalar multiple of $D_\infty^{\otimes r}$, so its pushed-forward
kernel has the form \eqref{hall:eq-rank-one-kernels}.  The left supports are
the lines $\mathbf k\sigma_e$ and their sum is the exact layer.  Apply
\Cref{rk:thm-reader-symbols,rk:thm-response-consequences}.
\end{proof}

\begin{corollary}[Native valuation exchange and flag reconstruction]
\label{hall:cor-native-valuation}
For every nonzero $q\in M_{\lambda,n}^*$,
\begin{equation}
 d_{\rm sp}^{\rm nat}(q)=a_F(q),
 \label{hall:eq-native-depth}
\end{equation}
and
\begin{equation}
 F_rM_{\lambda,n}
 =\bigcap_{\substack{q\in M_{\lambda,n}^*\\
                d_{\rm sp}^{\rm nat}(q)\le r-1}}
 \ker q.
 \label{hall:eq-native-flag}
\end{equation}
Moreover, some fixed designed geometric response of $q$ satisfies
\begin{equation}
 s_N(T_q)
 \sim C_q\frac{(\log N)^{a_F(q)-1}}{N},
 \qquad C_q>0.
 \label{hall:eq-native-exact-law}
\end{equation}
Thus forward depth and dual reconstruction require one full-support kernel,
not two separate exposure constructions.
\end{corollary}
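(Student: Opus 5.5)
The plan is to deduce the corollary from the abstract valuation-exchange theorem, with the native Hall kernel theorem as the only analytic input. First I would check the hypotheses of \Cref{rk:thm-valuation-exchange}. The source is the finite-dimensional space $M=M_{\lambda,n}$ with its finite decreasing flag $F_rM_{\lambda,n}$. The target is $\mathcal Y^{\rm nat}_{\lambda,n,\beta}$ with the increasing filtration \eqref{hall:eq-native-target-filtration}. The response $\mathcal R^{\rm nat}_{\lambda,n,\beta}$ is filtered by the uniform upper bound of \Cref{cc:gen-theorem-chronological-upper}: a probe $q\in\mathscr D_rM^*=(F_{r+1}M)^\perp$ only sees the components $P_{\lambda,s}L_n$ with $s\le r$, and these lie in $\mathfrak W^{[r]}_{\rm mf}$ by nesting.

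\Cref{hall:thm-native-kernel} supplies full left support of every graded kernel $\mathfrak K_r$. By \Cref{rk:thm-kernel-support}, each $\operatorname{gr}_r\mathcal R^{\rm nat}$ is then injective, so the response is strict. Clause~(iii) of \Cref{rk:thm-valuation-exchange} then gives $\nu_K(\mathcal R^{\rm nat}q)=a_F(q)$, which is \eqref{hall:eq-native-depth} by the definition \eqref{hall:eq-native-spectral-depth-definition}. The reconstruction formula \eqref{rk:eq-flag-reconstruction} is literally \eqref{hall:eq-native-flag}.

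For the exact law \eqref{hall:eq-native-exact-law}, put $r=a_F(q)$. The class of $q$ in $\mathscr D_r/\mathscr D_{r-1}\cong G_r^*$ is nonzero. By \Cref{hall:thm-native-kernel} the symbols $\sigma_e$ span $G_r$, so some experiment $e$ has $q(\sigma_e)\ne0$. By \eqref{rk:eq-rank-one-reader}, that experiment's graded readout is $q(\sigma_e)\tau_r$. At the representative level I would use the signed native corner identity \eqref{hall:eq-native-corner} with $B_j=D_\infty$. Applied to $q$ rather than to a single symbol, it should give $T_q=c\,U D_\infty^{\otimes r}V$ plus contributions from the components of $q$ on lower layers.

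The main obstacle is controlling those extra contributions: $q$ pairs nontrivially with $\operatorname{Ran}P_{\lambda,s}$ for $s<r$, and these need not vanish in the chosen corner. I would first argue that they lie in $\mathfrak W^{[r-1]}_{\rm mf}$ by the upper theorem, so they form a lower remainder of the type \eqref{cc:eq-packet-lower-remainder}. If necessary I would instead choose the Hall experiment so that its logarithm has a pure depth-$r$ leading term (pure area leaves and degree exactness), which makes the lower-layer pairings vanish on that experiment. With the remainder controlled, $T_q$ is an exact critical $r$-packet. \Cref{cc:thm-critical-packet-principle}(i), together with $s_N(D_\infty^{\otimes r})\sim\phi_r(N)/\Gamma(r)$ from \eqref{rk:eq-critical-boundary-line}, then gives $s_N(T_q)\sim C_q(\log N)^{r-1}/N$ with $C_q=|c|/\Gamma(r)>0$. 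Degenerate case: when $r=1$ the remainder is finite rank, since it comes from marker-only terms, and \Cref{cc:gen-lemma-one-grade-noncancellation} covers it.
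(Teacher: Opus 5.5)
Your proposal is correct and follows the paper's route. Full support from \Cref{hall:thm-native-kernel} gives strictness, hence valuation exchange and flag reconstruction by \Cref{rk:thm-valuation-exchange}, and the exact law follows from coefficient-purity (\Cref{rk:thm-response-consequences}(ii)) together with \eqref{rk:eq-critical-boundary-line}.

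The lower-layer remainder you worry about never arises. Degree exactness makes $\pi_n\log_\otimes\mathbf Z_w=w(A_1,\ldots,A_r;x)\in F_rL_n$, and every $P_{\lambda,s}$ with $s<r$ annihilates this, so the signed corner is exactly $c\,UD_\infty^{\otimes r}V$. For $r=1$ there is likewise nothing to control, and the noncancellation lemma, which yields only window bounds rather than an asymptotic equivalent, is not needed.
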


\begin{proof}
Apply
\Cref{rk:thm-valuation-exchange,rk:thm-response-consequences} to
\Cref{hall:thm-native-kernel}.  The exact coefficient is supplied by
\eqref{rk:eq-critical-boundary-line}.
\end{proof}

\subsection{One universal Hall test path}
\label{hall:subsec-universal-path}

Fix $p\in(2,3)$.  Enumerate the countable collection of polarized
area--marker Hall patterns and their finitely many signed label corners by
$(w_k)_{k\ge1}$.  Let $H_k$ be the orthogonal Hilbert block of the $k$-th
local experiment and $\mathbf Z_k$ its pure-area Hall path.  Put
$t_k=1-2^{-k}$, $t_0=0$, and $I_k=[t_{k-1},t_k]$.  Choose scales
$\varepsilon_k\in(0,1)$ so fast that
\begin{equation}
 \sum_{k\ge1}\varepsilon_k
 \bigl(1+\|\mathbf Z_k\|_{p\text{-var}}\bigr)<\infty.
 \label{hall:eq-universal-scales}
\end{equation}
Reparametrize the homogeneous dilation
$\delta_{\varepsilon_k}\mathbf Z_k$ to $I_k$, extend it constantly outside
$I_k$, and concatenate the pieces in
$H_*:=\bigoplus_{k\ge1}H_k$.

\begin{theorem}[One universal Hall test path]
\label{hall:thm-universal-path}
The finite concatenations converge in $p$-variation to one weakly geometric
$p$-rough path $\mathbf X_*$ on $H_*$.  It is independent of tensor degree,
Schur sector, and dual probe.  For every finite stable coefficient sector
$M_{\lambda,n}$ and every nonzero $q\in M_{\lambda,n}^*$, a bounded block
projection of $\mathbf X_*$ followed by one signed native corner gives
\begin{equation}
 Q_+\operatorname{Mat}_S\!\left(
 q(\pi_n\log_\otimes(P_{k\#}\mathbf X_*)_{0,1})\right)Q_-
 =c_qUD_\infty^{\otimes a_F(q)}V,
 \qquad c_q\ne0.
 \label{hall:eq-universal-path-corner}
\end{equation}
Consequently one fixed geometric path, together with labelled bounded
readers, tests every finite primitive layer and reconstructs every finite
ancestry flag.
\end{theorem}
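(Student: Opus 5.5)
The proof splits into three parts: convergence of the concatenation, the block-projection identity for a single experiment, and the passage from \eqref{hall:eq-universal-path-corner} to flag reconstruction.

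\emph{Convergence.} Each dilated piece $\delta_{\varepsilon_k}\mathbf Z_k$ lives on the orthogonal block $H_k$ and is supported on $I_k$. The homogeneous $p$-variation of a dilation scales linearly in $\varepsilon_k$. The concatenation of the first $K$ pieces is therefore a geometric $p$-rough path whose homogeneous $p$-variation norm is bounded by the sum in \eqref{hall:eq-universal-scales}. The tail after $K$ is supported on $[t_K,1]$ and has $p$-variation at most $\sum_{k>K}\varepsilon_k\|\mathbf Z_k\|_{p\text{-var}}\to0$. Chen's identity compares the $K$-th and $K'$-th truncations: they coincide on $[0,t_K]$ and differ afterwards by a group element close to the identity in the homogeneous metric. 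This makes the sequence Cauchy in $p$-variation (inhomogeneous, after the standard interpolation), and the weakly geometric closedness of the limit follows because every finite concatenation is a limit of smooth signatures by \Cref{hall:lem-pure-area}. The path uses no data depending on $(\lambda,n,q)$, which gives the independence clause.

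\emph{Block projection.} Fix $k$ and let $P_k:H_*\to H_k$ be the orthogonal projection. By naturality of the Lyons extension (\Cref{hall:lem-pure-area}) and the fact that pushforward of a concatenation is the concatenation of pushforwards, $P_{k\#}\mathbf X_*$ is the trivial path except on $I_k$, where it equals $\delta_{\varepsilon_k}\mathbf Z_k$. The other blocks project to zero, since each piece takes values in $\exp(T(H_j))$ and $P_k$ kills $H_j$ for $j\neq k$. Hence
\[
 \pi_n\log_\otimes(P_{k\#}\mathbf X_*)_{0,1}=\varepsilon_k^{\,n}\,\pi_n\log_\otimes\mathbf Z_k
\]
in the homogeneous dilation convention.

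\emph{Choosing $k$ and reading the corner.} Given nonzero $q\in M_{\lambda,n}^*$, set $r=a_F(q)$, so $q$ is nonzero on $F_rM_{\lambda,n}$ and, through the exact layer, nonzero on $\operatorname{gr}^{\rm anc}_r$. By \Cref{hall:lem-area-marker-spanning}, some polarized area--marker Hall word $w$ with $r$ area symbols satisfies $q(w)\ne0$. Hall words of depth $s>r$ lie in $F_{r+1}$ and are killed by $q$ only modulo what it sees; we choose $w$ of depth exactly $r$, which is possible because such words span $F_r$ modulo $F_{r+1}$ and $q\notin(F_r)^\perp$. Let $k$ be the index for which $w_k$ is this word together with a signed label corner $b$ whose coefficient $\kappa_b$ is nonzero for $q(w)$, as guaranteed by \Cref{hall:lem-polarized-label-separation}. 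Two points need care here. First, the enumeration must include every word with each of its finitely many label corners. Second, the required corner depends on $q$; we allow all corners and pick the one that works.

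Taking $B_j=D_\infty$ in each area slot, \Cref{hall:lem-Hall-exact} and \Cref{hall:lem-signed-corner} yield \eqref{hall:eq-universal-path-corner} with $c_q=\varepsilon_k^n c_{q,w}\ne0$, and with no same-degree remainder.

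The main obstacle is to ensure that the $D_\infty$ blocks in the $k$-th experiment are finite-energy. $D_\infty$ is Hilbert--Schmidt, so $\operatorname{Sk}(D_\infty)$ gives a legitimate pure-area cell; we use infinite-dimensional $H_k$ rather than truncations. The scalar $\varepsilon_k^n$ is absorbed into $c_q$.

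\emph{Reconstruction.} The final sentence follows from \Cref{hall:cor-native-valuation}. The corners above are native readers of the response $\mathcal R^{\rm nat}$ realized on one path, and their kernels are rank-one with symbols spanning each exact layer. By \Cref{rk:thm-reader-symbols} and \Cref{rk:thm-valuation-exchange} these readers already give strictness, and hence the flag formula.
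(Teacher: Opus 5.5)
Your proposal is correct and follows the paper's proof essentially step for step. You prove Cauchy convergence of the finite concatenations from the dilation scaling and a concatenation estimate. You show that the block projection $P_{k\#}$ recovers the $k$-th dilated Hall experiment up to the scalar $\varepsilon_k^n$. You choose a depth-$a_F(q)$ Hall word paired nontrivially with the top restriction of $q$ and apply the signed corner lemma with $B_j=D_\infty$. Finally you obtain reconstruction from the kernel-support and valuation-exchange theorems.

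Two of your remarks make explicit points the paper leaves implicit: the enumeration must include every label corner, and $\operatorname{Sk}(D_\infty)$ is a legitimate Hilbert--Schmidt pure-area cell.
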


\begin{proof}
Let $\mathbf X^{(m)}$ be the concatenation of the first $m$ pieces, extended
constantly on $[t_m,1]$.  Reparametrization does not change variation, and
homogeneous dilation multiplies the homogeneous $p$-variation control by
$\varepsilon_k$.  The standard concatenation estimate in the step-two rough
path group therefore gives, for $n>m$,
\[
 d_{p\text{-var}}(\mathbf X^{(m)},\mathbf X^{(n)})
 \le C_p\sum_{k=m+1}^n
 \varepsilon_k\bigl(1+\|\mathbf Z_k\|_{p\text{-var}}\bigr).
\]
Thus $\mathbf X^{(m)}$ is Cauchy.  The same tail bound controls its endpoint
increments, so the limit extends continuously to $t=1$; completeness of the
weakly geometric $p$-rough path space gives a weakly geometric limit
$\mathbf X_*$.  Condition \eqref{hall:eq-universal-scales} also implies the
square summability needed for the first-level endpoint in $H_*$ and for the
Hilbert--Schmidt direct sum of the second-level pure-area pieces.

The orthogonal projection $P_k:H_*\to H_k$ kills every other block and
recovers the reparametrized dilation of the $k$-th local experiment, with
constant pieces before and after $I_k$.  For the top nonzero restriction of
$q$, choose a Hall basis word paired nontrivially with it and use
\Cref{hall:lem-signed-corner}.  Homogeneous scaling contributes only the
nonzero scalar $\varepsilon_k^n$.  The valuation and reconstruction
statements follow from \Cref{rk:thm-kernel-support}.
\end{proof}

\begin{corollary}[Finite-window little tests]
\label{hall:cor-finite-window}
Fix finitely many tensor degrees and a finite spectral window.  Replacing
$D_\infty$ in the finitely many relevant local blocks by one sufficiently
long finite-rank truncation
$D_L=\operatorname{diag}(1,1/2,\ldots,1/L)$ produces a finite-rank geometric
test path whose labelled responses agree with
\eqref{hall:eq-native-exact-law} throughout the prescribed window.  Thus
every finite part of the universal test is realizable inside the little
weak-trace class.
\end{corollary}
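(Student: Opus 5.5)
The plan is to deduce the finite-window statement from the exact corner identity \eqref{hall:eq-universal-path-corner}, with $D_\infty$ replaced by a truncation $D_L$ in the finitely many local blocks involved. Since the declared degrees are finite and each stable sector $M_{\lambda,n}$ is finite-dimensional, the spanning argument of \Cref{hall:thm-native-kernel} and \Cref{hall:lem-area-marker-spanning} uses only finitely many area--marker Hall words. Hence only finitely many blocks $H_k$ of the universal concatenation matter. In those blocks I take $B_j=D_L$ with a single common $L$, and I keep only the corresponding finitely many pieces of the concatenation. The resulting path lives on a finite-dimensional space, namely a finite sum of blocks $\mathbb R^{L}\oplus\mathbb R^{L}$ and marker lines. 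By \Cref{hall:lem-pure-area} it is a geometric path, the signature limit of bounded-variation loops. Its area is finite rank, so it lies in $\mathcal S^0_{1,\infty}$, and \Cref{cc:gen-theorem-chronological-upper} places every higher Rees coefficient in the little grade $\mathfrak W^{[r]}_0$.

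Next I transfer the corners. \Cref{hall:lem-signed-corner} holds for arbitrary Hilbert--Schmidt $B_j$, with reader data independent of the block dimensions. Applied with $B_j=D_L$, it gives
\[
 Q_+\operatorname{Mat}_S\bigl(q(\pi_n\log_\otimes\mathbf Z)\bigr)Q_-
 =c_qU D_L^{\otimes a_F(q)}V,
\]
with the same nonzero $c_q$, up to the dilation factor $\varepsilon_k^n$. This is an exact identity with no remainder, so the window question is purely spectral: compare $s_N(D_L^{\otimes r})$ with $s_N(D_\infty^{\otimes r})$.

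The main step is choosing $L$ so the two singular sequences agree on the window $1\le N\le N_0$. The singular values of $D_\infty^{\otimes r}$ are the decreasing rearrangement of $1/(n_1\cdots n_r)$. Let $u_0=s_{N_0}(D_\infty^{\otimes r})>0$. Every tuple contributing a value at least $u_0$ has $n_1\cdots n_r\le 1/u_0$, so each $n_i\le 1/u_0$. If $L\ge 1/u_0$, maximized over the finitely many relevant depths $r\le\lfloor n/2\rfloor$, then all tuples contributing values at least $u_0$ are present in $D_L^{\otimes r}$. Truncation removes only smaller products. Therefore $s_N(D_L^{\otimes r})=s_N(D_\infty^{\otimes r})$ for $N\le N_0$, with ties at level $u_0$ handled by the same count.

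Combining this with \eqref{rk:eq-critical-boundary-line} and \eqref{hall:eq-native-exact-law}, the labelled responses reproduce the prescribed asymptotic profile exactly on the window. Since $D_L$ is finite rank, this realizes the finite part of the universal test inside the little weak-trace class. The only care needed is uniformity: a single $L$ must serve all finitely many degrees, sectors and probes, which is immediate from finiteness.
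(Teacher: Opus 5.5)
Your proposal is correct and follows the paper's argument. The paper also observes that passing from $D_\infty$ to $D_L$ removes only products involving an index larger than $L$; these are at most $1/(L+1)$, whereas the $L$ products $1/j\ge 1/L$ already fill the top $L$ places. Hence the first $L$ singular values of $D_L^{\otimes r}$ and $D_\infty^{\otimes r}$ agree, and one reruns the finite Hall construction with $D_L$ in the area slots. Your threshold $L\ge 1/s_{N_0}(D_\infty^{\otimes r})$ is the same counting idea, and it is never larger than the paper's choice $L\ge N_0$, since $s_{N_0}(D_\infty^{\otimes r})\ge 1/N_0$.
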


\begin{proof}
Only finitely many local blocks, depths, and output ranks are involved.  For
each $r$, every product involving an index larger than $L$ is at most
$1/(L+1)$, while the $L$ products
$1\cdot j^{-1}\cdot1\cdots1$, $1\le j\le L$, are at least $1/L$.
Hence the first $L$ singular values of $D_L^{\otimes r}$ agree, with
multiplicity, with those of $D_\infty^{\otimes r}$.  Taking one $L$ larger
than every requested output rank and repeating the finite Hall construction
proves the assertion, including the endpoint case $r=1$.
\end{proof}

\subsection{Natural grade-one sources and source-specific sharpness}
\label{hall:subsec-natural-sources}

\begin{definition}[Replicable grade-one saturation source]
\label{cc:def-replicable-grade-one-source}
Fix $\beta\in(1/3,1/2)$.  A sequence of one-cell random critical geometries
$\mathbf Z^{(d)}=(Z^{(d)},A^{(d)})$ is a \emph{replicable grade-one
saturation source} if:
\begin{enumerate}[label=\textup{(\roman*)},leftmargin=2.7em]
\item every finite moment of its one-cell critical H\"older control is
      bounded uniformly in $d$;
\item there is $c_*>0$ such that, for every $L$ and $\varepsilon>0$, some
      $d$ satisfies
      \begin{equation}
       \mathbb P\left\{s_k(A^{(d)}_{0,1})\ge\frac{c_*}{k},
                 \ 1\le k\le L\right\}\ge1-\varepsilon;
       \label{cc:eq-grade-one-edge-certificate}
      \end{equation}
\item finitely many independent copies may be placed on mutually orthogonal
      spatial blocks without dimension-dependent constants.
\end{enumerate}
\end{definition}

\begin{theorem}[Saturation amplification through the native kernel]
\label{cc:thm-saturation-amplification}
Assume that a replicable grade-one saturation source exists.  Then every
nonzero stable natural exact-layer family
$G^{\mathrm{Schur}}_{\lambda,n,r}$ has minimax-sharp spectral depth $r$ in
the sense of \Cref{cc:gen-definition-minimax-depth}.  The Hall template,
cut, compressions, marker increments, and algebraic coefficient are fixed by
the exact layer before the spectral window.  For each requested window only
the finite dimensions of the $r$ source cells are enlarged.
\end{theorem}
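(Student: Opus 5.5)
The proof has two halves: an upper bound and a lower bound. The upper bound is already available. By \Cref{cc:gen-theorem-chronological-upper}, every member of $\mathfrak C_\beta^{\mathrm{crit}}(T)$ satisfies \eqref{cc:gen-chronological-upper} with grade $r$, uniformly in the ambient dimension. Hence $d_{\mathrm{sp}}^{(\beta)}(G^{\mathrm{Schur}}_{\lambda,n,r})\le r$, and grade $r$ is admissible in \eqref{cc:gen-universal-grade}. What remains is the minimax lower bound \eqref{cc:gen-minimax-lower} with $q=r$. This lower bound also rules out every $q<r$ uniformly, because $\phi_r/\phi_q\to\infty$, exactly as at the end of \Cref{cc:thm-critical-packet-principle}(ii).

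For the lower bound I would transplant the deterministic Hall construction of \Cref{hall:thm-native-kernel}, replacing the harmonic atom $D_\infty$ by random source cells.

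\emph{Fixing the template.} Fix the exact layer and choose, via \Cref{hall:lem-area-marker-spanning}, a single area--marker Hall word $w$ with $r$ area letters and $n-2r$ markers whose class is nonzero in $\operatorname{gr}^{\mathrm{anc}}_rM_{\lambda,n}$. Choose also a dual probe $q$ (equivalently the projector $P_{\lambda,r}$) pairing nontrivially with that class. \Cref{hall:lem-signed-corner} then fixes a cut $S_G$, compressions $Q_\pm$, leg unitaries, and a coefficient $c_{q,w}\ne0$. All of these depend only on the labelled polynomial and not on the spectral window.

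\emph{Inserting the source.} Instead of a pure-area cell, I realize the $j$-th area letter by an independent copy of the source cell $\mathbf Z^{(d)}$ on an orthogonal block $E_j\oplus F_j$, purified by a closing segment as in \eqref{hall:eq-purified-cell}. I then skew-dilate or compress so that the relevant off-diagonal corner of $A^{(d)}_{0,1}$ plays the role of $B_j$. The markers are realized by straight unit segments on one-dimensional blocks. For this choice I have three expectations. First, \Cref{hall:lem-Hall-exact} still gives the leading term $w(A_1,\dots,A_r;x)$ in degree $n$, with all other terms of strictly higher ordinary degree. Second, the label projection of \Cref{hall:lem-signed-corner} kills every other same-degree term, so the corner is $c_{q,w}U(B_1\otimes\cdots\otimes B_r)V$. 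Third, the purification corrections contribute only lower-cost remainders.

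\emph{The main obstacle.} I expect the hardest step to be precisely those lower-cost remainders. For a genuinely geometric (non-pure-area) source cell, $\pi_n\log$ contains degree-$n$ contributions in which the third and higher levels of the Lyons extension of each cell enter alongside $A$. These need not be killed by the label projection, since they live on the same blocks. I would first try to show that any such term uses at most $r-1$ area factors together with higher-level data of a single cell. By \Cref{cc:gen-theorem-chronological-upper} applied cellwise, and by \Cref{cc:gen-theorem-product-sewing}, such terms are then uniformly bounded in $\mathfrak W^{[r-1]}_{\mathrm{mf}}$, or have bounded rank when $r=1$. This bound needs uniform moment control, which is hypothesis (i) of \Cref{cc:def-replicable-grade-one-source}. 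Markov's inequality then gives an event of fixed probability on which the remainder norm is at most a fixed constant $C$.

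\emph{Concluding.} Given a window $L$, hypothesis (ii) yields dimensions $d$ for which each of the $r$ independent cells satisfies $s_k(B_j)\ge c_*/k$ for $k\le L$ with probability at least $1-\varepsilon/r$; hypothesis (iii) lets me place them on orthogonal blocks. On the intersection of this event with the remainder event, which has probability at least some fixed $p_G$, I apply \Cref{cc:thm-critical-packet-principle}(ii). Concretely, \Cref{cc:gen-lemma-divisor-lower} gives the $\phi_r$ lower profile for the tensor product, \Cref{cc:gen-lemma-one-grade-noncancellation} absorbs the $\mathfrak W^{[r-1]}$ remainder, and contractions do not increase singular values. This gives \eqref{cc:gen-minimax-lower} on $N_G\le N\le c\,L(\log L)^{r-1}$, which contains $[N_G,L]$ after enlarging $L$.

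Finally, I rescale deterministically so that the realization lies in $\mathfrak C_\beta^{\mathrm{crit}}(T_G)$ on the event, as in \Cref{cc:prop-entrance-to-Rees-bridge}. This rescaling alters only constants, never grades, and only the cell dimensions depend on $L$.
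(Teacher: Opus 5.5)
Your architecture is the paper's: the upper bound comes from \Cref{cc:gen-theorem-chronological-upper}, and the lower bound uses one fixed Hall word, cut and signed corner, independent source cells on the area slots purified as in \eqref{hall:eq-purified-cell}, straight markers, and the divisor law on the window. However, the step you single out as the main obstacle does not exist, and the step you treat as routine is where the work is.

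\Cref{hall:lem-Hall-exact} is stated for purified geometric cells, and you invoke it yourself in your first expectation. Every Lie monomial in $\log\mathbf Z_w$ contains each letter at least once. Its ordinary degree is therefore at least $n=2r+m$, with equality only for the Hall polynomial in the lowest components $A_j,x_\ell$. The third and higher levels of a cell enter only in degrees $>n$. Hence $L_n=w(A_1,\ldots,A_r;x)$ exactly, and \Cref{hall:lem-signed-corner} gives an exact corner $c_{q,w}U(B_1\otimes\cdots\otimes B_r)V$ with zero remainder, so \Cref{cc:gen-lemma-one-grade-noncancellation} is not needed. Your claim that higher levels of each cell enter $\pi_n\log$ alongside $A$ is false.

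This is not cosmetic. Your fallback for $r=1$ assumes remainders of bounded rank. Genuine higher-level terms of a $d$-dimensional cell would have rank growing with $d$, and the noncancellation lemma at $r=1$ requires uniformly bounded rank. Had the obstacle been real, the $r=1$ case would not close. This is exactly why the paper stresses that the grade-one native corner is already an exact block.

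What actually needs hypothesis~(i) is membership of the random assembled path in $\mathfrak C_{\beta}^{\mathrm{crit}}(T_G)$ on an event whose probability is bounded below uniformly in $d$. Rescaling deterministically ``as in \Cref{cc:prop-entrance-to-Rees-bridge}'' does not supply this, because the cells' H\"older constants are random and unbounded. The argument you need runs as follows.
\begin{enumerate}[label=\textup{(\alph*)},leftmargin=2.4em]
\item Chen's identity writes the global second level of the closed, inverted, concatenated template as a signed sum of the cell areas plus finitely many wedges of first-level increments.
\item The weak-trace quasi-triangle inequality and $\|x\wedge_{\mathrm{op}}y\|_{1,\infty}\le2\|x\|\|y\|$ then bound the template's critical control polynomially in the cells' controls, with constants independent of the block dimensions.
\item The Lyons extension controls the fixed higher levels.
\item Uniform moments and Markov's inequality give an event of fixed probability on which this bound is at most a fixed constant. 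One deterministic rescaling then places the path in the unit critical ball.
\end{enumerate}
Intersect that event with the $r$ independent edge events, each of probability at least $1-\varepsilon/r$, to get $p_G$. Finally apply \Cref{cc:gen-lemma-divisor-lower} directly to the exact corner and use that contractions do not increase singular values.
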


\begin{proof}
Choose one Hall word whose exact-layer projection is nonzero and fix its
signed corner by \Cref{hall:lem-area-marker-spanning,hall:lem-signed-corner}.
Place independent grade-one source cells on its area slots, close their first
levels as in \eqref{hall:eq-purified-cell}, and use deterministic straight
markers.  Closing a cell, taking inverses, and concatenating a fixed Hall
template are polynomial operations in every fixed nilpotent step.  Chen's
identity expresses the global second level as the cell areas plus finitely
many wedges of first-level increments; hence the weak-trace quasi-triangle
inequality and
$\|x\wedge_{\rm op}y\|_{1,\infty}\le2\|x\|\|y\|$ give a
polynomial bound independent of the orthogonal block dimensions.  The
canonical finite-level extension theorem then controls the fixed higher
levels required by the Hall word.  After one deterministic spatial rescaling,
the assembled path lies in a fixed critical ball on an event of positive
probability.  On the intersection of this
control event and the edge events, the degree-$n$ Hall logarithm is exact and
its native corner is an $r$-fold tensor product of the grade-one areas.  In
particular, when $r=1$ there is no residual little-weak term whose modulus
could depend on the source dimension: the native corner is already an exact
grade-one block.  Apply the window part of
\Cref{cc:thm-critical-packet-principle}.  All
probabilities, algebraic data, and rescaling constants are fixed before the
window; only the block dimensions vary.
\end{proof}

\begin{theorem}[Native critical ancestry theorem]
\label{cc:thm-source-level-compiler}
Fix $\beta\in(1/3,1/2)$ and a nonzero stable exact primitive layer
$G^{\mathrm{Schur}}_{\lambda,n,r}$.
\begin{enumerate}[label=\textup{(\roman*)},leftmargin=2.7em]
\item Every controlled critical normal entrance has analytic cost at most
      $r$, with the dimension-free bound
      \eqref{cc:eq-rees-coefficient-bound}; little grade-one input gives
      little output.  If
      $T_{S,\lambda,r}=\operatorname{Mat}_S(P_{\lambda,r}L_n(s,t))$ and
      $b=C\omega(s,t)^{n\beta}$, then
      \begin{align*}
       \frac12\log\det(I+a^2T_{S,\lambda,r}^*T_{S,\lambda,r})
       &\le C_r(ab)(\log(e+ab))^{r-1},\\
       \operatorname{Tr}|T_{S,\lambda,r}|^{1+\varepsilon}
       &\le C_rb^{1+\varepsilon}\varepsilon^{-r}.
      \end{align*}
\item The designed native Hall response has a full-support coefficient-pure
      Rees kernel.  Hence every nonzero labelled probe has spectral depth
      equal to its ancestry depth, and every finite ancestry flag is
      recovered by \eqref{hall:eq-native-flag}.
\item A replicable natural grade-one source gives minimax sharpness for the
      same exact layer.
\item Any registered exact critical packet gives the full singular, Mellin,
      and Fredholm laws of \Cref{cc:thm-critical-packet-principle}.
\end{enumerate}
An application supplies only its canonical second-order geometry, its
algebraic normal symbol, and, for source-specific sharpness, one grade-one
certificate.  It does not reprove higher tensor propagation, dual detection,
or spectral conversion.
\end{theorem}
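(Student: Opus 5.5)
The theorem is a packaging of results already proved, so I will prove it clause by clause, each time pointing to the earlier statement that carries the weight. The only new work is checking that the hypotheses line up and that the Fredholm and Mellin envelopes in part (i) follow from the coefficient bound.

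For part~(i), I first apply \Cref{cc:thm-critical-rees-compilation}(i) together with \Cref{cc:gen-theorem-chronological-upper}. Summing over the finite set of Schur sectors, this gives
\[
 \|P_{\lambda,r}L_n(s,t)\|_{\mathfrak W^{[r]}_{\rm mf}(n)}\le C\omega(s,t)^{n\beta},
\]
and part~(ii) of the same theorem gives the little assertion. Next I set $b=C\omega(s,t)^{n\beta}$. For the fixed nontrivial cut $S$, the definition \eqref{cc:gen-multiflat-norm} gives $\|T_{S,\lambda,r}\|_{\mathfrak W^{[r]}}\le b$. Then \Cref{cc:prop-uniform-logarithmic-envelope} yields both displayed inequalities. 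The Fredholm bound comes from \eqref{cc:eq-uniform-fredholm-envelope}, since $\mathfrak F_T(a)=\tfrac12\log\det(I+a^2T^*T)$. The Mellin bound comes from \eqref{cc:eq-uniform-mellin-envelope} for $0<\varepsilon\le1$.

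For part~(ii), I cite \Cref{hall:thm-native-kernel} for the full-support, coefficient-pure kernel on each nonzero exact layer of $M_{\lambda,n}$. Then \Cref{hall:cor-native-valuation} gives $d^{\rm nat}_{\rm sp}(q)=a_F(q)$ and the flag formula \eqref{hall:eq-native-flag}. The one check needed is that the response is filtered. This holds because part~(i), applied to unit-normalized entrances, supplies the uniform bound built into \eqref{hall:eq-native-target-filtration}.

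For part~(iii), I invoke \Cref{cc:thm-saturation-amplification} directly for the given layer $G^{\mathrm{Schur}}_{\lambda,n,r}$. For part~(iv), I invoke \Cref{cc:thm-critical-packet-principle}(i). The concluding remarks about what an application must supply follow from these citations, since none of them depends on the source beyond the entrance data, the normal symbol, and the grade-one certificate.

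The step most likely to need care is part~(i). There I must confirm that the constant $C$ in $b$ is uniform in dimension and in the Hilbert-space realization, so that the envelope constants $C_r$ depend only on $r$. I will handle this by taking the dimension-free constant $C_{N,\beta,P}$ from \eqref{cc:eq-rees-coefficient-bound}, which comes from the dimension-free projector bound \eqref{cc:gen-projector-multiflat-bound}.
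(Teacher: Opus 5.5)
Your proposal is correct and follows the paper's proof, which is likewise a clause-by-clause citation of the filtered realization theorem, the native Hall kernel with the response-consequences theorem, the saturation-amplification theorem, and the packet principle. Your explicit use of \Cref{cc:prop-uniform-logarithmic-envelope} for the two envelope inequalities, and of the filteredness check, makes explicit what the paper leaves implicit. One small correction: the single-sector bound for $P_{\lambda,r}$ comes directly from \Cref{cc:gen-theorem-chronological-upper}; summing over Schur sectors goes the other way, producing the $P_{n,r}$ bound.
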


\begin{proof}
Part~\textup{(i)} is \Cref{cc:thm-critical-rees-compilation}.
Part~\textup{(ii)} is
\Cref{rk:thm-response-consequences,hall:thm-native-kernel}.
Part~\textup{(iii)} is \Cref{cc:thm-saturation-amplification}.
Part~\textup{(iv)} is \Cref{cc:thm-critical-packet-principle}.
\end{proof}

\subsection{Finite-order classification and spectral consequences}
\label{cc:subsec-compiled-consequences}

\begin{corollary}[Low degrees, long hooks, and arbitrary depth]
\label{cc:gen-corollary-sharp-sectors}
For every controlled critical entrance the following assertions hold.
\begin{enumerate}[label=\textup{(\roman*)},leftmargin=2.8em]
\item In degree three, the unique primitive sector
$\mathbf S_{(2,1)}H_{\mathbb C}$ has exact ancestry one.  Every nontrivial
flattening satisfies the universal $N^{-1}$ upper bound, and some labelled
native Hall coordinate $T$ satisfies $s_N(T)\sim C/N$ with $C>0$.
\item In degree four,
\[
 L_4(H_{\mathbb C})
 \cong\mathbf S_{(3,1)}H_{\mathbb C}
 \oplus\mathbf S_{(2,1,1)}H_{\mathbb C},
\]
and the two sectors have exact ancestries one and two.  Their universal upper
profiles are respectively
\[
 \frac1N,
 \qquad
 \frac{\log(e+N)}N.
\]
Each profile is attained, up to a positive asymptotic coefficient, by a
labelled native Hall coordinate.
\item For every $n\ge2$, the long-hook sector
$\mathbf S_{(n-1,1)}H_{\mathbb C}$ has exact ancestry one.  Its universal
critical upper profile is $N^{-1}$, and the native Hall family attains this
profile.
\item For every $r\ge1$, some partition $\lambda_r\vdash2r$ has exact
ancestry $r$; a native coordinate then satisfies
\[
 s_N(T)\sim C\frac{(\log N)^{r-1}}N,
 \qquad C>0.
\]
Thus finite-order critical depths are unbounded.
\end{enumerate}
Under a replicable natural grade-one source the same grades are minimax sharp
on arbitrarily long finite windows.  If the input area is little weak trace,
every upper membership improves to the corresponding little logarithmic
grade.
\end{corollary}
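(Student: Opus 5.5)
The plan is to read each part as the specialization of \Cref{cc:thm-source-level-compiler} to the exact layers identified algebraically in \Cref{cc:cor-low-degree-long-hook}. The algebra is already proved; what remains is to feed each identified sector into the upper bound (part~(i) of that theorem), the native full-support kernel (part~(ii)), and the packet law.

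The first step is to fix the algebraic identifications. Degree three has only $G^{\mathrm{full}}_{3,1}\cong\mathbf S_{(2,1)}$ by \eqref{cc:eq-degree-three-ancestry}. Degree four splits as in \eqref{cc:eq-degree-four-sector-split}, with $(3,1)$ at depth one and $(2,1,1)$ at depth two. The long hook has $m_{(n-1,1),n,r}=\mathbf 1_{\{r=1\}}$ by \eqref{cc:eq-long-hook-depth-one}. For every $r$ there is $\lambda_r\vdash2r$ with $a_{2r}(\lambda_r)=r$ by \eqref{cc:eq-arbitrarily-deep-schur-sector}.

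The second step gives the upper profiles. For each such sector, $P_{\lambda,r}L_n$ satisfies the $\mathfrak W^{[r]}_{\rm mf}$ bound of \Cref{cc:gen-theorem-chronological-upper}. This says $s_N\lesssim\phi_r(N)$ in every nontrivial cut, which is $N^{-1}$ for $r=1$ and $\log(e+N)/N$ for $r=2$. The little refinement is the little clause of the same theorem.

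The third step gives attainment. Apply \Cref{hall:cor-native-valuation} to a nonzero probe $q\in M_{\lambda,n}^*$ that is nonzero on the top exact layer, so that $a_F(q)=r$. Equation \eqref{hall:eq-native-exact-law} then produces a labelled native coordinate $T_q$ with $s_N(T_q)\sim C_q(\log N)^{r-1}/N$. Concretely, by \Cref{hall:thm-native-kernel} the corner is $c\,U D_\infty^{\otimes r}V$, and \eqref{rk:eq-critical-boundary-line} gives the constant $|c|/\Gamma(r)>0$.

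Minimax sharpness under a replicable source is \Cref{cc:thm-saturation-amplification}, applied sector by sector.

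The main obstacle is the phrase ``exact ancestry'' for a whole sector when the multiplicity space has several layers. I must check that in each listed sector the multiplicity space is concentrated in a single layer, so that every nonzero probe there has $a_F=r$ and the upper grade $r$ is not lowered. For $(2,1)$, $(3,1)$, $(2,1,1)$ and the long hook this follows from the explicit layer computations and the first-row bound \eqref{cc:eq-first-row-layer-bound}. For $\lambda_r$ in degree $2r$ it is automatic, because depth $r$ is maximal. I must also see that the upper profile for each sector is genuinely $\phi_r$ rather than some lower $\phi_{r'}$; this holds because the projector $P_{\lambda,r}$ kills $F_{r+1}$ and the native coordinate attains $\phi_r$ exactly.
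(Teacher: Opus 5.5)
Your route is the paper's. Its proof cites \Cref{cc:cor-low-degree-long-hook} for the algebra, then parts \textup{(i)}, \textup{(ii)} and \textup{(iii)} of \Cref{cc:thm-source-level-compiler} for the upper bounds, the native exact laws and the minimax windows, and your three steps unpack exactly those citations.

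One claim in your final paragraph is false, though nothing depends on it. For $\lambda_r$, concentration of the multiplicity space in a single layer is not automatic. Maximality of depth $r$ in degree $2r$ only rules out layers above $r$, not layers below it. \Cref{cc:ex-mixed-ancestry} gives $A_{(3,2,1),6}(u)=2u^2+u^3$, so for $\lambda_3=(3,2,1)$ some nonzero probes have $a_F(q)=2$.

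Part \textup{(iv)} does not need concentration. It asserts only $a_{2r}(\lambda_r)=r$ and the existence of one native coordinate with the $\phi_r$ law. Your third step already supplies that coordinate by choosing $q$ nonvanishing on $F_rM_{\lambda_r,2r}$, which is possible because $m_{\lambda_r,2r,r}>0$. Drop the concentration requirement for $\lambda_r$; for the sectors in \textup{(i)}--\textup{(iii)} your concentration check is correct.
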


\begin{proof}
The algebraic statements are \Cref{cc:cor-low-degree-long-hook}.  Apply
parts~\textup{(i)} and~\textup{(ii)} of
\Cref{cc:thm-source-level-compiler} for the upper and native exact
claims, and part~\textup{(iii)} for the natural-source window statement.
\end{proof}

\section{Conditional covariance and natural-source interfaces}
\label{sec:conditional-interface}

The response-kernel theorem is deterministic once a critical entrance is
available.  This section records two source mechanisms that are both useful
and genuinely independent of the Hall construction: conditional covariance
under a revealed predictable bracket, and deterministic evolution transport
for stochastic convolutions.

\subsection{Revealed brackets and conditional fermionic propagation}

Let $M$ be a square-integrable c\`adl\`ag $H$-valued martingale, let
\[
 R_t=\langle\!\langle M\rangle\!\rangle_t\in\mathcal S_1(H)_+,
 \qquad V_t=\operatorname{Tr}R_t,
\]
and let $Z^\lambda$ be the exterior lift of
\Cref{cc:thm-localized-exterior-lift}.  A sigma-field
$\mathcal G_0\subseteq\mathcal F_0$ \emph{reveals} $R$ when the complete
c\`adl\`ag path $R$ is $\mathcal G_0$-measurable.  Write
$R_t=R_t^c+\sum_{0<s\le t}\Delta R_s$ in trace-class variation and define
\begin{equation}
 \mathscr U^x_{s,t}
 :=\exp\!\left(x\mathcal B_{R_t^c-R_s^c}\right)
   \prod_{s<a\le t}(I+x\mathcal B_{\Delta R_a}).
 \label{eq:compact-fermionic-propagator}
\end{equation}
The commuting product converges in operator norm on
$\mathcal S_1(\mathcal F_\wedge(H))$ and satisfies
$\mathscr U^x_{s,t}=\mathscr U^x_{u,t}\mathscr U^x_{s,u}$.

\begin{lemma}[Conditional trace-class Stieltjes interchange]
\label{lem:conditional-stieltjes-interchange}
Let $R$ be a $\mathcal G_0$-measurable c\`adl\`ag increasing
$\mathcal S_1(H)_+$-valued path with $\mathbb EV_T<\infty$, put
$V=\operatorname{Tr}R$, and write
$\mathrm dR_s=r_s\,\mathrm dV_s$.  Let $F$ be a jointly measurable
$\mathcal S_1(\mathcal F_\wedge(H))$-valued process satisfying
\[
 \mathbb E\int_{(0,T]}\|F_s\|_1\,\mathrm dV_s<\infty.
\]
Let $\widehat F$ be its conditional expectation with respect to
$\mathcal G_0\otimes\mathcal B([0,T])$ in the Bochner space determined by
the finite measure
$\mu(\mathrm d\omega,\mathrm ds)=\mathbb P(\mathrm d\omega)\,
\mathrm dV_s(\omega)$.  Then, for every $t\le T$,
\begin{equation}
 \mathbb E\!\left[
  \int_{(0,t]}\mathcal B_{\mathrm dR_s}(F_s)
  \middle|\mathcal G_0\right]
 =\int_{(0,t]}\mathcal B_{\mathrm dR_s}(\widehat F_s)
 \quad\text{in }\mathcal S_1(\mathcal F_\wedge(H)).
 \label{eq:conditional-stieltjes-interchange}
\end{equation}
If $F_s=X_s\otimes X_s$ and
$\mathbb E\int_{(0,T]}\|X_s\|^2\,\mathrm dV_s<\infty$, then
$\widehat F_s$ may be taken to be
$\mathbb E[X_s\otimes X_s\mid\mathcal G_0]$ for $\mu$-almost every
$(\omega,s)$.
\end{lemma}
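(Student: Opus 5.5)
The plan is to reduce \eqref{eq:conditional-stieltjes-interchange} to a statement about a bounded linear map on a Bochner space, and then check it on test functions.

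Since $R$ is $\mathcal G_0$-measurable, the map
\[
 \mathcal I_t:F\longmapsto\int_{(0,t]}\mathcal B_{\mathrm dR_s}(F_s)
 =\int_{(0,t]}\mathcal B_{r_s}(F_s)\,\mathrm dV_s
\]
is well defined on $L^1(\mu;\mathcal S_1(\mathcal F_\wedge(H)))$. By \eqref{cc:eq-fermionic-trace-bound} and $\operatorname{Tr}r_s=1$, it satisfies
\[
 \mathbb E\|\mathcal I_tF\|_1\le\mathbb E\int_{(0,T]}\|F_s\|_1\,\mathrm dV_s.
\]
Thus $\mathcal I_t$ is a contraction from $L^1(\mu)$ to $L^1(\mathbb P;\mathcal S_1)$. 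Both sides of the identity are continuous in $F$ for the $L^1(\mu)$ norm. For the right side this uses that conditional expectation is an $L^1(\mu)$-contraction; for the left side it uses that conditional expectation on $\Omega$ is an $L^1(\mathbb P)$-contraction in the separable Banach space $\mathcal S_1$.

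\emph{Step 1: density reduction.} It suffices to verify the identity on a total family. I will take elementary tensors $F_s(\omega)=\xi(\omega)\,g(\omega,s)\,S$, where $S\in\mathcal S_1$ is fixed, $g$ is bounded and $\mathcal G_0\otimes\mathcal B$-measurable, and $\xi\in L^\infty(\mathbb P)$. Finite sums of such products are dense in $L^1(\mu;\mathcal S_1)$. To see this, approximate $F$ by simple functions with values in a countable dense subset of $\mathcal S_1$. The measurable rectangles $\{\omega\in A,\ s\in I\}$ with $A\in\mathcal F$ generate $\mathcal F\otimes\mathcal B$, and their indicators fall into this family with $\xi=\mathbf 1_A$ and $g=\mathbf 1_I$.

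\emph{Step 2: verification on generators.} The $\mu$-conditional expectation of $F=\xi gS$ with respect to $\mathcal G_0\otimes\mathcal B$ is $\widehat F=\eta\,g\,S$, where $\eta:=\mathbb E[\xi\mid\mathcal G_0]$. To check this, pair both against $\mathbf 1_{B}(\omega,s)$ with $B$ in $\mathcal G_0\otimes\mathcal B$. Using disintegration $\mu=\mathbb P\otimes\mathrm dV_s(\omega)$ and the $\mathcal G_0$-measurability of $V$, the pairing becomes
\[
 \mathbb E\Bigl[\xi\int\mathbf 1_B\,g\,\mathrm dV\Bigr].
\]
The inner integral is $\mathcal G_0$-measurable, so $\xi$ may be replaced by $\eta$. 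On the left-hand side,
\[
 \mathcal I_t(\xi gS)=\xi\int_{(0,t]}g_s\,\mathcal B_{r_s}(S)\,\mathrm dV_s,
\]
and the integral is a $\mathcal G_0$-measurable $\mathcal S_1$-valued random variable, integrable when $\mathbb EV_T<\infty$. Pulling out what is known gives $\eta$ times the same integral, which equals $\mathcal I_t(\widehat F)$. Linearity and Step 1 then finish the general case.

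\emph{Step 3: the quadratic case.} Here $\|X_s\otimes X_s\|_1=\|X_s\|^2$, so the integrability hypothesis holds. I must show that $G_s:=\mathbb E[X_s\otimes X_s\mid\mathcal G_0]$, computed for each fixed $s$ under $\mathbb P$, is a jointly measurable version of $\widehat F$. For this I choose a version $\widehat F$ first and then check it against test sets $B=A\times I$ with $A\in\mathcal G_0$:
\[
 \mathbb E\Bigl[\mathbf 1_A\int_I X_s\otimes X_s\,\mathrm dV_s\Bigr].
\]
Because $\mathrm dV$ is $\mathcal G_0$-measurable, Fubini cannot directly swap the fixed-$s$ conditional expectation past $\mathrm dV_s(\omega)$. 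So I will rather define $\widehat F$ abstractly and show that, for $\mathrm dV$-a.e.\ $s$, it agrees with $G_s$, by disintegrating $\mu$ over $s$ via the measure $\mathbb E\,\mathrm dV_s$.

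This disintegration is where I expect the main obstacle. Conditioning under $\mu$ uses the weight $\mathrm dV_s(\omega)$, which varies with $\omega$, while conditioning under $\mathbb P$ at fixed $s$ does not. The two agree only because the density of $\mathrm dV_s(\omega)$ relative to $\mathbb E\,\mathrm dV_s$ is $\mathcal G_0\otimes\mathcal B$-measurable and therefore factors out of the conditional expectation. I will make this rigorous by writing $\mathrm dV_s(\omega)=h(\omega,s)\,\nu(\mathrm ds)$ with $\nu=\mathbb E\,\mathrm dV$ and $h$ being $\mathcal G_0\otimes\mathcal B$-measurable, and then applying the ordinary product-space conditional expectation for $\mathbb P\otimes\nu$.
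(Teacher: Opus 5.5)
\textbf{Steps 1 and 2.} These are correct and are essentially the paper's proof of the main identity. The paper verifies it on simple processes $\sum_iX_i\mathbf 1_{(a_i,b_i]}$, using $\mathcal G_0$-measurability of the increments of $R$. It then passes to general $F$ by $L^1(\mu;\mathcal S_1)$-approximation, the bound $\|\mathcal B_{r_s}(S)\|_1\le\|S\|_1$, and contractivity of both conditional expectations. Your generating class $\xi gS$, with $g$ measurable for $\mathcal G_0\otimes\mathcal B([0,T])$, is slightly larger but is handled the same way. You should say explicitly, as the paper does, that the density $r$ is chosen $\mathcal G_0\otimes\mathcal B([0,T])$-measurable. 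That is what makes $\int_{(0,t]}g_s\mathcal B_{r_s}(S)\,\mathrm dV_s$ a $\mathcal G_0$-measurable random variable.

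\textbf{Step 3 has a genuine gap.} The density $h(\omega,s)=\mathrm dV_s(\omega)/\nu(\mathrm ds)$ with $\nu=\mathbb E\,\mathrm dV$ need not exist, because $\mu$ need not be absolutely continuous with respect to $\mathbb P\otimes\nu$. Here is a counterexample.
\begin{itemize}
\item Take $R_t=\mathbf 1_{\{t\ge\tau\}}P_e$, where $P_e$ is a rank-one projection and $\tau$ is a $\mathcal G_0$-measurable time whose law on $(0,T)$ is diffuse.
\item Then $\mathrm dV(\omega)=\delta_{\tau(\omega)}$ and $\nu$ is the law of $\tau$.
\item The measure $\mu$ is carried by the graph $\Gamma=\{(\omega,s):s=\tau(\omega)\}$, which lies in $\mathcal G_0\otimes\mathcal B([0,T])$.
\item But $(\mathbb P\otimes\nu)(\Gamma)=\int\mathbb P\{\tau=s\}\,\nu(\mathrm ds)=0$.
\end{itemize}
Bracket atoms at revealed random times are exactly what the subsequent Dol\'eans theorem needs. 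Singular continuous random clocks fail in the same way.

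The problem is not merely technical: in this example the fixed-$s$ conditional expectations do not determine $\widehat F$. Let $G$ be a jointly measurable choice of $\mathbb E[X_s\otimes X_s\mid\mathcal G_0]$ and let $K\ne0$. Then $G+\mathbf 1_\Gamma K$ is still, for every $s$, a version of $\mathbb E[X_s\otimes X_s\mid\mathcal G_0]$, because $\{\tau=s\}$ is $\mathbb P$-null. Yet it differs from $G$ on a set of full $\mu$-measure. So no identification that holds only $\mathbb P\otimes\nu$-almost everywhere can yield the $\mu$-almost-everywhere clause. The clause is true only for a specifically constructed version.

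\textbf{The fix.} Reuse Step 2 instead of disintegrating; this is what the paper's brief ``same argument after scalarization'' refers to.
\begin{itemize}
\item For $F=\xi gS$, the process $\mathbb E[\xi\mid\mathcal G_0]\,gS$ is jointly measurable.
\item It is, for every $s$, a version of $\mathbb E[F_s\mid\mathcal G_0]$.
\item It equals $\widehat F$ $\mu$-almost everywhere.
\end{itemize}
Scalarize against countably many trace-class dual functionals. A monotone-class argument then propagates both properties to all bounded jointly measurable processes. Along an increasing sequence, agreement with the $\mu$-conditional expectations forces the chosen versions to increase $\mu$-almost everywhere, so their $\limsup$ keeps both properties. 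Monotone truncation then reaches the positive process $X_s\otimes X_s$.
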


\begin{proof}
The density $r$ may be chosen
$\mathcal G_0\otimes\mathcal B([0,T])$-measurable because the complete path
$R$ is $\mathcal G_0$-measurable.  The estimate
\[
 \|\mathcal B_{r_s}(S)\|_1\le\|S\|_1
\]
makes both sides of \eqref{eq:conditional-stieltjes-interchange} Bochner
integrable.  For a simple process
$F=\sum_iX_i\mathbf1_{(a_i,b_i]}$, the identity follows from
$\mathcal G_0$-measurability of the increments of $R$ and linearity of
$\mathcal B_Q$ in both variables.  Approximate a general $F$ in
$L^1(\mu;\mathcal S_1)$ by simple processes.  The trace class is a separable
noncommutative $L^1$-space, so its Bochner conditional expectation exists and
is a contraction in that space.  The displayed trace-norm estimate passes both
Stieltjes integrals to the limit.  The last assertion follows from the same
argument after scalarization against bounded
$\mathcal G_0$-measurable functions and trace-class dual functionals.
\end{proof}

\begin{theorem}[Conditional fermionic Dol\'eans law and ideal transfer]
\label{thm:compact-revealed-doleans}
Assume that $\mathcal G_0$ reveals $R$.  Put
$E_m=\{V_T\le m\}\in\mathcal G_0$,
$M^{(m)}=\mathbf1_{E_m}M$, and $R^{(m)}=\mathbf1_{E_m}R$.
Let $\mathscr U^{x,(m)}$ denote
\eqref{eq:compact-fermionic-propagator} with $R$ replaced by $R^{(m)}$.
Then, for every $t\le T$ and $\lambda\in\mathbb R$,
\begin{equation}
 \mathbb E\!\left[
 Z_t^{\lambda,(m)}\otimes Z_t^{\lambda,(m)}\mid\mathcal G_0\right]
 =\mathscr U_{0,t}^{\lambda^2,(m)}(P_0)
 \quad\text{in }\mathcal S_1(\mathcal F_\wedge(H)),
 \label{eq:compact-doleans}
\end{equation}
with trace at most $e^{\lambda^2m}$.  The local conditional covariance is
therefore
\begin{equation}
 C_t^{\lambda,\mathrm{loc}}
 =e^{\lambda^2\mathcal B_{R_t^c}}
  \prod_{0<s\le t}(I+\lambda^2\mathcal B_{\Delta R_s})(P_0).
 \label{eq:compact-local-covariance}
\end{equation}
Its particle-two block gives the exact atomic defect
\begin{equation}
 4C_{\alpha_A,t}^{\mathrm{loc}}
 =\Lambda^2R_t-\sum_{0<s\le t}\Lambda^2\Delta R_s.
 \label{eq:compact-atomic-defect}
\end{equation}
Let $A$ be the chronological area of $M$ and $Q=R_T$.  For every $z\ge0$,
locally on the events $E_m$,
\begin{equation}
 \mathbb E\!\left[
 \exp\!\left(\sup_{t\le T}\mathfrak F_{A_t}(z)\right)
 \middle|\mathcal G_0\right]
 \le4\det\!\left(I+\frac z2Q\right).
 \label{eq:compact-revealed-profile}
\end{equation}
Consequently, if $0<p<1$ and $Q\in\mathcal S_{p,\infty}$, then for every
$0<r<2$,
\[
 \left(\mathbb E\!\left[
  \sup_{t\le T}\|A_t\|_{\mathcal S_{p,\infty}}^r
  \middle|\mathcal G_0\right]\right)^{1/r}
 \le C_{p,r}\|Q\|_{\mathcal S_{p,\infty}}.
\]
Little weak ideals transfer to little weak ideals; in particular
$Q\in\mathcal S_1$ implies
\begin{equation}
 \sup_{t\le T}n s_n(A_t)\longrightarrow0
 \qquad\text{almost surely}.
 \label{eq:compact-trace-transfer}
\end{equation}
\end{theorem}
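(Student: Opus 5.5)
The plan is to prove the conditional Dol\'eans law \eqref{eq:compact-doleans} by deriving a linear conditional integral equation for the second moment $F_t:=\mathbb E[Z_t^{\lambda,(m)}\otimes Z_t^{\lambda,(m)}\mid\mathcal G_0]$ and solving it. Everything else is then read off that solution.

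\emph{Step 1: the integral equation.} On $E_m$ we have $V_T\le m$, so \Cref{cc:thm-localized-exterior-lift} applies to $M^{(m)}$. The process $Z\otimes Z-\langle Z\rangle$ is a trace-class martingale, and the bracket is given by \eqref{cc:eq-exterior-Fock-bracket} with $N=\infty$. (The limit passage uses the trace-norm convergence argument of \Cref{cc:thm-Fock-free-capacity-balance}; for jumps the predictable bracket is still $\lambda^2\mathcal B_{\mathrm dR}(Z_-\otimes Z_-)$.) This gives
\[
 Z_t\otimes Z_t=P_0+\text{mart}_t+\lambda^2\int_{(0,t]}\mathcal B_{\mathrm dR_s}(Z_{s-}\otimes Z_{s-}).
\]
Since $\mathcal G_0\subseteq\mathcal F_0$, the martingale part has zero $\mathcal G_0$-conditional expectation, after a localization that Fatou and the energy bound $e^{\lambda^2 m}$ remove. \Cref{lem:conditional-stieltjes-interchange} then moves the conditional expectation inside the Stieltjes integral, because $R$ is $\mathcal G_0$-measurable. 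The result is the linear equation
\[
 F_t=P_0+\lambda^2\int_{(0,t]}\mathcal B_{\mathrm dR_s}(F_{s-}).
\]

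\emph{Step 2: solving the equation.} This is a linear Stieltjes equation in the Banach space $\mathcal S_1(\mathcal F_\wedge)$. Its coefficients $\mathcal B_{\mathrm dR}$ commute by \eqref{cc:eq-fermionic-commutation} and have norm at most $\mathrm dV$ by \eqref{cc:eq-fermionic-trace-bound}. The unique solution is therefore the ordered Dol\'eans exponential, which equals the commuting product $\mathscr U^{\lambda^2}_{0,t}(P_0)$.
\begin{itemize}
\item Uniqueness follows from Gronwall with respect to $\mathrm dV$.
\item For existence, verify that $\mathscr U$ solves the equation: check it on the continuous part and at each atom $I+x\mathcal B_{\Delta R}$, with left-point evaluation. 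Commutativity removes ordering issues.
\end{itemize}
The trace bound follows from \eqref{cc:eq-particle-hole-trace}, which gives $\operatorname{Tr}\mathcal B_Q(T)\le \operatorname{Tr}Q\operatorname{Tr}T$, hence trace at most $\prod(1+\lambda^2\Delta V)e^{\lambda^2V^c}\le e^{\lambda^2 m}$. Formula \eqref{eq:compact-local-covariance} is the same identity restated.

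\emph{Step 3: the particle-two block.} Expand to order $\lambda^4$ and project onto $\Lambda^2$. By \eqref{cc:eq-fermionic-powers}, $\tfrac12\mathcal B_{R^c}^2(P_0)=\Lambda^2R^c$, and the atom products contribute mixed terms $\mathcal B_{\Delta R_a}\mathcal B_{\Delta R_b}$ only for $a\ne b$. The even-level formula \eqref{cc:eq-exterior-level-formulas} gives $\Xi_2=2\alpha_A$. Comparing terms, the squared sum $\tfrac12\mathcal B_R^2(P_0)=\Lambda^2 R$ minus the diagonal atom terms $\Lambda^2\Delta R$ gives \eqref{eq:compact-atomic-defect}.

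\emph{Step 4: the profile bound \eqref{eq:compact-revealed-profile}.} Set $\lambda^2=z/2$ and use \eqref{cc:eq-even-profile-identity}. The quantity $\|Z_t\|^2$ is a conditional submartingale given $\mathcal G_0$, since $\mathcal G_0\subseteq\mathcal F_0$. Applying Doob's $L^2$ inequality to the martingale $Z$ itself yields the factor $4$, so
\[
 \mathbb E[\sup_t e^{\mathfrak F_{A_t}(z)}\mid\mathcal G_0]\le 4\operatorname{Tr}F_T.
\]
Then bound $\operatorname{Tr}\mathscr U^{x}_{0,T}(P_0)$ by $\det(I+xQ)$. This is the main obstacle. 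Applying \eqref{cc:eq-second-quantization-determinant} to each factor with the capacity inequality only gives $e^{xV}$, which is too weak. Instead I would use \eqref{cc:eq-fermionic-commutation} to show that $\prod(I+x\mathcal B_{\Delta R_a})e^{x\mathcal B_{R^c}}(P_0)$ is dominated in the positive cone by $e^{x\mathcal B_{R}}(P_0)=\Gamma_\wedge(xR)$. The argument is $I+x\mathcal B\le e^{x\mathcal B}$ termwise, using positivity of $\mathcal B_{\Delta R}$ on positive operators and commutativity. Taking traces gives at most $\det(I+xQ)$.

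\emph{Step 5: ideal transfer.} The moment bound follows from \Cref{cc:lem-response-profile-decoder}(ii) with $q=1$, $\Phi=\sup_t\mathfrak F_{A_t}$ and exponent $p$. Part (i) gives $\log\det(I+\tfrac z2Q)\le C(z\|Q\|_{p,\infty})^p$, which is the hypothesis with $B=\|Q\|_{p,\infty}$; since $q=1$, the lemma gives moments of order $r<2$, as stated. (Here "$p$" for $A$ should be read with the singular-value scaling of that lemma; I would check that the exponents match, and $2p$ may appear.) The little statement follows from part (iii): $Q\in\mathcal S_1$ gives $\log\det(I+\tfrac z2 Q)=o(z)$ by part (i), which is \eqref{eq:compact-trace-transfer}. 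Finally, take $m\to\infty$ over $E_m\uparrow\Omega$.
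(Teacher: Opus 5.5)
Your proposal is correct and follows the paper's proof. The common structure is: conditional It\^o isometry plus the conditional Stieltjes interchange give the linear left-point equation; its unique solution is the commuting Dol\'eans product; the particle-two block gives the atomic defect; and conditional Doob, the even-profile identity, determinant domination and the profile decoder give the rest. You spell out the domination step via $I+x\mathcal B\le e^{x\mathcal B}$ on the positive cone, which the paper leaves implicit.

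The remaining differences are technical:
\begin{itemize}
\item You write the equation directly at $N=\infty$ and get uniqueness from Gronwall with $\|\mathcal B_{\mathrm dR}\|\le\mathrm dV$ on $\mathcal S_1$. The paper instead works at a particle cutoff, gets uniqueness from triangularity in particle degree, and then passes $N\to\infty$.
\item Your worry that $2p$ may appear is unfounded. The decoder with $q=1$ and $B=\|Q\|_{\mathcal S_{p,\infty}}$ returns the same exponent $p$ for $A$; passing from the two-plane values $a_j$ to $s_n$ costs only a factor $2^{1/p}$.
\end{itemize}
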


\begin{proof}
For a particle cutoff $N$, put
$T_s^{\lambda,N}=Z_s^{\lambda,N}\otimes Z_s^{\lambda,N}$ and
$C_s^{\lambda,N}=\mathbb E[T_s^{\lambda,N}\mid\mathcal G_0]$.
Conditional It\^o isometry followed by
\Cref{lem:conditional-stieltjes-interchange}, applied to
$F_s=T_{s-}^{\lambda,N}$, gives the left-point Stieltjes equation
\[
 C_t^{\lambda,N}
 =P_0+\lambda^2\int_{(0,t]}
 P_{\le N}\mathcal B_{\mathrm dR_s}(C_{s-}^{\lambda,N})P_{\le N}.
\]
Positivity, the particle-hole trace estimate, and Stieltjes Gronwall yield
$\sup_{N,t}\operatorname{Tr}C_t^{\lambda,N}\le e^{\lambda^2m}$.
The cutoff equation is triangular in the pair of left and right particle
degrees: $\mathcal B_Q$ raises both by one, and the vacuum block is fixed.
Consequently its blocks are determined recursively, so the trace-class
left-point solution is unique.  The product in
\eqref{eq:compact-local-covariance} solves the same recursion: the continuous
part is generated by $\lambda^2\mathcal B_{\mathrm dR^c}$ and an atom
$Q=\Delta R_s$ gives the single update
$I+\lambda^2\mathcal B_Q$.

The localized exterior lifts converge in $L^2$ of the supremum norm by
\Cref{cc:thm-localized-exterior-lift}; the rank-one trace estimate permits
passage to the limit in every block and then in trace norm.  Uniqueness now
proves \eqref{eq:compact-doleans}--\eqref{eq:compact-local-covariance}.  Reading the
particle-two block gives \eqref{eq:compact-atomic-defect}.

The conditional $L^2$ maximal inequality, the even-profile identity, and
fermionic determinant domination give \eqref{eq:compact-revealed-profile}.
The determinant/profile decoder of \Cref{cc:lem-response-profile-decoder}
then gives the weak and little ideal conclusions.  At $p=1$ one uses
$\log\det(I+xQ)=o(x)$ for $Q\in\mathcal S_1$.
\end{proof}

\begin{proposition}[Revealed c\`adl\`ag critical entrance]
\label{prop:revealed-cadlag-critical-entrance}
Let $M$ be a square-integrable c\`adl\`ag $H$-valued martingale whose complete
predictable bracket path $R$ is revealed by $\mathcal G_0$ and satisfies
$\mathbb E\operatorname{Tr}R_T<\infty$.  Then its split anchor
\[
 (M_t,A_t,[M]_t)
\]
has a c\`adl\`ag version in $\mathscr G_{\mathrm{crit}}(H)$.  If
$P_k\uparrow I$ are finite-rank orthogonal projections, then
\begin{equation}
 \sup_{t\le T}d_{\mathrm{crit}}\!\left(
 (P_kM_t,P_kA_tP_k,P_k[M]_tP_k),(M_t,A_t,[M]_t)\right)
 \longrightarrow0
 \quad\text{in probability}.
 \label{eq:revealed-cadlag-projection}
\end{equation}
Consequently the anchor has the optional completion of
\Cref{cc:thm-critical-optional-quotient}.
\end{proposition}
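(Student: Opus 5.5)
The plan is to build the anchor by finite-rank approximation, exactly as in \Cref{cc:thm-critical-projection-stability}, but with the continuous universal bound replaced by the conditional estimates of \Cref{thm:compact-revealed-doleans}. First localize on the $\mathcal G_0$-events $E_m=\{V_T\le m\}$. Because these sets are revealed at time zero, $\mathbf1_{E_m}M$ is again a martingale with revealed bracket $\mathbf1_{E_m}R$, and $\mathbb P(E_m)\uparrow1$. It therefore suffices to prove everything for fixed $m$.

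For the three coordinates, set $X=P_kM$ and $Z=(I-P_k)M$. The first coordinate converges uniformly in $L^2$ by Doob's inequality, since $\mathbb E\operatorname{Tr}((I-P_k)R_T(I-P_k))\to0$. For the collision coordinate I use the optional bracket $[M]$, which is trace-class increasing. Kunita--Watanabe/Douglas positivity, as in \eqref{cc:eq-critical-stability-bracket} but with the optional $2\times2$ bracket matrix, gives
\[
 \sup_t\|P_k[M]_tP_k-[M]_t\|_1\le 2(\operatorname{Tr}[M]_T)^{1/2}\delta_k+\delta_k^2,
\]
where $\delta_k^2=\operatorname{Tr}((I-P_k)[M]_T(I-P_k))\to0$ almost surely by monotone trace approximation. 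The area coordinate is the main work. The compressed area $P_kA_tP_k$ is exactly the area of $P_kM$, so the difference $A^M-A^{P_kM}$ decomposes, by bilinearity, into the polarized mixed term $\mathcal A(X,Z)$ and $A^Z$, as in the proof of \eqref{cc:eq-critical-stability-area}.

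To bound these terms I apply \eqref{eq:compact-revealed-profile} to the revealed martingales $X\pm\lambda Z$ and $Z$. Their brackets are $\mathcal G_0$-measurable compressions and polarizations of $R$, so they are revealed too. With $p=1$, \Cref{cc:lem-response-profile-decoder}(ii) and the decoder hypothesis $\log\det(I+\tfrac z2Q)\le\tfrac z2\operatorname{Tr}Q$ give
\[
 \bigl(\mathbb E[\sup_t\|A^Y_t\|_{1,\infty}^r\mid\mathcal G_0]\bigr)^{1/r}\le C_r\operatorname{Tr}\langle\!\langle Y\rangle\!\rangle_T
\]
for $r<2$. Optimizing $\lambda$ then gives a conditional bound of order $C_r(\sqrt m\,\eta_k+\eta_k^2)$, where $\eta_k^2=\operatorname{Tr}((I-P_k)R_T(I-P_k))$ is $\mathcal G_0$-measurable and tends to $0$. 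Taking expectations and using the weak quasi-triangle inequality yields uniform convergence in probability in $\|\cdot\|_{1,\infty}$.

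Once convergence holds in all three coordinates, pass to an almost surely uniformly convergent subsequence in $d_{\mathrm{crit}}$. The approximants have finite-rank areas and are càdlàg in the finite-dimensional group. A uniform limit of càdlàg paths in the complete metric space $\mathscr G_{\mathrm{crit}}(H)$ is càdlàg, and the area lands in the closure $\mathcal S^0_{1,\infty}$. The limit is identified with the Hilbert--Schmidt Itô area through the continuous embedding $\mathcal S_{1,\infty}\hookrightarrow\mathcal S_2$, which also makes the full sequence converge in probability. Finally, \Cref{cc:thm-critical-optional-quotient} applies directly to this càdlàg adapted anchor.

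The main obstacle is the area estimate with jumps. Here \eqref{eq:compact-revealed-profile} controls only the predictable bracket, and the localization and polarization must keep every auxiliary martingale revealed; this is where the revealing hypothesis is used essentially.
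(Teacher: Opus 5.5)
Your proof is correct, but it takes a different route from the paper's.

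\textbf{The paper's route.} The paper never polarizes or estimates a difference of areas. It takes the almost-sure uniform little tail $\sup_{t\le T}Ns_N(A_t)\to0$ from \eqref{eq:compact-trace-transfer}, which is the $p=1$ little-ideal transfer (part~(iii) of \Cref{cc:lem-response-profile-decoder}). From there it argues pathwise on each $E_m$:
\begin{enumerate}
\item A Weyl head/tail split upgrades ``c\`adl\`ag in $\mathcal S_2$'' to ``c\`adl\`ag in $\mathcal S^0_{1,\infty}$''.
\item The range of a c\`adl\`ag path is relatively compact, so $P_k\to I$ uniformly on the ranges of $M$ and $A$.
\item Compression does not increase singular values, so the same head/tail split gives uniform $\mathcal S_{1,\infty}$ convergence of $P_kA_tP_kP$ to $A_t$.
\item Positivity and trace-norm compactness handle $[M]$.
\end{enumerate}

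\textbf{Your route.} You prove a revealed-bracket analogue of the stability estimate \eqref{cc:eq-critical-stability-area}. You use the big-ideal moment bound at $p=1$, i.e.\ part~(ii) of the decoder fed by $\log\det(I+\tfrac z2Q)\le\tfrac z2\operatorname{Tr}Q$. \Cref{thm:compact-revealed-doleans} states this moment bound only for $p<1$, but your extension to $p=1$ is valid. You then apply it to the auxiliary revealed martingales $X\pm\lambda Z$ and $Z$.

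\textbf{Comparison.} Your approach gives a quantitative conditional rate $\lesssim\sqrt m\,\eta_k+\eta_k^2$. It also obtains membership in $\mathcal S^0_{1,\infty}$ and the c\`adl\`ag property as by-products of finite-rank approximation, without invoking \eqref{eq:compact-trace-transfer}. The paper's approach is shorter and needs no auxiliary martingales or optimization. It yields almost-sure, not merely in-probability, uniform convergence on each $E_m$, but it gives no rate.

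\textbf{One point to make explicit.} Your optimal $\lambda=\sqrt m/\eta_k$ is $\mathcal G_0$-measurable, not deterministic, so the conditional bound for fixed $\lambda$ does not apply verbatim. You can fix this in either of two ways:
\begin{itemize}
\item localize further on the $\mathcal G_0$-events $\{\eta_k\le\delta\}$ and take the deterministic choice $\lambda=\sqrt m/\delta$; or
\item take $\lambda$ countably valued on $\mathcal G_0$-events.
\end{itemize}
Either is harmless, because the cross term $\mathcal A(X,Z)$ does not depend on $\lambda$.

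\textbf{A minor simplification.} Your collision-coordinate bound needs only $[M]_t\succeq0$, monotonicity in $t$, and Schatten H\"older; Douglas factorization is not required.
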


\begin{proof}
The area is c\`adl\`ag in $\mathcal S_2$, and
\Cref{thm:compact-revealed-doleans}, after localization by
$\{\operatorname{Tr}R_T\le m\}$, gives
\begin{equation}
 \sup_{t\le T}N s_N(A_t)\longrightarrow0
 \quad\text{almost surely}.
 \label{eq:uniform-little-tail-path}
\end{equation}
We use the following elementary upgrade.  If a path $K:[0,T]\to\mathcal S_2$
is c\`adl\`ag and satisfies
$\sup_tNs_N(K_t)\to0$, then it is c\`adl\`ag in
$\mathcal S^0_{1,\infty}$.  Indeed, for $s$ near $t$, Weyl's inequality gives,
for $N$ larger than a fixed cutoff,
\[
 N s_N(K_s-K_t)
 \leq 2N\bigl(s_{\lfloor N/2\rfloor}(K_s)
                 +s_{\lfloor N/2\rfloor}(K_t)\bigr),
\]
which is uniformly small by the tail hypothesis; the finitely many remaining
indices are bounded by a fixed multiple of $\|K_s-K_t\|_2$.  This proves the
claim and applies to $A$.

The range of a c\`adl\`ag path on a compact interval is relatively compact.
Strong convergence of $P_k$ is therefore uniform on the ranges of $M$ in $H$
and of $A$ in $\mathcal S_2$, so
$P_kM\to M$ and $P_kAP_k\to A$ uniformly in those norms.  Compression does not
increase singular values; combining this convergence with
\eqref{eq:uniform-little-tail-path} and the same finite-head/tail argument gives
uniform convergence in $\mathcal S_{1,\infty}$.  Finally, $[M]$ is a positive
c\`adl\`ag trace-class path.  Positivity and trace-norm compactness give
$P_k[M]P_k\to[M]$ uniformly on $[0,T]$ in trace norm.  Removing the
localization proves \eqref{eq:revealed-cadlag-projection} in probability.
\end{proof}

\begin{corollary}[Terminal bracket versus covariance delivery]
\label{cor:terminal-bracket-delivery}
Fix a deterministic terminal bracket $Q\in\mathcal S_1(H)_+$.  Among
revealed bracket paths with $R_T=Q$, the particle-two area covariance is
\begin{equation}
 C_{\alpha_A,T}^{\mathrm{loc}}
 =\frac14\left(\Lambda^2Q-
   \sum_{0<s\le T}\Lambda^2\Delta R_s\right).
 \label{eq:terminal-bracket-delivery}
\end{equation}
Consequently
\[
 0\le C_{\alpha_A,T}^{\mathrm{loc}}\le\frac14\Lambda^2Q.
\]
Continuous delivery attains the upper bound, one centered terminal jump with
covariance $Q$ attains zero, and every scalar level
$\theta\Lambda^2Q/4$, $0\le\theta\le1$, is attainable.  Thus the terminal
predictable bracket alone does not determine area covariance; its temporal
atomic structure is essential.
\end{corollary}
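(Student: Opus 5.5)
The plan is to read \eqref{eq:terminal-bracket-delivery} directly off the exact atomic defect \eqref{eq:compact-atomic-defect} of \Cref{thm:compact-revealed-doleans} at $t=T$, using $R_T=Q$. That theorem applies on each localizing event $E_m$. Since $Q$ is deterministic, $V_T=\operatorname{Tr}Q$, so $E_m=\Omega$ for $m\ge\operatorname{Tr}Q$ and no localization remains. The formula follows at once.

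\textbf{Order bounds.} For the lower bound, note that the jumps $\Delta R_s\succeq0$ sum in trace norm to at most $Q$. For positive operators one has $\Lambda^2(X+Y)\succeq\Lambda^2X+\Lambda^2Y$, because the cross term $X\wedge Y$ is positive on $\Lambda^2H$ as a compression of $X\otimes Y\succeq0$. Iterating this over finitely many atoms, and then using monotone trace-norm limits for countably many, gives
\[
 \sum_s\Lambda^2\Delta R_s\preceq\Lambda^2\Bigl(\sum_s\Delta R_s\Bigr)\preceq\Lambda^2Q .
\]
The second inequality uses monotonicity of $\Lambda^2$ on positive operators. The upper bound $C^{\mathrm{loc}}_{\alpha_A,T}\le\frac14\Lambda^2Q$ is immediate, because the subtracted sum is positive.

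\textbf{Attainment.} I would construct three examples.
\begin{itemize}
\item For continuous delivery, take a $Q$-Wiener process on $[0,T]$ time-rescaled so that $R_t=(t/T)Q$. There are no atoms, so the upper bound is attained.
\item For a single terminal jump, take $M_t=\mathbf 1_{\{t\ge T\}}\xi$, where $\xi$ is centered with covariance $Q$, independent of $\mathcal F_{T-}$, and the filtration is generated accordingly. The bracket then has the single atom $\Delta R_T=Q$, which gives zero.
\item For an intermediate level $\theta$, combine the two: continuous delivery of $\theta Q$ on $[0,T/2]$ followed by a terminal jump carrying $(1-\theta)Q$. The defect is then
\[
 \Lambda^2Q-\Lambda^2((1-\theta)Q)=(1-(1-\theta)^2)\Lambda^2Q ,
\]
and as $\theta$ ranges over $[0,1]$ the factor $1-(1-\theta)^2$ sweeps continuously from $0$ to $1$. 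So every scalar level is attained after reparametrizing $\theta$. One could also split into $k$ equal jumps to get the factor $1-1/k$, but continuity of the mixed construction already covers all levels.
\end{itemize}

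In every example the bracket is deterministic, so it is revealed by the trivial $\mathcal G_0$.

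\textbf{Main obstacle.} The step I expect to be hardest is the superadditivity of $\Lambda^2$ in the infinite-dimensional, countably-many-atoms setting, together with its correct normalization. I would first verify it in finite rank using the identity
\[
 \Lambda^2(X+Y)=\Lambda^2X+\Lambda^2Y+(\text{antisymmetrized }X\otimes Y),
\]
and then pass to the limit by trace-norm continuity of $\Lambda^2$ on $\mathcal S_1$.
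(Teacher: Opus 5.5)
Your proposal is correct and follows the paper's proof. You read the formula off \eqref{eq:compact-atomic-defect} at $t=T$, get the upper bound from positivity of each $\Lambda^2\Delta R_s$, and realize the intermediate levels with the same mixture as the paper: a continuous part plus a terminal jump carrying $(1-\rho)Q$, with $\rho=1-\sqrt{1-\theta}$. The paper leaves the lower bound $C^{\mathrm{loc}}_{\alpha_A,T}\succeq0$ implicit, since it is the particle-two diagonal block of the positive conditional second moment $\mathbb E[Z\otimes Z\mid\mathcal G_0]$; your superadditivity chain $\sum_s\Lambda^2\Delta R_s\preceq\Lambda^2\bigl(\sum_s\Delta R_s\bigr)\preceq\Lambda^2Q$ proves it directly and is also valid.
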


\begin{proof}
The formula is \eqref{eq:compact-atomic-defect}, and each exterior square
$\Lambda^2\Delta R_s$ is positive.  For a prescribed $\theta$, put
$\rho=1-\sqrt{1-\theta}$.  Combine an independent continuous martingale with
bracket $\rho(t/T)Q$ and a centered terminal jump with covariance
$(1-\rho)Q$.  Then the unique bracket atom contributes
$(1-\rho)^2\Lambda^2Q=(1-\theta)\Lambda^2Q$ in
\eqref{eq:terminal-bracket-delivery}.
\end{proof}

\begin{proposition}[Exterior degree is not primitive ancestry]
\label{prop:exterior-free-Lie-obstruction}
Let $V$ be a complex vector space with $\dim V\ge j$.  Then
\begin{equation}
 \operatorname{Hom}_{GL(V)}(\Lambda^jV,L_j(V))=0,
 \qquad j\ge3.
 \label{eq:no-exterior-free-Lie-map}
\end{equation}
Equivalently, in stable dimension there is no nonzero homogeneous linear
natural transformation from the degree-$j$ fermionic sector to the
degree-$j$ free-Lie sector.  The coincidence $L_2(V)=\Lambda^2V$ is therefore
exceptional: exterior particle number is a covariance-reader degree, not a
second model of the Rees ancestry filtration.
\end{proposition}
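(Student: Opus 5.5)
The plan is to work at the level of multilinear $\mathfrak S_j$-modules and then apply Schur--Weyl duality. In the stable range $\dim V\ge j$, Schur--Weyl duality identifies $\operatorname{Hom}_{GL(V)}(\mathbf S_\lambda V,L_j(V))$ with $\operatorname{Hom}_{\mathfrak S_j}(\mathrm{Sp}_\lambda,\operatorname{Lie}(j))$, where $\operatorname{Lie}(j)$ is the multilinear free-Lie module already used in the proof of \Cref{cc:thm-schur-exact-projectors}. Since $\Lambda^jV=\mathbf S_{(1^j)}V$ corresponds to the sign representation, the statement \eqref{eq:no-exterior-free-Lie-map} is equivalent to
\[
 \operatorname{Hom}_{\mathfrak S_j}(\operatorname{sgn},\operatorname{Lie}(j))=0,\qquad j\ge3.
\]
That is, $\operatorname{Lie}(j)$ contains no sign-isotypic vector. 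The reformulations about natural transformations are immediate from this, because homogeneous linear natural transformations between polynomial functors of degree $j$ are exactly the $GL(V)$-maps in the stable range.

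To prove the vanishing of the sign multiplicity, I would use the Klyachko/Kraskiewicz--Weyman description. It says $\operatorname{Lie}(j)\cong\operatorname{Ind}_{C_j}^{\mathfrak S_j}\omega$, where $\omega$ is a faithful character of the cyclic group generated by a $j$-cycle. By Frobenius reciprocity,
\[
 \dim\operatorname{Hom}_{\mathfrak S_j}(\operatorname{sgn},\operatorname{Lie}(j))=\langle\operatorname{sgn}|_{C_j},\omega\rangle_{C_j}.
\]
The $j$-cycle has sign $(-1)^{j-1}$, so $\operatorname{sgn}|_{C_j}$ sends the generator to $\pm1$, whereas $\omega$ sends it to a primitive $j$-th root of unity. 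For $j\ge3$ a primitive $j$-th root of unity is never $\pm1$, so the two characters differ and the pairing is zero. For $j=2$ the primitive square root of unity is $-1$, which matches the sign and recovers $L_2=\Lambda^2$. This case check explains the phrase ``exceptional'' in the statement.

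If citing Klyachko is undesirable, I would instead give a direct argument. A vector $u\in\operatorname{Lie}(j)\subset\mathbb C[\mathfrak S_j]$ (multilinear words) that is sign-isotypic satisfies $u=c\sum_\sigma\operatorname{sgn}(\sigma)\sigma$. By the Dynkin--Specht--Wever idempotent $e_j$, $u$ is Lie exactly when $u e_j=u$. One computes that the antisymmetrizer composed with the left-normed bracketing map is a multiple of $\sum_\pi\operatorname{sgn}(\pi)\,\pi\cdot[[x_1,x_2],\dots,x_j]$. The antisymmetrization of the left-normed bracket vanishes for $j\ge3$ by Jacobi: antisymmetrizing $[[x_1,x_2],x_3]$ over $\mathfrak S_3$ gives twice the Jacobi sum, and the higher cases follow by induction. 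This is the step I expect to be the main obstacle, if done by hand, because the signs must be tracked carefully. Citing \cite{Reutenauer93} for the character of $\operatorname{Lie}(j)$ avoids it.
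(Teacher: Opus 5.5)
Your proof is correct, but it follows a different route from the paper's. The two arguments share the first step: Schur--Weyl duality reduces the statement to the sign multiplicity in $\operatorname{Lie}(j)$.

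From there the paper computes that multiplicity from the Frobenius characteristic $\frac1j\sum_{d\mid j}\mu(d)p_d^{j/d}$. Pairing with the sign character gives $m_j=\frac1j\sum_{d\mid j}\mu(d)(-1)^{j-j/d}$. Showing this vanishes for $j\ge3$ then takes a three-case divisor analysis: $j$ odd, $4\mid j$, and $j=2m$ with $m$ odd, where the divisors $e$ and $2e$ are paired.

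You instead use Klyachko's theorem $\operatorname{Lie}(j)\cong\operatorname{Ind}_{C_j}^{\mathfrak S_j}\omega$ together with Frobenius reciprocity. This replaces the case analysis with one observation: $\operatorname{sgn}|_{C_j}$ has order at most two, while $\omega$ has order $j$. It also explains the $j=2$ exception conceptually rather than by evaluating a sum. The two inputs are of equal depth. Summing $\omega$ over the elements of $C_j$ of each order $d$ gives the sum of primitive $d$-th roots of unity, which is $\mu(d)$, so the induced character has exactly the M\"obius formula the paper quotes. The paper's version has two advantages: it reuses the Witt--Brandt character formula already used for the ancestry character, and it gives $m_j$ explicitly for every $j$.

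Your fallback argument is the most elementary of the three and is also sound. Write $a=\sum_\sigma\operatorname{sgn}(\sigma)\sigma$ and let $\theta$ be the Dynkin left-normed bracketing map. Since $\theta$ commutes with substitution, $\theta(a)=\sum_\pi\operatorname{sgn}(\pi)\,\pi\cdot[\cdots[x_1,x_2],\ldots,x_j]$. This vanishes for $j\ge3$ without any induction. Decompose $\mathfrak S_j$ into cosets of the copy of $\mathfrak S_3$ permuting the first three slots; on each coset the inner antisymmetrization is twice a Jacobi sum. Hence a sign-isotypic Lie element $c\,a$ satisfies $c\,a=\frac1j\theta(c\,a)=0$, which forces $c=0$.
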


\begin{proof}
The Frobenius characteristic of the multilinear Lie representation is
\[
 \operatorname{ch}\operatorname{Lie}(j)
 =\frac1j\sum_{d\mid j}\mu_{\rm ar}(d)p_d^{j/d},
\]
where $\mu_{\rm ar}$ denotes the arithmetic M\"obius function;
see \cite{Reutenauer93}.  Its sign multiplicity is
\[
 m_j=\frac1j\sum_{d\mid j}\mu_{\rm ar}(d)(-1)^{j-j/d}.
\]
It equals one for $j=1,2$.  For odd $j>1$ the ordinary M\"obius divisor sum
vanishes; for $4\mid j$ the same is true; and when $j=2m$ with odd $m>1$,
pairing the divisors $e$ and $2e$ again gives zero.  Schur--Weyl duality
identifies $m_j$ with the multiplicity of $\Lambda^jV$ in $L_j(V)$, proving
\eqref{eq:no-exterior-free-Lie-map}.
\end{proof}

\begin{corollary}[One-sided Mellin transfer]
\label{cor:compact-revealed-mellin}
If, almost surely,
\[
 \mathcal N_Q(R)\sim Ce^{pR}R^{a-1}\ell(R),
 \qquad 0<p<1,
\]
with $\ell$ slowly varying, then
\[
 \operatorname{Tr}Q^{p+\varepsilon}
 \sim pC\Gamma(a)\varepsilon^{-a}\ell(1/\varepsilon),
\]
and
\[
 \limsup_{z\to\infty}
 \frac{\log\mathbb E[\exp(\sup_{t\le T}\mathfrak F_{A_t}(z))
       \mid\mathcal G_0]}
      {z^p(\log z)^{a-1}\ell(\log z)}
 \le \frac{\pi C}{2^p\sin(\pi p)}.
\]
No converse or sample-area equivalent is asserted.
\end{corollary}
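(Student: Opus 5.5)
The plan is to chain two earlier results: transfer the almost-sure counting law for \(Q\) into a Mellin asymptotic and a Fredholm-type asymptotic via \Cref{cc:thm:regular-dictionary}, and then bound the conditional area profile by a determinant of \(Q\) using \eqref{eq:compact-revealed-profile}.

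\emph{Step 1 (Mellin part).} The Mellin statement is immediate. Since \(Q\succeq0\), one has \(\zeta_Q(s)=\operatorname{Tr}Q^s\). Applying \Cref{cc:thm:regular-dictionary} samplewise to the operator \(T=Q\), with exponent \(p\) and Mellin order \(a\), the formula \eqref{cc:eq:regular-zeta} gives \(\operatorname{Tr}Q^{p+\varepsilon}\sim pC\Gamma(a)\varepsilon^{-a}\ell(1/\varepsilon)\) almost surely.

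\emph{Step 2 (reduction to a Fredholm reader).} For the profile bound I would start from the revealed-profile inequality \eqref{eq:compact-revealed-profile}. It holds on the events \(E_m\), which exhaust \(\Omega\) because \(\operatorname{Tr}R_T<\infty\). It gives
\[
 \log\mathbb E[\exp(\sup_{t\le T}\mathfrak F_{A_t}(z))\mid\mathcal G_0]
 \le\log4+\log\det\bigl(I+\tfrac z2Q\bigr).
\]
Write \(\log\det(I+xQ)=2\mathfrak F_{Q^{1/2}}(\sqrt x)\). The counting function of \(Q^{1/2}\) is \(\mathcal N_{Q^{1/2}}(R)=\mathcal N_Q(2R)\sim C2^{a-1}e^{2pR}R^{a-1}\ell(2R)\). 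This has exponent \(2p\in(0,2)\), so \eqref{cc:eq:regular-fredholm} applies with exponent \(2p\) and constant \(C2^{a-1}\), using \(\ell(2R)\sim\ell(R)\) by slow variation.

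\emph{Step 3 (constant bookkeeping).} Set \(y=\sqrt{z/2}\), so that \(\log y\sim\tfrac12\log z\). Then \eqref{cc:eq:regular-fredholm} yields
\[
 2\cdot\frac{\pi C2^{a-1}}{2\sin(\pi p)}\,y^{2p}(\log y)^{a-1}\ell(\log y)
 \sim\frac{\pi C2^{a-1}}{\sin(\pi p)}\Bigl(\frac z2\Bigr)^{p}2^{1-a}(\log z)^{a-1}\ell(\log z).
\]
The factors \(2^{a-1}\) and \(2^{1-a}\) cancel, leaving \(\pi C/(2^p\sin(\pi p))\) times \(z^p(\log z)^{a-1}\ell(\log z)\). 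Dividing by the normalizer and taking \(\limsup\) gives the stated bound, because \(\log4\) is negligible against the normalizer.

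The only delicate point I expect is the localization in Step 2. The inequality \eqref{eq:compact-revealed-profile} is stated only on \(E_m\in\mathcal G_0\), so the conditional expectation must be read \(\omega\)-wise there and the \(\limsup\) taken pathwise on each \(E_m\). The result then extends almost surely by taking the countable union over \(m\).
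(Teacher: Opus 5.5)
Your proposal is correct and follows the paper's own route. The Mellin law is \Cref{cc:thm:regular-dictionary} applied to $Q$, and the profile bound comes from inserting the determinant asymptotic into \eqref{eq:compact-revealed-profile}; your reduction $\log\det(I+xQ)=2\mathfrak F_{Q^{1/2}}(\sqrt x)$ with $\mathcal N_{Q^{1/2}}(R)=\mathcal N_Q(2R)$ (exponent $2p<2$) is exactly the step the paper's one-line proof leaves implicit, and your bookkeeping correctly gives $\pi C/(2^p\sin(\pi p))$. The $E_m$ localization is harmless, since $E_m\in\mathcal G_0$ and $\bigcup_mE_m$ has full measure. The point worth tightening is that \eqref{eq:compact-revealed-profile} holds off a $z$-dependent null set, so before taking the $\limsup$ you should fix versions along rational $z$ and use that $z\mapsto\exp(\sup_{t\le T}\mathfrak F_{A_t}(z))$ is nondecreasing and $z\mapsto\det(I+\tfrac z2Q)$ is continuous.
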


\begin{proof}
Apply the counting--Mellin--Fredholm dictionary
\Cref{cc:thm:regular-dictionary} to $Q$ and insert the determinant equivalent
into \eqref{eq:compact-revealed-profile}.
\end{proof}

\subsection{L\'evy and evolution-family entrances}

The pathwise degree-two state uses optional quadratic variation; conditional
covariance uses the predictable bracket.  They coincide on some continuous
branches but remain differently typed coordinates.

\begin{proposition}[Typed Hoffman interface and collision kernel]
\label{prop:compact-Hoffman-interface}
Let $E$ be finite dimensional and fix a stochastic weight $N\ge2$.  Enlarge
the weight-one alphabet $\mathscr V_E=E$ by the collision letters
\[
 \mathscr C_{E,N}=\bigoplus_{k=2}^N E^{\odot k}.
\]
The finite-weight It\^o iterated integrals of an $E$-valued semimartingale
form a quasi-shuffle character on this typed alphabet.  Hoffman's exponential
and logarithm give inverse coalgebra maps and turn it into a shuffle
character without deleting collision letters.  The subsequent projection
$p:\mathscr V_E\oplus\mathscr C_{E,N}\to\mathscr V_E$ induces the exact
primitive sequence
\begin{equation}
 0\longrightarrow\mathfrak I_{E,N}
 \longrightarrow\mathfrak L(\mathscr V_E\oplus\mathscr C_{E,N})
 \longrightarrow\mathfrak L(\mathscr V_E)\longrightarrow0,
 \label{eq:compact-typed-exact-sequence}
\end{equation}
where $\mathfrak I_{E,N}$ is the Lie ideal generated by the collision
letters.  Geometricization and the quotient commute with convolution,
bounded spatial maps, deterministic evolution transport, truncation, and
fixed-terminal stopping.  At degree two the weight-one geometric tensor is
\[
 \mathbb Y^H_{s,t}=\mathbb Y^I_{s,t}+\frac12[Y]_{s,t}
 =\frac12Y_{s,t}^{\otimes2}+\operatorname{Alt}_2\mathbb Y^I_{s,t}.
\]
Thus the quotient retains the geometric ancestry while the typed parent
retains optional collision information.
\end{proposition}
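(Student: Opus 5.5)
The plan is to treat the statement in four layers: the typed quasi-shuffle character, Hoffman's isomorphism, the exact primitive sequence, and the degree-two formula.

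\textbf{Quasi-shuffle character.} Let \(Y\) be an \(E\)-valued semimartingale. Define the typed iterated integrals letter by letter. A weight-one letter \(v\in E\) integrates against \(\mathrm dY\). A collision letter in \(E^{\odot k}\) integrates against the \(k\)-th order jump/bracket measure: \([Y]^c\) plus \(\sum\Delta Y^{\otimes k}\) for \(k=2\), and \(\sum\Delta Y^{\otimes k}\) for \(k\ge3\). The quasi-shuffle identity for products of two such iterated integrals then follows from the semimartingale It\^o product rule, by induction on total weight. The bracket of two integrals produces exactly the merged letter, with weights adding. This is the standard Hoffman-type argument, and finite dimensionality of \(E\) keeps every sum finite. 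Chen multiplicativity is the splitting of the simplex, as in \Cref{cc:thm-critical-typed-target}.

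\textbf{Hoffman's maps and the primitive sequence.} Invoke Hoffman's theorem \cite{Hoffman00}: \(\exp_H\) and \(\log_H\) are mutually inverse coalgebra isomorphisms between the shuffle and quasi-shuffle Hopf algebras on the weight-graded alphabet. Applying the dual map turns the quasi-shuffle character into a shuffle character, letters untouched, so collision letters survive.

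For \eqref{eq:compact-typed-exact-sequence}, surjectivity of \(\mathfrak L(p)\) is clear, since \(p\) is split by the inclusion of \(\mathscr V_E\). For the kernel, the Lie ideal \(\mathfrak I_{E,N}\) generated by \(\mathscr C_{E,N}\) lies in the kernel. Dividing \(\mathfrak L(\mathscr V\oplus\mathscr C)\) by it yields a Lie algebra generated by \(\mathscr V\) with no relations, which is \(\mathfrak L(\mathscr V)\) by the universal property. Hence the sequence is exact.

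\textbf{Naturality claims.} The commutation statements follow because each operation is functorial in the alphabet or commutes with stochastic integration:
\begin{itemize}
\item bounded maps act by \(\mathfrak L(L)\), and induce \(L^{\odot k}\) on collision letters;
\item deterministic transport acts by the same mechanism;
\item truncation is by weight;
\item fixed-terminal stopping uses the stopped left-point identity of \Cref{cc:thm-critical-optional-quotient}.
\end{itemize}
Hoffman's maps are natural in alphabet morphisms that preserve weight, so all of these commute with geometricization and with the quotient.

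\textbf{Degree two.} Hoffman's exponential at weight two gives \(\mathbb Y^H=\mathbb Y^I+\tfrac12\,\pi(\text{collision letter})\). After projecting the collision letter \([Y]\) to \(E^{\otimes2}\), this becomes \(\mathbb Y^I+\tfrac12[Y]\). Combining with \eqref{cc:eq-critical-Ito-reconstruction} yields \(\tfrac12Y^{\otimes2}+\operatorname{Alt}_2\mathbb Y^I\).

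\textbf{Main obstacle.} The hardest step is the first: establishing the quasi-shuffle identity with correct weight and merge bookkeeping. It requires matching the It\^o bracket of two typed integrals to the symmetric-tensor letter in \(E^{\odot(k+l)}\). The identification is exact only because jump products \(\Delta Y^{\otimes k}\otimes\Delta Y^{\otimes l}\) symmetrize into \(E^{\odot(k+l)}\), and for \(k+l>2\) the continuous bracket does not contribute. This needs to be checked carefully for the first few weights before the induction is trusted. The weight cutoff \(N\) also requires attention, since merged letters of weight above \(N\) must be discarded consistently by truncation.
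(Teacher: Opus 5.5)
Your proposal is correct and follows essentially the same route as the paper's proof: It\^o integration by parts gives the quasi-shuffle character, Hoffman's exponential and logarithm serve as inverse coalgebra maps for geometricization, the free-Lie universal property identifies the kernel of the letter projection, naturality rests on weight preservation, and the It\^o product formula gives the degree-two identity. Your version supplies more of the bookkeeping that the paper leaves implicit, notably that brackets of collision drivers reduce to pure jump sums $\sum\Delta Y^{\otimes(k+l)}$ once $k+l\ge3$, and that merged letters of weight above $N$ can be discarded consistently under truncation.
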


\begin{proof}
Stochastic integration by parts identifies products of iterated It\^o
integrals with the quasi-shuffle product.  Hoffman's substitution identity
for the series $e^z-1$ and $\log(1+z)$ gives inverse coalgebra maps and
intertwines quasi-shuffle with shuffle.  The free-Lie universal property
identifies the kernel of the letter projection with the ideal generated by
$\mathscr C_{E,N}$, proving \eqref{eq:compact-typed-exact-sequence}.
Naturality follows because all maps preserve stochastic weight and commute
with bounded deterministic pushforwards.  The displayed degree-two identity
is the It\^o product formula.
\end{proof}

\begin{example}[Compensated Poisson type check]
\label{ex:poisson-type-check}
Let $M_t=N_t-\lambda t$ for a scalar Poisson process of rate $\lambda$.
Then
\[
 [M]_t=N_t,\qquad
 \langle\!\langle M\rangle\!\rangle_t=\lambda t,
 \qquad A_t=0,
\]
while
\[
 \mathbb M_t^I=\frac12(M_t^2-N_t),
 \qquad
 \mathbb M_t^I+\frac12[M]_t=\frac12M_t^2.
\]
Thus the optional collision coordinate jumps samplewise, whereas the
predictable covariance clock is deterministic.  Replacing either by the
other would already give the wrong geometricization or the wrong
conditional covariance in dimension one.
\end{example}

\begin{theorem}[Evolution-family stochastic entrance and Galerkin stability]
\label{thm:evolution-spde-interface}
Let $U(t,s)$ be a deterministic evolution family on a separable Hilbert space
$H$.  Let $N$ be a continuous square-integrable martingale on a Hilbert noise
space $U_0$, and let $K_s:U_0\to H$ be predictable.  For every terminal time
$t$ assume that $s\mapsto U(t,s)K_s$ is stochastically integrable and put
\[
 Z_r^{s\mid t}=\int_{(s,r]}U(t,a)K_a\,\mathrm dN_a,
 \qquad s\le r\le t.
\]
Assume locally in $t$ that the terminal-frame trace energies
\[
 \mathcal E_t^{(t)}
 =\operatorname{Tr}\langle\!\langle Z^{0\mid t}\rangle\!\rangle_t
\]
are finite.  Then the split degree-two increments
$\mathbf Z_{s,t}=(Z_t^{s\mid t},A_{s,t}^{(t)},Q_{s,t}^{(t)})$ are critical
entrances.  For a bounded spatial map $B$, write
$B_\#(x,A,Q)=(Bx,BAB^*,BQB^*)$.  With this convention they satisfy the
transported Chen law
\begin{equation}
 \mathbf Z_{s,t}
 =U(t,u)_\#\mathbf Z_{s,u}\circ\mathbf Z_{u,t},
 \qquad s\le u\le t.
 \label{eq:spde-transported-Chen}
\end{equation}
Here the early increment is first constructed in the terminal $u$ frame and
then pushed to the terminal $t$ frame.

Let $K^{(m)}$ be finite-rank Galerkin approximants.  For a fixed terminal
$t$, if
\begin{equation}
 \mathbb E\operatorname{Tr}
 \langle\!\langle
  Z^{K^{(m)}-K,\,0\mid t}
 \rangle\!\rangle_t\longrightarrow0,
 \label{eq:spde-operational-convergence}
\end{equation}
then the corresponding first levels, areas, and brackets converge ucp on
$[0,t]$ in the critical split topology.  The estimates are uniform over a
family of deterministic terminal frames whenever their energy and error
bounds are uniform.  Joint ucp convergence in the terminal parameter requires
an additional terminal-frame continuity or maximal estimate.  Adding an adapted finite-variation drift whose range on each compact
interval is contained in a deterministic finite-dimensional subspace of $H$
changes the area only by trace-class terms and preserves the entrance.
\end{theorem}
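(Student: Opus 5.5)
The plan has four stages: a terminal-frame decomposition, an application of the critical estimates in a fixed frame, Galerkin stability through the projection-stability theorem, and finally the drift correction.

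\emph{Terminal-frame reduction.} For a fixed terminal time \(t\), the process \(r\mapsto Z^{0\mid t}_r=\int_{(0,r]}U(t,a)K_a\,\mathrm dN_a\) is a continuous square-integrable \(H\)-valued martingale on \([0,t]\). Its trace energy is \(\mathcal E^{(t)}_t\). Since \(Z^{s\mid t}_r=Z^{0\mid t}_r-Z^{0\mid t}_s\), the split increment \(\mathbf Z_{s,t}\) is the stopped increment of the martingale \(Z^{0\mid t}\) over \([s,t]\). It is therefore obtained from the anchor \((Z^{0\mid t},A^{Z^{0\mid t}},\langle\!\langle Z^{0\mid t}\rangle\!\rangle)\) by \eqref{cc:eq-critical-stopped-anchor}.

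\emph{Critical entrance in each frame.} Apply \Cref{cc:thm-critical-projection-stability}, \Cref{cc:prop-universal-critical-area-bound} and \Cref{cc:thm-critical-optional-quotient} inside the fixed frame, after localizing the trace energy. These give a continuous \(\mathscr G^+_{\mathrm{crit}}(H)\)-valued anchor, little weak area samplewise, ucp projection convergence, and the stopped causal law. This is exactly the definition of a critical second-order entrance, frame by frame.

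\emph{Transported Chen law.} Split \((s,t]=(s,u]\cup(u,t]\). Deterministic bounded operators commute with stochastic integration and with operator brackets, and \(U(t,a)=U(t,u)U(u,a)\). Hence \(Z^{s\mid t}_r=U(t,u)Z^{s\mid u}_r\) for \(r\le u\), the area over \((s,u]\) pushes forward as \(U(t,u)A^{(u)}_{s,u}U(t,u)^*\), and the bracket pushes forward likewise. Splitting the left-point area integral at \(u\) then produces the cross term \(\tfrac12 X_{s,u}\wedge_{\mathrm{op}}X_{u,t}\) with \(X_{s,u}=U(t,u)Z^{s\mid u}_u\). This is the group law, which gives \eqref{eq:spde-transported-Chen}. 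This is the naturality of \(L_\#\) from \Cref{cc:thm-critical-typed-target}, applied inside a single terminal frame; no integrability issue at random times arises because every time is deterministic.

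\emph{Galerkin stability.} Let \(X=Z^{K^{(m)},0\mid t}\) and \(Z=X-Z^{K,0\mid t}\), with energy \(\delta_m^2\) such that \(\mathbb E\delta_m^2\to0\) by \eqref{eq:spde-operational-convergence}. Stop when the energies exceed \(E\) and \(\delta^2\), and apply \eqref{cc:eq-critical-stability-first}--\eqref{cc:eq-critical-stability-bracket}. This gives ucp convergence of the first levels, of the areas in \(\mathcal S_{1,\infty}\), and of the brackets in trace norm. The stopping probabilities are handled as in the projection proof. Uniformity over a family of deterministic frames follows because the constants depend only on \(E\) and \(\delta\).

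\emph{Drift correction.} Let \(D\) be an adapted finite-variation process with values in a deterministic finite-dimensional subspace \(F\) on each compact interval. The extra area terms \(\int M\otimes\mathrm dD\), \(\int D\otimes\mathrm dM\) and \(\int D\otimes\mathrm dD\), after antisymmetrization, have one leg in \(F\). They therefore have rank at most \(2\dim F\) and are trace class, with norm controlled by \(\sup\|M\|\) times the variation of \(D\). Being finite rank, they preserve the little weak class, and the bracket is unchanged.

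The main obstacle I expect is the drift term \(\int D\otimes\mathrm dM\). Its rank bound is clear, but its trace-norm bound requires writing it as \(D_tM_t-\int M\otimes\mathrm dD\), using integration by parts with the finite-variation process. I would handle this by expanding in a fixed basis of \(F\).
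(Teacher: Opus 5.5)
Your proposal is correct and follows essentially the paper's route. You split the terminal-frame integral at $u$ using $U(t,a)=U(t,u)U(u,a)$ for the transported Chen law, place each fixed-frame anchor in the critical split state via the universal area bound and projection stability, obtain Galerkin ucp convergence from the $O(\sqrt E\,\delta+\delta^2)$ estimate applied with localization to the difference martingale $Z^{K^{(m)}-K,0\mid t}$, and expand the second level so that each drift term has a leg in a finite-dimensional range. Two cosmetic slips: with your notation you should feed $-Z$ (same energy) into the stability theorem so that $X+(-Z)=Z^{K,0\mid t}$, and if the drift $D$ jumps, the optional bracket changes by the finite-rank trace-class term $\sum\Delta D\otimes\Delta D$ rather than staying unchanged, which still preserves the entrance.
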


\begin{proof}
For finite-rank approximants, stochastic integration by parts gives the
finite-weight quasi-shuffle character.  Hoffman geometricization commutes
with bounded maps and deterministic evolution transport; splitting the
terminal-frame integral at $u$ gives
\eqref{eq:spde-transported-Chen}.  The endpoint theorem
\Cref{cc:thm-universal-critical-endpoint} places the antisymmetric channel in
the weak-trace critical space and the collision channel in trace class.

The quantity in \eqref{eq:spde-operational-convergence} is exactly the pullback
of terminal $L^2$ convergence by the It\^o isometry.  Apply the critical
projection stability theorem \Cref{cc:thm-critical-projection-stability} to
the difference martingale; its area estimate is
$O(\sqrt E\,\delta+\delta^2)$ under energy $E$ and error energy $\delta^2$.
Localization gives ucp convergence.  The finite-variation assertion follows
by expanding the second level: all added terms have one leg in a locally
finite-dimensional range, hence are trace class.
\end{proof}

\begin{remark}[Deterministic terminal frames]
\label{rem:deterministic-terminal-frame}
The terminal parameter $t$ in \Cref{thm:evolution-spde-interface} is fixed
before stochastic integration.  For a stopping time $\tau$, the map
$a\mapsto U(\tau(\omega),a)K_a$ need not be predictable.  Stopping is
therefore applied to the already constructed deterministic-terminal
character.  Random-terminal transport requires a separate innovation or
restart theorem and is not a consequence of the transported Chen identity.
\end{remark}

\begin{corollary}[Mild SPDE interface]
\label{cor:mild-spde-interface}
Consider a mild equation
\[
 X_t=U(t,0)x_0+\int_0^tU(t,s)F_s\,\mathrm ds
      +\int_0^tU(t,s)G_s\,\mathrm dW_s.
\]
If the stochastic convolution satisfies the terminal-frame energy and
Galerkin hypotheses of \Cref{thm:evolution-spde-interface} for every fixed
terminal frame, and the drift channel satisfies the stated finite-variation
condition, then the enhanced noise part has a canonical critical degree-two
entrance compatible with stopping and deterministic evolution transport.
A joint terminal-time enhancement additionally requires the continuity or
maximal hypothesis stated in that theorem.  Every finite-order Rees
response theorem applies once a model-specific native response kernel is
shown to have full support.  By
\Cref{rk:prop-kernel-naturality,rk:prop-robust-support}, bounded terminal
observations may be reduced to finitely many scalar response coordinates.
For a fixed finite exact layer $G$, use the canonical Galerkin layer
identifications and choose a finite certificate as in
\Cref{rk:prop-robust-support}.  If the corresponding maps
$\mathsf C_G^{(m)}$ converge in operator norm to $\mathsf C_G$ and
$\kappa(\mathsf C_G)>0$, then all sufficiently large Galerkin responses are
strict.  Conversely, if $\inf_m\kappa(\mathsf C_G^{(m)})>0$, then the limiting
response is strict.  The equation's semilinear fixed-point estimates and any
state or generator observability remain separate analytic inputs.
\end{corollary}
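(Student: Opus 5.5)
The statement to prove is the Mild SPDE interface corollary. It is a corollary, so the plan is to reduce each clause to results already established rather than to do new analysis.

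\textbf{Step 1: the noise part is a critical entrance.} Decompose the mild solution into three pieces: the deterministic transport $U(t,0)x_0$, the drift convolution $\int_0^tU(t,s)F_s\,\mathrm ds$, and the stochastic convolution $Z^{0\mid t}_t=\int_0^tU(t,s)G_s\,\mathrm dW_s$. For the stochastic convolution I would take $N=W$ and $K_s=G_s$ and apply \Cref{thm:evolution-spde-interface}. Under the terminal-frame energy hypothesis this gives, for each fixed deterministic terminal $t$:
\begin{itemize}
\item split increments $\mathbf Z_{s,t}$ in $\mathscr G^+_{\mathrm{crit}}(H)$;
\item the transported Chen law \eqref{eq:spde-transported-Chen};
\item ucp Galerkin convergence under \eqref{eq:spde-operational-convergence}.
\end{itemize}
The drift channel is adapted, finite variation, and has locally finite-dimensional range. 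By the last clause of that theorem, adding it changes the area only by trace-class terms, so the entrance is preserved. The initial transport is deterministic, so it contributes only first-level and trace-class wedge corrections.

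\textbf{Step 2: stopping and joint enhancement.} Compatibility with stopping comes from \Cref{cc:thm-critical-optional-quotient}, applied to the already constructed deterministic-terminal character, exactly as \Cref{rem:deterministic-terminal-frame} prescribes. Compatibility with transport is \eqref{eq:spde-transported-Chen}. Canonicity follows from uniqueness in \Cref{cc:thm-optional-completion}, together with the identification of the split state as the degree-two quotient in \Cref{cc:thm-critical-typed-target}. The joint terminal-time clause is simply inherited as an explicit extra hypothesis.

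\textbf{Step 3: transfer to Rees responses and reduction to scalar readers.} Through \Cref{cc:prop-entrance-to-Rees-bridge}, the Rees response theorems (\Cref{cc:thm-source-level-compiler}, \Cref{rk:thm-response-consequences}) apply once full support is known. Bounded terminal observations factor as $B\circ\mathcal R\circ f^*$. So \Cref{rk:prop-kernel-naturality} pushes the kernel forward, and \Cref{rk:prop-robust-support} reduces detection to at most $\dim G$ scalar coordinates $\mathsf C_G$.

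\textbf{Step 4: Galerkin stability of strictness (the main obstacle).} This step needs care about which layer the maps act on. Using the canonical Galerkin identifications of the fixed finite exact layer $G$, I regard all $\mathsf C_G^{(m)}$ as maps on the same $G^*$.

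\emph{Forward direction.} By \eqref{rk:eq-observability-gap-stability}, $\kappa(\mathsf C_G^{(m)})\ge\kappa(\mathsf C_G)-\|\mathsf C_G^{(m)}-\mathsf C_G\|>0$ for large $m$. Hence these finite-certificate maps are injective. Injectivity of the certificate gives full left support on $G$ by \Cref{rk:thm-reader-symbols}, and hence strictness on $G$ by \Cref{rk:thm-kernel-support}.

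\emph{Converse direction.} The same inequality gives $\kappa(\mathsf C_G)\ge\limsup_m\kappa(\mathsf C_G^{(m)})\ge\inf_m\kappa(\mathsf C_G^{(m)})>0$. In the converse I would implicitly assume that the maps $\mathsf C_G^{(m)}$ converge in operator norm to $\mathsf C_G$; I would state this assumption as an implicit standing hypothesis.

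\textbf{Final remark.} Nothing here touches the fixed-point theory of the equation. The semilinear estimates and any state observability are left as separate inputs, exactly as the statement declares.
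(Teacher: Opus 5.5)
Your proposal is correct and follows essentially the paper's own argument. You apply \Cref{thm:evolution-spde-interface} to the stochastic convolution, using its finite-variation clause for the drift, push bounded terminal observations through kernel base change, and read both Galerkin implications off the gap inequality \eqref{rk:eq-observability-gap-stability} of \Cref{rk:prop-robust-support}; reading operator-norm convergence as a standing hypothesis for the converse as well is the paper's reading too. The detour through \Cref{cc:prop-entrance-to-Rees-bridge} is superfluous, since it would need a $\beta$-control not among the corollary's hypotheses, whereas \Cref{rk:thm-valuation-exchange,rk:thm-response-consequences} already apply to any filtered model-specific response with full support.
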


\begin{proof}
Apply \Cref{thm:evolution-spde-interface} to the stochastic convolution and
its Galerkin approximants; the finite-variation clause treats the drift.
Kernel base change identifies bounded terminal observations with pushforwards
of the same response kernel.  The finite-reader reduction and the two
perturbative implications are exactly \Cref{rk:prop-robust-support}.
\end{proof}

\begin{corollary}[Hilbert-valued L\'evy entrance]
\label{cor:compact-levy-entrance}
Every c\`adl\`ag Hilbert-valued L\'evy process with finite second moment has a
canonical degree-two critical split lift
\[
 (L_{s,t},A^L_{s,t},[L]_{s,t})
\]
with stopped Chen identities and finite-rank projection stability.  The lift
may be chosen as a shift-covariant Borel functional on the Skorokhod path
space.  Hence, after every bounded stopping time, the lifted future is
independent of the past and has the law of the original lifted process.
At stochastic weights above two the jump alphabet contains additional
power-jump primitives, which belong to the quasi-shuffle normal geometry.
\end{corollary}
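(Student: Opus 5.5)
The plan is to assemble the lift from three facts already in the paper: the critical projection stability theorem (which is stated for continuous martingales), the revealed c\`adl\`ag entrance proposition, and anchored optional completion. Write the L\'evy process as \(L_t=bt+\mathsf W_t+J_t\), where \(\mathsf W\) is a \(Q\)-Wiener part and \(J\) is the compensated jump martingale. Finite second moment makes \(J\) and \(\mathsf W\) square integrable. The predictable bracket is then deterministic: \(R_t=t\,\Sigma\), with \(\Sigma=Q+\int y\otimes y\,\nu(\mathrm dy)\) trace class. A deterministic bracket path is revealed by the trivial sigma-field, so \(\mathcal G_0\) may be taken trivial.

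For the martingale part \(M=\mathsf W+J\), the revealed c\`adl\`ag critical entrance proposition gives several things. It gives a c\`adl\`ag split anchor \((M_t,A^M_t,[M]_t)\) in \(\mathscr G_{\rm crit}(H)\), finite-rank projection stability, and the optional completion of the critical optional-quotient theorem. The conditional Dol\'eans theorem, applied with \(Q=R_T=T\Sigma\in\mathcal S_1\), supplies the little weak-trace tail that proposition needs.

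The drift \(bt\) lies in the one-dimensional range \(\mathbb R b\). By the finite-variation clause of the evolution-family theorem (with \(U=I\)), adding it changes the area only by trace-class terms. Indeed, \(\operatorname{Alt}_2\int (M_{r-}\otimes b\,\mathrm dr + b r\otimes\mathrm dM_r)\) is rank at most two per time and trace-norm continuous. The drift also leaves \([L]=[M]\) unchanged. So the full lift \((L_{s,t},A^L_{s,t},[L]_{s,t})\) is obtained from the anchor by the stopped causal law, and the stopped Chen identities hold on one event by the anchored optional completion theorem.

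For the Borel and shift-covariance claims, I would realize the lift as a measurable functional on Skorokhod space. Take dyadic left-point Riemann sums of \(\operatorname{Alt}_2\sum L_{t_i}\wedge L_{t_{i+1}}\) and \(\sum (\Delta_i L)^{\otimes 2}\) on finite-rank projections \(P_k\). These are Borel maps of the path, and they converge in probability in the critical metric, using the projection stability just obtained and the standard convergence of Riemann sums for semimartingale integrals and quadratic variation. The Borel closure lemma then gives a Borel version \(\Gamma\), fixed once and for all along a deterministic subsequence. The increments over \([s,t]\) are \(\Gamma\) applied to the shifted path \(\theta_s L\), and the anchor identity reconstructs them.

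The main obstacle is consistency: the limit set of full measure must be shift-invariant, so that \(\Gamma(\theta_\sigma L)\) is defined almost surely at random \(\sigma\). I would handle this with the strong Markov property. For a bounded stopping time \(\sigma\), the process \(\theta_\sigma L-L_\sigma\) is independent of \(\mathcal F_\sigma\) and equal in law to \(L\), so the convergence set for the shifted path has probability one. Identifying \(\Gamma(\theta_\sigma L)\) with \(\mathbf\Phi_{\sigma,\sigma+\cdot}\) uses the stopped-integral formula in the critical optional-quotient theorem. Independence of the lifted future and equality of its law then follow, because the lifted future is a Borel function of the shifted path.

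The final sentence about power-jump primitives is interpretive. I would justify it by pointing to the typed Hoffman interface proposition: the jumps contribute the collision letters \(E^{\odot k}\), \(k\ge3\), which lie in the ideal \(\mathfrak I_{E,N}\).
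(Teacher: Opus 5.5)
Your construction of the lift itself follows the paper's proof. You split off the drift and note that the predictable bracket $t\Sigma$ is deterministic, hence revealed by the trivial $\sigma$-field. You then apply the conditional Dol\'eans theorem and the revealed c\`adl\`ag critical entrance proposition to the martingale part, absorb the one-dimensional drift as a finite-rank area correction, and read the stopped Chen identities off the group law and optional completion. For the drift, your direct rank-two computation is the right justification; the finite-variation clause of the evolution-family theorem is stated for continuous driving martingales.

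The Borel/restart part takes a different route from the paper. The paper applies the Bichteler--Karandikar pathwise integral on each finite-dimensional projection and defines the lift by a projective limit on the Borel Cauchy set of the projected lifts. It then argues that these maps commute with deterministic shifts and with the $P_k$, so the convergence set is shift invariant. The strong Markov property is applied to one fixed map that does not depend on the law of $L$.

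You instead use dyadic Riemann sums, which converge only in probability. You extract a Borel version via the Borel-closure lemma, along a deterministic subsequence chosen for the law $\mu$ of $L$. Exact shift invariance is replaced by the observation that $\theta_\sigma L-L_\sigma$ has law $\mu$, so it lies in the $\mu$-full convergence set almost surely. This is economical: everything comes from results already in the paper. You also never need exact pathwise commutation of the approximating maps with time shifts and projections, only its almost-sure shadow, which the strong Markov property supplies.

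The route has three costs.
\begin{enumerate}
\item Your functional is law-dependent. It is shift-covariant only modulo shift-dependent null sets. If instead you define the two-parameter lift tautologically as $\Gamma(\theta_s\omega-\omega_s)$, its Chen identities hold only almost surely for each triple.
\item The identification $\Gamma(\theta_\sigma L-L_\sigma)=\mathbf\Phi_{\sigma,\sigma+\cdot}$ needs more than a citation of the stopped-integral formula, and you should write it out. Left-point sums along the random grid $\sigma+k2^{-n}$ converge in probability to the stopped integral, as Riemann sums along stopping-time partitions. This must be combined with the uniform anchor convergence of the revealed entrance proposition, transferred to stopping intervals by the group law as in the moving-interval projection corollary. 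Together with almost-sure convergence along your fixed diagonal subsequence, this pins down the limit.
\item You should apply the Borel-closure lemma with the Skorokhod topology on the target path space, since the uniform-in-time path space is not separable.
\end{enumerate}
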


\begin{proof}
Write $L_t=tm+M_t$, where $M$ is the centered square-integrable L\'evy
martingale.  Its predictable operator bracket is deterministic:
\[
 \langle\!\langle M\rangle\!\rangle_t=tC,
 \qquad C\in\mathcal S_1(H)_+.
\]
Thus the trivial initial sigma-field reveals the complete bracket path.
Apply \Cref{thm:compact-revealed-doleans,prop:revealed-cadlag-critical-entrance}
to obtain the weak-trace area, the c\`adl\`ag critical anchor, and finite-rank
projection convergence for the martingale part.  The optional coordinate in
the split state is $[M]$, while the deterministic bracket $tC$ drives the
conditional covariance estimate.  The drift $tm$ has one-dimensional range;
expanding the second level of $tm+M_t$ shows that it adds only finite-rank,
hence trace-class, area terms.  The group law then supplies all stopped Chen
identities.

For the restart statement, fix increasing finite-rank projections
$P_k\uparrow I_H$.  On each finite-dimensional Skorokhod space use the same
Bichteler--Karandikar left-point integral map
\cite{Karandikar95,Chevyrev18,ChevyrevFriz19}.  The set on which the projected
split lifts form a Cauchy sequence in the Polish critical path space is Borel,
since its Cauchy condition is countable; define the infinite-dimensional lift
there by the projective limit and assign a fixed value off that set.  The
finite-dimensional maps commute with deterministic time shifts and with the
projections $P_k$.  Hence the convergence set is shift invariant and the
resulting Borel lift map on $D([0,\infty);H)$ is shift covariant.  Applying
this fixed map to the strong Markov shifted L\'evy path transfers stationary
independent increments to the enhanced future.
\end{proof}

\section{Brownian type separation and further spectral consequences}
\label{sec:brownian-boundaries}

\subsection{Three Brownian operators}

Let $C\in\mathcal S_1(H)_+$ have eigenpairs $(c_i,e_i)$ and let $B^C$ be
Brownian motion with covariance $C$.  Three natural operators must remain
separate.
\begin{enumerate}[label=\textup{(\roman*)},leftmargin=2.8em]
\item The raw Chen-local area $A_t^C(\omega)$ is a random skew operator on
      $H$ and belongs samplewise to $\mathcal S^0_{1,\infty}$.
\item The deterministic second-chaos coefficient root
\[
 K_{C,t}f_{ij}=\frac t2\sqrt{c_ic_j}\,e_i\wedge e_j,
 \qquad i<j,
\]
acts from the active coefficient space to $\Lambda^2H$.  Its square is the
annealed covariance of the random bivector.
\item Let $\mathsf K_T$ be the Brownian clock covariance on $L^2(0,T)$,
\[
 (\mathsf K_Tf)(s)=\int_0^T(s\wedge t)f(t)\,\mathrm dt,
\]
and let $P_{k,T}$ be its simple temporal spectral projections.  Define
\[
 \mathfrak G_T(b)=\sum_{k\ge1}|P_{k,T}b\rangle\langle P_{k,T}b|,
 \qquad
 \mathfrak D_T(b)=\mathfrak G_T(b)^{1/2}.
\]
For one Brownian sample this is a positive global auxiliary response on
$L^2(0,T)\otimes H$.
\end{enumerate}

\begin{proposition}[Second-chaos factorization and the point-evaluation gap]
\label{prop:compact-brownian-factorization}
For $i<j$, let
\[
 \Xi_{ij}(t)=t^{-1}\left(
  \int_0^tW_s^i\,\mathrm dW_s^j-
  \int_0^tW_s^j\,\mathrm dW_s^i\right).
\]
The active family $(\Xi_{ij}(t))_{c_ic_j>0}$ is orthonormal in
$L^2(\Omega)$, and second-chaos radonification defines an isometry
\[
 \operatorname{Rad}^{(2)}_{\Xi,t}:
 \mathcal S_2(\mathfrak h_{2,C},E)\longrightarrow L^2(\Omega;E).
\]
Moreover
\begin{equation}
 \alpha_{A_t^C}
 =\operatorname{Rad}^{(2)}_{\Xi,t}(K_{C,t}),
 \qquad
 \operatorname{Cov}(\alpha_{A_t^C})
 =K_{C,t}K_{C,t}^*=\frac{t^2}{4}\Lambda^2C.
 \label{eq:compact-second-chaos-factorization}
\end{equation}
If $C$ has infinite rank, then almost surely algebraic point evaluation on
the active chaos space has no bounded extension to its $L^2$ closure.
Thus \eqref{eq:compact-second-chaos-factorization} is an $L^2$
radonification identity, not a bounded samplewise contraction.
\end{proposition}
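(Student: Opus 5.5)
The argument splits into three parts: the orthonormality and isometry claims, the factorization identity \eqref{eq:compact-second-chaos-factorization}, and the point-evaluation obstruction.

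\emph{Orthonormality and the isometry.} Write $B^C=\sum_i\sqrt{c_i}\,W^ie_i$ with independent scalar Brownian motions $W^i$. By the It\^o isometry and the independence of $W^i,W^j$, the L\'evy area $\int_0^tW^i\,\mathrm dW^j-\int_0^tW^j\,\mathrm dW^i$ has variance $2\int_0^t s\,\mathrm ds\cdot 2/2=t^2$ after the cross term $-2\,\mathbb E\int W^iW^j\,\mathrm ds\,\mathrm d\langle W^i,W^j\rangle$ vanishes. Hence $\Xi_{ij}(t)$ is normalized. Distinct pairs $\{i,j\}\neq\{k,l\}$ give orthogonal elements of the second Wiener chaos, because the corresponding multiple Wiener integrals have orthogonal symmetric kernels. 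Let $\mathfrak h_{2,C}$ be the Hilbert space with orthonormal basis $f_{ij}$, $i<j$, $c_ic_j>0$. The map
\[
 \operatorname{Rad}^{(2)}_{\Xi,t}(S)=\sum_{i<j}\Xi_{ij}(t)\,Sf_{ij}
\]
then satisfies $\mathbb E\|\cdot\|^2=\sum\|Sf_{ij}\|^2=\|S\|_{\mathcal S_2}^2$, by orthonormality of the scalars and a standard orthogonal-series argument in $L^2(\Omega;E)$. This is the stated isometry.

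\emph{The factorization.} Expand $\alpha_{A^C_t}$ through \eqref{cc:eq-two-form-operator}. Using $\operatorname{Alt}_2$ we get
\[
 \langle A_t^C e_j,e_i\rangle=\tfrac12\sqrt{c_ic_j}\Bigl(\int_0^tW^i\,\mathrm dW^j-\int_0^tW^j\,\mathrm dW^i\Bigr)=\tfrac t2\sqrt{c_ic_j}\,\Xi_{ij}(t).
\]
With the determinant inner-product convention this gives $\alpha_{A_t^C}=\sum_{i<j}\Xi_{ij}(t)\,\tfrac t2\sqrt{c_ic_j}\,e_i\wedge e_j$. I would check the normalization against the identity $2\alpha_{\frac12(x\otimes y-y\otimes x)}=x\wedge y$, adjusting constants only if a factor of two appears there. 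This is exactly $\operatorname{Rad}^{(2)}_{\Xi,t}(K_{C,t})$. Convergence of the series holds because $\|K_{C,t}\|_2^2=\tfrac{t^2}{4}\sum_{i<j}c_ic_j<\infty$. For the covariance, orthonormality gives $\operatorname{Cov}=K_{C,t}K_{C,t}^*=\tfrac{t^2}4\sum_{i<j}c_ic_j\,|e_i\wedge e_j\rangle\langle e_i\wedge e_j|$. This equals $\tfrac{t^2}{4}\Lambda^2C$, since $\{e_i\wedge e_j\}_{i<j}$ is an orthonormal eigenbasis of $\Lambda^2C$ with eigenvalues $c_ic_j$.

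\emph{The point-evaluation gap.} Algebraic point evaluation on the active chaos space sends a finite combination $\sum a_{ij}\Xi_{ij}$ to $\sum a_{ij}\Xi_{ij}(\omega)$, and this is isometric from $\ell^2$ onto the $L^2$ closure. A bounded extension at $\omega$ would be a bounded functional on $\ell^2$ with coefficients $\Xi_{ij}(\omega)$. That would force $\sum_{i<j}\Xi_{ij}(\omega)^2<\infty$. If $C$ has infinite rank, there are infinitely many active pairs, so it suffices to show this sum diverges almost surely. I expect this to be the main obstacle, because the $\Xi_{ij}$ are only uncorrelated, not independent. My approach is to choose infinitely many disjoint pairs $(2k-1,2k)$ of active indices. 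The corresponding $\Xi$'s are then independent, identically distributed, and nondegenerate (standardized L\'evy areas), so the second Borel--Cantelli lemma gives $\Xi^2\ge1$ infinitely often, almost surely. Hence the sum diverges. This shows the identity is an $L^2$ radonification and not a samplewise bounded contraction.
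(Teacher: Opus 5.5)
Your proposal is correct and follows essentially the same route as the paper. Orthonormality comes from the It\^o isometry plus independence, and expansion in the eigenbasis of $C$ gives $\alpha_{A_t^C}=\frac t2\sum_{i<j}\sqrt{c_ic_j}\,\Xi_{ij}(t)\,e_i\wedge e_j$. A family of disjoint active pairs then yields i.i.d.\ $\Xi$'s with unit second moment, whose sample values cannot lie in $\ell^2$, so Riesz representation rules out a bounded point evaluation. The only difference is cosmetic: you get $\sum_k\Xi_k(\omega)^2=\infty$ from the second Borel--Cantelli lemma (using $\mathbb P\{\Xi^2\ge1\}>0$), where the paper invokes the strong law of large numbers.
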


\begin{proof}
It\^o isometry and independence give
$\mathbb E[\Xi_{ij}(t)\Xi_{k\ell}(t)]
=\mathbf1_{\{i=k,j=\ell\}}$.  Expanding the area in the eigenbasis of $C$
gives
\[
 \alpha_{A_t^C}
 =\frac t2\sum_{i<j}\sqrt{c_ic_j}\,\Xi_{ij}(t)e_i\wedge e_j,
\]
which is precisely \eqref{eq:compact-second-chaos-factorization}; the
coefficient square sum equals
$t^2((\operatorname{Tr}C)^2-\operatorname{Tr}C^2)/8$.
For unboundedness, choose disjoint active pairs.  The corresponding
$\Xi$'s are independent, identically distributed, and have unit second
moment.  The strong law implies that their sample values are not in
$\ell^2$.  Riesz representation would force $\ell^2$ coefficients for any
bounded point-evaluation functional, a contradiction.
\end{proof}

\begin{proposition}[Annealed divisor spectrum and quenched zero density]
\label{prop:brownian-zero-density}
Fix $t>0$ and assume
\begin{equation}
 c_j\sim b j^{-\delta},\qquad b>0,\quad\delta>1.
 \label{eq:brownian-polynomial-covariance}
\end{equation}
Then
\begin{align}
 N_{K_{C,t}}(u)
 &\sim\frac{(tb/2)^{2/\delta}}{\delta}
 u^{-2/\delta}\log(1/u),
 \label{eq:annealed-counting}\\
 s_N(K_{C,t})
 &\sim\frac{tb}{2^{1+\delta/2}}
 N^{-\delta/2}(\log N)^{\delta/2}.
 \label{eq:annealed-singular}
\end{align}
On an event of probability one,
\begin{equation}
 A_t^C\in\mathcal S_{1/\delta,\infty}(H)
 \quad\text{and}\quad
 \frac{N_{A_t^C}(au)}{N_{K_{C,t}}(u)}\longrightarrow0
 \quad(u\downarrow0)
 \label{eq:quenched-zero-density}
\end{equation}
for every fixed $a>0$.  Thus the raw area has zero asymptotic spectral
density relative to its annealed coefficient root after every fixed threshold
rescaling.
\end{proposition}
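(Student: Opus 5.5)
The plan treats the annealed and quenched halves separately. The quenched half follows almost directly from \Cref{thm:compact-revealed-doleans}. The annealed half is a resonant tensor-product count handled by \Cref{cc:thm:tensor-critical-arithmetic}.

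\emph{Annealed counting.} The nonzero singular values of $K_{C,t}$ are exactly $\frac t2\sqrt{c_ic_j}$ for $i<j$, since the vectors $e_i\wedge e_j$ are orthonormal in $\Lambda^2H$. Let $S:=\frac t2\,C^{1/2}\otimes C^{1/2}$. The singular multiset of $S$ consists of
\begin{itemize}
\item every off-diagonal value $\frac t2\sqrt{c_ic_j}$, $i\ne j$, counted twice, and
\item the diagonal values $\frac t2 c_i$.
\end{itemize}
Hence
\[
 N_{K_{C,t}}(u)=\tfrac12\bigl(N_S(u)-\#\{i:\tfrac t2c_i>u\}\bigr).
\]
By \eqref{eq:brownian-polynomial-covariance}, $s_j(C^{1/2})\sim b^{1/2}j^{-\delta/2}$. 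In log-spectral form this reads $\mathcal N_{C^{1/2}}(R)\sim b^{1/\delta}e^{(2/\delta)R}$, that is, $p=2/\delta$, $r=1$ and $\ell\equiv1$. The resonant case \eqref{cc:eq:resonant-product} of \Cref{cc:thm:tensor-critical-arithmetic} with $r_A=r_B=1$ and $\mathrm B(1,1)=1$ then gives
\[
 \mathcal N_{C^{1/2}\otimes C^{1/2}}(R)\sim \tfrac2\delta\, b^{2/\delta}e^{(2/\delta)R}R.
\]
Scaling by $t/2$ shifts $R$ by $\log(t/2)$. Substituting $R=\log(1/u)$ and multiplying by $e^{(2/\delta)\log(t/2)}$ yields
\[
 N_S(u)\sim\tfrac2\delta(tb/2)^{2/\delta}u^{-2/\delta}\log(1/u).
\]
The diagonal count is $O(u^{-1/\delta})$, which is negligible. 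Halving gives \eqref{eq:annealed-counting}. Finally, \eqref{cc:eq:regular-quantile} with $p=2/\delta$, $r=2$ and $C=(tb/2)^{2/\delta}/\delta$ produces \eqref{eq:annealed-singular}. The only computation left is the constant bookkeeping
\[
 (Cp^{-1})^{\delta/2}=\tfrac{tb}{2}\,2^{-\delta/2},
\]
which I will check rather than grind here.

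\emph{Quenched membership.} The predictable bracket of $B^C$ is the deterministic path $sC$. Therefore the trivial $\sigma$-field reveals $R$, and \Cref{thm:compact-revealed-doleans} applies with $\mathcal G_0$ trivial and $Q=tC$. The hypothesis $c_j\sim bj^{-\delta}$ means $Q\in\mathcal S_{p,\infty}$ with $p=1/\delta<1$. The conditional moment bound of that theorem (no localization is needed, because $V_T$ is bounded) gives $\mathbb E\sup_{s\le t}\|A^C_s\|^r_{\mathcal S_{1/\delta,\infty}}<\infty$. Hence $A_t^C\in\mathcal S_{1/\delta,\infty}$ almost surely, with a random constant $W(\omega)<\infty$.

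\emph{Zero density.} On the same event,
\[
 N_{A_t^C}(au)\le \#\{n:W n^{-\delta}>au\}\le W^{1/\delta}(au)^{-1/\delta}.
\]
By \eqref{eq:annealed-counting}, $N_{K_{C,t}}(u)\gtrsim u^{-2/\delta}\log(1/u)$. The ratio is therefore $O\bigl(u^{1/\delta}/\log(1/u)\bigr)\to0$ for every fixed $a>0$, all on one probability-one event. This proves \eqref{eq:quenched-zero-density}.

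The main obstacle is the first step. One must verify that the perturbed asymptotic $c_j\sim bj^{-\delta}$, as opposed to an exact power law, feeds correctly into the resonant convolution. One must also check that removing the diagonal and halving does not disturb the leading $\log(1/u)$ term. \Cref{cc:thm:tensor-critical-arithmetic} absorbs the regular-variation perturbation, so what remains is careful constant tracking.
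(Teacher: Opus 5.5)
Your proposal is correct. The quenched half and the zero-density comparison are the paper's argument:
\begin{itemize}
\item \Cref{thm:compact-revealed-doleans} is applied with trivial $\mathcal G_0$ and $Q=tC\in\mathcal S_{1/\delta,\infty}$.
\item The resulting bound $N_{A_t^C}(au)\le (W/(au))^{1/\delta}$ is compared with the annealed order $u^{-2/\delta}\log(1/u)$.
\end{itemize}

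The genuine difference is in the annealed count.
\begin{itemize}
\item \emph{The paper} takes the singular values $\frac t2\sqrt{c_ic_j}$, $i<j$. It applies the elementary distinct-pair divisor law $\#\{i<j:ij\le X\}\sim\frac12X\log X$, followed by monotone inversion.
\item \emph{You} realize the spectrum of $K_{C,t}$ as the strictly off-diagonal, halved part of the tensor square $S=\frac t2C^{1/2}\otimes C^{1/2}$. You use the exact identity $N_{K_{C,t}}=\frac12\bigl(N_S-\#\{i:\frac t2c_i>u\}\bigr)$. You then invoke the resonant law \eqref{cc:eq:resonant-product} with $p=2/\delta$, $r_A=r_B=1$ and $\mathrm B(1,1)=1$.
\end{itemize}

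What each route buys:
\begin{itemize}
\item The paper's route is shorter and needs only a lattice-point count. As written, however, the divisor law is a statement about exact power laws. Passing to $c_j\sim bj^{-\delta}$ tacitly requires a sandwich argument: pairs with a bounded index contribute only $O(u^{-2/\delta})$, which is $o(u^{-2/\delta}\log(1/u))$.
\item Your route absorbs that perturbation automatically, since the resonant theorem needs only counting equivalents. It also makes visible that the single logarithm is the Beta-resonance $r_A+r_B-1=1$ of two grade-one factors, in line with the paper's tensor-arithmetic theme.
\item The price of your route is reliance on the heavier Potter/vague-convergence machinery, plus the diagonal bookkeeping. You handle the latter correctly: the diagonal contributes $O(u^{-1/\delta})$, which is negligible.
\end{itemize}

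The constant you deferred checks out. Here $Cp^{-1}=\frac12(tb/2)^{2/\delta}$, and raising it to the power $\delta/2$ gives $tb/2^{1+\delta/2}$, as claimed.
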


\begin{proof}
The singular values of $K_{C,t}$ are
$(t/2)\sqrt{c_ic_j}$, $i<j$.  The distinct-pair divisor law
\[
 \#\{i<j:ij\le X\}\sim\frac12X\log X
\]
and monotone inversion give
\eqref{eq:annealed-counting}--\eqref{eq:annealed-singular}.  Since
$tC\in\mathcal S_{1/\delta,\infty}$ and $1/\delta<1$, the revealed-bracket
ideal transfer in \Cref{thm:compact-revealed-doleans} gives
$A_t^C\in\mathcal S_{1/\delta,\infty}$ almost surely.  Hence
$N_{A_t^C}(au)=O_{\omega,a}(u^{-1/\delta})$, whereas
\eqref{eq:annealed-counting} is of order
$u^{-2/\delta}\log(1/u)$, proving \eqref{eq:quenched-zero-density}.
\end{proof}

\begin{lemma}[Random harmonic rearrangement]
\label{lem:random-harmonic-rearrangement}
Let $(Y_k)_{k\ge1}$ be independent copies of a nonnegative random variable
$Y$ with $0<\mathbb EY<\infty$ and $\mathbb EY^2<\infty$.  Fix $c>0$ and
$0\le\theta<1$, and let $D_\omega$ be the positive diagonal operator whose
unordered diagonal entries are
\[
 d_k(\omega)=\frac{cY_k(\omega)}{k-\theta}.
\]
Then $D_\omega$ is compact almost surely and
\begin{equation}
 uN_{D_\omega}(u)\longrightarrow c\mathbb EY,
 \qquad
 s_N(D_\omega)\sim\frac{c\mathbb EY}{N}
 \quad\text{almost surely}.
 \label{eq:random-harmonic-rearrangement}
\end{equation}
If $Y$ has finite moments of every positive order, use the convention
$0^z=0$ for $\operatorname{Re}z>0$.  Then almost surely
$\zeta_{D_\omega}$ admits a meromorphic continuation to
$\operatorname{Re}z>1/2$ with a unique pole there at $z=1$, and
\begin{equation}
 \operatorname*{Res}_{z=1}\zeta_{D_\omega}(z)=c\mathbb EY.
 \label{eq:random-harmonic-residue}
\end{equation}
\end{lemma}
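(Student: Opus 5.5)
The plan has three steps: reduce the rearrangement to a counting problem, solve that by a strong law of large numbers, and then use a centred Dirichlet series for the residue.

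\emph{Counting.} Since $\theta<1$, every denominator $k-\theta$ is positive and comparable to $k$. Compactness follows once we know $d_k\to0$ almost surely. Because $\mathbb EY^2<\infty$, we have $\sum_k\mathbb P\{Y_k>\varepsilon k\}<\infty$, so Borel--Cantelli gives $Y_k/k\to0$ almost surely. For the counting function I write
\[
 uN_{D_\omega}(u)=u\sum_k\mathbf 1\{cY_k>u(k-\theta)\}.
\]
Each $k$ with $cY_k/u+\theta>k\ge1$ contributes, so heuristically $uN\approx u\sum_k\mathbf1\{k<cY_k/u\}$. I will compare this with the expectation
\[
 u\,\mathbb E N(u)=u\sum_k\mathbb P\{cY>u(k-\theta)\}\longrightarrow c\,\mathbb EY,
\]
which is Riemann-sum convergence to $\int_0^\infty\mathbb P\{cY>x\}\,dx$.

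\emph{Almost-sure concentration.} This is the main obstacle. The summands are independent indicators, so $\operatorname{Var}(uN(u))\le u^2\,\mathbb EN(u)=O(u)$. Chebyshev along a geometric sequence $u_m=\rho^m$ gives summable failure probabilities, and Borel--Cantelli then yields convergence along $u_m$. Monotonicity of $N$ in $u$ sandwiches intermediate $u$ between consecutive $u_m$, at the cost of a factor $\rho$; letting $\rho\downarrow1$ gives $uN_{D_\omega}(u)\to c\,\mathbb EY$ almost surely. The quantity sandwiched here is only the indicator count itself, so no moment beyond the first is needed for this step.

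Monotone inversion, exactly as in \Cref{cc:thm:regular-dictionary} with $p=r=1$ and $\ell\equiv c\mathbb EY$, converts $N_{D_\omega}(u)\sim c\mathbb EY/u$ into $s_N(D_\omega)\sim c\mathbb EY/N$.

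\emph{Meromorphic continuation.} Write
\[
 \zeta_{D_\omega}(z)=c^z\sum_k Y_k^z(k-\theta)^{-z}
 =c^z\,\mathbb EY^z\,\zeta_H(z,1-\theta)+c^z\sum_k\bigl(Y_k^z-\mathbb EY^z\bigr)(k-\theta)^{-z},
\]
where $\zeta_H$ is the Hurwitz zeta function. It is meromorphic with a single simple pole at $z=1$ of residue $1$. The map $z\mapsto\mathbb EY^z$ is entire on $\operatorname{Re}z>0$ because all moments of $Y$ are finite (and $0^z=0$ there).

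The centred series consists of independent mean-zero terms. On a compact set of $\operatorname{Re}z>1/2$, their variances are bounded by $C\,k^{-2\operatorname{Re}z}$, which is summable. Kolmogorov's two-series theorem therefore gives almost-sure convergence at each fixed $z$. To obtain analyticity I will upgrade this to locally uniform convergence: apply Kolmogorov's maximal inequality, or a Rademacher--Menchov-type $L^2$ bound on analytic partial sums controlled through Cauchy integrals over a countable dense set of contours. This produces an analytic function on $\operatorname{Re}z>1/2$ almost surely.

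Hence $\zeta_{D_\omega}$ is meromorphic there with its only pole at $z=1$, and
\[
 \operatorname*{Res}_{z=1}\zeta_{D_\omega}=c\,\mathbb EY,
\]
coming solely from the Hurwitz term.
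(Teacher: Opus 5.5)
Your proposal is correct and follows the paper's proof essentially step for step. The shared steps are the Riemann-sum limit of $u\,\mathbb E N(u)$, the $O(u)$ variance bound with Chebyshev and Borel--Cantelli along $u_m=\rho^m$ followed by monotone sandwiching, monotone inversion, and the Hurwitz-plus-centred-series decomposition. The paper makes your Cauchy-integral upgrade concrete by running Hilbert-space Kolmogorov convergence in a Bergman space $A^2(U)$ and then applying the interior Bergman estimate on compacts; two small slips are that you want $\rho\uparrow1$ rather than $\rho\downarrow1$, and that $z\mapsto\mathbb E Y^z$ is holomorphic on $\operatorname{Re}z>0$ rather than entire.
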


\begin{proof}
First, $d_k\to0$ almost surely because
$\sum_k\mathbb P\{Y_k>\varepsilon k\}<\infty$ for every $\varepsilon>0$;
this follows by comparing the sum with
$\varepsilon^{-1}\int_0^\infty\mathbb P\{Y>x\}\,\mathrm dx$.
For $u>0$ put $N(u)=\sum_{k\ge1}
\mathbf1_{\{cY_k>(k-\theta)u\}}$.  The tail function of $Y$ is decreasing and
integrable, so the Riemann-sum theorem gives
\[
 \mathbb E[uN(u)]
 =u\sum_{k\ge1}\mathbb P\{cY>(k-\theta)u\}
 \longrightarrow c\int_0^\infty\mathbb P\{Y>x\}\,\mathrm dx
 =c\mathbb EY.
\]
Moreover
$\operatorname{Var}(uN(u))\le u^2\sum_k\mathbb P\{cY>(k-\theta)u\}
=O(u)$.  Along every geometric sequence $u_n=\rho^n$, $0<\rho<1$,
Chebyshev and Borel--Cantelli therefore give
$u_nN(u_n)\to c\mathbb EY$ almost surely.  Monotonicity of $N$ brackets an
arbitrary $u\in[u_{n+1},u_n]$ between the two neighboring geometric values.
Taking a countable sequence $\rho\uparrow1$ proves the first limit in
\eqref{eq:random-harmonic-rearrangement}; monotone inversion proves the
second.

For the complex assertion, initially on $\operatorname{Re}z>1$,
\begin{equation}
 \zeta_{D_\omega}(z)
 =c^z\mathbb E(Y^z)\zeta_{\rm H}(z,1-\theta)+H_\omega(z),
 \qquad
 H_\omega(z)=c^z\sum_{k\ge1}
 \frac{Y_k^z-\mathbb E(Y^z)}{(k-\theta)^z},
 \label{eq:random-harmonic-dirichlet}
\end{equation}
where $\zeta_{\rm H}$ is the Hurwitz zeta function.  Let $K$ be compact in
$\{\operatorname{Re}z>1/2\}$ and choose a bounded planar domain $U$ with
$K\Subset U\Subset\{\operatorname{Re}z>1/2\}$.  The centered summands in
$H_\omega$ are independent mean-zero random variables in the Bergman Hilbert
space $A^2(U)$, and
\[
 \sum_{k\ge1}\mathbb E\left\|
 c^{(\cdot)}\frac{Y_k^{(\cdot)}-\mathbb E(Y^{(\cdot)})}
 {(k-\theta)^{(\cdot)}}\right\|_{A^2(U)}^2<\infty.
\]
Indeed the real parts on $U$ stay in a compact subset of $(1/2,\infty)$ and
all corresponding moments of $Y$ are finite.  Hilbert-space Kolmogorov
convergence gives almost-sure convergence in $A^2(U)$; the interior Bergman
estimate upgrades this to uniform convergence on $K$.  A countable exhaustion
therefore makes $H_\omega$ holomorphic on the whole half-plane.  Since
$z\mapsto\mathbb E(Y^z)$ is holomorphic there and the Hurwitz zeta function
has residue one at $z=1$, \eqref{eq:random-harmonic-residue} follows.
\end{proof}

\begin{theorem}[Auxiliary fixed-sample critical boundary]
\label{thm:compact-brownian-auxiliary-boundary}
Assume $T>0$ and $C\ne0$.  Let $G_C\sim N(0,C)$ and
$a_{C,T}=T\mathbb E\|G_C\|/\pi>0$.  On one event of probability one,
\begin{equation}
 s_N(\mathfrak D_T(B^C))\sim\frac{a_{C,T}}{N}.
 \label{eq:compact-aux-boundary}
\end{equation}
Moreover, $\zeta_{\mathfrak D_T(B^C)}$ extends meromorphically to
$\operatorname{Re}z>1/2$, has no pole there except $z=1$, and
\begin{equation}
 \operatorname*{Res}_{z=1}\zeta_{\mathfrak D_T(B^C)}(z)=a_{C,T}.
 \label{eq:compact-aux-residue}
\end{equation}
For every $r\ge1$ the tensor power satisfies
\begin{equation}
 s_N\!\left(\mathfrak D_T(B^C)^{\otimes r}\right)
 \sim\frac{a_{C,T}^r}{\Gamma(r)}
       \frac{(\log N)^{r-1}}{N},
 \qquad
 \zeta_{\mathfrak D_T(B^C)^{\otimes r}}(z)
 =\zeta_{\mathfrak D_T(B^C)}(z)^r.
 \label{eq:compact-aux-tensor-law}
\end{equation}
\end{theorem}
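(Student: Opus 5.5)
The plan is to diagonalize $\mathfrak D_T(B^C)$ explicitly through the Karhunen--Lo\`eve expansion of the Brownian clock, and then reduce every assertion to \Cref{lem:random-harmonic-rearrangement} and \Cref{cc:thm:tensor-critical-arithmetic}.

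\emph{Step 1: explicit spectral data.} The operator $\mathsf K_T$ has simple eigenvalues
\[
 \lambda_k=\frac{T^2}{(k-\tfrac12)^2\pi^2},
\]
with orthonormal eigenfunctions $\varphi_k(s)=\sqrt{2/T}\,\sin\bigl((k-\tfrac12)\pi s/T\bigr)$. The projection $P_{k,T}$ acts on $L^2(0,T)\otimes H$ as $|\varphi_k\rangle\langle\varphi_k|\otimes I_H$. The Karhunen--Lo\`eve expansion in $L^2(0,T;H)$ is
\[
 B^C_s=\sum_k\sqrt{\lambda_k}\,\varphi_k(s)\,G_k ,
\]
where the $G_k$ are i.i.d.\ $N(0,C)$; this holds on one event of probability one, since the series converges in $L^2(\Omega;L^2(0,T;H))$ and, by the Gaussian structure, also almost surely. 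Hence
\[
 v_k:=P_{k,T}B^C=\sqrt{\lambda_k}\,\varphi_k\otimes G_k ,
\]
and these vectors are mutually orthogonal because the $\varphi_k$ are.

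\emph{Step 2: singular values.} Because the $v_k$ are orthogonal, $\mathfrak G_T=\sum_k|v_k\rangle\langle v_k|$ is already diagonal, with eigenvalues $\|v_k\|^2$. Its square root therefore has unordered singular values
\[
 \|v_k\|=\frac{T}{\pi}\,\frac{\|G_k\|}{k-\tfrac12}.
\]
Vanishing $v_k$ simply contribute zero, which matches the convention $0^z=0$ in the lemma.

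\emph{Step 3: apply the random harmonic rearrangement.} Apply \Cref{lem:random-harmonic-rearrangement} with $c=T/\pi$, $\theta=\tfrac12$ and $Y=\|G_C\|$. The variable $Y$ has finite moments of every order (Fernique, or directly since $\mathbb E\|G_C\|^2=\operatorname{Tr}C$), and $\mathbb EY>0$ because $C\neq0$. The lemma gives \eqref{eq:compact-aux-boundary} with $a_{C,T}=c\,\mathbb EY$. It also gives the meromorphic continuation to $\operatorname{Re}z>1/2$ with the single pole at $z=1$ and the residue \eqref{eq:compact-aux-residue}. All statements hold on the intersection of the countably many almost-sure events involved.

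\emph{Step 4: tensor powers.} Apply \eqref{cc:eq:kfold-simple-atoms} with $p=1$ and $a_j=a_{C,T}$ to obtain the singular-value law in \eqref{eq:compact-aux-tensor-law}. Apply the multiplicative rig law \eqref{cc:eq:reader-rig-laws}, $\zeta_{A\otimes B}=\zeta_A\zeta_B$, on $\operatorname{Re}z>1$, and then extend it by analytic continuation to obtain the zeta identity.

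\emph{Main obstacle.} The delicate point is Step 1: correctly identifying $P_{k,T}$ as acting only on the temporal leg, and checking that the Karhunen--Lo\`eve coefficients $G_k=\lambda_k^{-1/2}\langle\varphi_k,B^C\rangle$ are genuinely i.i.d.\ $N(0,C)$ as $H$-valued variables. Independence follows from joint Gaussianity together with the orthogonality of the $\varphi_k$ under the covariance $\mathsf K_T\otimes C$. Once that is settled, the rest is a direct citation of earlier results.
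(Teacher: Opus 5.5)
Your proposal is correct and follows the paper's own proof: the block norms $\|P_{k,T}B^C\|=\frac{T}{\pi(k-\frac12)}\|G_k\|$, with $G_k$ i.i.d.\ $N(0,C)$, are the unordered singular values of $\mathfrak D_T(B^C)$. You then apply the random harmonic rearrangement lemma with $c=T/\pi$, $\theta=\frac12$, $Y=\|G_C\|$, and obtain the tensor law from the $k$-fold critical-atom asymptotics together with multiplicativity of the zeta function. Your explicit check that the $G_k$ are jointly independent, and not merely equal in distribution, is a welcome addition; note that finiteness of all moments of $\|G_C\|$ comes from Fernique (or from Minkowski applied to $\|G_C\|^2=\sum_i c_i\xi_i^2$), not from $\mathbb E\|G_C\|^2=\operatorname{Tr}C$ alone.
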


\begin{proof}
The clock covariance $\mathsf K_T$ has simple eigenvalues
$T^2/[\pi^2(k-\frac12)^2]$.  Hence the mutually orthogonal
Karhunen--Lo\`eve blocks of $B^C$ satisfy
\[
 \|P_{k,T}B^C\|
 \stackrel{d}=\frac{T}{\pi(k-\frac12)}\|G_k\|,
\]
where $G_k$ are independent copies of $G_C$.  Because the summands defining
$\mathfrak G_T(B^C)$ have orthogonal ranges, these block norms are precisely
the unordered singular values of $\mathfrak D_T(B^C)$.  Apply
\Cref{lem:random-harmonic-rearrangement} with
$c=T/\pi$, $\theta=1/2$, and $Y_k=\|G_k\|$.  Gaussian moments of every
positive order are finite, so the singular-value equivalent
\eqref{eq:compact-aux-boundary} and the continuation and residue
\eqref{eq:compact-aux-residue} follow on one event.  Tensor
multiplicativity of the zeta function and the exact clause of
\Cref{cc:thm-critical-packet-principle} give
\eqref{eq:compact-aux-tensor-law}.
\end{proof}

\begin{proposition}[Brownian registration trilemma]
\label{prop:compact-brownian-trilemma}
Fix $t>0$ and assume the polynomial covariance hypothesis
\eqref{eq:brownian-polynomial-covariance}.  No realization can simultaneously
\begin{enumerate}[label=\textup{(\roman*)},leftmargin=2.8em]
\item equal the raw Chen-local area $A_t^C(\omega)$;
\item arise from a bounded functional agreeing with point evaluation on all
      active second-chaos coordinates;
\item be positively spectrally faithful to $K_{C,t}$, in the sense that some
      $a,c>0$ satisfy
      \[
       N_{A_t^C}(au)\ge cN_{K_{C,t}}(u)
      \]
      for all sufficiently small $u$ on an event of positive probability.
\end{enumerate}
Thus the valid assignments are typed:
\[
 \mathcal B_{\rm ann}=K_{C,t},\qquad
 \mathcal B_{\rm que}=A_t^C(\omega),\qquad
 \mathcal B_{\rm aux}=\mathfrak D_T(B^C(\omega)).
\]
Relations between them are covariance, radonification, or nonlinear response
transforms, not operator identifications.
\end{proposition}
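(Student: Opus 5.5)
The argument is by contradiction: suppose a single realization $\mathcal B(\omega)$ satisfies all three properties, and derive an incompatibility from two results already proved. Property (i) together with Proposition~\ref{prop:brownian-zero-density} will contradict (iii). The point-evaluation gap of Proposition~\ref{prop:compact-brownian-factorization} will contradict (ii) whenever it is combined with (iii). This shows that no two of the three conditions can be read as a single operator identification, and in particular that all three cannot hold together.

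\textbf{Step 1: (i) and (iii) are incompatible.}
Assume (i), so the realization is the raw area $A_t^C(\omega)$. Under the polynomial covariance hypothesis \eqref{eq:brownian-polynomial-covariance}, Proposition~\ref{prop:brownian-zero-density} supplies a single probability-one event on which
\[
 N_{A_t^C}(au)/N_{K_{C,t}}(u)\to0
\]
for every fixed $a>0$.

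Property (iii) instead asks for some $a,c>0$ such that
\[
 N_{A_t^C}(au)\ge cN_{K_{C,t}}(u)
\]
for small $u$ on an event of positive probability. Any event of positive probability meets the full-measure event from Proposition~\ref{prop:brownian-zero-density}. At a point of the intersection the ratio is bounded below by $c>0$ and also tends to zero, which is a contradiction.

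A small technical check: the quantifier "for every fixed $a$" in \eqref{eq:quenched-zero-density} has to hold on one event, independent of $a$. This follows from monotonicity of $N$ in $a$ by taking rational values of $a$, so the positive-probability event in (iii) cannot escape it.

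\textbf{Step 2: (ii) is incompatible with the other conditions.}
Suppose the realization arises from a bounded functional that agrees with point evaluation on the active second-chaos coordinates $\Xi_{ij}(t)$. Proposition~\ref{prop:compact-brownian-factorization} states that when $C$ has infinite rank, which holds under \eqref{eq:brownian-polynomial-covariance}, algebraic point evaluation almost surely has no bounded extension to the $L^2$ closure. So (ii) fails almost surely by itself, and certainly fails on any positive-probability event where (i) or (iii) is asserted.

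This gives the typed separation: the raw area (i) exists only samplewise, the annealed root $K_{C,t}$ relates to it only through the $L^2$ radonification identity \eqref{eq:compact-second-chaos-factorization}, and spectral faithfulness (iii) fails for the raw area by Step~1.

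\textbf{Step 3: the typed assignments are consistent.}
The closing statement lists three typed objects, and each is legitimate on its own. The annealed object $\mathcal B_{\rm ann}$ is deterministic, with spectrum given by \eqref{eq:annealed-singular}. The quenched object $\mathcal B_{\rm que}$ is the samplewise area, in the little weak ideal by Theorem~\ref{cc:thm-critical-projection-stability}. The auxiliary object $\mathcal B_{\rm aux}$ satisfies Theorem~\ref{thm:compact-brownian-auxiliary-boundary}. The relations among them are the covariance identity in \eqref{eq:compact-second-chaos-factorization}, the radonification map, and the nonlinear Gramian construction.

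\textbf{Main obstacle.}
The hardest part is interpreting (ii) precisely enough that the unboundedness result from Proposition~\ref{prop:compact-brownian-factorization} applies directly. The first attempt will be to read (ii) as a bounded linear map on the active chaos space. By Riesz representation such a map would have to produce $\ell^2$ coefficients, whereas the independent unit-variance variables $\Xi_{ij}$ on disjoint pairs almost surely do not form an $\ell^2$ sequence, by the strong law of large numbers. That is exactly the contradiction already established in Proposition~\ref{prop:compact-brownian-factorization}.
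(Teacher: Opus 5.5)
Your proposal is correct and is essentially the paper's own argument: the quenched zero-density law \eqref{eq:quenched-zero-density} rules out (iii) for the raw area, and the almost-sure point-evaluation gap of \Cref{prop:compact-brownian-factorization} rules out (ii) independently. Your extra care about an $a$-independent event is fine but not needed, since the paper's event (where $A_t^C\in\mathcal S_{1/\delta,\infty}$) is already independent of $a$, and your Step~3 matches the paper's closing remark that $\mathfrak D_T(B^C)$ escapes both obstructions because it drops Chen locality.
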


\begin{proof}
The zero-density statement \eqref{eq:quenched-zero-density} rules out
item~\textup{(iii)} for the raw area after every fixed threshold rescaling.
Independently, \Cref{prop:compact-brownian-factorization} shows that the
active Gaussian coordinate row does not define a bounded point-evaluation
functional on its coefficient Hilbert space almost surely.  The auxiliary
root in \Cref{thm:compact-brownian-auxiliary-boundary} drops Chen locality and
therefore does not contradict either obstruction.
\end{proof}

\begin{corollary}[Natural Brownian window certificate]
\label{cor:compact-brownian-window}
There is a numerical $c_*>0$ such that, for every $L\ge1$ and
$\varepsilon>0$, some sufficiently large even dimension $d$ satisfies
\begin{equation}
 \mathbb P\left\{
  s_k(A^{M^{(d)}}_{0,1})\ge\frac{c_*}{k}
  \text{ for }1\le k\le L\right\}\ge1-\varepsilon,
 \qquad M^{(d)}=d^{-1/2}B^{(d)}.
 \label{eq:compact-brownian-window-certificate}
\end{equation}
Through \Cref{cc:thm-saturation-amplification}, this supplies the stochastic
finite-window version of every exact primitive depth.  It remains a
family-valued minimax statement, not a full-tail theorem for one raw sample.
\end{corollary}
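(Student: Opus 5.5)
The plan is to combine the finite-block Brownian edge asymptotics of \Cref{cc:lem-critical-brownian-block} with Brownian scaling. The rescaled process $M^{(d)}=d^{-1/2}B^{(d)}$ is a continuous martingale with bracket $d^{-1}tI_d$, so its terminal trace energy is exactly $1$. Since the area is quadratic in the path, $A^{M^{(d)}}_{0,1}=d^{-1}\mathcal L^{(d)}$, with the same half-commutator normalization as in \eqref{cc:eq-critical-martingale-area}. Hence
\[
 s_k(A^{M^{(d)}}_{0,1})=d^{-1}s_k(\mathcal L^{(d)})
\]
for every $k$, and the dimension factor $d$ appearing in the edge bound is cancelled exactly.

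Next, fix $L$ and $\varepsilon$, and apply \eqref{cc:eq-critical-Brownian-edge} with $m=L$. For all sufficiently large even $d$ this gives
\[
 s_k(\mathcal L^{(d)})\ge c_0\,d/k,\qquad 1\le k\le L,
\]
with probability at least $1-\varepsilon$. Dividing by $d$ yields \eqref{eq:compact-brownian-window-certificate} with $c_*=c_0$. The constant $c_0$ is numerical and independent of $L$ and $\varepsilon$ by the statement of that lemma; for instance, $c_0=1/(4\pi)$ comes from the fixed-edge limits $\sigma_{(j)}/n\to 2/((2j-1)\pi)$ together with the twofold multiplicity. Only the dimension threshold depends on $(L,\varepsilon)$, which is what the corollary requires.

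For the second sentence, I would verify the clauses of \Cref{cc:def-replicable-grade-one-source}. Clause (ii) is the bound just proved. Clause (iii) holds because independent Brownian blocks on orthogonal subspaces remain independent Brownian blocks, and rescaling does not introduce dimension-dependent constants. Clause (i), uniform moments of the one-cell critical Hölder control, follows from \Cref{cc:cor-critical-bracket-density-control} with $\Lambda=1$, because $\mathrm dV_t=\mathrm dt$ for every $d$. The constants in \eqref{cc:eq-critical-holder-moments} are dimension free, so they are uniform in $d$. \Cref{cc:thm-saturation-amplification} then yields minimax sharpness of every nonzero exact layer on arbitrarily long windows.

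The caveat in the statement, that this is a family-valued minimax result and not a full-tail theorem for one raw sample, needs no further proof. For each fixed $d$ the operator has finite rank, and \Cref{cc:thm-critical-projection-stability} places any single infinite-dimensional sample in $\mathcal S^0_{1,\infty}$. The main point requiring care is the uniform-in-$d$ Hölder moment bound in clause (i). I expect this to be handled directly by the dimension-free corollary above; the only thing to check is that its constants depend on $\Lambda$ alone and not on $H$.
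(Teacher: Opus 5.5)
Your proposal is correct and is essentially the paper's proof. Both use the quadratic scaling $A^{M^{(d)}}_{0,1}=d^{-1}\mathcal L^{(d)}$, then the edge bound \eqref{cc:eq-critical-Brownian-edge} with $m=L$ and $c_*=c_0$, and then \Cref{cc:thm-saturation-amplification}. Your explicit check of the three clauses of \Cref{cc:def-replicable-grade-one-source}, in particular the uniform H\"older moments from \Cref{cc:cor-critical-bracket-density-control} with $\Lambda=1$, fills in a step the paper leaves implicit.
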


\begin{proof}
Area is quadratic under spatial scaling, so
$A^{M^{(d)}}_{0,1}=d^{-1}\mathcal L^{(d)}$.  Apply
\eqref{cc:eq-critical-Brownian-edge} from
\Cref{cc:lem-critical-brownian-block} with $m=L$ and take $c_*=c_0$.
The higher-depth conclusion is exactly
\Cref{cc:thm-saturation-amplification}.
\end{proof}

\subsection{Coefficient degree, Rees depth, and infinite towers}

The exact spectrum of a deterministic coefficient operator and the Rees
spectrum of a critical response answer different questions.  The following
compact statement retains the useful part of the coefficient theory.

\begin{proposition}[Pure-power tensor law and faithful transfer]
\label{prop:compact-coefficient-law}
If $T$ is positive compact and
$s_N(T)\sim cN^{-\alpha}$, then for every $k\ge1$,
\begin{equation}
 N_{T^{\otimes k}}(u)
 \sim\frac{c^{k/\alpha}}{(k-1)!\alpha^{k-1}}
 u^{-1/\alpha}(\log(1/u))^{k-1}.
 \label{eq:compact-tensor-counting}
\end{equation}
If a selected coefficient sector has a separately established positive
counting density inside $T^{\otimes k}$, and a registered operator $R_k(T)$ is
asymptotically faithful to that sector after a fixed threshold rescaling, then
the same counting, Mellin, and Fredholm laws transfer with the induced
constant.  Neither the sector density nor analytic faithfulness follows from
algebraic homogeneity alone.
\end{proposition}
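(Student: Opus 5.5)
The counting law \eqref{eq:compact-tensor-counting} comes from the resonant tensor arithmetic of \Cref{cc:thm:tensor-critical-arithmetic}, translated from singular values to counting functions by the dictionary of \Cref{cc:thm:regular-dictionary}. The transfer clause is then a monotone comparison of counting functions.

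The first step is to rewrite the hypothesis $s_N(T)\sim cN^{-\alpha}$ in the language of log-counting. Put $p=1/\alpha$. Monotone inversion gives
\[
 N_T(u)\sim c^{1/\alpha}u^{-1/\alpha}.
\]
Equivalently,
\[
 \mathcal N_T(R)\sim c^{1/\alpha}e^{pR},
\]
which is \eqref{cc:eq:regular-counting} with $r=1$, $C=c^{1/\alpha}$ and $\ell\equiv1$.

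Next I induct on $k$ using the resonant formula \eqref{cc:eq:resonant-product} with $p_A=p_B=p$, taking $A=T^{\otimes(k-1)}$ and $B=T$. The induction hypothesis is
\[
 \mathcal N_{T^{\otimes(k-1)}}(R)\sim C_{k-1}e^{pR}R^{k-2}.
\]
The resonant formula then gives
\[
 C_k=p\,\mathrm B(k-1,1)\,C_{k-1}\,c^{1/\alpha}=\frac{p}{k-1}\,C_{k-1}\,c^{1/\alpha}.
\]
Iterating from $C_1=c^{1/\alpha}$ yields
\[
 C_k=\frac{c^{k/\alpha}p^{k-1}}{(k-1)!}=\frac{c^{k/\alpha}}{(k-1)!\,\alpha^{k-1}}.
\]
Substituting $R=\log(1/u)$ recovers \eqref{eq:compact-tensor-counting}. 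The only care needed is the base case and matching the conventions for $N$ versus $\mathcal N$ at jump points; both are harmless because the asymptotics are continuous.

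For the transfer clause, I interpret the hypotheses as follows. The sector has counting function $N_{\rm sec}(u)\sim\kappa N_{T^{\otimes k}}(u)$ with $\kappa>0$. Faithfulness after a fixed threshold rescaling $a$ means
\[
 N_{R_k(T)}(au)\sim N_{\rm sec}(u).
\]
Composing these two relations, and using the regular variation of $u^{-1/\alpha}(\log(1/u))^{k-1}$ to absorb the fixed factor $a$, gives an explicit regular counting law for $R_k(T)$. Its constant is $\kappa C_k a^{1/\alpha}$.

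I then feed this counting law into \Cref{cc:thm:regular-dictionary} with $p=1/\alpha$ and $r=k$. That produces the singular-value equivalent \eqref{cc:eq:regular-quantile}, the Mellin law \eqref{cc:eq:regular-zeta}, and the Fredholm law \eqref{cc:eq:regular-fredholm}. The Fredholm law needs $p<2$, i.e. $\alpha>1/2$, and I will state that restriction explicitly.

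The final sentence of the statement, that sector density and faithfulness do not follow from algebraic homogeneity alone, I would justify by pointing to the zero-density phenomenon of \Cref{prop:brownian-zero-density}. There a natural homogeneous operator has vanishing relative counting density.

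The main obstacle is making precise what ``asymptotically faithful'' and ``positive counting density'' mean, so that the composition of the two equivalences is legitimate. I would fix these as the two displayed counting equivalences above and treat them as the definition. The rest is bookkeeping of constants.
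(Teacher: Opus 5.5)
Your proposal is correct and follows the paper's route: iterating the resonant formula \eqref{cc:eq:resonant-product} with $\mathrm B(k-1,1)=1/(k-1)$ is the step-by-step form of the $k$-fold log-convolution with Karamata--Delange inversion that the paper invokes, and your constant $c^{k/\alpha}\alpha^{-(k-1)}/(k-1)!$ matches \eqref{eq:compact-tensor-counting}. The transfer clause is, as in the paper, monotone inversion followed by \Cref{cc:thm:regular-dictionary}, and your explicit restriction $\alpha>1/2$ (that is, $p<2$) for a finite Fredholm law is a correct clarification that the paper leaves implicit.
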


\begin{proof}
The singular values of $T^{\otimes k}$ are the products
$s_{i_1}(T)\cdots s_{i_k}(T)$.  Taking logarithms reduces the counting problem
to the $k$-fold additive convolution of a measure with exponential tail;
Karamata--Delange inversion gives \eqref{eq:compact-tensor-counting}.  Once
a selected sector density is known, the transfer assertion is monotone
inversion followed by the counting--Mellin--Fredholm dictionary.
\end{proof}

\begin{remark}[Two logarithmic indices]
For coefficient spectra, every copy of $T$ is active and the logarithmic power
is governed by total operator degree $k$.  In a Rees response, transport
letters may be rank one while only $r$ critical area leaves are active; the
power is then $r-1$.  Equality of these indices is a registration theorem,
not notation.
\end{remark}

\begin{lemma}[Compact synthesis invariance]
\label{lem:compact-synthesis-invariance}
Let $W\ge0$ be compact with nonzero eigenvalues
$w_1\ge w_2\ge\cdots>0$ satisfying
$w_n=n^{-\alpha}L(n)$ for some $\alpha>0$ and slowly varying $L$.
Let $\mathscr S$ be bounded and bounded below, and suppose
$\mathscr S^*\mathscr S=I+K$ with $K$ compact.  Then the nonzero eigenvalues
of $\mathscr S W\mathscr S^*$ satisfy
\begin{equation}
 s_n(\mathscr S W\mathscr S^*)\sim w_n.
 \label{eq:compact-synthesis-invariance}
\end{equation}
The same conclusion holds whenever the counting function of $W$ is regularly
varying at zero with positive index.
\end{lemma}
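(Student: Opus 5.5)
The plan is to compare the nonzero eigenvalues of \(\mathscr S W\mathscr S^*\) with those of \(W^{1/2}\mathscr S^*\mathscr S W^{1/2}\). The nonzero spectra of \(XX^*\) and \(X^*X\) coincide with multiplicity, so I take \(X=\mathscr S W^{1/2}\). Then \(X^*X=W^{1/2}(I+K)W^{1/2}=W+W^{1/2}KW^{1/2}\). The problem reduces to showing that the perturbation \(E:=W^{1/2}KW^{1/2}\) does not change the first-order asymptotics of a regularly varying eigenvalue sequence of index \(-\alpha\). Boundedness below of \(\mathscr S\) gives \(I+K\ge\varepsilon I\), so \(X^*X\ge\varepsilon W\) is positive and no sign problems arise.

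\textbf{Step one: split \(K\).} Fix \(\eta>0\) and split \(K=F+G\) with \(F\) finite rank, \(\operatorname{rank}F=m\), and \(\|G\|\le\eta\). Then
\[
 E=W^{1/2}FW^{1/2}+W^{1/2}GW^{1/2}.
\]
The first summand has rank at most \(m\). The second is bounded in the operator order by \(\pm\eta W\), because \(-\eta I\le \operatorname{Re}G\le\eta I\) after symmetrizing. We may take \(K\) self-adjoint, since \(\mathscr S^*\mathscr S-I\) is self-adjoint.

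\textbf{Step two: eigenvalue bracketing.} By the minimax principle,
\[
 (1-\eta)W-F_1\le X^*X\le(1+\eta)W+F_1,
\]
with \(F_1\) of rank at most \(m\). The rank-perturbation Weyl inequalities then give
\[
 (1-\eta)w_{n+m}\le \lambda_n(X^*X)\le(1+\eta)w_{n-m}\qquad (n>m).
\]
Regular variation gives \(w_{n\pm m}/w_n\to1\) for fixed \(m\), since \(L(n\pm m)/L(n)\to1\) and \((1\pm m/n)^{-\alpha}\to1\). Hence
\[
 \limsup_n \bigl|\lambda_n/w_n-1\bigr|\le\eta,
\]
and letting \(\eta\downarrow0\) proves \eqref{eq:compact-synthesis-invariance}.

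\textbf{Counting-function version.} If only the counting function \(N_W(u)\) is assumed regularly varying at zero with positive index, the same bracketing reads
\[
 N_W\!\bigl(u/(1-\eta)\bigr)-m\le N_{X^*X}(u)\le N_W\!\bigl(u/(1+\eta)\bigr)+m.
\]
Divide by \(N_W(u)\), which tends to infinity, and use the uniform convergence theorem to get \(N_W(cu)/N_W(u)\to c^{-1/\alpha}\). Letting \(\eta\downarrow0\) gives \(N_{X^*X}(u)\sim N_W(u)\). Monotone inversion then returns the singular-value statement.

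The main obstacle is the order bound for \(W^{1/2}GW^{1/2}\) when \(G\) is not self-adjoint or not small in the order sense. This is resolved by using only the self-adjoint part of \(K\), which is all that enters \(X^*X\). A secondary care point is bookkeeping of the zero eigenvalue and kernel: the statement concerns nonzero eigenvalues, and boundedness below of \(\mathscr S\) makes \(\ker X=\ker W\), so multiplicities of nonzero eigenvalues match exactly.
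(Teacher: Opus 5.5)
Your proposal is correct and follows the paper's proof essentially step for step. Both pass to $W^{1/2}(I+K)W^{1/2}$ and approximate the self-adjoint $K$ in norm by a self-adjoint finite-rank $F$. Both then bracket the operator between $(1\mp\eta)W$ plus a rank-$m$ term and conclude from rank-$m$ Weyl interlacing together with $w_{n\pm m}/w_n\to1$; the counting version uses monotone inversion. For your display with the same $F_1$ in both brackets to be literally true, take $F_1=|W^{1/2}FW^{1/2}|$.
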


\begin{proof}
The nonzero eigenvalues of $\mathscr S W\mathscr S^*$ are those of
$A=W^{1/2}(I+K)W^{1/2}$.  Fix $0<\varepsilon<1$ and choose a self-adjoint
finite-rank $F$ with $\|K-F\|<\varepsilon$.  If $m=\operatorname{rank}F$,
then
\[
 (1-\varepsilon)W+W^{1/2}FW^{1/2}
 \le A\le
 (1+\varepsilon)W+W^{1/2}FW^{1/2}.
\]
Finite-rank min--max interlacing yields, for $n>m$,
\[
 (1-\varepsilon)w_{n+m}
 \le s_n(A)\le
 (1+\varepsilon)w_{n-m}.
\]
Regular variation gives $w_{n\pm m}/w_n\to1$.  Let first $n\to\infty$ and
then $\varepsilon\downarrow0$.  The counting-function formulation is
obtained by the same argument after monotone inversion.
\end{proof}

\begin{theorem}[Primitive--PBW boundary beyond finite depth]
\label{thm:compact-infinite-tower}
Let finite-dimensional ancestry shells $G_m$ have cumulative rank
$M(x)=\sum_{m\le x}\dim G_m$, let $w_m\downarrow0$, and let
$T=\mathscr S(\bigoplus_m w_mI_{G_m})\mathscr S^*$ with
$\mathscr S^*\mathscr S=I+$compact.  If
\[
 M(x)\sim Ax^\delta(\log x)^\gamma,
 \qquad
 w_m\sim cm^{-\alpha}(\log m)^\eta,
\]
then, with $p=\delta/\alpha$ and $\theta=\gamma+p\eta$,
\[
 \mathcal N_T(R)
 \sim Ac^p\alpha^{-\theta}e^{pR}R^\theta.
\]
The corresponding real Mellin and Fredholm laws follow from
\Cref{cc:thm:regular-dictionary}.  For an exponentially attenuated free tower
on $q\ge2$ generators, the primitive zeta function has a logarithmic branch at
$p_0=(\log q)/\kappa_0$, whereas the PBW tower has a simple pole and its
$k$-fold tensor power has a pole of order $k$:
\[
 \zeta_{\rm prim}(p_0+\varepsilon)=\log(1/\varepsilon)+O(1),
 \qquad
 \zeta_{\rm PBW}(z)=\frac1{1-qe^{-\kappa_0z}}.
\]
The lattice spectrum generally produces log-periodic counting oscillations.
\end{theorem}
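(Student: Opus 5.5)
The plan has three stages: the counting asymptotic for the shell tower, the free-tower zeta computations, and the lattice remark.

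\textbf{Counting law for $T$.} First I would reduce to the block-diagonal operator $W=\bigoplus_m w_mI_{G_m}$. This uses \Cref{lem:compact-synthesis-invariance} in its counting-function form. That form needs the counting function of $W$ to be regularly varying with positive index, so I establish the asymptotic for $W$ first. For $W$ the counting function is exact: $\mathcal N_W(R)=M(m(R))$, where $m(R)=\max\{m:w_m\ge e^{-R}\}$, up to the monotone one-index adjustment at multiplicity jumps. From $w_m\sim cm^{-\alpha}(\log m)^\eta$ I invert $-\log w_m=R$ to get
\[
 m(R)\sim c^{1/\alpha}e^{R/\alpha}(R/\alpha)^{\eta/\alpha}(1+o(1)).
\]
The inversion uses $\log m\sim R/\alpha$, and the $(1+o(1))$ factors are absorbed by regular variation of $M$. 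Substituting into $M(x)\sim Ax^\delta(\log x)^\gamma$, with $\log m(R)\sim R/\alpha$, gives
\[
 Ac^{p}e^{pR}(R/\alpha)^{p\eta}(R/\alpha)^\gamma=Ac^p\alpha^{-\theta}e^{pR}R^\theta .
\]
The delicate step is uniformity. $M$ is only asymptotic, so I bracket $w_m$ between $(1\pm\varepsilon)$ multiples of the model sequence. This shifts $R$ by $O(\varepsilon)$ and changes $e^{pR}$ by a factor $1\pm O(\varepsilon)$; I then let $\varepsilon\downarrow0$. Since the resulting counting function is regularly varying in $u=e^{-R}$ with index $-p$, \Cref{lem:compact-synthesis-invariance} transfers it to $T$. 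The Mellin and Fredholm laws then follow from \Cref{cc:thm:regular-dictionary} with $r=\theta+1$ and $\ell$ constant.

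\textbf{Free-tower zeta computations.} Here I take the shells to be graded pieces with weights $w_m=e^{-\kappa_0m}$.
\begin{itemize}
\item For the PBW tower, $\dim T(V)_m=q^m$, so $\zeta_{\rm PBW}(z)=\sum_m q^me^{-\kappa_0mz}=1/(1-qe^{-\kappa_0z})$. This is a geometric series with a simple pole at $p_0$, and $\zeta$ of the $k$-fold tensor power is the $k$-th power by \Cref{cc:prop:positive-spectralization}, which gives a pole of order $k$.
\item For the primitive tower I use the Witt dimension $\dim\mathfrak L_m=\frac1m\sum_{d\mid m}\mu(d)q^{m/d}$. This gives
\[
 \zeta_{\rm prim}(z)=\sum_{d}\frac{\mu(d)}{d}\,\log\frac1{1-qe^{-\kappa_0dz}}.
\]
The $d=1$ term contributes $\log(1/(1-qe^{-\kappa_0z}))$. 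Near $p_0$, $1-qe^{-\kappa_0(p_0+\varepsilon)}\sim\kappa_0\varepsilon$, so this term is $\log(1/\varepsilon)+O(1)$.
\end{itemize}
The main obstacle is showing the $d\ge2$ terms are holomorphic near $p_0$. Those terms involve $qe^{-\kappa_0 dz}\approx q^{1-d}<1$ and are uniformly summable, since the tail is bounded by $\sum_d q^{1-d}/d$. So they contribute only $O(1)$.

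\textbf{Lattice remark.} For the log-periodicity statement I would exhibit it rather than prove a general claim. With $w_m=e^{-\kappa_0m}$ the counting function is a step function on the lattice $\kappa_0\mathbb N$. Hence $e^{-p_0R}\mathcal N(R)$ is asymptotically periodic in $R$ with period $\kappa_0$ and is nonconstant. Equivalently, the poles of $\zeta_{\rm PBW}$ at $p_0+2\pi i k/\kappa_0$ all lie on the line $\operatorname{Re}z=p_0$.
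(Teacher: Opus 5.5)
Your proposal is correct and follows the paper's proof essentially step for step. You use monotone inversion for the diagonal shell operator $W$, then \Cref{lem:compact-synthesis-invariance} to pass to $T$, then the Witt--M\"obius expansion of $\zeta_{\rm prim}$ with the $d=1$ term carrying the logarithmic branch, the geometric series for $\zeta_{\rm PBW}$, and the lattice poles $p_0+2\pi i\ell/\kappa_0$ for the oscillations. Your $\varepsilon$-bracketing of the inversion and your $q^{1-d}$ bound on the $d\ge2$ terms make explicit what the paper leaves implicit. One small note: if you also want the oscillation statement for the primitive tower, the periodic normalization there is $R\,e^{-p_0R}\mathcal N(R)$, since $\dim\mathfrak L_m\sim q^m/m$.
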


\begin{proof}
For the diagonal shell operator
$W=\bigoplus_m w_mI_{G_m}$, monotone inversion identifies the largest shell
visible above $e^{-R}$ and substitution into $M$ yields the displayed
counting equivalent.  Its ordered eigenvalue sequence is regularly varying;
\Cref{lem:compact-synthesis-invariance} transfers the same equivalent, with
the same leading constant, to
$T=\mathscr S W\mathscr S^*$.  For the free tower, the Witt formula gives
\[
 \zeta_{\rm prim}(z)
 =\sum_{d\ge1}\frac{\mu(d)}d
  \log\frac1{1-qe^{-\kappa_0zd}},
\]
whose $d=1$ term has the stated branch, while PBW completion is the geometric
series.  The remaining poles are the lattice translates
$p_0+2\pi i\ell/\kappa_0$.
\end{proof}

\section{Conclusion}
\label{sec:scope-conclusion}

The stochastic entrance, primitive Rees filtration, and native response kernel
form a single finite-order mechanism.  The entrance identifies the sharp
second-order operator geometry, the Rees filtration records primitive ancestry,
and full kernel support identifies ancestry depth with critical spectral depth
while reconstructing the finite labelled ancestry flag.  Coefficient-purity on
the harmonic boundary gives the exact logarithmic asymptotics.

The Hall realization supplies full support geometrically, while Brownian,
revealed-bracket, L\'evy, and evolution-family constructions provide natural
source interfaces.  For jump processes the present spectral ancestry theory is
used at degree two; higher stochastic weights carry additional quasi-shuffle
primitive letters.  Thus the finite-order continuous/geometric theory and the
second-order jump entrance fit the same causal Rees framework without changing
the response invariant.

\medskip
\noindent\textbf{Acknowledgment.}
The author used ChatGPT (GPT-5.6, OpenAI) interactively in developing proofs
and refining the presentation, and takes full responsibility for the
mathematical content.

\end{document}